\author{Will Johnson}
\title{Combinatorial Game Theory, Well-Tempered Scoring Games, and a Knot Game}
\DeclareMathOperator{\start}{start}
\DeclareMathOperator{\outcome}{o^\#}
\DeclareMathOperator{\loutcome}{L}
\DeclareMathOperator{\routcome}{R}
\DeclareMathOperator{\lfout}{Lf}
\DeclareMathOperator{\rfout}{Rf}
\DeclareMathOperator{\defi}{def}
\DeclareMathOperator{\val}{val}
\newtheorem{theorem}{Theorem}[section] 
\newtheorem{lemma}[theorem]{Lemma} 
\newtheorem{proposition}[theorem]{Proposition}
\newtheorem{corollary}[theorem]{Corollary}
\newtheorem{definition}[theorem]{Definition}
\newtheorem{claim}[theorem]{Claim}
\newtheorem{exercise}[theorem]{Exercise}
\newtheorem{fact}[theorem]{Fact}
\begin{document}
\maketitle
\tableofcontents
\chapter{To Knot or Not to Knot}\label{chap:zero}
Here is a picture of a knot:
\begin{figure}[H]
\begin{center}
\includegraphics[width=0.7in]
					{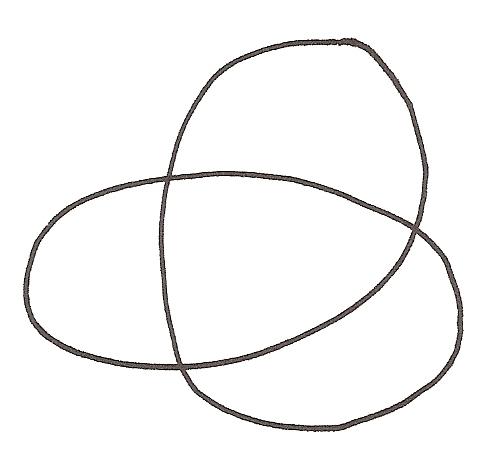}
\caption{A Knot Shadow}
\label{startshadow}
\end{center}
\end{figure}

Unfortunately, the picture doesn't show which strand is on top and which strand is below, at each intersection.
So the knot in question could be any one of the following eight possibilities.
\begin{figure}[H]
\begin{center}
\includegraphics[width=2in]
					{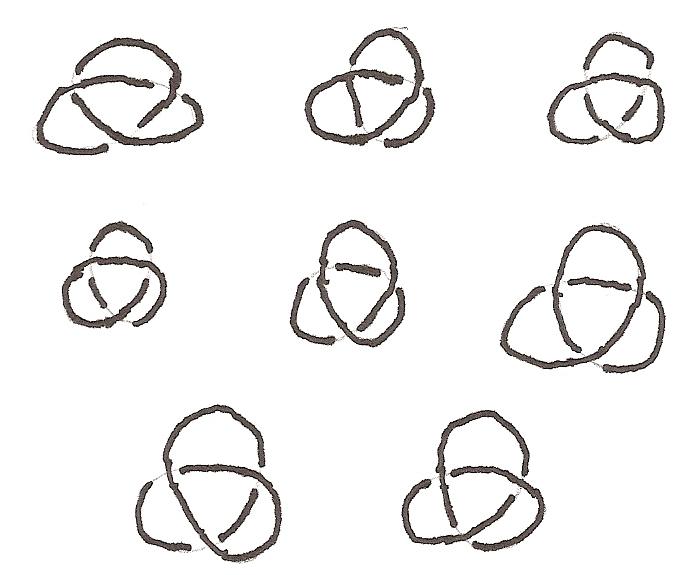}
\caption{Resolutions of Figure~\ref{startshadow}}
\label{octagonal}
\end{center}
\end{figure}
Ursula and King Lear decide to play a game with Figure~\ref{startshadow}.  They take turns
alternately \emph{resolving} a crossing, by choosing which strand is on top.  If Ursula goes first, she could
move as follows:
\begin{figure}[H]
\begin{center}
\includegraphics[width=3in]
					{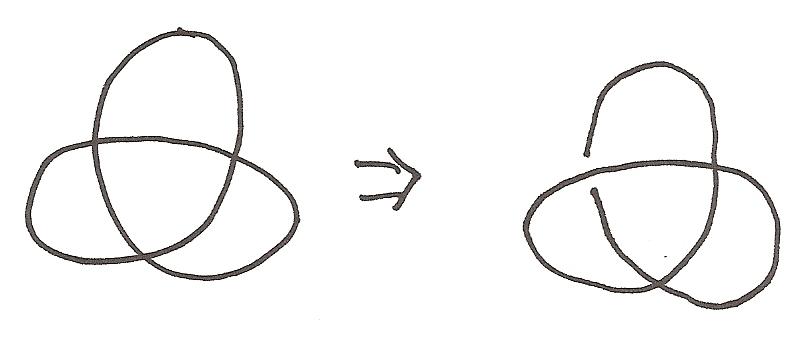}
\end{center}
\end{figure}
King Lear might then respond with
\begin{figure}[H]
\begin{center}
\includegraphics[width=3in]
					{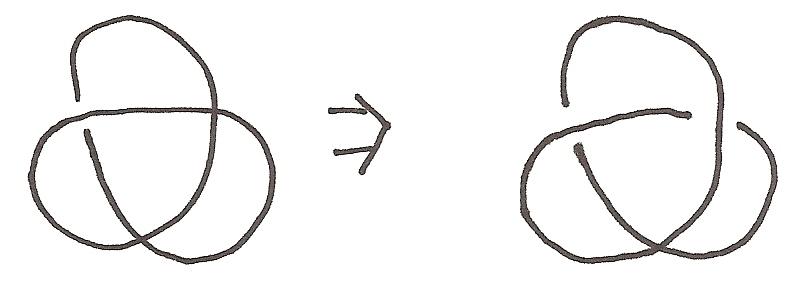}
\end{center}
\end{figure}
For the third and final move, Ursula might then choose to move to
\begin{figure}[H]
\begin{center}
\includegraphics[width=3in]
					{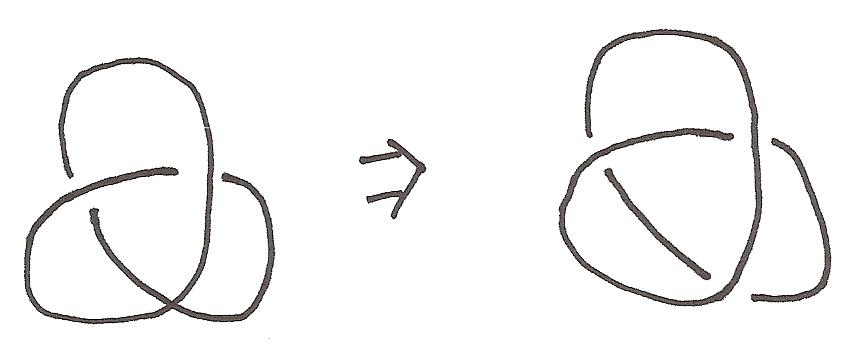}
\caption{The final move}
\label{thirdmove}
\end{center}
\end{figure}
Now the knot is completely identified.  In fact, this knot can be untied as follows, so mathematically
it is the \emph{unknot}:
\begin{figure}[H]
\begin{center}
\includegraphics[width=2.5in]
					{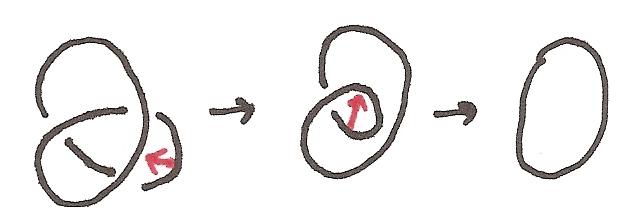}
\end{center}
\end{figure}
Because the final knot was the \textbf{u}nknot, \textbf{U}rsula
is the winner - had it been truly \textbf{k}notted, \textbf{K}ing Lear would be the winner.

A picture of a knot like the ones in Figures \ref{octagonal} and \ref{thirdmove} is called a \emph{knot diagram}
or \emph{knot projection} in the field of mathematics known as Knot Theory.  The generalization in which
some crossings are \emph{unresolved} is called a \emph{pseudodiagram} - every diagram we have just seen is an example.
A pseudodiagram in which \emph{all} crossing are unresolved is called a \emph{knot shadow}.  While knot diagrams
are standard tools of knot theory, pseudodiagrams are a recent innovation by Ryo Hanaki for the sake of mathematically
modelling electron microscope images of DNA in which the elevation of the strands is unclear, like the
following\footnote{Image taken from http://www.tiem.utk.edu/bioed/webmodules/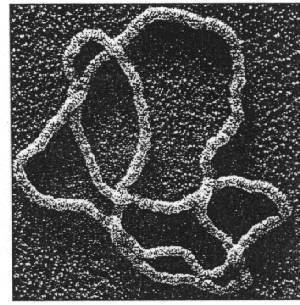 on July 6, 2011.}:
\begin{figure}[H]
\begin{center}
\includegraphics[width=2.5in]
					{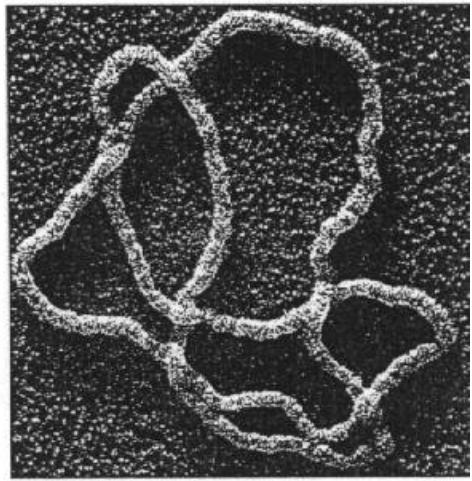}
\caption{Electron Microscope image of DNA}
\end{center}
\end{figure}
Once upon a time, a group of students in a Research Experience for Undergraduates (REU) at Williams College in 2009 were studying
properties of knot pseudodiagrams, specifically the \emph{knotting number} and \emph{trivializing number},
which are the smallest number of crossings which one can resolve to ensure that the resulting pseudodiagram corresponds to a knotted
knot, or an unknot, respectively.  One of the undergraduates\footnote{Oliver Pechenik, according to \url{http://www.math.washington.edu/~reu/papers/current/allison/UWMathClub.pdf}} had the idea of turning this process into a game
between two players, one trying to create an unknot and one trying to create a knot, and thus was born \emph{To Knot or Not to Knot} (TKONTK),
the game described above.

In addition to their paper on knotting and trivialization numbers, the students in the REU wrote an additional
Shakespearean-themed paper \emph{A Midsummer Knot's Dream} on \textsc{To Knot or Not to Knot} and a couple of other knot games, with names like
``Much Ado about Knotting.''  In their analysis of TKONTK specifically, they considered
starting positions of the following sort:
\begin{figure}[H]
\begin{center}
\includegraphics[width=3in]
					{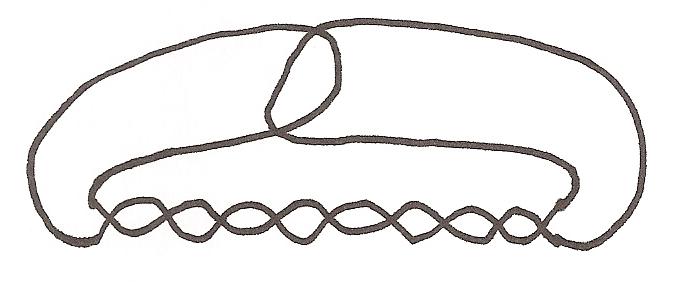}
\end{center}
\end{figure}
For these positions, they determined which player wins under perfect play:
\begin{itemize}
\item If the number of crossings is odd, then Ursula wins, no matter who goes first.
\item If the number of crossings is even, then whoever goes second wins.
\end{itemize}
They also showed that on a certain large class of shadows, the second player wins.

\section{Some facts from Knot Theory}
In order to analyze TKONTK, or even to play it, we need a way to tell whether a given knot diagram
corresponds to the unknot or not.  Unfortunately this problem is very non-trivial, and while algorithms
exist to answer this question, they are very complicated.

One fundamental fact
in knot theory is that two knot diagrams correspond to the same knot if and only if they can be obtained
one from the other via a sequence of \emph{Reidemeister moves}, in addition to mere distortions (isotopies) of the plane
in which the knot diagram is drawn.  The three types of Reidemeister moves are
\begin{enumerate}
\item Adding or removoing a twist in the middle of a straight strand.
\item Moving one strand over another.
\item Moving a strand over a crossing.
\end{enumerate}
These are best explained by a diagram:
\begin{figure}[H]
\begin{center}
\includegraphics[width=4in]
					{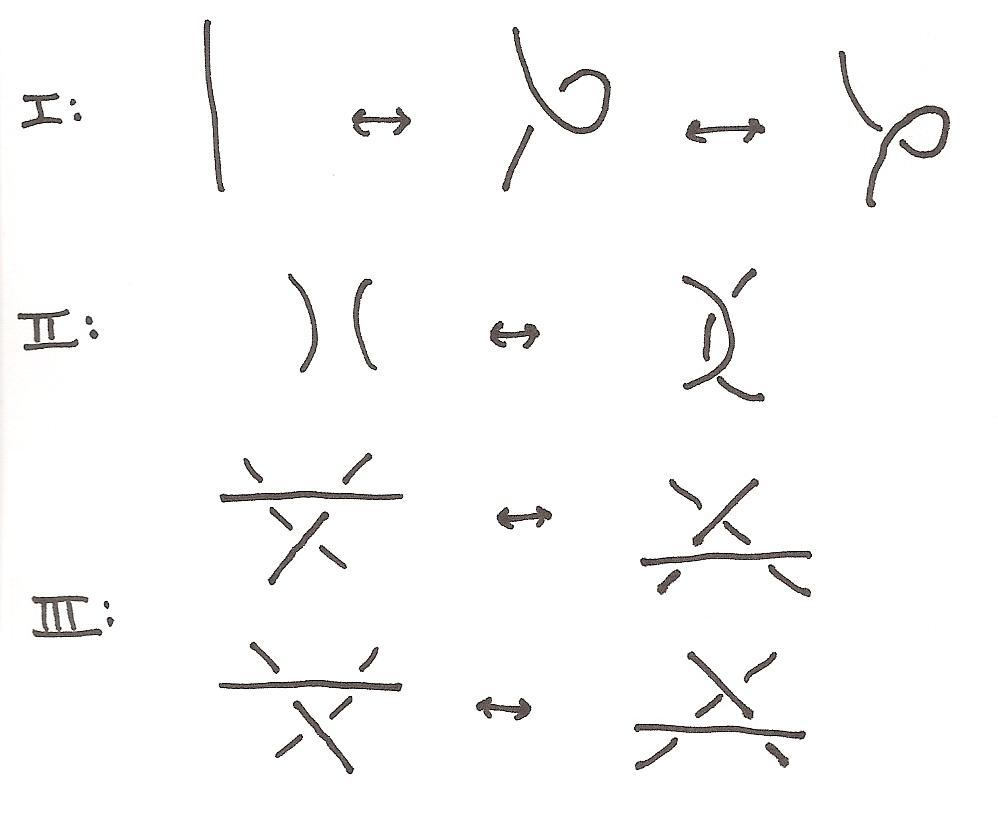}
\caption{The three Reidemeister Moves}
\end{center}
\end{figure}
Given this fact, one way to classify knots is by finding properties of knot diagrams which are invariant under the
Reidemeister moves.  A number of surprising \emph{knot invariants} have been found, but none are known to be \emph{complete invariants},
which exactly determine whether two knots are equivalent.

Although this situation may seem bleak, there are certain families of knots in which we can test for unknottedness easily.
One such family is the family of \emph{alternating knots}.  These are knots with the property that if you walk along them, you alternately
are on the top or the bottom strand at each successive crossing.  Thus the knot weaves under and over itself perfectly.  Here are some
examples:
\begin{figure}[H]
\begin{center}
\includegraphics[width=2in]
					{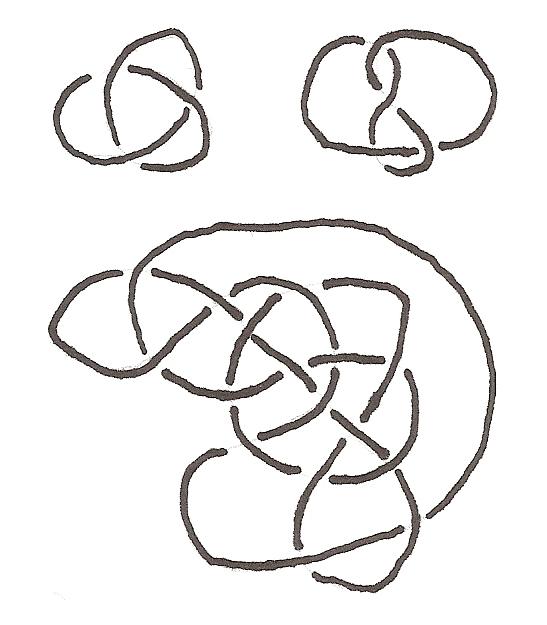}
\end{center}
\end{figure}
The rule for telling whether an alternating knot is the unknot is simple: color the regions between the strands black and white
in alternation, and connect the black regions into a graph.  Then the knot is the unknot if and only if the graph can be
reduced to a tree by removing self-loops.
For instance,
\begin{figure}[H]
\begin{center}
\includegraphics[width=2in]
					{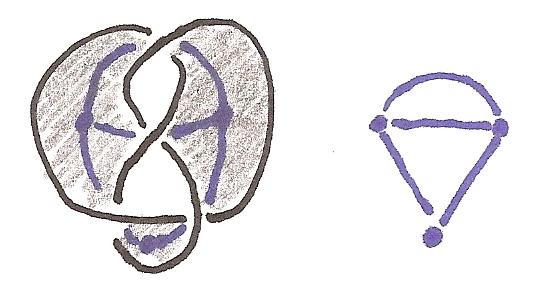}
\end{center}
\end{figure}
is not the unknot, while
\begin{figure}[H]
\begin{center}
\includegraphics[width=2in]
					{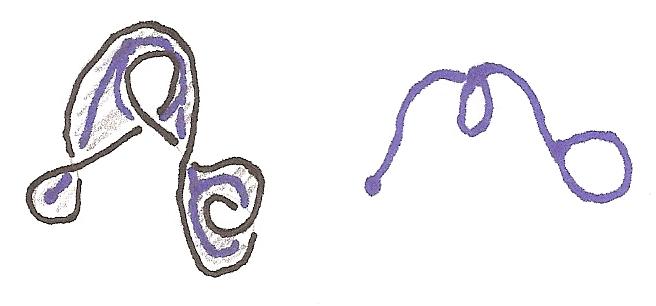}
\end{center}
\end{figure}
is.

Now it turns out that any knot shadow can be turned into an alternating knot - but in only two ways.  The players are unlikely
to produce one of these two resolutions, so this test for unknottedness is not useful for the game.

Another family of knots, however, works out perfectly for TKONTK.  These are the \emph{rational knots}, defined in terms of the \emph{rational tangles}.
A \emph{tangle} is like a knot with four loose ends, and two strands.  Here are some examples:
\begin{figure}[H]
\begin{center}
\includegraphics[width=3in]
					{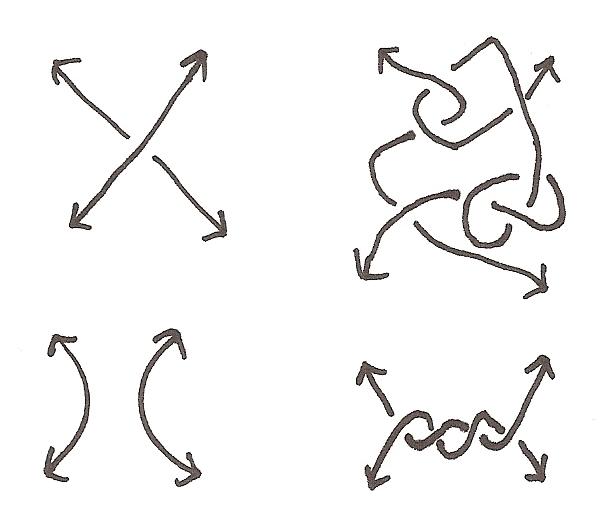}
\end{center}
\end{figure}
The four loose ends should be though of as going off to infinity, since they can't be pulled in to unknot the tangle.  We consider
two tangles to be equivalent if you can get from one to the other via Reidemeister moves.

A \emph{rational} tangle is one built up from the following two
\begin{figure}[H]
\begin{center}
\includegraphics[width=3in]
					{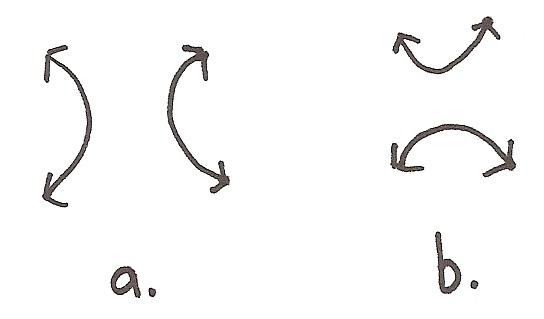}
\end{center}
\end{figure}
via the following operations:
\begin{figure}[H]
\begin{center}
\includegraphics[width=4in]
					{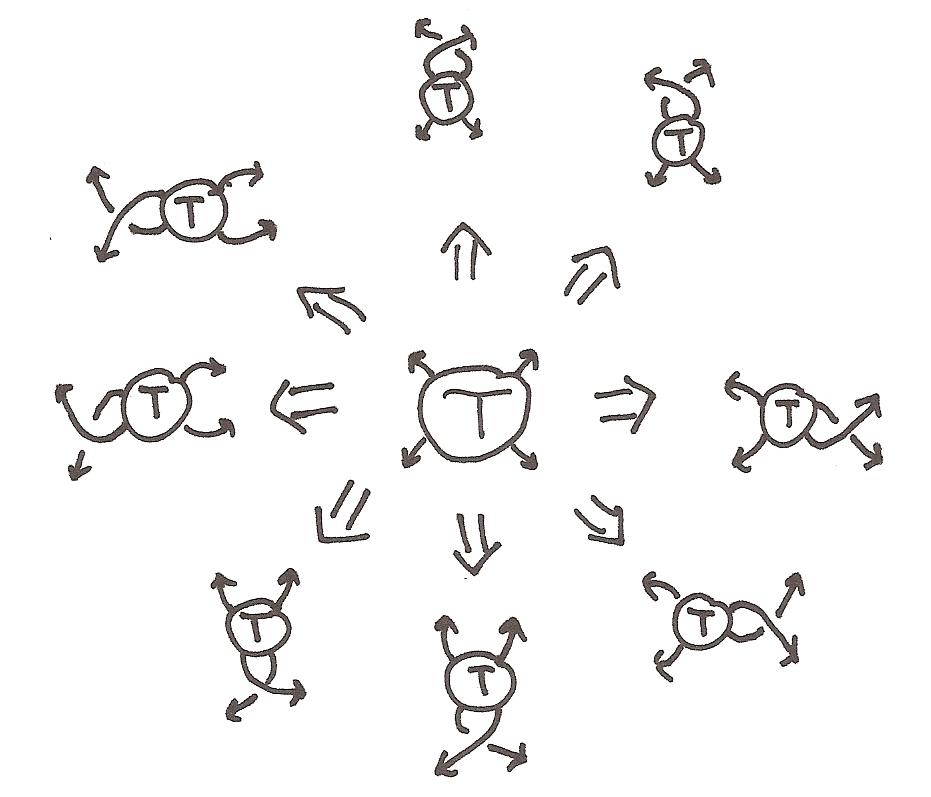}
\end{center}
\end{figure}
Now it can easily be seen by induction that if $T$ is a rational tangle, then $T$ is invariant under $180^{\textrm{o}}$ rotations
about the $x$, $y$, or $z$ axes:
\begin{figure}[H]
\begin{center}
\includegraphics[width=2.5in]
					{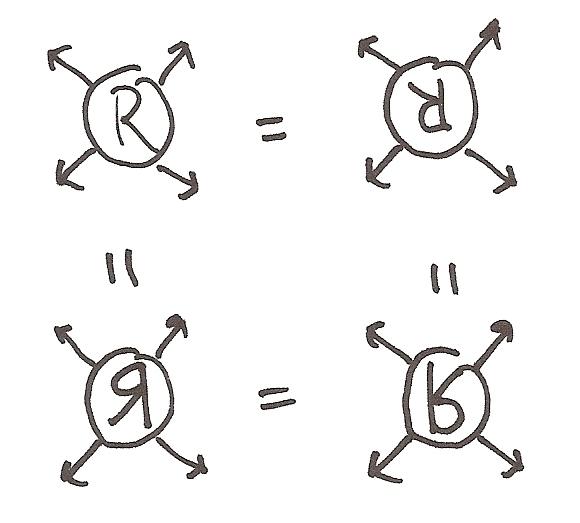}
\end{center}
\end{figure}
Because of this, we have the following equivalences,
\begin{figure}[H]
\begin{center}
\includegraphics[width=3in]
					{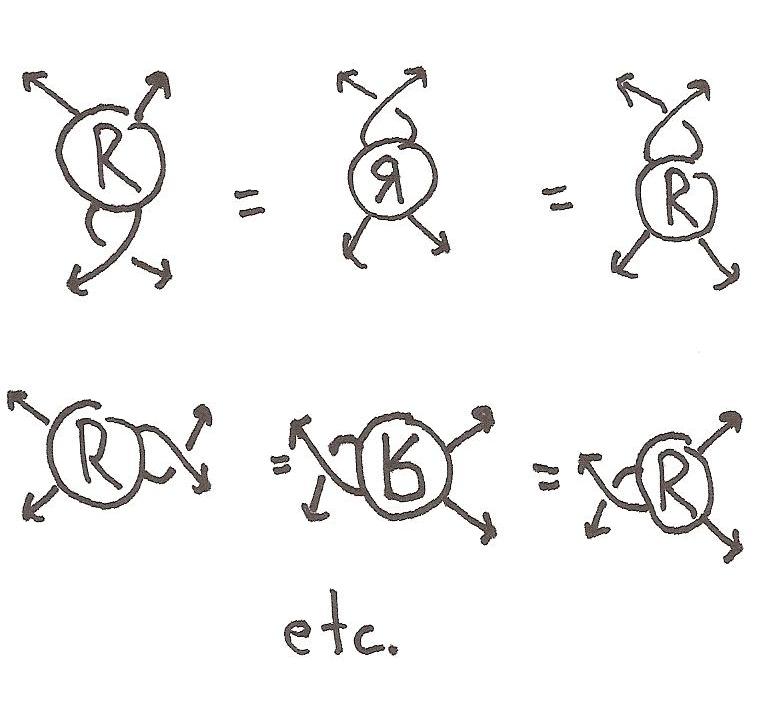}
\end{center}
\end{figure}
In other words, adding a twist to the bottom or top of a rational tangle has the same effect, and so does adding a twist on
the right or the left.  So we can actually build up all rational tangles via the following smaller set of operations:
\begin{figure}[H]
\begin{center}
\includegraphics[width=3in]
					{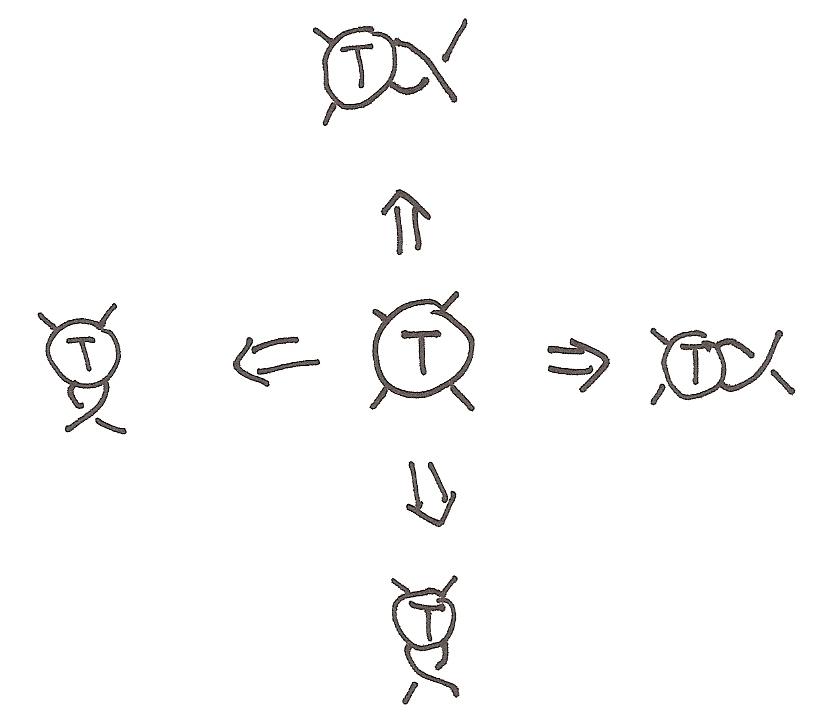}
\end{center}
\end{figure}

John Conway found a way to assign a rational number (or $\infty$) to each rational tangle, so that the tangle is determined
up to equivalence by its number.  Specifically, the initial tangles
\begin{figure}[H]
\begin{center}
\includegraphics[width=2in]
					{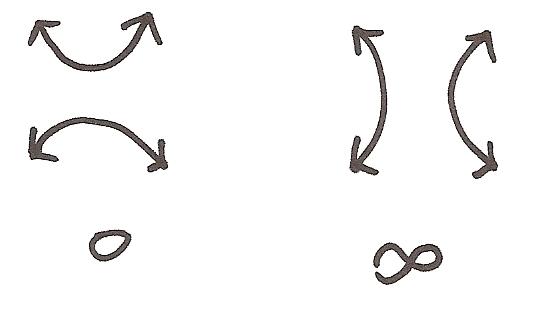}
\end{center}
\end{figure}
have values $0$ and $\infty = 1/0$.  If a tangle $t$ has value $\frac{p}{q}$, then adding a twist on the
left or right changes the value to $\frac{p+q}{q}$ if the twist is left-handed, or $\frac{p-q}{q}$ if the twist is right handed.
Adding a twist on the top or bottom changes the value to $\frac{p}{q-p}$ if the twist is left-handed, or
$\frac{p}{q+p}$ if the twist is right-handed.
\begin{figure}[H]
\begin{center}
\includegraphics[width=4in]
					{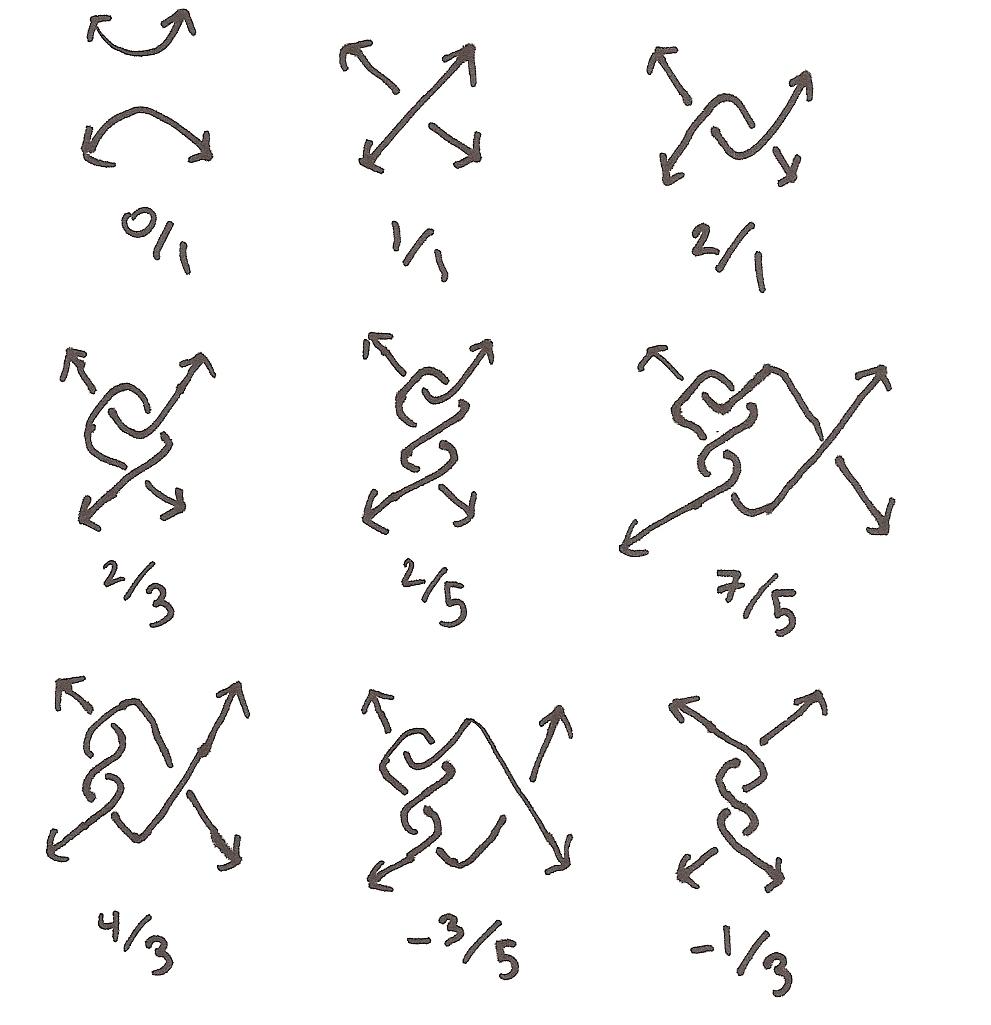}
\caption{Sample rational tangles}
\label{sample-rational}
\end{center}
\end{figure}
Reflecting a tangle over the $45^{\circ}$ diagonal plane corresponds to taking the reciprocal:
\begin{figure}[H]
\begin{center}
\includegraphics[width=3in]
					{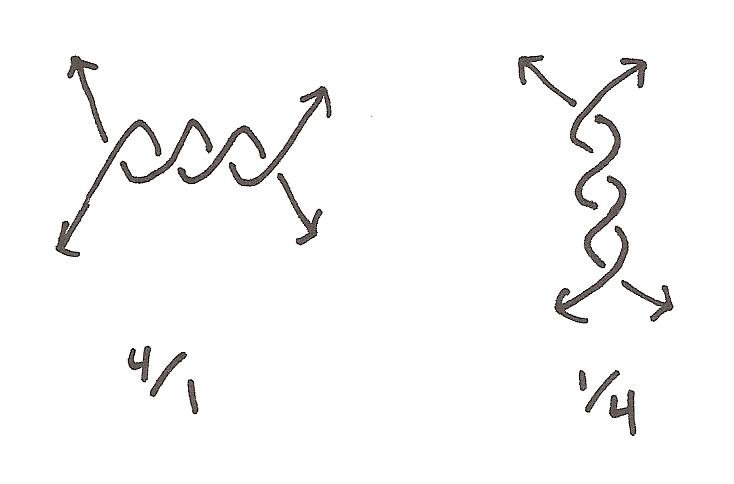}
\caption{Reflection over the diagonal plane corresponds to taking the reciprocal.  Note which strands are on top
in each diagram.}
\label{reciprocal-example}
\end{center}
\end{figure}
Using these rules, it's easy to see that a general rational tangle, built up by adding twists on the bottom or right side,
has its value determined by a \emph{continued fraction}.  For instance, the following rational tangle
\begin{figure}[H]
\begin{center}
\includegraphics[width=3in]
					{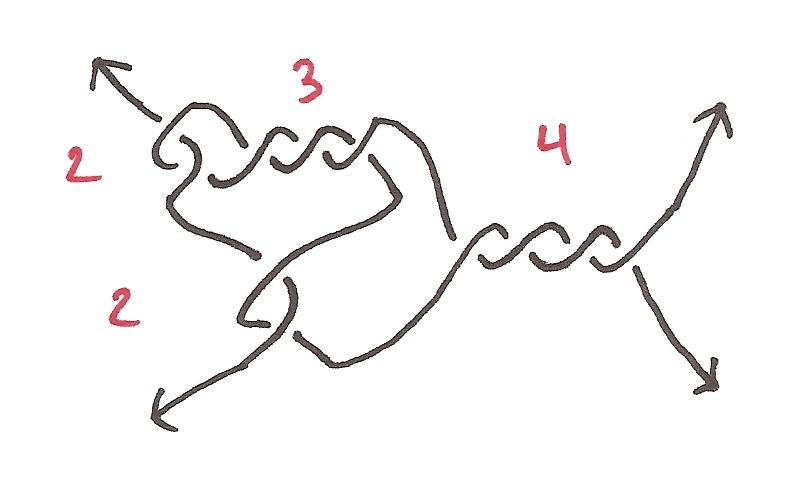}
\end{center}
\end{figure}
has value
\[ 4 + \frac{1}{2 + \frac{1}{3 + \frac{1}{2}}} = \frac{71}{64}.\]
Now a basic fact about continued fractions is that if $n_1, \ldots, n_k$ are positive integers, then the continued fraction
\[ n_1 + \frac{1}{n_2 + \frac{1}{\ddots + \frac{1}{n_k}}}\]
almost encodes the sequence $(n_1,\ldots,n_k)$.  So this discussion of continued fractions might sound like an elaborate
way of saying that rational tangles are determined by the sequence of moves used to construct them.

But our continued fractions can include negative numbers.  For instance, the following tangle
\begin{figure}[H]
\begin{center}
\includegraphics[width=3in]
					{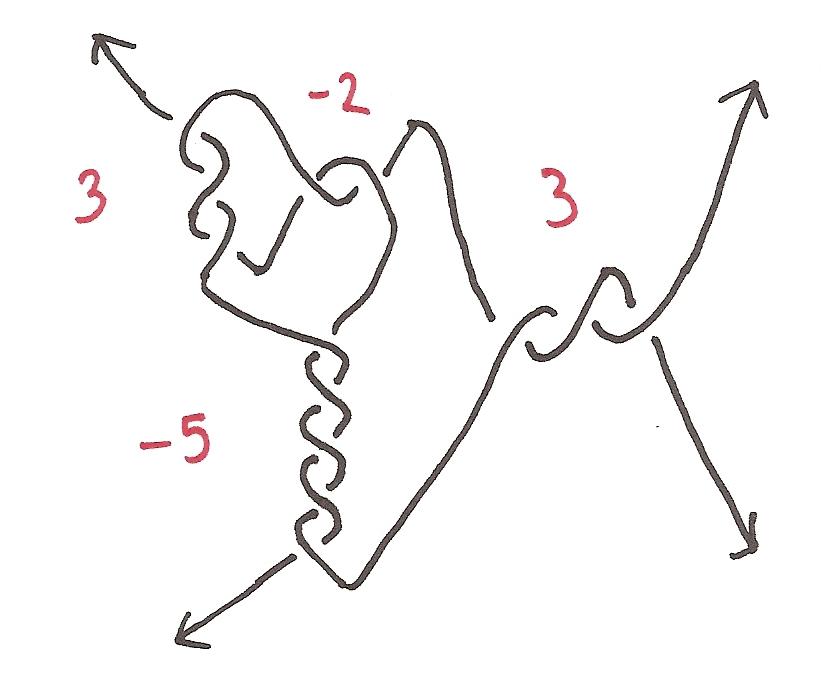}
\end{center}
\end{figure}
has continued fraction
\[ 3 + \frac{1}{-5 + \frac{1}{-2 + \frac{1}{3}}} =  79/28
= 2 + \frac{1}{1 + \frac{1}{4 + \frac{1}{1 + \frac{1}{1 + \frac{1}{2}}}}},\]
so that we have the following nontrivial equivalence of tangles:
\begin{figure}[H]
\begin{center}
\includegraphics[width=5in]
					{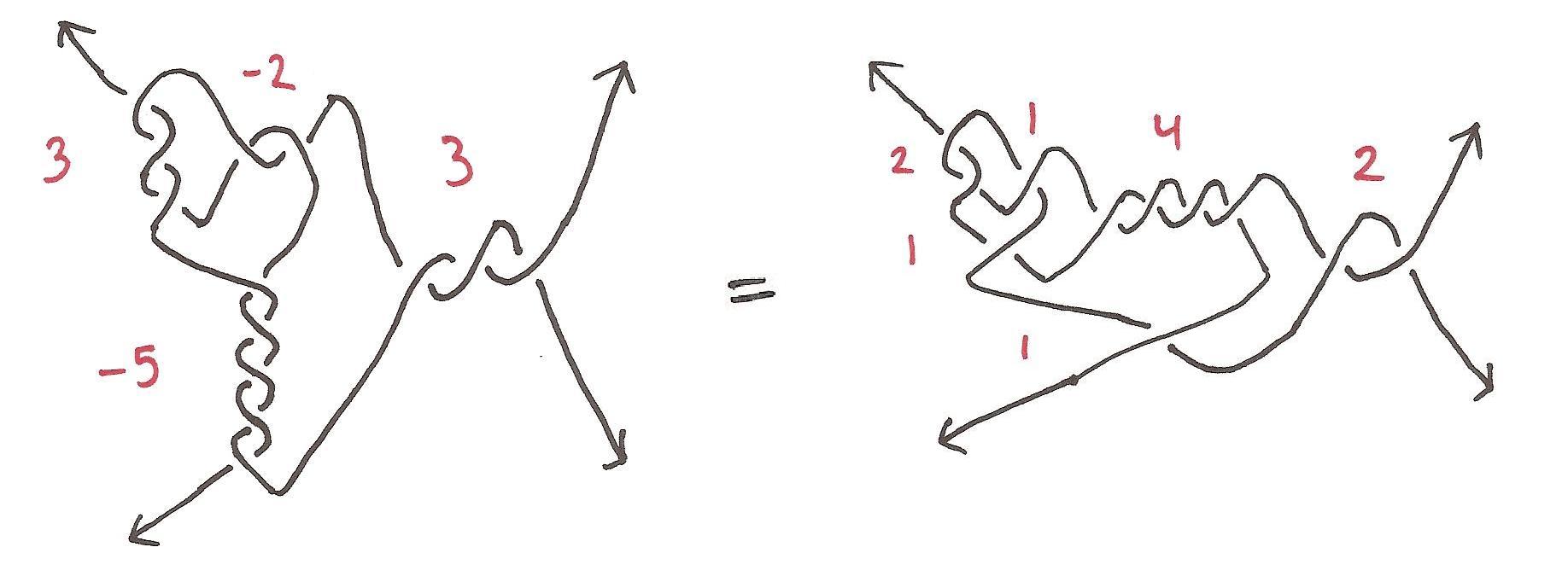}
\end{center}
\end{figure}

Given a rational tangle, its \emph{numerator closure} is obtained by
connecting the two strands on top and connecting the two strands on bottom, while the \emph{denominator closure} is obtained
by joining the two strands on the left, and joining the two strands on the right:
\begin{figure}[H]
\begin{center}
\includegraphics[width=3in]
					{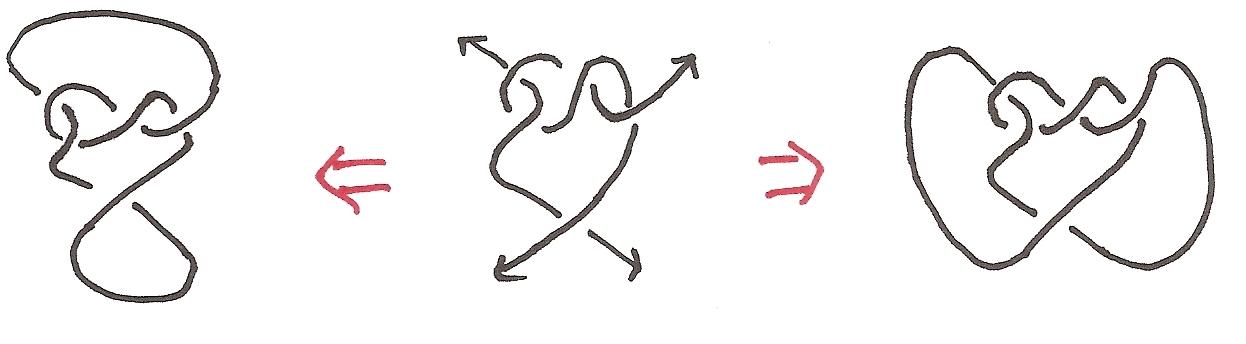}
\caption{the numerator closure (left) and denominator closure (right) of a tangle.}
\label{infinity-zero-closure}
\end{center}
\end{figure}
In some cases, the result ends up consisting of two disconnected strands, making it a \emph{link} rather
than a \emph{knot}:
\begin{figure}[H]
\begin{center}
\includegraphics[width=3in]
					{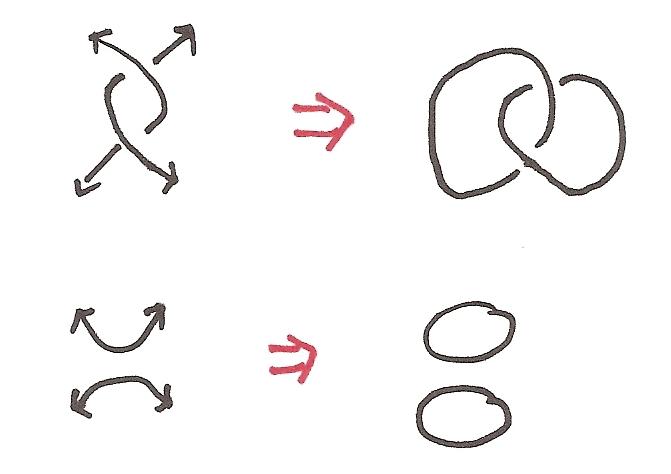}
\end{center}
\end{figure}
As a general rule, one can show that the numerator closure is a knot as long as the numerator of $p/q$
is odd, while the denominator closure is a knot as long as the denominator of $p/q$ is odd.

Even better, it turns out that the \emph{numerator closure} is an unknot exactly if the value $p/q$ is
the reciprocal of an integer, and the \emph{denominator closure} is an unknot exactly if the value $p/q$ is an integer.

The upshot of all this is that if we play TKONTK on a ``rational shadow,'' like the following:
\begin{figure}[H]
\begin{center}
\includegraphics[width=3in]
					{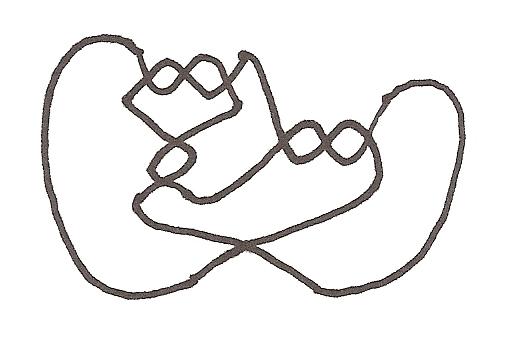}
\end{center}
\end{figure}
then at the game's end the final knot will be rational, and we can check who wins by means of continued fractions.

The twist knots considered in \emph{A Midsummer Knot's Dream} are instances of this, since they are the denominator closures of the
following rational tangle-shadows:
\begin{figure}[H]
\begin{center}
\includegraphics[width=4.5in]
					{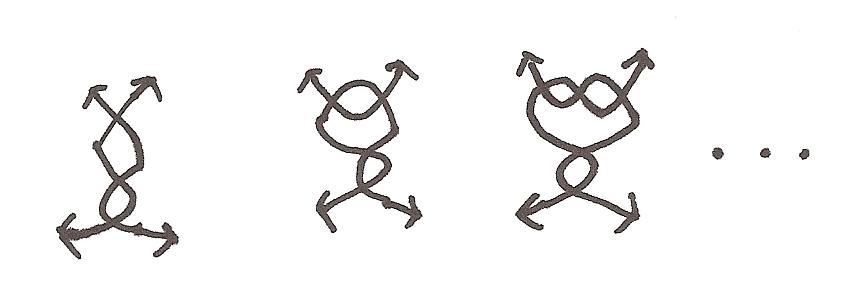}
\end{center}
\end{figure}

\section{Sums of Knots}
Now that we have a basic set of analyzable positions to work with, we can quickly extend them by the operation
of the \emph{connected sum} of two knots.

Here are two knots $K_1$ and $K_2$:
\begin{figure}[H]
\begin{center}
\includegraphics[width=2.5in]
					{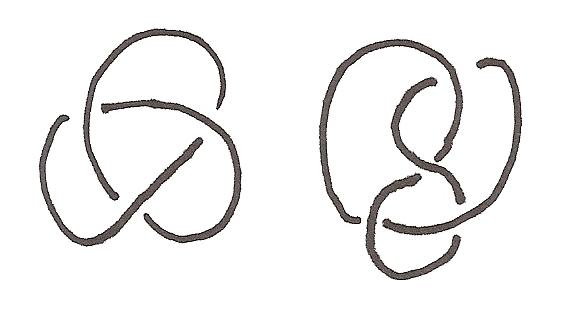}
\end{center}
\end{figure}
and here is their connected sum $K_1\#K_2$
\begin{figure}[H]
\begin{center}
\includegraphics[width=3in]
					{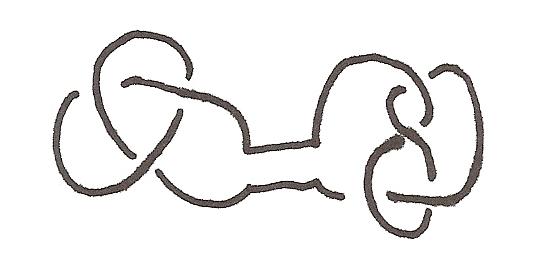}
\end{center}
\end{figure}
This sum may look arbitrary, because it appears to depend on the places where we chose to attach the two knots.
However, we can move one knot along the other to change this, as shown in the following picture:
\begin{figure}[H]
\begin{center}
\includegraphics[width=5in]
					{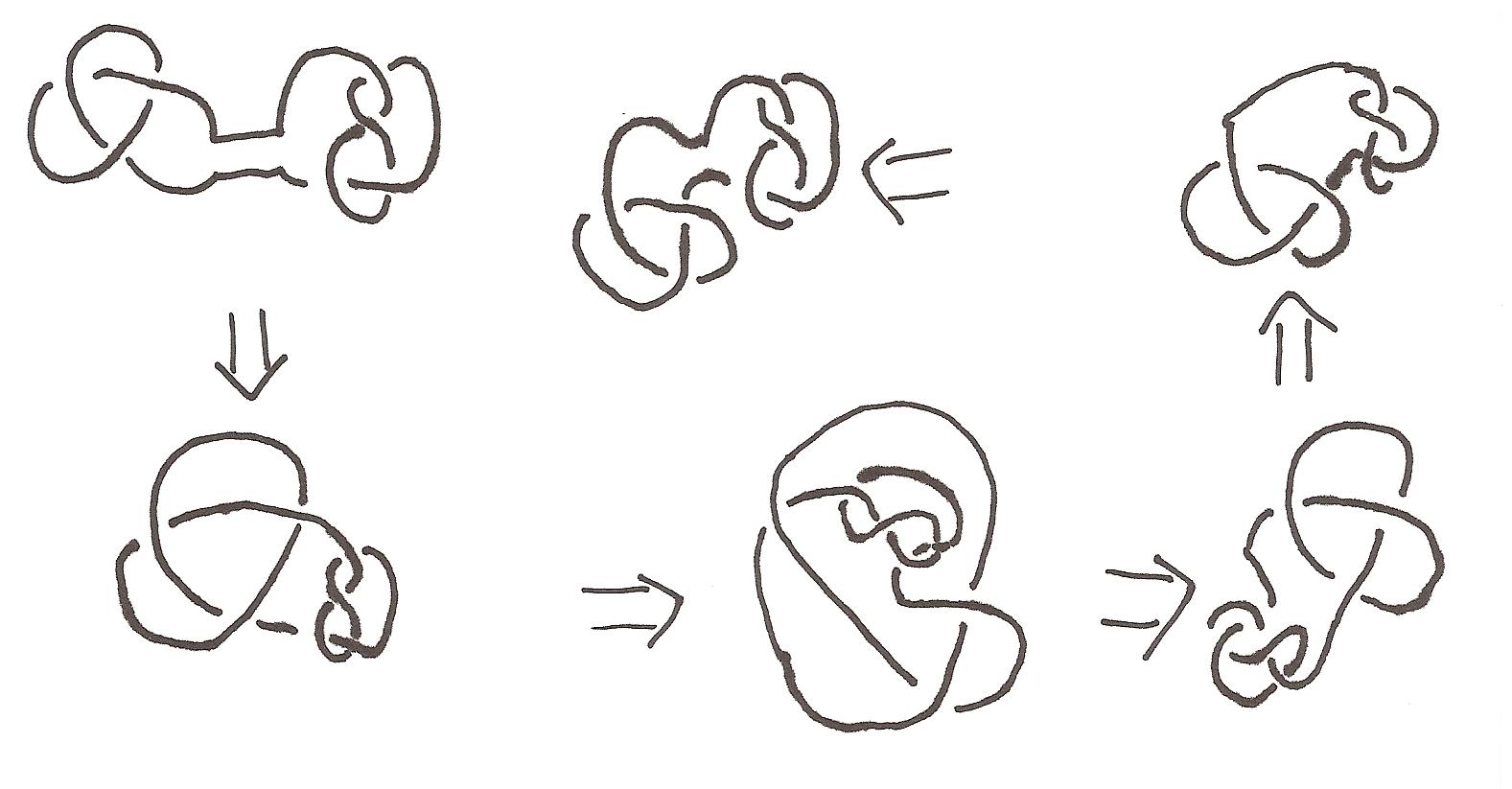}
\end{center}
\end{figure}
So the place where we choose to join the two knots doesn't matter.\footnote{Technically, the definition is still
ambiguous, unless we specify an orientation to each knot.  When adding two ``noninvertible'' knots, where the choice of orientation matters, there are two non-equivalent
ways of forming the connected sum.  We ignore these technicalities, since our main interest is in Fact~\ref{firstfact}.}


Our main interest is in the following fact:
\begin{fact}\label{firstfact}
If $K_1$ and $K_2$ are knots, then $K_1\#K_2$ is an unknot if and only if both $K_1$ and $K_2$
are unknots.
\end{fact}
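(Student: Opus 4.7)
The plan is to prove the two directions separately, with the forward direction ($K_1, K_2$ unknots $\Rightarrow K_1\#K_2$ unknot) being essentially immediate and the reverse direction requiring real work. For the forward direction, if each $K_i$ bounds an embedded disk $D_i$ in $\mathbb{R}^3$, I would perform the connected sum along a small arc joining the two knots and verify that the two disks can be glued along a band to produce a disk bounded by $K_1\#K_2$; thus $K_1\#K_2$ is the unknot.

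For the reverse direction, I would introduce the \emph{Seifert genus} $g(K)$, defined as the minimum genus of an orientable surface embedded in $\mathbb{R}^3$ whose boundary is $K$. Two facts will do the entire job: first, $g(K)=0$ if and only if $K$ is the unknot (a knot bounds an embedded disk if and only if it is unknotted, which is essentially the definition); second, genus is additive under connected sum, i.e.\ $g(K_1\#K_2)=g(K_1)+g(K_2)$. Granting these, if $K_1\#K_2$ is an unknot then $0 = g(K_1\#K_2) = g(K_1)+g(K_2)$, forcing $g(K_1)=g(K_2)=0$, and hence each $K_i$ is itself an unknot.

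The main obstacle is proving additivity of genus, in particular the nontrivial inequality $g(K_1\#K_2) \geq g(K_1)+g(K_2)$. My plan here is the standard innermost-circle argument: take a minimal-genus Seifert surface $\Sigma$ for $K_1\#K_2$, and let $S$ be the 2-sphere in $\mathbb{R}^3$ that realises the connected sum, meeting $K_1\#K_2$ in exactly two points. Put $\Sigma$ in general position with respect to $S$, so that $\Sigma\cap S$ consists of the single arc through the two basepoints together with finitely many disjoint simple closed curves. I would then iteratively surger away these circles using innermost disks on $S$, arguing at each step that the surgery either strictly reduces the number of intersection circles or strictly reduces genus, and never increases genus because the reverse inequality $g(K_1\#K_2)\leq g(K_1)+g(K_2)$ is already known. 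Once $\Sigma\cap S$ is just the arc, cutting $\Sigma$ along this arc decomposes it as a Seifert surface for $K_1$ glued to one for $K_2$, giving $g(K_1)+g(K_2)\leq \mathrm{genus}(\Sigma)=g(K_1\#K_2)$.

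Since the paper only states this result as a \emph{fact} and treats it as background from knot theory, I would be content to sketch the genus argument at this level and cite a standard knot theory text (e.g.\ Lickorish or Rolfsen) for the detailed innermost-disk manipulation, rather than reproving the additivity of genus from scratch. The alternative route via the Alexander polynomial fails because there exist nontrivial knots with trivial Alexander polynomial, so genus really is the right invariant for this job.
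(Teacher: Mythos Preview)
Your proposal is correct, and in fact you do more than the paper does: the paper gives no proof of this fact at all. It simply states the result, remarks that it fits into the theory of prime decomposition of knots, and refers the reader to Colin Adams' \emph{The Knot Book} for a proof. Your genus-additivity argument is the standard one and is essentially what one finds in Adams and other introductory knot theory texts, so your sketch is entirely appropriate here. Your closing paragraph already anticipates this situation correctly.
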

In other words, two non-unknots can never be added and somehow cancel each other.  There is actually 
an interesting theory here, with knots decomposing uniquely as sums of ``prime knots.'' For more information,
and proofs of \ref{firstfact}, I refer the reader to Colin Adams' \emph{The Knot Book}.

Because of this fact, we can play \textsc{To Knot or Not to Knot} on \emph{sums of rational shadows}, like the following
\begin{figure}[H]
\begin{center}
\includegraphics[width=3in]
					{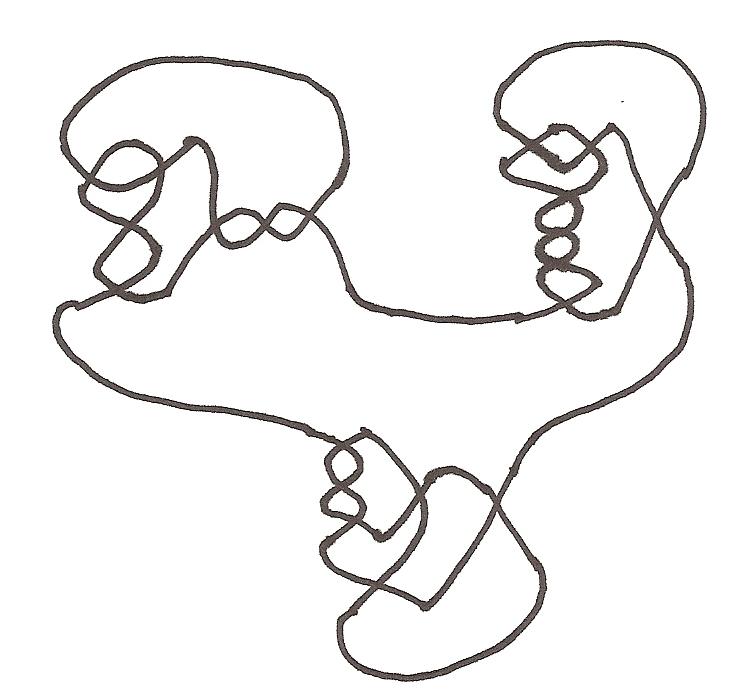}
\end{center}
\end{figure}
and actually tell which player wins at the end.  In fact, the winner will be King Lear as long as
he wins in any of the summands, while Ursula needs to win in every summand.
\begin{figure}[H]
\begin{center}
\includegraphics[width=4.5in]
					{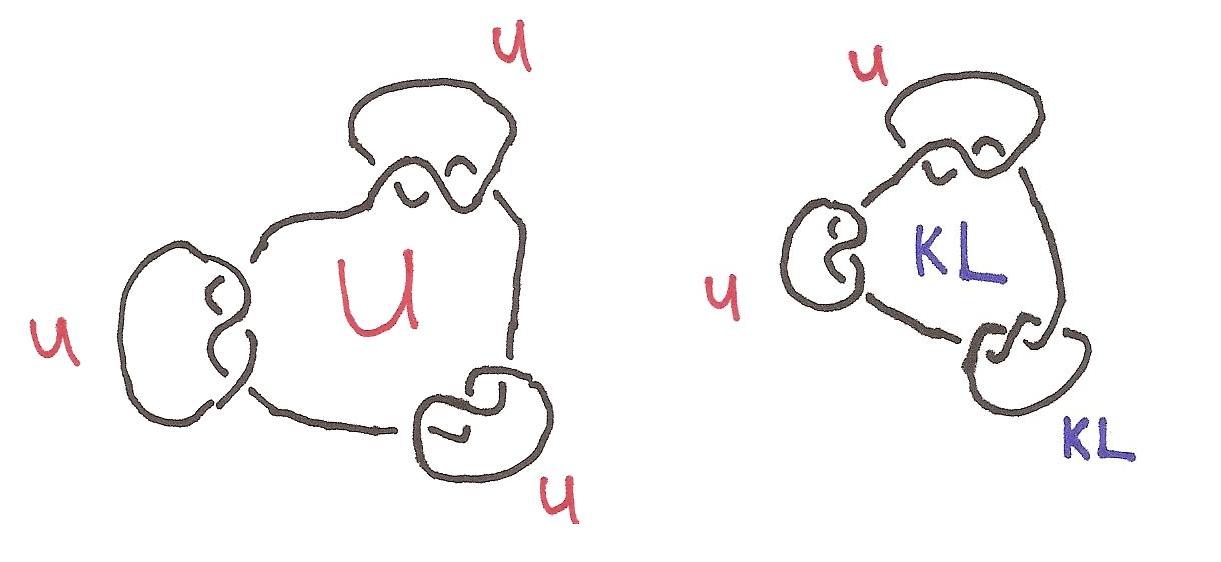}
\caption{On the left, Ursula has won every subgame, so she wins the connected sum.  On the right, King Lear
has won only one subgame, but this is still enough to make the overall figure knotted, so he wins the connected sum.}
\label{knot-sum-logical-or-example}
\end{center}
\end{figure}
Indeed, this holds even when the summands are not rational, though it is harder to tell who wins in that case.

When TKONTK is played on a connected sum of knot shadows, each summand acts as a fully independent game.  There is no interaction
between the components, except that at the end we pool together the results from each component to see who wins (in an asymmetric
way which favors King Lear).  We can visualize each component as a black box, whose output gets fed into a logical OR gate
to decide the final winner:
\begin{figure}[H]
\begin{center}
\includegraphics[width=4.5in]
					{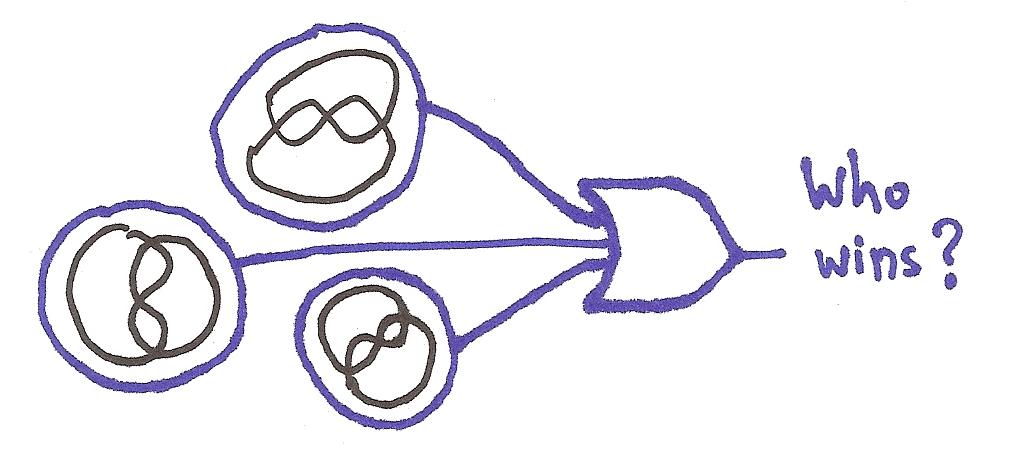}
\end{center}
\end{figure}

The way in which we can add positions of \textsc{To Knot or Not to Knot} together, or decompose positions as sums of multiple non-interacting
smaller positions, is highly reminiscent of the branch of recreational mathematics known as \emph{combinatorial game theory.}
Perhaps it can be applied to \textsc{To Knot or Not to Knot}?
\begin{quote}\end{quote}
The rest of this work is an attempt to do so.  We begin with an overview
of combinatorial game theory, and then move on to the modifications to the theory that we need to analyze TKONTK.
We proceed by an extremely roundabout route, which may perhaps give better insight into the origins of the final theory.

For completeness we include all the basic proofs of combinatorial game theory, though many of them can be found in
John Conway's book \emph{On Numbers and Games}, and Guy, Berlekamp, and Conway's book \emph{Winning Ways.}  However
\emph{ONAG} is somewhat spotty in terms of content, not covering Norton multiplication or many of the other interesting results
of \emph{Winning Ways}, while \emph{Winning Ways} in turn is generally lacking in proofs.  Moreover, the proofs of basic combinatorial game theory are the basis
for our later proofs of new results, so they are worth understanding.

\part{Combinatorial Game Theory}
\chapter{Introduction}
\section{Combinatorial Game Theory in general}
Combinatorial Game Theory (CGT) is the study of \emph{combinatorial games}.  In the losse sense,
these are \emph{two-player discrete deterministic games of perfect information}:
\begin{itemize}
\item There must be only two players.  This rules out games like Bridge or Risk.
\item The game must be discrete, like Checkers or Bridge, rather than continuous, like Soccer or Fencing.
\item There must be no chance involved, ruling out Poker, Risk, and Candyland.  Instead, the game
must be deterministic.
\item At every stage of the
game, both players have perfect information on the state of the game.
This rules out Stratego and Battleship.  Also, there can be no
simultaneous decisions, as in Rock-Paper-Scissors.  Players must take turns.
\item The game must be zero-sum, in the sense of classical game theory.  One player wins
and the other loses, or the players receive scores that add to zero.
This rules out games like Chicken and Prisoner's Dilemma.
\end{itemize}
While these criteria rule out most popular games, they include Chess, Checkers, Go, Tic-Tac-Toe, Connect Four,
and other abstract strategy games.

By restricting to combinatorial games, CGT distances itself from the classical game theory
developed by von Neuman, Morgenstern, Nash, and others.  Games studied in classical game theory
often model real-world problems like geopolitics, market economics, auctions, criminal justice,
and warfare.  This makes classical game theory a much more practical and empirical subject
that focuses on imperfect information, political coalitions, and various sorts of strategic equilibria.
Classical game theory starts begins its analyses by enumerating strategies for all players.  In
the case of combinatorial games, there are usually 
too many strategies too list, rendering the techniques of classical game theory somewhat
useless.

Given a combinatorial game, we can ask the question: who wins if both players play perfectly?  The answer is called the
\emph{outcome (under perfect play)} of the game.  The underlying goal of combinatorial game theory
is to \emph{solve} various games by determining their outcomes.  Usually
we also want a strategy that the winning player can use to ensure victory.

As a simple example, consider the following game: Alice and Bob sit
on either side of a pile of beans, and alternately take turns removing 1 or 2 beans
from the pile, until the pile is empty.  Whoever removes the last bean wins.

If the players start with 37 beans, and Alice goes first, then she can guarantee
that she wins by always ending her turn in a configuration where the number of beans remaining
is a multiple of three.  This is possible on her first turn because she can remove one bean.
On subsequent turns, she moves in response to Bob, taking one bean if he took two,
and vice versa.  So every two turns, the number of beans remaining decreases by three.
Alice will make the final move to a pile of zero beans, so she is guaranteed the victory.
Because Alice has a perfect winning strategy, Bob has no useful strategies
at all, and so all his strategies are ``optimal,'' because all are equally bad.

On the other hand, if there had been 36 beans originally, and Alice
had played first, then Bob would win by the same strategy, taking one or two beans
in response to Alice taking two or one beans, respectively.  Now Bob will always end his turn
with the number of beans being a multiple of three, so he will be the one to move to the position
with no beans.

The general solution is as follows:
\begin{itemize}
\item If there are $3n + 1$
or $3n + 2$ beans
on the table, then the next player to move will win
under perfect play.
\item If there are $3n$ beans on the table, then the next player
to move will lose under perfect play.
\end{itemize}
Given this solution, Alice or Bob can consider each potential move, and choose the one which
results in the optimal outcome.  In this case, the optimal move is to always move to a multiple of three.
The players can play perfectly as long as they are able to tell the outcome of an arbitrary position
under consideration.

As a general principle, we can say that
\begin{quote}
In a combinatorial game, knowing the outcome (under perfect play) of every position
allows one to play perfectly.
\end{quote}
This works because the players can look ahead one move and choose the move with
the best outcome.  Because of this, the focus of CGT is to determine the outcome
(under perfect play) of positions in arbitrary games.  Henceforth, we assume
that the players are playing perfectly, so that the ``outcome'' always refers
to the outcome under perfect play, and ``Ted wins'' means that Ted has
a strategy guaranteeing a win.

Most games do not admit such simple solutions as the bean-counting game.
As an example of the
complexities that can arise, consider \emph{Wythoff's Game}
In this game, there
are two piles of beans, and the two players (Alice and Bob) alternately take
turns removing beans.  In this game, a player can remove any number of
beans (more than zero) on her turn, but if she removes beans from both piles,
then she must remove the same number from each pile.  So if it is Alice's turn,
and the two piles have sizes 2 and 1, she can make the following moves: remove
one or two beans from the first pile, remove one bean from the second pile,
or remove one bean from each pile.  Using $(a,b)$ to represent a state
with $a$ beans in one pile and $b$ beans in the other, the legal moves
are to states of the form $(a-k,b)$ where $0 < k \le a$, $(a - k, b - k)$,
where $0 < k \le \min(a,b)$, and $(a,b-k)$, where $0 < k \le b$.  As before,
the winner is the player who removes the last bean.

Equivalently, there is a lone Chess queen on a board, and the players
take turns moving her south, west, or southwest.  The player who moves
her into the bottom left corner is the winner.  Now $(a,b)$ is the queen's grid
coordinates, with the origin in the bottom left corner.

Wythoff showed that the following positions are the ones you should move
to under optimal play - they are the positions for which the \emph{next} player
to move will lose:
\[ \left(\lfloor n \phi \rfloor, \lfloor n \phi^2 \rfloor\right)\]
\textrm{ and }
\[ \left(\lfloor n \phi^2 \rfloor, \lfloor n \phi \rfloor \right)\]
where $\phi$ is the golden ratio $\frac{1 + \sqrt{5}}{2}$ and $n = 0,1,2,\ldots$.
(As an aside, the two sequences $a_n = \left\lfloor n \phi \right\rfloor$ and $b_n = \left\lfloor n \phi^2 \right\rfloor$:
\[ \{a_n\}_{n = 0}^\infty = \{0,1,3,4,6,8,9,\ldots\}\]
\[ \{b_n\}_{n = 0}^\infty = \{0,2,5,7,10,13,15,\ldots\}\]
are examples of \emph{Beatty sequences}, and have several interesting properties.  For example,
$b_n = n + a_n$ for every $n$, and each positive integer occurs in exactly one of the two
sequences.  These facts play a role in the proof of the solution of Wythoff's game.)

Much of combinatorial game theory consists of results of this sort - independent analyses of isolated games.  Consequently, CGT has a tendency to lack overall
coherence. The closest
thing to a unifying framework within CGT is what I will call \emph{Additive Combinatorial Game Theory}\footnote{I thought I heard this name
once but now I can't find it anywhere.  I'll use it anyways. The correct name for this subject may be Conway's combinatorial game theory, or partizan theory,
but these seem to specifically refer to the study of disjunctive sums of partizan games.},
by which I mean the theory begun and extended by Sprague, Grundy, Milnor, Guy, Smith,
Conway, Berlekamp, Norton, and others.  Additive CGT will be
the focus of most of this thesis.\footnote{Computational Complexity Theory
has also been used to prove many negative results.  If we assume the standard conjectures
of computational complexity theory (like P $\ne$ NP), then it is impossible to efficiently evaluate positions of generalized versions
of Gomoku, Hex, Chess, Checkers, Go, Philosopher's Football, Dots-and-Boxes, Hackenbush, and many other games.
Many puzzles are also known to be
intractable if P$\ne$NP.  This subfield of combinatorial game theory is called
\emph{algorithmic} combinatorial game theory.  In a sense it provides another theoretical framework
for CGT.  We will not discuss it further, however.}


\subsection{Bibliography}
The most famous books on CGT are John Conway's \emph{On Numbers and Games},
Conway, Guy, and Berlekamp's four-volume \emph{Winning Ways For your Mathematical Plays}
(referred to as \emph{Winning Ways}), and three collections of articles published by the Mathematical Sciences Research Institute: \emph{Games of No Chance},
\emph{More Games of No Chance}, and \emph{Games of No Chance 3}.  There are also over a thousand
articles in other books and journals, many of which are listed in the bibliographies of the \emph{Games of No Chance} books.

\emph{Winning Ways} is an encyclopedic work: the first
volume covers the core theory of additive CGT, the second covers ways of bending the rules,
and the third and fourth volumes apply these theories to various games and puzzles.
Conway's \emph{ONAG} focuses more closely on the Surreal Numbers (an ordered field
extending the real numbers to also include all the transfinite ordinals),
for the first half of the book, and then considers additive CGT
in the second half.  Due to its earlier publication, the second half
of \emph{ONAG} is generally superseded by the first two volumes of \emph{Winning Ways},
though it tends to give more precise proofs.  The \emph{Games of No Chance} books
are anthologies of papers on diverse topics in the field.

Additionally, there are at least two books applying these theories to specific games:
Berlekamp's \emph{The Dots and Boxes Game: Sophisticated Child's Play} and
Wolfe and Berlekamp's \emph{Mathematical Go: Chilling Gets the Last Point.}
These books focus on Dots-and-Boxes and Go, respectively.


\section{Additive CGT specifically}\label{sec:examples}
We begin by introducing a handful of example combinatorial games.

The first is \emph{Nim}, in which there are several piles of counters (as in Figure~\ref{nim-example}),
and players take turns alternately removing pieces until none remain.
A move consists of removing one or more pieces from a signle pile.
The player to remove the last piece wins.  There is no set starting
position.  

\begin{figure}[h]
\begin{center}
\includegraphics[width=2in]
					{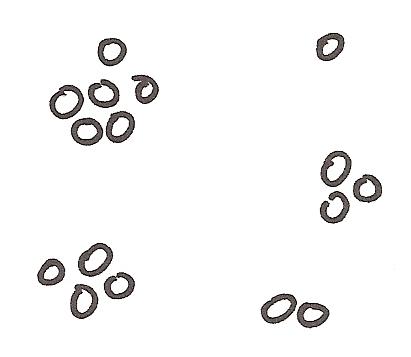}
\caption{A Nim position containing piles of size 6, 1, 3, 2, and 4.}
\label{nim-example}
\end{center}
\end{figure}

A game of \emph{Hackenbush} consists of a drawing made of red and blue 
edges
connected to each other, and to the ``ground,'' a dotted line at the edge of the world.  See Figure~\ref{hackenbush-example}
for an example.  Roughly, a Hackenbush poition is a graph whose edge have been colored red and blue.

\begin{figure}[H]
\begin{center}
\includegraphics[width=4in]
					{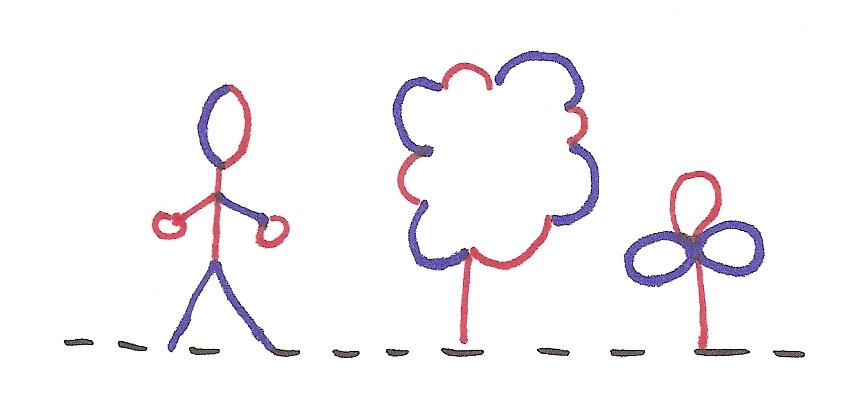}
\caption{A Hackenbush position.}
\label{hackenbush-example}
\end{center}
\end{figure}

On each turn, the current player chooses one edge of his own color, and erases it.  In the process, other
edge may become disconnected from the ground.  \emph{These edges are also erased.}  If the current player
is unable to move, then he loses.  Again, there is no set starting position.
%

\begin{figure}[H]
\begin{center}
\includegraphics[width=4in]
					{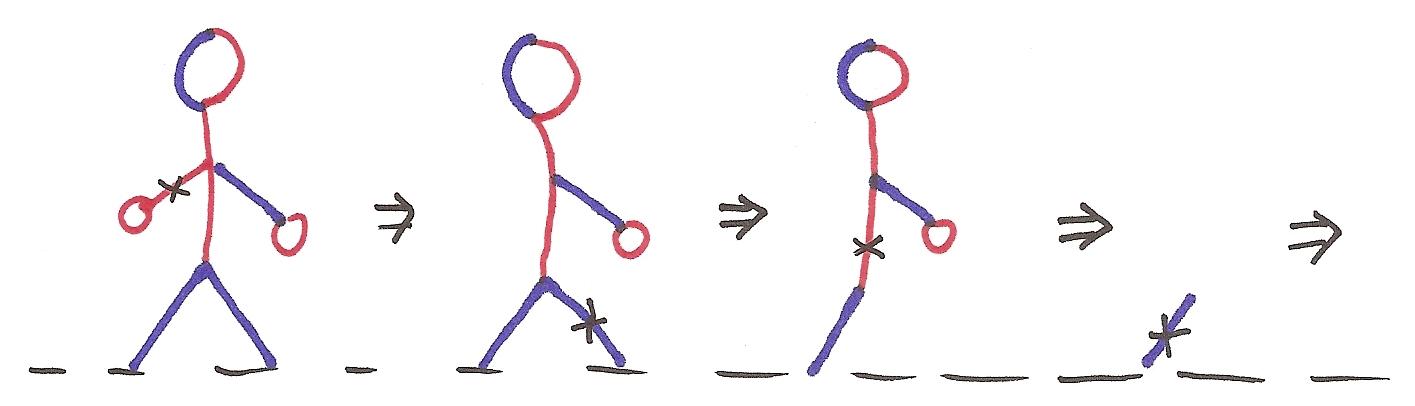}
\caption{Four successive moves in a game of Hackenbush.  Red goes first, and Blue makes the final move of this game.
Whenever an edge is deleted, all the edges that become disconnected from the ground disappear at the same time.}
\label{hackenbush-moves}
\end{center}
\end{figure}


In \emph{Domineering}, invented by G\"oran Andersson, a game begins with an empty chessboard.
Two players, named Horizontal and Vertical, place dominoes on the board, as in Figure~\ref{domineering-example}.  Each domino
takes up two directly adjacent squares.  Horizontal's dominoes must be aligned
horizontally (East-West), while Vertical's must be aligned vertically (North-South).
Dominoes are not allowed to overlap, so eventually the board fills up.  The first player unable to move
on his turn loses.  

\begin{figure}[h]
\begin{center}
\includegraphics[width=4in]
					{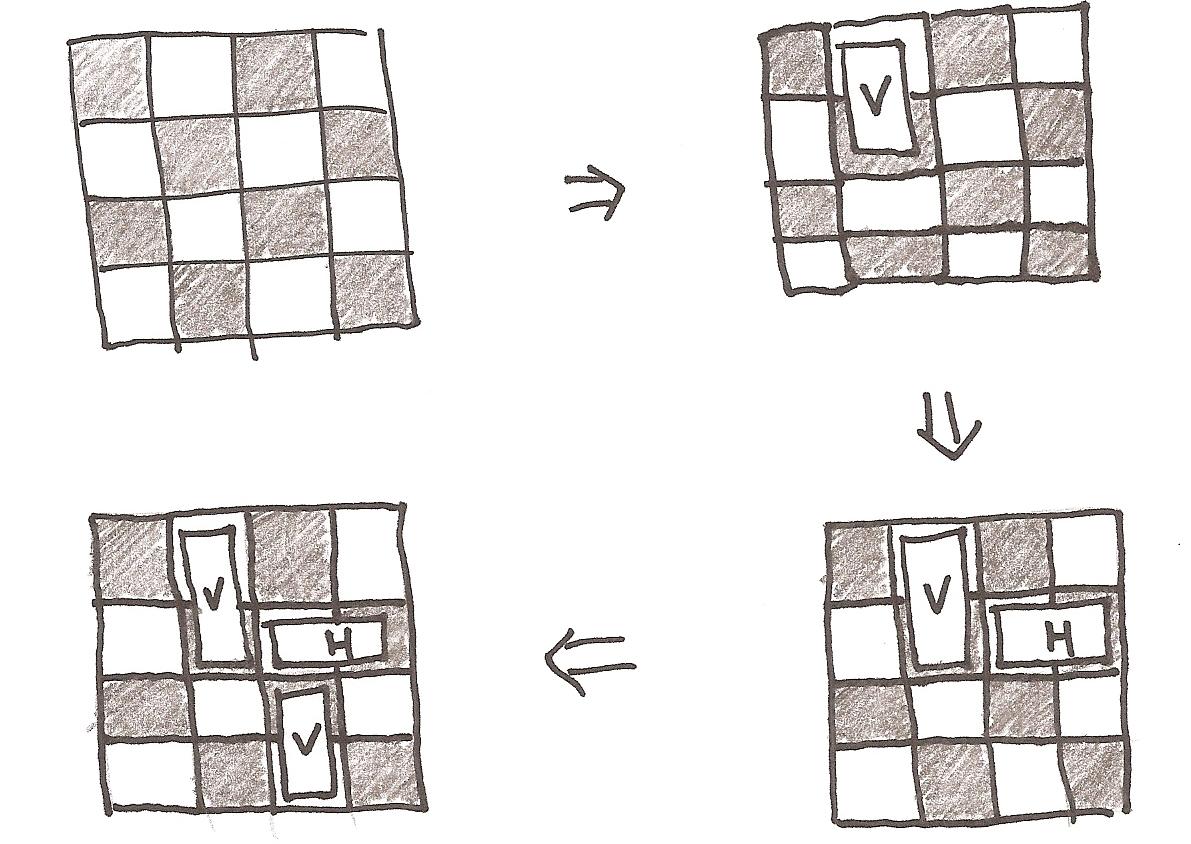}
\caption{Three moves in a game of Domineering.  The board starts empty, and Vertical goes first.}
\label{domineering-example}
\end{center}
\end{figure}

A pencil-and-paper
variant of this game is played on a square grid of dots.  Horizontal draws connects adjacent dots
with horizontal lines, and vertical connects adjacent dots with vertical lines. No dot may have more
than one line out of it.  The reader can easily check that this is equivalent to placing dominoes on a grid of squares.

\emph{Clobber} is another game played on a square grid, covered with White and Black checkers.
Two players, White and Black, alternately move until someone is unable to, and that player loses.
A move consists of moving a piece of your own color onto an immediately adjacent piece of your opponent's
color, which gets removed.  The game of \emph{Konane} (actually an ancient Hawaiian gambling game), is played by the same
rules, except that a move consists of jumping over an opponent's piece and removing it, rather than
moving onto it, as in Figure~\ref{clobber-konane-moves}.  In both games, the board starts out with
the pieces in an alternating checkerboard pattern, except that in Konane two adjacent pieces are removed
from the middle, to provide room for the initial jumps.

\begin{figure}[H]
\begin{center}
\includegraphics[width=4in]
					{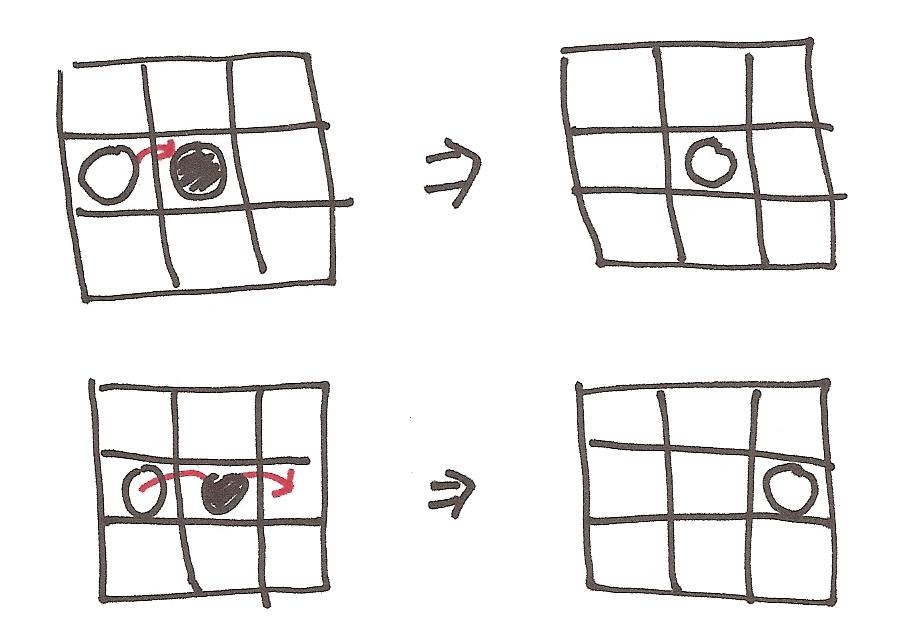}
\caption{Example moves in Clobber and Konane.  In Clobber (top), the capturing piece displaces
the captured piece.  In Konane (bottom), the capturing piece instead jumps over to the captured piece,
to an empty space on the opposite side.  Only vertical and horizontal moves are allowed in both games,
not diagonal.}
\label{clobber-konane-moves}
\end{center}
\end{figure}

The games just described have the following properties in common, in addition to being combinatorial games:
\begin{itemize}
\item \emph{A player loses when and only when he is unable to move.}  This is called the \textbf{normal play convention.}
\item \emph{The games cannot go on forever, and eventually one player wins.}  In every one of our example games,
the number of pieces or parts remaining on the board decreases over time (or in the case of Domineering,
the number of empty spaces decreases.)  Since all these games are finite, this means that
a game can never loop back to a previous position.  These games are all \textbf{loopfree.}
\item \emph{Each game has a tendency to break apart into independent subcomponents.}  This is less obvious but the motivation
for additive CGT.
In the Hackenbush position of Figure~\ref{hackenbush-example}, the stick person, the tree, and the flower each functions
as a completely independent subgame.  In effect, three games of Hackenbush are being played in parallel.

Similarly, in Domineering, as the board begins to fill up,
the remaining empty spaces (which are all that matter from a strategic point of view) will be disconnected
into separate clusters, as in Figure~\ref{domineering-sum}. Each cluster might as well be on a separate board.  So again, we find that
the two players are essentially playing several games in parallel.

In Clobber and Konane, as the pieces disappear they begin to fragment into clusters, as in Figure~\ref{clobber-sundering}.
In Clobber,
once two groups of checkers are disconnected they have no future way of interacting with each other.  So
in Figure~\ref{clobber-subdivision}, each of the red circled regions is an independent subgame.
In Konane, pieces can jump into empty space, so it is possible for groups of checkers to reconnect,
but once there is sufficient separation, it is often possible to prove that such connection is impossible.
Thus Konane splits into independent subgames, like Clobber.

\begin{figure}[p]
\begin{center}
\includegraphics[width=2.5in]
					{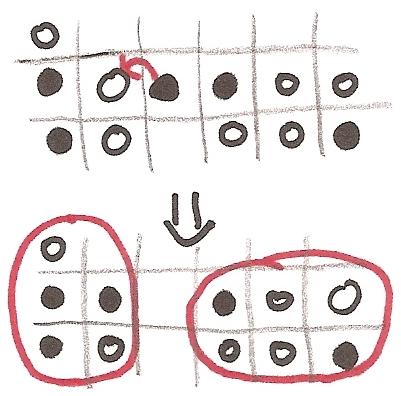}
\caption{Subidivision of Clobber positions: Black's move breaks up the position into a sum of two smaller positions.}
\label{clobber-sundering}
\end{center}
\end{figure}
\begin{figure}[p]
\begin{center}
\includegraphics[width=2.5in]
					{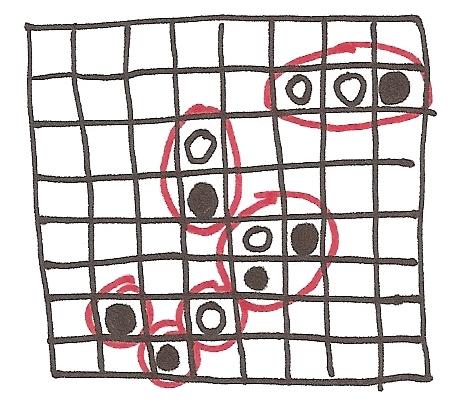}
\caption{A position of Clobber that decomposes as a sum of independent positions.  Each circled area
functions independently from the others.}
\label{clobber-subdivision}
\end{center}
\end{figure}

In Nim, something more subtle happens: each pile is an independent game.  As an isolated position, an individual
pile is not interesting because whoever goes first takes the whole pile and wins.  In combination, however,
nontrivial things occur.

In all these cases, we end up with positions that are \textbf{sums} of other positions.
  In some sense, additive combinatorial game theory is the study of the nontrivial behavior
of sums of game.
\end{itemize}

The core theory of additive CGT, the theory of partizan games,
focuses on \emph{loopfree} combinatorial games played by the  \emph{normal play rule}.
There is no requirement for the games under consideration to decompose as sums,
but unless this occurs, the theory has no a priori reason to be useful.
Very few real games (Chess, Checkers, Go, Hex) meet these requirements,
so Additive CGT has a tendency to focus on obscure games that nobody plays.  Of course,
this is to be expected, since once a game is solved, it loses its appeal as a playable game.

In many cases, however, a game which does not fit these criteria can be analyzed or partially analyzed
by clever applications of the core theory.  For example, Dots-and-Boxes and Go have both been studied
using techniques from the theory of partizan games.  In other cases, the standard rules can be bent,
to yield modified or new theories.  This is the focus of Part 2 of \emph{Winning Ways} and Chapter 14 of \emph{ONAG},
as well as Part II of this thesis.

\section{Counting moves in Hackenbush}
Consider the following Hackenbush position:
\begin{figure}[H]
\begin{center}
\includegraphics[width=1.5in]
					{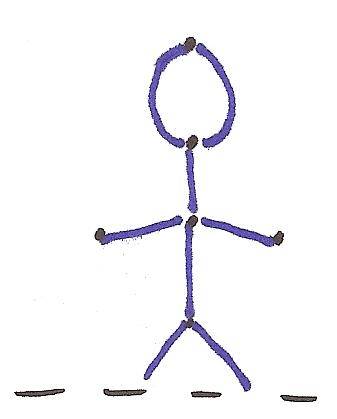}
\end{center}
\end{figure}
Since there are only blue edges present, Red has no available moves, so as soon as his turn comes around,
he loses.  On the other hand, Blue has at least one move available, so she will win no matter what.
To make things more interesting, lets give Red some edges:
\begin{figure}[H]
\begin{center}
\includegraphics[width=3in]
					{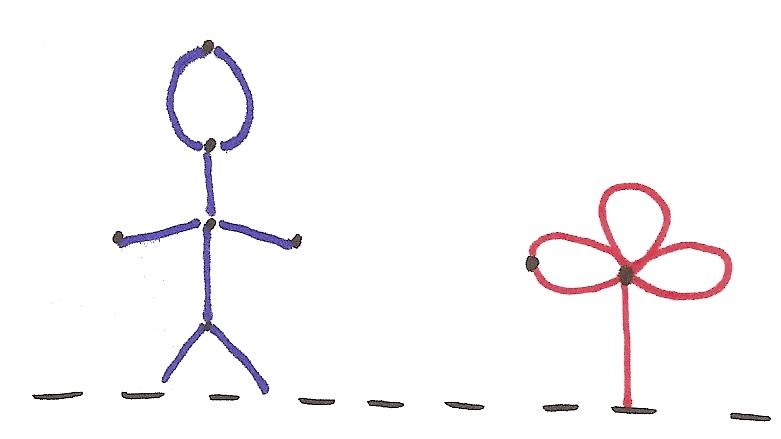}
\end{center}
\end{figure}
Now there are 5 red edges and 8 blue edges.  If Red plays wisely, moving on the petals of the flower rather than the stem, he will be able to move 5 times.
However Blue can similarly move  8 times, so Red will run out of moves first and lose, no matter which player moves first.  So again,
Blue is guaranteed a win.

This suggests that we balance the position by giving both players equal numbers of edges:
\begin{figure}[H]
\begin{center}
\includegraphics[width=4in]
					{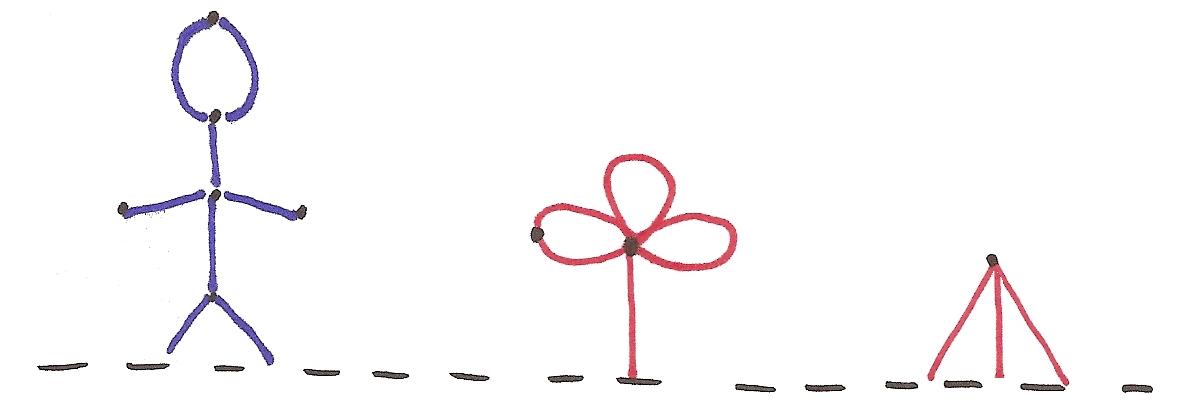}
\end{center}
\end{figure}
Now Blue and Red can each move exactly 8 times.  If Blue goes first, then she will run out of moves
first, and therefore lose, but conversely if Red goes first he will lose.  So whoever goes second wins.

In general, if we have a position like
\begin{figure}[H]
\begin{center}
\includegraphics[width=5in]
					{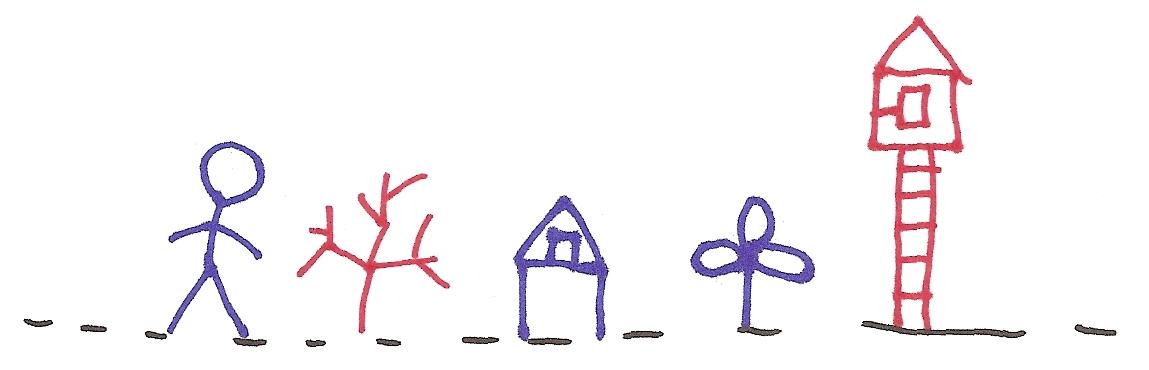}
\end{center}
\end{figure}
which is a sum of non-interacting red and blue components, then the player with the greater number of edges will win.
In the case of a tie, whoever goes second wins.  The players are simply seeing how long they can last before
running out of moves.

But what happens if red and blue edges are mixed together, like so?
\begin{figure}[H]
\begin{center}
\includegraphics[width=3.5in]
					{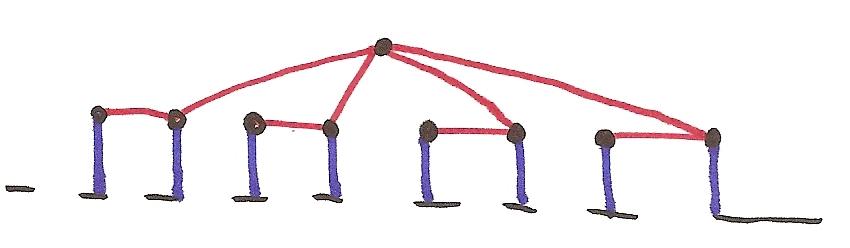}
\end{center}
\end{figure}
We claim that Blue can win in this position.  Once all the blue edges are gone,
the red edges must all vanish, because they are disconnected from the ground.  So if Red is able to move at any point,
there must be blue edges remaining in play, and Blue can move too.  Since no move by Red can eliminate blue edges, it follows that after any move by Red,
blue edges will remain and Blue cannot possibly lose.  This demonstrates that the simple rule of counting edges is no longer valid, since both 
players have eight edges but Blue has the advantage.

Let's consider a simpler position:
\begin{figure}[H]
\begin{center}
\includegraphics[width=0.5in]
					{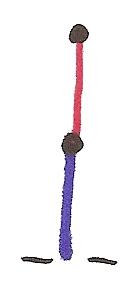}
\caption{How many moves is this worth?}
\label{one-half}
\end{center}
\end{figure}
Now Red and Blue each have 1 edge, but for similar reasons to the previous picture, Blue wins.  How much of an advantage
does Blue have?  Let's add one red edge, giving a 1-move advantage to Red:
\begin{figure}[H]
\begin{center}
\includegraphics[width=1in]
					{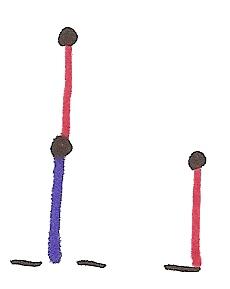}
\caption{Figure~\ref{one-half} plus a red edge.}
\label{one-half-minus-one}
\end{center}
\end{figure}
Now Red is guaranteed a win!  If he moves first, he can move to the following position:
\begin{figure}[H]
\begin{center}
\includegraphics[width=1in]
					{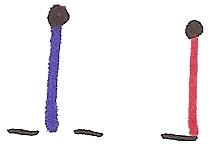}
\caption{A red edge and a blue edge.  This position is balanced, so whoever goes next loses.  This
is a good position to move to.}
\label{one-minus-one}
\end{center}
\end{figure}
which causes the next player (Blue) to lose, and if he moves second, he simply ignores the extra red edge on the left
and treats Figure~\ref{one-half-minus-one} as Figure~\ref{one-minus-one}.

So although Figure~\ref{one-half} is advantageous for Blue, the advantage is worth less than 1 move.  Perhaps Figure~\ref{one-half} is worth half a move for Blue?
We can check this by adding two copies of Figure~\ref{one-half} to a single red edge:
\begin{figure}[H]
\begin{center}
\includegraphics[width=1.3in]
					{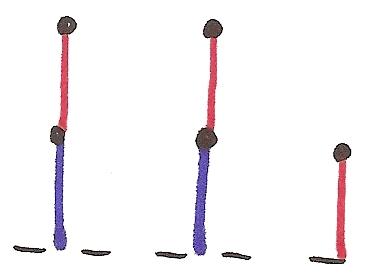}
\end{center}
\end{figure}
You can easily check that this position is now a balanced second-player win, just like Figure~\ref{one-minus-one}.
So two copies of Figure~\ref{one-half} are worth the same as one red edge, and Figure~\ref{one-half} is worth half a red edge.

In the same way, we can show for
\begin{figure}[H]
\begin{center}
\includegraphics[width=2in]
					{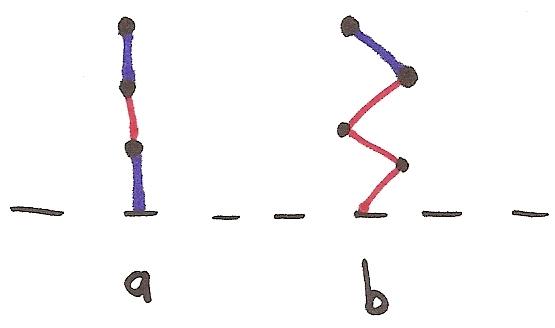}
\end{center}
\end{figure}
that (a) is worth $3/4$ of a move for Blue, and (b) is worth $2.5$ moves for Red, because the following two positions turn out to be balanced:
\begin{figure}[H]
\begin{center}
\includegraphics[width=3.5in]
					{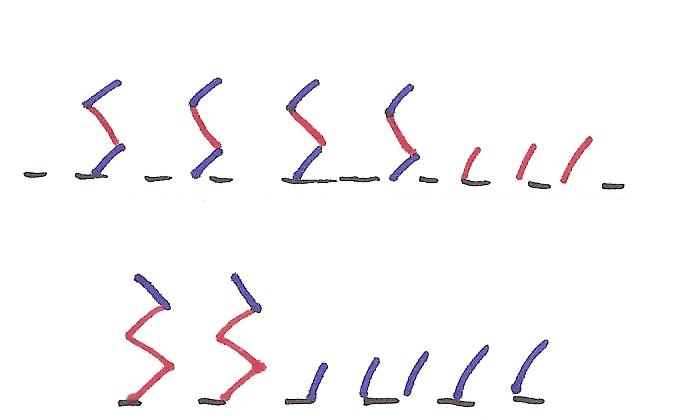}
\end{center}
\end{figure}
We can combine these values, to see that (a) and (b) together
are worth $2.5 - 3/4 = 7/4$ moves for Red.

The reader is probably wondering why any of these operations are legitimate.  Additive CGT shows that we can assign a rational number
to each Hackenbush position, measuring the advantage of that position to Blue.  The sign of the number
determines the outcome:
\begin{itemize}
\item If positive, then Blue will win no matter who goes first.
\item If negative, then Red will win no matter who goes first.
\item If zero, then whoever goes second will win.
\end{itemize}
And the number assigned to the sum of two positions
is the sum of the numbers assigned to each position.  With games other than Hackenbush, we can assign
values to positions, but the values will no longer be numbers.  Instead they will live in a partially ordered abelian
group called \textbf{Pg}.  The structure of \textbf{Pg} is somewhat complicated, and is one of the focuses
of CGT.

\chapter{Games}
\section{Nonstandard Definitions}
An obvious way to mathematically model a combinatorial game is as a set of positions
with relations to specify how each player can move.  This is not the conventional way of defining
games in combinatorial game theory, but we will use it at first because it is more intuitive in some ways:
\begin{definition}
A \emph{game graph} is a set $S$ of \emph{positions}, a designated \emph{starting position} $\start(S) \in S$,
and two relations $\stackrel{L}{\to}$ and $\stackrel{R}{\to}$
 on $S$.  For any $x \in S$, the $y \in S$
such that $x \stackrel{L}{\to} y$ are called the \emph{left options} of $x$,
and the $y \in S$ such that $x \stackrel{R}{\to} y$ are called the \emph{right options} of $x$.
\end{definition}
For typographical reasons that will become clear in the next section, the two players in additive CGT
are almost always named Left and Right.\footnote{As a rule, B\textbf{l}ue, B\textbf{l}ack, and Vertica\textbf{l} are \textbf{L}eft, while \textbf{R}ed, White,
and Ho\textbf{r}izontal are \textbf{R}ight.  This tells which player is which in our sample partizan games.}
The two relations $\stackrel{L}{\to}$ and $\stackrel{R}{\to}$ are interpreted as follows: $x \stackrel{L}{\to} y$ means that
Left can move from position $x$ to position $y$, and $x \stackrel{R}{\to} y$ means that Right can
move from position $x$ to position $y$.  So if the current position is $x$,
Left can move to any of the left options of $x$, and Right can move to any of the right options of $x$.
We use the shorthand $x \to y$ to denote $x \stackrel{L}{\to} y$ or $x \stackrel{R}{\to} y$.

The game starts out in the position $s_0$.  We intentionally do not specify who will move first.
There is no need for a game graph to specify which player wins at the game's end, because we are using the \emph{normal play} rule:
the first player unable to move loses.

But wait - why should the game ever come to an end?  We need to add an additional condition:
there should be no infinite sequences of play
\[ a_1 \stackrel{L}{\to} a_2 \stackrel{R}{\to} a_3 \stackrel{L}{\to} a_4 \stackrel{R}{\to} \cdots. \]
\begin{definition}
A game graph is \emph{well-founded} or \emph{loopfree} if there are no infinite sequences
of positions $a_1, a_2, \ldots$ such that
\[ a_1 \to a_2 \to a_3 \to \cdots.\]
We also say that the game graph satisfies the \emph{ending condition}.
\end{definition}
This property might seem like overkill: not only does it rule out
\[ a_1 \stackrel{L}{\to} a_2 \stackrel{R}{\to} a_3 \stackrel{L}{\to} a_4 \stackrel{R}{\to} \cdots\]
and
\[ a_1 \stackrel{R}{\to} a_2 \stackrel{L}{\to} a_3 \stackrel{R}{\to} a_4 \stackrel{L}{\to} \cdots\]
but also sequences of play in which the players aren't taking turns correctly, like
\[ a_1 \stackrel{R}{\to} a_2 \stackrel{R}{\to} a_3 \stackrel{R}{\to} a_4 \stackrel{L}{\to} a_5 \stackrel{R}{\to} a_6 \stackrel{L}{\to} \cdots.\]

The ending condition is actually necessary, however, when we play sums of games.
When games are played in parallel,
there is no guarantee that within each component the players will alternate.  If Left and Right
are playing a game $A + B$, Left might move repeatedly in $A$ while Right moved repeatedly in $B$.
Without the full ending condition, the sum of two well-founded games might not be well-founded.
If this is not convincing, the reader can take this claim on faith, and also verify that all of the games
described above are well-founded in this stronger sense.

The terminology ``loopfree'' refers to the fact that, when there are only finitely many positions,
being loopfree is the same as having no cycles $x_1 \to x_2 \to \cdots \to x_n \to x_1$,
because any infinite series would necessarily repeat itself.  In the infinite case,
the term loopfree might not be strictly accurate.

A key fact of well-foundedness, which will be fundamental in everything that follows,
is that it gives us an induction principle
\begin{theorem}
Let $S$ be the set of positions in a well-founded game graph, and let $P$ some subset of $S$.
Suppose that $P$ has the following property: if $x \in S$ and every left and right option
of $x$ is in $P$, then $x \in P$.  Then $P = S$.
\end{theorem}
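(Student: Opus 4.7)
The plan is to prove this by contradiction, directly invoking the ending condition of the game graph. Suppose $P \neq S$, so that $S \setminus P$ is nonempty, and pick some $x_1 \in S \setminus P$. Because $x_1 \notin P$, the hypothesis on $P$ (applied contrapositively) tells us that $x_1$ cannot have all of its left and right options in $P$; hence there exists some option $x_2$ of $x_1$ with $x_2 \notin P$. In particular $x_1 \to x_2$ in the sense of the game graph.

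The main step is then simply to iterate: having chosen $x_1 \to x_2 \to \cdots \to x_n$ with each $x_i \notin P$, the same contrapositive argument applied to $x_n$ yields an option $x_{n+1}$ of $x_n$ with $x_{n+1} \notin P$, so that $x_n \to x_{n+1}$. Recursively we build an infinite chain
\[ x_1 \to x_2 \to x_3 \to \cdots \]
in the game graph, which directly contradicts the definition of well-foundedness (the ending condition). Therefore the assumption $P \neq S$ is false, and $P = S$.

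The only subtlety worth flagging is the appeal to dependent choice to construct the sequence $(x_n)$ all at once; this is a standard and harmless use, and it is the step where the well-foundedness condition is truly used, since without it the iterated selection could loop back or terminate. I do not expect any real obstacle: the theorem is essentially the textbook equivalence between the well-foundedness of a relation and the validity of induction over it, specialized to the union relation $\to\, = \stackrel{L}{\to} \cup \stackrel{R}{\to}$, and everything needed has already been set up in the preceding definitions.
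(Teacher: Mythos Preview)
Your proof is correct and is essentially identical to the paper's: both argue by contradiction, set $P' = S \setminus P$, observe that every element of $P'$ has an option in $P'$, and iterate to build an infinite $\to$-chain contradicting well-foundedness. Your remark about dependent choice is a nice additional observation that the paper omits.
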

\begin{proof}
Let $P' = S \setminus P$.  Then by assumption, for every $x \in P'$, there is some
$y \in P'$ such that $x \to y$.  Suppose for the sake of contradiction that
$P'$ is nonempty.  Take $x_1 \in P'$, and find $x_2 \in P'$ such that $x_1 \to x_2.$
Then find $x_3 \in P'$ such that $x_2 \to x_3$.  Repeating this indefinitely
we get an infinite sequence
\[ x_1 \to x_2 \to \cdots\]
contradicting the assumption that our game graph is well-founded.
\end{proof}
To see the similarity with induction, suppose that the set of positions is
$\{1,2,\ldots,n\}$, and $x \to y$ iff $x > y$.  Then this is nothing
but strong induction.

As a first application of this result, we show that in a well-founded game graph,
somebody has a winning strategy.  More precisely, every position in a well-founded game graph can be put into
one of four \emph{outcome classes}:
\begin{itemize}
\item Positions that are wins\footnote{Under optimal play by both players, as usual} for Left, no matter which player moves next.
\item Positions that are wins for Right, no matter which player moves next.
\item Positions that are wins for whichever player moves next.
\item Positions that are wins for whichever player doesn't move next (the previous player).
\end{itemize}
These four possible outcomes are abbreviated as $L$, $R$, $1$ and $2$.
\begin{theorem}
Let $S$ be the set of positions of a well-founded game graph.  Then every position in $S$ falls into
one of the four outcome classes.
\end{theorem}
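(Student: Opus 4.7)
The plan is to use the induction principle proved immediately above. For each position $x$, I want to simultaneously determine two pieces of information: who wins starting from $x$ when Left moves next, and who wins starting from $x$ when Right moves next. Once both are pinned down for every position, the pair of answers falls into one of exactly four combinations, which match the four outcome classes $L$, $R$, $1$, $2$ in the obvious way (e.g.\ Left wins no matter who moves first corresponds to $L$, Left wins iff she does not move first corresponds to $2$, and so on).

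Let $P \subseteq S$ be the set of positions for which both ``winner when Left moves next'' and ``winner when Right moves next'' are well-defined under optimal play. I will verify the hypothesis of the induction theorem: if every left and right option of $x$ lies in $P$, then $x \in P$. The argument is by cases on whether the player to move has any options. When Left is to move from $x$: if $x$ has no left options, Left cannot move and so loses by the normal play convention, fixing the outcome. Otherwise, by hypothesis each left option $y$ already has a determined ``Right-to-move'' winner; Left can win if and only if at least one such $y$ has ``Right-to-move from $y$'' marked as a Left win, and she plays to that option. Either way the winner with Left to move from $x$ is determined. The Right-to-move case is symmetric. Hence $x \in P$, and the induction principle gives $P = S$.

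The final step is purely bookkeeping: encode the two Boolean outcomes $(\text{Left-to-move winner}, \text{Right-to-move winner})$ as the four labels. If both are Left, $x$ is in class $L$; if both are Right, class $R$; if the mover wins in both cases, class $1$; if the non-mover wins in both cases, class $2$. Since both coordinates are determined for every $x \in S$, every position lies in exactly one of the four classes.

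I do not expect a real obstacle here; the only mildly delicate point is remembering to bundle both ``Left-to-move'' and ``Right-to-move'' into the induction hypothesis at the same time. Trying to induct on just one of them would fail, because the recursion for Left-to-move at $x$ depends on the Right-to-move outcomes at the left options of $x$, and vice versa. Packaging both together into the single predicate ``$x \in P$'' makes the induction go through cleanly.
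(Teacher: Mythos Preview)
Your proposal is correct and takes essentially the same approach as the paper. The paper formalizes your ``winner is well-defined'' predicate by introducing four sets $L_1, R_1, L_2, R_2$ (wins for Left/Right when Left/Right moves first) and proving by the same induction that each position lies in exactly one of $L_1, R_2$ and exactly one of $R_1, L_2$; your packaging of both to-move cases into a single inductive predicate is exactly the point the paper makes, just stated slightly more informally.
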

\begin{proof}

Let $L_1$ be the set of positions that are wins for Left when she goes first, $R_1$ be the
set of positions that are wins for Right when he goes first, $L_2$ be the set of positions
that are wins for Left when she goes second, and $R_2$ be the set of positions that are wins for Right when
he goes second.  

The reader can easily verify that a position $x$ is in
\begin{itemize}
\item $L_1$ iff some left option is in $L_2$.
\item $R_1$ iff some right option is in $R_2$.
\item $L_2$ iff every right option is in $L_1$.
\item $R_2$ iff every left option is in $R_1$.
\end{itemize}
These rules are slightly subtle, since they implicitly contain the normal play convention, in the case where
$x$ has no options.

If Left goes first from a given position $x$, we want to show that either Left or Right has a winning strategy,
or in other words that $x \in L_1$ or $x \in R_2$.  Similarly, we want to show that every position is in either $R_1$ or $L_2$.
Let $P$ be the set of positions for which $x$ is in exactly one of $L_1$ and $R_2$ and in exactly one of $R_1$ and $L_2$.
By the induction principle, it suffices to show that when all options of $x$ are in $P$,
then $x$ is in $P$.  So suppose all options of $x$ are in $P$.  Then the following
are equivalent:
\begin{itemize}
\item $x \in L_1$
\item some option of $x$ is in $L_2$
\item some option of $x$ is not in $R_1$
\item not every option of $x$ is in $R_1$
\item $x$ is not in $R_2$.
\end{itemize}
Here the equivalence of the second and third line follows from the inductive hypothesis, and
the rest follows from the reader's exercise.  So $x$ is in exactly one of $L_1$ and $R_2$.
A similar argument shows that $x$ is in exactly one of $R_1$ and $L_2$.  So by induction
every position is in $P$.

So every position is in one of $L_1$ and $R_2$, and one of $R_1$ and $L_2$.  This yields four possibilities,
which are the four outcome classes:
\begin{itemize}
\item $L_1 \cap R_1 = 1$.
\item $L_1 \cap L_2 = L$.
\item $R_2 \cap R_1 = R$.
\item $R_2 \cap L_2 = 2$.
\end{itemize}
\end{proof}

\begin{definition}
The \emph{outcome} of a game is the outcome class (1, 2, L, or R) of its starting position.
\end{definition}

Now that we have a theoretical handle on perfect play, we turn towards sums of games.
\begin{definition}
If $S_1$ and $S_2$ are game graphs, we define the \emph{sum} $S_1 + S_2$ to be a game graph
with positions $S_1 \times S_2$ and starting position $\start(S_1 + S_2) = (\start(S_1),\start(S_2))$.
The new $\stackrel{L}{\to}$ relation is defined by
\[ (x,y) \stackrel{L}{\to} (x',y')\]
if $x = x'$ and $y \stackrel{L}{\to} y'$, or $x \stackrel{L}{\to} x'$ and $y = y'$.  The new $\stackrel{R}{\to}$ is defined similarly.
\end{definition}
This definition generalizes in an obvious way to sums of three or more games.  This operation
is essentially associative and commutative, and has as its identity the \emph{zero game}, in which there
is a single position from which neither player can move.

In all of our example games, the sum of two positions can easily be constructed.
In Nim, we simply place the two positions side by side.  In fact this is literally what we do
in each of the games in question.  In Clobber, one needs to make sure that the two positions
aren't touching, and in Konane, the two positions need to be kept a sufficient distance apart.
In Domineering, the focus is on the empty squares, so one needs to ``add'' the gaps together,
again making sure to keep them separated.  And as noted above, such composite sums occur
naturally in the course of each of these games.

\begin{figure}[htb]
\begin{center}
\includegraphics[width=4in]
					{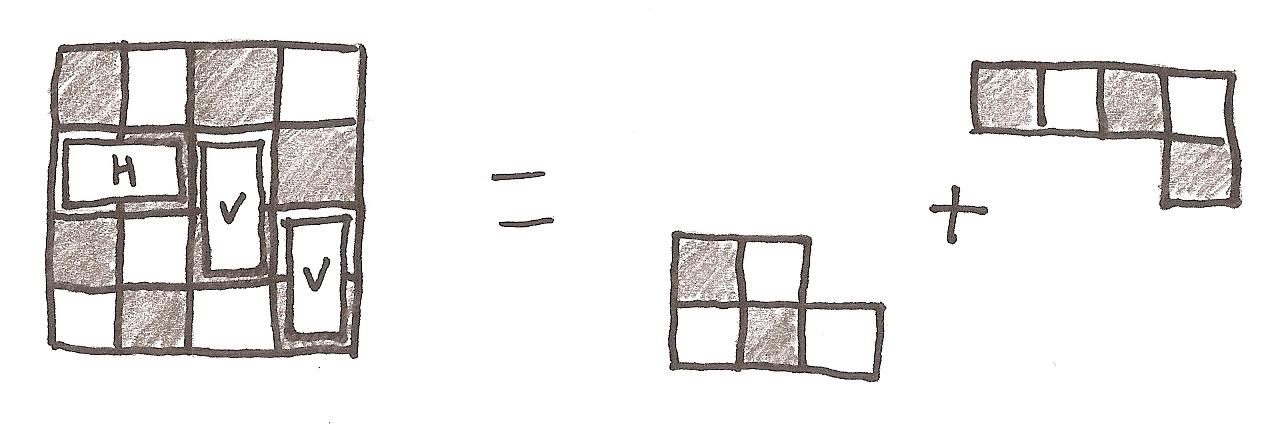}
\caption{The Domineering position on the left decomposes as a sum of the two positions on the right.}
\label{domineering-sum}
\end{center}
\end{figure}

Another major operation that can be performed on games: is \emph{negation}:
\begin{definition}
If $S$ is a game graph, the \emph{negation} $-S$ has the same set of positions,
and the same starting position, but $\stackrel{L}{\to}$ and $\stackrel{R}{\to}$ are interchanged.
\end{definition}
Living up to its name, this operation will turn out to actually produce additive inverses,
modulo Section~\ref{sec:equivalence}.  This operation is easily exhibited in our example games (see Figure~\ref{negation-guises} for examples):
\begin{itemize}
\item In Hackenbush, negation reverses the color of all the red and blue edges.
\item In Domineering, negation corresponds to reflecting the board over a 45 degree line.
\item In Clobber and Konane, it corresponds to changing the color of every piece.
\item Negation has no effect on Nim-positions.  This works because $\stackrel{L}{\to}$
and $\stackrel{R}{\to}$ are the same in any position of Nim.
\end{itemize}
So in general, negation interchanges the roles of the two players.

\begin{figure}[h]
\begin{center}
\includegraphics[width=4.5in]
					{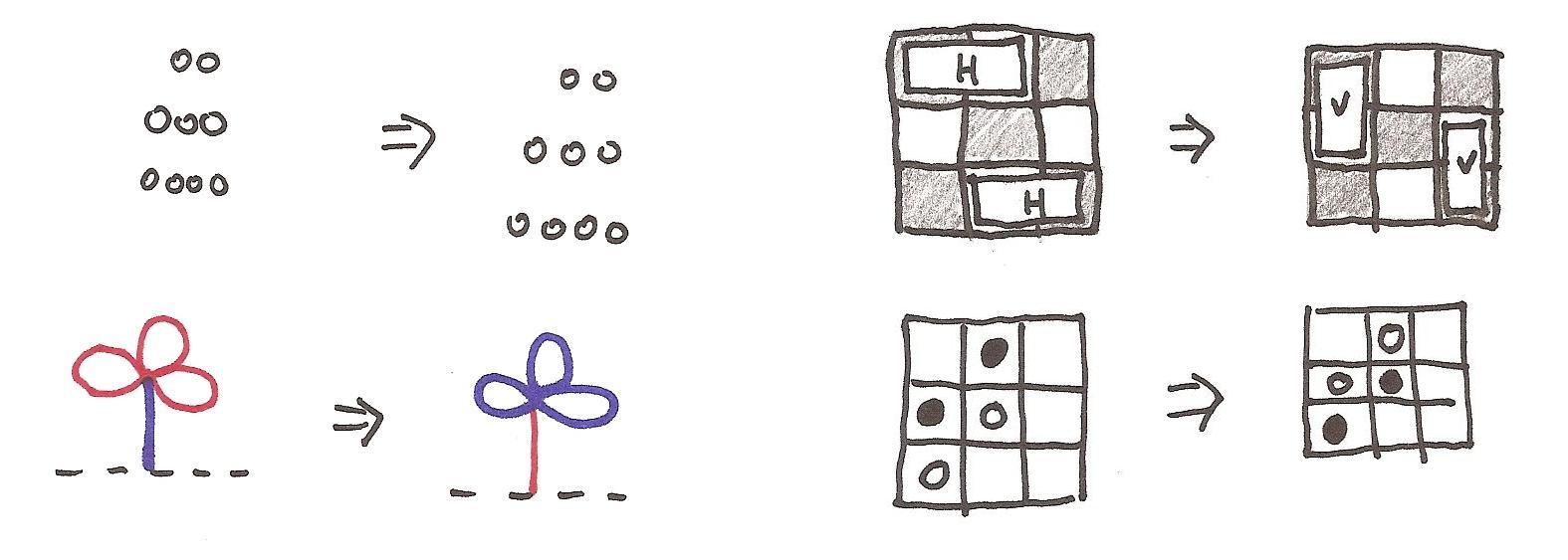}
\caption{Negation in its various guises.}
\label{negation-guises}
\end{center}
\end{figure}

We also define \emph{subtraction} of games by letting
\[ G - H = G + (-H).\]


\section{The conventional formalism}
While there are no glaring problems with ``game graphs,'' a different convention is used in literature.  We merely included it here
because it is slightly more intuitive than the actual definition we are going to give
later in this section.  And even this defintion will be lacking one last
clarification, namely Section~\ref{sec:equivalence}.

To motivate the standard formalism, we turn to an analogous situation
in set theory: well-ordered sets.

A well-ordered set is a set $S$ with a relation $>$ having the following properties:
\begin{itemize}
\item Irreflexitivity: $a > a$ is never true.
\item Transitivity: if $a > b$ and $b > c$ then $a > c$.
\item Totality: for every $a, b \in S$, either $a > b$, $a = b$, or $a < b$.
\item Well-orderedness: there are no infinite descending chains
\[ x_1 > x_2 > x_3 > \cdots \]
\end{itemize}
These structures are very rigid, and there is a certain
canonical list of well-ordered sets called the \emph{von Neumann ordinals}.  A von Neumann ordinal
is rather opaquely defined as a set $S$ with the property that $S$ and all its members are transitive.  Here
we say that a set is \emph{transitive} if it contains all members of its members.

Given a von Neumann ordinal $S$, we can define a well-ordering on $S$ be letting $x > y$ mean $y \in x$.
Moreover each well-ordered set is isomorphic to a unique von Neuman ordinal.
The von Neumann ordinals themselves are well-ordered by $\in$, and the first few are
\[ 0 = \{\} = \emptyset \]
\[ 1 = \{\{\}\} = \{0\} \]
\[ 2 = \{\{\},\{\{\}\}\} = \{0,1\}\]
\[ 3 = \{\{\},\{\{\}\},\{\{\},\{\{\}\}\}\} = \{0,1,2\}\]
\[ \vdots\]
In general, each von Neumann ordinal is the set of preceding ordinals - for instance,
the first infinite ordinal number is $\omega = \{0,1,2,\ldots\}$.

In some sense, the point of (von Neumann) ordinal numbers is to provide canonical instances
of each isomorphism class of well-ordered sets.  Well-ordered sets are rarely considered
in their own right, because the theory immediately reduces to the theory of
ordinal numbers.  Something similar will happen with games - each isomorphism class
of game graphs will be represented by a single \emph{game}.  This will be made
possible through the magic of well-foundedness.

Analogous to our operations on game graphs, there are certain ways one can combine well-ordered sets.
For instance, if $S$ and $T$ are well-ordered sets, then one can produce (two!) well-orderings
of the disjoint union $S \coprod T$, by putting all the elements of $S$ before (or after) $T$.
And similarly, we can give $S \times T$ a lexicographical ordering, letting $(s_1, t_1) < (s_2, t_2)$ if
$t_1 < t_2$ or ($s_1 < s_2$ and $t_1 = t_2$).  This also turns out to be a well-ordering.

These operations give rise to the following recursively-defined operations on ordinal numbers,
which don't appear entirely related:
\begin{itemize}
\item $\alpha + \beta$ is defined to be $\alpha$ if $\beta = 0$,
the successor of $\alpha + \beta'$ if $\beta$ is the successor of $\beta'$,
and the supremum of $\{\alpha + \beta'\,:\,\beta' < \beta\}$ if $\beta$ is a limit ordinal.
\item $\alpha \beta$ is defined to be $0$ if $\beta = 0$,
defined to be $\alpha \beta' + \alpha$ if $\beta$ is the successor of $\beta'$,
and defined to be the supremum of $\{\alpha \beta'\,:\, \beta' < \beta\}$ if $\beta$ is a limit ordinal.
\end{itemize}

In what follows, we will give recursive definitions of ``games,'' and also of their outcomes, sums, and negatives.
These definitions might seem strange, so we invite the reader to check
that they actually come out to the right things, and agree with the definitions given
in the last section.

The following definition is apparently due to John Conway:
\begin{definition}
A \emph{(partizan) game} is an ordered pair $(L,R)$ where $L$ and $R$ are sets of games.
If $L = \{A,B,\ldots\}$ and $R = \{C,D,\ldots\}$, then we write $(L,R)$ as
\[ \{A,B,\ldots|C,D,\ldots\}.\]
The elements of $L$ are called the
\emph{left options} of this game, and the elements of $R$ are called its
\emph{right options}.  The \emph{positions} of a game $G$ are $G$ and all the positions
of the options of $G$.

Following standard conventions in the literature, we will always denote
direct equality between partizan games with $\equiv$, and refer to this relation
as \emph{identity}.\footnote{The reason for this will be explained in Section~\ref{sec:equivalence}.}
\end{definition}
Not only does the definition of ``game'' appear unrelated to combinatorial games, it also seems to be missing
a recursive base case.

The trick is to begin with the empty set, which gives us the following game
\[ 0 \equiv (\emptyset,\emptyset) \equiv \{|\}.\]
Once we have one game, we can make three more:
\[ 1 \equiv \{0|\}\]
\[ -1 \equiv \{|0\}\]
\[ * \equiv \{0|0\}\]
The reason for the numerical names like $0$ and $1$ will become clear later.

In order to avoid a proliferation of brackets, we use $||$ to indicate a higher level of nesting:
\[ \{w,x||y|z\} \equiv \{w,x|\{y|z\}\}\]
\[ \{a|b||c|d\} \equiv \{\{a|b\}|\{c|d\}\}\]

The interpretation of $(L,R)$ is a position whose left options are the elements of $L$
and right options are the elements of $R$.  In particular, this shows us how to associate game graphs with games:
\begin{theorem}
Let $S$ be a well-founded game graph.  Then there is a unique function $f$ assigning a game to
each position of $S$ such that for every $x \in S$, $f(x) \equiv (L,R)$, where
\[ L = \{f(y):x \stackrel{L}{\to} y\}\]
\[R = \{f(y):x \stackrel{R}{\to} y\}\]
In other words, for every position $x$, the left and right options of $f(x)$ are the images
of the left and right options of $x$ under $f$.

Moreover, if we take a partizan game $G$, we can make a game graph $S$ by letting $S$ be the set
of all positions of $G$, $\start(S) = G$, and letting
\[ (L,R) \stackrel{L}{\to} x \iff x \in L\]
\[ (L,R) \stackrel{R}{\to} x \iff x \in R\]
Then the map $f$ sends each element of $S$ to itself.
\end{theorem}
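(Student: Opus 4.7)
The plan is to split the theorem into three parts: uniqueness of $f$, existence of $f$ (the main technical obstacle), and the moreover clause, which will fall out as an immediate consequence of uniqueness applied to the identity map.

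For uniqueness, I would suppose $f_1$ and $f_2$ both satisfy the stated recursion, and let $P = \{x \in S : f_1(x) \equiv f_2(x)\}$. If every option $y$ of $x$ lies in $P$, then $f_1(y) \equiv f_2(y)$ for all such $y$, so $f_1(x)$ and $f_2(x)$ have literally identical left option sets and right option sets; since a partizan game is just an ordered pair of such sets, this forces $f_1(x) \equiv f_2(x)$, i.e.\ $x \in P$. The well-founded induction theorem proved above then gives $P = S$.

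Existence is the technical crux, and I would handle it by well-founded recursion. Consider the collection $\mathcal F$ of pairs $(T, h)$ where $T \subseteq S$ is option-closed (contains every option $y$ of each of its members) and $h \colon T \to \mathrm{Games}$ satisfies the prescribed recursion on $T$. The uniqueness argument applies verbatim inside $T_1 \cap T_2$ to show that any two such $h$'s agree on the intersection of their domains, so the union $(T^*, f)$ of all pairs in $\mathcal F$ is itself a member of $\mathcal F$. To conclude $T^* = S$, assume otherwise; then by well-foundedness (if $S \setminus T^*$ were nonempty, one could iteratively pick an element of $S \setminus T^*$, an option of it in $S \setminus T^*$, and so on, contradicting the ending condition) there must exist some $x^* \in S \setminus T^*$ all of whose options lie in $T^*$. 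Defining $f(x^*) \equiv \bigl(\{f(y) : x^* \stackrel{L}{\to} y\},\, \{f(y) : x^* \stackrel{R}{\to} y\}\bigr)$ extends $f$ to $T^* \cup \{x^*\}$, contradicting maximality of $T^*$.

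For the moreover clause, given a partizan game $G$, I would form the game graph on its set of positions as stated. First I need to check this graph is well-founded: since partizan games are built up inductively starting from the empty game $\{\,|\,\}$, there is a well-defined notion of rank on games, and an option chain $G_1 \to G_2 \to \cdots$ would produce an infinite descent in rank, which is impossible. Now the identity map $\mathrm{id}$ on $S$ satisfies the recursive equation of the first half: if $x \equiv (L, R)$ is a position, the left options of $\mathrm{id}(x) \equiv x$ are by definition the elements of $L$, which agree with $\{\mathrm{id}(y) : x \stackrel{L}{\to} y\}$, and similarly on the right. By the uniqueness part of the theorem, $f \equiv \mathrm{id}$.
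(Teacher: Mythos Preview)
Your proof is correct, and in fact the paper omits the proof entirely: it says only that the theorem ``is a bit like the Mostowski collapse lemma of set theory, and the proof is similar,'' and that it ``mainly consists of set theoretic technicalities.'' What you have written is exactly the standard well-founded recursion argument underlying Mostowski collapse, so you have supplied precisely the omitted details. Your organization (uniqueness via the induction principle already proved, existence by taking the union of all partial solutions and extending at a minimal missing point, then identifying $f$ with the identity on the canonical graph of a game) is the expected route; there is nothing to compare against since the paper gives no alternative.

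One small remark: in your existence step, the assertion that the family $\mathcal{F}$ has a well-defined union relies on $\mathcal{F}$ being a set, which in turn needs Replacement (for each option-closed $T \subseteq S$ there is at most one $h$, and $T$ ranges over the set $\mathcal{P}(S)$). This is exactly the sort of ``set theoretic technicality'' the paper waves away, so it is fine to leave implicit, but it is worth being aware that this is where the foundations are doing work.
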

This theorem is a bit like the Mostowski collapse lemma of set theory, and the proof is similar.  Since we will
make no formal use of game graphs, we omit the proof, which mainly consists of set theoretic technicalities.

As an example, for Hackenbush positions we have
\begin{figure}[H]
\begin{center}
\includegraphics[width=5in]
					{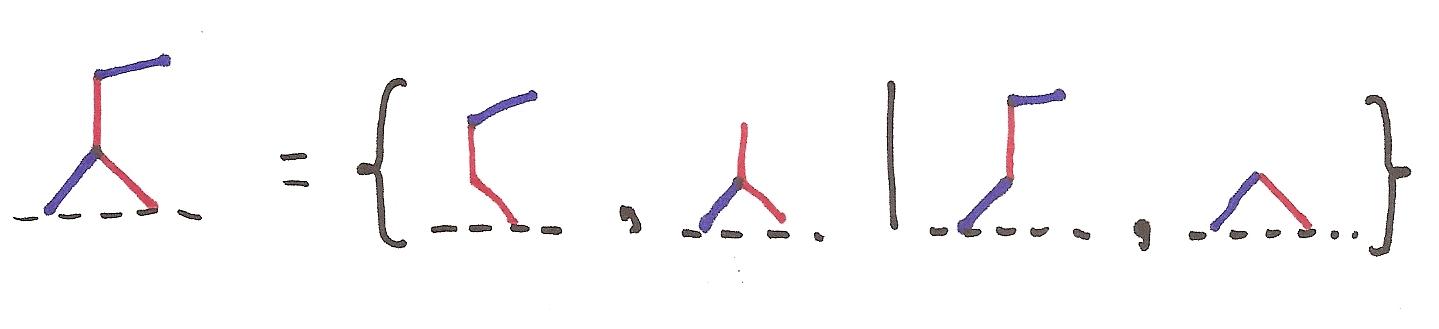}
\end{center}
\end{figure}
where
\begin{figure}[H]
\begin{center}
\includegraphics[width=4in]
					{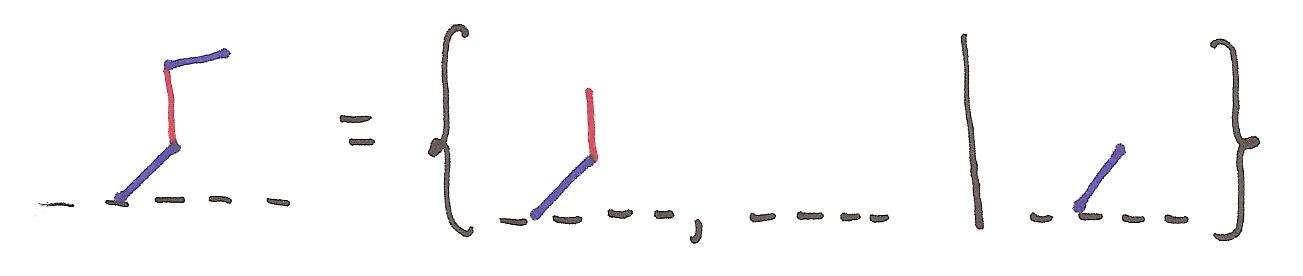}
\end{center}
\end{figure}
where
\begin{figure}[H]
\begin{center}
\includegraphics[width=2in]
					{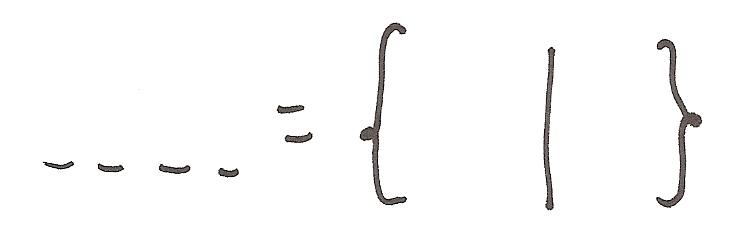}
\end{center}
\end{figure}
and so on.  Also see Figure~\ref{day-1-examples} for examples of $0$, $1$, $-1$, and $*$
in their various guises in our sample games.

The terms ``game'' and ``position'' are used interchangeably\footnote{Except for the technical sense in which one
game can be a ``position'' of another game.} in the literature, identifying a game with its starting position.
This plays into the philosophy
of evaluating every position and assuming the players are smart enough to look ahead one move.
Then we can focus on outcomes rather than strategies.

\begin{figure}[htb]
\begin{center}
\includegraphics[width=4.5in]
					{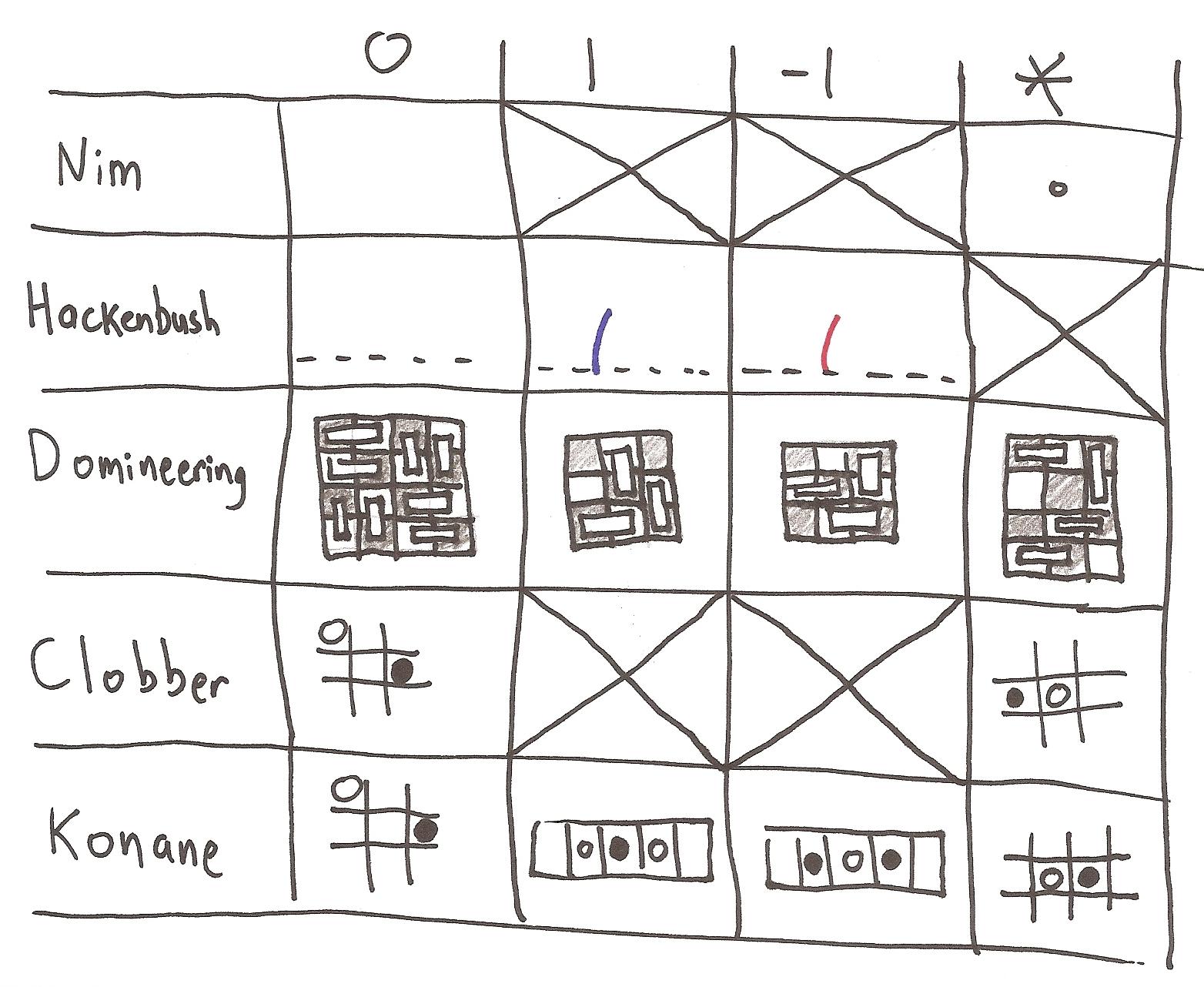}
\caption{The games $0$, $1$, $-1$, and $*$ in their various guises in our sample games.
Some cases do not occur - for instance $*$ cannot occur in Hackenbush, and $1$ cannot occur in Clobber.
We will see why in Sections~\ref{sec:hack-num} and \ref{sec:small-games}.}
\label{day-1-examples}
\end{center}
\end{figure}

Another way to view what's going on is to consider $\{\cdot|\cdot\}$ as an extra operator
for combining games, one that construct a new game with specified left options
and specified right options.

We next define the ``outcome'' of a game, but change notation, to match the standard conventions in the field:
\begin{itemize}
\item $G \ge 0$ means that Left wins when Right goes first.
\item $G \lhd 0$ means that Right wins when Right goes first.
\item $G \le 0$ means that Right wins when Left goes first.
\item $G \rhd 0$ means that Left wins when Right goes first.
\end{itemize}
The $\rhd$ and $\lhd$ are read as ``greater than or fuzzy with'' and ``less than or fuzzy with.''

These are defined recursively and opaquely as:
\begin{definition}
If $G$ is a game, then
\begin{itemize}
\item $G \ge 0$ iff every right option $G^R$ satisfies
$G^L \rhd 0$.
\item $G \le 0$ iff every left option $G^L$ satisfies
$G^R \lhd 0$.
\item $G \rhd 0$ iff some left option $G^L$ satisfies $G^L \ge 0$.
\item $G \lhd 0$ iff some right option $G^R$ satisfies $G^R \le 0$.
\end{itemize}
\end{definition}
One can easily check that exactly one of $G \ge 0$ and $G \lhd 0$ is true,
and exactly one of $G \le 0$ and $G \rhd 0$ is true.

We then define the four outcome classes as follows:
\begin{itemize}
\item $G > 0$ iff Left wins no matter who goes first, i.e., $G \ge 0$ and $G \rhd 0$.
\item $G < 0$ iff Right wins no matter who goes first, i.e., $G \le 0$ and $G \lhd 0$.
\item $G = 0$ iff the second player wins, i.e., $G \le 0$ and $G \ge 0$.
\item $G || 0$ (read $G$ is incomparable or fuzzy with zero) iff the first player wins, i.e., $G \rhd 0$ and $G \lhd 0$.
\end{itemize}
Here is a diagram summarizing the four cases:
\begin{figure}[H]
\begin{center}
\includegraphics[width=3in]
					{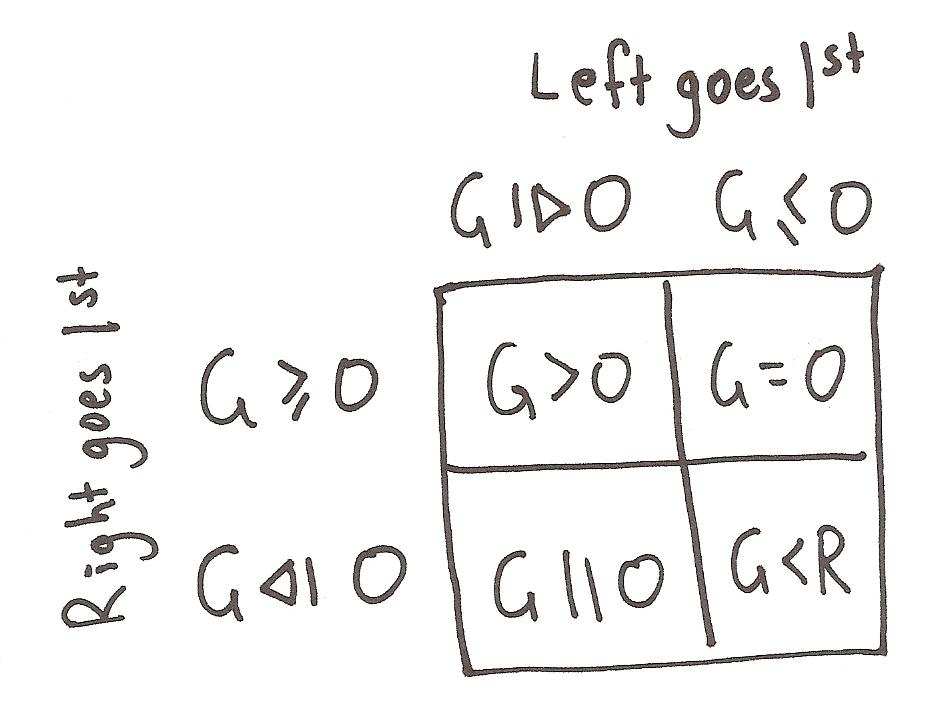}
\end{center}
\end{figure}
The use of relational symbols like $>$ and $<$ will be justified in the next section.

If you're motivated, you can check that these definitions agree with our definitions
for well-founded game graphs.

As an example, the four games we have defined so far fall into the four classes:
\[ 0 = 0\]
\[ 1 > 0\]
\[ -1 < 0\]
\[ * || 0\]

Next, we define negation:
\begin{definition}
If $G \equiv \{A,B,\ldots|C,D,\ldots\}$ is a game, then its negation $-G$ is recursively
defined as
\[ -G \equiv \{-C,-D, \ldots | -A, -B,\ldots\}\]
\end{definition}
Again, this agrees with the definition for game graphs.  As an example,
we note the negations of the games defined so far
\[ -0 \equiv 0\]
\[ -1 \equiv -1\]
\[ -(-1) \equiv 1\]
\[ -* \equiv *\]
In particular, the notation $-1$ remains legitimate.

Next, we define addition:
\begin{definition}
If $G \equiv \{A,B, \ldots |C,D,\ldots\}$ and
$H \equiv \{E,F,\ldots |X,Y,\ldots\}$ are games, then the \emph{sum} $G + H$ is recursively
defined as
\[ G + H = \{G + E, G + F, \ldots, A + H, B + H, \ldots |\]\[ G + X, G + Y, \ldots, H + C, H + D, \ldots\}\]
\end{definition}
This definition agrees with our definition for game graphs, though it may not be very obvious.  As before, we define subtraction
by
\[ G - H = G + (-H).\]

The usual shorthand for these definitions is
\[ -G \equiv \{-G^R|-G^L\}\]
\[ G + H \equiv \{G^L + H, G + H^L|G^R + H, G + H^R\}\]
\[ G - H \equiv \{G^L - H, G - H^R| G^R - H, G - H^L\}\]
Here $G^L$ and $G^R$ stand for ``generic'' left and right options of $G$, and represent variables ranging over
all left and right options of $G$.  We will make use of this compact and useful notation, which seems to be due to Conway, Guy, and Berlekamp.

We close this section with a list of basic identities satisfied
by the operations defined so far:
\begin{lemma}\label{basic}
If $G$, $H$, and $K$ are games, then
\[ G + H \equiv H + G \]
\[ (G + H) + K \equiv G + (H + K)\]
\[ -(-G) \equiv G \]
\[ G + 0 \equiv G\]
\[ -(G + H) \equiv (-G) + (-H)\]
\[ -0 \equiv 0\]
\end{lemma}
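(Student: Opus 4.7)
The plan is to prove all six identities by well-founded structural induction on games, invoking the induction principle established for well-founded game graphs (which applies because every game corresponds to a well-founded game graph). The crucial point to exploit is that $\equiv$ denotes strict equality of ordered pairs of sets, so to prove $X \equiv Y$ it suffices to show that $X$ and $Y$ have the same set of left options and the same set of right options. Each identity will follow by unfolding the recursive definitions of $+$ and $-$ on both sides, reading off the option sets, and then applying the inductive hypothesis to replace options on one side with the corresponding options on the other.

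First I would dispatch the easiest identities. The identity $-0 \equiv 0$ is immediate: by definition $-0 \equiv \{-0^R \mid -0^L\}$, and since $0 \equiv \{\mid\}$ has no options of either kind, both option sets are empty, giving $-0 \equiv \{\mid\} \equiv 0$. The identity $G + 0 \equiv G$ I would prove by induction on $G$ alone: unfolding,
\[ G + 0 \equiv \{G^L + 0, \, G + 0^L \mid G^R + 0, \, G + 0^R\}, \]
but $0$ has no options, so this collapses to $\{G^L + 0 \mid G^R + 0\}$, and the inductive hypothesis applied to each option of $G$ gives $G^L + 0 \equiv G^L$ and $G^R + 0 \equiv G^R$, so the whole thing equals $\{G^L \mid G^R\} \equiv G$. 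The identity $-(-G) \equiv G$ is analogous: by definition $-G \equiv \{-G^R \mid -G^L\}$, so $-(-G) \equiv \{-(-G^L) \mid -(-G^R)\}$, and the inductive hypothesis on the options of $G$ finishes the job.

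For commutativity $G + H \equiv H + G$ and the negation-of-sum identity $-(G+H) \equiv (-G) + (-H)$, I would induct on pairs $(G,H)$, using the product well-foundedness: the relation $(G,H) \succ (G',H')$ whenever $G'$ is an option of $G$ with $H'=H$ or vice versa is well-founded because each component game graph is. For commutativity, the left options of $G+H$ are $\{G^L + H\} \cup \{G + H^L\}$; by the inductive hypothesis these equal $\{H + G^L\} \cup \{H^L + G\}$, which is exactly the set of left options of $H + G$. The right options match by the same argument, so $G+H \equiv H+G$. For $-(G+H)$, unfold $-$ applied to the above and apply the inductive hypothesis together with the already-proven commutativity if needed. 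Associativity is the same idea but with induction on triples $(G,H,K)$, unfolding both $(G+H)+K$ and $G+(H+K)$ and matching option sets term by term.

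The main obstacle is really just bookkeeping: one has to be careful that when the inductive hypothesis is invoked, it is applied to a strictly ``smaller'' pair or triple in the well-founded order, and that the set-theoretic union of options on both sides matches exactly (not merely up to some weaker equivalence). Since each identity is a literal equality of games, once the option sets on each side are shown to be identical as sets via the inductive hypothesis, there is nothing more to do. No single identity hides a conceptual surprise; the only minor subtlety is confirming that the product of well-founded game graphs is again well-founded, which is immediate from the non-existence of infinite descending sequences in either coordinate.
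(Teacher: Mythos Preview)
Your proposal is correct and takes essentially the same approach as the paper: structural (joint) induction, unfolding the recursive definitions of $+$ and $-$ and matching option sets via the inductive hypothesis. The paper only spells out the commutativity case explicitly and leaves the rest as routine, noting that no base case is needed since the recursive definition of ``game'' has none; your more detailed treatment of each identity is entirely in line with this.
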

\begin{proof}
All of these are intuitively obvious if you interpret them within the context of Hackenbush, Domineering, or more
abstractly game graphs.  But the rigorous proofs work by induction.  For instance,
to prove $G + H \equiv H + G$, we proceed by joint induction on $G$ and $H$.  Then we have
\[ G + H \equiv \{G^L + H, G + H^L | G^R + H, G + H^R\} \]\[
\equiv \{H + G^L, H^L + G | H + G^R, H^R + G\} \equiv H + G,\]
where the outer identities follow by definition, and the inner one follows by the inductive hypothesis.
These inductive proofs need no base case, because the recursive definition of ``game'' had no base case.
\end{proof}

On the other hand, $G - G \not \equiv 0$ for almost all games $G$.  For instance, we have
\[ 1 - 1 \equiv 1 + (-1) \equiv \{1^L + (-1), 1 + (-1)^L|1^R + (-1), 1 + (-1)^R\} \]\[
\equiv \{0 + (-1) | 1 + 0\} \equiv \{-1|1\}\]
Here there are no $1^R$ or $(-1)^L$, since $1$ has no right options and $-1$ has no left options.

\begin{definition}
A \emph{short game} is a partizan game with finitely many positions.
\end{definition}
\emph{We will assume henceforth that all our games are short.}  Many of the results hold for
general partizan games, but a handful do not, and we have no interest in infinite games.

\section{Relations on Games}\label{sec:equivalence}
So far, we have done nothing but give complicated definitions of simple concepts.
In this section, we begin to look at how our operations for combining
games interact with their outcomes.

Above, we defined $G \ge 0$ to mean that $G$ is a win for Left, when Right moves first.
Similarly, $G \rhd 0$ means that $G$ is a win for Left when \emph{Left} moves first.
From Left's point of view, the positions $\ge 0$ are the good positions to 
move to, and the positions $\rhd 0$ are the ones that Left would like to receive from
her opponent.  In terms of options,
\begin{itemize}
\item $G$ is $ \ge 0$ iff every one of its right option $G^R$ is $\rhd 0$
\item $G$ is $ \rhd 0$ iff at least one of its left option $G^L$ is $\ge 0$.
\end{itemize}

One basic fact about outcomes of sums is that if $G \ge 0$ and
$H \ge 0$, then $G + H \ge 0$.  That is, if Left can win both $G$
and $H$ as the second player, then she can also win $G + H$ as the second player.
She proceeds by combining her strategy in each summand.  Whenever
Right moves in $G$ she replies in $G$, and whenever Right moves in $H$
she replies in $H$.  Such responses are always possible because
of the assumption that $G \ge 0$ and $H \ge 0$.

Similarly, if $G \rhd 0$ and $H \ge 0$, then $G + H \rhd 0$.  Here
left plays first in $G$, moving to a position $G^L \ge 0$,
and then notes that $G^L + H \ge 0$.

Properly speaking, we prove both statements together by induction:
\begin{itemize}
\item If $G, H \ge 0$, then every right option of $G + H$ is of the form
$G^R + H$ or $G + H^R$ by definition.  Since $G$ and $H$ are $\ge 0$,
$G^R$ or $H^R$ will be $\rhd 0$, and so every right option of $G + H$
is the sum of a game $\ge 0$ and a game $\rhd 0$.  By induction,
such a sum will be $\rhd 0$.  So every right option of $G + H$ is $\rhd 0$,
and therefore $G + H \ge 0$.
\item If $G \rhd 0$ and $H \ge 0$, then $G$ has a left option
$G^L \ge 0$.  Then $G^L + H$ is the sum of two games $\ge 0$.  So by induction
$G^L + H \ge 0$.  But it is a left option of $G + H$, so $G + H \rhd 0$.
\end{itemize}

Now one can easily see that
\begin{equation} G \le 0 \iff -G \ge 0 \label{gle}\end{equation}
and
\begin{equation} G \lhd 0 \iff G \rhd 0.\label{elg}\end{equation}
Using these, it similarly follows that if $G \le 0$ and $H \le 0$, then $G + H \le 0$, among other things.

Another result about outcomes is that $G + (-G) \ge 0$.
This is shown using what \emph{Winning Ways }
calls the Tweedledum and Tweedledee Strategy\footnote{The diagram in \emph{Winning Ways} actually looks
like Tweedledum and Tweedledee.}. 
\begin{figure}[H]
\begin{center}
\includegraphics[width=4in]
					{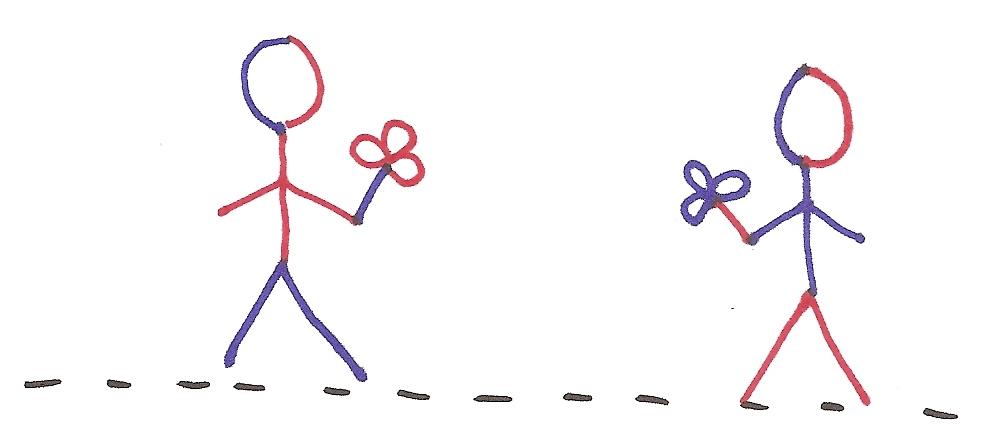}
\end{center}
\end{figure}
Here we see the sum of a Hackenbush position and its negative.
If Right moves first, then Left can win as follows: whenever Right
moves in the first summand, Left makes the corresponding move in the second
summand, and vice versa.  So if Right initially moves
to $G^R + (-G)$, then Left moves to $G^R + (-(G^R))$, which is possible
because $-(G^R)$ is a left option of $-G$.  On the other hand, if
Right initially moves to $G + (-(G^L))$, then Left responds by moving
to $G^L + (-(G^L))$.  Either way, Left can always move to 
a position of the form $H + (-H)$, for some position $H$ of $G$.

More precisely, we prove the following facts 
\begin{itemize}
\item $G + (-G) \ge 0$
\item $G^R + (-G) \rhd 0$ if $G^R$ is a right option of $G$.
\item $G + (-(G^L)) \rhd 0$ if $G^L$ is a left option of $G$.
\end{itemize}
together jointly by induction:
\begin{itemize}
\item For any game $G$, every Right option of $G + (-G)$ is of
the form $G^R + (-G)$ or $G + (-(G^L))$, where $G^R$ ranges
over right options of $G$ and $G^L$ ranges over left options of $G$.
This follows from the definitions of addition and negation.
By induction all of these options are $\rhd 0$, and so
$G + (-G) \ge 0$.
\item If $G^R$ is a right option of $G$, then $-(G^R)$ is a left option
of $-G$, so $G^R + (-(G^R))$ is a left option
of $G^R + (-G)$.  By induction $G^R + (-(G^R)) \ge 0$,
so $G^R + (-G) \rhd 0$.
\item If $G^L$ is a left option of $G$, then
$G^L + (-(G^L))$ is a left option of $G + (-(G^L))$, and by induction
$G^L + (-(G^L)) \ge 0$.  So $G + (-(G^L)) \rhd 0$.
\end{itemize}

We summarize our results
in the following lemma.
\begin{lemma}\label{outcomesums}
Let $G$ and $H$ be games.
\begin{description}
\item[(a)] If $G \ge 0$ and $H \ge 0$, then $G + H \ge 0$.
\item[(b)] If $G \rhd 0$ and $H \ge 0$, then $G + H \rhd 0$.
\item[(c)] If $G \le 0$ and $H \le 0$, then $G + H \le 0$.
\item[(d)] If $G \lhd 0$ and $H \le 0$, then $G + H \lhd 0$.
\item[(e)] $G + (-G) \le 0$ and $G + (-G) \ge 0$, i.e., $G + (-G) = 0$.
\item[(f)] If $G^L$ is a left option of $G$, then $G^L + (-G) \lhd 0$.
\item[(g)] If $G^R$ is a right option of $G$, then $G^R + (-G) \rhd 0$.
\end{description}
\end{lemma}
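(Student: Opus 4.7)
The plan is to prove the seven parts in two stages, each by a joint well-founded induction on the game trees, leveraging the symmetries $G \le 0 \iff -G \ge 0$ and $G \lhd 0 \iff -G \rhd 0$ together with $-(G+H) \equiv (-G)+(-H)$ from Lemma \ref{basic}.

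First I would establish parts (a) and (b) jointly by induction on the pair $(G,H)$. For (a), every right option of $G+H$ is of the form $G^R + H$ or $G + H^R$; the hypothesis $G \ge 0$ forces each $G^R \rhd 0$, so the inductive instance of (b) applied to $(G^R, H)$ yields $G^R + H \rhd 0$, and symmetrically for $G + H^R$ (invoking commutativity of addition from Lemma \ref{basic}). Hence every right option of $G+H$ is $\rhd 0$, so $G+H \ge 0$. For (b), the hypothesis $G \rhd 0$ gives some $G^L \ge 0$; the inductive instance of (a) applied to $(G^L, H)$ yields $G^L + H \ge 0$, and since this is a left option of $G + H$, we conclude $G+H \rhd 0$. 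Parts (c) and (d) then follow immediately by negating both arguments: $G, H \le 0$ is equivalent to $-G, -H \ge 0$, so by (a), $(-G)+(-H) \equiv -(G+H) \ge 0$, whence $G+H \le 0$; similarly for (d) using (b).

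Next I would prove (e), (f), (g) jointly, by induction on $G$. For (g), observe that $-(G^R)$ is a left option of $-G$, so $G^R + (-(G^R))$ is a left option of $G^R + (-G)$; by the inductive instance of (e) applied to the strictly simpler game $G^R$, this left option is $\ge 0$, so $G^R + (-G) \rhd 0$. Part (f) is proved symmetrically (or by negating the statement of (g) using the identities already proved). For (e), every right option of $G + (-G)$ has the form $G^R + (-G)$ or $G + (-(G^L))$; the first is $\rhd 0$ by (g), and for the second the left option $G^L + (-(G^L))$ of $G + (-(G^L))$ is $\ge 0$ by the inductive instance of (e) applied to $G^L$, so $G + (-(G^L)) \rhd 0$. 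Therefore every right option of $G+(-G)$ is $\rhd 0$, giving $G + (-G) \ge 0$; the symmetric inequality $G + (-G) \le 0$ follows either by a mirror-image argument or by replacing $G$ with $-G$ and using $-(-G) \equiv G$.

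The main obstacle is the bookkeeping of the joint induction: the clauses within each stage are mutually recursive, so one must package them into a single combined predicate whose every clause's inductive step invokes a genuinely simpler subposition. Well-foundedness of the game tree, which comes built into the recursive definition of games themselves, is exactly what validates this induction. Once the scheme is framed correctly, each individual verification is a one-line unpacking of the definitions of $\ge$, $\rhd$, $+$, and $-$, aided in a few places by the elementary identities of Lemma \ref{basic}.
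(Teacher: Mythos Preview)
Your proposal is correct and follows essentially the same approach as the paper: a joint induction for (a) and (b), the symmetry via negation for (c) and (d), and a joint induction on $G$ for (e), (f), (g) using the Tweedledum--Tweedledee mirroring idea. The only cosmetic difference is that the paper phrases the companion of (g) as $G + (-(G^L)) \rhd 0$ rather than $G^L + (-G) \lhd 0$, but as you note these are equivalent by negation.
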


These results allow us to say something about \emph{zero games} (not to be confused with the zero game $0$).
\begin{definition}
A game $G$ is a \emph{zero game} if $G = 0$.
\end{definition}
Namely, zero games have no effect on outcomes:
\begin{corollary}
If $H = 0$, then $G + H$ has the same outcome as $G$
for every game $H$.
\end{corollary}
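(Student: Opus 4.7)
The plan is to show that, when $H = 0$, each of the four basic inequalities $G \geq 0$, $G \leq 0$, $G \rhd 0$, $G \lhd 0$ holds if and only if the corresponding inequality holds for $G + H$. Since the outcome class of any game is determined by which of these inequalities hold, this immediately yields that $G$ and $G + H$ lie in the same outcome class. I work directly from the hypothesis $H = 0$, which by definition means $H \geq 0$ and $H \leq 0$.

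First I would establish the ``forward'' implications $G \geq 0 \Rightarrow G + H \geq 0$ and $G \leq 0 \Rightarrow G + H \leq 0$. Both of these are immediate from parts (a) and (c) of Lemma~\ref{outcomesums}, using $H \geq 0$ and $H \leq 0$ respectively.

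Next I would establish the implications $G \lhd 0 \Rightarrow G + H \lhd 0$ and $G \rhd 0 \Rightarrow G + H \rhd 0$. If $G \lhd 0$, then by definition $G$ has some right option $G^R \leq 0$; since $H \leq 0$ as well, Lemma~\ref{outcomesums}(c) gives $G^R + H \leq 0$. But $G^R + H$ is a right option of $G + H$, so $G + H \lhd 0$. The symmetric argument with a left option $G^L \geq 0$ and Lemma~\ref{outcomesums}(a) handles the $\rhd$ case.

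Finally I would combine these four implications with the remark (made just after the recursive definition of the outcome relations) that exactly one of $G \geq 0$ and $G \lhd 0$ holds, and exactly one of $G \leq 0$ and $G \rhd 0$ holds. This automatically upgrades the four one-way implications into the four biconditionals needed, which pin down the outcome class of $G + H$ to be that of $G$. There is no real obstacle here: the whole argument is essentially just bookkeeping with the one-move trade-off lemma \ref{outcomesums}, and no induction is required beyond what is already packaged into that lemma.
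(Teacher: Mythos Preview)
Your proof is correct and essentially the same as the paper's. The only cosmetic difference is that the paper invokes parts (b) and (d) of Lemma~\ref{outcomesums} directly for the $\rhd$ and $\lhd$ implications, whereas you re-derive those two facts in one line each from parts (a) and (c); and the paper simply observes that the four one-way implications already force $G+H$ into the same outcome class as $G$, rather than explicitly passing through biconditionals as you do.
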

\begin{proof}
Since $H \ge 0$ and $H \le 0$, we have by
part (a) of Lemma~\ref{outcomesums}
\[ G \ge 0 \Rightarrow G + H \ge 0.\]
By part (b)
\[ G \rhd 0 \Rightarrow G + H \rhd 0.\]
By part (c)
\[ G \le 0 \Rightarrow G + H \le 0.\]
By part (d)
\[ G \lhd 0 \Rightarrow G + H \lhd 0.\]
So in every case, $G + H$ has whatever outcome
$G$ has.
\end{proof}

This in some sense justifies the use of the terminology
$H = 0$, since this implies that $G + H$
and $G + 0 \equiv G$ always have the same outcome.

We can generalize this sort of equivalence:
\begin{definition}
If $G$ and $H$ are games, we write $G = H$ (read $G$ equals $H$)
to mean $G - H = 0$.  Similarly, if $\Box$ is any of $||$ $<$, $>$, $\ge$, $\le$, $\lhd$,
or $\rhd$, then we use $G \Box H$ to denote $G - H \Box 0$.
\end{definition}
Note that since $G + (-0) \equiv G$, this notation
does not conflict with our notation for outcomes.
The interpretation of $\ge$ is that $G \ge H$ if
$G$ is at least as good as $H$, from Left's point of view,
or that $H$ is better than $G$, from Right's point of view.
Similarly, $G = H$ should mean that $G$ and $H$
are strategically equivalent.  Further
results will justify these intuitions.

These relations have the properties that one would hope for.
It's clear that $G = H$ iff $G \le H$ and $G \ge H$,
or that $G \ge H$ iff $G > H$ or $G = H$.  Also,
$G \rhd H$ iff $G \not \le H$.  Somewhat less obviously,
\[ G \le H \iff G + (-H) \le 0 \iff -(G + (-H)) \equiv H + (-G) \ge 0 \iff H \ge G\]
and
\[ G \lhd H \iff G + (-H) \lhd 0 \iff -(G + (-H)) \equiv H + (-G) \rhd 0 \iff H \rhd G\]
using equations (\ref{gle}-\ref{elg}).  So we see that
$G = H$ iff $H = G$, i.e., $=$ is symmetric.

Moreover, part (e) of Lemma~\ref{outcomesums} shows that $G = G$, so that =
is reflexive.  In fact,
\begin{lemma}
The relations $=$, $\ge$, $\le$, $>$, and $<$ are transitive.
And if $G \le H$ and $H \lhd K$, then $G \lhd K$.  Similarly
if $G \lhd H$ and $H \le K$, then $G \lhd K$.
\end{lemma}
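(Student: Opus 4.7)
The plan is to reduce every claimed transitivity to a statement about outcomes of sums, and then apply the outcome-of-sums machinery in Lemma~\ref{outcomesums} together with the fact that adding a zero game preserves outcome. The central algebraic identity I will exploit repeatedly is
\[ (G - H) + (H - K) \equiv (G - K) + (H + (-H)), \]
which follows purely from the identities in Lemma~\ref{basic} (commutativity and associativity of $+$, distributivity of $-$ over $+$).

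First I would handle transitivity of $\le$. Suppose $G \le H$ and $H \le K$; unfolding the definition, $G - H \le 0$ and $H - K \le 0$. By part (c) of Lemma~\ref{outcomesums}, $(G - H) + (H - K) \le 0$. Rewriting via the identity above, $(G - K) + (H + (-H)) \le 0$. Since $H + (-H) = 0$ by part (e) of Lemma~\ref{outcomesums}, the corollary on zero games shows that $(G-K) + (H + (-H))$ has the same outcome as $G - K$, so $G - K \le 0$, i.e.\ $G \le K$. Transitivity of $\ge$ is symmetric (or follows by negating, using that $G \le H \iff -H \le -G$, which was established just before this lemma). Transitivity of $=$ is immediate from transitivity of $\le$ and $\ge$.

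For strict inequalities, note that $G > H$ means $G - H \ge 0$ and $G - H \rhd 0$. If $G > H$ and $H > K$, then I use part (a) of Lemma~\ref{outcomesums} to get $(G-H)+(H-K) \ge 0$ and part (b) to get $(G-H)+(H-K) \rhd 0$ (feeding in one $\rhd 0$ summand and one $\ge 0$ summand). Applying the same zero-game rewriting, both $G - K \ge 0$ and $G - K \rhd 0$ follow, so $G > K$. Transitivity of $<$ is symmetric.

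Finally, for the mixed statements: if $G \le H$ and $H \lhd K$, then $G - H \le 0$ and $H - K \lhd 0$, so part (d) of Lemma~\ref{outcomesums} gives $(G-H)+(H-K) \lhd 0$, hence $G - K \lhd 0$, i.e.\ $G \lhd K$; the other mixed case is analogous. The only real obstacle is the bookkeeping step of recognizing $(G-H)+(H-K)$ as $(G-K)$ plus the zero game $H+(-H)$, but this is purely a consequence of Lemma~\ref{basic} and the zero-game corollary, so no new induction is needed beyond what has already been set up.
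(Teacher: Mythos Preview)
Your proof is correct and uses the same central device as the paper: rewrite $(G-H)+(H-K)$ as $(G-K)+(H+(-H))$ and strip off the zero game $H+(-H)$ via the corollary. The only organizational difference is that the paper, after establishing transitivity of $\le$, proves the mixed statements $G \le H,\ H \lhd K \Rightarrow G \lhd K$ by contradiction (assume $K \le G$, chain to $K \le H$), and then derives transitivity of $<$ from transitivity of $\le$ together with the mixed statement; you instead hit the mixed and strict cases directly using parts (b) and (d) of Lemma~\ref{outcomesums}. Your route is more uniform, the paper's is slightly more self-contained in that it only needs parts (a) and (e) once $\le$-transitivity is in hand; both are equally valid.
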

\begin{proof}
We first show that $\ge$ is transitive.  If $G \ge H$ and
$H \ge K$, then by definition $G + (-H) \ge 0$ and
$H + (-K) \ge 0$.  By part (a) of Lemma~\ref{outcomesums},
\[ (G + (-K)) + (H + (-H)) \equiv (G + (-H)) + (H + (-K)) \ge 0.\]
But by part (e), $H + (-H)$ is a zero game, so we can (by the Corollary),
remove it without effecting the outcome.  Therefore $G + (-K) \ge 0$,
i.e., $G \ge K$.  So $\ge$ is transitive.  Therefore so are
$\le$ and $=$.

Now if $G \le H$ and $H \lhd K$, suppose for the sake of contradiction
that $G \lhd K$ is false.  Then $K \le G \le H$, so $K \le H$, contradicting $H \lhd K$.
A similar argument shows that
if $G \lhd H$ and $H \le K$, then $G \lhd K$.

Finally, suppose that $G < H$ and $H < K$.  Then
$G \le H$ and $H \le K$, so $G \le K$.  But also,
$G \lhd H$ and $H \le K$, so $G \lhd K$.  Together these
imply that $G < K$.  A similar argument shows that $>$ is transitive.
\end{proof}

So we have just shown that $\ge$ is a preorder (a reflexive
and transitive relation), with $=$ as its associated equivalence
relation (i.e., $x = y$ iff $x \ge y$ and $y \ge x$).  So
$\ge$ induces a partial order on the quotient of games modulo $=$.
Because the outcome depends only on a game's comparison to 0,
it follows that if $G = H$ then $G$ and $H$ have the same outcome.

We use $\mathbf{Pg}$ to denote the class of all partizan games, modulo $=$. 
This class is also sometimes denoted with \textbf{Ug} (for unimpartial games)
in older books.  We will use $\mathcal{G}$ to denote the class of all short games modulo $=$.  The only article I have seen
which explicitly names the group of short games is David Moews' article \emph{The Abstract Structure of the Group
of Games} in \emph{More Games of No Chance}, which uses the notation \textbf{ShUg}.  This notation is outdated,
however, as it is based on the older \textbf{Ug} rather than \textbf{Pg}.  Both \textbf{ShUg} and \textbf{ShPg} are notationally
ugly, and scattered notation in several other articles suggests we use $\mathcal{G}$ instead.

From now on, we use ``game'' to refer to an element of $\mathbf{Pg}$.

When we need to speak
of our old notion of game, we talk of the ``form'' of a game, as opposed to
its ``value,'' which is the corresponding representative in \textbf{Pg}.
We abuse notation and use $\{A,B,\ldots|C,D,\ldots\}$ to refer to both the form and the corresponding value.

But after making these identifications, can we still use our operations on games,
like sums and negation?
An analogous question arises in the construction of the rationals from the integers.
Usually one defines a rational number to be a pair $\frac{x}{y}$, where
$x, y \in \mathbb{Z}$, $y \ne 0$.  But we identify $\frac{x}{y} = \frac{x'}{y'}$
if $xy' = x'y$.  Now, given a definition like
\[ \frac{x}{y} + \frac{a}{b} = \frac{xb + ay}{yb},\]
we have to verify that the right hand side does not depend on the form we choose to represent
the summands on the left hand side.  Specifically,
we need to show that if $\frac{x}{y} = \frac{x'}{y'}$ and $\frac{a}{b} = \frac{a'}{b'}$,
then
\[ \frac{xb + ay}{yb} = \frac{x'b' + a'y'}{y'b'}.\]
This indeed holds, because 
\[ (xb + ay)(y'b') = (xy')(bb') + (ab')(yy') = (x'y)(bb') + (a'b)(yy') = (x'b' + a'y')(yb).\]

Similarly, we need to show for games that if $G = G'$ and $H = H'$, then $G + H = G' + H'$.
In fact, we have
\begin{theorem}\label{congruences}
\begin{description}
\item[(a)] If $G \ge G'$ and $H \ge H'$, then $G + H \ge G' + H'$.  In particular
if $G = G'$ and $H = H'$, then $G + H = G' + H'$.
\item[(b)] If $G \ge G'$, then $-G' \ge -G$.  In particular
if $G = G'$, then $-G = -G'$.
\item[(c)] If $A \ge A'$, $B \ge B'$,\ldots, then
\[ \{A,B,\ldots|C,D,\ldots\} \ge \{A',B',\ldots|C',D',\ldots\}\]
In particular if $A = A'$, $B = B'$, and so on, then
\[ \{A,B,\ldots|C,D,\ldots\} = \{A',B',\ldots|C',D',\ldots\}\]
\end{description}
\end{theorem}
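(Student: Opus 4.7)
The plan is to reduce everything to Lemma~\ref{outcomesums} together with the $\equiv$-level identities of Lemma~\ref{basic}. Parts (a) and (b) are routine rearrangements. Part (c) is the substantive piece: it requires showing that a compound game dominates its ``coordinate-wise dominated'' partner, which I will do by directly analysing the right options of the difference game.

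For (a), I rewrite the hypotheses $G \ge G'$ and $H \ge H'$ as $G + (-G') \ge 0$ and $H + (-H') \ge 0$, apply part (a) of Lemma~\ref{outcomesums} to conclude $(G + (-G')) + (H + (-H')) \ge 0$, and then use the commutativity, associativity, and distribution-of-negation identities from Lemma~\ref{basic} to rewrite this expression as $(G + H) + (-(G' + H')) \ge 0$, which is exactly $G + H \ge G' + H'$. For (b), the hypothesis $G + (-G') \ge 0$ is identical (via commutativity of $+$) to $(-G') + G \ge 0$, i.e.\ to $(-G') + (-(-G)) \ge 0$, which unpacks to $-G' \ge -G$. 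The ``in particular'' statements for $=$ in each case follow by applying the $\ge$ statement in both directions.

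For (c), write $G \equiv \{A,B,\ldots | C,D,\ldots\}$ and $G' \equiv \{A',B',\ldots | C',D',\ldots\}$. To show $G \ge G'$, I need $G + (-G') \ge 0$, which by definition demands that every right option of $G + (-G')$ be $\rhd 0$. Unpacking addition and negation, these right options come in two flavours: $G^R + (-G')$ where $G^R$ ranges over $C, D, \ldots$, and $G + (-(G'^L))$ where $G'^L$ ranges over $A', B', \ldots$. For the first flavour, say $G^R = C$ with partner $C'$: the hypothesis gives $C + (-C') \ge 0$, and part (g) of Lemma~\ref{outcomesums} gives $C' + (-G') \rhd 0$; part (b) of Lemma~\ref{outcomesums} combines these into $(C + (-C')) + (C' + (-G')) \rhd 0$. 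Rearranging via Lemma~\ref{basic} and collapsing the $(-C') + C'$ factor to $0$ via part (e) of Lemma~\ref{outcomesums} and the corollary that zero games do not affect outcomes, this expression has the same outcome as $C + (-G')$, so $C + (-G') \rhd 0$ as required. For the second flavour, I apply the dual argument: the hypothesis gives $A + (-A') \ge 0$ and part (f) of Lemma~\ref{outcomesums} gives $G + (-A) \rhd 0$, which combine via part (b) and the same kind of rearrangement to yield $G + (-A') \rhd 0$.

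The main obstacle will be the bookkeeping in (c), specifically keeping the distinction between identity $\equiv$ and equality $=$ clean when invoking Lemma~\ref{basic} to move terms around and part (e) to cancel $X + (-X)$ summands. There is no new mathematical content beyond the monotonicity already captured in Lemma~\ref{outcomesums}; the work lies entirely in packaging that monotonicity correctly through the algebra of $+$ and $-$. Once the calculation above is checked, the ``in particular'' statement for $=$ in (c) again follows by applying the $\ge$ statement in both directions.
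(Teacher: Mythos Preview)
Your proof is correct. Parts (a) and (b) match the paper's treatment almost exactly, though your (a) is marginally slicker: you add the two inequalities directly and rearrange using $-(G'+H') \equiv (-G')+(-H')$, whereas the paper first handles the special case $H=H'$ via the zero game $H+(-H)$ and then chains two such steps.

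Part (c) is where you genuinely diverge from the paper. The paper \emph{defers} (c) until after Theorem~\ref{betwixt}, and then uses the criterion ``$G \le G'$ unless some $(G')^R \le G$ or some $G' \le G^L$'' together with the fact that options are $\lhd$ or $\rhd$ the game itself. You instead attack $G + (-G') \ge 0$ head-on, enumerating the right options of the difference game and killing each with parts (b), (e), (f), (g) of Lemma~\ref{outcomesums} plus the zero-game corollary. Your route is more self-contained (it needs nothing beyond Lemma~\ref{outcomesums} and the $\equiv$-identities of Lemma~\ref{basic}), and in fact your argument essentially \emph{contains} the proof of the first half of Theorem~\ref{betwixt}: the step ``$C' + (-G') \rhd 0$ because $C'$ is a right option of $G'$'' is exactly $G' \lhd C'$. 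The paper's route, by isolating Theorem~\ref{betwixt} as a standalone tool first, buys a reusable comparison criterion that gets invoked repeatedly later (dominated moves, reversible moves, the simplicity rule), so the detour pays for itself downstream.
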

What this theorem is saying is that whenever we combine games using one of our operations,
the final value depends only on the \emph{values} of the operands, not on their forms.
\begin{proof}
\begin{description}
\item[(a)]
Suppose first that $H = H'$.  Then we need to show that if $G' \ge G$
then $G + H \ge G' + H$, which is straightforward:
\[ G + H \ge G' + H \iff (G + H) + (-(G' + H)) \ge 0 \iff \]\[ (G + (-G')) + (H + (-H)) \ge 0 \iff G + (-G)' \ge 0 \iff G \ge G'.\]
Now in the general case, if $G \ge G'$ and $H \ge H'$ we have
\[ G + H \ge G' + H \equiv H + G' \ge H' + G' \equiv G' + H'.\]
So $G + H \ge G' + H'$.  And if $G = G'$ and $H = H'$, then $G' \ge G$ and $H' \ge H$ so by
what we have just shown,
$G' + H' \ge G + H$.  Taken together, $G' + H' = G + H$.
\item[(b)] Note that
\[ G \ge G' \iff G + (-G') \ge 0 \iff (-G') + (-(-G)) \ge 0 \iff -G' \ge -G.\]
So in particular if $G = G'$, then $G \ge G'$ and $G' \ge G$ so
$-G' \ge -G$ and $-G \ge -G'$.  Thus $-G = -G'$. 
\item[(c)]
We defer the proof of this part until after the proof of Theorem~\ref{betwixt}.
\end{description}
\end{proof}


Next we relate the partial order to options:
\begin{theorem}\label{betwixt}
If $G$ is a game, then $G^L \lhd G \lhd G^R$ for every left
option $G^L$ and every right option $G^R$.

If $G$ and $H$ are games, then $G \le H$ unless
and only unless there is a right option $H^R$ of $H$
such that $H^R \le G$, or there is
a left option $G^L$ of $G$ such that $H \le G^L$.
\end{theorem}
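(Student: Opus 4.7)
The plan is to treat the two assertions separately and extract both from the infrastructure already built up in Lemma~\ref{outcomesums} and the outcome definitions, rather than running a new induction.

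For the first assertion $G^L \lhd G \lhd G^R$, I would simply unfold the shorthand. By the definition of $\lhd$ on games, $G^L \lhd G$ means exactly $G^L + (-G) \lhd 0$, which is part (f) of Lemma~\ref{outcomesums}. For the right half I would go through the duality $X \lhd 0 \iff -X \rhd 0$, which follows at once from equation~(\ref{gle}) together with the exclusivity observation recorded just after the definition of the outcome relations (``exactly one of $G \ge 0$ and $G \lhd 0$ is true''). Under this duality, $G \lhd G^R$ is equivalent to $G^R + (-G) \rhd 0$, which is part (g) of Lemma~\ref{outcomesums}.

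For the second assertion, the strategy is to unfold the definition of $\le$ on the left-hand side and then apply the same $\lhd$/$\ge$ exclusivity to rewrite each resulting condition in the claimed form. Specifically, $G \le H$ says $G + (-H) \le 0$, and by the recursive definition this holds iff every left option of $G + (-H)$ is $\lhd 0$. The left options of $G + (-H)$ have exactly two shapes, namely $G^L + (-H)$ for $G^L$ a left option of $G$, and $G + (-(H^R))$ for $H^R$ a right option of $H$, because $(-H)^L \equiv -(H^R)$. So the condition becomes the conjunction: every $G^L + (-H) \lhd 0$ and every $G + (-(H^R)) \lhd 0$.

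Now I use that $X \lhd 0$ is simply the negation of $X \ge 0$. The first conjunct then reads ``for no $G^L$ is $G^L \ge H$,'' i.e.\ for no $G^L$ is $H \le G^L$, and the second reads ``for no $H^R$ is $G \ge H^R$,'' i.e.\ for no $H^R$ is $H^R \le G$. Negating both sides gives precisely the stated characterization: $G \not\le H$ iff some right option $H^R$ satisfies $H^R \le G$ or some left option $G^L$ satisfies $H \le G^L$. The only mild obstacle is bookkeeping around the six dualities between $\ge,\le,\rhd,\lhd$ and their negations and negatives; no induction is needed beyond what is already folded into Lemma~\ref{outcomesums}.
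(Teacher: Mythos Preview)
Your proposal is correct and follows essentially the same approach as the paper: both parts are unfolded via $G - H$ and reduced to Lemma~\ref{outcomesums}(f)--(g) for the first assertion and to the recursive description of the left options of $G + (-H)$ for the second. The paper is a bit terser (it just says the $G \lhd G^R$ case ``similarly uses part (g)'' without spelling out the $\lhd/\rhd$ duality), but the argument is the same.
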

\begin{proof}
Note that $G^L \lhd G$ iff $G^L - G \lhd 0$, which is part (f)
of Lemma~\ref{outcomesums}.  The proof that $G \lhd G^R$ similarly uses
part (g).

For the second claim, note first that $G \le H$ iff $G - H \le 0$,
which occurs iff every left option of $G - H$ is not $\ge 0$.

But the left options of $G - H$ are of the forms $G^L - H$ and
$G - H^R$, so $G \le H$ iff no $G^L$ or $H^R$ satisfy $G^L \ge H$ or $G \ge H^R$.
\end{proof}

Now we prove part (c) of Theorem~\ref{congruences}:
\begin{proof}
Suppose $A' \ge A$, $B' \ge B$, and so on.
Let
\[ G = \{A,B,\ldots|C,D,\ldots\}\]
and
\[ G' = \{A',B',\ldots|C',D',\ldots\}.\]
Then $G \le G'$ as long as there is no $(G')^R \le G$,
and no $G^L \ge G'$.  That is, we need to check that
\[ C' \not \le G \]
\[ D' \not \le G\]
\[ \vdots\]
\[ G' \not \le A \]
\[ G' \not \le B \]
\[ \vdots\]
But actually these are clear: if $C' \le G$ then because $C \le C'$ we would
have $C \le G$, contradicting $G \lhd G$ by the previous theorem.
Similarly if $G' \le A$, then since $A \le A'$, we would have
$G' \le A'$, rather than $A' \lhd G'$.

The same argument shows that if $A = A'$, $B = B'$, and so on,
then $G' \le G$, so that $G' = G$ in this particular case.
\end{proof}
Using this, we can make substitutions in expressions.  For instance,
if we know that $G = G'$, then we can conclude that
\[ -\{25|13, (* + G)\} = -\{25|13, (* + G')\}\]

\begin{definition}
A \emph{partially-ordered abelian group} is an abelian group $G$
with a partial order $\le$ such that
\[ x \le y \implies x + z \le y + z\]
for every $z$.
\end{definition}
All the expected algebraic
facts hold for partially-ordered abelian groups.  For instance,
\[ x \le y \iff x + z \le y + z\]
(the $\Leftarrow$ direction follows by negating $z$),
and
\[ x \le y \iff -y \le -x\]
and the elements $\ge 0$ are closed under addition, and so on.

With this notion we summarize all the results so far:
\begin{theorem}\label{everything}
The class $\mathcal{G}$ of (short) games modulo equality is a partially ordered abelian group, with addition
given by addition of games, identity given by the game $0 = \{|\}$, and additive
inverses given by negation.  The outcome of a game $G$ is determined by its comparison
to zero:
\begin{itemize}
\item If $G = 0$, then $G$ is a win for whichever player moves second.
\item If $G || 0$, then $G$ is a win for whichever player moves first.
\item If $G > 0$, then $G$ is a win for Left either way.
\item If $G < 0$, then $G$ is a win for Right either way.
\end{itemize}
Also, if $A,B,C, \ldots \in \mathcal{G}$, then we can meaningfully talk about
\[ \{A,B,\ldots|C,D,\ldots\} \in \mathcal{G}\]
This gives a well-defined map
\[ \mathcal{P}_f(\mathcal{G}) \times \mathcal{P}_f(\mathcal{G}) \to \mathcal{G}\]
where $\mathcal{P}_f(S)$ is the set of all finite subsets of $S$.
Moreover, if $G = \{G^L|G^R\}$ and $H = \{H^L|H^R\}$, then $G \le H$ unlesss
$H^R \le G$ for some $H^R$, or $H \le G^L$ for some $G^L$.  Also,
$G^L \lhd G \lhd G^R$ for every $G^L$ and $G^R$.
\end{theorem}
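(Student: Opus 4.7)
The plan is essentially to collect and repackage what has already been established in the preceding lemmas and theorems; there is very little new content to prove, only a careful verification that the pieces fit together to form a partially ordered abelian group structure on $\mathcal{G}$.

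First I would verify the abelian group axioms on $\mathcal{G}$. Commutativity, associativity, and the identity law $G + 0 \equiv G$ hold at the level of forms by Lemma~\ref{basic}, so they descend to the quotient by $=$. Existence of inverses comes from Lemma~\ref{outcomesums}(e), which states $G + (-G) = 0$. The crucial point here is that addition and negation are well-defined on equivalence classes, which is exactly the content of Theorem~\ref{congruences}(a) and (b). Thus $\mathcal{G}$ is an abelian group.

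Next I would verify the partially ordered abelian group structure. The relation $\ge$ on forms descends to a reflexive relation on $\mathcal{G}$ (reflexivity from Lemma~\ref{outcomesums}(e)) and is transitive (from the transitivity lemma just proved). Antisymmetry is built into the definition of $=$ as $G \ge H$ and $H \ge G$. Compatibility with addition, namely $G \ge H \Rightarrow G + K \ge H + K$, follows from Theorem~\ref{congruences}(a) by taking $H' = H$ and $K' = K$ (or equivalently from the definition $G \ge H \iff G - H \ge 0$ combined with Lemma~\ref{outcomesums}(a)). Hence $(\mathcal{G}, +, \ge)$ is a partially ordered abelian group.

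The characterization of outcomes is essentially the definitions of $>$, $<$, $=$, $||$ in terms of $\ge, \le, \rhd, \lhd$ combined with the unpacking at the start of Section~\ref{sec:equivalence}: $G \ge 0$ means Left wins when Right moves first and $G \rhd 0$ means Left wins when Left moves first, so their conjunction $G > 0$ means Left wins either way, and similarly for the other three cases. Well-definedness of the curly-brace constructor $\mathcal{P}_f(\mathcal{G}) \times \mathcal{P}_f(\mathcal{G}) \to \mathcal{G}$ is exactly Theorem~\ref{congruences}(c). The final two claims, the characterization $G \le H$ unless some $H^R \le G$ or some $H \le G^L$, and the inequalities $G^L \lhd G \lhd G^R$, are restatements of Theorem~\ref{betwixt}.

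There is no real obstacle; the only thing to watch is bookkeeping, making sure that each claim of the theorem is explicitly matched with the earlier statement that justifies it, and in particular that every reference to an operation on $\mathcal{G}$ (rather than on forms) is licensed by Theorem~\ref{congruences}. So the proof will read as a short paragraph of citations rather than an argument.
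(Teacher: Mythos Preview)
Your proposal is correct and matches the paper's approach exactly: the paper presents this theorem explicitly as a summary (``With this notion we summarize all the results so far'') and gives no separate proof, since every claim is a direct citation of Lemma~\ref{basic}, Lemma~\ref{outcomesums}, the transitivity lemma, Theorem~\ref{congruences}, and Theorem~\ref{betwixt}. Your bookkeeping of which earlier result justifies which clause is accurate.
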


\section{Simplifying Games}
Now that we have an equivalence relation on games, we seek a canonical representative
of each class.  We first show that removing a left (or right) option of a game doesn't help Left (or Right).
\begin{theorem}
If $G = \{A,B,\ldots|C,D,\ldots\}$, then
\[ G' = \{B,\ldots|C,D,\ldots\} \le G.\]
Similarly,
\[ G \le \{A,B,\ldots|D,\ldots\}.\]
\end{theorem}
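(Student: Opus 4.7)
My plan is to prove the first inequality $G' \le G$ directly via the characterization of $\le$ by options given in Theorem~\ref{betwixt}; the second inequality is symmetric (and also follows formally by negating and applying the first part, since negation reverses $\le$ and swaps the roles of left and right options).

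By Theorem~\ref{betwixt}, to establish $G' \le G$ it suffices to show two things: (i) no right option $G^R$ of $G$ satisfies $G^R \le G'$, and (ii) no left option $(G')^L$ of $G'$ satisfies $G \le (G')^L$. Both facts will fall out immediately from the observation that $G$ and $G'$ share their right options, and the left options of $G'$ form a subset of the left options of $G$.

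For (i), the right options of $G$ are exactly $C, D, \ldots$, which are also right options of $G'$. So if some right option $G^R$ of $G$ satisfied $G^R \le G'$, then $G^R$ would be a right option of $G'$ with $G^R \le G'$, contradicting the ``$G' \lhd (G')^R$'' clause of Theorem~\ref{betwixt} applied to $G'$. For (ii), every left option of $G'$ is one of $B, \ldots$, hence is a left option of $G$ as well. So $G \le (G')^L$ for some $(G')^L$ would mean $G \le G^L$ for some left option $G^L$ of $G$, again contradicting Theorem~\ref{betwixt}, this time the ``$G^L \lhd G$'' clause.

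Once $G' \le G$ is in hand, the second inequality $G \le \{A,B,\ldots|D,\ldots\}$ follows by the same argument with the roles of Left and Right swapped, or equivalently by negating: removing a right option $C$ from $G$ yields a game whose negative is obtained from $-G$ by removing a left option, hence is $\le -G$, so the original game is $\ge G$. The main ``obstacle'' here is really just organizing which options belong to which game so that Theorem~\ref{betwixt} applies, so I do not expect any genuine difficulty.
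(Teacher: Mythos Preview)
Your proof is correct and follows essentially the same approach as the paper: both apply Theorem~\ref{betwixt} to check the two obstruction conditions for $G' \le G$, using that the left options of $G'$ are among those of $G$ and that $G$ and $G'$ share right options, then invoke $G^L \lhd G$ and $G' \lhd (G')^R$ to dispose of each case. The paper's argument is terser but structurally identical.
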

\begin{proof}  We use Theorem~\ref{betwixt}.
To see $G' \le G$, it suffices to show that $G$ is not $\le$ any left option of $G'$ (which is obvious,
since every left option of $G'$ is a left option of $G$), and that $G'$ is not $\ge$ any right 
option of $G$, which is again obvious since every right option of $G$ is a right option of $G'$.

The other claim is proven similarly.
\end{proof}

On the other hand, sometimes options can be added/removed without affecting the value:
\begin{theorem}(Gift-horse principle)
If $G = \{A,B,\ldots|C,D,\ldots\}$, and $X \lhd G$, then
\[ G = \{X,A,B,\ldots|C,D,\ldots\}.\]
Similarly if $Y \rhd G$, then
\[ G = \{A,B,\ldots|C,D,\ldots,Y\}.\]
\end{theorem}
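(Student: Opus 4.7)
The plan is to show $G = G'$, where $G' \equiv \{X,A,B,\ldots|C,D,\ldots\}$, by proving both $G' \le G$ and $G' \ge G$ and invoking antisymmetry.

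First, the inequality $G' \ge G$ is essentially free: removing the left option $X$ from $G'$ produces $G$, so by the preceding theorem (``removing a left option doesn't help Left''), $G \le G'$.

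The real content is $G' \le G$. For this I would apply the second part of Theorem~\ref{betwixt}, which reduces $G' \le G$ to verifying two things: (i) no right option $G^R$ satisfies $G^R \le G'$, and (ii) no left option $(G')^L$ satisfies $G \le (G')^L$. For (i), each right option $C, D, \ldots$ of $G$ is also a right option of $G'$, so the first part of Theorem~\ref{betwixt} gives $G' \lhd G^R$, i.e., $G^R \not\le G'$. For (ii), there are two kinds of left options of $G'$ to check. The ``old'' left options $A, B, \ldots$ are left options of $G$, hence $A \lhd G$ by Theorem~\ref{betwixt}, so $G \not\le A$; similarly for the others. The new option $X$ is handled by the hypothesis itself: we are given $X \lhd G$, which is exactly the statement that $G \not\le X$. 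Both conditions of Theorem~\ref{betwixt} are met, giving $G' \le G$.

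Combining the two inequalities yields $G = G'$. The dual statement, that adding a right option $Y$ with $Y \rhd G$ leaves the value unchanged, can be obtained either by an entirely symmetric argument (swapping the roles of left/right options and using parts (f)/(g) of Lemma~\ref{outcomesums} in the mirror form), or more cleanly by negating: $-G = \{-C,-D,\ldots|-A,-B,\ldots\}$, the hypothesis $Y \rhd G$ becomes $-Y \lhd -G$, the left-handed version of the principle applies to $-G$ with the gift horse $-Y$, and negating back gives the desired identity.

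There is no real obstacle here — the argument is a direct application of Theorem~\ref{betwixt}. The only place one must be careful is in correctly translating $X \lhd G$ to $G \not\le X$ (rather than the reverse), i.e., in remembering that $\lhd$ is the negation of $\ge$, not of $\le$.
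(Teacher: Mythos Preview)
Your proof is correct and follows essentially the same approach as the paper: obtain $G' \ge G$ from the preceding theorem, then use Theorem~\ref{betwixt} to verify $G' \le G$ by checking that $G$ is not $\le$ any left option of $G'$ (using $X \lhd G$ for the new option) and that $G'$ is not $\ge$ any right option of $G$. The paper dispatches the dual claim with ``the other is similar,'' while you additionally sketch the negation argument.
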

\begin{proof}
We prove the first claim because the other is similar.
From the previous theorem we already know that $G' = \{X,A,B,\ldots|C,D,\ldots\}$ is $\ge G$.
So it remains to show that $G' \le G$.  To see this, it suffices by Theorem~\ref{betwixt} to show that
\begin{itemize}
\item $G$ is not $\le$ any left option of $G'$: obvious since every left option of $G'$ is a left option
of $G$, except for $X$, but $G \not\le X$ by assumption.
\item $G'$ is not $\ge$ any right option of $G$: obvious since every right option of $G$ is a right
option of $G'$.
\end{itemize}
\end{proof}

\begin{definition}
Let $G$ be a (form of a) game.  Then a left
option $G^L$ is \emph{dominated} if there is some other left option
$(G^L)'$ such that $G^L \le (G^L)'$.  Similarly, a
right option $G^R$ is \emph{dominated} if there is some
other right option $(G^R)'$ such that $G^R \ge (G^R)'$.
\end{definition}
That is, an option is dominated when its player has a better alternative.
The point of dominated options is that they are useless and can be removed:
\begin{theorem}(dominated moves).  If $A \le B$, then
\[ \{A,B,\ldots|C,D,\ldots\} = \{B,\ldots|C,D,\ldots\}.\]
Similarly, if $D \le C$ then
\[ \{A,B,\ldots|C,D,\ldots\} = \{A,B,\ldots|D,\ldots\}.\]
\end{theorem}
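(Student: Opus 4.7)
The plan is to apply Theorem~\ref{betwixt} (the options characterization of $\le$) in combination with the previous theorem that adding or removing options in one direction gives an inequality. I will only write out the left-dominated case, since the right-dominated case is symmetric.

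Write $G \equiv \{A,B,\ldots \mid C,D,\ldots\}$ and $G' \equiv \{B,\ldots \mid C,D,\ldots\}$, so that $G'$ is obtained from $G$ by deleting the dominated left option $A$. One direction, $G' \le G$, is free: $G'$ is obtained from $G$ by removing a left option, and the preceding theorem (removing a left option weakens Left's position) immediately gives $G' \le G$.

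The substantive step is to show $G \le G'$, and for this I will verify the two conditions of Theorem~\ref{betwixt}. I must check (i) no right option of $G'$ is $\le G$, and (ii) no left option of $G$ is $\ge G'$. For (i), the right options of $G'$ are exactly $C, D, \ldots$, which are also right options of $G$; Theorem~\ref{betwixt} gives $G \lhd C$, $G \lhd D$, and so on, so none of them is $\le G$. For (ii), the left options of $G$ are $A, B, \ldots$. For each left option other than $A$, it is also a left option of $G'$, so Theorem~\ref{betwixt} gives $B \lhd G'$, and so on, hence none of them is $\ge G'$. The only subtle case is the removed option $A$: here I use the hypothesis $A \le B$ together with $B \lhd G'$ (since $B$ is a left option of $G'$), and apply the mixed-transitivity clause ``$A \le B$ and $B \lhd G'$ imply $A \lhd G'$'' from the transitivity lemma. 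This gives $A \lhd G'$, so $A \not\ge G'$, and condition (ii) holds.

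Combining, $G \le G'$ and $G' \le G$, so $G = G'$. The main obstacle (if it deserves the name) is remembering to deploy the mixed-transitivity rule for $\le$ and $\lhd$ on the dominated option $A$; without it one cannot rule out the possibility that $A$ is large enough to be $\ge G'$. The right-dominated case ($D \le C$) follows by negation, or by running the mirror-image argument with the roles of Left and Right swapped.
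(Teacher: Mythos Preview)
Your proof is correct and shares the same key insight as the paper's: since $B$ is a left option of $G'$, we have $B \lhd G'$, and then $A \le B \lhd G'$ gives $A \lhd G'$ by mixed transitivity. The only difference is packaging: the paper observes that once $A \lhd G'$ is established, the gift-horse principle immediately gives $G' = \{A,B,\ldots \mid C,D,\ldots\} = G$ in one stroke, whereas you unpack that step by verifying the two conditions of Theorem~\ref{betwixt} directly (which is exactly how the gift-horse principle itself was proved). So your argument is essentially the gift-horse proof inlined.
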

\begin{proof}
We prove the first claim (the other follows by symmetry).
\[ A \le B \lhd \{B,\ldots|C,D,\ldots\}.\]
So therefore $A \lhd \{B,\ldots|C,D,\ldots\}$ and we are done by the gift-horse
principle.
\end{proof}

\begin{definition}
If $G$ is a game, $G^L$ is a left option of $G$, and $G^{LR}$ is a right 
option of $G^L$ such that $G^{LR} \le G$, then we say that $G^L$
is a \emph{reversible option}, which is \emph{reversed through} 
its option $G^{LR}$.

Similarly, if $G^R$ is a right option, having a left option $G^{RL}$ with
$G^{RL} \ge G$, then $G^R$ is also a reversible option, reversed through
$G^{RL}$.
\end{definition}
A move from $G$ to $H$ is reversible when the opponent can ``undo'' it with
a subsequent move.  It turns out that a player
might as well always make such a reversing move.
\begin{theorem}(reversible moves)
If $G = \{A,B,\ldots|C,D,\ldots\}$ is a game, and $A$ is a reversible
left option, reversed through $A^R$, then
\[ G = \{X,Y,Z,\ldots,B,\ldots|C,D,\ldots\}\]
where $X,Y,Z,\ldots$ are the left options of $A^R$.

Similarly, if $C$ is a reversible move, reversed through $C^L$, then
\[ G = \{A,B,\ldots|D,\ldots,X,Y,Z,\ldots\}\]
where $X,Y,Z,\ldots$ are the right options of $C^L$.
\end{theorem}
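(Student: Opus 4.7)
The plan is to prove $G = G'$ directly by establishing $G \le G'$ and $G' \le G$ using the second half of Theorem~\ref{betwixt} twice, where $G' \equiv \{X,Y,Z,\ldots,B,\ldots|C,D,\ldots\}$. I will only write out the case where $A$ is a reversible \emph{left} option; the other case follows by an entirely symmetric argument (or by negating everything and applying Lemma~\ref{basic} and Theorem~\ref{congruences}(b)).

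For $G' \le G$, by Theorem~\ref{betwixt} I must verify that no right option of $G$ is $\le G'$ and no left option of $G'$ is $\ge G$. The right options of $G$ are $C,D,\ldots$, which are also right options of $G'$, and hence satisfy $G' \lhd C$, $G' \lhd D$, \ldots\ by Theorem~\ref{betwixt}. The left options of $G'$ split into two families. The ``old'' options $B,\ldots$ are left options of $G$, so Theorem~\ref{betwixt} gives $B \lhd G$, etc. For the ``new'' options $X,Y,Z,\ldots$, which are the left options of $A^R$, Theorem~\ref{betwixt} gives $X \lhd A^R$, and by hypothesis $A^R \le G$; combining via the mixed transitivity already proved (if $P \lhd Q$ and $Q \le R$ then $P \lhd R$) yields $X \lhd G$, as required.

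For $G \le G'$, the right options of $G'$ are the $C,D,\ldots$, which are right options of $G$ and so satisfy $G \lhd C$, etc., by Theorem~\ref{betwixt}; the old left options $B,\ldots$ of $G$ are also left options of $G'$, so $B \lhd G'$. The only nontrivial case is showing that the reversible option $A$ itself satisfies $A \not\ge G'$. For this I claim $A^R \le G'$, which by Theorem~\ref{betwixt} applied to the pair $(A^R, G')$ reduces to two checks: first, no right option $C,D,\ldots$ of $G'$ is $\le A^R$, which follows because $G \lhd C$ (Theorem~\ref{betwixt}) combined with $A^R \le G$ gives $A^R \lhd C$ by mixed transitivity; second, no left option of $A^R$ is $\ge G'$, but these left options are precisely the $X,Y,Z,\ldots$, which are themselves left options of $G'$ and so satisfy $X \lhd G'$ by Theorem~\ref{betwixt}. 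Having established $A^R \le G'$, and knowing that $A^R$ is a right option of $A$, Theorem~\ref{betwixt} applied to the pair $(G',A)$ forces $A \not\ge G'$, completing the verification that $G \le G'$.

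The main obstacle is the last step: the only place the reversibility hypothesis $A^R \le G$ plays an essential role is in showing $A \not\ge G'$, and the trick is not to argue about $A$ directly but to show that the \emph{specific} $A^R$ witnessing reversibility is itself $\le G'$. Once one sees that the betwixt criterion for $A^R \le G'$ unwinds into exactly the two conditions that the mixed-transitivity principle $(P \lhd Q,\, Q \le R \Rightarrow P \lhd R)$ and the fact that the $X,Y,Z,\ldots$ were promoted to left options of $G'$ were designed to handle, the rest is mechanical bookkeeping.
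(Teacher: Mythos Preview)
Your proof is correct and follows essentially the same route as the paper's: both show $G' \le G$ and $G \le G'$ via Theorem~\ref{betwixt}, and both handle the nontrivial case $A \not\ge G'$ by first establishing $A^R \le G'$ (using that the $X,Y,Z,\ldots$ are now left options of $G'$ and that $A^R \le G$ forces $A^R \lhd C,D,\ldots$). The only cosmetic difference is that the paper phrases some steps as contradictions (e.g.\ $G \le X \lhd A^R \le G$ gives $G \lhd G$) where you invoke the mixed transitivity $P \lhd Q,\ Q \le R \Rightarrow P \lhd R$ directly, but these are the same argument unwound.
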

\begin{proof}
We prove the first claim because the other follows by symmetry.

Let $G'$ be the game \[\{X,Y,Z,\ldots,B,\ldots|C,D,\ldots\}.\]
We need to show $G' \le G$ and $G \le G$'.

First of all, $G'$ will be $\le G$ unless $G'$ is $\ge$ a right option of $G$
(impossible, since all right options of $G$ are right options of $G'$),
or $G$ is $\le$ a right option of $G'$.  Clearly $G$ cannot be
$\le B,\ldots$ because those are already right options of $G$.
So suppose that $G$ is $\le$ a right option of $A^R$, say $X$.
Then
\[ G \le X \lhd A^R \le G,\]
so that $G \lhd G$, an impossibility.  Thus $G' \le G$.

Second, $G$ will be $\le G'$ unless $G$ is $\ge$ a right option
of $G'$ (impossible, because every right option of $G'$ is
a right option of $G$), or $G'$ is $\le$ a left option of
$G$.  Now every left option of $G$ aside from $A$ is
a left option of $G'$ already, so it remains to show that
$G' \not\le A$.

This follows if we show that $A^R \le G'$.
Now $G'$ cannot be $\le$ any left option of $A^R$, because
every left option of $A^R$ is also a left option of $G'$.  So
it remains to show that $A^R$ is not $\ge$ any right option
of $G'$.  But if $A^R$ was $\ge$ say $C$, then
\[ A^R \ge C \rhd G \ge A^R,\]
so that $A^R \rhd A^R$, a contradiction.
\end{proof}
The game $\{X,Y,Z,\ldots,B,\ldots|C,D,\ldots\}$ is called the game
obtained by \emph{bypassing the reversible move $A$}. 


The key result is that for short games, there is a canonical
representative in each equivalence class:
\begin{definition}
A game $G$ is in \emph{canonical form} if every position of $G$ has no dominated or reversible moves.
\end{definition}
\begin{theorem}
If $G$ is a game, there is a unique canonical form equal to $G$,
and it is the unique smallest game equivalent to $G$, measuring size
by the number of edges in the game tree of $G$.\footnote{The number of edges in the game tree of $G$
can be defined recursively as the number of options of $G$ plus the sum of the number of edges in the game trees
of each option of $G$.  So $0$ has no edges, $1$ and $-1$ have one each, and $*$ has two.  A game like $\{*,1|-1\}$ then
has $2 + 1 + 1$ plus three, or seven total.  It's canonical form is $\{1|-1\}$ which has only four.}
\end{theorem}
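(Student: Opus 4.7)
The theorem makes three claims: a canonical form equal to $G$ exists, it is unique up to identity $\equiv$, and it has the fewest edges in its game tree among all games equal to $G$. I will handle these in turn by strong induction on the complexity of $G$, using the dominated-moves and reversible-moves theorems together with Theorems~\ref{congruences} and~\ref{betwixt}.

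For existence, I use the inductive hypothesis: every option of $G$ already has a canonical form equal to it, so by Theorem~\ref{congruences}(c) I may replace each option of $G$ by its canonical form without changing $G$'s value. I then iterate two top-level operations: delete a dominated option (strictly decreasing the option count), or bypass a reversible left option $A$ through some $A^R$ with $A^R \le G$, replacing $A$ by the left options of $A^R$ (symmetrically for right options). A termination argument---using that bypass produces only positions strictly $\lhd A^R$, so that some well-founded measure on the multiset of option values decreases---shows the process halts. The resulting game has no top-level dominated or reversible moves, and its sub-options were already canonical, so it is itself canonical and equal to $G$.

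For uniqueness, suppose $G$ and $H$ are both canonical with $G = H$. I show $G \equiv H$ by joint induction on their positions. Pick any left option $G^L$ of $G$. Since $G^L \lhd G = H$, Theorem~\ref{betwixt} gives either (i) a right option $(G^L)^R$ of $G^L$ with $(G^L)^R \le H = G$, or (ii) a left option $H^L$ of $H$ with $H^L \ge G^L$. Case (i) would make $G^L$ reversible in $G$, violating canonicity of $G$, so (ii) holds. Applying the analogous argument to $H^L \lhd H = G$, and using canonicity of $H$ to rule out the symmetric case (i), I obtain a left option $G^{L'}$ of $G$ with $G^{L'} \ge H^L \ge G^L$. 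Canonicity of $G$ forbids dominated options, so $G^{L'} = G^L$ as values, which forces $H^L = G^L$ as values. The inductive hypothesis, applied to the canonical forms $G^L$ and $H^L$ of equal value, then yields $H^L \equiv G^L$. Thus every left option of $G$ appears identically as a left option of $H$; the reverse inclusion and the right-option analogues are symmetric, so $G \equiv H$.

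For minimality, let $G$ be canonical and $H$ an arbitrary game with $H = G$; I prove $|H| \ge |G|$ (edge count in the game tree) by induction on the complexity of $G$. For each left option $G^L$ of $G$, the first half of the uniqueness argument (which uses only canonicity of $G$) produces a left option $H^L$ of $H$ with $H^L \ge G^L$. I expect the main obstacle to be upgrading $H^L \ge G^L$ to $H^L = G^L$ as values: without canonicity of $H$, the symmetric step can land in case (i), and $H$ may have genuinely reversible options. The plan is to resolve this by first reducing $H$ to canonical form via iterated dominated-removal and reversible-bypass, applying the uniqueness part to identify the result with $G$, and then checking that the overall reduction does not increase the edge count---possibly by introducing a refined complexity measure on game trees that decreases under bypass even when the raw edge count does not. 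Granting the upgrade, the map $G^L \mapsto H^L$ is injective because canonicity of $G$ makes its left options pairwise distinct as values, and each subtree at $H^L$ satisfies $|H^L| \ge |G^L|$ by the inductive hypothesis (since $G^L$ is canonical). Summing over both players' options gives $|H| \ge |G|$, with equality forcing $H$ itself to be canonical and hence, by uniqueness, identical to $G$.
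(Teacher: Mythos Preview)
Your uniqueness argument is essentially the paper's, just phrased via Theorem~\ref{betwixt} rather than by analyzing $G - H$ directly.

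The real gap lies in existence and minimality, and it is the same gap in both places: you never verify---and in your minimality paragraph you explicitly doubt---that bypassing a reversible move strictly decreases the edge count of the game tree. It does. If you bypass a left option $A$ through $A^R$, the edge count changes by
\[
\Bigl(k + \sum_{X} |X|\Bigr) - \bigl(1 + |A|\bigr),
\]
where $X$ ranges over the $k$ left options of $A^R$. But $|A| \ge 1 + |A^R|$ since $A^R$ is an option of $A$, and $|A^R| \ge k + \sum_X |X|$ since each $X$ is an option of $A^R$, so the change is at most $-2$.

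With this in hand, both loose ends close immediately. For existence, the edge count itself is your termination measure, and your ``well-founded measure on the multiset of option values'' is unnecessary. For minimality, the paper's argument is then one line: any $H$ with $H = G$ that is not already in canonical form admits a reduction step strictly shrinking its edge count, so a game of minimal size in its equivalence class must be canonical, and by the uniqueness part is identical to the canonical form. Your injection argument, the acknowledged obstacle of upgrading $H^L \ge G^L$ to equality, and the proposed ``refined complexity measure'' are all superfluous---the raw edge count already does the job.
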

\begin{proof}
Existence: if $G$ has some dominated moves, remove them.  If it has
reversible moves, bypass them.  These operations may introduce new dominated
and reversible moves, so continue doing this.  Do the same thing in all
subpositions.  The process cannot go on forever
because removing a dominated move or bypassing a reversible move always
strictly decreases the total number of edges in the game tree.  So at least one canonical
form exists.

Uniqueness: Suppose that $G$ and $H$ are two equal games in
canonical form.  Then because $G - H = 0$, we know that
every right option of $G - H$ is $\rhd 0$.  In particular,
for every right option $G^R$ of $G$, $G^R - H \rhd 0$,
so there is a left option of $G^R - H$ which is $\ge 0$.
This option will either be of the form $G^{RL} - H$
or $G^R - H^R$ (because of the minus sign).  But since
$G$ is assumed to be in canonical form, it has no reversible
moves, so $G^{RL} \not\ge G = H$.  Therefore $G^{RL} - H \not \ge 0$.
So there must be some $H^R$ such that $G^R \ge H^R$.

In other words, if $G$ and $H$ are both in canonical form,
and if they equal each other, then for every right option
$G^R$ of $G$, there is a right option $H^R$ of
$H$ such that $G^R \ge H^R$.  Of course we can apply the
same logic in the other direction, to $H^R$, and produce
another right option $(G^R)'$ of $G$, such that
\[ G^R \ge H^R \ge (G^R)'.\]
But since $G$ has no dominated moves, we must have
$G^R = (G^R)'$, and so $G^R = H^R$.  In fact, by induction,
we even have $G^R \equiv H^R$.

So every right
option of $G$ occurs as a right option of $H$.
Of course the same thing holds in the other direction,
so the set of right options of $G$ and $H$ must be equal.
Similarly the set of left options will be equal too.  Therefore
$G \equiv H$.

Minimality: If $G = H$, then $G$ and $H$
can both be reduced to canonical form, and by uniqueness
the canonical forms must be identical.  So if $H$ is of minimal size
in its equivalence class, then it cannot be made any smaller and
must equal the canonical form.  So any game of
minimal size in its equivalence class is identical to the unique canonical form.
\end{proof}

\section{Some Examples}

Let's demonstrate some of these ideas with the simplest four games:
\[ 0 \equiv \{|\}\]
\[ 1 \equiv \{0|\}\]
\[ -1  \equiv \{|0\}\]
\[ * \equiv \{0|0\}.\]
Each of these games is already in canonical form, because there can
be no dominated moves (as no game has more than two options on either side),
nor reversible moves (because every option is $0$, and $0$ has no options itself).

Let's try adding some games together:
\[ 1 + 1 \equiv \{1^L + 1, 1 + 1^L|1^R + 1, 1 + 1^R\} \equiv \{0+1|\} \equiv \{1|\}\]
This game is called $2$, and is again in canonical form, because the move to $1$
is not reversible (as $1$ has no right option!).

On the other hand, sometimes games become simpler when added together.  We already know
that $G - G = 0$ for any $G$, and here is an example:
\[ 1 + (-1) \equiv \{1^L + (-1), 1 + (-1)^L|1^R + (-1), 1 + (-1)^R\}
 \equiv \{-1|1\}\]
since no $1^R$ or $(-1)^L$ exist, and the only $1^L$ or $(-1)^R$ is $0$.
Now $\{-1|1\}$ is \emph{not} in canonical form, because the moves are reversible.
If Left moves to $-1$, then Right can reply with a move to $0$, which is
$\le \{-1|1\}$ (since we know $\{-1|1\}$ actually \emph{is} zero).
Similarly, the right option to $1$ is also reversible.  This yields
\[ \{-1|1\} = \{0^L|1\} \equiv \{|1\} = \cdots =  \{|\} = 0.\]

Likewise, $*$ is its own negative, and indeed we have
\[ * + * \equiv \{*^L + *|*^R + *\} \equiv \{*|*\}\]
which reduces to $\{|\}$ because $*$ on either side is reversed by a move to $0$.

For an example of a dominated move that appears, consider $1 + *$
\[ 1 + * \equiv \{1^L + *, 1 + *^L|1^R + *, 1 + *^R\} \equiv \{0 + *, 1 + 0| 1 + 0\} \equiv\{*,1|1\}.\]
Now it is easy to show that $* = \{0|0\} \le \{0|\} = 1$, (in fact, this follows because
$1$ is obtained from $*$ by deleting a right option), so $*$ is dominated and we actually have
\[ 1 + * = \{1|1\}.\]
Note that $1 + * \ge 0$, even though $* \not \ge 0$.  We will see that $*$ is an ``infinitesimal''
or ``small'' game which is insignificant in comparison to any number, such as $1$.

\chapter{Surreal Numbers}
\section{Surreal Numbers}
One of the more surprising parts of CGT is the manifestation of the numbers.

\begin{definition}
A \emph{(surreal) number} is a partizan game $x$ such that every option of $x$
is a number, and every left option of $x$ is $\lhd$ every right
option of $x$.
\end{definition}
Note that this is a recursive definition.  Unfortunately, it is not compatible
with equality: by the definition just
given, $\{*,1|\}$ is not a surreal number (since $*$ is not), but $\{1|\}$ is, even though
$\{1|\} = \{*,1|\}$.  But we can at least say that
if $G$ is a short game that is a surreal number, then its canonical form
is also a surreal number.  In general, we consider a game to be a surreal number
if it has some form which is a surreal number.

Some simple examples of surreal numbers are
\[ 0 = \{|\}\]
\[ 1 = \{0|\}\]
\[ -1 = \{|0\}\]
\[ \frac{1}{2} = \{0|1\}\]
\[ 2 = 1 + 1 = \{1|\}.\]
Explanation for these names will appear quickly.  But first, we prove
some basic facts about surreal numbers.

\begin{theorem}\label{twolessthans}
If $x$ is a surreal number, then $x^L < x < x^R$ for every left option
$x^L$ and every right option $x^R$.
\end{theorem}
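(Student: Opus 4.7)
The plan is to bootstrap the weak relations from Theorem~\ref{betwixt}, which already give $x^L \lhd x \lhd x^R$, up to the strict relations $x^L < x < x^R$. Recall $x^L < x$ is by definition the conjunction of $x^L \le x$ and $x^L \lhd x$, so only the first needs to be established; the other direction $x < x^R$ is symmetric. I would proceed by induction on the game tree of $x$ (equivalently, on the well-foundedness built into games), using the recursive definition of ``surreal number'' to apply the inductive hypothesis to options.

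To show $x^L \le x$, I would invoke the characterization of $\le$ given in the second half of Theorem~\ref{betwixt}: $x^L \le x$ holds unless either (i) some right option $x^R$ of $x$ satisfies $x^R \le x^L$, or (ii) some left option of $x^L$, which I will call $x^{LL}$, satisfies $x \le x^{LL}$. Case (i) is immediately ruled out by the defining property of surreal numbers, which requires $x^L \lhd x^R$ for every pair of options, and $x^L \lhd x^R$ is by definition the negation of $x^R \le x^L$.

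Case (ii) is the inductive step. Because $x$ is a surreal number, its left option $x^L$ is itself a surreal number, so by the induction hypothesis applied to $x^L$ I may assume $x^{LL} < x^L$; in particular $x^{LL} \le x^L$. Combined with $x^L \lhd x$ from Theorem~\ref{betwixt} and the mixed-transitivity fact (``if $G \le H$ and $H \lhd K$, then $G \lhd K$'') proven in the lemma just after the definition of $=$, this yields $x^{LL} \lhd x$, which is exactly the negation of $x \le x^{LL}$. So case (ii) is also impossible, and $x^L \le x$ as desired.

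I do not expect any serious obstacle; the main thing to get right is the bookkeeping between the symbols $<$, $\le$, $\lhd$ and the fact that the defining property of surreal numbers, together with the induction, exactly kills the two ``bad cases'' in the characterization of $\le$. The only conceptual point worth flagging is that the induction is legitimate because the definition of surreal number is recursive on positions, so ``$x^L$ is a surreal number'' is part of the hypothesis and the well-foundedness of short games ensures the induction terminates.
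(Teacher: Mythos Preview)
Your proposal is correct and follows essentially the same approach as the paper's proof: both use Theorem~\ref{betwixt} to reduce $x^L \le x$ to ruling out the two bad cases, dispatch case (i) directly from the surreal-number definition, and handle case (ii) by induction on $x^L$ together with a transitivity step. The only cosmetic difference is the order in which you chain the inequalities in case (ii) (you combine $x^{LL} \le x^L \lhd x$ to contradict $x \le x^{LL}$, while the paper combines $x \le x^{LL} \le x^L$ to contradict $x^L \lhd x$), but this is the same argument.
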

\begin{proof}
We already know that $x^L \lhd x$.  Suppose for the sake of contradiction
that $x^L \not \le x$.  Then either
$x^L$ is $\ge$ some $x^R$ (directly contradicting the definition of surreal number),
or $x \le x^{LL}$ for some left option $x^{LL}$ of $x^L$.  Now by induction,
we can assume that $x^{LL} < x^L$, so it would follow that
$x \le x^{LL} < x^L$, and so $x \le x^L$, contradicting the fact that
$x^L \lhd x$.  Therefore our assumption was false, and $x^L \le x$.
Thus $x^L < x$.  A similar argument shows that $x < x^R$.
\end{proof}

\begin{theorem}
Surreal numbers are closed under negation and addition.
\end{theorem}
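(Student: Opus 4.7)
The plan is to prove both closure properties by simultaneous induction on the joint complexity of the games involved (equivalently, on the well-founded ``option-of'' relation). For negation, the step is almost immediate: if $x$ is a surreal number, then by definition $-x \equiv \{-x^R \mid -x^L\}$, and by the inductive hypothesis the options $-x^L$ and $-x^R$ are again numbers. Theorem~\ref{twolessthans} yields $x^L < x^R$, and Theorem~\ref{congruences}(b) (together with the monotonicity of the order under negation) shows that $<$ flips under negation, giving $-x^R < -x^L$. So $-x$ meets the recursive definition.

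For addition, suppose $x$ and $y$ are numbers. Unfolding the definition,
\[ x + y \equiv \{x^L + y,\; x + y^L \mid x^R + y,\; x + y^R\},\]
and by the inductive hypothesis every such option is itself a number. What remains is to verify the four required inequalities between left and right options. The two ``parallel'' ones,
\[ x^L + y < x^R + y \qquad\text{and}\qquad x + y^L < x + y^R,\]
come from Theorem~\ref{twolessthans} applied to $x$ and to $y$, combined with the fact that adding a fixed game preserves the strict order. The two ``cross'' ones,
\[ x^L + y < x + y^R \qquad\text{and}\qquad x + y^L < x^R + y,\]
follow by chaining two such strict inequalities through the intermediate game $x + y$, using $x^L < x < x + (\text{nothing})$ etc.\ together with $y < y^R$.

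The only delicate point is that Theorem~\ref{everything} records only that $\mathcal{G}$ is a partially ordered abelian group, which directly supplies preservation of $\le$ under translation, but not manifestly preservation of the strict order $<$. I will patch this by unfolding $a < b$ as $a \le b$ together with $a \lhd b$: the first clause is preserved by the group-ordered axiom, and the second is preserved because $a \lhd b$ means $a - b \lhd 0$, which depends only on the difference $a - b$ and hence is unchanged upon adding the same game to both sides. With that small lemma in hand, the induction is purely structural; the main bookkeeping task is to remember that the induction must run simultaneously over $x$ and $y$, since each recursive step shrinks exactly one of them.
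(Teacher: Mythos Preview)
Your proof is correct and follows essentially the same approach as the paper: both use Theorem~\ref{twolessthans} together with translation-invariance of the order to compare options of $x+y$, and both handle negation by flipping the inequality. The only cosmetic difference is that the paper uniformly routes every comparison through the intermediate game $x+y$ (showing each left option is $< x+y$ and each right option is $> x+y$, then invoking transitivity once), whereas you treat the ``parallel'' pairs separately via $x^L < x^R$; your explicit justification that $\lhd$ (and hence $<$) is translation-invariant is a detail the paper leaves implicit.
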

\begin{proof}
Let $x$ and $y$ be surreal numbers.  Then the left options of
$x + y$ are of the form $x^L + y$ and $x + y^L$.  By the previous theorem,
these are less than $x + y$.  By induction they are surreal numbers.
By similar arguments, the right options of $x + y$ are greater than
$x + y$ and are also surreal numbers.  Therefore every left option
of $x + y$ is less than every right option of $x + y$, and every option
is a surreal number.  So $x + y$ is a surreal number.

Similarly, if $x$ is a surreal number, we can assume inductively that
$-x^L$ and $-x^R$ are surreal numbers for every $x^L$ and $x^R$.  Then
since $x^L \lhd x^R$ for every $x^L$ and $x^R$, we have
$-x^L \lhd -x^R$ for every $x^L$ and $x^R$.  So $-x = \{-x^L|-x^R\}$
is a surreal number.
\end{proof}

\begin{theorem}
Surreal numbers are \emph{totally} ordered by $\ge$.  That is, two surreal numbers
are never incomparable.
\end{theorem}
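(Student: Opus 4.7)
The plan is to prove the contrapositive: if $x$ and $y$ are surreal numbers and $x \not\le y$, then $y \le x$. Since (from the discussion after the definition of outcomes) exactly one of $x \le y$ and $x \rhd y$ holds, and similarly for the pair $(y,x)$, this suffices to exclude the fuzzy case $x \parallel y$.

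The first step will be to feed $x \not\le y$ into Theorem~\ref{betwixt}, which tells us that either some right option $y^R$ satisfies $y^R \le x$, or some left option $x^L$ satisfies $y \le x^L$. So the argument splits into exactly these two cases.

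The second step is to collapse each case using Theorem~\ref{twolessthans}, which for surreal numbers upgrades the generic inequality $x^L \lhd x \lhd x^R$ to the strict $x^L < x < x^R$. In the first case, $y < y^R \le x$ by transitivity of $<$ with $\le$ (established in the general transitivity lemma earlier), giving $y < x$, so in particular $y \le x$. In the second case, symmetrically, $y \le x^L < x$, again giving $y \le x$. Either way we conclude $y \le x$, as required.

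The main potential obstacle is psychological rather than real: one might expect to need a fresh joint induction on the game trees of $x$ and $y$, splitting $x \parallel y$ into four subcases according to which option witnesses $x \lhd y$ and which witnesses $x \rhd y$, and deriving a contradiction from the surreal-number condition on options together with the inductive hypothesis. But Theorem~\ref{twolessthans} has already done precisely this inductive work, and combined with Theorem~\ref{betwixt} the present theorem drops out as a direct corollary with no further induction. The only thing to double-check is that the transitivity step mixing $<$ and $\le$ is valid, which is exactly one of the cases handled in the transitivity lemma preceding Theorem~\ref{everything}.
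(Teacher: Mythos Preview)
Your proof is correct, but it takes a different route from the paper. The paper first invokes the immediately preceding theorem (closure of surreal numbers under addition and negation) to reduce to the one-variable statement ``no surreal number is fuzzy with $0$,'' and then dispatches that by a minimal-counterexample argument: if $x\parallel 0$ were minimal, any $x^L\ge 0$ would give $0\le x^L < x$ contradicting fuzziness, so every $x^L\lhd 0$, and symmetrically every $x^R\rhd 0$, forcing $x=0$.

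Your argument instead works directly with the pair $(x,y)$, feeding $x\not\le y$ into Theorem~\ref{betwixt} and then using Theorem~\ref{twolessthans} to close each case. This is a clean alternative: it avoids any appeal to closure under subtraction and contains no explicit induction (the inductive content having already been absorbed into Theorem~\ref{twolessthans}). The paper's version, on the other hand, isolates the pleasant intermediate fact that no surreal number is $\parallel 0$, and its reduction-to-zero trick is a pattern reused elsewhere. Both are short; yours is arguably the more self-contained once Theorems~\ref{betwixt} and~\ref{twolessthans} are in hand.
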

\begin{proof}
If $x$ and $y$ are surreal numbers, then by the previous theorem $x - y$ is also
a surreal number.  So it suffices to show that no surreal number is fuzzy (incomparable to zero).
Let $x$ be a minimal counterexample.  If any left option $x^L$ of $x$ is
$\ge 0$, then $0 \le x^L < x$, contradicting fuzziness of $x$.  So every
left option of $x$ is $\lhd 0$.  That means that if Left moves first in $x$,
she can only move to losing positions.  By the same argument, if Right 
moves first in $x$, then he loses too.  So no matter who goes first,
they lose.  Thus $x$ is a zero game, not a fuzzy game.
\end{proof}

John Conway defined a way to multiply\footnote{
His definition is
\[ xy = \{x^Ly + xy^L - x^Ly^L, x^Ry + xy^R - x^Ry^R|x^Ly + xy^R - x^Ly^R, x^Ry + xy^L - x^Ry^L\}\]
Here $x^L$, $x^R$, $y^L$, and $y^R$ have their usual meanings, though within
an expression like $x^Ly + xy^L - x^Ly^L$, the two $x^L$'s should be the same.} surreal numbers,
making the class \textbf{No} of all surreal numbers into a totally 
ordered real-closed field which turns out to contain all the real numbers
and transfinite ordinals.  We refer the interested reader to Conway's book \emph{On Numbers and Games}.

\section{Short Surreal Numbers}
If we just restrict to short games, the short surreal numbers end up being in correspondence with the \emph{dyadic rationals}
 - rational
numbers of the form $i/2^j$ for $i, j \in \mathbb{Z}$.  We now work to
show this, and to give the rule for determining which numbers are which.

We have already shown that the (short) surreal numbers form a totally ordered
abelian group.  In particular, if $x$ is any nonzero surreal number, then the integral
multiples of $x$ form a group isomorphic to $\mathbb{Z}$ with its usual order, because
$x$ cannot be incomparable to zero.

\begin{lemma}\label{proto-simplicity}
Let $G \equiv \{G^L|G^R\}$ be a game, and $\mathcal{S}$ be the class of all surreal numbers
$x$ such that $G^L \lhd x \lhd G^R$ for every left option $G^L$ and every
right option $G^R$.  Then if $\mathcal{S}$ is nonempty, $G$ equals a surreal number,
and there is a surreal number $y \in \mathcal{S}$ none of whose options are in $\mathcal{S}$.
This $y$ is unique up to equality, and in fact equals $G$.
\end{lemma}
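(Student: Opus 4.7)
The plan is to handle existence of $y$ by a descent argument, then to prove $G = y$ by a double application of Theorem~\ref{betwixt}, and then to deduce uniqueness for free.

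First I would produce the minimal surreal number $y \in \mathcal{S}$. Pick any $x_0 \in \mathcal{S}$. If no option of $x_0$ is in $\mathcal{S}$, take $y \equiv x_0$. Otherwise, recursively pick some option $x_1$ of $x_0$ that lies in $\mathcal{S}$, and continue. Because an option of a surreal number is a surreal number, every $x_i$ is a surreal number in $\mathcal{S}$. Because we are restricted to short games, the well-foundedness of the position relation forces this chain to terminate, and the final term $y$ is a surreal number in $\mathcal{S}$ with no option in $\mathcal{S}$.

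Next I would show $y = G$. For $y \le G$, apply Theorem~\ref{betwixt}: we need to rule out (i) some right option $G^R$ with $G^R \le y$, and (ii) some left option $y^L$ with $y^L \ge G$. Case (i) is immediate from $y \lhd G^R$. For case (ii), suppose $y^L \ge G$. The option $y^L$ is a surreal number with $y^L < y$ by Theorem~\ref{twolessthans}, so $y^L \le y \lhd G^R$ gives $y^L \lhd G^R$ by Lemma~\ref{basic}'s transitivity cousin (the mixed $\le$/$\lhd$ rule in the transitivity lemma), and $G^L \lhd G \le y^L$ gives $G^L \lhd y^L$. Thus $y^L \in \mathcal{S}$, contradicting the minimality of $y$. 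The other direction $G \le y$ is symmetric: again by Theorem~\ref{betwixt} we must rule out some $y^R \le G$ (the other alternative $G^L \ge y$ is excluded by $G^L \lhd y$). If $y^R \le G$, then $y^R \le G \lhd G^R$ yields $y^R \lhd G^R$, and if any $G^L$ satisfied $G^L \ge y^R$ we would get $G^L \ge y^R \ge y$ using $y^R > y$, contradicting $G^L \lhd y$; hence $G^L \lhd y^R$. So $y^R \in \mathcal{S}$, contradicting minimality again. Thus $G = y$, and in particular $G$ equals a surreal number.

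For uniqueness, any other surreal $y' \in \mathcal{S}$ with no option in $\mathcal{S}$ equals $G$ by the same argument, hence equals $y$. The main obstacle I anticipate is the middle step: the arithmetic with $\lhd$ and $\le$ is fiddly because an option $G^L$ need not itself be a surreal number, so we cannot upgrade $G^L \lhd y$ to $G^L < y$. The trick that saves us is to use strict inequality on the surreal side (the fact that $y^L < y < y^R$ from Theorem~\ref{twolessthans}) and to combine it with the mixed transitivity rule, so that inequalities like $G^L \ge y^R$ collapse into the forbidden $G^L \ge y$.
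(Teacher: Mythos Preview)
Your proof is correct and follows essentially the same route as the paper's: produce a minimal $y \in \mathcal{S}$ by descent, then use Theorem~\ref{betwixt} together with $y^L < y < y^R$ (Theorem~\ref{twolessthans}) and the mixed $\le/\lhd$ transitivity rule to force any offending option of $y$ back into $\mathcal{S}$, contradicting minimality. The paper only writes out the $y \le G$ direction and appeals to symmetry for the other, whereas you spell out both; one small slip is that the transitivity rule you invoke is not part of Lemma~\ref{basic} (which concerns algebraic identities of $+$ and $-$) but the separate transitivity lemma proved just after the definition of $G \le H$.
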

So roughly speaking, $\{G^L|G^R\}$ is always the simplest surreal number
between $G^L$ and $G^R$, unless there is no such number, in which case $G$ is not a surreal number.
\begin{proof}
Suppose that $\mathcal{S}$ is nonempty.  It is impossible for every element of $\mathcal{S}$ to have an option in $\mathcal{S}$,
or else $\mathcal{S}$ would be empty by induction.
Let $y$ be some element of $\mathcal{S}$, none of whose options are in $\mathcal{S}$.
Then it suffices to show that $y = G$, for then $G$ will equal a surreal number,
and $y$ will be unique.

By symmetry, we need only show that $y \le G$.  By Theorem~\ref{betwixt}, this will be true unless
$y \ge G^R$ for some right option $G^R$ (but this can't happen because
$y \in \mathcal{S}$), or $G \le y^L$ for some left option $y^L$ of $y$.  Suppose then
that $G \le y^L$ for some $y^L$.
By choice of $y$, $y^L \notin \mathcal{S}$.

So for any $G^L$, we have $G^L \lhd G \le y^L$.  But also, for any
$G^R$, we have $y^L \le y \lhd G^R$, so that $y^L \lhd G^R$ for any $G^R$.
So $y^L \in \mathcal{S}$, a contradiction.
\end{proof}

\begin{lemma}\label{dyadic-rationals-to-surreals}
Define the following infinite sequence:
\[ a_0 \equiv 1 \equiv \{0|\}\]
and
\[ a_{n+1} \equiv \{0|a_n\}\]
for $n \ge 0$.  Then every term in this sequence is a positive
surreal number, and $a_{n+1} + a_{n+1} = a_n$ for $n \ge 0$.
Thus we can embed the dyadic rational numbers into the surreal numbers
by sending $i/2^j$ to
\[ \underbrace{a_j + a_j + \cdots + a_j}_{\textrm{$i$ times}}\]
where the sum of $0$ terms is $0$.
This map is an order-preserving homomorphism.
\end{lemma}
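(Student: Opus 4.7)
My plan is to verify the three assertions in order: positivity and surreal-numberhood of the $a_n$, the halving identity $a_{n+1}+a_{n+1}=a_n$, and then the embedding properties.

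For the first assertion I will induct on $n$. The base case $a_0 \equiv 1 \equiv \{0\mid\}$ was already shown in the previous section to be a positive surreal number. For the inductive step, assume $a_n$ is a positive surreal number. Then $a_{n+1} \equiv \{0\mid a_n\}$ has surreal options $0$ and $a_n$, and the ``left $\lhd$ right'' condition reduces to $0 \lhd a_n$, which follows from $a_n > 0$. Hence $a_{n+1}$ is a surreal number, and applying Theorem~\ref{twolessthans} to its left option $0$ gives $0 < a_{n+1}$.

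The halving identity is the core of the lemma, and I will prove it by induction on $n$ using the simplicity principle (Lemma~\ref{proto-simplicity}). Unwinding the addition formula, $a_{n+1} + a_{n+1}$ has left options of the form $0 + a_{n+1} = a_{n+1}$ and right options of the form $a_n + a_{n+1}$, so up to dominated/duplicate options its form is $\{a_{n+1}\mid a_n + a_{n+1}\}$. I will propose $a_n$ as the candidate value. The inequalities $a_{n+1} \lhd a_n \lhd a_n + a_{n+1}$ hold because $a_n$ is a right option of $a_{n+1}$ (giving $a_{n+1} < a_n$ by Theorem~\ref{twolessthans}) and because $a_{n+1} > 0$ added to $a_n$ yields something strictly greater. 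To apply the simplicity lemma I must check that no option of $a_n$ lies in the open interval $(a_{n+1}, a_n + a_{n+1})$: the left option $0$ is already $< a_{n+1}$, and (for $n \ge 1$) the right option $a_{n-1}$ satisfies $a_{n-1} = a_n + a_n > a_n + a_{n+1}$ by the inductive hypothesis applied one step earlier. The main obstacle is getting the simplicity argument to bootstrap correctly: one must verify that the base case $n=0$ (where $a_0 = 1$ has no right option, so only the left option $0$ needs checking) goes through, and that the inductive step uses the previously established case $a_n + a_n = a_{n-1}$ exactly where needed, rather than circularly.

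For the embedding, I first check well-definedness. If $i/2^j = i'/2^{j'}$ with $j' \ge j$, then $i' = 2^{j'-j} i$, and iterating the halving identity gives $a_j = 2^{j'-j} a_{j'}$, so $i \cdot a_j = i' \cdot a_{j'}$ as integer multiples in the abelian group $\mathcal{G}$. That the map is a group homomorphism then follows by passing to a common denominator and using distributivity of integer multiplication over addition in $\mathcal{G}$, which is automatic in any abelian group. Finally, order-preservation reduces to showing that $\phi(x) > 0$ iff $x > 0$, and by linearity this reduces to showing $i \cdot a_j > 0$ iff $i > 0$; this follows from $a_j > 0$ together with the observation (noted just before the lemma) that integer multiples of a nonzero surreal number form a subgroup order-isomorphic to $\mathbb{Z}$.
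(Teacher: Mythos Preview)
Your proposal is correct and follows essentially the same approach as the paper: induction for positivity, the simplicity principle (Lemma~\ref{proto-simplicity}) applied to the computed form $a_{n+1}+a_{n+1}\equiv\{a_{n+1}\mid a_n+a_{n+1}\}$ with the inductive hypothesis ruling out the right option $a_{n-1}$ of $a_n$, and then the routine embedding check. You are in fact a bit more explicit than the paper, which leaves the final embedding claim as an exercise and is slightly sloppy about the base case $n=0$ (where $a_0$ has no right option to check).
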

\begin{proof}
Note that if $x$ is a positive surreal number, then $\{0|x\}$ is clearly
a surreal number, and it is positive by Theorem~\ref{twolessthans}.  So
every term in this sequence is a positive surreal number, because $1$ is.

For the second claim, proceed by induction.  Note that
\[ a_{n+1} + a_{n+1} \equiv \{a_{n+1} | a_{n+1} + a_n\}.\]
Now by Theorem~\ref{twolessthans}, $0 < a_{n+1} < a_n$, so that
\[ a_{n+1} < a_n < a_{n+1} + a_n,\]
or more specifically
\[ a_{n+1} \lhd a_n \lhd a_{n+1} + a_n.\]
By Lemma~\ref{proto-simplicity}, it will follow that $a_{n+1} + a_{n+1} = a_n$ as long
as no option $x^*$ of $a_n$ satisfies
\begin{equation} a_{n+1} \lhd x^* \lhd a_{n+1} + a_n.\label{goal1}\end{equation}
Now $a_n$ has the option $0$, which fails the left side of (\ref{goal1}) because
$a_{n+1}$ is positive, and the only other option of $a_n$ is $a_{n-1}$, which
only occurs in the case $n > 1$.  By induction, we know that
\[ a_n + a_n = a_{n-1}.\]
Since $a_{n+1} < a_n$, it's clear that
\[ a_{n+1} + a_n < a_n + a_n = a_{n-1},\]
so that $a_{n-1} \lhd a_{n+1} + a_n$ is false.  So no option
of $a_n$ satisfies (\ref{goal1}), but $a_n$ does.  Therefore
by Lemma~\ref{proto-simplicity}, $\{a_{n+1}|a_{n+1} + a_n\} = a_n$.

The remaining claim follows easily and is left as an exercise to the reader.
\end{proof}

\begin{definition}
A surreal number is \emph{dyadic} if it occurs in the range of the map
from dyadic rationals to surreal numbers in the previous theorem.
\end{definition}
Note that these are closed under addition, because the map from
the theorem is a homomorphism.

\begin{theorem}
Every short surreal number is dyadic.
\end{theorem}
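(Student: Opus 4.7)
The plan is to induct on the number of positions of $x$, applying the simplicity lemma (Lemma~\ref{proto-simplicity}) at each step. The base case $x \equiv 0$ is immediate, since $0$ is dyadic. For the inductive step, the shortness of $x$ means it has finitely many options, each of which is a shorter short surreal number and thus dyadic by hypothesis. Let $\ell$ be the greatest left option of $x$ (formally $-\infty$ if none) and $r$ the least right option (formally $+\infty$ if none); the surreal-number condition together with Theorem~\ref{twolessthans} and the total ordering of surreals gives $\ell < r$.

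Let $\mathcal{S}$ be the collection of surreal numbers strictly between $\ell$ and $r$; this is exactly the class appearing in Lemma~\ref{proto-simplicity} for the form $\{x^L \mid x^R\}$. I plan to exhibit a dyadic $d \in \mathcal{S}$ that admits a form $\{d^L \mid d^R\}$ with $d^L \le \ell$ and $d^R \ge r$; the simplicity lemma then forces $x = d$, completing the induction. The choice of $d$ is the standard ``simplest dyadic between $\ell$ and $r$'': if $\ell < 0 < r$, take $d = 0$ (which has no options at all); otherwise, if some integer lies strictly between $\ell$ and $r$, take the integer $n$ of smallest absolute value in $(\ell, r)$; otherwise $\ell$ and $r$ lie in a common unit interval $[n, n+1]$, and I pick $n + k/2^j \in (\ell, r)$ with $j$ chosen as small as possible.

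The auxiliary fact needed is a description of the canonical form of a dyadic: positive integers $n$ equal $\{n-1 \mid\ \}$, negative integers equal $\{\ \mid n+1\}$, and $i/2^j$ in lowest terms with $j \ge 1$ equals $\{(i-1)/2^j \mid (i+1)/2^j\}$ (again in lowest terms). Both of these follow from a short secondary induction using Lemma~\ref{dyadic-rationals-to-surreals} to identify $a_j$ with $1/2^j$, combined with another application of Lemma~\ref{proto-simplicity}: the only surreal number simpler than its proposed options that lies between $(i-1)/2^j$ and $(i+1)/2^j$ is $i/2^j$ itself. Termination of the dyadic search is immediate, since $r - \ell > 0$ forces some grid $\{k/2^j\}$ to be fine enough to land inside $(\ell, r)$.

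The main obstacle is the bookkeeping in this canonical-form lemma for dyadics and the verification that the minimality of $j$ (and of $|n|$ in the integer case) really does place both $d^L$ and $d^R$ outside the open interval $(\ell, r)$. Once that is handled, the outer induction reduces to a single invocation of Lemma~\ref{proto-simplicity}, with no further subtleties.
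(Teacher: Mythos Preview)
Your approach is correct, but it is organized differently from the paper's.  You front-load the canonical-form computation for dyadics (integers as $\{n-1\mid\,\}$ or $\{\,\mid n+1\}$, and $i/2^j$ with $i$ odd as $\{(i-1)/2^j\mid(i+1)/2^j\}$), and then use those forms to name the specific dyadic that $x$ must equal.  The paper instead postpones the canonical forms to the next few theorems and uses an abstract device here: it calls a game form \emph{all-dyadic} when every position equals a dyadic surreal, observes that all-dyadic forms are closed under addition and subtraction (so every dyadic, being a $\mathbb{Z}$-linear combination of the $a_n$ from Lemma~\ref{dyadic-rationals-to-surreals}, has an all-dyadic form), and then reruns the argument of Lemma~\ref{proto-simplicity} with $\mathcal{S}$ restricted to all-dyadic surreals in the interval.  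Density of the dyadic rationals makes this restricted $\mathcal{S}$ nonempty, and it is still closed under passing to an option that stays in the interval, so the lemma yields $x\in\mathcal{S}$ without ever identifying which dyadic it is.

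What each buys: your route is more explicit and essentially proves the full simplicity rule (Theorem~\ref{simplicity}) in the same breath; the paper's route keeps this proof short by outsourcing the concrete identification to later theorems.  One caution on your auxiliary step: establishing $\{(i-1)/2^j\mid(i+1)/2^j\}=i/2^j$ ``by another application of Lemma~\ref{proto-simplicity}'' is not quite as direct as you suggest, since to invoke that lemma you would need a form of $i/2^j$ whose options already lie outside the interval, and the defining form of $i/2^j$ as a sum of $a_k$'s does not obviously have that property.  The paper's later proof instead shows, by induction on $j$, that $\{(i-1)/2^j\mid(i+1)/2^j\}$ doubled equals $i/2^{j-1}$; you will likely want that computation rather than a second call to the simplicity lemma.
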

\begin{proof}
Let's say that a game $G$ is \emph{all-dyadic} if every position of $G$ (including $G$)
equals ($=$) a dyadic surreal number.  (This depends on the form of $G$, not just its value.)

We first claim that all-dyadic games are closed under addition.  This follows easily
by induction and the fact that the values of dyadic surreal numbers are closed under addition.
Specifically, if $x = \{x^L|x^R\}$ and $y = \{y^L|y^R\}$ are all-dyadic, then $x^L$, $x^R$, $y^L$,
and $y^R$ are all-dyadic, so by induction $x + y^L$, $x^L + y$, $x + y^R$, $x^R + y$ are all
all-dyadic.  Therefore $x + y$ is, since it equals a dyadic surreal number itself, because $x$ and $y$ do.

Similarly, all-dyadic games are closed under negation, and therefore subtraction.


Next, we claim that every dyadic surreal number has an all-dyadic form.  The dyadic
surreal numbers are sums of the $a_n$ games of Lemma~\ref{dyadic-rationals-to-surreals}, and by construction,
the $a_n$ are all-dyadic in form.  So since all-dyadic games are closed under addition and subtraction,
every dyadic surreal number has an all-dyadic form.

We can also show that if $G$ is a game, and there is some all-dyadic surreal number $x$
such that $G^L \lhd x \lhd G^R$ for every $G^L$ and $G^R$, then $G$ equals
a dyadic surreal number.  The proof is the same as Lemma~\ref{proto-simplicity} except that we now
let $\mathcal{S}$ be the set of all all-dyadic surreal numbers between all $G^L$
and all $G^R$.  The only property of $\mathcal{S}$ we used were that every $x \in \mathcal{S}$
satisfies $G^L \lhd x \lhd G^R$, and that if $y$ is an option of $x \in \mathcal{S}$,
and $y$ also satisfies $G^L \lhd y \lhd G^R$, then $y \in \mathcal{S}$.  These conditions are still
satisfied if we restrict $\mathcal{S}$ to all-dyadic surreal numbers.

Finally, we prove the theorem.
We need to show that if $L$ and $R$ are finite sets of dyadic surreal
numbers, and every element of $L$ is less than every element of
$R$, then $\{L|R\}$ is also dyadic.  (All short surreal numbers are built up in this way, so this suffices.)
By the preceding paragraph,
it suffices to show that some dyadic surreal number is greater than every
element of $L$ and less than every element of $R$.  This follows
from the fact that the dyadic rational numbers are a dense total
order without endpoints, and that the dyadic surreal numbers are in
order-preserving correspondence with the dyadic rational numbers.
\end{proof}

From now on, we identify dyadic rationals and their corresponding
short surreal numbers.

We next determine the canonical form of every short number
and provide rules to decide which number
$\{L|R\}$ is, when $L$ and $R$ are sets of numbers.

\begin{theorem}
Let $b_0 \equiv \{|\}$ and $b_{n+1} \equiv \{b_n|\}$
for $n \ge 0$.  Then
$b_n$ is the canonical form of positive integers $n$ for $n \ge 0$.
\end{theorem}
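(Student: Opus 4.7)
The plan is to prove by strong induction on $n$ the conjunction of two statements: $b_n = n$ as an element of $\mathcal{G}$, and the form $b_n$ is already in canonical form. Carrying both statements together is essential because the canonical-form check in the inductive step requires knowing that the deeper positions are themselves canonical.

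The base case $n=0$ is immediate: $b_0 \equiv \{|\} \equiv 0$ has no options, so it is vacuously free of dominated or reversible moves, and it is by definition the integer $0$. For the inductive step, assume $b_n = n$ and $b_n$ is in canonical form. I would compute $b_n + 1$ directly from the recursive definition of addition. Since $b_n$ has no right options (either $b_n$ is $\{|\}$, or its form $\{b_{n-1}|\}$ has only a left option) and $1 \equiv \{0|\}$ has only the left option $0$, the formula collapses to
\[
b_n + 1 \;\equiv\; \{\, b_n^L + 1,\; b_n \mid \,\}.
\]
For $n = 0$ the list of $b_n^L + 1$ terms is empty and we already get $b_0 + 1 \equiv \{b_0|\} \equiv b_1$. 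For $n \ge 1$, the only $b_n^L$ is $b_{n-1}$, so by the inductive hypothesis $b_{n-1} + 1 = n = b_n$. Applying Theorem~\ref{congruences}(c) to replace the two equal left options by a single representative, and then eliminating the duplicate as a dominated move, yields $b_n + 1 = \{b_n \mid\,\} \equiv b_{n+1}$. Combined with the inductive hypothesis $b_n = n$, this gives $b_{n+1} = n + 1$, which is the definition of the integer $n+1$ under the embedding of Lemma~\ref{dyadic-rationals-to-surreals}.

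It remains to verify that $b_{n+1} \equiv \{b_n \mid\,\}$ is in canonical form. There are no right options, so nothing on the right can be dominated or reversible. There is a single left option, so it cannot be dominated. Reversibility of $b_n$ would require a right option $b_n^R$ with $b_n^R \le b_{n+1}$; but the inductive hypothesis says $b_n$ is in canonical form, and the form $b_n$ has no right options whatsoever, so reversibility fails vacuously. By the inductive hypothesis all deeper positions (the positions of $b_n$) are already canonical, so $b_{n+1}$ is canonical as required.

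There is no serious obstacle here: the only subtlety is being sure the collapse of the expansion of $b_n + 1$ to $\{b_n\mid\}$ really goes through, which requires Theorem~\ref{congruences}(c) to pass from the form-level identity to the value-level identity before invoking the dominated-moves theorem. The whole argument hinges on the fact that $b_n$ has no right options, which is exactly what makes both the sum computation and the canonical-form check go through so cleanly.
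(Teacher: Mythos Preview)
Your proof is correct and follows essentially the same route as the paper: compute $b_n + 1$ from the definition of addition, use the inductive hypothesis to collapse the two left options $b_{n-1}+1$ and $b_n$ into one, and observe that $b_{n+1}$ has no dominated or reversible moves because it has at most one left option and no right options. The only organizational difference is that the paper dispatches the canonical-form check for all $b_n$ in a single sentence upfront (since every $b_n$ has $\le 1$ option on each side and no right options at any depth), while you carry it through the induction; both are fine.
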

\begin{proof}
It's easy to see that every $b_n$ is in canonical form: there
are no dominated moves because there are never two options for
Left or for Right, and there are no reversible moves,
because no $b_n$ has any right options.

Since $b_0 = 0$, it remains to see that $b_{n+1} = 1 + b_n$.
We proceed by induction.  The base case $n = 0$ is clear,
since $b_1 = \{b_0|\} = \{0|\} = 1$, by definition of the game $1$.

If $n > 0$, then
\[ 1 + b_n = \{0|\} + \{b_{n-1}|\} = \{1 + b_{n-1}, 0 + b_n|\}.\]
By induction $1 + b_{n-1} = b_n$, so this is just $\{b_n|\} = b_{n+1}$.
\end{proof}
So for instance, the canonical form of $7$ is $\{6|\}$.
Similarly, if we let $c_0 = 0$ and $c_{n+1} = \{|c_n\}$,
then $c_n = -n$ for every $n$, and these are in canonical form.
For example, the canonical form of $-23$ is $\{|-22\}$.

\begin{theorem}\label{intsimpl}
If $G \equiv \{G^L|G^R\}$ is a game, and there is at least one
integer $m$ such that $G^L \lhd m \lhd G^R$ for all $G^L$ and $G^R$,
then there is a unique such $m$ with minimal magnitude, and
$G$ equals it.
\end{theorem}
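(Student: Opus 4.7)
The plan is to deduce this from the simplicity lemma (Lemma~\ref{proto-simplicity}). Let $\mathcal{S}$ be the class of surreal numbers $x$ satisfying $G^L \lhd x \lhd G^R$ for all $G^L$, $G^R$, and let $S = \mathcal{S} \cap \mathbb{Z}$. Integers are surreal numbers, so the hypothesis says $S$ is nonempty; hence $\mathcal{S}$ is nonempty, and Lemma~\ref{proto-simplicity} tells us that $G$ equals a surreal number in $\mathcal{S}$, namely the unique (up to equality) such number having no options in $\mathcal{S}$. My goal is to exhibit the minimum-magnitude integer $m$ as such a number.

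First I would establish that a unique $m \in S$ of minimum magnitude exists. Existence is immediate, since $|{\cdot}|$ takes values in $\mathbb{Z}_{\ge 0}$ on the nonempty set $S$. For uniqueness, two distinct integers of equal absolute value must have the form $k$ and $-k$ for some $k \ge 1$, so it suffices to rule out $\{k,-k\} \subseteq S$. Suppose $-k \in S$. For every left option $G^L$, we have $G^L \lhd -k$, and I would argue that this forces $G^L \not\ge 0$: otherwise $G^L \ge 0 \ge -k$ would give $G^L \ge -k$ by transitivity of $\ge$. Symmetrically, $k \in S$ forces every right option to satisfy $G^R \not\le 0$. But then $G^L \lhd 0 \lhd G^R$ for all options, placing $0 \in S$ with magnitude $0 < k$, contradicting minimality of both $k$ and $-k$. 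So the minimum-magnitude integer $m$ is unique.

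Next I would verify that $m$, in the canonical form produced by the preceding theorem ($\{m-1\,|\,\}$ if $m>0$, $\{\,|\,m+1\}$ if $m<0$, and $\{\,|\,\}$ if $m=0$), has no option lying in $\mathcal{S}$. Its only option, if any, is the integer $m\mp 1$, whose absolute value is strictly smaller than $|m|$ (the case $m=\pm 1$ gives option $0$ of magnitude $0$; otherwise the magnitude drops by $1$). If this option were in $\mathcal{S}$, then being an integer it would lie in $S$, contradicting the minimality of $|m|$. Hence $m$ is a surreal number in $\mathcal{S}$ all of whose options lie outside $\mathcal{S}$, and Lemma~\ref{proto-simplicity} concludes that $G = m$.

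The main obstacle is the uniqueness of the minimum-magnitude integer: one has to notice that a would-be tie between $k$ and $-k$ is impossible because the conditions on $G^L$ and $G^R$ that rule out $0 \in S$ are individually incompatible with $-k$ and $k$ belonging to $S$, forcing $0$ itself into $S$ and collapsing the tie.
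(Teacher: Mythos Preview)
Your proposal is correct and follows essentially the same approach as the paper: pick the integer $m$ of minimum magnitude in its canonical form $b_m$ or $c_{-m}$, observe that its sole option (if any) is an integer of strictly smaller magnitude and hence not in $\mathcal{S}$, and invoke Lemma~\ref{proto-simplicity}. The only minor difference is that you prove uniqueness of $m$ directly (via the $\{k,-k\}\Rightarrow 0\in S$ argument), whereas the paper implicitly obtains uniqueness a posteriori from the conclusion $m=G$: any two minimum-magnitude integers would both equal $G$, hence each other.
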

\begin{proof}
The proof is the same as the proof of Lemma~\ref{proto-simplicity},
but we let $\mathcal{S}$ be the set of \emph{integers}
between the left and right options of $G$, and we use their
canonical forms derived in the preceding theorem.

That is, we let $\mathcal{S}$ be the set
\[ \mathcal{S} = \{b_n\,:\, G^L \lhd b_n \lhd G^R \textrm{ for all $G^L$ and $G^R$} \}
\cup \{c_n\,:\, G^L \lhd c_n \lhd G^R \textrm{ for all $G^L$ and $G^R$} \}\]
Then by assumption (and the fact that every integer equals a $b_n$ or a $c_n$),
$\mathcal{S}$ is nonempty.  Let $m$ be an element of $\mathcal{S}$ with
minimal magnitude.  I claim that no option of $m$ is in $\mathcal{S}$.
If $m$ is $0 = b_0 = c_0$, this is obvious, since $m$ has no options.
If $m > 0$, then $m = b_m$, and the only option of $b_m$ is $b_{m-1}$,
which has smaller magnitude, and thus cannot be in $\mathcal{S}$.  Similarly
if $m < 0$, then $m = c_{-m}$, and the only option of $m$
is $m + 1$, which has smaller magnitude.

So no option of $m$ is in $\mathcal{S}$.  And in fact no option of $m$
is in the broader $\mathcal{S}$ of Lemma~\ref{proto-simplicity}, 
so $m = G$.
\end{proof}

\begin{theorem}
If $m/2^n$ is a non-integral dyadic rational, and $m$ is odd, then the canonical form
of $m/2^n$ is
\[ \{\frac{m-1}{2^n}|\frac{m+1}{2^n}\}.\]
\end{theorem}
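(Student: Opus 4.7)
The plan is to prove the equality and the canonicity simultaneously by strong induction on $n$, leveraging Lemma~\ref{proto-simplicity} (the simplicity lemma) together with Theorem~\ref{intsimpl} (the integer case) as a ``base.'' The inductive hypothesis will provide explicit canonical forms for $\frac{m-1}{2^n}$ and $\frac{m+1}{2^n}$: since $m$ is odd, each of these is either an integer (so Theorem~\ref{intsimpl} applies) or, in lowest terms, of the form $m'/2^{n'}$ with $m'$ odd and $n' < n$ (so the inductive hypothesis applies). The key quantitative consequence I will extract is that in the canonical form of $\frac{m\pm 1}{2^n}$, any left option is at most $\frac{m\pm 1}{2^n} - 2^{1-n}$ and any right option is at least $\frac{m\pm 1}{2^n} + 2^{1-n}$ (whenever such options exist), because the ``step size'' in those canonical forms is $2^{-n'} \geq 2^{1-n}$ in the non-integer case and $1 \geq 2^{1-n}$ in the integer case.

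To establish $\{\tfrac{m-1}{2^n} \mid \tfrac{m+1}{2^n}\} = m/2^n$, I would apply Lemma~\ref{proto-simplicity} to the game $G = \{\tfrac{m-1}{2^n} \mid \tfrac{m+1}{2^n}\}$, with $\mathcal{S}$ the set of surreal numbers strictly between $\tfrac{m-1}{2^n}$ and $\tfrac{m+1}{2^n}$. It suffices to exhibit some form of $m/2^n$ whose options all avoid $\mathcal{S}$. I take the form $m/2^n = \tfrac{m-1}{2^n} + a_n$, with $\tfrac{m-1}{2^n}$ written in its canonical form and $a_n = \{0 \mid a_{n-1}\}$. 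The sum definition produces four kinds of options: the ``trivial'' options $\tfrac{m-1}{2^n} + 0 = \tfrac{m-1}{2^n}$ and $\tfrac{m-1}{2^n} + a_{n-1} = \tfrac{m+1}{2^n}$, which sit exactly at the boundary of $\mathcal{S}$; and the options $(\tfrac{m-1}{2^n})^L + a_n$ and $(\tfrac{m-1}{2^n})^R + a_n$, which by the step-size bound lie at most $\tfrac{m-1}{2^n} - 2^{1-n} + 2^{-n} < \tfrac{m-1}{2^n}$ and at least $\tfrac{m-1}{2^n} + 2^{1-n} + 2^{-n} > \tfrac{m+1}{2^n}$ respectively. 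Hence no option lies inside $\mathcal{S}$, and the simplicity lemma yields $G = m/2^n$.

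For the canonical-form claim, both sides have only one option, so there is nothing to dominate. Reversibility of the left option $\tfrac{m-1}{2^n}$ would require a right option $(\tfrac{m-1}{2^n})^R$ of its canonical form with $(\tfrac{m-1}{2^n})^R \le m/2^n = \tfrac{m-1}{2^n} + 2^{-n}$; but the step-size bound forces $(\tfrac{m-1}{2^n})^R \geq \tfrac{m-1}{2^n} + 2^{1-n} > m/2^n$, which rules this out. A symmetric argument (or, more cheaply, negation via Lemma~\ref{basic} and Theorem~\ref{congruences}) handles the right option $\tfrac{m+1}{2^n}$. Hence $G$ has no reversible moves and is in canonical form.

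The main obstacle I anticipate is the bookkeeping around the boundary cases where one (or both) of $\tfrac{m-1}{2^n}$ and $\tfrac{m+1}{2^n}$ is an integer, since their canonical forms are asymmetric ($\{k-1\mid\}$, $\{\mid k+1\}$, or $\{\mid\}$) rather than the two-sided form predicted by the theorem. In those cases some options of the sum $\tfrac{m-1}{2^n} + a_n$ simply do not exist, and the reversibility check becomes vacuous on one side. I expect the argument to still go through because the step-size inequality $1 \geq 2^{1-n}$ holds for all $n \geq 1$, but the cleanest writeup will isolate these subcases explicitly so that the uniform bound $2^{1-n}$ on step size can be invoked without ambiguity.
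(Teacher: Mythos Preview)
Your proof is correct but takes a genuinely different route from the paper's. Both arguments proceed by induction on $n$, both invoke Lemma~\ref{proto-simplicity}, and both establish canonicity via the same step-size bound $2^{-n'}\ge 2^{1-n}$ on the options of $\tfrac{m\pm1}{2^n}$. Where they diverge is in proving the value equation $\{\tfrac{m-1}{2^n}\mid\tfrac{m+1}{2^n}\}=m/2^n$. The paper sets $x=\{\tfrac{m-1}{2^n}\mid\tfrac{m+1}{2^n}\}$, computes $x+x=\{x+\tfrac{m-1}{2^n}\mid x+\tfrac{m+1}{2^n}\}$, and applies Lemma~\ref{proto-simplicity} to the sum $x+x$, using the inductive canonical form of $\tfrac{m}{2^{n-1}}$ to identify $x+x=\tfrac{m}{2^{n-1}}$; then it halves. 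You instead apply Lemma~\ref{proto-simplicity} directly to $G=\{\tfrac{m-1}{2^n}\mid\tfrac{m+1}{2^n}\}$, exhibiting the explicit witness form $y=\tfrac{m-1}{2^n}+a_n$ (with $\tfrac{m-1}{2^n}$ in canonical form) and checking that each of the four option-types of $y$ lands outside the open interval $(\tfrac{m-1}{2^n},\tfrac{m+1}{2^n})$. Your approach is more direct and avoids the detour through doubling, at the cost of having to inventory and bound four families of options of a sum rather than two of a simple game; the paper's doubling trick is slicker once one sees it, and it reuses the inductive hypothesis on the \emph{same} odd numerator $m$ rather than on the shifted (and possibly integral) $\tfrac{m-1}{2^n}$, which sidesteps your anticipated case split for integer options. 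Both are valid; your write-up should indeed isolate the integer subcases so the step-size bound and the vacuous reversibility checks are explicit.
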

So for instance, the canonical form of $1/2$ is $\{0|1\}$,
of $11/8$ is $\{5/4|3/2\}$, and so on.
\begin{proof}
Proceed by induction on $n$.  If $n = 1$, then we need to show that
for any $k \in \mathbb{Z}$,
\[ k + \frac{1}{2} = \{k|k+1\}\]
and that $\{k|k+1\}$ is in canonical form.  Letting $x = \{k|k+1\}$,
we see that
\[ x + x = \{k + x|k + 1 + x\}.\]
Since $k < x < k + 1$, we have $k + x < 2k + 1 < k + 1 + x$.  Therefore,
by Theorem~\ref{intsimpl} $x + x$ is an integer.  In fact, since
$k < x < k + 1$, we know that $2k < x + x < 2k + 2$, so that $x + x = 2k + 1$.
Therefore, $x$ must be $k + \frac{1}{2}$.

Moreover, $\{k|k+1\}$ is in canonical form:  it clearly has no dominated moves.
Suppose it had a reversible move, $k$ without loss of generality.
But $k$'s right option can only be $k + 1$, by canonical forms of
the integers.  And $k + 1 \not \le \{k|k+1\}$.

Now suppose that $n > 1$.  Then we need to show that for $m$ odd,
\[ \frac{m}{2^n} = \{\frac{m-1}{2^n}|\frac{m+1}{2^n}\}\]
and that the right hand side is in canonical form.  (Note that $\frac{m\pm 1}{2^n}$ have
smaller denominators than $\frac{m}{2^n}$ because $m$ is odd.)

Let $x = \{\frac{m-1}{2^n}|\frac{m+1}{2^n}\}$.  Then
\[ x + x = \{x + \frac{m-1}{2^n}|x + \frac{m+1}{2^n}\}.\]
Now since $\frac{m-1}{2^n} < x < \frac{m+1}{2^n}$, we know that
$x + \frac{m-1}{2^n} < \frac{m}{2^n} + \frac{m}{2^n} < x + \frac{m+1}{2^n}$.
So $\frac{m}{2^{n-1}}$ lies between the left and right options of $x + x$.  On the other
hand, we know by induction that $\frac{m}{2^{n-1}} = \{\frac{m-1}{2^{n-1}}|\frac{m+1}{2^{n-1}}\}$,
and we have
\[ x + \frac{m-1}{2^n} \not < \frac{m-1}{2^{n-1}} < x + \frac{m+1}{2^n}\]
(because $x > \frac{m-1}{2^n}$)
and
\[ x + \frac{m-1}{2^n} < \frac{m+1}{2^{n-1}} \not < x + \frac{m+1}{2^n}\]
(because $x < \frac{m+1}{2^n}$)
so by Lemma~\ref{proto-simplicity}, $x + x = \frac{m}{2^{n-1}}$.  Therefore,
$x = \frac{m}{2^n}$.

It remains to show that $x = \{\frac{m-1}{2^n}|\frac{m+1}{2^n}\}$ is in canonical form.
It clearly has no dominated moves.  And since $\frac{m-1}{2^n}$ has smaller denominator,
we know by induction that when it is in canonical form, any right option
must be at least $\frac{m-1}{2^n} + \frac{1}{2^{n-1}} = \frac{m+1}{2^n} \not > x$.
So $\frac{m-1}{2^n}$ is not reversible, and similarly neither is $\frac{m+1}{2^n}$.
\end{proof}

Using these rules, we can write out the canonical forms of some of the simplest numbers:
\begin{align*}
 0 = \{|\} \qquad &  1 = \{0|\} \\
 -1 = \{|0\} \qquad &  2 = \{1|\}\\
 \frac{1}{2} = \{0|1\} \qquad &   \frac{-1}{2} = \{-1|0\}\\
 -2 = \{|-1\} \qquad &  3 = \{2|\}\\
 \frac{3}{2} = \{1|2\} \qquad &  \frac{3}{4} = \{\frac{1}{2}|1\}\\
 \frac{1}{4} = \{0|\frac{1}{2}\} \qquad &  \frac{-1}{4} = \{\frac{-1}{2}|0\}\\
 \frac{-3}{4} = \{-1|\frac{-1}{2}\} \qquad &  \frac{-3}{2} = \{-2|-1\}\\
 & -3 = \{|-2\}.
\end{align*}
But what about numbers that aren't in canonical form?  What is
$\{\frac{-1}{4}|27\}$ or $\{1,2|\frac{19}{8}\}$?

\begin{definition}
Let $x$ and $y$ be dyadic rational numbers (short surreal numbers).  We say that
$x$ is \emph{simpler than} $y$ if $x$ has a smaller denominator than $y$,
or if $|x| < |y|$ when $x$ and $y$ are both integers.
\end{definition}
Note that simplicity is a strict partial order on numbers.  Also, by the canonical forms
just determined, if $x$ is a number in canonical form, then all options of $x$ are simpler
than $x$.
\begin{theorem}\label{simplicity}(the simplicity rule)
Let $G = \{G^L|G^R\}$ be a game.  If there is any number
$x$ such that $G^L \lhd x \lhd G^R$ for all $G^L$ and $G^R$, then $G$
equals a number, and $G$ is the unique simplest such $x$.
\end{theorem}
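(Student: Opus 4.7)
The plan is to refine Lemma~\ref{proto-simplicity} by choosing, from among all dyadic rationals lying in the gap between the options of $G$, the one that is simplest. Let $\mathcal{S}$ denote the set of dyadic rationals $x$ satisfying $G^L \lhd x \lhd G^R$ for every left option $G^L$ and every right option $G^R$ of $G$. By hypothesis $\mathcal{S}$ is nonempty.

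First I would record an interval-closure property: whenever $a, b \in \mathcal{S}$ with $a \le b$ and $c$ is any dyadic rational with $a \le c \le b$, then $c \in \mathcal{S}$. This is immediate from the mixed transitivity established earlier---if $G \lhd H$ and $H \le K$ then $G \lhd K$, together with its mirror on the right. So $G^L \lhd a \le c$ gives $G^L \lhd c$, and $c \le b \lhd G^R$ gives $c \lhd G^R$.

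Next I would show that $\mathcal{S}$ has a unique simplest element. Existence of a minimal element is automatic, because the simplicity partial order on dyadic rationals is well-founded: any strictly descending chain first strictly decreases the denominator (arriving eventually at an integer), after which subsequent steps strictly decrease the integer magnitude. For uniqueness, suppose $x < y$ were two distinct minimal elements. Neither being simpler than the other forces them to share a common denominator $2^m$. If $m \ge 1$, write $x = p/2^m$ and $y = q/2^m$ in lowest terms, so $p, q$ are odd with $p < q$; then $p + 1$ is even, and the dyadic rational $(p+1)/2^m$ lies in $[x,y]$, has denominator at most $2^{m-1}$, and by interval closure lies in $\mathcal{S}$, contradicting minimality of $x$. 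If instead $m = 0$, then minimality forces $|x| = |y|$, so $x = -y$, and then $0 \in [x,y] \cap \mathcal{S}$ is strictly simpler, a contradiction again.

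Let $x$ be this unique simplest element, and pass to its canonical form $\tilde{x}$. The canonical-form computations for dyadic rationals show that every option of $\tilde{x}$ has strictly smaller denominator than $\tilde{x}$ (or, in the integer case, strictly smaller magnitude), and therefore is strictly simpler. By minimality, no option of $\tilde{x}$ lies in $\mathcal{S}$; since those options are themselves dyadic rationals, this also rules them out of the larger class of all surreal numbers in the gap used by Lemma~\ref{proto-simplicity}. Applying that lemma with witness $\tilde{x}$ yields $G = \tilde{x} = x$, showing that $G$ equals a number, namely the unique simplest element of $\mathcal{S}$. The main obstacle is the uniqueness step, where one has to convert the density of the dyadic rationals into a statement about the simplicity partial order; the remaining pieces are essentially bookkeeping against lemmas and canonical forms already established.
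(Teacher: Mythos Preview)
Your proof is correct and follows essentially the same route as the paper: pick a simplest $x$ in the gap (existence by well-foundedness of the simplicity order), put it in canonical form so that its options are strictly simpler and hence outside $\mathcal{S}$, and invoke Lemma~\ref{proto-simplicity} to conclude $G = x$.

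The one difference is that you prove uniqueness of the simplest element \emph{directly}, via the interval-closure argument, before ever identifying the element with $G$. The paper skips this: it simply takes \emph{some} simplest $x$, shows $G = x$, and lets uniqueness fall out of the uniqueness clause of Lemma~\ref{proto-simplicity} (any simplest $x$ equals $G$, so any two are equal). Your route is a bit more work but perfectly sound; the paper's is shorter because the needed uniqueness is already baked into the lemma you are about to apply.
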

\begin{proof}
A simplest possible $x$ exists because there are no infinite sequences of
numbers $x_1, x_2, \ldots$ such that $x_{n+1}$ is simpler than $x_n$
for every $n$.  The denominators in such a chain would necessarily decrease at each step
until reaching $1$, and then the magnitudes would decrease until $0$ was reached, from which
the sequence could not proceed.

Then taking $x$ to be in canonical form, the options of $x$ are simpler than $x$,
and therefore not in the class $\mathcal{S}$ of Lemma~\ref{proto-simplicity}, though $x$ itself is.
So by Lemma~\ref{proto-simplicity}, $x = G$.
\end{proof}

Here is a diagram showing the structure of some of the simplest short surreal numbers.  Higher numbers
in the tree are simpler.

\begin{figure}[H]
\begin{center}
\synttree[0 [-1 [-2 [-3 [-4] [-5/2]] [-3/2 [-7/4] [-5/4]]] [-1/2 [-3/4 ] [1/4 ]]] [1 [1/2 [1/4 ] [3/4]] [2 [3/2 [5/4] [7/4]] [3 [5/2] [4]]]]]
\end{center}
\end{figure}

So for instance $\{10|20\}$ is $11$ (rather than $15$ as one might expect),
because $11$ is the simplest number between $10$ and $20$.  Or
$\{2|2.75\}$ is $2.5$ but $\{2|2.25\}$ is $2.125$.

\section{Numbers and Hackenbush}\label{sec:hack-num}
Surreal numbers are closely connected to the game of Hackenbush.
In fact, every Hackenbush position is a surreal number, and every short surreal number
occurs as a Hackenbush position.

\begin{lemma}
Let $G$ be a game, and suppose that for every position $H$ of $G$,
every $H^L \le H$ and every $H^R \ge H$.  Then $G$ is a surreal number.
\end{lemma}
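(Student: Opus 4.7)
The plan is to verify the two clauses of the definition of surreal number by induction on the game tree, leveraging the transitivity fact relating $\le$ and $\lhd$ proved earlier.

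First I observe that the hypothesis is inherited: if $H$ is a position of $G$, then every position of $H$ is also a position of $G$, so the statement ``every $H'^L \le H'$ and every $H'^R \ge H'$ for all positions $H'$'' holds for the sub-game rooted at $H$ as well. This lets me run well-founded induction on positions of $G$: to show $G$ is (the form of) a surreal number, I assume every option of $G$ is a surreal number, and I only need to verify the local condition that $G^L \lhd G^R$ for every left option $G^L$ and right option $G^R$.

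For the local condition, the key move combines the hypothesis with Theorem~\ref{betwixt}. By hypothesis, $G^L \le G$. By Theorem~\ref{betwixt}, $G \lhd G^R$. Then the mixed-transitivity statement from the lemma immediately before Theorem~\ref{everything} (``if $G \le H$ and $H \lhd K$, then $G \lhd K$'') yields $G^L \lhd G^R$, which is exactly what the definition of surreal number demands. Combined with the inductive hypothesis that each option is a surreal number, this shows $G$ itself has a surreal-number form, hence is a surreal number in the sense defined (a game having some form which is a surreal number).

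The main subtlety — really the only one — is keeping the form/value distinction straight: the definition of ``surreal number'' from the previous section is about forms, not equivalence classes, and the hypothesis ``$H^L \le H$'' is a statement about values. But this causes no trouble, because the property we extract, $G^L \lhd G^R$, is also an equality-class statement, and it is exactly the condition appearing in the definition. I do not need to pass to canonical forms or make any additional simplifications; the induction goes through cleanly because well-foundedness of $G$ (inherited by its subpositions) gives the required induction principle, and the transitivity lemma does all the real work in a single line.
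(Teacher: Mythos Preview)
Your proof is correct and takes essentially the same approach as the paper: combine the hypothesis with Theorem~\ref{betwixt} and a transitivity fact to get $G^L \lhd G^R$ at every position, then invoke the recursive definition. The only cosmetic difference is that the paper first upgrades $H^L \le H$ and $H \le H^R$ to strict inequalities $H^L < H < H^R$ (since $H^L \lhd H \lhd H^R$ always holds) and then uses transitivity of $<$, whereas you apply the mixed transitivity ``$\le$ and $\lhd$ give $\lhd$'' directly; both routes arrive at the same place in one line.
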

\begin{proof}
If $H^L \le H$ then $H^L < H$ because $H^L \lhd H$ by Theorem~\ref{betwixt}, and similarly $H \le H^R \Rightarrow H < H^R$.
So by transitivity and the assumptions, $H^L < H^R$ for every
$H^L$ and $H^R$.  Since this is true for all positions of $G$, $G$ is a surreal number.
\end{proof}
\begin{theorem}
Every position of Hackenbush is a surreal number.
\end{theorem}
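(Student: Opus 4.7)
The plan is to apply the lemma immediately preceding the theorem: it suffices to show that for \emph{every} Hackenbush position $H$ (not merely the starting position of $G$), every left option $H^L$ satisfies $H^L \le H$, and every right option $H^R$ satisfies $H^R \ge H$. Since the class of positions that occur inside a Hackenbush game graph is itself closed under taking options, this will show the entire position is a surreal number.

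By the symmetry $-(-)$ swapping Left and Right (and red and blue edges), it is enough to prove the one inequality $H^L \le H$ for an arbitrary Hackenbush $H$ and arbitrary blue move $H \stackrel{L}{\to} H^L$. By the definition of $\le$, this amounts to showing $H + (-H^L) \ge 0$, i.e.\ exhibiting a winning strategy for Left as the \emph{second} player in $H + (-H^L)$. Here $-H^L$ is the same drawing as $H^L$ but with all the colors of the edges reversed, so Left's moves in $-H^L$ are exactly ``remove an edge that was red in $H^L$'' and Right's moves in $-H^L$ are exactly ``remove an edge that was blue in $H^L$.''

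The strategy will be the standard Tweedledum--Tweedledee mirror, adapted to accommodate the cascading deletions that are the characteristic complication of Hackenbush. Record that the edges of $H^L$ form a subset of the edges of $H$ (the passage from $H$ to $H^L$ removed one chosen blue edge together with every edge that subsequently lost its grounded support). Thus there is a natural injection $\iota$ from the edges of $-H^L$ to the edges of $H$ sending each edge to its counterpart; the edges of $H$ not in the image of $\iota$ are exactly the ``phantom'' edges destroyed in forming $H^L$. Left's plan is to maintain, throughout play, the invariant that the surviving edges of the $-H^L$-summand continue to match the surviving non-phantom edges of the $H$-summand under $\iota$. Whenever Right plays in one summand, Left mirrors it in the other: if Right deletes an edge $e$ in $H$ that corresponds via $\iota^{-1}$ to an edge $e'$ in $-H^L$, Left deletes $e'$, and vice versa. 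If Right deletes a phantom edge $e$ in $H$, Left will have to invoke an auxiliary subroutine (described next).

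The main obstacle, and what must be handled carefully, is that a single edge deletion in Hackenbush can trigger a cascade of further deletions, and the cascade may look different on the two sides because of the missing phantom edges. The idea is that the set of phantoms only shrinks as play proceeds, and any Right move that disturbs a phantom is itself phantom-related, so Left can still mirror: if Right's move in $H$ ungrounds some phantom edges together with some non-phantom edges, the non-phantom cascade matches exactly the cascade of the mirror move in $-H^L$, while the extra phantom deletions simply reduce the set of phantoms; if Right targets a phantom directly, Left can respond by ``passing'' via any still-available Left move, using the fact that the phantoms are a self-contained subforest whose disappearance never costs Left a pending response. Checking that this invariant truly survives all cascading interactions is the only delicate point; once it is verified, Left always has a reply, so Left wins going second, which gives $H + (-H^L) \ge 0$. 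The opposite inequality $H^R \ge H$ follows by the same argument with colors reversed, and then the lemma finishes the proof.
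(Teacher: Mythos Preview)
Your approach is the same as the paper's: reduce to showing $H^L \le H$ for every Hackenbush position, reformulate as a second-player win in a sum with one summand negated, and use a Tweedledum--Tweedledee mirror strategy with special handling of the ``phantom'' edges (the removed blue edge $e$ together with the set $S$ of edges that fell with it). Your observation that the phantom subgraph is a pendant attached at a single vertex, so that mirroring a non-phantom move produces matching non-phantom cascades on both sides, is correct and is exactly what makes the mirror work.

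The gap is in your handling of the case where Right plays a red phantom edge in $H$. You say Left should ``pass via any still-available Left move,'' but taken literally this is wrong: if Left plays any blue non-phantom edge in $H$, or any edge in $-H^L$, the mirror invariant is destroyed and Right can exploit the asymmetry. The correct response, and the one the paper uses (in its mirror-image setup $G^L + (-G)$), is specific: Left plays the edge $e$ itself. Since $e$ is blue, this is a legal Left move, and since every edge in $S$ is connected to ground only through $e$, removing $e$ collapses all remaining phantoms at once. The first summand becomes exactly $H^L$, the second summand is unchanged, and the resulting position $H^L + (-H^L)$ is a zero game with Right to move, so Left wins. Note that as long as any phantom remains for Right to target, $e$ is still present (removing $e$ would have already eliminated all phantoms), so this reply is always available. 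Once you replace the vague ``passing'' instruction with ``play $e$,'' your argument is complete and coincides with the paper's.
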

\begin{proof}
We need to show that if $G$ is a Hackenbush position, and $G^L$ is a left option,
then $G^L \le G$.  (We also need to show that if $G^R$ is a right
option, then $G \le G^R$.  But this follows by symmetry from the other claim.)
Equivalently, we need to show that
$G^L + (-G) \le 0$.  Note that $G^L$ is obtained from $G$ by removing a left
edge $e$, and then deleting all the edges that become disconnected from the ground
by this action.  Let $S$ be the set of deleted edges other than $e$.  Let
$e'$ and $S'$ be the corresponding edges in $-G$, so that $e'$ is a Right edge,
whose removal definitely deletes all the edges in $S'$.
\begin{figure}[H]
\begin{center}
\includegraphics[width=4in]
					{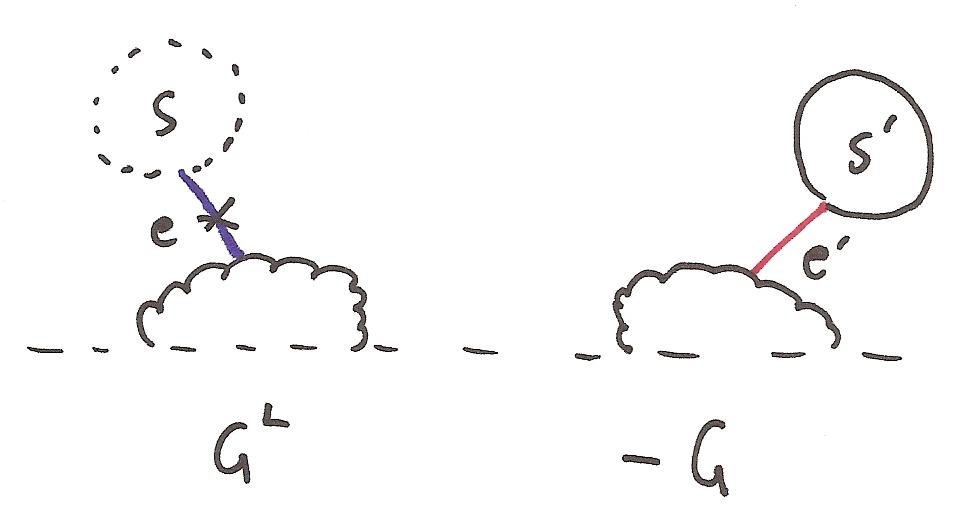}
\end{center}
\end{figure}
To show that $G^L + (-G) \le 0$, we exhibit a strategy that Right can
use playing second.  Whenever Left plays in one component, Right makes the exact same
move in the other.  This makes sense as long as Left does not play at $e'$ or in $S'$.
However, Left cannot play at $e'$ because $e'$ is a Right edge.  On the other hand,
if Left plays in $S'$, we add a caveat to Right's strategy, by having Right
respond to any move in $S'$ with a move at $e'$.  After such an exchange,
all of $S'$ and $e'$ will be gone, and the resulting position will be of the form
$X + (-X) = 0$.  Since Right has moved to this position, Right will win.

Therefore, Right can always reply to any move of Left.  So Right will win,
if he plays second.
\end{proof}

As a simple example of numbers, the reader can verify that
a Hackenbush position containing only Blue (Left) edges is a positive integer,
equal to the number of edges.

More generally, every surreal number occurs:
\begin{theorem}\label{charcoal}
Every short surreal number is the value of some position in Hackenbush.
\end{theorem}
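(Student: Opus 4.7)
The plan is to realize each short surreal number (that is, each dyadic rational, by the previous theorem) as the value of an explicit Hackenbush position. The argument breaks into two pieces: first I will observe that Hackenbush positions are closed under the group operations of $\mathcal{G}$ in a geometric way, and then I will construct explicit Hackenbush positions whose values are the generators $1, 1/2, 1/4, \ldots$ of the dyadic rationals.

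For the first piece, two Hackenbush positions placed side by side on the ground are strategically independent: a move in one leaves the other untouched, so by the construction of sums in Section~\ref{sec:examples} the combined position has value equal to the sum of the two individual values. Likewise, swapping the colors of every edge realizes exactly the negation operation on game graphs, since it interchanges the Left and Right options at every position. Hence the set of dyadic rationals realized by Hackenbush positions is closed under addition and negation, and it suffices to realize a generating set.

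For the generators, let $H_k$ denote the Hackenbush position consisting of a single blue edge attached to the ground with a chain of $k$ red edges stacked on top of it. I claim $H_k = 1/2^k$, proved by induction on $k$. The base case $H_0$ is a lone blue edge, which has form $\{0\,|\,\}$ and value $1$. For the inductive step, Left's only move in $H_{k+1}$ is to remove the blue edge, collapsing the whole diagram, yielding $0$. Right's moves consist of removing any one of the $k+1$ red edges; removing the red at height $i$ (counted from the bottom, $1 \le i \le k+1$) disconnects everything above it, leaving $H_{i-1}$, whose value is $1/2^{i-1}$ by induction. Among these right options the smallest is $1/2^k$, obtained by removing the top red, and every other right option $1/2^{i-1}$ with $i \le k$ satisfies $1/2^{i-1} \ge 1/2^k$ and so is dominated. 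After discarding dominated moves we have $H_{k+1} = \{0\,|\,1/2^k\}$, and the simplicity rule (Theorem~\ref{simplicity}) identifies this as the simplest dyadic rational strictly between $0$ and $1/2^k$, which is $1/2^{k+1}$.

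The main obstacle is the inductive computation of $H_k$: one must correctly enumerate Right's options, notice that all but one are dominated, and then invoke the simplicity rule to collapse $\{0\,|\,1/2^k\}$ to $1/2^{k+1}$. Once this is in hand, the closure properties of the first paragraph finish the proof, since every dyadic rational is a $\mathbb{Z}$-linear combination of the values $1/2^k$ and Hackenbush supports both addition (juxtaposition) and negation (color swap).
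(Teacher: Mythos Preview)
Your proof is correct and follows essentially the same route as the paper: both reduce to realizing $1/2^k$ via the stalk of one blue edge topped by $k$ red edges (the paper's $d_k$ is your $H_k$), then close under sums and negation. The only cosmetic difference is that you explicitly prune dominated right options before invoking simplicity, whereas the paper leaves all of $d_0,\ldots,d_{n-1}$ in place and applies the simplicity rule directly.
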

\begin{proof}
The sum of two Hackenbush positions is a Hackenbush position,
and the negative of a Hackenbush position is also a Hackenbush position.
Therefore, we only need to present Hackenbush positions
taking the values $\frac{1}{2^n}$ for $n \ge 0$.

Let $d_n$ denote a string of edges, consisting of a blue (left) edge
attached to the ground, followed by $n$ red edges.
\begin{figure}[htb]
\begin{center}
\includegraphics[width=2.5in]
					{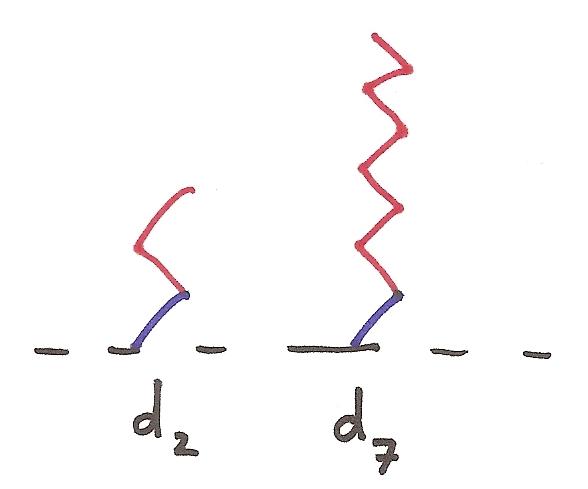}
\caption{$2^{-2}$ and $2^{-7}$ in Hackenbush}
\label{numbers-only-hackenbush}
\end{center}
\end{figure}

Then we can easily verify that for all $n \ge 0$,
\[ d_n \equiv \{0|d_0, d_1, \ldots, d_{n-1}\}.\]
So $d_0 \equiv \{0|\}$, $d_1 \equiv \{0|d_0\}$, $d_2 \equiv \{0|d_0,d_1\}$, and so on.

Then by the simplicity rule it easily follows inductively that $d_n \equiv \frac{1}{2^n}$.
\end{proof}

So we can assign a rational number to each Hackenbush position,
describing the advantage that the position gives to Left or to Right.
In many cases there are rules for finding these numbers,
though there is probably no general rule, because the problem of determining the
outcome of a general Hackenbush position is NP-hard, as shown on page 211-217 of \emph{Winning Ways}.

\section{The interplay of numbers and non-numbers}
Unfortunately, not all games are numbers.  However,
the numbers play a fundamental role in understanding the structure
of the other games.

First of all, they bound all other games:
\begin{theorem}
Let $G$ be a game.  Then there is some number $n$ such that $-n < G < n$.
\end{theorem}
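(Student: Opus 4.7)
The plan is to argue by well-founded induction on $G$. For the base case, $G$ has no options at all, so $G \equiv 0 \equiv \{\mid\}$, and taking $n = 1$ works, since $-1 < 0 < 1$ by direct computation.

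For the inductive step, write $G \equiv \{G^L \mid G^R\}$. Because $G$ is short, the set of options is finite, so by the inductive hypothesis I can pick a single integer $M$ that simultaneously bounds every option: $-M < G^L < M$ and $-M < G^R < M$ for all $G^L$ and all $G^R$. I then set $n := M + 2$ and claim $-n < G < n$.

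To verify $G < n$, I would unpack this as $G \le n$ together with $G \lhd n$, and appeal to Theorem~\ref{betwixt}. For $G \le n$: the canonical form of the positive integer $n$ has no right options, so the only obstruction would be a left option $G^L$ with $G^L \ge n$; but the inductive bound combined with transitivity gives $G^L < M < n$, hence $G^L \lhd n$, so $G^L \not\ge n$. For $G \lhd n$, equivalently $n \rhd G$, I exhibit $n - 1$ as a left option of $n$ satisfying $n - 1 \ge G$; here the same argument applies with $n - 1$ in place of $n$, using the slack $M < n - 1$. The lower bound $-n < G$ then follows by applying the upper bound to the game $-G$ and using that negation reverses inequalities (Theorem~\ref{congruences}(b)).

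The only substantive ingredient is the finiteness of the option set, i.e.\ the shortness of $G$, which is what lets me take a single integer $M$ dominating all the inductive bounds at once; everything else is bookkeeping against Theorem~\ref{betwixt} and the $\le/\lhd$ dichotomy. Without shortness the theorem actually fails --- for instance, the transfinite surreal number $\omega$ exceeds every integer --- so the step that needs care is the uniform choice of $n$ from finitely many inductive bounds, with enough slack ($+2$) that both $G \le n$ and the reversal witness $G \le n-1$ go through simultaneously.
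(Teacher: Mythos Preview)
Your proof is correct and follows essentially the same approach as the paper: induct, use shortness to choose a single integer bound $M$ above all options, and apply Theorem~\ref{betwixt} using the fact that a positive integer in canonical form has no right options. The only cosmetic difference is that the paper gets strict inequality more economically by first proving $G \le M$ and then observing $G < M+1$, whereas you take $n = M+2$ and verify $G \le n$ and $G \lhd n$ separately (the latter via $G \le n-1$); your extra unit of slack is harmless but unnecessary.
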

\begin{proof}
We show by induction that every game is less than a number.

Let $G$ be a game, and suppose that every option of $G$ is less than a number.
Since all our games are short, $G$ has finitely many options.  So we can find some
$M$ such that every option $G^L, G^R < M$.  Without loss of generality,
$M$ is a positive integer, so it has no right option.
Then $G \le M$ unless $G$ is $\ge$ a right option of $M$ (but none exist),
or $M \le$ a left option of $G$ (but we chose $M$ to exceed all options of $G$).  Therefore
$G \le M$, by Theorem~\ref{betwixt}.  Then $G < M + 1$, and $M + 1$ is a number.
\end{proof}

Because of this, we can examine which numbers are greater than or less
than a given game.

\begin{definition}
If $G$ is a game then we let $L(G)$ be the infimum (in $\mathbb{R}$)
of all numbers $x$ such $G \le x$, and $R(G)$ be the supremum
of all numbers $x$ such that $x \le G$.
\end{definition}
These exist, because the previous theorem shows that the sets are non-empty,
and because if $x \le G$ for arbitrarily big $x$, then it could not be the
case that $G < n$ for some fixed $n$.

It's clear that $R(G) \le L(G)$, since if $x \le G$ and $G \le y$,
then $x \le y$.  It's not yet clear that $R(G)$ and $L(G)$ must
be dyadic rational numbers, but we will see this soon.
If $G$ is a number, then clearly $R(G) = G = L(G)$.
Another easily verified fact is that if $x$ is a number,
then $L(G + x) = L(G) + x$ and $R(G + x) = R(G) + x$
for any $x$.  Also, it's easy to show that $L(G + H) \le L(G) + L(H)$
and similarly that $R(G + H) \ge R(G) + R(H)$, using the fact
that if $x \le G$ and $y \le H$, then $x + y \le G + H$.


Numbers are games in which any move makes the game worse for the player
who made the move.  Such games aren't very fun to play in,
so Left and Right might decide to simply stop as soon as the state of
the game becomes a number.  Suppose they take this number as the final
score of the game, with Left trying to maximize it, and Right trying
to minimize it.  Then the final score under perfect play is called
the ``stopping value.''  Of course it depends on who goes
first, so we actually get two stopping values:
\begin{definition}
Let $G$ be a short game.  We recursively define the
\emph{left stopping value} $LS(G)$ and the \emph{right stopping value}
$RS(G)$ by
\begin{itemize}
\item If $G$ equals a number $x$, then $LS(G) = RS(G) = x$.
\item Otherwise, $LS(G)$ is the maximum value of $RS(G^L)$
as $G^L$ ranges over the left options of $G$; and
$RS(G)$ is the minimum value of $LS(G^R)$ as $G^R$ ranges
over the right options of $G$.
\end{itemize}
\end{definition}

Then we have the following:
\begin{theorem}
Let $G$ be a game and $x$ be a number.  Then
\begin{itemize}
\item If $x > LS(G)$ then $x > G$.
\item If $x < LS(G)$ then $x \lhd G$.
\item If $x < RS(G)$ then $x < G$.
\item If $x > RS(G)$ then $x \rhd G$.
\end{itemize}
\end{theorem}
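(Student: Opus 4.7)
The plan is to use well-founded induction on the game $G$, proving all four statements together with the auxiliary inequality $LS(G) \ge RS(G)$ within each inductive step. The base case is immediate: when $G$ equals a number $y$, we have $LS(G) = RS(G) = y$, so each conclusion reduces to a comparison between numbers, which is settled by the total ordering of the surreal numbers. For the inductive step I assume $G$ does not equal a number and work with the recursive definitions $LS(G) = \max_{G^L} RS(G^L)$ and $RS(G) = \min_{G^R} LS(G^R)$, so that both stopping values are well-defined dyadic rationals.

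First I would prove (d): if $x > RS(G)$, the minimum is attained by some right option $G^R_*$ with $LS(G^R_*) = RS(G) < x$, and the inductive hypothesis for statement (a) applied to $G^R_*$ gives $x > G^R_*$, hence $x \ge G^R_*$. Thus $x - G^R_*$ is a left option of $x - G$ that is $\ge 0$, so $x - G \rhd 0$, i.e., $x \rhd G$. Next I would establish the auxiliary $LS(G) \ge RS(G)$ by contradiction: if $LS(G) < RS(G)$, pick a dyadic rational $y$ strictly between them. For every $G^L$, $RS(G^L) \le LS(G) < y$, so the inductive hypothesis for (d) at $G^L$ yields $y \rhd G^L$, equivalently $G^L \lhd y$. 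Symmetrically, for every $G^R$, $LS(G^R) \ge RS(G) > y$, so the inductive hypothesis for (b) at $G^R$ gives $y \lhd G^R$. Theorem~\ref{simplicity} then forces $G$ to equal a number, contradicting our assumption.

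With (d) and the auxiliary inequality in hand, statement (a) follows. Assuming $x > LS(G)$, we get $x > LS(G) \ge RS(G)$, so (d) for $G$ yields $x \rhd G$. It remains to show $x \ge G$, i.e., every right option of $x - G$ is $\rhd 0$. Taking $x$ in canonical form, these right options are $x - G^L$ (one for each left option $G^L$ of $G$) and, when $x$ has a right option, $x^R - G$. For $x - G^L$, the inductive hypothesis for (d) at $G^L$ applies because $x > LS(G) \ge RS(G^L)$, giving $x \rhd G^L$. For $x^R - G$, note $x^R > x > LS(G) \ge RS(G)$, and (d) for $G$ itself (applied to the number $x^R$) gives $x^R \rhd G$. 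Combined with the earlier $x \rhd G$, this yields $x > G$. Statements (b) and (c) follow by entirely symmetric arguments, or equivalently by applying (a) and (d) to $-G$ using $LS(-G) = -RS(G)$ and $RS(-G) = -LS(G)$.

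The main obstacle is the very last verification above, where the right option $x^R - G$ of $x - G$ still involves the game $G$ we are inducting on, making naive induction on $G$ circular. The structural trick is to prove the auxiliary $LS(G) \ge RS(G)$ before tackling (a), so that $x > LS(G)$ can be upgraded to $x^R > RS(G)$ and fed into the already-proved statement (d) for $G$. This auxiliary inequality is exactly what the simplicity rule delivers from the inductive information about the options of $G$, which is why arranging the order as (d), then $LS(G) \ge RS(G)$, then (a) is the key to a clean proof.
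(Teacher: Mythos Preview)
Your proof is correct but organized differently from the paper's. The paper proceeds by \emph{joint} induction on $G$ and $x$ simultaneously: when proving (a) for non-numerical $G$, it shows only $G \le x$ (which suffices for $x > G$ since $G$ is not a number, hence $G \ne x$), and the troublesome right option $x^R$ of $x$ is handled by applying the inductive hypothesis (a) directly to the pair $(x^R, G)$, which is legitimate because $x^R$ is an option of $x$. With this device the paper never needs $LS(G) \ge RS(G)$ or the simplicity rule inside the induction; that inequality is derived only afterward as a corollary. Your route, inducting on $G$ alone, forces you to prove (d) for $G$ first (for all numbers at once), then to establish $LS(G) \ge RS(G)$ via the simplicity rule, and only then to feed $x^R$ back into (d) for $G$. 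Both arguments are valid; the paper's joint induction is shorter and uses less machinery, while yours has the virtue of making the inequality $LS(G) \ge RS(G)$ an explicit part of the inductive package rather than a downstream consequence.
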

\begin{proof}
We proceed by joint induction on $G$ and $x$.
As usual, we need no base case.

If $G$ equals a number, then all results are obvious.  So suppose
that $G$ is not equal to a number, so that
$LS(G)$ is the maximum value of $RS(G^L)$
and $RS(G)$ is the maximum value of $LS(G^R)$.  We have
four things to prove.

Suppose $x > LS(G)$.  Then since $G$ does not equal
a number, we only need to show that
$G \le x$. This will be true unless $x^R \le G$
for some $x^R$, or $x \le G^L$ for some $G^L$.  In
the first case, we have $x^R > x > LS(G)$, so by
induction $x^R > G$, not $x^R \le G$.  In the second
case, note that $x > LS(G) \ge RS(G^L)$
so by induction $x \rhd G^L$, not $x \le G^L$.

Next, suppose that $x < LS(G)$.  Then there
is some $G^L$ such that $x < LS(G) = RS(G^L)$.
By induction, then $x < G^L$.  So
$x \le G^L \lhd G$, and thus $x \lhd G$.

The cases where $x > RS(G)$ and $x < RS(G)$
are handled similarly.
\end{proof}

\begin{corollary}
If $G$ is any short game, then $LS(G) = L(G)$ and
$RS(G) = R(G)$.
\end{corollary}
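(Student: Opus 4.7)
The plan is to deduce this corollary almost immediately from the preceding theorem, using the density of the dyadic rationals in $\mathbb{R}$. I will argue the $LS(G) = L(G)$ half in detail, and note that $RS(G) = R(G)$ follows by an entirely symmetric argument (or by negation, since $L(-G) = -R(G)$ and $LS(-G) = -RS(G)$, both easy to see from the definitions).

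First I would unwind the definitions. The quantity $L(G)$ is the infimum, taken in $\mathbb{R}$, of the set
\[ A(G) = \{x \in \mathbb{Z}[1/2] : G \le x\}. \]
The preceding theorem says that every number $x > LS(G)$ satisfies $x > G$, and in particular $G \le x$, so every dyadic rational strictly greater than $LS(G)$ lies in $A(G)$. The same theorem also says that every number $x < LS(G)$ satisfies $x \lhd G$, which in particular precludes $G \le x$, so no dyadic rational strictly less than $LS(G)$ lies in $A(G)$.

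From these two inclusions the corollary is immediate. Since $LS(G)$ is itself a dyadic rational (this follows by induction on the recursive definition, using that $LS$ and $RS$ return only values of numbers that are short surreal numbers, hence dyadic), the dyadic rationals are dense arbitrarily close to $LS(G)$ on both sides. Therefore $A(G)$ contains points converging down to $LS(G)$ from above, giving $\inf A(G) \le LS(G)$, and $A(G)$ contains no point strictly below $LS(G)$, giving $\inf A(G) \ge LS(G)$. Hence $L(G) = LS(G)$.

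The analogous argument, swapping the roles of left and right, gives $R(G) = RS(G)$: every dyadic $x < RS(G)$ satisfies $x < G$ and hence belongs to $\{x : x \le G\}$, while every dyadic $x > RS(G)$ satisfies $x \rhd G$ and hence does not, so the supremum is exactly $RS(G)$. I do not anticipate a real obstacle here, since the preceding theorem already packages all the content; the only point that must not be glossed over is that $LS(G)$ and $RS(G)$ are themselves dyadic rationals, which is what lets the density argument pin the infimum and supremum down exactly rather than merely sandwich them.
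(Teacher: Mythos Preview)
Your argument is correct and matches the paper's approach: the paper simply writes ``Clear from the definition of $L(G)$ and $R(G)$ and the previous theorem,'' and you have correctly unpacked what this means. One small remark: you flag the dyadic rationality of $LS(G)$ as essential, but in fact the density of the dyadic rationals in $\mathbb{R}$ alone already pins down the infimum exactly, regardless of whether $LS(G)$ is itself dyadic --- so that observation, while true, is not actually load-bearing.
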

\begin{proof}
Clear from the definition of $L(G)$ and $R(G)$
and the previous theorem.
\end{proof}
Interestingly, this means that the left stopping
value of any non-numerical game is at least its right stopping
value
\[ LS(G) \ge RS(G).\]
So in some sense, in a non-numerical
game you usually want to be the first person to move.  Since
$LS(G)$ and $RS(G)$ are synonymous with $L(G)$ and $R(G)$,
we drop the former and write $L(G)$ and $R(G)$ for the left stopping
value and the right stopping value.

Using these results, we can easily compute $L(G)$ and $R(G)$ for
various games.  For $* = \{0|0\}$, the left and right
stopping values are easily seen to be 0, so
$L(*) = R(*) = 0$.  It follows that $*$ is less
than every positive number and greater than every negative
number.  Such games are called \emph{infinitesimal} 
or \emph{small} games, and will be discussed more in a later section.

For another example, the game $\pm 1 = \{1|-1\}$ has
$L(\pm 1) = 1$ and $R(\pm 1)$.  So it is less than
every number in $(1,\infty)$, and greater than
every number in $(-\infty,1)$, but fuzzy with
$(-1,1)$.

The next result formalizes the notion that ``numbers
aren't fun to play in.''
\begin{theorem}(Number Avoidance Theorem)
Let $x$ be a number and $G = \{G^L|G^R\}$ be a short game
\emph{that does not equal any number.}  Then
\[ G + x = \{G^L + x|G^R + x\}.\]
\end{theorem}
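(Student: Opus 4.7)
The plan is to show that every ``extra'' option in the full expansion
\[ G + x = \{G^L + x,\; G + x^L \mid G^R + x,\; G + x^R\} \]
is dominated: each left option $G + x^L$ is dominated by some $G^L + x$, and each right option $G + x^R$ is dominated by some $G^R + x$. The Dominated Moves theorem, applied iteratively, then deletes them all and yields $\{G^L + x \mid G^R + x\}$.

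First I would assemble three ingredients. Since $x$ is a surreal number, Theorem~\ref{twolessthans} gives $x^L < x < x^R$, so $\epsilon^L := x - x^L$ and $\epsilon^R := x^R - x$ are positive dyadic rationals. Since $G$ does not equal any number, the recursive clause in the definition of the stopping values, combined with the corollary identifying $LS$ with $L$ and $RS$ with $R$, gives
\[ L(G) \;=\; \max_{G^L} R(G^L), \qquad R(G) \;=\; \min_{G^R} L(G^R). \]
Fix a specific left option $G^L_*$ attaining $R(G^L_*) = L(G)$ and a specific right option $G^R_*$ attaining $L(G^R_*) = R(G)$.

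To show $G + x^L$ is dominated by $G^L_* + x$, I estimate
\[ L(G - G^L_*) \;\le\; L(G) + L(-G^L_*) \;=\; L(G) - R(G^L_*) \;=\; 0, \]
using the subadditivity $L(A+B) \le L(A) + L(B)$ noted in the text together with the elementary identity $L(-H) = -R(H)$. Since $\epsilon^L > 0 \ge L(G - G^L_*)$, the theorem ``if $y > L(H)$ then $y > H$'' applied with the number $y = \epsilon^L$ and the game $H = G - G^L_*$ yields $\epsilon^L > G - G^L_*$, i.e.\ $G^L_* + x > G + x^L$; hence $G + x^L \le G^L_* + x$. The symmetric estimate $R(G^R_* - G) \ge R(G^R_*) - L(G) = 0$ combined with the dual theorem gives $G^R_* + x < G + x^R$, so every $G + x^R$ is dominated on the Right by $G^R_* + x$. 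Iterating the Dominated Moves theorem — the dominating options $G^L_* + x$ and $G^R_* + x$ are themselves kept throughout, so they continue to dominate after earlier deletions — eliminates every $G + x^L$ and every $G + x^R$, collapsing the option set to $\{G^L + x \mid G^R + x\}$.

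The main subtlety I anticipate is justifying that the formulas $L(G) = \max R(G^L)$ and $R(G) = \min L(G^R)$ are legitimately available: the recursive clause of the stopping-value definition only fires when $G$ does not equal a number, and the max/min require $G$ to have at least one option on each side. A short game with no left option must be $\le 0$ (Left cannot move) and a brief check shows it is in fact equivalent to a number, so the hypothesis that $G$ does not equal any number forces $G$ to have at least one $G^L$; the same remark handles $G^R$. The maxima and minima are then attained because $G$ is short.
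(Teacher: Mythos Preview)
Your approach is genuinely different from the paper's and, modulo one slip, correct. The paper argues indirectly: it sets $G_x = \{G^L + x \mid G^R + x\}$, shows via the simplicity rule that $G_x - G$ equals some number $y$ (by checking that every left option of $G_x - G$ is $\lhd x$ and every right option is $\rhd x$), and then uses a stopping-value computation to identify $y = x$. Your argument is more direct and arguably more illuminating, since it explains concretely \emph{why} the moves $G + x^L$ and $G + x^R$ are useless: each is strictly dominated by an existing option.

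The slip is in your ``symmetric estimate.'' You write $R(G^R_* - G) \ge R(G^R_*) - L(G) = 0$, but the equality $R(G^R_*) = L(G)$ does not follow from anything --- you only arranged $L(G^R_*) = R(G)$. A concrete counterexample: for $G = \{1 \mid -1\}$ one has $L(G) = 1$, $R(G) = -1$, $G^R_* = -1$, and $R(G^R_*) - L(G) = -2$. The fix is simply to mirror the left-side argument properly: estimate
\[ L(G^R_* - G) \;\le\; L(G^R_*) + L(-G) \;=\; L(G^R_*) - R(G) \;=\; 0, \]
and then $\epsilon^R > 0 \ge L(G^R_* - G)$ gives $\epsilon^R > G^R_* - G$, i.e.\ $G^R_* + x < G + x^R$, exactly as you wanted. (Equivalently, the genuine dual estimate is $R(G - G^R_*) \ge R(G) - L(G^R_*) = 0$, which also works.) With that correction, your proof goes through cleanly.
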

\begin{proof}
Let $G_x = \{G^L + x|G^R + x\}$.  Then consider
\[ G_x - G = \{G_x - G^R, (G^L + x) - G| G_x - G^L, (G^R + x) - G\}.\]
Now for any $G^R$, $G^R + x$ is a right option of $G_x$, so $G^R + x \rhd G_x$ and therefore $G_x - G^R \lhd x$.  Similarly, $G^L \lhd G$, so that
$(G^L + x) - G \lhd x$ for every $G^L$.  So every left option
of $G_x - G$ is $\lhd x$.

Similarly, for any $G^L$, we have $G^L + x \lhd G_x$ so that
$x \lhd G_x - G^L$.  And for any $G^R$, $G \lhd G^R$ so that $x \lhd (G^R + x) - G$.
So every right option of $G_x - G$ is $\rhd x$.  Then by the simplicity
rule, $G_x - G$ is a number, $y$.  We want to show that $y = x$.

Note that $G_x = G + y$, so that
\begin{equation} L(G_x) = L(G) + y.\label{yo}\end{equation}
Now $y$ is a number and $G$ is not, so $G_x$ is not a number.  Therefore
$L(G_x)$ is the maximum value of $R(G^L + x) = R(G^L) + x$ as $G^L$ ranges over the left options
of $G$.  Since the maximum value of $R(G^L)$ as $G^L$ ranges over the left options
of $G$ is $L(G)$, we must have $L(G_x) = L(G) + x$.  Combining this with (\ref{yo})
gives $y = x$.  So $G_x - G = x$ and we are done.
\end{proof}
This theorem needs some explaining.  Some simple examples of its use are
\[ * + x = \{x|x\}\]
and
\[ \{-1|1\} + x = \{-1 + x|1 + x\}\]
for any number $x$.  To see why it is called ``Number Avoidance,'' note that
the definition of $G + x$ is
\[ G + x = \{G^L + x, G + x^L|G^R + x, G + x^R\},\]
where the options $G + x^L$ and $G + x^R$ correspond to the options of moving
in $x$ rather than in $G$.  The Number Avoidance theorem says that such options
can be removed without affecting the outcome of the game.  The strategic implication
of this is that if you are playing a sum of games, you can ignore all moves
in components that are numbers.  This works even if your opponent
does move in a number, because by the gift-horse principle,
\[ G + x = \{G^L + x| G^R + x, G + x^R\}\]
in this case.

\section{Mean Value} 
If $G$ and $H$ are short games, then $L(G + H) \le L(G) + L(H)$
and $R(G + H) \ge R(G) + R(H)$.  It follows that
the size of the \emph{confusion interval} $[R(G + H),L(G + H)]$ is at most the sum
of the sizes of the confusion intervals of $G$ and $H$.

Now if we add a single game to itself $n$ times, what happens in the limit?  We might expect
that $L(nG) - R(nG)$ will be approximately $n(L(G) - R(G))$.  But in fact,
the size of the confusion interval is bounded:
\begin{theorem}(Mean Value Theorem)
Let $nG$ denote $G$ added to itself $n$ times.  Then there is some bound $M$ dependent on
$G$ but not $n$ such that
\[ L(nG) - R(nG) \le M\]
for every $n$.
\end{theorem}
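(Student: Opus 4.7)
The plan is to prove the stronger statement that there exists a dyadic rational $\mu$ (the ``mean value'' of $G$) and a constant $M_0$ depending only on $G$ such that both $L(nG) - n\mu \le M_0$ and $n\mu - R(nG) \le M_0$ for every $n \ge 1$. The theorem then follows with $M = 2M_0$, since
\[ L(nG) - R(nG) = [L(nG) - n\mu] + [n\mu - R(nG)] \le 2M_0. \]

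First I would establish the subadditivity $L(A + B) \le L(A) + L(B)$ and superadditivity $R(A + B) \ge R(A) + R(B)$; both follow immediately from the characterization of $L(H)$ and $R(H)$ as infimum/supremum over comparisons with numbers (if $A \le x$ and $B \le y$, then $A + B \le x + y$, and dually). Applying Fekete's subadditive lemma to the sequence $a_n = L(nG)$ then yields the existence of $\mu_+ := \lim_{n \to \infty} L(nG)/n = \inf_n L(nG)/n$; dually $\mu_- := \lim_n R(nG)/n = \sup_n R(nG)/n$, and $\mu_- \le \mu_+$ since $R(H) \le L(H)$ in general. In particular $L(nG) \ge n\mu_+$ and $R(nG) \le n\mu_-$ for every $n$.

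The remainder of the argument would proceed by strong induction on the number of positions of $G$, aiming to show that $\mu_+ = \mu_- =: \mu$ and that the deviations from $n\mu$ are uniformly bounded. The base case, $G$ equal to a number, is trivial with $M_0 = 0$. For the inductive step, every option of $G$ has strictly fewer positions, so by hypothesis each left option $G^{L_i}$ admits a mean $\mu_i$ and deviation bound $M_i$, and each right option $G^{R_j}$ admits $\nu_j$ and $N_j$. Using the recursive formulas
\[ L(nG) = \max_i R\bigl((n-1)G + G^{L_i}\bigr), \qquad R(nG) = \min_j L\bigl((n-1)G + G^{R_j}\bigr), \]
valid when $nG$ is not itself a number, together with the Number Avoidance Theorem to absorb components of the sum that have collapsed to numerical residues, one would extract a common mean $\mu$ determined by a minimax balance of the best option means $\mu^* = \max_i \mu_i$ and $\nu^* = \min_j \nu_j$.

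The main obstacle is showing $\mu_+ = \mu_-$: these two limits measure Left's and Right's long-run rates independently, and it is not at all transparent that they agree even though individually $\mu^*$ may exceed $\nu^*$ (as already happens for $\pm 1 = \{1|-1\}$). I expect to handle this by a pairing-of-strategies argument in which Right's responses to Left's optimal moves bound the long-term average at $\mu_-$, while symmetric reasoning bounds it below by $\mu_+$; concretely, one analyzes the game $nG - cn$ for candidate rates $c \in [\mu_-, \mu_+]$ and uses the Number Avoidance Theorem to reduce the analysis to configurations where the inductive bounds $M_i, N_j$ apply. Once $\mu_+ = \mu_-$ is in hand, propagating the option-level bounds through finitely many rounds of the recursion yields the uniform constant $M_0$.
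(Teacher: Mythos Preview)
Your plan runs in the wrong direction and leaves the decisive step unproved. You aim to establish first the existence of a common mean $\mu$ with uniformly bounded deviations, and then read off the theorem; but the ``main obstacle'' you flag, $\mu_+ = \mu_-$, is in fact an immediate \emph{consequence} of the theorem as stated (divide $L(nG) - R(nG) \le M$ by $n$ and let $n \to \infty$), while the theorem itself has a two-line direct proof requiring no induction and no Fekete. Your strong induction on positions does not close the gap either: the recursion $L(nG) = \max_i R\bigl(G^{L_i} + (n-1)G\bigr)$ hands you the mixed sum $G^{L_i} + (n-1)G$, whose second summand is built from copies of $G$ itself, so the inductive hypothesis about multiples of the \emph{options} says nothing about it. The gestures toward Number Avoidance and a ``pairing-of-strategies argument'' do not supply the missing bound.

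The ingredient you are missing is the \emph{cross} inequality $R(A + B) \le L(A) + R(B)$ and its dual $L(A + B) \ge L(A) + R(B)$; you only record the same-side bounds $L(A+B) \le L(A) + L(B)$ and $R(A+B) \ge R(A) + R(B)$, which by themselves cannot control the gap $L - R$. The paper applies the cross bounds with $A$ an option of $G$ and $B = (n-1)G$: from the left options one obtains $L(nG) \le M_1 + R\bigl((n-1)G\bigr)$ where $M_1 = \max_{G^L} L(G^L)$, and from the right options $R(nG) \ge M_2 + R\bigl((n-1)G\bigr)$ where $M_2 = \min_{G^R} L(G^R)$. Subtracting, the two occurrences of $R\bigl((n-1)G\bigr)$ cancel and one gets $L(nG) - R(nG) \le M_1 - M_2$ for every $n$, with no recursion on $n$ and no appeal to the mean.
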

\begin{proof}
We noted above that
\[ L(G + H) \le L(G) + L(H).\]
But we can also say
that 
\[ R(G + H) \le R(G) + L(H)\]
 for arbitrary $G$ and $H$, because if
$x > R(G)$ and $y > L(H)$, then $x \rhd G$ and $y > H$, so that $x + y \rhd G + H$,
implying that $x + y \ge R(G + H)$.  Similarly, 
\[ L(G + H) \ge R(G) + L(H).\]

Every left option of $nG$ is of the form $G^L + (n-1)G$, and by
these inequalities
\[ R(G^L + (n-1)G) \le L(G^L) + R((n-1)G)\]
Therefore if we let $M_1$ be the maximum
of $L(G^L)$ over the left options of
$G$, then
\[ R(G^L + (n-1)G) \le M_1 + R((n-1)G)\]
and so every left option of
$nG$ has right stopping value at most $M_1 + R((n-1)G)$.
Therefore $L(nG) \le M_1 + R((n-1)G)$.

Similarly, every right option of $nG$ is of the form 
$G^R + (n-1)G$, and we have
\[ L(G^R + (n-1)G) \ge L(G^R) + R((n-1)G)\]
Letting $M_2$ be the minimum value of $L(G^R)$
over the right options of $G$, we have
\[ L(G^R + (n-1)G) \ge M_2 + R((n-1)G)\]
and so every right option of $nG$ has left stopping
value at least $M_2 + R((n-1)G)$.  Therefore,
$R(nG) \ge M_2 + R((n-1)G)$.

Thus
\[ L(nG) - R(nG) \le M_1 + R((n-1)G) - (M_2 + R((n-1)G)) = M_1 - M_2\]
regardless of $n$.
\end{proof}
Together with the fact that $L(G + H) \le L(G) + L(H)$ and $R(G + H) \ge R(G) + R(H)$ and $L(G) \ge R(G)$,
it implies that
\[ \lim_{n \to \infty} \frac{L(nG)}{n} \textrm{ and } \lim_{n \to \infty} \frac{R(nG)}{n}\]
converge to a common limit, called the \emph{mean value} of $G$, denoted $m(G)$.
It is also easily seen that $m(G + H) = m(G) + m(H)$ and $m(-G) = -m(G)$, and that $G \ge H$
implies $m(G) \ge m(H)$.  The mean value of $G$ can be thought of as a numerical approximation to $G$.

\chapter{Games near 0}
\section{Infinitesimal and all-small games}\label{sec:small-games}

As noted above, the game $*$ lies between all the positive
numbers and all the negative numbers.  Such games are called \emph{infinitesimals}
or \emph{small} games.

\begin{definition}
A game is \emph{infinitesimal} (or \emph{small}) if it is less than every positive
number and greater than every negative number, i.e., if $L(G) = R(G) = 0$.  A game is \emph{all-small in form}
 if every one of its positions (including itself) is infinitesimal.  A game is
\emph{all-small in value} if it equals an all-small game.
\end{definition}

Since $L(G + H) \le L(G) + L(H)$ and $R(G + H) \ge R(G) + R(H)$ and $R(G) \le L(G)$, it's clear
that infinitesimal games are closed under addition.  An easy inductive proof shows that
all-small games are also closed under addition: if $G$ and $H$ are all-small in form,
then every option of $G + H$ is all-small by induction, and $G + H$ is infinitesimal itself,
so $G + H$ is all-small.  Moreover, if $G$ is all-small in value, then the canonical
value of $G$ is all-small in form.

Their name might suggest that all-small games are the smallest of games, but as we will see
this is not the case: the game $+_2 = \{0|\{0|-2\}\}$ is smaller than every positive all-small game.

Infinitesimal games occur naturally in certain contexts.  For example, every position in Clobber is infinitesimal.
To see this, let $G$ be a position in Clobber.  We need to show that for any $n$,
\[ \frac{-1}{2^n} \le G \le \frac{1}{2^n}.\]
As noted in the proof of Theorem~\ref{charcoal}, $\frac{1}{2^n}$ is a Hackenbush position consisting of string of
edges attached to the ground: $1$ blue edge followed by $n$ red edges (see Figure~\ref{numbers-only-hackenbush}).  By symmetry,
we only need to show that $G \le \frac{1}{2^n}$, or in other words, that
$\frac{1}{2^n} - G$ is a win for Left when Right goes first.  Left plays as follows:
whenever it is Left's turn, she makes a move in $G$, unless there are no remaining
moves in $G$.  \emph{In this case there are no moves for Right either.}  This can be seen
from the rules of Clobber - see Figure~\ref{clobber-all-small}.

\begin{figure}[h]
\begin{center}
\includegraphics[width=4in]
					{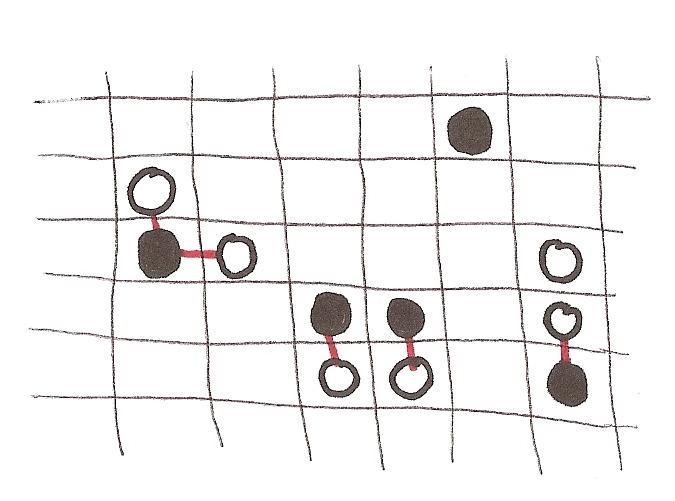}
\caption{In a position of Clobber, whenever one player has available moves,
so does the other.  The available moves for each player correspond to the pairs of adjacent black
and white pieces, highlighted with red lines in this diagram.}
\label{clobber-all-small}
\end{center}
\end{figure}

So once the game reaches a state where no more moves remain
in $G$, Left can make the final move in $\frac{1}{2^n}$, by cutting the blue
edge at the base of the stalk.  This ends the other component.

In other words, the basic reason why Left can win is that she retains the ability to end the Hackenbush position
at any time, and there's nothing that Right can do about it.

Now since every subposition of a Clobber position is itself a Clobber position,
it follows that Clobber positions are in fact \emph{all-small}.

The only property of Clobber that we used was that \emph{whenever Left can move, so can Right,
and vice versa}.  So we have the following
\begin{theorem}
If $G$ is a game for which Left has options iff Right has options, and the same holds 
of every subposition of $G$, then $G$ is all-small.
\end{theorem}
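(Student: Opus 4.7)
My plan is to prove this by induction on the position $G$, closely imitating the Clobber argument that immediately precedes the theorem but replacing the specific combinatorial feature of Clobber with the abstract hypothesis. Since ``all-small in form'' means that every position (including $G$ itself) is infinitesimal, the inductive step reduces to showing that any game $G$ satisfying the hypothesis is itself infinitesimal; the inductive hypothesis then handles all proper subpositions, which inherit the property by assumption.

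To show $G$ is infinitesimal, I need $-\frac{1}{2^n} \le G \le \frac{1}{2^n}$ for every $n \ge 0$. The negation $-G$ also satisfies the hypothesis (because ``Left has options iff Right has options'' is symmetric under swapping the two players, and the subpositions of $-G$ are the negations of subpositions of $G$). So by that symmetry it suffices to prove $G \le \frac{1}{2^n}$, i.e., that $\frac{1}{2^n} + (-G) \ge 0$. Representing $\frac{1}{2^n}$ as the Hackenbush stalk $d_n$ from Theorem~\ref{charcoal} (a blue edge on the ground topped by $n$ red edges), I will exhibit a strategy by which Left, moving second, cannot lose.

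Left's strategy is the natural abstraction of the one used for Clobber: on any Left turn, if the current position of the $-G$ component has a left option available, Left moves there; otherwise Left cuts the base blue edge of $d_n$, which deletes the entire Hackenbush component at once. The key observations are (i) only Left can ever remove the blue edge, so the ``safety move'' of cutting it is always available whenever Left has not previously used it, and (ii) when Left finds herself without a move in the current $-G$-subposition, the hypothesis applied to that subposition tells us Right has no move there either, so after Left cuts the blue edge the resulting position has no moves for either player. Since it is then Right's turn, Right loses. Between such episodes, whenever Right moves in the Hackenbush he merely collapses $d_k$ to some $d_{k'}$ with $k' < k$ while leaving the blue edge intact, and whenever Right moves in $-G$ he produces a new $-G$-subposition that still satisfies the hypothesis, keeping the strategy well-defined. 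Well-foundedness of both components guarantees the game terminates.

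The main conceptual step, and the only one requiring care, is the verification that Left's strategy is always legally available and actually wins: that the blue edge survives until Left chooses to cut it, that cutting it lands us in a truly terminal position for Right, and that the hypothesis transfers to every subposition of $-G$ encountered mid-game. None of these requires calculation, only bookkeeping with the hypothesis. Once this is done, the outer induction concludes that every subposition of $G$ (including $G$) is infinitesimal, so $G$ is all-small in form.
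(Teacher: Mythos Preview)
Your proposal is correct and follows essentially the same approach as the paper: the paper's proof is simply the observation that the preceding Clobber argument used only the abstract hypothesis ``Left has options iff Right has options in every subposition,'' and you have faithfully reproduced that argument with the Hackenbush stalk $d_n$ and Left's strategy of moving in $-G$ until no move remains, then cutting the blue edge. The outer ``induction'' in your framing is harmless but unnecessary, since the argument applies directly to every subposition without invoking an inductive hypothesis.
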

Conversely
\begin{theorem}
If $G$ is an all-small game in canonical form, then $G$ has the property that
Left can move exactly when Right can, and this holds in all subpositions.
\end{theorem}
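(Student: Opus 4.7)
The plan is to prove the statement by well-founded induction on $G$, showing the balanced-options property at $G$ itself and then invoking the inductive hypothesis at its proper subpositions. The key observation is that both "canonical form" and "all-small in form" are hereditary properties: every subposition of a canonical game is canonical (since canonical means no position has a dominated or reversible option), and every subposition of an all-small-in-form game is infinitesimal by definition. Combined with the earlier remark that the canonical form of an all-small-in-value game is all-small in form, this means that every proper subposition of our canonical all-small $G$ is again canonical and all-small, so the inductive hypothesis will dispose of those automatically.

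The heart of the proof is therefore the local claim at $G$: if $G$ is in canonical form and infinitesimal, then $G$ has a left option if and only if it has a right option. I would argue the contrapositive. Suppose $G$ has at least one left option but no right options. Because $G$ has no right options, when Right moves first he loses immediately under normal play, so $G \ge 0$. But $G$ is infinitesimal, hence $G \le 1/2^n$ for all $n \ge 0$, which forces $G \le 0$. Together these give $G = 0$. By uniqueness of canonical forms (proved earlier), the canonical form of the value $0$ is the game $\{\,|\,\} \equiv 0$, which has no left options, contradicting the assumption. The symmetric case (right options but no left options) is identical with the roles of the players swapped.

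Combining these, every subposition of $G$, including $G$ itself, has left options precisely when it has right options. I expect no real obstacle here; the main thing to be careful about is bookkeeping the two levels of hereditary structure (canonical and all-small) so that the inductive hypothesis applies verbatim to subpositions, and invoking the uniqueness of canonical forms for the value $0$ cleanly rather than trying to reason directly about simplicity or the gift-horse principle.
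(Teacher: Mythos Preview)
Your overall architecture is sound: the hereditary nature of ``canonical form'' and ``all-small in form'' does reduce everything to the local claim at $G$, and your final appeal to uniqueness of the canonical form of $0$ is exactly right. But there is a genuine gap in the middle. The inference ``$G$ is infinitesimal, hence $G \le 1/2^n$ for all $n \ge 0$, which forces $G \le 0$'' is false in general: being less than every positive number does \emph{not} imply being $\le 0$ in the partial order on games. The game $\uparrow = \{0\mid *\}$ is infinitesimal, satisfies $\uparrow < 1/2^n$ for every $n$, and yet $\uparrow > 0$. So from ``$G \ge 0$ and $G$ infinitesimal'' alone you cannot conclude $G = 0$; notice that at this point in your argument you have not used the hypothesis that $G$ has no right options.

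The paper fills this gap by a different route. Rather than trying to show $G \le 0$, it observes that a game with no right options always falls under the simplicity rule: choose any number $n$ exceeding every left option, and then $G^L \lhd n$ for all $G^L$ while the condition on right options is vacuous, so $G$ equals a number. An infinitesimal \emph{number} must be $0$, and then your uniqueness-of-canonical-form step finishes the job. Swapping your ``$G \le 1/2^n$ forces $G \le 0$'' line for this simplicity-rule argument makes the proof correct, and the rest of your write-up can stand as is.
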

\begin{proof}
By induction, we only need to show that this property holds for $G$.
Suppose that it didn't, so that $G = \{L|\emptyset\}$ or $G = \{\emptyset|R\}$.
In the first case, there is some number $n$ such that $n $ exceeds every element
of $L$.  Therefore by the simplicity rule, $G$ is a number.  Since $G$ is infinitesimal,
it must be zero, but the canonical form of $0$ has no left options.  So
$G$ cannot be of the form $\{L|\emptyset\}$.  The other possibility is ruled
out on similar grounds.
\end{proof}

The entire collection of all-small games is not easy to understand.  In fact,
we will see later that there is an order-preserving
homomorphism from the group $\mathcal{G}$ of (short) games into the group of all-small games.
So all-small games are as complicated as games in general.

Here are the simplest all-small games:
\[ 0 = \{|\}\]
\[ * = \{0|0\}\]
\[ \uparrow = \{0|*\}\]
\[ \downarrow = \{*|0\}\]
The reader can easily check that $\uparrow > 0$ but $\uparrow || *$.
As an exercise in reducing to canonical form, the reader can also verify that
\[ \uparrow + * = \{0,*|0\}\]
\[ \uparrow + \uparrow = \{0|\uparrow + *\}\]
\[ \uparrow + \uparrow + * = \{0|\uparrow\}\]
\[ \{\uparrow|\downarrow\} = *\]
Usually we use the abbreviations $\uparrow * = \uparrow + *$, $\Uparrow = \uparrow + \uparrow$,
$\Downarrow = \downarrow + \downarrow$, $\Uparrow* = \Uparrow + *$.  More generally,
$\hat{n}$ is the sum of $n$ copies of $\uparrow$ and $\hat{n}* = \hat{n} + *$.

These games occur in Clobber as follows:
\begin{figure}[h]
\begin{center}
\includegraphics[width=3in]
					{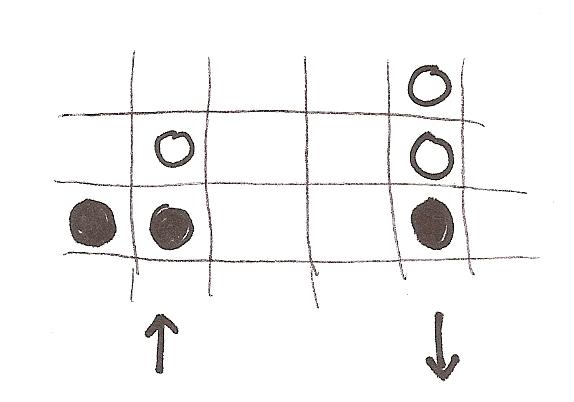}
\end{center}
\end{figure}

\begin{theorem}
Letting $\mu_1 = \uparrow$, $\mu_{n+1} = \{0|\mu_n\}$, and $\nu_1 = \uparrow *$,
$\nu_{n+1} = \{0|\nu_n\}$, we have $\mu_{n+1} = \uparrow + \nu_n$ and
$\nu_{n+1} = \uparrow + \mu_n$ and $\mu_{n+1} = \nu_{n+1} + *$ for every $n \ge 1$.
\end{theorem}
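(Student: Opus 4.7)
The plan is to prove all three identities by a single joint induction on $n$, taking the base case from the canonical forms already computed in the text and handling the inductive step by expanding sums and simplifying via dominated and reversible moves.

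For the base case $n=1$, each identity reduces to a computation already done: $\uparrow + \uparrow = \{0|\uparrow*\} = \{0|\nu_1\} = \nu_2$ gives (B); $\Uparrow + * = \{0|\uparrow\} = \mu_2$ gives (C); and $\uparrow + \nu_1 \equiv \uparrow + \uparrow + * = \Uparrow\! *\,= \mu_2$ gives (A).

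For the inductive step, assume (A), (B), and (C) at index $n$, and prove (B) first. Expanding $\uparrow + \mu_{n+1}$ by the addition formula gives left options $\{\mu_{n+1},\uparrow\}$ and right options $\{*+\mu_{n+1},\, \uparrow + \mu_n\}$. By inductive (C) combined with $*+* \equiv 0$, the first right option equals $\nu_{n+1}$; by inductive (B), so does the second. Hence after using Theorem~\ref{congruences} the right-option set collapses to $\{\nu_{n+1}\}$. I then claim both left options are reversible. The right option of $\uparrow$ is $*$, and $\uparrow + \mu_{n+1} - * = \uparrow + \nu_{n+1}$ by inductive (C); this is a sum of positive games (here using that $\mu_n, \nu_n$ are positive for $n \ge 1$, $n \ge 2$ respectively, which follows from the $\{0\mid\cdot\}$ form and inductive (B)), so $* \le \uparrow + \mu_{n+1}$. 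Similarly, the right option of $\mu_{n+1}$ is $\mu_n$, and $\uparrow + \mu_{n+1} - \mu_n$ reduces via inductive (C) and (B) to $\Uparrow\! *\, = \mu_2 > 0$, so $\mu_n \le \uparrow + \mu_{n+1}$. The reversible-moves theorem then replaces both left options with the left options of $*$ and $\mu_n$, each of which is just $\{0\}$. Thus $\uparrow + \mu_{n+1} = \{0 \mid \nu_{n+1}\} = \nu_{n+2}$.

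The argument for (C) at $n+1$ runs in parallel. Expanding $\nu_{n+2} + *$ gives right options $\{\nu_{n+1}+*,\,\nu_{n+2}\}$ which by inductive (C) becomes $\{\mu_{n+1},\nu_{n+2}\}$; using (B) at $n+1$ just proved, $\nu_{n+2} - \mu_{n+1} = \uparrow > 0$, so $\nu_{n+2}$ is dominated by $\mu_{n+1}$ and drops. The left options $\{*,\nu_{n+2}\}$ are shown reversible through $0$ and through $\nu_{n+1}$ respectively, by the same style of difference computation, and both get replaced by $0$. This yields $\nu_{n+2}+* = \{0\mid\mu_{n+1}\} = \mu_{n+2}$. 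Finally, (A) at $n+1$ is then immediate: $\mu_{n+2} = \nu_{n+2}+* = \uparrow + \mu_{n+1} + * = \uparrow + \nu_{n+1}$, the last step by inductive (C).

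The main obstacle is the reversibility bookkeeping: each application of the reversible-moves theorem requires an honest inequality between games, and these are not visible from the definitions but emerge only after rewriting via the inductive hypotheses (typically reducing a difference to $\uparrow$, $\Uparrow$, or $\Uparrow\!*$, which are manifestly positive). Organizing these reductions so that (B) is proved before (C), and (A) afterward by a pure algebraic combination of the two, keeps the argument from becoming circular.
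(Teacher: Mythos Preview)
Your proof is correct, and the overall inductive scheme matches the paper's. The execution of the inductive step, however, is organized differently.

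The paper proves (A) first: it expands $\uparrow + \nu_n = \{\uparrow,\nu_n \mid \uparrow + \nu_{n-1},\, \nu_n + *\}$, uses the inductive hypotheses to collapse the right side to $\{\uparrow,\nu_n \mid \mu_n\}$, and then compares this game \emph{directly} with $\mu_{n+1} = \{0\mid\mu_n\}$ using the comparison criterion of Theorem~\ref{betwixt} (one direction is monotonicity of options; the other reduces to ruling out $\mu_{n+1}\ge\mu_n$ and $\{\uparrow,\nu_n\mid\mu_n\}\le 0$, both immediate). Identity (B) is then ``completely analogous,'' and (C) drops out in one line of algebra: $\mu_{n+1}=\uparrow+\nu_n=\uparrow+\mu_n+*=\nu_{n+1}+*$.

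You instead carry out a full canonical-form reduction via reversible and dominated moves, and you do this \emph{twice}: once for (B) and again, separately, for (C), deriving (A) algebraically at the end. This works, but it is heavier: each reversibility claim requires an auxiliary inequality (which you correctly reduce to $\uparrow$, $\Uparrow$, or $\Uparrow*$ being positive), whereas the paper's direct $\le$ comparison needs only the two obvious obstructions. Note also a small imprecision in your (C) step: reversing $*$ through $0$ replaces it by the \emph{empty} set of left options, not by $0$; the $0$ in the final left-option set comes solely from reversing $\nu_{n+2}$ through $\nu_{n+1}$. The conclusion is unaffected. The paper's route is shorter mainly because it never invokes reversible moves at all and gets (C) for free.
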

\begin{proof}
We proceed by induction on $n$.  The base case is already verified above in the examples.
Otherwise
\[ \uparrow + \nu_n = \{0|*\} + \{0|\nu_{n-1}\} = \{\uparrow, \nu_n|\uparrow + \nu_{n-1}, \nu_n + *\}.\]
By induction, this is
\[ \{\uparrow, \nu_n| \mu_n, \mu_n\}\]
This value is certainly $\ge \{0|\mu_n\}$, since it is obtained by improving a left option ($0 \to \uparrow$)
and adding a left option of $\nu_n$.  So it remains to show that $\mu_{n+1} = \{0|\mu_n\} \le \{\uparrow, \nu_n|\mu_n\}$.
This will be true unless $\mu_{n+1} \ge \mu_n$ (impossible, since $\mu_n$ is a right option of $\mu_{n+1}$),
or $\{\uparrow, \nu_n | \mu_n\} \le 0$ (impossible because Left can make an initial winning move to $\uparrow$).
So $\uparrow + \nu_n = \mu_{n+1}$.

A completely analogous argument shows that $\uparrow + \mu_n = \nu_{n+1}$.  Then for the final claim,
note that $\mu_{n+1} = \uparrow + \nu_n = \uparrow + \mu_n - * = \uparrow + \mu_n + * = \nu_{n+1} + *$.
\end{proof}
So then $\mu_{2k - 1}$ is the sum of $2k - 1$ copies of $\uparrow$ and
$\nu_{2k}$ is the sum of $2k$ copies of $\uparrow$, because $* + * = 0$.

Using this, we get the following values of clobber positions:
\begin{figure}[h]
\begin{center}
\includegraphics[width=4in]
					{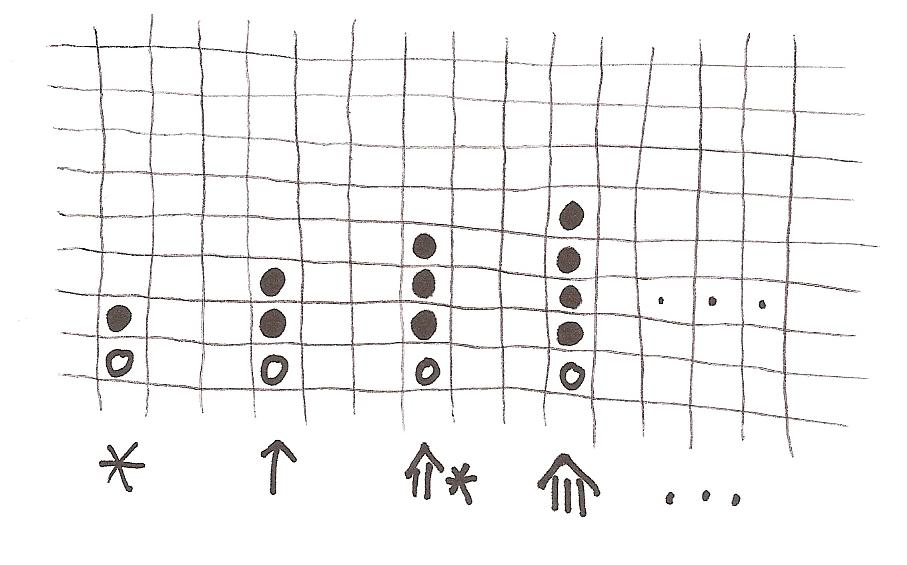}
\end{center}
\end{figure}

We also can use this to show that the multiples of $\uparrow$ are among the largest of (short) infinitesimal games:
\begin{theorem}
Let $G$ be a short infinitesimal game.  Then there is some $N$ such that for $n > N$,
$G \le \mu_n$ and $G \le \nu_n$.  In particular, every infinitesimal game lies between two (possibly negative) multiples of $\uparrow$.
\end{theorem}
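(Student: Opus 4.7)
The plan is to strengthen the statement so that a single induction on the positions of $G$ carries through, and then extract the theorem as the special case $L(G)=R(G)=0$. The naive hypothesis ``$G\le\mu_n$ for large $n$ whenever $G$ is infinitesimal'' resists induction on options, because an infinitesimal game can have options that are not themselves infinitesimal. Instead I will prove, by induction on the positions of $G$, the following stronger claim: \emph{for every short game $G$ there is an integer $N$ such that $G\le L(G)+\mu_n$ and $G\le L(G)+\nu_n$ for every $n>N$.} The theorem is the instance $L(G)=0$, and the lower bound $-\mu_n\le G$, $-\nu_n\le G$ follows by applying the strengthened statement to $-G$, which is also infinitesimal.

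The base case is when $G$ equals a number $x$, so that $L(G)=x$. The inequality then reduces to $\mu_n\ge 0$ and $\nu_n\ge 0$, which is a short side-induction on $n$ from the recursions $\mu_{n+1}\equiv\{0|\mu_n\}$ and $\nu_{n+1}\equiv\{0|\nu_n\}$: Left wins going first by moving to $0$, and wins going second by the inductive hypothesis.

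For the inductive step, assume $G$ is not a number. Then $R(G)=\min L(G^R)$, and since $G$ is short this minimum is attained by some right option $G^R_0$ with $L(G^R_0)=R(G)$. The inductive hypothesis applied to $G^R_0$ furnishes an $N_0$ such that $G^R_0\le R(G)+\mu_n$ for every $n>N_0$. For such $n$ I claim that Right wins $G-L(G)-\mu_n$ going first by the move in $G$ to $G^R_0-L(G)-\mu_n$, since
\[ G^R_0-L(G)-\mu_n \;\le\; R(G)+\mu_n-L(G)-\mu_n \;=\; R(G)-L(G)\;\le\; 0,\]
using the universal inequality $R(G)\le L(G)$. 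Hence $G\le L(G)+\mu_n$, and the identical argument with $\nu_n$ in place of $\mu_n$ (using the second half of the inductive hypothesis on $G^R_0$) gives the other bound; one then takes the maximum of the finitely many $N_0$'s produced this way.

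The main conceptual obstacle is pinpointing the correct strengthening: without the additive term $L(G)$ on the right-hand side, the induction cannot absorb options of $G$ whose stopping values are positive. Once that constant is carried along, the inductive step reduces to exhibiting a single winning Right move, and uses nothing more than the attained-minimum formula for $R(G)$ and the universal bound $R(G)\le L(G)$.
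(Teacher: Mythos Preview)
Your inductive step has a direction error that breaks the argument. Exhibiting a winning first move for Right in $G-L(G)-\mu_n$ shows only that $G-L(G)-\mu_n\lhd 0$, i.e.\ $G\lhd L(G)+\mu_n$; it does \emph{not} give $G\le L(G)+\mu_n$. For $\le$ you must show that \emph{Left} moving first loses, which means handling every Left option of $G-L(G)-\mu_n$: both the move $-\mu_n\to-\mu_{n-1}$ and each move $G\to G^L$. Your single Right reply $G\to G^R_0$ addresses neither of these; it is a move from the wrong side.

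The repair is not a one-line patch. For Left's move to $G-L(G)-\mu_{n-1}$, your $G^R_0$ idea does work as Right's reply. But for Left's move to $G^L-L(G)-\mu_n$, the inductive hypothesis on $G^L$ is stated in terms of $L(G^L)$, which can exceed $L(G)$ (take $G=\{\{2|0\}\mid -1\}$: here $L(G)=0$ but $L(G^L)=2$, and indeed $G^L\not\le L(G)$, so Right cannot simply play $-\mu_n\to 0$). What actually works is for Right to respond \emph{inside} $G^L$, to an option $(G^L)^{R}_0$ realizing $R(G^L)$; then the inductive hypothesis applied to that grandchild, together with $R(G^L)\le L(G)$, closes the case. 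So the induction must invoke the hypothesis on positions two levels down, not just on one right option of $G$. The paper sidesteps this bookkeeping by giving an explicit strategy for Left in $\mu_n-G$ (for infinitesimal $G$ only), maintaining the invariant that the $-G$ component always has nonnegative right stopping value; this is essentially the same mechanism, packaged as a strategy rather than an induction.
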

\begin{proof}
Take $N$ to be larger than five plus the number of positions in $G$.  We give a strategy for Left to use as the second
player in $\mu_n - G$.  The strategy is to always move to a position of the form $\mu_{m} + H$,
where $m > 5$ and $R(H) \ge 0$, until the very end.  The initial position is of this form.  From such a position,
Right can only move to $\mu_{m-1} + H$ or to $\mu_m + H^R$ for some $H$.  In the first case, we use the fact
that $L(H) \ge R(H) \ge 0$, and find a left option $H^L$ such that $R(H^L) \ge 0$.  Then
$\mu_{m-1} + H^L$ is of the desired form.  In the other case, since $R(H) \ge 0$, $L(H^R) \ge 0$, and therefore
we can find some $H^{RL}$ such that $R(H^{RL}) \ge 0$.  We make such a move.  Left uses this strategy
until $H$ becomes a number.

Note that if we follow this strategy, we (Left) never move in the $\mu_n$ component.  Therefore, the complexity
of the other component will decrease after each of our turns. By the time that $H$ becomes a number $x$, we will stil lbe in a position
$\mu_m + x$ with $m$ at least four or five, by choice of $n$.

Now by following our strategy, once $H$ becomes a number, the number will be nonnegative.  So we will be 
in a position $\mu_m + x$, where $x \ge 0$ and $m$ is at least four or five.  Either Left or Right
moved to this position.  Either way, this position is positive (because $x \ge 0$ and $\mu_m > 0$ for $m > 1$),
so therefore Left wins.

The same argument shows that $\nu_n - G$ is positive for sufficiently large $n$.

Since the positive multiples of $\uparrow$ are of the form $\mu_n$ or $\nu_n$, all sufficiently large multiples
of $\uparrow$ will exceed $G$.  By the same logic, all sufficiently large negative multiples of
$\uparrow$ (i.e.,  multiples of $\downarrow$) will be less than $G$.  So our claim is proven.
\end{proof}

We end this section by showing that some infinitesimal games are smaller than all positive
all-small games.  For any positive number $x$, let $+_x$ (pronounced ``tiny $x$'') be
the game $\{0||0|-x\} \equiv \{0|\{0|-x\}\}$.  The negative of $+_x$ is
$\{x|0||0\}$, which we denote $-_x$ (pronounced ``miny $x$'').
\begin{theorem}
For any positive number $x$, $+_x$ is a positive infinitesimal, and $+_x < G$ for any positive
all-small $G$.  Also, if $G$ is any positive infinitesimal, then $+_x < G$ for sufficiently large $x$.
\end{theorem}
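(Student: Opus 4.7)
The plan is to handle the three claims in sequence: first positivity, then infinitesimality, then the comparison with positive all-small games, and finally the large-$x$ comparison with positive infinitesimals.

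First I would check $+_x > 0$ directly from $+_x \equiv \{0 \mid \{0 \mid -x\}\}$: if Left goes first she moves to $0$, a second-player win, so Left wins; if Right goes first he must move to $\{0 \mid -x\}$, from which Left moves to $0$ and Right has no reply. Next I would compute stopping values. Clearly $L(+_x) = R(0) = 0$. For $R(+_x) = L(\{0 \mid -x\})$ I would first verify that $\{0 \mid -x\}$ is not equal to a number via the simplicity rule: no number $z$ satisfies $0 \lhd z \lhd -x$ because this would force $z > 0$ and $z < -x < 0$. So $\{0 \mid -x\}$ is not a number, giving $L(\{0 \mid -x\}) = R(0) = 0$, hence $R(+_x) = 0$. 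With $L(+_x) = R(+_x) = 0$, the earlier stopping-value theorem forces every positive number to exceed $+_x$ and every negative number to fall below it; i.e., $+_x$ is infinitesimal.

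The core step is showing $+_x < G$ for a positive all-small $G$. I would pass to the canonical form so every subposition of $G$ is all-small in form, and analyze $G - +_x = G + \{\{x \mid 0\} \mid 0\}$ by giving an explicit Left strategy. Playing second: if Right moves $-+_x \to 0$ the residue is $G > 0$ and Left wins; if Right moves in $G$ to $G^R$, Left replies $-+_x \to \{x \mid 0\}$, reaching $G^R + \{x \mid 0\}$. Right's two options then resolve as follows. The move $\{x \mid 0\} \to 0$ leaves $G^R$ with $G^R \rhd 0$ (since $G \ge 0$), so Left wins. The move $G^R \to G^{RR}$ is countered by $\{x \mid 0\} \to x$, producing $G^{RR} + x$; here $G^{RR}$ is all-small, hence infinitesimal, and $x$ is a positive number, so $L(G^{RR} + x) = R(G^{RR} + x) = x > 0$, whence $G^{RR} + x > 0$ and Left wins. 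Playing first, Left opens with $-+_x \to \{x \mid 0\}$, and Right's two replies mirror the second-player analysis.

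For the final claim, when $G$ is merely a positive infinitesimal I would choose $M$ larger than $|R(H)|$ over every position $H$ of $G$ (possible since $G$ is short) and take any $x > M$. The same strategy works verbatim; the only step that used all-smallness was $G^{RR} + x > 0$, and with only infinitesimality of $G$ unavailable I would instead invoke $R(G^{RR} + x) = R(G^{RR}) + x > -M + M = 0$, which still yields $G^{RR} + x > 0$. The main obstacle will be keeping the branching case analysis clean: the conceptual point to emphasize is that Right's option $-+_x \to 0$ is ``suicidal'' whenever the other summand is positive, so Left can afford to hold $\{x \mid 0\}$ in reserve until cashing it in for the positive number $x$, and this single device drives every winning line.
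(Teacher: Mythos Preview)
Your proof is correct and follows essentially the same strategy as the paper: in $G + (-_x)$ Left plays $-_x \to \{x\mid 0\}$ at the first opportunity, and the standing threat $\{x\mid 0\}\to x$ (which wins because the other summand plus $x$ is positive) either forces Right to reply $\{x\mid 0\}\to 0$, returning play to $G$, or lets Left cash in. One small remark: in your first-player line you also need $G^R + x > 0$ (not only $G^{RR}+x>0$), but your choice of $M$ in the final paragraph already covers this, so there is no gap.
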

\begin{proof}
It's easy to verify that $L(+_x) = R(+_x) = 0$, and that $+_x > 0$.

For the first claim, let $G$ be a positive all-small game.  We need to show that
$G + (-_x)$ is still positive. If Left goes first, she can win by making her first move
be $\{x|0\}$.  Then Right is forced to respond by moving to $0$ in this component,
or else Left can move to $x$ on her next turn, and win (because $x$ is a positive number,
so that $x$ plus any all-small game will be positive).  So Right is forced to move
to $0$.  This returns us back to $G$ alone, which Left wins by assumption.

If Left goes second, then she follows the same strategy, moving to $\{x|0\}$ at the first
moment possible.  Again, Right is forced to reply by moving $\{x|0\} \to 0$, and
the brief interruption has no effect.  The only time that this doesn't work is
if Right's first move is from $-_x$ to $0$. This leaves $G + 0$, but Left can win
this since $G > 0$.  This proves the first claim.

For the second claim, we use identical arguments, but choose $x$ to be so large
that $-x < G^* < x$ for every position $G^*$ occurring anywhere within
$G$.  Then Left's threat to move to $x$ is still strong enough to force a reply from Right.
\end{proof}

So just as the multiples of $\uparrow$ are the biggest infinitesimal games,
the games $+_x$ are the most miniscule.
\section{Nimbers and Sprague-Grundy Theory}
An important class of all-small games is the \emph{nimbers}
\[ *n = \{*0, *1, \ldots, *(n-1)\}\]
where we are using $\{A\}$ as shorthand for $\{A|A\}$.
For instance
\[ *0 = \{|\} = 0\]
\[ *1 = \{0|0\} = *\]
\[ *2 = \{0,*|0,*\}\]
\[ *3 = \{0,*,*2|0,*,*2\}\]
These games are all-small by the same criterion that made Clobber games
all-small.  Note that if $m < n$, then
$*n$ has $*m$ as both a left and a right option, so $*m \lhd *n \lhd *m$.
Thus $*m || *n$.  So the nimbers are pairwise distinct, and in fact
pairwise fuzzy with each other.

There are standard shorthand notations for sums of numbers and nimbers:
\[ x*n \equiv x + *n\]
\[ x* \equiv x + *1 \equiv x + *\]
where $x$ is a number and $n \in \mathbb{Z}$.
Similarly, $7\uparrow$ means $7 + \uparrow$ and so on.This notation is usually justified
as an analog to mixed fraction notation like $5\frac{1}{2}$ for $\frac{11}{2}$.

Because nimbers are infinitesimal, we can compare expressions of this sort as follows:
$x_1*n_1 \le x_2*n_2$ if and only if $x_1 < x_2$, or $x_1 = x_2$ and $n_1 = n_2$.
We will see how to add these kind of values soon.

The nimbers are so-called because they occur in the game Nim.  In fact,
the nim-pile of size $n$ is identical to the nimber $*n$.

Nim is an example of an \emph{impartial game}, a game in which
every position is its own negative.  In other words, every left option of a position
is a right option, and vice versa.  When working with impartial
games, we can use ``option'' without clarifying whether we mean
left or right option.  The impartial games are exactly those
which can be built up by the operation $\{A,B,C,\ldots|A,B,C,\ldots\}$,
in which we require the left and right sides of the $|$ to be the same.
This is often abbreviated to $\{A,B,C,\ldots\}$, and we use this abbreviation
for the rest of the section.

Another impartial game is \emph{Kayles}.  Like Nim, it is played
using counters in groups.  However, now the counters are in rows,
and a move consists of removing one or two consecutive counters
from a row.  Doing so may split the row into two pieces.  Both
players have the same options, and as usual we play this game
using the normal play rule, where the last player to move is the winner.

\begin{figure}[htb]
\begin{center}
\includegraphics[width=4in]
					{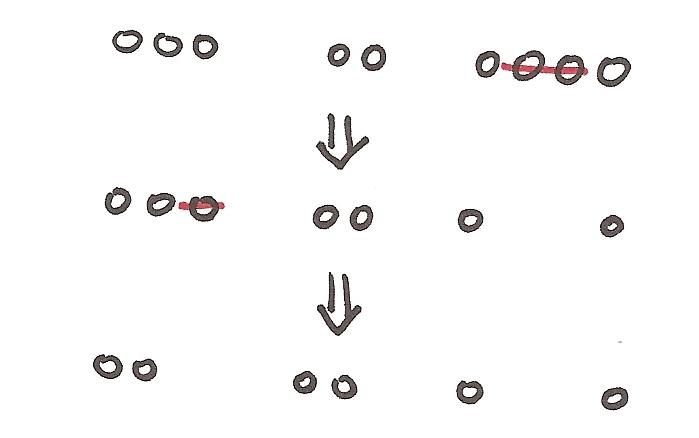}
\caption{Two moves in a game of Kayles.  Initially there are rows of length 3, 2, and 4.
The first player splits the row of 4 into rows of length 1 and 1, by removing the middle two
pieces.  The second player reduces the row of length 3 to length 2.}
\label{kayles-rules}
\end{center}
\end{figure}

So if $K_n$ denotes the Kayles-row of length $n$, then we have
\[ K_1 = \{0\} = *\]
\[ K_2 = \{0,K_1\}\]
\[ K_3 = \{K_1, K_2, K_1 + K_1\}\]
\[ K_4 = \{K_2, K_1 + K_1, K_3, K_2 + K_1\}\]
\[ K_5 = \{K_3, K_2 + K_1, K_4, K_3 + K_1, K_2 + K_2\}\]

Another similar game is \emph{Grundy's game}.  This game is played
with piles, like Nim, but rather than removing counters, the move is
to split a pile into two non-equal parts.
So if $G_n$ denotes a Grundy-heap of size $n$, then
\[ G_1 = \{\} = 0\]
\[ G_2 = \{\} = 0\]
\[ G_3 = \{G_1 + G_2\}\]
\[ G_4 = \{G_1 + G_3\}\]
\[ G_5 = \{G_1 + G_4, G_2 + G_3 \}\]
\[ G_6 = \{G_1 + G_5, G_2 + G_4 \}\]
\[ G_7 = \{G_1 + G_6, G_2 + G_5, G_3 + G_4\}\]
and so on.


The importance of nimbers is the following:
\begin{theorem}
Every impartial game equals a nimber.
\end{theorem}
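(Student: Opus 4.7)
The plan is to proceed by well-founded induction on the structure of the game. Suppose $G$ is impartial; then every position of $G$ is impartial, and by the inductive hypothesis every option of $G$ equals a nimber. Since impartiality forces the left and right options to coincide, $G$ has a form $\{*n_1, *n_2, \ldots, *n_k\}$ (in the shorthand used above) for some nonnegative integers $n_1, \ldots, n_k$. Let $m$ be the minimum excludant of the set $\{n_1, \ldots, n_k\}$, that is, the smallest nonnegative integer not appearing among the $n_i$. The claim is that $G = *m$.

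Before checking this, I would first establish the auxiliary fact that $*n + *n = 0$ for every $n$, by a short induction: every option of $*n + *n$ has the form $*j + *n$ for some $j < n$, and the second player responds by moving the other summand to $*j$, yielding $*j + *j$, which is $0$ by the inductive hypothesis. Since the negative of an impartial game is itself, to prove $G = *m$ it then suffices to exhibit a second-player win in $G + *m$.

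The winning response works as follows. If the first player moves in the $*m$ component to some $*j$ with $j < m$, then by definition of mex we have $j \in \{n_1, \ldots, n_k\}$, so $G$ has $*j$ as an option; the second player moves $G \to *j$, leaving $*j + *j = 0$. If instead the first player moves in the $G$ component to some $*n_i$, then $n_i \neq m$ (because $m \notin \{n_1, \ldots, n_k\}$). If $n_i < m$, the second player replies $*m \to *n_i$, leaving $*n_i + *n_i = 0$; if $n_i > m$, the second player replies $*n_i \to *m$, which is legal since $*m$ is an option of $*n_i$, leaving $*m + *m = 0$. In every case one exchange returns to a zero position, so the second player wins.

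The main obstacle is really just the bookkeeping of the mex argument: one must ensure that the crucial move $*n_i \to *m$ in the final case is actually available, which relies on the exact definition of $*n_i$ as having all $*j$ with $j < n_i$ as options, and one must check that the resulting position is genuinely a zero game (handled by the auxiliary lemma $*n + *n = 0$). The well-foundedness of the underlying game graph guarantees that the induction terminates, with the base case $G = 0 = *0$ (corresponding to $k = 0$ and $m = 0$) handled automatically within the inductive step.
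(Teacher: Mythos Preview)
Your proof is correct but takes a somewhat different route from the paper's. The paper reduces to the same key lemma (that $\{*a,*b,\ldots\} = *m$ where $m$ is the mex), but proves it algebraically via the recursive criterion for $\le$: since impartial games satisfy $-G = G$, it suffices to show $*m \le G$, and this holds because (i) no option $*n_i$ of $G$ can be $\le *m$ (distinct nimbers are incomparable, as established just before the theorem), and (ii) $G$ is not $\le$ any option of $*m$, since by choice of $m$ every option of $*m$ already occurs among the options of $G$ and is therefore $\lhd G$.

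Your approach instead proves $G + *m = 0$ by an explicit second-player pairing strategy, supported by the auxiliary fact $*n + *n = 0$. This is the classical ``strategic'' presentation of the Sprague--Grundy theorem and is entirely sound; it has the advantage of being self-contained (you don't need the prior remark that distinct nimbers are pairwise fuzzy), at the cost of a slightly longer case analysis. The paper's argument is shorter once the order-theoretic machinery (Theorem~3.4.6 in the paper) and the incomparability of nimbers are in hand. One small point worth making explicit in your write-up: after the inductive hypothesis, the options of $G$ merely \emph{equal} nimbers rather than being identical to them, so strictly speaking you are first replacing $G$ by the equal game $\{*n_1,\ldots,*n_k\}$ via the congruence property of $\{\cdot\mid\cdot\}$ before running the strategy argument; your phrasing ``$G$ has a form $\{*n_1,\ldots,*n_k\}$'' already gestures at this, but the move $*n_i \to *m$ in your last case only literally exists once that replacement is made.
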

\begin{proof}
Because of how impartial games are constructed,
it inductively suffices to show that if all options
of an impartial game are nimbers, then the game itself
is a nimber.  This is the following lemma:
\end{proof}
\begin{lemma}
If $a,b,c,\ldots$ are nonnegative integers, then
\[ \{*a,*b,*c,\ldots\} = *m,\]
where $m$ is the smallest nonnegative integer
not in the set $\{a,b,c,\ldots\}$, the
\emph{minimal excludent} of $a,b,c,\ldots$.
\end{lemma}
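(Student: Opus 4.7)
The plan is to show directly that $G \equiv \{*a,*b,*c,\ldots\}$ equals $*m$ by exhibiting a second-player winning strategy in $G + *m$, i.e., by proving $G + *m = 0$. Since $G$ and $*m$ are impartial, $-*m \equiv *m$, so $G + *m \equiv G - *m$, and $G - *m = 0$ is exactly what we need to conclude $G = *m$ by the equivalence relation on games.

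Before the main argument I will establish the auxiliary identity $*n + *n = 0$ for every nonnegative integer $n$. This is the familiar Tweedledum--Tweedledee strategy in the impartial setting: the options of $*n + *n$ are all of the form $*k + *n$ with $k < n$, to which the second player replies with $*n \to *k$, reaching $*k + *k$, which is $0$ by induction on $n$. (The base case $n=0$ is trivial.)

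For the main statement, let $m$ be the minimal excludent of $\{a,b,c,\ldots\}$. I will describe a second-player strategy in $G + *m$. The opponent's first move falls into one of the following cases. If the opponent moves in the $G$ component to some $*a_i$: since $m \notin \{a,b,c,\ldots\}$ we have $a_i \ne m$; if $a_i < m$, I reply in the $*m$ component via $*m \to *a_i$, and if $a_i > m$, I reply in the first component via $*a_i \to *m$. Either way we reach a position $*k + *k$, which is $0$ by the auxiliary lemma, and the opponent is now to move from a zero game and loses. If instead the opponent moves $*m \to *j$ with $j < m$: by the defining property of the minimal excludent every nonnegative integer less than $m$ appears in $\{a,b,c,\ldots\}$, so $*j$ is among the options of $G$; I reply $G \to *j$, again reaching $*j + *j = 0$.

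This exhibits a strategy ensuring the second player wins $G + *m$ regardless of which player moves first (the case analysis is symmetric under swapping Left and Right because both $G$ and $*m$ are impartial). By Theorem~\ref{everything} this gives $G + *m = 0$, hence $G = -*m = *m$. The only real obstacle is making the induction explicit: the strategy is applied recursively to the resulting positions $*k + *k$, and well-foundedness of the game graph (or equivalently, induction on the total complexity of the position) ensures the recursion terminates. The auxiliary identity $*n + *n = 0$ must therefore be available as an inductive hypothesis at every depth, which is fine since its proof proceeds by the same induction.
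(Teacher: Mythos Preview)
Your proof is correct, but it takes a different route from the paper's. The paper argues order-theoretically: since impartial games are their own negatives, it suffices to show $*m \le G$, and this is checked directly via Theorem~\ref{betwixt}. The two obstructions to $*m \le G$ are (i) $*m \ge$ some option $*a_i$ of $G$, impossible because distinct nimbers are fuzzy (established just before the lemma), and (ii) $G \le$ some option $*j$ of $*m$, impossible because every such $*j$ is already an option of $G$ by the mex property. Your argument instead builds an explicit second-player strategy in $G + *m$, reducing after one exchange to a position $*k + *k$, and then invoking $*n + *n = 0$ (which in the paper is already available as the instance $G + (-G) = 0$ with $-*n \equiv *n$). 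The paper's version is shorter because it leans on the partial-order machinery and the pre-established fuzziness of distinct nimbers; yours is more self-contained and makes the winning strategy visible, which is arguably more informative for a reader meeting Sprague--Grundy theory for the first time.
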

\begin{proof}
Since impartial games are their own negatives, we only need
to show that $*m \le x = \{*a,*b,*c,\ldots\}$.  This will
be true unless $*m \ge$ an option of $x$ (impossible
since $*m || *a, *b, *c, \ldots$ because $m \ne a,b,c,\ldots$),
or if $x \le$ an option of $*m$.  But by choice of
$*m$, every option of $*m$ is an option of $x$,
so $x$ is incomparable with every option of $*m$.
Thus this second case is also impossible, and so $*m \le x$.
\end{proof}

One can easily show that the sum of two impartial games
is an impartial game.  So which nimber is $*m + *n$?
\begin{lemma}
If $m,n < 2^k$, then $*m + *(2^k) = *(m + 2^k)$,
and $*m + *n = *q$ for some $q < 2^k$.
\end{lemma}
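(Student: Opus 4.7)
The plan is to prove both statements jointly by induction on $k$. For the base case $k=0$ we have $m=n=0$: then $*0+*0=0=*0$ with $0<1=2^0$ (verifying (2)), and $*0+*(2^0)=*1=*(0+2^0)$ (verifying (1)).

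For the inductive step, assume (1) and (2) both hold at level $k-1$, and derive them at level $k$. I would first establish (2) at level $k$ by reducing to the previous level. If $m,n<2^{k-1}$, the inductive hypothesis for (2) applies directly. If $m<2^{k-1}\le n<2^k$, write $n=n'+2^{k-1}$ with $n'<2^{k-1}$ and use (1) at level $k-1$ to decompose $*n=*n'+*(2^{k-1})$; then $*m+*n=(*m+*n')+*(2^{k-1})$, and (2) followed by (1) at level $k-1$ rewrite this as $*q'+*(2^{k-1})=*(q'+2^{k-1})$ with $q'+2^{k-1}<2^k$. The case $n<2^{k-1}\le m$ is symmetric. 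Finally, if $2^{k-1}\le m,n<2^k$, decompose both via (1) at level $k-1$ and use the fact that $*(2^{k-1})+*(2^{k-1})=0$ (nimbers are impartial, hence self-inverse, so any game plus itself is $0$); this reduces to a sum of two nimbers below $2^{k-1}$ and the IH of (2) applies.

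With (2) in hand at level $k$, I would prove (1) at level $k$ by a nested induction on $m<2^k$. The options of $*m+*(2^k)$ split into the $*m'+*(2^k)$ for $m'<m$, which by the inner induction equal $*(m'+2^k)$ and therefore contribute exactly the set of values $\{2^k,2^k+1,\ldots,2^k+m-1\}$, and the $*m+*j$ for $j<2^k$, which by (2) at level $k$ each equal some $*q_j$ with $q_j<2^k$. The key sub-step is that the map $j\mapsto q_j$ is a bijection from $\{0,1,\ldots,2^k-1\}$ onto itself: it is injective because $*m+*j_1=*m+*j_2$ yields $*j_1=*j_2$ after adding $*m$ (using $*m+*m=0$), and an injection between equinumerous finite sets is a bijection. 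Consequently the option values of $*m+*(2^k)$ are precisely $\{0,1,\ldots,2^k+m-1\}$, so the minimal excludent lemma gives $*m+*(2^k)=*(m+2^k)$.

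The main obstacle is organizing the induction so it does not become circular: (1) at level $k$ wants (2) at level $k$ to handle the $*m+*j$ options, while (2) at level $k$ wants (1) at level $k-1$ to decompose any nimber of size at least $2^{k-1}$. Proving (2) first at each level and then deducing (1) within the same level via the inner induction on $m$ resolves this. The other non-routine ingredient is the cancellation-plus-counting step that upgrades (2) from a mere bound $q_j<2^k$ into surjectivity of $j\mapsto q_j$; this is what forces the mex to reach exactly $m+2^k$ and is the conceptual engine of the proof.
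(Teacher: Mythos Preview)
Your proposal is correct and follows essentially the same approach as the paper: a joint induction on $k$ that first establishes (2) at the current level by decomposing each $m,n$ into a residue below $2^{k-1}$ plus a multiple of $2^{k-1}$, then proves (1) via an inner induction on $m$ using the minimal-excludent rule. The only differences are cosmetic: the paper writes the decomposition uniformly as $m=m'+i\cdot 2^{k-1}$ with $i\in\{0,1\}$ rather than splitting into four cases, and it phrases your cancellation-plus-counting bijection as the observation that $\{*q:q<2^k\}$ forms a group under addition.
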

\begin{proof}
We proceed by induction on $k$.  The base case $k = 0$
is true because $*0 = 0$ and so $*0 + *0 = *0$
and $*0 + *(2^0) = *(0 + 2^0)$.

So suppose the hypothesis is true for $k$.  Let
$m,n < 2^{k+1}$.  We can write $m = m' + i2^k$
and $n = n' + j2^k$, where $i,j \in \{0,1\}$
and $m',n' < 2^k$.  Then by induction,
\[ *m = *m' + *(i2^k)\]
\[ *n = *n' + *(j2^k)\]
\[ *m + *n = *q' + *(i2^k) + *(j2^k)\]
where $*q' = *m' + *n'$, and $q' < 2^k$.
Now $i2^k$ is either $0$ or $2^k$ and similarly
for $j2^k$, so the correction term $*(i2^k) + *(j2^k)$ is either
$*0 + *0 = *0$, $*0 + *2^k = *2^k$, or $*2^k + *2^k = *0$ (using the fact that
impartial games are their own inverses).

So $*m + *n$ is either $*q'$ where $q' < 2^k < 2^{k+1}$, in which case we are done,
or $*q' + *2^k$, which by induction is $*(q' + 2^k)$.  Then we are done
since $q' + 2^k < 2^{k+1}$.

So if $m,n < 2^{k+1}$, then $*m + *n = *q$ for some $q < 2^{k+1}$.  Since addition
of games (modulo equality) is cancellative and associative, it follows that
$\{*q:q < 2^{k+1}\}$ forms a group.

It remains to show that for any $m < 2^{k+1}$, $*m + *2^{k+1} = *(m + 2^{k+1})$.
We show this by induction on $m$ ($k$ is fixed of course).  The options
of $*m + *2^{k+1}$ are of two forms:
\begin{itemize}
\item $*m + *n$ for $n < 2^{k+1}$.  Because $\{*q\,:\, q < 2^{k+1}\}$
with addition forms a group,
$\{*m + *n \,:\, n < 2^{k+1}\} = \{*n \,:\, n < 2^{k+1}\}$.
\item $*m' + *2^{k+1}$ for $m' < 2^{k+1}$.  By induction, this is just
$\{*(m' + 2^{k+1})\,:\, m' < m\}$.
\end{itemize}
So all together, the options of $*m + *2^{k+1}$ are just
\[ \{*n \,:\, n < 2^{k+1} \} \cup \{ *n \,:\, 2^{k+1} \le n < 2^{k+1} + m\} = \{*0,*1,\ldots, *(2^{k+1} + m - 1)\}.\]
Therefore the minimal excludent is $m + 2^{k+1}$, and so $*m + *2^{k+1} = *(m + 2^{k+1})$.
\end{proof}

Together with the fact that $*m + *m = 0$, we can use this to add any two nimbers:
\[ *9 + *7 = (*1 + *8) + (*3 + *4) = (*1 + *8) + (*1 + *2 + *4) =\]\[ (*1 + *1) + *2 + *4 + *8 = 0 + *2 + *4 + *8 = *6 + *8 = *14.\]
\[ *25 + *14 = (*1 + *8 + *16) + (*2 + *4 + *8) = *1 + *2 + *4 + (*8 + *8) + *16 =\]\[*1 + *2 + *4 + *16 = *23.\]
In general, the approach is to split up the summands into powers of two, and them combine and cancel out like terms.
The reader can show that the general rule for $*m + *n$ is to write
$m$ and $n$ in binary, and add without carries, to produce a number $l$ which will satisfy $*m + *n = *l$.

The number $l$ such that $*m + *n = *l$ is called the \emph{nim-sum} of $m$ and $n$, denoted
$m +_2 n$.  Here is an addition table:
\begin{center}
\begin{tabular}{|c||c|c|c|c|c|c|c|c|}
$+_2$ & 0 & 1 & 2 & 3 & 4 & 5 & 6 & 7 \\
\hline
\hline
0 & 0 & 1 & 2 & 3 & 4 & 5 & 6 & 7  \\ \hline
1 & 1 & 0 & 3 & 2 & 5 & 4 & 7 & 6  \\ \hline
2 & 2 & 3 & 0 & 1 & 6 & 7 & 4 & 5  \\ \hline
3 & 3 & 2 & 1 & 0 & 7 & 6 & 5 & 4  \\ \hline
4 & 4 & 5 & 6 & 7 & 0 & 1 & 2 & 3  \\ \hline
5 & 5 & 4 & 7 & 6 & 1 & 0 & 3 & 2  \\ \hline
6 & 6 & 7 & 4 & 5 & 2 & 3 & 0 & 1  \\ \hline
7 & 7 & 6 & 5 & 4 & 3 & 2 & 1 & 0  \\
\hline
\end{tabular}
\end{center}
Note also that sums of numbers and nimbers are added as follows: $x*n + y*m = (x+y)*(n +_2 m)$.  

Using nim-addition and the minimal-excludent rule, we can calculate values of some positions in Kayles and Grundy

\[ K_1 = \{0\} = *\]
\[ K_2 = \{0,K_1\} = \{0,*\} = *2\]
\[ K_3 = \{K_1, K_2, K_1 + K_1\} \]\[= \{*, *2, * + *\} = \{*,*2,0\} = *3\]
\[ K_4 = \{K_2, K_1 + K_1, K_3, K_2 + K_1\} \]\[= \{*2,*+*,*3,*2+*1\} = \{*2,0,*3,*3\} = *\]
\[ K_5 = \{K_3, K_2 + K_1, K_4, K_3 + K_1, K_2 + K_2\} \]\[= \{*3, *2 + *1, *, *3 + *1, *2 + *2\} = \{*3,*3,*,*2,*0\} = *4\]
\[ G_1 = \{\} = 0\]
\[ G_2 = \{\} = 0\]
\[ G_3 = \{G_1 + G_2\} = \{0 + 0\}= *\]
\[ G_4 = \{G_1 + G_3\} = \{0 + *\} = 0\]
\[ G_5 = \{G_1 + G_4, G_2 + G_3 \} \]\[ = \{0 + 0, 0 + *\} = \{0,*\} = *2\]
\[ G_6 = \{G_1 + G_5, G_2 + G_4 \}\]\[ = \{0 + *2, 0 + 0\} = \{0,*2\} = *\]
\[ G_7 = \{G_1 + G_6, G_2 + G_5, G_3 + G_4\} \]\[ = \{0 + *, 0 + *2, *\} = \{*,*2\} = 0\]
In general, there are sequences of integers $\kappa_n$ and $\gamma_n$ such that $K_n = *\kappa_n$ and
$G_n = *\gamma_n$. One can construct a table of these values, and use it to evaluate any small position
in Grundy's game or Kayles.  For the case of Kayles, this sequence is known to become
periodic after the first hundred or so values, but for Grundy's game periodicity is not yet know to occur.

The theory of impartial games is called \emph{Sprague-Grundy theory}, and was the original form of additive CGT which
Conway, Guy, Berlekamp and others extended to handle partizan games.

\chapter{Norton Multiplication and Overheating}
\section{Even, Odd, and Well-Tempered Games}\label{sec:evenandodd}
\begin{definition}
A short game $G$ is \emph{even} in form if every option is odd in form, and \emph{odd} in form
if every option is even in form and $G \ne 0$.  $G$ is \emph{even (odd) in value}
if it equals a short game that is even (odd) in form.
\end{definition}
For instance
\begin{itemize}
\item $0 = \{|\}$ is even and not odd (in form).
\item $* = \{0|0\}$ and $1 = \{0|\}$ are odd and not even (in form).
\item $2 = \{1|\}$ is even and not odd (in form).
\item $2 = \{0,1|\}$ is neither even nor odd (in form), but even (in value).
\item In general an integer is even or odd in value if it is even or odd in the usual sense.
\item $1/2 = \{0|1\}$ is neither even nor odd (in form).  By (b) of the following theorem,
it is neither even nor odd in value, too.
\end{itemize}

\begin{theorem}\label{parities}
Let $G$ be a short game.
\begin{description}
\item[(a)] If $G$ is a short game that is odd (even) in form, then the canonical form of $G$ is also odd (even) in form.
\item[(b)] $G$ is even or odd (in value) iff the canonical form of $G$ is even or odd (in form)
\item[(c)] $G$ is even or odd (in form) iff $-G$ is even or odd (in form).  $G$ is even or odd (in value) iff $-G$ is even or odd (in value).
\item[(d)] No game is both even and odd (in form or in value).
\item[(e)] The sum of two even games or two odd games is even.  The sum of an odd game and an even game is an odd game.  True for forms or values.
\item[(f)] If every option of $G$ is even (or odd) in value, and $G \ne 0$, then $G$ is odd (or even) in value.
\end{description}
\end{theorem}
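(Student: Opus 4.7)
The plan is to treat the parts in a chain of dependencies: (c), then the form version of (d), then (a), then (b), then the value version of (d), then (e), and finally (f). Part (c) is essentially immediate, since negation swaps the left- and right-option sets of every position and the definitions of even and odd are symmetric in this swap. The form version of (d) is a structural induction: if some $G$ were both even and odd in form, then every option of $G$ would be both, by induction there are no such games, so $G$ would have no options, giving $G \equiv \{|\}$ and $G = 0$ in value, contradicting the ``$G \ne 0$'' clause in the definition of ``odd''.

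The main obstacle is (a). I would induct on the position of $G$ in its game tree: reduce every subposition to canonical form first, which by the inductive hypothesis preserves form-parity, and then apply the two reductions at the top. Dominated removal simply deletes an option, leaving every other option with its original parity, so the ``all options odd'' or ``all options even'' condition is trivially stable. For reversible bypass, suppose $G$ is even in form, so each left option $A$ is odd in form; then $A^R$, as an option of the odd game $A$, is even in form, and its left options are odd in form. The bypass replaces $A$ by these grandchildren, every new option is therefore odd, and the ``all options odd'' property of $G$ is preserved. The symmetric argument handles right options and the case of $G$ odd in form. Since canonical reduction preserves value, the ``$G \ne 0$'' clause needed for oddness is also maintained throughout. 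Part (b) follows immediately from uniqueness of canonical form and (a).

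With (a) and (b) in hand, the value version of (d) follows: a game that is both even and odd in value would have a canonical form that is simultaneously even and odd in form, contradicting the form version. For the form version of (e), I would use joint induction on $G$ and $H$, computing the form-parities of the four classes of options $G^L + H$, $G + H^L$, $G^R + H$, $G + H^R$ by inductive hypothesis and reading off the conclusion in each of the three cases. The only subtle point is concluding that $G + H$ is odd in form when $G$ is odd and $H$ is even, because this requires $G + H \ne 0$ in value. If $G + H = 0$ in value, then $H = -G$ in value; by (c) the negation $-G$ is still odd in value, whereas $H$ is even in value by (b) applied to its even form, contradicting the value version of (d). The value version of (e) then follows by replacing each summand with its canonical form via Theorem~\ref{congruences} and invoking the form version.

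For (f), let $A_i$ and $B_j$ denote the left and right options of $G$, all even in value (respectively all odd), and let $A_i'$ and $B_j'$ be their canonical forms, which are even (respectively odd) in form by (b). Theorem~\ref{congruences} gives $G = \{A_1',A_2',\ldots|B_1',B_2',\ldots\}$ in value, and the right-hand side has every option even (respectively odd) in form, so it is odd (respectively even) in form, where the ``$\ne 0$'' clause needed for the odd conclusion is exactly the hypothesis $G \ne 0$ in value. Hence $G$ is odd (respectively even) in value. The whole theorem thus rests on the careful bookkeeping in (a), and particularly on the effect of reversible bypass, where one must track how the parities of $A$, $A^R$, and the left options of $A^R$ interlock.
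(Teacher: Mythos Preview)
Your proposal is correct and follows essentially the same approach as the paper: the inductive reduction in (a) tracking parity through dominated removal and reversible bypass, the derivation of (b) and value-(d) from (a), the joint induction in (e) with the $G+H\ne 0$ subtlety resolved via (c) and (d), and the use of Theorem~\ref{congruences}(c) in (f) all match the paper's arguments. The only difference is cosmetic ordering---you front-load (c) and form-(d), while the paper proves the parts in their stated order---but the logical content is the same.
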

\begin{proof}
\begin{description}
\item[(a)]
Let $G$ be odd or even in form.  By induction we can put all the options of $G$ in canonical form.  We can then
reduce $G$ to canonical form by bypassing reversible options and removing dominated options.  None of these operations
will introduce options of $G$ of the wrong parity: this is obvious in the case of removing dominated options,
and if, say, $G^R$ is a reversible option reversed by $G^{RL}$, then $G^R$ has the opposite parity
of $G$, and $G^{RL}$ has the same parity as $G$, so that every left option of $G^{RL}$ has opposite parity of $G$,
and can be added to the list of options of $G$ without breaking the parity constraint.  Of course since
removing dominated moves and bypassing reversible moves does not effect the value of $G$, the constraint
that $G \ne 0$ when $G$ is odd will never be broken.  So after reducing $G$ to canonical form,
it will still be odd or even, as appropriate.
\item[(b)] If $G$ is, say, odd in value, then $G = H$ for some $H$ that is odd in form.  Letting $H'$ be the canonical
form of $H$, by part (a) $H'$ is odd (in form).  Since $H'$ is also the canonical form of $G$, we see that the canonical
form of $G$ is odd (in form).  Conversely, if the canonical form of $G$ is odd (in form), then $G$ is odd in value
by definition, since $G$ equals its canonical form.
\item[(c)] Clear by induction - the definitions of even and odd are completely symmetric between the two players.
\item[(d)] We first show that no game is both even and odd in form, by induction.  Let $G$ be a short game,
and suppose it is both even and odd in form.  Then by definition, every option of $G$ is both even and odd in form.
By induction, $G$ has no options, so $G \equiv \{|\} = 0$, and then $G$ is not odd.

Next, suppose that $G$ is both even and odd in value. Then by part (b), the canonical form of $G$ is both even and odd in form,
contradicting what we just showed.
\item[(e)] We first prove the two statements about even and odd games \emph{in form}, by induction.  Suppose that both $G$
and $H$ have parities in form.  By induction, every option of $G + H$ will have the correct parity.
So we only need to show that if $G$ is odd and $H$ is even (or vice versa), then $G + H \ne 0$.  But
if $G + H = 0$, then $G = -H$, and since $H$ is even, so is $-H$, by part (c), so we have a contradiction of part (d),
since $G$ is odd and $-H$ is even.

Now suppose that $G$ and $H$ are both even or odd (in value).  Then $G = G'$ and $H = H'$, for some
games $G'$ and $H'$ having the same parities (in form) as $G$ and $H$ have (in value).  Then
$G + H = G' + H'$, and $G' + H'$ has the desired parity (in form), so $G + H$ has the desired parity (in value).
\item[(f)] For every option of $G$, there is an equivalent game having the same parity, in form.
Assembling these equivalent games into another game $H$, we have $G = H$ by Theorem~\ref{congruences}(c), and $H$ has the desired parity (in form),
so $G$ has the desired parity (in value).
\end{description}
\end{proof}

Henceforth ``even'' and ``odd'' will mean even and odd in value.  From this theorem, we see that
the even and odd values form a subgroup of $\mathcal{G}$ (the group of short games), with the even values as an index 2 subgroup.
Every even or odd game can be uniquely written as an even game plus an element of the order-2 subgroup
$\{0,*\}$, because $*$ is odd and has order 2.

Later, using Norton multiplication, we'll see that the group of (short) even games is isomorphic as a partially-ordered group
to the entire group $\mathcal{G}$ of short games.

A slight variation of even and odd is \emph{even and odd -temper}:
\begin{definition}
Let $G$ be a short game.  Then $G$ is \emph{even-tempered} in form
if it equals a (surreal) number, or every option is odd-tempered in form.
Similarly, $G$ is \emph{odd-tempered} in form if does not equal a number, and every option
is even-tempered in form.  Also, $G$ is \emph{odd- (even-)tempered in value}
if it equals a short game that is odd or even tempered in form.
\end{definition}
This notion behaves rather differently: now $0, 1, 1/2$ are all even-tempered,
while $*, 1*,$ and $\{1|0\}$ are odd-tempered, and $\uparrow$ is neither.  Intuitively,
a game is even- or odd-tempered iff a number will be reached in an even- or odd- number of moves.

We say that a game is \emph{well-tempered} if it is even-tempered or odd-tempered.

\begin{theorem}\label{tempers}
\item[(a)] If $G$ is a short game that is odd- or even-tempered in form, then the canonical form of $G$ is also odd- or even-tempered in form.
\item[(b)] $G$ is even- or odd-tempered (in value) iff the canonical form of $G$ is even- or odd-tempered (in form).
\item[(c)] Then $G$ is even- or odd-tempered (in form) iff $-G$ is even- or odd-tempered (in form).  $G$ is even- or odd-tempered (in value) iff
$-G$ is even- or odd-tempered (in value).
\item[(c')] If $G$ is even- or odd-tempered (in value), then so is $G + x$, for any number $x$.
\item[(d)] No game is both even- and odd-tempered (in form or in value).
\item[(e)] The sum of two even-tempered games or two odd-tempered games is even-tempered.  The sum of an odd-tempered game and an even-tempered game is an odd-tempered game.  True in values (not forms).
\item[(f)] If $G$ does not equal a number, and every option of $G$ is even- or odd-tempered in value,
then $G$ is odd- or even-tempered in value.
\end{theorem}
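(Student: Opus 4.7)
The plan is to mirror the proof of Theorem~\ref{parities} on parities, taking the parts in the order (c), (a), (b), (d), (f), (c'), (e). The disjunctive nature of the temper definition (``equals a number \emph{or} every option has the opposite temper in form'') introduces subtleties not present in the parity case, chiefly at the reduction to canonical form (part (a)) and in the sum theorem (part (e)), so I describe those in more detail below.

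Part (c) is a direct induction: the definitions are symmetric in Left and Right, and surreal numbers are closed under negation. For (a), I would induct on the game-tree size of $G$. If $G$ equals a number, its canonical form equals the same number and is trivially even-tempered in form. Otherwise, put each option into canonical form (temper-preserving by the inductive hypothesis) and iterate domination and reversibility. The delicate step is the even-tempered branch: when an odd-tempered left option $G^L$ is reversible through a right option $G^{LR}$ that equals a number $y$, the bypass inserts the left options of $y$ as new left options of $G$, and those are numbers---hence even-tempered in form---apparently violating the ``every option odd-tempered'' criterion. The resolution uses the simplicity rule (Theorem~\ref{simplicity}): since $G$ is not a number and $y \le G$, there must exist another left option $G^{L'}$ of $G$ with $G^{L'} \ge y$ (otherwise $y$ would sit strictly between all left and right options of $G$, forcing $G$ to equal a number). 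The canonical-form left options of $y$ are strictly less than $y$, so they are strictly dominated by $G^{L'}$ and removed in the subsequent dominated-move pass. In the odd-tempered branch no such subtlety arises, because the left options of either a canonical-form number or an odd-tempered game are automatically even-tempered in form, matching what odd-temperedness of $G$ requires. Part (b) follows from (a) by passing to canonical forms.

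For (d), the form version is immediate by induction: if $G$ were both, then $G$ could not equal a number (from odd-temperedness), yet every option would need to be both even-tempered and odd-tempered in form, contradicting the inductive hypothesis on (d). The value version follows from (a) and (b). For (f), given $G$ not equal to a number and each option with the claimed temper in value, replace each option by its canonical form---which by (a) and (b) has the claimed temper in form---and reassemble; by Theorem~\ref{congruences}(c) the new game $H$ equals $G$, and since $H$ is not a number, the form-temperedness of $H$ follows from the definition. Part (c') then drops out: if $G$ equals a number then both $G$ and $G + x$ are numbers; otherwise the Number Avoidance Theorem gives $G + x = \{G^L + x \mid G^R + x\}$ in value, $G + x$ is never a number (a non-number plus a number is not a number), and induction on the game tree of $G$ combined with (f) yields the claim.

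The last piece, (e), is by induction on the sum of game-tree sizes of the canonical forms of $G$ and $H$. If both equal numbers the sum is a number; if exactly one equals a number then (c') finishes it; if neither equals a number, every option of $G + H$ in the standard form has the expected temper in value by the inductive hypothesis, and (f) finishes the case \emph{provided} $G + H$ itself is not a number. The main obstacle is the sub-case where $G + H$ equals a number $x$ with neither summand a number, so $G + H$ is automatically even-tempered in value; if $G$ and $H$ have opposite tempers this clashes with the desired odd-temper of the sum. The way out is to rewrite $G = x + (-H)$: by (c), $-H$ has the opposite temper of $H$, and by (c'), adding the number $x$ preserves temper, so $G$ has the opposite temper of $H$ in value. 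Hence in the problematic case (opposite tempers) $G + H$ cannot equal a number without violating (d), and (f) then delivers the odd-temper of $G + H$. In the matching-temper case $G + H$ may or may not equal a number, but both possibilities yield even-temper in value, matching the claim. This completes (e).
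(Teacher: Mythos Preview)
Your proposal is correct and follows essentially the same route as the paper. The crucial step in (a)---when a reversible option is bypassed through a number $y$ and the inserted options have the wrong temper, use the simplicity rule to locate a surviving sibling option that dominates them---is exactly the paper's argument (the paper phrases it on the Right side rather than the Left), and your handling of (c'), (d), (e), (f) matches as well; your explicit ordering $(c),(a),(b),(d),(f),(c'),(e)$ is slightly cleaner than the paper's in that it avoids the forward reference from (c') to (f).
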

\begin{proof}
Most of the proofs are the same as in the case for even and odd games.  However, we have the following subtleties:
\begin{description}
\item[(a)] When bypassing reversible moves, we now need to check that the replacement options $G^{RLR}$ actually have the appropriate parity.
If $G$ itself equals a number,
then the parity of the bypassed options is irrelevant.  Otherwise, if $G^R$ equals a number,
then since we've reduced to canonical form, $G^{RL}$ and $G^{RLR}$ will also be numbers, so they will
have the same temper as $G^R$, and everything works out fine.

The only possible failure case is when $G^{RL}$ is a number, so every one of its left options $G^{RLR}$ is even tempered,
and $G$ and $G^R$ are even- and odd-tempered non-numerical games, respectively.  Since $G^{RL} \ge G$ (by definition of reversible move),
$G^{RL}$ must be greater than or fuzzy with every left option of $G$.  If $G^{RL}$ were additionally less than or fuzzy with every right option
of $G$, then by the simplicity rule $G$ would be a number.  So some right option $H$ of $G$ must be less than
or equal to $G^{RL}$.  This option cannot be $G^R$ itself, since $G^{RL} \lhd G^R$.  So $H$ will remain a right
option of $G$ after bypassing $G^R$.  But then $H \le G^{RL} \le G^{RLR}$ for every new option $G^{RLR}$.  Here $G^{RL} \le G^{RLR}$ because
$G^{RL}$ is a number.  So all the new moves will be dominated
by $H$ and can be immediately discarded, fixing the problem.
\item[(b-c)] These remain true for identical reasons as for even and odd games.
\item[(c')] Suppose $G$ is even tempered (in value).  Then it equals a game $H$ that is even-tempered (in form).
If $G$ equals a number, then so does $G + x$, so $G + x$ is also even-tempered (in form and value).  Otherwise,
$H$ does not equal a number, so by number avoidance
\[ H + x = \{H^L + x|H^R + x\}.\]
By induction, every $H^L + x$ and $H^R + x$ is odd-tempered in value, so by part (f), $H + x$
is even-tempered in value.

Similarly, if $G$ is odd tempered in value, then it equals a game $H$ that is odd-tempered (in form).  And since
$G$ and $H$ are not equal to numbers, neither is $G + x$, so it suffices to show that every option
of $\{H^L + x|H^R + x\}$ is even-tempered in value, which follows by induction.
\item[(d)] The proof that no game is both even- and odd-tempered in form is essentially the same:
unless $G$ is a number, $G$ can have no options by induction, and then it equals zero and is not odd.
And if $G$ is a number, then $G$ is not odd in form.  Extending this result to values
proceeds in the same way as before, using part (a).
\item[(e)] Note that this is not true in forms, since $1 + * = \{*,1|1\}$, and $*$ and $1$
are odd- and even-tempered respectively, so that $\{*,1|1\}$ is neither even- nor odd-tempered
\emph{in form}.  But it equals $\{1|1\}$ which is odd-tempered in form, so it \emph{is} odd-tempered
\emph{in value}.

We prove the result for values inductively, making use of part (f).
If $G$ and $H$ are even or odd-tempered games, then every option of $G + H$ will have the desired temper (in value),
except when one of $G$ or $H$ is a number.  But this case follows from part (c').  So the only remaining
thing we need to show is that if $G$ and $H$ are even- and odd-tempered in value, respectively,
then $G + H$ is not a number.  But if $G + H = x$ for some number $x$, then
$G = x + (-H)$, so by parts (c-c'), $x + (-H)$ is odd-tempered in value.  But then $G$ is both even-
and odd-tempered in value, contradicting part (d).
\item[(f)] This proceeds as in the previous theorem.
\end{description}
\end{proof}

So as in the previous case, even and odd-tempered values form a subgroup of $\mathcal{G}$,
with the even-tempered games as an index 2 subgroup, having $\{0,*\}$ as a complement.
But in this case, something more interesting happens: the group of all short games
is a direct sum of even-tempered games and infinitesimal games.

\begin{theorem}\label{decomposition}
Every short partizan game $G$ can be uniquely written as $E + \epsilon$,
where $E$ is even-tempered and $\epsilon$ is infinitesimal.
\end{theorem}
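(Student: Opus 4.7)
The plan is to prove existence and uniqueness separately, with uniqueness the cleaner half.

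For uniqueness, suppose $E_1 + \epsilon_1 = E_2 + \epsilon_2$. Rearranging gives $E_1 - E_2 = \epsilon_2 - \epsilon_1$: the left side is even-tempered by Theorem~\ref{tempers}(c) and (e), and the right side is infinitesimal since infinitesimals form a subgroup (using $L(G+H) \le L(G) + L(H)$, $R(G+H) \ge R(G) + R(H)$, $L(-G) = -R(G)$, and $L \ge R$). So uniqueness reduces to the claim that the only even-tempered infinitesimal game is $0$. I would prove this by induction on the canonical form $H$, which by Theorem~\ref{tempers}(b) is even-tempered in form: if $H$ equals a number, infinitesimality forces it to be $0$; otherwise every option of $H$ is odd-tempered in form, and then $L(H) = R(H) = 0$ together with the inductive hypothesis on options should force a contradiction.

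For existence, induct on the canonical form of $G$. Three subcases are immediate: if $G$ equals a number or is even-tempered non-numerically, take $E = G$, $\epsilon = 0$; and if $G$ is odd-tempered, take $E = G + *$ (even-tempered by Theorem~\ref{tempers}(e) since $*$ is odd-tempered) and $\epsilon = *$, using $G = (G + *) + *$ since $*$ is self-inverse. The remaining case is when $G$'s canonical form is neither even- nor odd-tempered, which forces its options to have mixed tempers. Here I would apply the inductive hypothesis to each option to obtain $G^L = E^L + \epsilon^L$ and $G^R = E^R + \epsilon^R$, then define $\epsilon(G) := \{\epsilon^L | \epsilon^R\}$ and $E(G) := G - \epsilon(G)$; a direct stopping-value computation shows $\epsilon(G)$ is infinitesimal.

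The hard part will be verifying that $E(G) = G - \epsilon(G)$ really is even-tempered in the non-well-tempered case. Expanding via the addition formula, the options of $E(G)$ include terms like $G^L - \epsilon(G)$ and $G - \epsilon(G^R)$, none of which are manifestly odd-tempered in value; showing they all reduce to odd-tempered games in canonical form is the crux, and will likely require a joint inductive hypothesis that tracks how the decomposition interacts with options (for instance, showing that $G - \epsilon(G^R)$ has a decomposition whose infinitesimal part is the difference $\epsilon(G) - \epsilon(G^R)$). The analogous ``only even-tempered infinitesimal is $0$'' lemma needed for uniqueness will probably require similar joint bookkeeping on temper and stopping values to close the induction.
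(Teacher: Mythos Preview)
Your uniqueness reduction is exactly right and matches the paper's: both reduce to showing that the only even-tempered infinitesimal is $0$. The paper proves this via a slightly stronger statement (Theorem~\ref{blah}): if $G$ is even-tempered and $R(G) \ge 0$ then $G \ge 0$. The induction works in two steps---from even-tempered $G$ to its odd-tempered option $G^R$ (which, being odd-tempered, cannot equal a number and so has a left option $G^{RL}$ with $R(G^{RL}) = L(G^R) \ge 0$), then back to the even-tempered $G^{RL}$ where the inductive hypothesis applies. Your sketch (``inductive hypothesis on options should force a contradiction'') gestures at this but does not articulate the two-step descent through an odd-tempered intermediary; that is precisely the ``joint bookkeeping'' you anticipated.

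The real gap is in existence. You build $\epsilon(G) = \{\epsilon^L \mid \epsilon^R\}$ from the infinitesimal parts of the options and then hope to verify that $G - \epsilon(G)$ is even-tempered, correctly flagging this as the crux and leaving it open. The paper goes the other way: it builds $E$ directly. The key tool is a short ``-ish lemma'': if $G$ is not a number and each option $A$ is replaced by an $A$-ish game $A'$, then $G' = \{A',\ldots \mid C',\ldots\}$ is $G$-ish (two lines, using number avoidance to compare $G'$ against $G + \delta$ for every positive $\delta$). Given this, existence is immediate by induction: replace each option of $G$ by an \emph{odd}-tempered -ish version (available inductively, since adding $*$ toggles temper), obtaining a $G'$ that is even-tempered by Theorem~\ref{tempers}(f) and $G$-ish by the lemma. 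The difference $G - G'$ is then infinitesimal by construction, and no verification that a subtraction is even-tempered is ever needed. Your dual approach of assembling $\epsilon$ first runs into exactly the difficulty you name: there is no clean reason the options of $\{G^L \mid G^R\} - \{\epsilon^L \mid \epsilon^R\}$ should all be odd-tempered in value, whereas building $E$ first makes its temper manifest.
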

To prove this, we need some preliminary definitions and results:
\begin{definition}
If $G$ and $H$ are games, we say that $G$ is $H$-ish if
$G - H$ is an infinitesimal.
\end{definition}
Since infinitesimals form a group, this is an equivalence relation.
The suffix ``-ish'' supposedly stands for ``infinitesimally shifted,''
though it also refers to the fact that $G$ and $H$ are approximately equal.
For instance, they will have the same left and right stopping values.\footnote{This can be shown
easily from the fact that $L(G + H) \le L(G) + L(G)$ for short games $G$ and $H$, and related
inequalities, like $L(G + H) \ge L(G) + R(H)$.  Recall that if $\epsilon$ is infinitesimal,
then $L(\epsilon) = R(\epsilon) = 0$.}  We can rephrase Theorem~\ref{decomposition}
as saying that every short game is even-tempered-ish.

\begin{lemma}
If $G = \{A,B,\ldots|C,D,\ldots\}$ is a short game that does not equal a number,
and $A'$ is $A$-ish, $B'$ is $B$-ish, and so on, then
\[G' = \{A',B',\ldots|C',D',\ldots\}\] is $G$-ish.
\end{lemma}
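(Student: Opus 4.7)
The plan is to establish, for every positive number $x$, the pair of inequalities $G - x \le G' \le G + x$. From these, the definitions of the stopping values $L$ and $R$ give $L(G'-G) \le 0$ and $R(G'-G) \ge 0$, and since $R \le L$ holds universally, both stopping values must vanish; hence $G' - G$ is infinitesimal, i.e.\ $G'$ is $G$-ish.

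For the upper bound $G' \le G + x$, the Number Avoidance Theorem (which applies because $G$ does not equal a number) gives
\[ G + x = \{A+x, B+x, \ldots \mid C+x, D+x, \ldots\}. \]
Using this form, the right options of $(G+x) - G'$ come in exactly two families: terms $C + x - G'$ from Right moving in the $G+x$ summand, and terms $G + x - A'$ from Right moving in the $-G'$ summand. To conclude $(G+x) - G' \ge 0$ it suffices to check that each such option is $\rhd 0$, i.e.\ admits a left option that is $\ge 0$. For a right option $C + x - G'$, I would take the left option $C + x - C'$ obtained by moving $-G' \to -C'$ in the second summand; since $C'$ is $C$-ish, the difference $C - C'$ is infinitesimal, so $C + x - C' = x + (C - C')$ has both left and right stopping values equal to $x > 0$, and is therefore strictly greater than $0$. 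For a right option $G + x - A'$, I would dually take the left option $A + x - A'$ obtained by moving $G + x \to A + x$ in the first summand (available because Number Avoidance keeps $A + x$ as a left option of $G + x$); the same analysis gives $A + x - A' = x - (A' - A) > 0$.

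The lower bound $G' \ge G - x$ follows by the symmetric argument applied to $-G$ and $-G'$. Here $-G$ is not a number because surreal numbers are closed under negation, and each option $-C, -D, \ldots$ and $-A, -B, \ldots$ of $-G$ is ish to the corresponding option $-C', -D', \ldots$ and $-A', -B', \ldots$ of $-G'$, because negation sends infinitesimals to infinitesimals. The upper-bound argument then yields $-G + x \ge -G'$, which rearranges to $G' \ge G - x$.

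The main subtlety is enumerating the right options of $(G+x) - G'$ correctly. Number Avoidance is doing essential work: without it, the form of $G + x$ would also carry right options $G + x^R$, producing additional right options $G + x^R - G'$ of $(G+x) - G'$ against which Left would have no obvious matching move in the other summand. Once the right options are pinned down as above, and one verifies that each matched pair $C+x-C'$ or $A+x-A'$ reduces to a positive number plus an infinitesimal, the rest of the argument is routine manipulation of stopping values.
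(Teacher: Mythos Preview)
Your proof is correct and follows essentially the same approach as the paper: both use Number Avoidance to write $G + x = \{A+x,\ldots \mid C+x,\ldots\}$ and then establish $G' \le G + x$ for every positive number $x$ (and symmetrically $G - x \le G'$). The only difference is that the paper dispatches the inequality $G' \le G + x$ in one line by invoking the monotonicity of $\{\cdot\mid\cdot\}$ (Theorem~\ref{congruences}(c)) together with $A' \le A + x$, $C' \le C + x$, etc., whereas you unroll this by hand, explicitly enumerating the right options of $(G+x) - G'$ and exhibiting matching left replies.
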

\begin{proof}
Since $G$ does not equal a number, we know by number avoidance that for every positive
number $\delta$,
\[ G + \delta = \{A + \delta, B + \delta, \ldots | C + \delta, D + \delta, \ldots\}\]
But since $A'$ is $A$-ish, $A' - A \le \delta$, and $B' - B \le \delta$, and so on,
so that $A' \le A + \delta$, $B' \le B + \delta$, and so on.  Therefore,
\[ G' = \{A',B',\ldots|C',D',\ldots\} \le \{A + \delta, B + \delta, \ldots, C + \delta, D + \delta, \ldots\} = G + \delta.\]
So $G' - G \le \delta$ for every positive number $\delta$.  Similarly, $G' - G \ge \delta$ for every negative number $\delta$,
so that $G' - G$ is infinitesimal.
\end{proof}

\begin{corollary}\label{span}
For every short game $G$, there are $G$-ish even-tempered and odd-tempered games.
\end{corollary}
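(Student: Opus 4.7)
The plan is to prove the existence of both an even-tempered $G$-ish game and an odd-tempered $G$-ish game simultaneously by well-founded induction on $G$. The easy case is when $G$ already equals a number $x$: then $G$ itself is even-tempered (numbers are even-tempered by definition), and $G + *$ serves as an odd-tempered witness, since $*$ is an infinitesimal (making $G+*$ automatically $G$-ish) and, by part (e) of Theorem~\ref{tempers}, the sum of the even-tempered $x$ and the odd-tempered $*$ is odd-tempered.

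In the inductive case, suppose $G \equiv \{A,B,\ldots|C,D,\ldots\}$ does not equal a number. By the inductive hypothesis, every option of $G$ admits both an even-tempered $G$-ish counterpart and an odd-tempered $G$-ish counterpart. To produce an even-tempered $G$-ish game, I would pick odd-tempered counterparts $A',B',\ldots,C',D',\ldots$ of the options of $G$ and form $G' \equiv \{A',B',\ldots|C',D',\ldots\}$. The preceding lemma ensures that $G'$ is $G$-ish. If $G'$ does not equal a number, then all its options are odd-tempered and part (f) of Theorem~\ref{tempers} makes $G'$ even-tempered; if $G'$ does equal a number, then $G'$ is automatically even-tempered.

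For the odd-tempered half of the claim, I would run the parallel construction using even-tempered counterparts of the options, obtaining a $G$-ish game $G''$ whose options are all even-tempered. The main obstacle is that the simple appeal to part (f) now only succeeds when $G''$ fails to equal a number — unlike in the even-tempered case, accidentally becoming a number is the wrong outcome. Happily, this obstacle is dispatched by a single correction: if $G'' = x$ for some number $x$, replace $G''$ by $G'' + *$. This game is still $G$-ish (adding the infinitesimal $*$ preserves $G$-ishness), and it equals $x + *$, which is odd-tempered by part (e) of Theorem~\ref{tempers}. Combining the two constructions completes the induction; no ingredients are needed beyond the lemma and the parity calculus already assembled in Theorem~\ref{tempers}.
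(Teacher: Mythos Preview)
Your proof is correct and follows essentially the same inductive scheme as the paper, including the base case and the construction of the even-tempered witness $G'$ from odd-tempered option counterparts. The one minor difference is that for the odd-tempered witness the paper simply takes $G' + *$ (with $G'$ the even-tempered $G$-ish game just built), bypassing your separate construction of $G''$ and its attendant case split.
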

\begin{proof}
We proceed by induction on $G$.  If $G$ is a number, then $G$ is already even-tempered,
and $G + *$ is odd-tempered and $G$-ish because $*$ is infinitesimal.  If $G$ is not a number,
let $G = \{A,B,\ldots|C,D,\ldots\}$.  By induction, there are odd-tempered $A'$, $B'$, \ldots
such that $A'$ is $A$-ish, $B'$ is $B$-ish, and so on.  By the lemma,
\[ G' = \{A',B',\ldots|C',D',\ldots\}\]
is $G$-ish.  It is also even-tempered in value, by part (f) of Theorem~\ref{tempers},
unless $G'$ is a number.  But then it is even-tempered in form and value, by definition of even-tempered.
So either way $G'$ is even-tempered and $G$-ish.  Then as before, $G + *$ is odd-tempered and also
$G$-ish.
\end{proof}

It remains to show that $0$ is the only even-tempered infinitesimal game.
\begin{theorem}\label{blah}
Let $G$ be an even or even-tempered game.  If $R(G) \ge 0$, then $G \ge 0$.
Similarly, if $L(G) \le 0$, then $G \le 0$.
\end{theorem}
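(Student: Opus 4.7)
The plan is to prove the first statement by induction on the canonical form of $G$, and then deduce the second by negation. The base step is immediate: if $G$ equals a number $x$, then $R(G) = x \ge 0$ gives $G = x \ge 0$. For the inductive step, suppose $G$ does not equal a number; by Theorems~\ref{parities}(a) and \ref{tempers}(a) I may replace $G$ by its canonical form, so $G$ is even-in-form or even-tempered-in-form as appropriate and still does not equal a number. In particular $G$ has at least one right option, and the stopping-value recursion gives $R(G) = \min_{G^R} L(G^R) \ge 0$, so every right option $G^R$ satisfies $L(G^R) \ge 0$.

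The task then reduces to showing each such $G^R$ is $\rhd 0$, which I would do by producing a left option $G^{RL}$ with $G^{RL} \ge 0$ and invoking the inductive hypothesis. In the even-tempered case, each $G^R$ is odd-tempered in form and therefore, by definition, does not equal a number; thus $L(G^R) = \max_{G^{RL}} R(G^{RL}) \ge 0$ supplies some $G^{RL}$ with $R(G^{RL}) \ge 0$. This $G^{RL}$ is even-tempered in form, so induction delivers $G^{RL} \ge 0$, as desired.

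The even-in-form case is the only real subtlety, since odd-in-form games can themselves equal numbers. The key observation is that if $G^R$ is odd in form and equals a number, then by Theorem~\ref{parities}(b) the canonical form of that value is odd in form; inspecting the canonical forms of dyadic rationals shows this forces the value to be a nonzero odd integer (the canonical form of $0$ is $\{|\}$, which is even, and the canonical form $\{(m-1)/2^n | (m+1)/2^n\}$ of a non-integer dyadic rational has options of mismatched parities, so is neither even nor odd). Then $L(G^R) \ge 0$ forces that odd integer to be positive, giving $G^R > 0 \rhd 0$ at once. Otherwise $G^R$ is not a number and the argument of the previous paragraph applies verbatim, producing an even-in-form $G^{RL} \ge 0$ by induction.

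For the second claim I would apply the first to $-G$: Theorems~\ref{parities}(c) and \ref{tempers}(c) preserve the parity hypothesis, and $R(-G) = -L(G) \ge 0$, so $-G \ge 0$ and therefore $G \le 0$. The main obstacle I anticipate is the bookkeeping between ``in form'' and ``in value''; once one commits to working in canonical forms throughout, the parity of options propagates automatically via the same theorems, and only the short observation about odd-in-form games whose value happens to be a number needs its own argument.
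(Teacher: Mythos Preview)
Your proof is correct and follows essentially the same inductive strategy as the paper: reduce to a form-level statement, note that every $G^R$ has $L(G^R)\ge 0$, and then for each $G^R$ either recognise it as a number or extract a left option $G^{RL}$ with $R(G^{RL})\ge 0$ and apply induction. The only real difference is stylistic. The paper argues the inner step by contradiction: assuming $G^R\le 0$, in the number case one gets $0\le L(G^R)=G^R\le 0$, so $G^R=0$, which already contradicts $G^R$ being odd (by definition odd games are nonzero) or odd-tempered (which cannot equal a number). Thus the paper never needs to classify which numbers are odd in form; the single fact $0$ is not odd suffices. Your detour identifying odd-in-form numbers as nonzero odd integers is correct but more than the argument requires, and the phrase ``mismatched parities'' is only literally apt at denominator~$2$ --- for larger denominators one option is itself neither even nor odd, which still yields the desired conclusion.
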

\begin{proof}
By symmetry we only need to prove the first claim.  Since right and left stopping values
depend only on value, not form, we can assume without loss of generality that
$G$ is even or even-tempered in form.  We proceed by induction.
If $G$ equals a number, then $R(G) = G$ and we are done.  Otherwise, every option of $G$
is odd or odd-tempered in form, and we have
\[ L(G^R) \ge R(G) \ge 0\]
for all $G^R$, by definition of stopping values.
We need to show that Left wins $G$ when Right goes first.  Suppose for the sake of contradiction
that Right wins, and let $G^R$ be Right's winning move.  So $G^R \le 0$.  If $G^R$ is a number,
then
\[ 0 \le L(G^R) = G^R \le 0,\]
so $G^R = 0$, contradicting the fact that $G^R$ is odd or odd-tempered.  Thus $G^R$ does not equal a number.
So again, by definition of stopping values,
\[ 0 \le L(G^R) = R(G^{RL})\]
for some left option $G^{RL}$ of $G^R$.  But then since $G^R$ is odd or odd-tempered, $G^{RL}$ is even or even-tempered,
and then by induction $0 \le G^{RL} \lhd G^R$, contradicting $G^R \le 0$.
\end{proof}

\begin{corollary}\label{nointersect}
If $G$ is an even or even-tempered game that is infinitesimal,
then $G = 0$.  If $G$ is odd or odd-tempered, then $G = *$.
\end{corollary}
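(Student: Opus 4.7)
The plan is to deduce both statements directly from Theorem~\ref{blah} by exploiting the subgroup structure established in Theorems~\ref{parities} and~\ref{tempers}.

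First I would handle the even/even-tempered case. Suppose $G$ is even or even-tempered and infinitesimal. By definition of infinitesimal, $L(G) = R(G) = 0$. Applying Theorem~\ref{blah} with $R(G) \ge 0$ gives $G \ge 0$, and applying the symmetric half with $L(G) \le 0$ gives $G \le 0$. Therefore $G = 0$.

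Next I would reduce the odd/odd-tempered case to the even case by adding $*$. Suppose $G$ is odd or odd-tempered and infinitesimal. Then $G + *$ is the sum of two odd (resp.\ odd-tempered) games, hence even (resp.\ even-tempered) by part~(e) of Theorem~\ref{parities} (resp.\ Theorem~\ref{tempers}). Since $*$ is infinitesimal and infinitesimals are closed under addition, $G + *$ is also infinitesimal. By the previous paragraph, $G + * = 0$, so $G = -* = *$.

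There is no real obstacle here: the work was done in Theorem~\ref{blah} and in the closure-under-sum results for the even/odd and even-/odd-tempered subgroups. The only mild subtlety is making sure the parity hypothesis carries over to $G + *$, which is exactly what part~(e) of the parity theorems supplies.
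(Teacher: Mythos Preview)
Your proof is correct. The first paragraph is exactly the paper's argument: since $R(G) = L(G) = 0$, Theorem~\ref{blah} gives $0 \le G \le 0$. For the odd/odd-tempered case, the paper's written proof actually omits any argument, so your reduction via $G + *$ (using part~(e) of Theorems~\ref{parities} and~\ref{tempers} and the fact that $*$ is infinitesimal and odd/odd-tempered) is a perfectly good way to complete the statement, and is presumably what was intended.
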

\begin{proof}
Since $R(G) = L(G) = 0$, the previous theorem implies that $0 \le G \le 0$.
\end{proof}

Now we prove Theorem~\ref{decomposition}
\begin{proof}[Proof (of Theorem~\ref{decomposition})]
By Corollary~\ref{span}, we know that every short game $G$ can be written as the sum of
an even-tempered game and an infinitesimal game.  By Corollary~\ref{nointersect} the group of
even-tempered games has trivial intersection with the group of infinitesimal games.  So we are done.
\end{proof}
Therefore for every short game $G$, there is a unique $G$-ish even-tempered game.
We can also draw another corollary from Theorem~\ref{blah}
\begin{theorem}
If $G$ is any even or odd game (in value), then $L(G)$ and $R(G)$ are integers.
\end{theorem}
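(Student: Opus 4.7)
My plan is to use strong induction on the game-tree complexity of the canonical form of $G$. Since $L(G)$ and $R(G)$ depend only on the value of $G$, and by Theorem~\ref{parities}(a),(b) the canonical form of an even (respectively odd) game is even (respectively odd) in form, I may assume $G$ itself is in canonical form and is even or odd in form.

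First I handle the base case where $G$ equals a (dyadic rational) number; I claim such an even or odd number must in fact be an integer. By the classification of canonical forms of dyadic rationals, a non-integer dyadic $m/2^n$ with $m$ odd and $n\ge 1$ has canonical form $\{(m-1)/2^n \mid (m+1)/2^n\}$. When $n=1$ the two options are consecutive integers, hence of opposite parities, so the game is neither even nor odd in form. When $n\ge 2$, exactly one of $m-1$ and $m+1$ has $2$-adic valuation equal to $1$ (the other is divisible by $4$), so one of the options is a non-integer dyadic rational with denominator $2^{n-1}\ge 2$; by induction on $n$, that option is neither even nor odd, so the canonical form has an option of the wrong parity. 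In either case we contradict the assumption that $G$'s canonical form is even or odd in form. Hence $G$ is an integer, and $L(G)=R(G)=G\in\mathbb{Z}$.

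Next, suppose $G$ is in canonical form but does not equal a number. Then $G$ must have at least one left option and at least one right option, for otherwise the simplicity rule (Theorem~\ref{simplicity}) would identify $G$ with a number. Every option of $G$ has the opposite parity in form and strictly smaller game-tree complexity, so by the inductive hypothesis $R(G^L)$ and $L(G^R)$ are integers for every left option $G^L$ and every right option $G^R$. Since $G$ is not a number, the recursive formulas for the stopping values give
\[ L(G) \;=\; \max_{G^L} R(G^L) \qquad\text{and}\qquad R(G) \;=\; \min_{G^R} L(G^R), \]
each a maximum or minimum over a finite nonempty set of integers, and hence itself an integer.

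The main obstacle is the base case: verifying that no non-integer dyadic rational is even or odd in value. This requires examining the canonical form of an arbitrary dyadic rational and a small induction on the denominator, exploiting the $2$-adic asymmetry of $m\pm 1$. Once that is in place, the inductive step collapses to a routine application of the recursive formulas for $L$ and $R$ together with part (a) of Theorem~\ref{parities}.
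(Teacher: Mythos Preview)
Your proof is correct, but it takes a genuinely different route from the paper's.

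The paper argues in two stages. First, it observes that the stopping values $L(G)$ and $R(G)$ always occur as values of positions within $G$; since every position of an even or odd game (in form) is itself even or odd, this means $L(G)$ and $R(G)$ are even or odd numbers. Second, it shows that any even or odd number must be an integer by a short algebraic argument: if some non-integer dyadic rational were even or odd, then by the group structure (Theorem~\ref{parities}(e)) so would $\tfrac{1}{2}$, and then $1 = \tfrac{1}{2}+\tfrac{1}{2}$ would be even, contradicting the fact that $1$ is odd.

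Your approach instead runs a direct structural induction on the canonical form, using the recursive formula for stopping values in the non-number case, and handling the number case by an explicit analysis of the canonical form $\{(m-1)/2^n \mid (m+1)/2^n\}$ with an inner induction on $n$. The $2$-adic observation that exactly one of $m\pm 1$ has valuation $1$ is a nice way to push the induction through. What you gain is self-containment: you avoid invoking the group structure of even/odd games and the somewhat implicit fact that stopping values are realized as positions. What the paper's argument gains is brevity and a cleaner reduction---once you know even/odd values form a group containing $\mathbb{Z}$, the contradiction at $\tfrac{1}{2}$ is immediate.
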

\begin{proof}
If $G$ is any short game, then the values $L(G)$ and $R(G)$ actually occur
(as surreal numbers) within $G$.  So if $G$ is even or odd in form,
then $L(G)$ and $R(G)$ must be even or odd (not respectively) in value,
because every subposition of an even or odd game is even or odd (not respectively).
So to show that $L(G)$ and $R(G)$ are integers, it suffices to show
that every (surreal) number which is even or odd (in value) is an integer.

Suppose that $x$ is a short number which is even or odd in value, and $x$ is not an integer.  Then $x$ corresponds
to a dyadic rational, so some multiple of $x$ is a half-integer.  Since the set of even and odd games
forms a group, and since it contains the integers, it follows that $\frac{1}{2}$ must be
an even or an odd game.  But then $\frac{1}{2} + \frac{1}{2} = 1$ would be even,
when in fact it is odd.  Therefore $L(G)$ and $R(G)$ must be integers.
\end{proof}

\section{Norton Multiplication}\label{sec:norton}
If $H$ is any game, we can consider the multiples of $G$
\[ \ldots,~~(-2).H = -H-H,~~(-1).H = -H,~~0.H = 0,\]
\[ 1.H = H,~~2.H = H + H,~~3.H = H + H + H,~~\ldots\]
The map sending $n \in \mathbb{Z}$ to $G + G + \cdots + G$ ($n$ times, with obvious allowances for $n \le 0$)
establishes a homomorphism from
$\mathbb{Z}$ to $\mathcal{G}$.  If $G$ is positive, then the map is injective
and strictly order-preserving.  In this case, Simon Norton found a way to extend
the domain of the map to all short partizan games.  Unfortunately this definition depends on the form of $G$ (not just its value),
and doesn't have many of the properties that we expect from multiplication, but it does provide a good collection
of endomorphisms on the group of short partizan games.

We'll use Norton multiplication to prove several interesting results:
\begin{itemize}
\item If $G$ is any short game, then there is a short game $H$ with $H + H = G$.
By applying this to $*$, we get torsion elements of the group of games $\mathcal{G}$ having order
$2^k$ for arbitrary $k$.
\item The partially-ordered group of even games is isomorphic to the group
of all short games, and show how to also include odd games into the mix.
\item The group of all-small games contains a complete copy of the group
of short partizan games.
\item Later on, we'll use it to relate scoring games to $\mathcal{G}$.
\end{itemize}

The definition of Norton multiplication is very ad-hoc, but works nevertheless:
\begin{definition}
(Norton multiplication) 
Let $H$ be a positive short game.  For $n \in \mathbb{Z}$, define
$n.H$ to be $\underbrace{H + H + \cdots + H}_{\textrm{$n$ times}}$ if $n \ge 0$
or $-(\underbrace{H + H + \cdots + H}_{\textrm{$-n$ times}})$ if $n \le 0$.
If $G$ is any short game, then \emph{$G$ Norton multiplied by $H$ }(denoted $G.H$) is $n.H$ if $G$ equals an integer $n$.
and otherwise is defined recursively as 
\[ G.H \equiv \{G^L.H + H^L, G^L.H + 2H - H^R | G^R.H - H^L, G^R.H - 2H + H^R\}.\]
\end{definition}
To make more sense of this definition, note that $H^L$ and $2H - H^R$ can be
rewritten as $H + (H^L - H)$ and $H + (H - H^R)$.  The expressions $H^L - H$
and $H - H^R$ are called \emph{left and right incentives} of $H$,
since they measure how much Left or Right gains (improves her situation)
by making the corresponding option.  Unfortunately, incentives can never
be positive, because $H^L \lhd H \lhd H^R$ for every $H^L$ and $H^R$.

For instance, if $H \equiv \uparrow \equiv \{0|*\}$, then the left incentive
is $0 - \uparrow = \downarrow$, and the right incentive is $\uparrow - * = \uparrow*$.
Since $\uparrow* \ge \downarrow$, the options of the form
$G^L.H + H^L$ will be dominated by $G^L.H + 2H - H^R$ in this case, and we
get
\[ G.\uparrow \equiv \{G^L.\uparrow + \Uparrow* | G^R.\uparrow + \Downarrow*\}\]
when $G$ is not an integer.  Sometimes $G.\uparrow$ is denoted as $\hat{G}$.

Another important example is when $H \equiv 1 + * \equiv 1* \equiv \{1|1\}$. Then the incentives for both players are $1* - 1 = * = 1 - 1*$,
so $H + (H^L - H)$ and $H + (H - H^R)$ are both $1* + * = 1$.  So when $G$ is not an integer,
\[ G.(1*) \equiv \{ G^L.(1*) + 1 | G^R.(1*) - 1\}.\]

In many cases, Norton multiplication is an instance of the general \emph{overheating} operator
\[ \int_G^H K,\]
defined to be $K.G$ if $K$ equals an integer, and
\[ \{H + \int_G^H K^L | -H + \int_G^H K^R\}\]
otherwise.  For example, $\int_{\uparrow}^{\Uparrow*}$ is Norton multiplication by $\uparrow \equiv \{0|*\}$,
and $\int_{1*}^1$ is Norton multiplication by $\{1|1\}$.  Unfortunately, overheating is sometimes ill-defined
modulo equality of games.

We list the important properties of Norton multiplication in the following theorem:
\begin{theorem}\label{nortonstuff}
For every positive short game $A$,
the map $G \to G.A$ is a well-defined an order-preserving endomorphism on the group
of short-games, sending $1$ to $A$.  In other words
\[ (G + H).A = G.A + H.A\]
\[ (-G).A = -(G.A)\]
\[ 1.A = A\]
\[ 0.A = 0\]
\[ G = H \implies G.A = H.A\]
\[ G \ge H \iff G.A \ge H.A\]
\end{theorem}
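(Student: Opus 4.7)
The plan is to prove all six assertions simultaneously by a single induction on the complexity of the games involved. The identities $0.A = 0$ and $1.A = A$ are immediate from the integer clause of the definition, and $(-G).A = -(G.A)$ is obtained by expanding with $(-G)^L \equiv -G^R$ and $(-G)^R \equiv -G^L$ and applying the inductive hypothesis term-by-term, so that the two sides match on the nose (not merely in value).

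The main effort is the forward direction of order-preservation: if $G \ge H$, then $G.A \ge H.A$. I would prove this by Theorem~\ref{betwixt}, showing that no right option of $G.A$ is $\le H.A$ and no left option of $H.A$ is $\ge G.A$. One splits on whether each of $G, H$ equals an integer and on which of the two forms of left or right option is in play. Each obligation then reduces, using the positivity of $A$, the non-positivity of the incentives $A^L - A$ and $A - A^R$, and the inductive hypothesis (including additivity for strictly smaller games), to a comparison among simpler games. The role of the extra options $G^L.A + 2A - A^R$ and $G^R.A - 2A + A^R$ in the definition is precisely to give enough flexibility for each obstruction to be blocked.

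Once the forward direction holds, well-definedness is immediate: $G = H$ forces $G \ge H$ and $H \ge G$, hence $G.A = H.A$. Additivity $(G+H).A = G.A + H.A$ is then proved by joint induction on $(G, H)$, splitting by whether each equals an integer. In the generic case both sides are computed via the recursive definition and the options match by induction; dominated and reversible moves appearing on one side but not the other are cleared using the simplification theorems proved earlier in this chapter. The reverse direction of order-preservation then reduces, via additivity and negation, to showing $K.A \ge 0 \Rightarrow K \ge 0$: if $K \lhd 0$ and $K$ is an integer then $K$ is a negative integer and $K.A < 0$ by positivity of $A$, while if $K \lhd 0$ and $K$ is a non-integer then some right option $K^R \le 0$ exists, and $A > 0$ supplies some left option $A^L \ge 0$, so $K^R.A - A^L \le 0$ is a right option of $K.A$ that forces $K.A \lhd 0$.

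The main obstacle is the order-preservation induction, specifically the case pitting right options of $G.A$ of the form $G^R.A - 2A + A^R$ against left options of $H.A$ of the form $H^L.A + A^L$. Reducing these to comparisons among smaller games relies on $G \ge H \Rightarrow G^R \rhd H$ and $G \ge H \Rightarrow G \rhd H^L$ from Theorem~\ref{betwixt} together with the inductive form of additivity, and it is here that the $2A - A^R$ terms prove indispensable — a naive definition using only $\pm A^L$ as increments would not make the induction close.
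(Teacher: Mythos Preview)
Your route differs substantially from the paper's, and while it can probably be made to work, the sketch understates where the real labour lies.

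After disposing of $(-G).A \equiv -(G.A)$ formally, the paper does \emph{not} separate order-preservation from additivity. It proves a single ``triple-sum'' lemma (Lemma~\ref{triplesum}): whenever $G_1 + G_2 + G_3 \ge 0$, then $G_1.A + G_2.A + G_3.A \ge 0$. The induction is on the non-integer summands, and the engine is the integer-avoidance principle (Lemma~\ref{explicit}), which guarantees that whenever Left has a winning move in a sum not all of whose terms are integers, she has one in a non-integer term; this is precisely what lets the $\pm A^L$ and $\pm(2A - A^R)$ increments cancel in pairs. Forward order-preservation then drops out by specialising to the triple $(G, -H, 0)$, and additivity by specialising to $(G+H, -G, -H)$ and its negative. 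The reverse implication $G.A \ge H.A \Rightarrow G \ge H$ is then obtained essentially as you describe, by showing $K \rhd 0 \Rightarrow K.A \rhd 0$.

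Your direct proof of order-preservation via Theorem~\ref{betwixt} must still confront the cases where $G$ or $H$ equals an integer, and there the options of $n.A$ take a different shape; the case analysis you will need is essentially the paper's Cases~1a--1c and 2a--2b, rearranged for two games instead of three. More importantly, your option-matching proof of additivity is subtler than you indicate. When $G = n > 0$ is an integer and $H$ is not, $G.A + H.A$ acquires extra left options $(n-1).A + A^L + H.A$ coming from moves inside $n.A$, and these have no counterpart among the options of $(G+H).A$ computed from the form $\{n + H^L \mid n + H^R\}$. Showing such an option is dominated by $n.A + H^L.A + A^L$ unwinds (via inductive additivity on the simpler pairs $(n-1,H)$ and $(n,H^L)$ together with the already-established order-preservation) to the inequality $H \le H^L + 1$ for some $H^L$, which is true but is not an off-the-shelf ``simplification theorem.'' The paper's three-term device sidesteps all of this by comparing values rather than matching forms, so that additivity costs nothing beyond the one lemma.
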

Of course the last of these equations also implies that
$G < H \iff G.A < H.A$, $G \lhd H \iff G.A \lhd H.A$, and so on.

These identities show that $G.A$ depends only on the value of $G$.  But as a word
of warning, we note that \textbf{$G.A$ depends on the form of $A$}.  For instance,
it turns out that
\[ \frac{1}{2}.\{0|\} = \frac{1}{2},\]
while
\[ \frac{1}{2}.\{1*|\} = \{1|0\} \ne \frac{1}{2}\]
although $\{0|\} = 1 = \{1*|\}$.  By default, we will interpret $G.A$ using
the canonical form of $A$, when the form of $A$ is left unspecified.

Before proving Theorem~\ref{nortonstuff}, we use it to show some of the claims above:
\begin{corollary}\label{hats}
The map sending $G \to G.\uparrow$ is an order-presering embedding
of the group of short partizan games into the group of short all-small games.
\end{corollary}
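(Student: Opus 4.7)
The plan is to observe that Theorem~\ref{nortonstuff} already delivers most of what the corollary claims. The map $\phi\colon G \mapsto G.\uparrow$ is an abelian group homomorphism from $\mathcal{G}$ to itself with the strict order-preservation property $G \ge H \iff G.\uparrow \ge H.\uparrow$. From this, injectivity is immediate: if $G.\uparrow = H.\uparrow$ then $G \ge H$ and $H \ge G$, so $G = H$. Thus $\phi$ is automatically an order-preserving embedding of partially ordered abelian groups into $\mathcal{G}$, and the only remaining content of the corollary is that the image lies inside the subgroup of all-small games.

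I would prove that $G.\uparrow$ is all-small in value by induction on (the form of) $G$. The base case is when $G$ equals an integer $n$: then $G.\uparrow = n.\uparrow$ is a sum of copies of $\uparrow$ (or, if $n<0$, of $\downarrow$), and the paper has already observed that $\uparrow$ and $\downarrow$ are all-small and that all-small games are closed under addition. For the inductive step I would use the recursive definition
\[
G.\uparrow \equiv \{\,G^L.\uparrow + \Uparrow{*} \;\big|\; G^R.\uparrow + \Downarrow{*}\,\},
\]
noting that by induction every $G^L.\uparrow$ and $G^R.\uparrow$ is all-small, so adding the all-small games $\Uparrow{*}$ or $\Downarrow{*}$ again yields all-small games. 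Replacing each option by an all-small form (justified by Theorem~\ref{congruences}(c)) produces a form of $G.\uparrow$ all of whose options are all-small in form, so every subposition below $G.\uparrow$ is infinitesimal.

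The crux is to show that $G.\uparrow$ \emph{itself} is infinitesimal — knowing only that its options are infinitesimal is not enough (for example $1 = \{0|\}$ has an infinitesimal option but is not infinitesimal). Here I would use a sandwich argument. Since every short game is bounded by integers, there exists $N \in \mathbb{Z}_{>0}$ with $-N \le G \le N$, and the strict order-preservation of Norton multiplication in Theorem~\ref{nortonstuff} then yields
\[
-N.\uparrow \;\le\; G.\uparrow \;\le\; N.\uparrow.
\]
Both bounds are (finite) sums of $\uparrow$ and $\downarrow$, hence all-small and in particular infinitesimal, so for any positive number $x$ we have $G.\uparrow \le N.\uparrow < x$ and $G.\uparrow \ge -N.\uparrow > -x$. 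Thus $G.\uparrow$ is infinitesimal, and combined with the previous paragraph the chosen form of $G.\uparrow$ is all-small in form, so $G.\uparrow$ is all-small in value.

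The main obstacle is precisely this ``itself infinitesimal'' step: an induction on options alone cannot rule out that $G.\uparrow$ equals, say, a nonzero number via the simplicity rule. The integer-sandwich argument is the cleanest workaround, since it invokes only facts already established — the bounding of short games by integers, the order-preservation of Norton multiplication, and the infinitesimality of integer multiples of $\uparrow$ — and sidesteps any delicate direct analysis of the recursive form of $G.\uparrow$.
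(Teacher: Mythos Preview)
Your proof is correct, and its overall inductive structure matches the paper's. The point of divergence is exactly the step you flag as the crux: showing that $G.\uparrow$ is itself infinitesimal when $G$ is not an integer.

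The paper handles this more directly by observing that if $G$ does not equal an integer, then $G$ must have both left options and right options (a form with options on only one side always equals an integer). Consequently $G.\uparrow$ has options on both sides, and since by induction all of those options are all-small, this already forces $G.\uparrow$ to be all-small --- your counterexample $1 = \{0|\}$ is ruled out precisely because it lacks right options. The paper thus sidesteps the sandwich argument entirely at this point.

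Your integer-sandwich argument is also perfectly valid, and in fact the paper uses essentially the same device slightly later (in the lemma inside the proof of Corollary~\ref{twodivide}) to show that $G.K$ is infinitesimal for any positive infinitesimal $K$. So your route is not foreign to the paper, just deployed one corollary early. The paper's version is shorter here because the ``options on both sides'' observation comes for free from the case split in the definition of Norton multiplication; your version is more explicit, since the paper's proof leaves the step ``all-small options on both sides $\Rightarrow$ all-small'' unjustified in place.
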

\begin{proof}
We only need to show that $G.\uparrow$ is always all-small.  Since all-small
games are closed under addition, this is clear when $G$ is an integer.  In any
other case, $G$ has left and right options, so $G.\uparrow$ does too.
Moreover, the left options of $G.\uparrow$ are all of the form
$G^L.\uparrow + \Uparrow*$ and $G^L.\uparrow + 0$ (because $\uparrow = \{0|*\}$)
and by induction (and the fact that $\Uparrow*$ is all-small), all the left options
of $G.\uparrow$ are all-small.  So are all the right options.  So
every option of $G.\uparrow$ is all-small, and $G.\uparrow$ has options
on both sides.  Therefore $G.\uparrow$ is all-small itself.
\end{proof}

\begin{corollary}\label{halfpres}
The map sending $G \to G.(1*)$ is an order-preserving embedding
of the group of short partizan games into the group of (short) even games.
\end{corollary}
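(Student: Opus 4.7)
The plan is to invoke Theorem~\ref{nortonstuff} with $A = 1*$ and reduce the corollary to the single assertion that $G.(1*)$ is always even in value. First I would check that $1* = \{1|1\}$ is a positive short game: its only right option $1$ satisfies $1 > 0$, giving $1* \ge 0$, while its left option $1 \ge 0$ gives $1* \rhd 0$, hence $1* > 0$. Theorem~\ref{nortonstuff} then yields a well-defined order-preserving group endomorphism $G \mapsto G.(1*)$ of $\mathcal{G}$. The equivalence $G \le H \iff G.(1*) \le H.(1*)$ makes the map injective, so it is already an embedding of partially ordered abelian groups; only the claim about the image remains.

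To show the image lies in the even subgroup, I would prove by induction on a form of $G$ that $G.(1*)$ is always even in value. In the base case, $G$ equals an integer $n$, and $G.(1*) = n.(1*)$ is the $n$-fold sum of $1*$. But $1*$ is itself even in value (its canonical form $\{1|1\}$ has only the odd option $1$), and the even values form a subgroup of $\mathcal{G}$ by Theorem~\ref{parities}(e), so any integer multiple of $1*$ is even.

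For the inductive step, suppose $G$ does not equal any integer. As the paper already observes in the preceding section, the form of $G.(1*)$ simplifies to
\[ G.(1*) \equiv \{G^L.(1*) + 1 \,|\, G^R.(1*) - 1\}, \]
because substituting $H = 1*$ into the Norton-multiplication definition gives $H^L = H^R = 1$ and $2H - H^R = 2(1{+}*) - 1 = 1$ (using $* + * = 0$). By induction each $G^L.(1*)$ and $G^R.(1*)$ is even in value, and adding the odd game $\pm 1$ yields options that are odd in value by Theorem~\ref{parities}(e). Since $G$ is a non-integer we have $G \ne 0$, and then by injectivity $G.(1*) \ne 0$, so Theorem~\ref{parities}(f) concludes that $G.(1*)$ is even. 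The only slightly delicate step is the collapse of the recursive option expression down to two distinct options via the cancellation $2\cdot * = 0$; once that is in hand, the parity lemmas finish the job with no further computation.
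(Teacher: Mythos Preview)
Your proposal is correct and follows essentially the same approach as the paper: invoke Theorem~\ref{nortonstuff} for the embedding properties, then prove by induction that $G.(1*)$ is always even, using the simplified recursive formula $G.(1*) = \{G^L.(1*) + 1 \mid G^R.(1*) - 1\}$ and the parity arithmetic of Theorem~\ref{parities}. Your write-up is slightly more explicit than the paper's (you verify $1* > 0$ and note injectivity directly), and your check that $G.(1*) \ne 0$ before applying Theorem~\ref{parities}(f) is harmless but actually unnecessary, since the ``even'' case of that statement does not require the nonzero hypothesis.
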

In fact, we'll see that this map is bijective, later on.
\begin{proof}
As before, we only need to show that $G.(1*)$ is even, for any $G$.
Since $1* = \{1|1\}$ is even, and even games are closed under addition and subtraction,
this is clear when $G$ is an integer.  Otherwise, note that
\[ G.(1*) = \{G^L.(1*) + 1|G^R.(1*) - 1\}\]
and by induction $G^L.(1*)$ is even and $G^R.(1*)$ is even, so that
$G^L.(1*) + 1$ and $G^R.(1*) - 1$ are odd (because $1$ and $-1$ are odd).
Thus every option of $G.(1*)$ is odd, and so $G.(1*)$ is even as desired.
\end{proof}

\begin{corollary}\label{twodivide}
If $G$ is any short game, then there is a short game $H$ such that
$H + H = G$.  If $G$ is infinitesimal or all-small, we can take $H$ to be likewise.
Either way, we can take $H$ to have the same sign (outcome) as $G$.
\end{corollary}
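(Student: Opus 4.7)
The plan is to construct $H$ by induction on the rank of $G$ (the maximum length of a play sequence). The base case is when $G$ equals a short surreal number: then $G$ is a dyadic rational $p/2^k$ and we take $H = p/2^{k+1}$, which is again a short surreal number, doubles to $G$, and shares its sign. For the inductive step, write $G \equiv \{G^L \mid G^R\}$ with $G$ not equal to a number. Apply the inductive hypothesis to each option to obtain halves $H^L$ of $G^L$ and $H^R$ of $G^R$. The key observation is that $1* \equiv \{1 \mid 1\}$ satisfies $1* + 1* = 1 + 1 + (* + *) = 2$, so $1*$ plays the role of a ``half of $2$'' in $\mathcal{G}$, and Norton multiplication by $1*$ (Corollary~\ref{halfpres}) behaves like doubling up to the embedding into even games. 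Guided by this, I would define
\[ H \equiv \{ H^L + x \mid H^R - x \}, \]
where $x$ is an auxiliary short game (for instance a multiple of $1*$, or an overheated version of $G$) chosen so that after applying the Number Avoidance Theorem together with the dominated-move and gift-horse simplifications, the options of $H + H$ coincide in value with $\{2H^L \mid 2H^R\} = \{G^L \mid G^R\} = G$. The verification that $H + H = G$ is then an inductive check using $2 H^L = G^L$ and $2 H^R = G^R$.

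For the inheritance claims, both the infinitesimals and the all-small games form subgroups of $\mathcal{G}$, and the auxiliary shift $x$ above can be taken within the appropriate subgroup whenever $G$ lies there (for instance, replacing $1*$ by a multiple of $\uparrow$ in the all-small case), so the construction never leaves the relevant subgroup. For the sign condition, note that $H \le 0$ forces $G = 2H \le 0$ and symmetrically, so if $G || 0$ then $H$ is automatically $|| 0$, and if $G = 0$ we take $H = 0$. For $G > 0$ (respectively $G < 0$), if the construction yields a merely fuzzy $H$, we replace it by $H + \tau$ where $\tau$ is a 2-torsion element of $\mathcal{G}$ (such as $*$, or a recursively constructed half of $0$) chosen to flip the outcome into the positive (respectively negative) class; since $\tau + \tau = 0$, $H + \tau$ remains a half of $G$.

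The main obstacle will be pinning down the exact formula for $x$ so that the identity $H + H = G$ holds in value (not merely in form); this requires balancing contributions from Norton multiplication or the overheating operator against the raw options $H^L$ and $H^R$, and the verification genuinely exploits the simplicity rule, the number-avoidance theorem, and reversibility. A secondary subtlety is the sign adjustment, which relies on having a sufficiently rich supply of 2-torsion elements in $\mathcal{G}$ --- elements whose very existence is produced by applying the present theorem to $G = 0$ and $G = *$ --- so one must be careful to sequence the induction so that these torsion elements are available at the moment the sign adjustment is needed.
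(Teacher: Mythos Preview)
Your proposal has a genuine gap at the heart of the construction. When you set $H \equiv \{H^L + x \mid H^R - x\}$ and expand $H + H$, the options are not of the form $2H^L$ and $2H^R$; they are of the form $H + (H^L + x)$ and $H + (H^R - x)$. For these to equal $G^L = 2H^L$ and $G^R = 2H^R$ in value you would need $H + x = H^L$ and $H - x = H^R$ simultaneously for \emph{every} option of $G$, which is generally impossible (different options have different values). No amount of domination, reversibility, or number avoidance rescues this, because those simplifications do not convert $H + H^L$ into $2H^L$. So the phrase ``$x$ chosen so that \ldots'' is hiding the entire difficulty: you have not exhibited such an $x$, and there is no reason to believe one exists. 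Your sign-fixing argument also fails: adding a $2$-torsion element such as $*$ to a fuzzy $H$ need not make it positive (e.g.\ $* + * = 0$), and there is no general supply of $2$-torsion that shifts a given fuzzy game into the positive class.

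The paper's argument avoids induction on options entirely and instead exploits the additivity of Norton multiplication in one line. For any positive $A$, Theorem~\ref{nortonstuff} gives $(1/2).A + (1/2).A = (1/2 + 1/2).A = 1.A = A$, so $(1/2).A$ is a half of $A$. Now shift: choose $n$ large enough that $G + 2n > 0$, set $A = G + 2n$, and take $H = (1/2).A - n$; then $H + H = A - 2n = G$. For the sign, when $G > 0$ one may take $n = 0$, and then $H = (1/2).G > 0$ because Norton multiplication is order-preserving; the cases $G = 0$ and $G \| 0$ are trivial, and $G < 0$ follows by negation. For the infinitesimal and all-small cases, replace $2n$ by a multiple of $\uparrow$ large enough to make $G + \hat{2n}$ positive, and observe (via a short lemma) that Norton multiplying by an infinitesimal or all-small positive game keeps the result infinitesimal or all-small. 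The whole proof is a direct application of the fact that $G \mapsto G.A$ is an order-preserving group homomorphism sending $1$ to $A$; your proposal gestures at Norton multiplication but never actually uses this property.
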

\begin{proof}
Since every short game is greater than some number, $G + 2n$ will be positive
for big enough $n$.  Let $H = (1/2).(G + 2n) - n$.  Then
\[ H + H = (1/2).(G + 2n) + (1/2).(G + 2n) - n - n =\]\[ (1/2 + 1/2).(G + 2n) - 2n = G + 2n - 2n = G.\]
If $G$ is infinitesimal, we can replace $2n$ with $\hat{2n}$ (i.e., $2n.\uparrow$),
since we know that every infinitesimal is less than some multiple of $\uparrow$.  Then
$G + \hat{2n}$ will be infinitesimal or all-small, as $G$ is, so $H = (1/2).(G + \hat{2n}) - \hat{n}$ will be
infinitesimal or all-small, by the following lemma:
\begin{lemma}
If $K$ is infinitesimal and positive, then $G.K$ is infinitesimal for every short game $G$.  Similarly
if $K$ is all-small, then $G.K$ is all-small too.
\end{lemma}
\begin{proof}
The all-small case proceeds as in Corollary~\ref{hats}, using the fact that the incentives
of $K$ will be all-small because $K$ and its options are, and all-small games form a group.

If $K$ is merely infinitesimal, then notice that since every short game is less than an integer,
there is some large $n$ for which $-n < G < n$, and so $-n.K < G.K < n.K$.  But since
$K$ is an infinitesimal, $n.K$ is less than every positive number and $-n.K$ is greater than
every negative number. Thus $G.K$ also lies between the negative and positive numbers,
so it is infinitesimal.
\end{proof}

To make the signs come out right, note that if $G = 0$, then we can trivially take $H = 0$.
If $G || 0$, then any $H$ satisfying $H + H = G$ must satisfy $H || 0$, since $H \ge 0 \Rightarrow H + H \ge 0$,
$H = 0 \Rightarrow H + H =0$, and $H \le 0 \Rightarrow H + H \le 0$.  So the $H$ chosen above works.
If $G > 0$, then we can take $n = 0$.  So $H = (1/2).G$ which is positive by Theorem~\ref{nortonstuff}.
If $G$ is negative, then by the same argument applied to $-G$, we can find $K > 0$ such that $K + K = -G$.
Then take $H = -K$.
\end{proof}

We now work towards a proof of Theorem~\ref{nortonstuff}.
\begin{lemma}\label{negation}
$(-G).H \equiv -(G.H)$
\end{lemma}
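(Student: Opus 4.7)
The plan is to prove this by structural induction on (the form of) $G$, splitting into two cases based on the definition of Norton multiplication. Note that the statement uses $\equiv$ (identity of forms), so I will need to track forms exactly and rely on the form-level identities of Lemma~\ref{basic}, in particular $-(A+B)\equiv(-A)+(-B)$ and $-(-A)\equiv A$.

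For the first case, suppose $G$ equals some integer $n$. Then $-G$ equals $-n$, so by definition $G.H\equiv n.H$ and $(-G).H\equiv(-n).H$. The integer case of the definition of $m.H$ is an iterated sum (or its negation), so directly from that definition we have $(-n).H\equiv -(n.H)$, which finishes this case.

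For the second case, suppose $G$ does not equal an integer; then neither does $-G$. Expand $(-G).H$ using the recursive clause: its options are $(-G)^L.H + H^L$ and $(-G)^L.H + 2H - H^R$ on the left, and $(-G)^R.H - H^L$ and $(-G)^R.H - 2H + H^R$ on the right. Since $-G\equiv\{-G^R\mid -G^L\}$ by the definition of negation, the options $(-G)^L$ are precisely the $-G^R$ and the $(-G)^R$ are precisely the $-G^L$. Now apply the inductive hypothesis to each $(-G^R).H$ and $(-G^L).H$ to rewrite them as $-(G^R.H)$ and $-(G^L.H)$ respectively. Separately, compute $-(G.H)$ directly from the definition of negation applied to the recursive expansion of $G.H$: its left options are the negatives of $G.H$'s right options and vice versa, giving expressions like $-(G^R.H - H^L)$, which one pushes the minus sign through using Lemma~\ref{basic}'s $\equiv$-level identities to obtain $-(G^R.H) + H^L$, and similarly for the other three kinds of options. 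The two resulting option-sets match term-for-term, so $(-G).H\equiv -(G.H)$.

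There is no real obstacle beyond bookkeeping; the main thing to be careful about is (a) distinguishing the integer case of the definition from the recursive case on both sides simultaneously, and (b) ensuring every manipulation is at the level of $\equiv$, which is why Lemma~\ref{basic}'s strong form-level identities are used rather than mere equality. The induction is well-founded because all recursive calls are on strictly simpler option forms $G^L, G^R$ of $G$.
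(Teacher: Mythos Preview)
Your proof is correct and follows essentially the same approach as the paper: induction on $G$, handling the integer case directly from the definition of $n.H$, and in the non-integer case expanding both sides via the recursive clause, substituting $(-G)^L=-(G^R)$, $(-G)^R=-(G^L)$, invoking the inductive hypothesis, and pushing minus signs through sums using the form-level identities of Lemma~\ref{basic}. The paper's proof is simply a more condensed version of exactly this computation.
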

\begin{proof}
This is easily proven by induction.  If $G$ equals an integer, then it is obvious by definition
of Norton multiplication.  Otherwise,
\[ (-G).H \equiv \{(-G)^L.H + H^L, (-G)^L.H + 2H - H^R|\]\[(-G)^R.H - H^L, (-G)^R.H - 2H + H^R\}
\]\[\equiv \{(-(G^R)).H + H^L, (-(G^R)).H + 2H - H^R|\]\[(-(G^L)).H - H^L, (-(G^L)).H - 2H + H^R\}
\]\[\equiv \{-(G^R.H) + H^L, -(G^R.H) + 2H - H^R|\]\[-(G^L.H) - H^L, -(G^L.H) - 2H + H^R\}
\]\[\equiv \{-(G^R.H - H^L), -(G^R.H - 2H + H^R)|\]\[-(G^L.H + H^L), -(G^L.H + 2H - H^R)\}
\equiv\]\[ -\{G^L.H + H^L, G^L.H + 2H - H^R|\]\[G^R.H - H^L, G^R.H - 2H + H^R\}\]\[ \equiv -(G.H)\]
where the third identity follows by induction.
\end{proof}

The remainder is more difficult.  We'll need the following variant of number-avoidance
\begin{theorem}\label{intavoid}(Integer avoidance)
If $G$ is a short game that does not equal an integer, and $n$ is an integer,
then
\[ G + n = \{G^L + n|G^R + n\}\]
\end{theorem}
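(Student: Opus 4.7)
My plan is to split on whether $G$ equals a (surreal) number. If not, the Number Avoidance Theorem applied with $x = n$ immediately gives $G + n = \{G^L + n|G^R + n\}$.

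The substantive case is when $G$ equals a non-integer dyadic rational $y$. Set $G_n \equiv \{G^L + n|G^R + n\}$; I will show $G_n = y + n$ by applying the simplicity rule (Theorem~\ref{simplicity}) to both $G$ and $G_n$ and comparing via translation. The crucial bridge is that, for any number $z$, the condition $G^L + n \lhd z \lhd G^R + n$ (for every option $G^L$, $G^R$) is equivalent to $G^L \lhd z - n \lhd G^R$, because $A \lhd B$ depends only on $A - B$ and the integer summand cancels on both sides. So the set $\mathcal{S}_n$ of numbers eligible for the simplicity rule at $G_n$ is precisely the translate-by-$n$ of the corresponding set $\mathcal{S}$ at $G$.

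Since $G = y \in \mathcal{S}$, the simplicity rule identifies $y$ as the unique simplest element of $\mathcal{S}$. The key observation is that $\mathcal{S}$ contains no integers: any integer in $\mathcal{S}$ would be strictly simpler than the non-integer $y$ in the simplicity order, contradicting $y$'s minimality. Consequently $\mathcal{S}_n$ also contains no integers (since $z \in \mathcal{S}_n$ is an integer iff $z - n \in \mathcal{S}$ is). On the non-integer dyadics, the simplicity order reduces to the strict order on denominators, and denominators are manifestly preserved by integer translation; hence the bijection $z \leftrightarrow z - n$ between $\mathcal{S}_n$ and $\mathcal{S}$ preserves simplicity. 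So $y + n$, the image of the simplest element of $\mathcal{S}$, is the simplest element of $\mathcal{S}_n$, and the simplicity rule yields $G_n = y + n = G + n$.

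The delicate step --- and the sole place where the hypothesis ``$G$ does not equal an integer'' is used --- is the argument that $\mathcal{S}$ contains no integers. The simplicity partial order's ``both integers, smaller magnitude'' clause does not respect integer translation, so if $y$ were permitted to be an integer, a minimal integer in $\mathcal{S}_n$ might have no matching minimal preimage in $\mathcal{S}$, and the transfer of minimality across the bijection would fail. Non-integrality of $y$ ensures denominator alone governs simplicity on the relevant sets, which is then preserved by translation.
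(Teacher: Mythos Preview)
Your proof is correct and follows essentially the same approach as the paper's: split on whether $G$ equals a number, invoke Number Avoidance in the non-number case, and in the numerical case apply the simplicity rule to both $G$ and $G_n$ using the translation bijection $\mathcal{S}_n = n + \mathcal{S}$. The paper runs the numerical case as a contradiction argument (assume $G_n \ne y + n$, deduce that one of $G$, $G_n$ is an integer), whereas you argue directly that $\mathcal{S}$ contains no integers and hence translation preserves simplicity on the relevant sets; this is a mild stylistic improvement but not a genuinely different route.
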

\begin{proof}
If $G$ does not equal a number, this is just the number avoidance theorem.  Otherwise, let $S$
be the set of all numbers $x$ such that $G^L \lhd x \lhd G^R$ for all
$G^L$ and $G^R$, and let $S'$ be the set of all numbers $x$ such that $G^L + n \lhd x \lhd G^R + n$
for all $G^L$ and $G^R$.  By the simplicity rule, $G$ equals the simplest number in $S$,
and $\{G^L + n|G^R + n\}$ equals the simplest number in $S'$.  But the elements of $S'$
are just the elements of $S$ shifted by $n$, that is $S' = \{s + n\,:\, s \in S\}$.
Let $x = G$ and $y = \{G^L + n|G^R + n\}$.  We want to show $y = x + n$, so suppose otherwise.
Then $x$ is simpler than $y - n \in S$, and $y$ is simpler than $x + n \in S'$.  Because
of how we defined simplicity, adding an integer to a number has no effect on how simple it is
unless a number is an integer.  So either $x$ or $y$ is an integer.  If $x = G$ is an integer
then we have a contradiction, and if $y$ is an integer, the fact that $x$ is simpler
than $y - n$ implies that $x$ is an integer too.
\end{proof}
The name integer avoidance comes from the following reinterpretation:
\begin{lemma}\label{explicit} 
Let $G_1, G_2, \ldots, G_n$ be a list of short games.  If at least one $G_i$ does not equal an integer,
and $G_1 + G_2 + \cdots + G_n \rhd 0$, then there is some $i$ and some left option
$(G_i)^L$ such that $G_i$ does not equal an integer, and
\[ G_1 + \cdots + G_{i-1} + (G_i)^L + G_{i+1} + \cdots + G_n \ge 0\]
\end{lemma}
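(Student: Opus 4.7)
The plan is to derive this lemma essentially as a repackaging of the Integer Avoidance Theorem (Theorem~\ref{intavoid}) applied to a well-chosen regrouping of the summands. Let $J = \{i : G_i \text{ does not equal an integer}\}$, which is nonempty by hypothesis, and let $I$ be its complement; for $i \in I$ write $G_i = m_i$ (an integer) and set $M = \sum_{i \in I} m_i$. Choose any $j_0 \in J$.

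I then apply Integer Avoidance to $G_{j_0} + M$ (valid because $G_{j_0}$ does not equal an integer and $M$ is an integer) to obtain the value identity
\[ G_{j_0} + M \;=\; \{\, G_{j_0}^{L} + M \ \big|\ G_{j_0}^{R} + M \,\}. \]
With $M$ understood as a single integer game, I then introduce the specific game
\[ X \;=\; \{\, G_{j_0}^{L} + M \ \big|\ G_{j_0}^{R} + M \,\} \;+\; \sum_{j \in J \setminus \{j_0\}} G_{j}, \]
whose value equals $S = G_1 + \cdots + G_n$. In particular, $X \rhd 0$, so some left option of $X$ in its displayed form must be $\ge 0$ as a value.

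The point of choosing this form of $X$ is that its left options are easy to enumerate and each one corresponds, in value, to moving in a non-integer summand of the original sum. Concretely, a left option of $X$ is obtained either by moving in the bracketed first summand, which is a choice of some $G_{j_0}^{L}$ and yields total value $G_{j_0}^{L} + \sum_{i \ne j_0} G_i$; or by moving in one of the $G_{j_1}$ with $j_1 \in J \setminus \{j_0\}$, which is a choice of some $G_{j_1}^{L}$ and yields $G_{j_1}^{L} + \sum_{i \ne j_1} G_i$. In either case the winning left option of $X$ furnishes an index $j \in J$ and a left option $G_{j}^{L}$ of $G_{j}$ such that $G_1 + \cdots + G_{j}^{L} + \cdots + G_n \ge 0$, which is exactly the conclusion.

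The main thing to be careful about is the form/value distinction: Integer Avoidance is an equality of values, not forms, so it is crucial that I choose a specific form for $X$ in which $M$ appears as a single integer game rather than as the sum $\sum_{i \in I} G_i$. Written this way, the only left options of $X$ that exist in form are the ones listed above, so the hypothesis $S \rhd 0$ is strong enough to force one of these (rather than some hypothetical move ``inside $M$'') to be winning, and that winning option is the move in a non-integer $G_j$ that the lemma demands.
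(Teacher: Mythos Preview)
Your proof is correct and follows essentially the same approach as the paper: both group all the integer summands into a single integer $M$ (the paper calls it $k$), apply Theorem~\ref{intavoid} to one designated non-integer summand plus $M$, and then read off a winning left option from the resulting explicit form. Your treatment of the form/value distinction is slightly more explicit than the paper's, but the argument is the same.
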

(If we didn't require $G_i$ to be a non-integer, this would be obvious from the fact
that some left option of $G_1 + G_2 + \cdots + G_n$ must be $\ge 0$.)
\begin{proof}
Assume without loss
of generality that we've sorted the $G_n$ so that $G_1,\ldots,G_j$ are all non-integers,
while $G_{j+1},\ldots,G_n$ are all integers.  (We don't assume that $j < n$, but we do assume that $j > 0$.)  Then
we can write $G_{j+1} + \cdots + G_n = k$ for some integer $k$, which will be the empty sum zero if $j = n$.  By integer avoidance,
\[ 0 \lhd G_1 + G_2 + \cdots + G_n = G_1 + \cdots + G_{j-1} + \{(G_j)^L + k|(G_j)^R + k\}\]
Therefore, there is some left option of $G_1 + \cdots + G_{j-1} + \{(G_j)^L + k|(G_j)^R + k\}$ which is $\ge 0$.
There are two cases: it is either of the form
\[ G_1 + \cdots + G_{i-1} + (G_i)^L + G_{i+1} + \cdots + G_{j-1} + \{(G_j)^L + k|(G_j)^R + k\}\]
for some $i < j$, or it is of the form
\[ G_1 + \cdots + G_{j-1} + (G_j)^L + k.\]
In the first case, we have
\[ 0 \le G_1 + \cdots + G_{i-1} + (G_i)^L + G_{i+1} + \cdots + G_{j-1} + \{(G_j)^L + k|(G_j)^R + k\}
 =\]\[ G_1 + \cdots + G_{i-1} + (G_i)^L + G_{i+1} + \cdots  + G_j + k = \]\[
G_1 + \cdots + G_{i-1} + (G_i)^L + G_{i+1} + \cdots + G_n\]
and $G_i$ is not an integer.  In the second case, we have
\[ 0 \le G_1 + \cdots + G_{j-1} + (G_j)^L + k = G_1 + \cdots + G_{j-1} + (G_j)^L + G_{j+1} + \cdots + G_n\]
and $G_j$ is not an integer.
\end{proof}
This result says that in a sum of games, not all integers, whenever you have a winning move, you have one in
a non-integer.  In other words, you never need to play in an integer if any non-integers are present on the board.
%
%

Using this, we turn to our most complicated proof:
\begin{lemma}\label{triplesum}
Let $H$ be a positive short game and $G_1, G_2, G_3$ be short games.  Then
\[ G_1 + G_2 + G_3 \ge 0 \implies G_1.H + G_2.H + G_3.H \ge 0\]
\end{lemma}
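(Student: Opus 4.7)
The plan is to prove the lemma by induction on a well-founded complexity measure of the triple $(G_1, G_2, G_3)$ --- say, the total number of positions in their game trees (with $H$ fixed). We must show that Left wins the second-player game in $G_1.H + G_2.H + G_3.H$, so it suffices to exhibit, for every Right move, a Left response leaving a position that is $\ge 0$.

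The easy case is when all three $G_i$ equal integers $n_i$. Then each $G_i.H$ is literally a sum (or negation) of copies of $H$, so the whole position equals $(n_1+n_2+n_3).H$, a nonnegative multiple of the positive game $H$, hence $\ge 0$. Any Right move merely converts one copy of $H$ to $H^R$ (or one $-H$ to $-H^L$); since $H^R - H \rhd 0$ and $H - H^L \rhd 0$, the resulting position is strictly $\rhd 0$ and Left trivially replies.

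Now suppose some $G_i$ is a non-integer, and fix a Right move, without loss of generality in $G_1.H$. If $G_1$ itself is a non-integer, Right's move takes $G_1.H$ to either $G_1^R.H - H^L$ or $G_1^R.H - 2H + H^R$. The parallel move $G_1 \to G_1^R$ in the unmultiplied sum gives $G_1^R + G_2 + G_3 \rhd 0$, and Lemma~\ref{explicit} (integer avoidance for Left) furnishes a winning Left reply $G_m \to G_m^L$ with $G_m$ a non-integer (treating $G_j^R$ as a candidate summand if it is non-integer). Left's Norton response is the matching option $G_m.H \to G_m^L.H + H^L$ or $G_m^L.H + 2H - H^R$; the $\pm H^L$ or $\pm(2H - H^R)$ correction terms then cancel exactly, leaving a new three-sum $\sum G_i'.H$ whose underlying sum is $\ge 0$ and whose total complexity has strictly decreased, so the inductive hypothesis applies.

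The principal obstacle is the sub-case where $G_1 = n$ is an integer but Right nevertheless has an option in $G_1.H = n.H$, taking it to $(n-1).H + H^R$ (or $(n+1).H - H^L$). This Right option has no direct counterpart in the unmultiplied sum, since integers $n$ have at most one option and it is of the opposite color; the three-sum form of the lemma is exactly what makes it possible to work around this, because we are guaranteed another summand to play in. The plan is to appeal to the hypothesis that some $G_k$ (with $k \ne 1$) is a non-integer and have Left respond in $G_k.H$ with the option $G_k^L.H + 2H - H^R$, chosen so that Lemma~\ref{explicit} applied to a modification of the original sum (an application that must accommodate the missing ``imagined'' Right move in the integer $n$) certifies $(n+1) + G_k^L + G_l \ge 0$. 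After the $\pm H^R$ terms cancel, the resulting position is $\sum G_i'.H$ for a triple with strictly smaller complexity, and the inductive hypothesis again closes the argument. Verifying that the required instance of integer avoidance is available in this sub-case --- and checking the parallel argument for the $(n+1).H - H^L$ option --- is the delicate bookkeeping that I expect to be the crux of the proof.
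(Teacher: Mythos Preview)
Your overall strategy matches the paper's: induct, dispose of the all-integer base case directly, and otherwise answer Right's Norton move with a matching Left Norton move so that the $\pm H^L$ or $\pm(2H-H^R)$ corrections cancel, reducing to a simpler instance of the lemma. Two genuine gaps remain, however.

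First, your induction measure does not work. When $G_1=n$ is a positive integer and Right plays in $n.H$, your response lands in a triple of the form $(n+1,\,G_k^L,\,G_l)$; but canonical $n+1$ has exactly one more position than canonical $n$, while $G_k^L$ may have exactly one fewer than $G_k$, so the total can stay constant (take $G_1=1$, $G_k=*$, $G_k^L=0$). The paper's remedy is to induct on the combined complexity of the \emph{non-integer} summands only. Then $n\mapsto n+1$ costs nothing, while replacing the non-integer $G_k$ by $G_k^L$ strictly drops the measure (or lands you in the all-integer base case).

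Second, you have overlooked a sub-case in the non-integer branch. When $G_1$ is a non-integer and Right moves via $G_1^R$, it is possible that $G_1^R$, $G_2$, $G_3$ are \emph{all} integers; then Lemma~\ref{explicit} is inapplicable and there is no non-integer summand in which Left can make a matching Norton reply. The paper handles this directly (its Case~2a): setting $m=G_1^R+G_2+G_3$, one has $m\ge 1$ since $G_1^R+G_2+G_3\rhd 0$, and the position after Right's move is literally $m.H-H^L$ or $m.H-2H+H^R$. Since $m\ge 1$ gives $m.H-H^L\ge H-H^L\rhd 0$ and $m.H-2H+H^R\ge H^R-H\rhd 0$, Right's move was already bad and no inductive appeal is needed.
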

\begin{proof}
If every $G_i$ equals an integer, then the claim follows easily from the definition of Norton multiplication.
Otherwise, we proceed by induction on the combined complexity of the \emph{non-integer} games
among $\{G_1, G_2, G_3\}$.

We need to show that Left has a good response to any Right option of $G_1.H + G_2.H + G_3.H$.
So suppose that Right moves in some component, $G_1.H$ without loss of generality.  We have several cases.

Case 1: $G_1$ is an integer $n$.
In this case, $(n+1) + G_2 + G_3 \ge 1 > 0$, and $G_2$ and $G_3$ are not both equal to integers,
so we can assume without loss of generality (by integer avoidance), that a winning left option
in $(n+1) + G_2 + G_3$ is in $G_2$, and $G_2$ is not an integer.  That is,
$G_2$ is not an integer and $(n+1) + (G_2)^L + G_3 \ge 0$ for some $G_2^L$.  By induction,
we get
\begin{equation} (n+1).H + (G_2)^L.H + G_3.H \ge 0 \label{handy}\end{equation}
Now we break into cases according to the sign of $n$.

Case 1a: $n = 0$.  Then $G_1.H \equiv 0$ so right could not have possibly moved in $G_1.H$.

Case 1b: $n > 0$.  Then $G_1.H \equiv n.H \equiv \underbrace{H + H + \cdots + H}_{\textrm{$n$ times}}$,
so that the right options of $G_1.H$ are all of the form $(n-1).H + H^R$.  If Right moves from $G_1.H = n.H$
to $(n-1).H + H^R$, we (Left) reply with a move from $G_2.H$ to $(G_2)^L.H + 2H - H^R$, which is legal
because $G_2$ is not an integer.  This leaves us in the position
\[ (n-1).H + H^R + (G_2)^L.H + H + H - H^R + G_3.H = (n+1).H + (G_2)^L.H + G_3.H \ge 0\]
using (\ref{handy}).

Case 1c: $n < 0$.  Then similarly, the right options of $n.H$ are all of the form
$(n+1).H - H^L$.  We reply to such a move with a move from $G_2.H$ to $(G_2)^L.H + H^L$, resulting in
\[ (n+1).H - H^L + (G_2)^L.H + H^L + G_3.H = (n+1).H + (G_2)^L.H + G_3.H \ge 0\]
using (\ref{handy}) again.

Case 2: $G_1$ is not an integer.  Then the right options of $G_1.H$ are of the form
$(G_1)^R.H - H^L$ and $(G_1)^R.H - 2H + H^R$.  We break into cases according
to the nature of $(G_1)^R + G_2 + G_3$, which is necessarily $\rhd 0$ because
$G_1 + G_2 + G_3 \ge 0$.

Case 2a: All of $(G_1)^R$, $G_2$, and $G_3$ are integers. Then
$(G_1)^R + G_2 + G_3 \rhd 0 \implies (G_1)^R + G_2 + G_3 \ge 1$.  After Right's move,
we will either be in
\[ (G_1)^R.H - H^L + G_2.H + G_3.H\]
or
\[ (G_1)^R.H - 2H + H^R + G_2.H + G_3.H\]
But since $(G_1)^R$, $G_2$, and $G_3$ are all integers, we can rewrite these possibilities
as
\[ m.H - H^L\]
and
\[ m.H - 2H + H^R\]
where $m = (G_1)^R + G_2 + G_3$ is an integer at least 1.  But since $m \ge 1$, we have
\[ m.H - H^L \ge H - H^L \rhd 0\]
and
\[ m.H - 2H + H^R \ge H - 2H + H^R = H^R - H \rhd 0\]
so Right's move in $G_1.H$ was bad.

Case 2b: Not all of $(G_1)^R$, $G_2$, and $G_3$ are integers.  Letting $\{A,B,C\} = \{(G_1)^R, G_2, G_3\}$,
we find outselves in a position
\begin{equation} A.H - H^L + B.H + C.H \label{firstcase}\end{equation}
or
\begin{equation} A.H - 2H + H^R + B.H + C.H \label{secondcase}\end{equation}
and we know that not all of $A,B,C$ are integers, and $A + B + C \rhd 0$.  By integer
avoidance, there is some winning left option in one of the non-integers.  Without loss of generality,
$A$ is not an integer and $A^L + B + C \ge 0$.  Then by induction,
\[ A^L.H + B.H + C.H \ge 0.\]
Now, if we were in situation (\ref{firstcase}), we move from $A.H$ to $A^L.H + H^L$, producing
\[ A^L.H + H^L - H^L + B.H + C.H = A^L.H + B.H + C.H \ge 0\]
while if we were in situation (\ref{secondcase}), we move from $A.H$ to $A^L.H + 2H - H^R$, producing
\[ A^L.H + 2H - H^R - 2H + H^R + B.H + C.H = A^L.H + B.H + C.H \ge 0\]
So in this case, we have a good reply, and Right's move could not have been any good.

So no matter how Right plays, we have good replies.
\end{proof}

Using this, we prove Theorem~\ref{nortonstuff}
\begin{proof}[Proof (of Theorem~\ref{nortonstuff})]
$1.A = A$ and $0.A = 0$ are obvious, and $(-G).A = -(G.A)$ was Lemma~\ref{negation}.  The implication
$G = H \implies G.A = H.A$ follows from the last line $G \ge H \iff G.A \ge H.A$, so we only need
to show
\begin{equation} G \ge H \iff G.A \ge H.A\label{order-preserving}\end{equation}
and
\begin{equation} (G + H).A = G.A + H.A.\label{additive0}\end{equation}
We use Lemma~\ref{triplesum} for both of these.  First of all, suppose that $G \ge H$.  Then
$G + (-H) + 0 \ge 0$, so by Lemma~\ref{triplesum}, together with Lemma~\ref{negation},
\[ G.A + (-H).A + 0.A \equiv G.A - H.A \ge 0.\]
Thus $G \ge H \implies G.A \ge H.A$.  Similarly, if $G$ and $H$ are any games,
$G + H + (-(G + H)) \ge 0$ and $(-G) + (-H) + (G + H) \ge 0$, so that
\[ G.A + H.A + (-(G+H)).A \equiv G.A + H.A - (G+H).A \ge 0\]
and
\[ (-G).A + (-H).A + (G + H).A \equiv -G.A - H.A + (G + H).A \ge 0.\]
Combining these shows (\ref{additive0}).  It remains to show the $\Leftarrow$ direction of (\ref{order-preserving}).

Then suppose that $G \not\ge H$, i.e., $G \lhd H$.  Then $H - G \rhd 0$, and
\[ (H - G).A = (H + (-G)).A = H.A + (-G).A = H.A - G.A.\]
If we can similarly show that $H.A - G.A \rhd 0$, when we'll have shown $G.A \not\ge H.A$, as desired.  So
it suffices to show that if $K \rhd 0$, then $K.A \rhd 0$.

We show this by induction on $K$.  If $K$ is an integer, this is obvious, since $K \rhd 0 \implies K \ge 1 \implies
K.A \ge A > 0$.  Otherwise, $K \rhd 0$ implies that some $K^L \ge 0$.  Then by the $\Rightarrow$ direction of (\ref{order-preserving}),
\[ K^L.A \ge 0\]
so that
\[ K^L.A + A^L \ge 0\]
if $A^L \ge 0$.  Such an $A^L$ exists because $A > 0$.
\end{proof}

\section{Even and Odd revisited}\label{sec:even-and-odd-revisited}
Now we show that the map sending $G$ to $G.(1*)$ is onto the even games, showing that the
short even games are isomorphic as a partially-ordered group to the whole group of short games.

\begin{lemma}\label{slippery}
For $G$ a short game, $G.(1*) \ge * \iff G \ge 1 \iff G.(1*) \ge 1*$.
Similarly, $G.(1*) \le * \iff G \le -1 \iff G.(1*) \le -1*$.
\end{lemma}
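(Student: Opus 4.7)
The plan is to establish the three-way equivalence by exploiting that Norton multiplication by $1*$ is an order-preserving group homomorphism (Theorem~\ref{nortonstuff}), so the only subtle step is linking the comparison against $*$ to the comparison against $1*$.

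First I would dispose of the right equivalence $G \ge 1 \iff G.(1*) \ge 1*$ outright: since $1.(1*) = 1*$ and Norton multiplication respects order and differences,
\[ G \ge 1 \iff G - 1 \ge 0 \iff (G - 1).(1*) \ge 0 \iff G.(1*) \ge 1*. \]
The easy half of the other equivalence, $G \ge 1 \Rightarrow G.(1*) \ge *$, follows because $1* - * = 1 \ge 0$, so $1* \ge *$ and transitivity of $\ge$ finishes.

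The hard half is $G.(1*) \ge * \Rightarrow G \ge 1$, which I would prove by contrapositive. Assume $G \lhd 1$; I want to exhibit a winning Right first move in $G.(1*) - *$ (recall that $-*$ and $*$ coincide as forms). If $G$ is an integer $n$, then $n < 1$ forces $n \le 0$, and the value $G.(1*) = n.(1*)$ is either $n$ (when $n$ is even) or $n + *$ (when $n$ is odd); a direct check shows $G.(1*) - * \lhd 0$ in each subcase. The main case is when $G$ is not an integer: then $G + (-1) \lhd 0$ with $G$ as the only non-integer summand, so the right-move version of integer avoidance (Lemma~\ref{explicit}, dualized by negation) produces a right option $G^R$ of $G$ with $G^R \le 1$. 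By Theorem~\ref{nortonstuff}, $G^R.(1*) \le 1.(1*) = 1*$, and $G^R.(1*) - 1$ is an honest right option of $G.(1*)$ (the formula $G.(1*) \equiv \{G^L.(1*) + 1 \mid G^R.(1*) - 1\}$ applies since $G$ is non-integer). Hence in $G.(1*) - *$, Right moves in $G.(1*)$ to $G^R.(1*) - 1 + * \le 1* - 1 + * = * + * = 0$, winning. The main obstacle is justifying that the required right move lives in $G$ itself rather than in the $-1$ summand; integer avoidance is exactly what makes this work cleanly.

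The symmetric claim $G.(1*) \le * \iff G \le -1 \iff G.(1*) \le -1*$ drops out by applying the first claim to $-G$: by Lemma~\ref{negation}, $(-G).(1*) \equiv -(G.(1*))$, and moreover $-* \equiv *$ while $-(1*) \equiv -1*$, so the three equivalences for $-G$ are exactly the three for $G$ with the inequalities reversed.
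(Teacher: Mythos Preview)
Your proof is correct. The paper organizes the non-integer case a bit differently: it applies Theorem~\ref{betwixt} symmetrically to both $* \le G.(1*)$ and $1 \le G$, noting that the respective failure conditions (``$G.(1*) \le 0$ or some $G^R.(1*) - 1 \le *$'' versus ``$G \le 0$ or some $G^R \le 1$'') coincide under the order-preserving property of Norton multiplication, giving the iff in one stroke. Your contrapositive route reaches the same key right option $G^R.(1*)-1$, but the appeal to Lemma~\ref{explicit} is heavier machinery than needed: Theorem~\ref{betwixt} applied to $1=\{0|\}$ already tells you that $G \lhd 1$ forces either $G \le 0$ (whence $G.(1*) \le 0 \lhd *$ directly) or some $G^R \le 1$, without passing through integer avoidance. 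Both arguments are fine; the paper's is just a touch more streamlined.
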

\begin{proof}
We already know that $G \ge 1$ iff $G.(1*) \ge 1*$, since
$1* = 1.1*$ and Norton multiplication by $1*$ is strictly order-preserving.

It remains to show that $G.(1*) \ge * \iff G \ge 1$.  If
$G$ is an integer, this is easy, since every positive multiple of $1*$
is greater than $*$ (as $*$ is an infinitesimal so $x$ and $x + *$ are greater than $*$
for positive numbers $x$), but $0.(1*) = 0 \not \ge *$.

If $G$ is not an integer, then $G.(1*) \equiv \{G^L.(1*) + 1 | G^R.(1*) - 1\}$, so by
Theorem~\ref{betwixt} we have $* \le G.(1*)$
unless and only unless $G.(1*) \le *^L = 0$ or $G^R.(1*) - 1 \le *$.
So $* \le G.(1*)$ unless and only unless $G.(1*) \le 0$ or some $G^R$ has $G^R.(1*) \le 1*$.
Because Norton multiplication with $1*$ is order-preserving, we see
that $* \le G.(1*)$ unless and only unless $G \le 0$ or some $G^R \le 1$.  This
is exactly the conditions for which $1 \not\le G$.  So
$* \le G.(1*)$ iff $1 \le G$.
\end{proof}

\begin{lemma}\label{goodenough}
If $G$ is a short game
and
\[ G.(1*) \ne \{G^L.(1*) + 1|G^R.(1*) - 1\},\]
then there is some integer $n$ such that $G^L \lhd n$ and $n + 1 \lhd G^R$ for
every $G^L$ and $G^R$.
\end{lemma}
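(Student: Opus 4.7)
The plan is as follows. First, I would observe that when $G$ does not equal any integer, the recursive definition of Norton multiplication applied to $H = 1*$ (for which $H^L = H^R = 1$, so the two types of left options $G^L.H + H^L$ and $G^L.H + 2H - H^R$ both collapse to $G^L.(1*) + 1$, and likewise on the right) already gives $G.(1*) \equiv \{G^L.(1*) + 1 | G^R.(1*) - 1\}$, making the hypothesis of the lemma vacuously false. Henceforth I assume $G$ equals an integer $n_0$, so $G.(1*) = n_0.(1*)$, and I prove the contrapositive: if no integer $n$ satisfies $G^L \lhd n$ and $n+1 \lhd G^R$ for all options, then $R := \{G^L.(1*) + 1 | G^R.(1*) - 1\}$ equals $n_0.(1*)$.

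Since $G = n_0$, Theorem~\ref{betwixt} gives $G^L \lhd n_0 \lhd G^R$ automatically, so ``$G^L \lhd n$'' is automatic for $n \ge n_0$ and ``$n+1 \lhd G^R$'' is automatic for $n \le n_0 - 1$. The hypothesis that no integer works therefore reduces by monotonicity to two existence statements: some $G^R_0 \le n_0 + 1$ (witnessing the failure at $n = n_0$) and some $G^L_0 \ge n_0 - 1$ (witnessing the failure at $n = n_0 - 1$).

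I then show $R = n_0.(1*)$ using Theorem~\ref{betwixt}. For $R \ge n_0.(1*)$, the condition ``no right option $G^R.(1*) - 1 \le n_0.(1*)$'' translates via the Norton homomorphism (Theorem~\ref{nortonstuff}) into ``$Y.(1*) \not\le 1$ for every $Y = G^R - n_0 \rhd 0$,'' which I settle by a short case split: if $Y$ equals an integer $m$, then $Y \rhd 0$ forces $m \ge 1$, and $m.(1*)$ is either $m$ or $m*$, neither of which is $\le 1$; otherwise $Y \rhd 0$ produces a left option $Y^L \ge 0$, so that $Y^L.(1*) + 1 \ge 1$ is a left option of $Y.(1*)$ forbidding $Y.(1*) \le 1$. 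The second condition ``$R \not\le$ any left option of $n_0.(1*)$'' uses the close option $G^L_0 \ge n_0 - 1$: the canonical form of $n_0.(1*)$ has at most one left option (namely $n_0 - 1$ when $n_0$ is even positive, $n_0$ when $n_0$ is odd, and none otherwise), and in each case Theorem~\ref{nortonstuff}'s order-preservation gives $G^L_0.(1*) + 1$ as a left option of $R$ that blocks $R$ from descending that far. The inequality $R \le n_0.(1*)$ is the symmetric mirror using $G^R_0$.

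The main obstacle will be the parity case analysis: the canonical form of $n_0.(1*)$ is either $n_0$ or $n_0*$ depending on the parity of $n_0$, so the relevant left and right options of $n_0.(1*)$ and their comparisons against $G^L_0.(1*)+1$ and $G^R_0.(1*)-1$ must be worked out in several subcases. Each case reduces routinely to the order-preservation of Theorem~\ref{nortonstuff}, together with the slippery identity from Lemma~\ref{slippery}.
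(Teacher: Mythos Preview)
Your proposal is correct, and the overall architecture (contrapositive, reduce to $G=n_0$, extract the two ``close'' options $G^L_0\ge n_0-1$ and $G^R_0\le n_0+1$) matches the paper. The route to the final equality $R=n_0.(1*)$ is genuinely different, though. The paper does not touch the canonical form of $n_0.(1*)$ or split on parity. Instead it first shows, via Lemma~\ref{slippery}, that every $G^L.(1*)+1$ is $\lhd G.(1*)$ and every $G^R.(1*)-1$ is $\rhd G.(1*)$; then it starts from the \emph{uniform} presentation $m.(1*)=\{(m-1).(1*)+1\mid (m-1).(1*)+1\}$ (the literal sum of $m$ copies of $1*$), adjoins all the new options as gift horses, and finally removes the single old option on each side by dominance using $G^L_0.(1*)+1\ge (m-1).(1*)+1$ and $G^R_0.(1*)-1\le (m+1).(1*)-1=(m-1).(1*)+1$. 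That argument is parity-free and handles all signs of $m$ essentially uniformly (with the $m=0$ case degenerating, since there are no old options to remove).

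Your approach trades that uniformity for directness: you verify $R\le n_0.(1*)$ and $R\ge n_0.(1*)$ straight from Theorem~\ref{betwixt}, which forces you to enumerate the (at most one) canonical option of $n_0.(1*)$ in each of the four parity/sign cases. This works cleanly and, as you noticed, does not actually require Lemma~\ref{slippery}: your handling of ``$Y.(1*)\not\le 1$ for $Y\rhd 0$'' re-derives exactly the special case of that lemma the paper invokes. So your plan is self-contained at the cost of the case split, while the paper's is slicker but leans on the preceding lemma and the gift-horse/dominance machinery.
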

In other words, the recursive definition of Norton multiplication works even when $G$ is an integer,
except in some bad cases.  Another way of saying this is that
as long as there is no more than one integer $n$ such that $G^L \lhd n \lhd G^R$,
then the recursive definition of $G.(1*)$ works.
\begin{proof}
Suppose that there is no integer $n$ such that $G^L \lhd n$ and $n + 1 \lhd G^R$
for all $G^L$ and $G^R$.  Then we want to show
that
\begin{equation} G.(1*) = \{G^L.(1*) + 1|G^R.(1*) - 1\}.\label{goal2}\end{equation}
This is obvious if $G$ does not equal an integer, so suppose that $G = m$ for some
integer $m$.  Then $G^L \lhd m \lhd G^R$ for every $G^L$ and $G^R$.
If $G^L.(1*) + 1 \ge G.(1*)$, then $G^L.(1*) - G.(1*) + 1* \ge *$, so by the
previous lemma $G^L - G + 1 \ge 1$, so $G^L \ge G$, contradicting $G^L \lhd G$.
Thus $G^L.(1*) + 1 \lhd G.(1*)$ for every $G^L$, and similarly one can show
$G.(1*) \lhd G^R.(1*) - 1$ for every $G^R$.  So by the gift-horse principle,
we can add the left options $G^L.(1*) + 1$ and the right options
$G^R.(1*) - 1$ to any presentation of $G.(1*)$.

Since $G = m$, one such presentation is $(1*) + \cdots + (1*)$ ($m$ times), which is
\[ \{(m-1).(1*) + 1 | (m- 1).(1*) + 1\}\]
This produces the presentation
\[ \{(m-1).(1*) + 1, G^L.(1*) + 1|(m-1).(1*) + 1, G^R.(1*) - 1\}\]
I claim that we can remove the old options $(m-1).(1*) + 1$ and $(m-1).(1*) + 1$ as dominated moves,
leaving behind (\ref{goal2}).

By assumption, $m$ is the only integer satisfying $G^L \lhd m \lhd G^R$, $\forall G^L, \forall G^R$.
Since $m - 1 \lhd G^R$ for all $G^R$, it must be the case that $G^L \ge m - 1$ for some $G^L$, or else $G^L \lhd m - 1 \lhd G^R$ would hold.
Then $G^L.(1*) \ge (m - 1).(1*)$, so $(m-1).(1*) + 1$ is dominated by $G^L.(1*) + 1$.
Similarly, $G^L \lhd m + 1$ for all $G^L$, so some $G^R$ must satisfy $G^R \le m + 1$.
Then $G^R.(1*) \le (m + 1).(1*) = (m-1).(1*) + 2$.  So $(m-1).(1*) + 1 \ge G^R.(1*) - 1$,
and $(m-1).(1*) + 1$ is dominated by $G^R.(1*) - 1$.  So after removing dominated
moves, we reach (\ref{goal2}), the desired form.
\end{proof}

\begin{lemma}\label{surge}
Every (short) even game $G$ equals $H.(1*)$ for some short game $H$.
\end{lemma}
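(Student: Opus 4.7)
The plan is to proceed by induction on the complexity of the canonical form of $G$, splitting into three cases. If $G = 0$, I take $H \equiv 0$. If $G$ equals a nonzero integer, then by the even hypothesis this integer must itself be even, say $G = 2m$; I take $H \equiv 2m$, and verify that $H.(1*) = 2m.(1*) = 2m = G$ using the identity $(1*) + (1*) = 2$.

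In the main case, $G$ is non-integer. I would put $G$ in canonical form $\{G^L|G^R\}$, so by Theorem~\ref{parities}(b) every $G^L$ and $G^R$ is odd in form. Then $G^L - 1$ and $G^R + 1$ are each sums of two odd games, hence even by Theorem~\ref{parities}(e). By the inductive hypothesis I can choose short games $H^L$ and $H^R$ with $H^L.(1*) = G^L - 1$ and $H^R.(1*) = G^R + 1$. Setting $H \equiv \{H^L|H^R\}$, I claim $H.(1*) = G$.

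If $H$ does not equal an integer, the recursive formula from Section~\ref{sec:norton} immediately gives
\[ H.(1*) \equiv \{H^L.(1*) + 1 \mid H^R.(1*) - 1\} \equiv \{G^L|G^R\} = G. \]
If $H$ happens to collapse to some integer $m$, then by the simplicity rule $H^L \lhd m \lhd H^R$ for all options, and applying the order-preserving endomorphism $(1*)$ to both inequalities gives $G^L \lhd m.(1*) + 1$ and $m.(1*) - 1 \lhd G^R$. A case analysis on the parity of $m$ (using $m.(1*) = m$ or $m + *$ according as $m$ is even or odd), combined with Lemma~\ref{goodenough}, should show either that the recursive formula still evaluates correctly, or that $G$ itself equals $m.(1*)$; either way, $H.(1*) = m.(1*) = G$.

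The hard part will be the subcase in which $H$ collapses to an integer $m$, since then the recursive formula for $H.(1*)$ need not literally hold. The cleanest route is to check via Lemma~\ref{goodenough} that the obstructing condition — an integer $n$ with $H^L \lhd n$ and $n+1 \lhd H^R$ for all options — cannot arise when $G$ is non-integer, and to verify by simplicity that any residual corner cases force $G = m.(1*)$. A secondary technical issue is the choice of induction measure: the canonical form of $G^L - 1$ need not be strictly simpler than that of $G$, so I would frame the induction on a well-founded complexity measure such as the total number of positions in the canonical form of $G$, and verify that each recursive call strictly decreases this measure.
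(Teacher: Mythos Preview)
Your outline follows the same strategy as the paper: build $H=\{H^L\mid H^R\}$ from preimages of $G^L-1$ and $G^R+1$, and use Lemma~\ref{goodenough} to show the recursive formula $H.(1*)=\{H^L.(1*)+1\mid H^R.(1*)-1\}=\{G^L\mid G^R\}=G$ still holds even when $H$ happens to equal an integer.  Your second paragraph on the integer-collapse case is right in spirit: if the obstruction of Lemma~\ref{goodenough} holds, i.e.\ some integer $n$ has $H^L\lhd n$ and $n+1\lhd H^R$ for all options, then (via Lemma~\ref{slippery}) one gets $G^L\lhd n.(1*)\le (n+1).(1*)\lhd G^R$; since one of $n,n+1$ is even, one of $n.(1*),(n+1).(1*)$ is an integer, and Theorem~\ref{intsimpl} forces $G$ to equal an integer, contradicting the case hypothesis.  (Your ``parity of $m$'' framing in the earlier paragraph is a red herring; it is the parity of $n$, not of $m=H$, that drives the contradiction.)

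The real gap is the induction measure.  Your proposed ``total number of positions in the canonical form of $G$'' is not translation-invariant: for instance $\{0\mid -2\}$ has $4$ edges in canonical form but $\{0\mid -2\}-1=\{-1\mid -3\}$ has $6$, and one can push this discrepancy higher by nesting (e.g.\ $\{0\mid -1*\}$ has $6$ edges while its translate $\{-1\mid -2*\}$ has $9$).  Whether the slack from the root edges and the mandatory right option always absorbs this growth is not obvious, and you do not verify it.  The paper avoids the issue entirely by inducting on a different quantity: set $A_0=\{\text{games equal to an integer}\}$ and $A_{n+1}=A_n\cup\{\text{games all of whose options lie in }A_n\}$.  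Every short game lies in some $A_n$, and the key point is that each $A_n$ is closed under integer translation (an easy induction using integer avoidance).  Then $G\in A_{n+1}\setminus A_0$ gives $G^L\in A_n$, hence $G^L-1$ equals something in $A_n$, and the induction goes through cleanly.  With this measure in place, the rest of your argument is correct and coincides with the paper's.
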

\begin{proof}
We need induction that works in a slightly different way.

Recursively define the following sets of short games:
\begin{itemize}
\item $A_0$ contains all short games which equal numbers.
\item $A_{n+1}$ contains $A_n$ and all short games whose options
are all in $A_n$.
\end{itemize}
Note that $A_0 \subseteq A_1 \subseteq A_2 \subseteq \cdots$.

We first claim that $\cup_{n = 1}^\infty A_n$ is the set of all short games.  In other
words, every short game $G$ belongs to some $A_n$.  Proceeding by induction on $G$,
if $G$ is an integer, then $G \in A_0$, and otherwise, we can assume by induction
and shortness of $G$ that there is some $n$ such that every option of $G$ is in $A_n$,
so that $G$ itself is in $A_{n+1}$.

Next we claim that the sets $A_n$ are somewhat invariant under translation by integers.
Specifically, if $G \in A_n$ and $m$ is an integer, then $G +m = H$ for some $H \in A_n$.
We show this by induction on $n$.  If $n = 0$, this is obvious, since the integers are closed
under addition.  Now supposing that the hypothesis holds for $A_n$, let $G \in A_{n+1}$ and $m$ be
an integer.  If $G$ equals an integer, then $G + m$ does too, so $G + m$ equals an element
of $A_0 \subseteq A_{n+1}$ and we are done.  Otherwise, by integer avoidance
$G + m$ equals $\{G^L + m|G^R + m\}$.  By induction every $G^L + m$ and every $G^R + m$ equals an element
of $A_n$.  So $\{G^L + m|G^R +m\} = \{H^L|H^R\}$ for some $H^L, H^R \in A_n$.  Then
$H = \{H^L|H^R\} \in A_{n+1}$, so $G +m$ equals an element of $A_n$.

Next, we show by induction on $n$ that if $G$ is even in form
and $G \in A_n$, then $G = H.(1*)$ for some $H$.  If $n = 0$, then $G$ is an integer.  Since integers
are even as games if and only if they are even in the usual sense, $G = 2m = (2m).(1*)$ (because $*$ has order two,
so $1* + 1* = 2$).  For the inductive step, suppose
that the result is known for $A_n$, and $G \in A_{n+1}$.  Then by definition
of ``even'' and $A_{n+1}$, every option of $G$ is odd and in $A_n$.
So if $G^L$ is any left option of $G$, then $G^L - 1$ will be \emph{even},
and will equal some $X \in A_n$, by the previous paragraph.  By induction,
$X = H^L.(1*)$ for some $H^L$.  We can carry this out for every left option
of $G$, so that every $G^L$ is of the form $H^L.(1*) + 1$.  Similarly
we can choose some games $H^R$ such that the set of $G^R$
is the set of $(H^R).(1*) - 1$.  Thus
\[ G \equiv \{G^L|G^R\} = \{H^L.(1*) + 1|H^R.(1*) - 1\}\]
We will be done with our inductive step by Lemma~\ref{goodenough}, unless
there is some integer $n$ such that $H^L \lhd n$ and $n +1 \lhd H^R$
for every $H^L$ and $H^R$.  Now by the order-preserving property
of Norton multiplication, and Lemma~\ref{slippery}
\[ H^L \lhd n \iff H^L.(1*) \not \ge n.(1*) \iff
H^L.(1*) - n.(1*) + 1* \not \ge 1* \iff \]\[
H^L.(1*) - n.(1*) + 1* \not \ge * \iff H^L.(1*) + 1 \lhd n.(1*).\]
Similarly,
\[ n + 1 \lhd H^R \iff (n + 1).(1*) \not \ge H^R.(1*)
\iff (n + 1).(1*) - H^R.(1*) + 1* \not \ge 1* \iff \]\[
(n + 1).(1*) - H^R.(1*) + 1* \not \ge * \iff
(n + 1).(1*) \lhd H^R.(1*) - 1.\]
So it must be the case that
\[ H^L.(1*) + 1 \lhd n.(1*) \le (n+1).(1*) \lhd H^R.(1*) - 1\]
for every $H^L$ and $H^R$.  But since each $G^L$ equals $H^L.(1*) + 1$
and each $G^R$ equals $H^R.(1*) - 1$, we see that
\[ G^L \lhd n.(1*) \le (n + 1).(1*) \lhd G^R\]
for every $G^L$ and $G^R$.  But either $n$ or $n+1$ will be even,
so either $n.(1*)$ or $(n+1).(1*)$ will be an integer, and therefore
by the simplicity rule $G$ must equal an integer.  So $G \in A_0$
and we are done by the base case of induction.

So we have just shown, for every $n$, that if $G \in A_n$
and $G$ is even in form, then $G = H.(1*)$ for some $H$.
Thus if $K$ is any even game (in value), then as shown above
$K = G$ for some game $G$ that is even in form.  Since
every short game is in one of the $A_n$ for large enough $n$,
we see that $K = G = H.(1*)$ for some $H$.
\end{proof}

\begin{theorem}
The map $G \to G.(1*)$ establishes an isomorphism between the partially ordered group of
short games and the subgroup consisting of even games.  Moreover, every even or odd game
can be uniquely written in the form $G = H.(1*) + a$, for $a \in \{0,*\}$, (unique up
to equivalence of $G$), where $a = 0$ if $H$ is even and $a = *$ if $H$ is odd.  Such a game
is $\ge 0$ iff $H \ge 0$ when $a = 0$, and iff $H \ge 1$ when $a = *$.
\end{theorem}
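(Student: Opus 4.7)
The plan is to package the content of this theorem as a synthesis of three already-proved facts: Corollary~\ref{halfpres} (the map is an order-preserving embedding into the even games), Lemma~\ref{surge} (every even game lies in the image), and Lemma~\ref{slippery} (the identification of the threshold between $*$ and $1*$ under the map). Theorem~\ref{nortonstuff} is also used throughout to invoke additivity, order-preservation, and in particular injectivity of the map (if $G.(1*) = H.(1*)$ then $(G-H).(1*) = 0.(1*) = 0$, so $G \ge H$ and $G \le H$, hence $G = H$). Thus the isomorphism claim is immediate: additivity and order-preservation come from Theorem~\ref{nortonstuff}, the image lies in the even games by Corollary~\ref{halfpres}, the image contains every even game by Lemma~\ref{surge}, and injectivity is as above.

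Next I handle the representation of even-or-odd games. For existence, when $G$ is even I apply Lemma~\ref{surge} directly to obtain $G = H.(1*)$ and set $a = 0$. When $G$ is odd, the sum $G + *$ is the sum of two odd games, hence even by Theorem~\ref{parities}(e), so Lemma~\ref{surge} yields $G + * = H.(1*)$, i.e.\ $G = H.(1*) + *$ (recalling $* + * = 0$), and I set $a = *$. For uniqueness, suppose $H_1.(1*) + a_1 = H_2.(1*) + a_2$ with $a_i \in \{0,*\}$. Rearranging gives $(H_1 - H_2).(1*) = a_2 - a_1 \in \{0,*\}$. But the left side is an even game by Corollary~\ref{halfpres}, while $*$ is odd and no game is both even and odd by Theorem~\ref{parities}(d). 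So $(H_1 - H_2).(1*) = 0$, forcing $a_1 = a_2$ and (by injectivity of the map) $H_1 = H_2$. Finally, since $H.(1*)$ is always even and $*$ is odd, the parity of $G = H.(1*) + a$ is determined by $a$: $a = 0$ when $G$ is even, $a = *$ when $G$ is odd.

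For the sign characterization, the case $a = 0$ is immediate from the order-preserving property: $G \ge 0 \iff H.(1*) \ge 0.(1*) \iff H \ge 0$. The case $a = *$ requires Lemma~\ref{slippery}: $G = H.(1*) + * \ge 0$ iff $H.(1*) \ge -* = *$, and Lemma~\ref{slippery} tells us this is equivalent to $H \ge 1$.

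The only place requiring any care is the phrasing of the constraint on $a$: I read it as saying that $a$ is determined by (and in that sense ``matches'') the parity of the resulting game $G$, which is what the uniqueness argument actually establishes. There is no real obstacle — all the substantive work has been done in Lemma~\ref{surge}, Lemma~\ref{slippery}, and Theorem~\ref{nortonstuff}, and the present theorem is essentially a bookkeeping statement assembling those results into a structural description of the even-and-odd subgroup.
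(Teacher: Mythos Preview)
Your proof is correct and follows essentially the same approach as the paper's own proof: both assemble Corollary~\ref{halfpres}, Lemma~\ref{surge}, Theorem~\ref{parities}(d), the order-preserving property of Norton multiplication, and Lemma~\ref{slippery} into the stated structural description. Your version is more explicit about existence and uniqueness of the decomposition (the paper simply asserts that the even-or-odd games form a direct product of $\{0,*\}$ with the even games), and your reading of the awkward ``$a = 0$ if $H$ is even'' clause as really being about the parity of $G$ is the correct interpretation.
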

\begin{proof}
From Corollary~\ref{halfpres}, we know that the map sending $G$ to $G.(1*)$
is an embedding of short partizan games into short even games.  From Lemma~\ref{surge}
we know that the map is a surjection.  We know from Theorem~\ref{parities}(d) that
no game is even and odd, so that the group of even and odd games is indeed
a direct product of $\{0,*\}$ with the even games.  Moreover,
we know that $H.(1*) \ge 0$ iff $H \ge 0$, by the order-preserving property of
Norton multiplication, and $H.(1*) + * \ge 0$ iff $H \ge 1$, by Lemma~\ref{slippery}.
\end{proof}

\chapter{Bending the Rules}
So far we have only considered loopfree partizan games played under the normal play rule,
where the last player able to move is the winner.  In this chapter we see how combinatorial
game theory can be used to analyze games that do not meet these criteria.  We first consider
cases where the standard partizan theory can be applied to other games.

\section{Adapting the theory}
\emph{Northcott's Game} is a game played on a checkerboard.  Each player starts with
eight pieces along his side of the board.  Players take alternating turns, and on each
turn a player may move one of her pieces left or right any number of squares, but may not jump
over her opponent's piece in the same row.  The winner is decided by the normal play rule: you lose
when you are unable to move.
\begin{figure}[H]
\begin{center}
\includegraphics[width=2.5in]
					{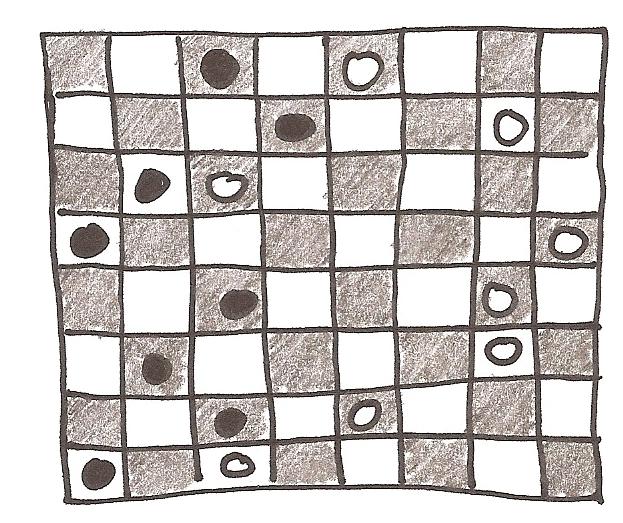}
\caption{A position of Northcott's Game}
\label{northcott}
\end{center}
\end{figure}
Clearly each row functions independently, so Northcott's Game is really a sum of eight independent games.
However, the standard partizan theory isn't directly applicable, because this game is \emph{loopy}, meaning
that the players can return the board to a prior state if they so choose:
\begin{figure}[H]
\begin{center}
\includegraphics[width=2.5in]
					{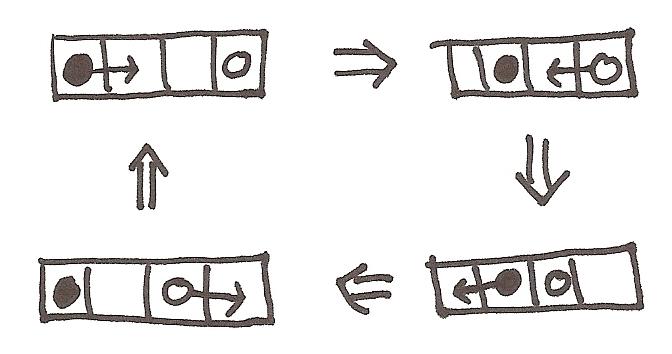}
\caption{Loops can occur in Northcott's Game.}
\label{northcott-loopy}
\end{center}
\end{figure}
Consequently, there is no guarantee that the game will ever come to an end, and \emph{draws} are possible.
We assume that each player prefers victory to a draw and a draw to defeat.  Because of this extra possibility,
it is conceivable that in some positions, neither player would have a winning strategy, but both players
would have a strategy guaranteeing a drwa.

Suppose that we changed the rules, so that a player could only move his pieces forward, towards his opponent's.  Then
the game would become loopfree, and in fact, it becomes nothing but Nim in disguise!  Given a position
of Northcott's Game, one simply counts the number of empty squares between the two pieces in each row, and creates
a Nim-heap of the same size:
\begin{figure}[H]
\begin{center}
\includegraphics[width=4in]
					{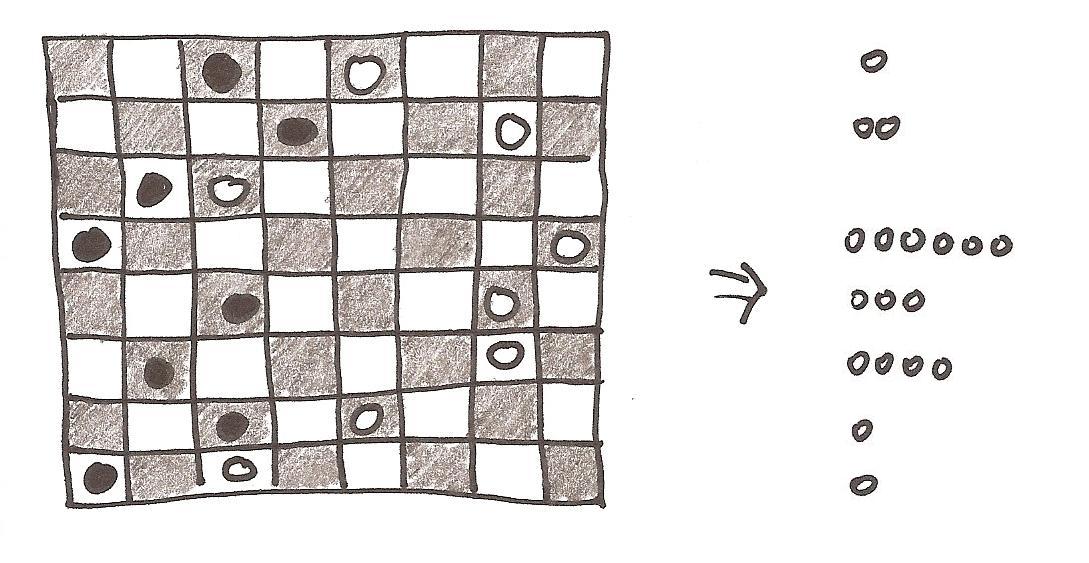}
\caption{Converting the position of Figure~\ref{northcott} into a Nim position. }
\label{northcott-is-nim}
\end{center}
\end{figure}
The resulting Nim position is equivalent: taking $n$ counters from a Nim pile corresponds to moving your piece in the corresponding
row $n$ squares forward.  This works as long as we forbid backwards moves.

However, it turns out that this prohibition has no strategic effect.  Whichever player has a winning strategy in the no-backwards-move variant
can use the same strategy in the full game.  If her opponent ever moves a piece backwards by $x$ squares, she moves her own piece
forwards by $x$ squares, cancelling her opponent's move.  This strategy guarantees that the game actually ends, because
the pieces of the player using the strategy are always moving forwards, which cannot go on indefinitely.
So Northcott's Game is still nothing but Nim in disguise.  The moral of the story is that loopy games can sometimes be analyzed using partizan theory (Sprague-Grundy theory in this case).

We now consider two case studies of real-life games that can be partially analyzed using the standard partizan theory, even though
they technically aren't partizan games themselves.
\section{Dots-and-Boxes}
Unlike Northcott's Game, \emph{Dots-and-Boxes} (also known as \emph{Squares}) is a game that people actually play.  This is a pencil and
paper game, played on a square grid of dots.  Players take turns drawing line segments between orthogonally adjacent dots.  Whenever
you complete the fourth side of a box, you claim the box by writing your initials in it, and get another move\footnote{Completing two boxes in one move \emph{does not} give you
two more moves.}.  It is possible to chain together these extra moves, and take many boxes in a single turn:
\begin{figure}[H]
\begin{center}
\includegraphics[width=3in]
					{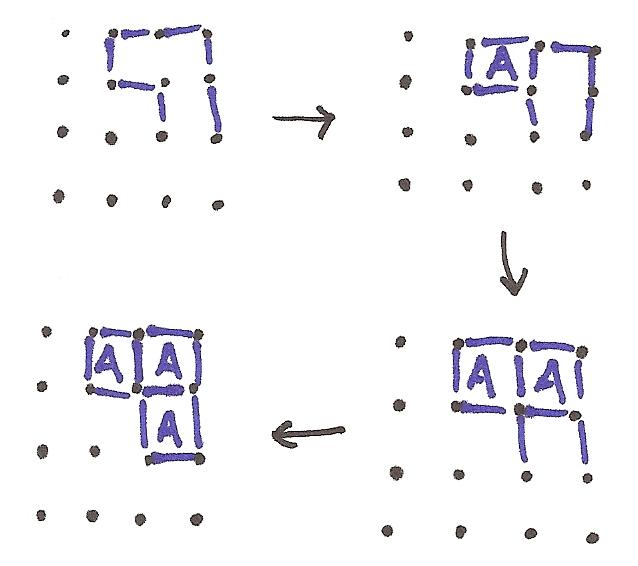}
\caption{Alice takes three boxes in one turn.}
\label{chain-of-three}
\end{center}
\end{figure}
Eventually the board fills up, and the game ends.  The player with the most boxes claimed wins.  \emph{Victory is not decided
by the normal play rule,} making the standard theory of partizan games inapplicable\footnote{The fact that you move again after completing a box
also creates problems}.

Most people play Dots-and-Boxes by making random moves until all remaining moves create a three-sided box.  Then the players
take turn giving each other larger and larger chains.
\begin{figure}[H]
\begin{center}
\includegraphics[width=5in]
					{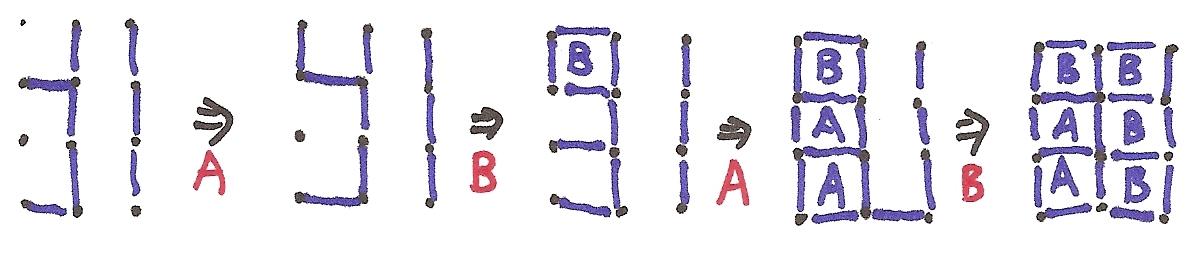}
\caption{Alice gives Bob a box.  Bob takes it and gives Alice two boxes.  Alice takes them and gives Bob three boxes.}
\label{naive-endgame}
\end{center}
\end{figure}
Oddly enough, there is a simple and little-known trick which easily beats the na\"ive strategy.  When an opponent gives
you three or more boxes, it is always possible to take all but two of them, and give two to your opponent.  Your opponent
takes the two boxes, and is then usually forced to give you another long chain of boxes.

For instance, in the following position,
\begin{figure}[H]
\begin{center}
\includegraphics[width=1.5in]
					{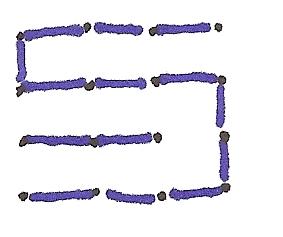}
\end{center}
\end{figure}
the na\"ive strategy is to move to something like
\begin{figure}[H]
\begin{center}
\includegraphics[width=1.5in]
					{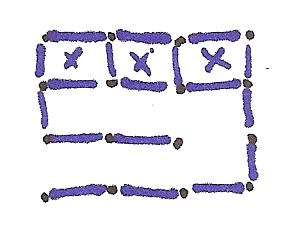}
\end{center}
\end{figure}
which then gives your opponent more boxes than you obtained your self.
The better move is the following:
\begin{figure}[H]
\begin{center}
\includegraphics[width=1.5in]
					{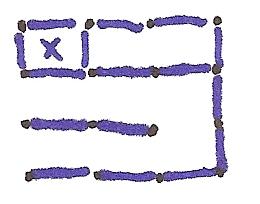}
\end{center}
\end{figure}
From this position, your opponent might as well take the two boxes, but is then
forced to give you the other long chain:
\begin{figure}[H]
\begin{center}
\includegraphics[width=1.5in]
					{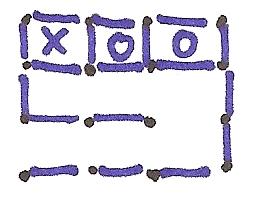}
\end{center}
\end{figure}

This trick is tied to a general phenomenon, of \emph{loony positions}.  Rather
than giving a formal definition, we give an example.

Let $P_1$ be the following complicated position:
\begin{figure}[H]
\begin{center}
\includegraphics[width=2in]
					{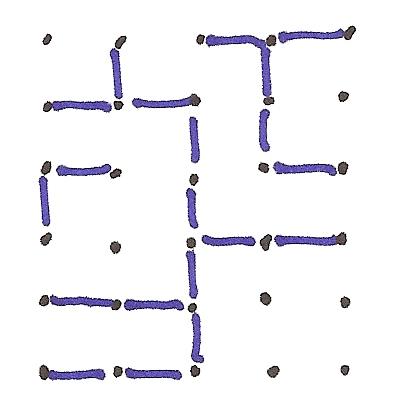}
\end{center}
\end{figure}
Surprisingly, we can show that this position is a win for the first player, without even
exhibiting a specific strategy.  To see this, let $P_2$ be the following position, in which
Alice has the two squares in the bottom left corner:
\begin{figure}[H]
\begin{center}
\includegraphics[width=2in]
					{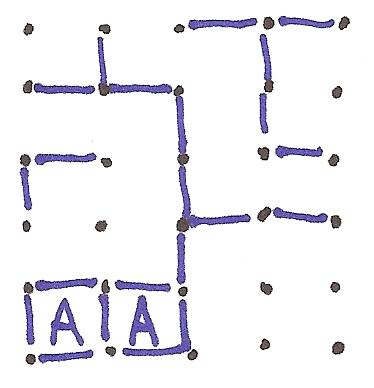}
\end{center}
\end{figure}
Let $k$ be the final score for Alice if she moves first in $P_2$ and both players play optimally.

Since there are an odd number of boxes on the board,
$k$ cannot be zero.
Now break into cases according to the sign of $k$.
\begin{itemize}
\item If $k > 0$, then the first player can win $P_1$ by taking the two boxes
as well as the name ``Alice.''
\item If $k < 0$, then the first player can win $P_1$ by naming her opponent ``Alice'' and declining the two boxes, as follows:
\begin{figure}[H]
\begin{center}
\includegraphics[width=2in]
					{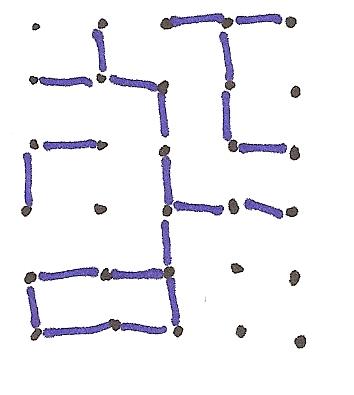}
\end{center}
\end{figure}
Now ``Alice'' might as well take the two boxes, resulting in position $P_2$.  Then because $k < 0$, Alice's opponent
can guarantee a win.  If ``Alice'' doesn't take the two boxes, her opponent can just take them on her next turn, with no adverse effect.
\end{itemize}
So either way, the first player has a winning strategy in $P_1$.  Actually applying this strategy is made difficult by the fact
that we have to completely evaluate $P_2$ to tell which move to make in $P_1$.

In general, a \emph{loony position} is one containing two adjacent boxes, such that
\begin{itemize}
\item There is no wall between the two boxes
\item One of the two boxes has three walls around it.
\item The other box has exactly two walls around it.
\item The two boxes are not part of one of the following configurations:
\begin{figure}[H]
\begin{center}
\includegraphics[width=3in]
					{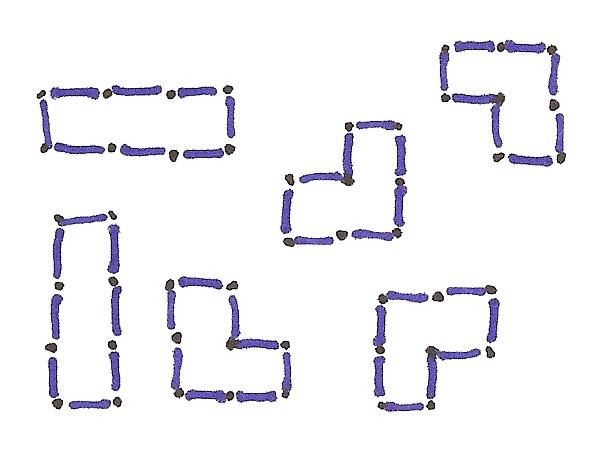}
\end{center}
\end{figure}
\end{itemize}
The general fact about loony positions is that \emph{the first player is always able to win a weak majority of the remaining
pieces on the board}.  This follows by essentially the same argument used to analyze the complicated position above.  In the case
where there are an odd number of boxes on the board, and neither player has already taken any boxes, it follows that \emph{a loony position
is a win for the first player}.
Here are some examples of loony positions:
\begin{figure}[H]
\begin{center}
\includegraphics[width=2in]
					{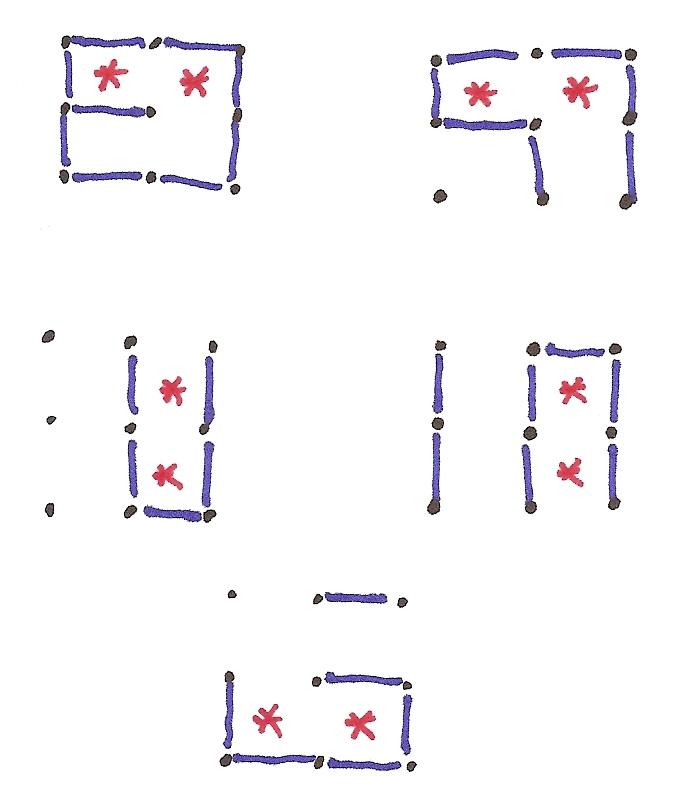}
\end{center}
\end{figure}
The red asterisks indicate why each position is loony.
Here are some examples of non-loony positions:
\begin{figure}[H]
\begin{center}
\includegraphics[width=2in]
					{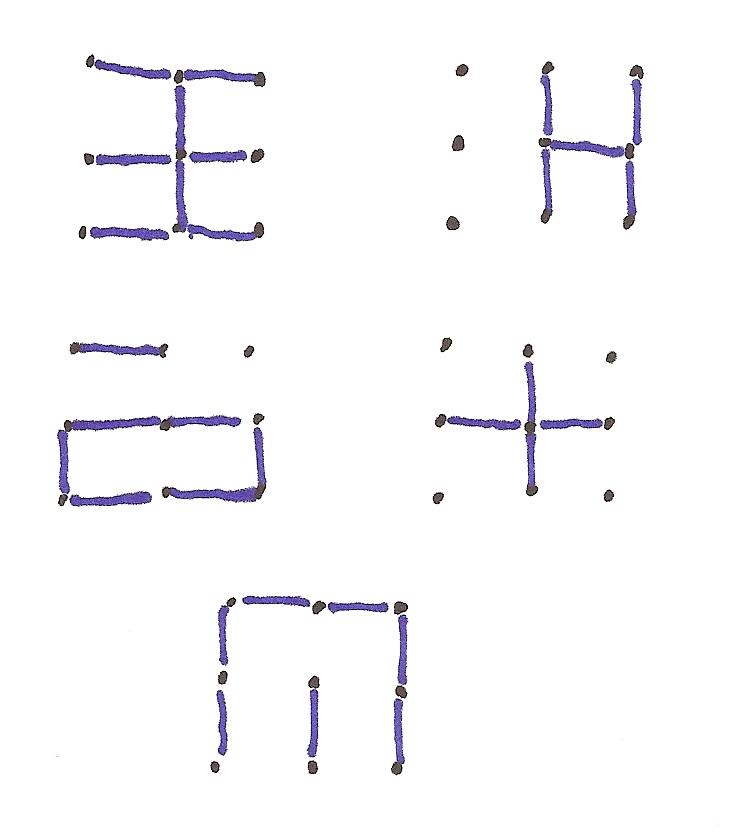}
\end{center}
\end{figure}
It can be shown that whenever some squares are available to be taken, and the position is \emph{not} loony, then you might as well take them.

A \emph{loony move} is one that creates a loony position.  Note that giving away a long chain (three or more boxes) or a loop is always a loony move.
When giving away two boxes, it is always possible to do so in a non-loony way:
\begin{figure}[H]
\begin{center}
\includegraphics[width=5in]
					{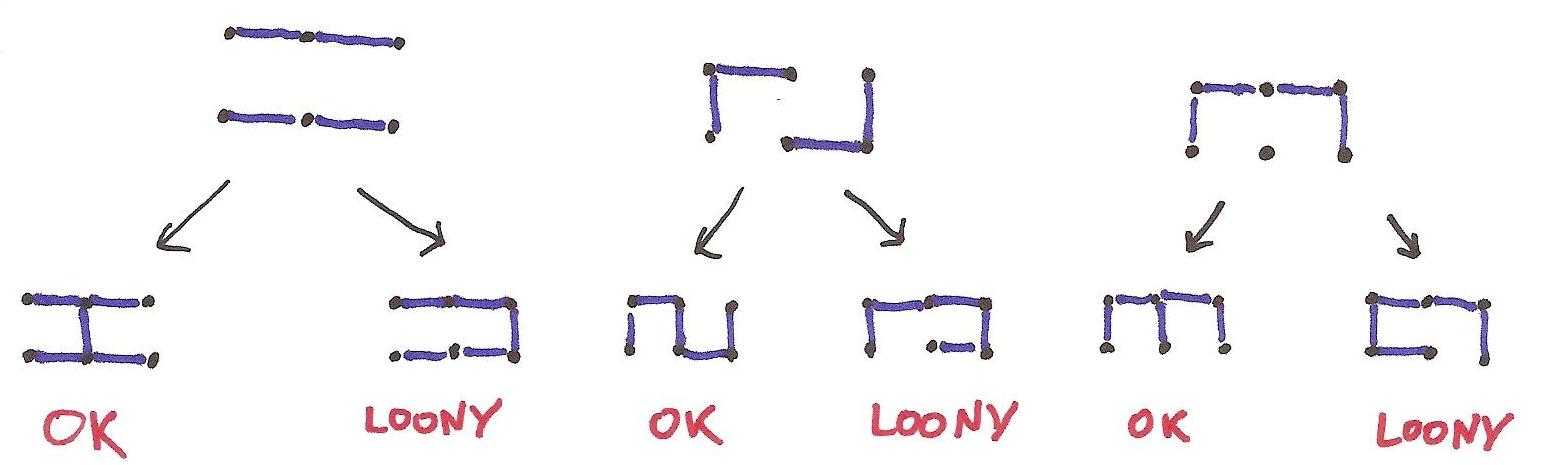}
\end{center}
\end{figure}
In the vast majority of Dots-and-Boxes games, somebody eventually gives a way a long chain.  Usually, few boxes have been claimed when this first happens
(or both players have claimed about the same amount, because they have been trading chains of length one and two), so the player who gives
away the first long chain loses under perfect play.

Interestingly, the player who first makes a loony move can be predicted in terms of the parity of the number of long chains on the board.
As the game proceeds towards its end, chains begin to form and the number of long chains begins to crystallize.  Between experts, Dots-and-Boxes
turns into a fight to control this number.  For more information, I refer the interested reader to Elwyn Berlekamp's \emph{The Dots and Boxes Game.}

To connect Dots-and-Boxes to the standard theory of partizan games (in fact, to Sprague-Grundy theory), consider the variant game of
\emph{Nimdots}.  This is played exactly the same as Dots-and-Boxes except that the player who makes the last move \emph{loses}.  A few comments are in order:
\begin{itemize}
\item Despite appearances to the contrary, Nimdots is actually played by the normal play rule, not the mis\`ere rule.  The reason is that the
normal rule precisely says that \emph{you lose when it's your turn but you can't move.} In Nimdots, the player who makes the last move always
completes a box.  He then gets a bonus turn, which he is unable to complete, because the game is over!
\item Who claims each box is completely irrelevant, since the final outcome isn't decided by score.  This makes Nimdots be impartial.
\item As in Dots-and-Boxes, a loony move is generally bad. In fact, in Nimdots, a loony move is \emph{always} a losing move, by the same
arguments as above.  In fact, since we are using the normal play rule, we might as well make loony moves illegal, and consider no
loony positions.
\item If you give away some boxes without making a non-loony move, your opponent might as well take them.  But there is no score,
so it doesn't matter who takes the boxes, and we could simply have the boxes get magically eaten up after any move which gives away boxes.
\end{itemize}
With these rule modifications, there are no more entailed moves, and Nimdots becomes a bona fide impartial game, so we can apply Sprague-Grundy theory.
For example, here is a table showing the Sprague-Grundy numbers of some small Nimdots positions (taken from page 559 of \emph{Winning Ways}).
\begin{figure}[H]
\begin{center}
\includegraphics[width=4in]
					{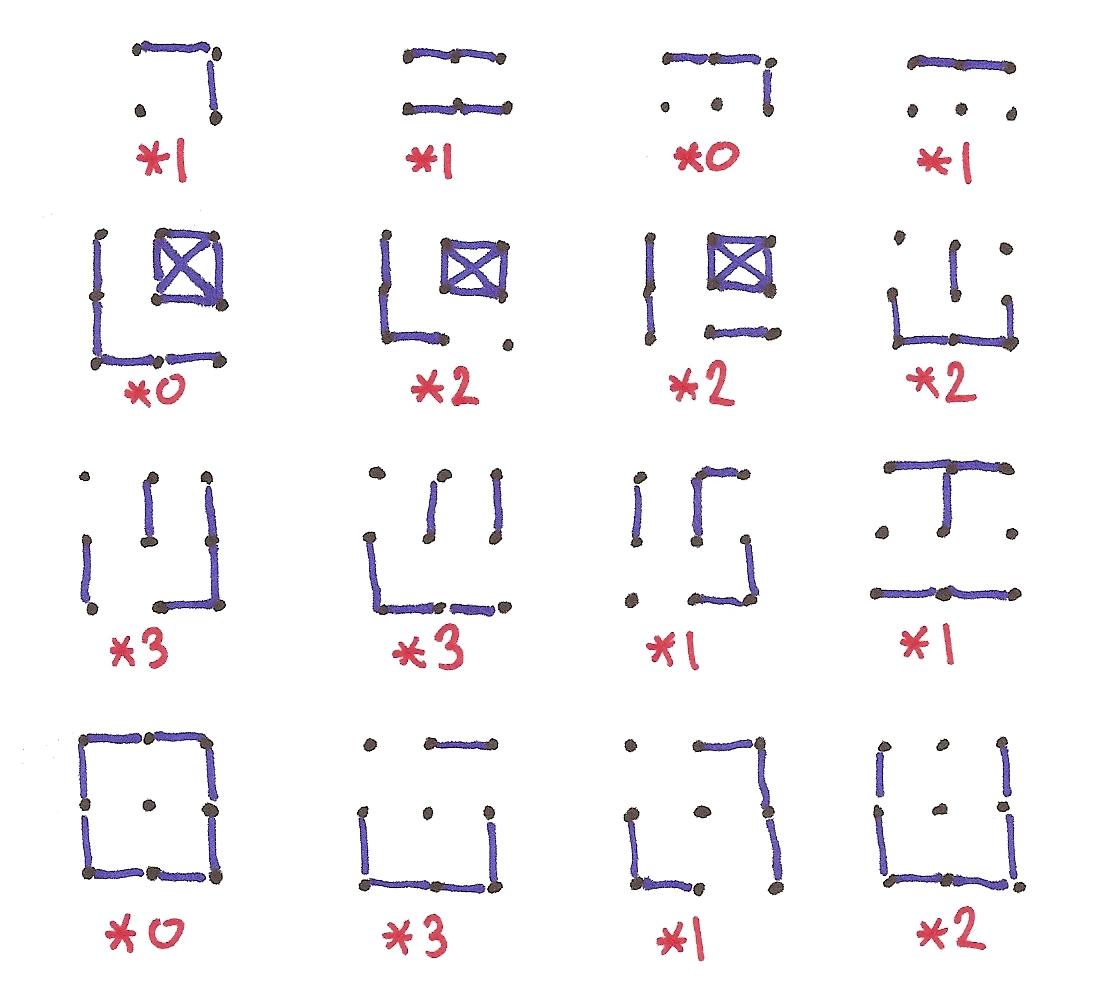}
\end{center}
\end{figure}
This sort of analysis is actually useful because positions in Nimdots and Dots-and-Boxes can often decompose as sums of smaller positions.
And oddly enough, in some cases, a Nimdots positions replicate impartial games like Kayles (see chapter 16 of \emph{Winning Ways} for examples).

The connection between Dots-and-Boxes and Nimdots comes by seeing Nimdots as an approximation to Dots-and-Boxes.  In Dots-and-Boxes, the first
player to make a loony move usually loses.  in Nimdots, the first player to make a loony move \emph{always} loses.  So even though the winner
is determined by completely different means in the two games, they tend to have similar outcomes, at least early in the game.

This gives an (imperfect and incomplete) ``mathematical'' strategy for Dots-and-Boxes: pretend that the position is a Nimdots position,
and use this to make sure that your opponent ends up making the first loony move.  In order for the loony-move fight to even be worthwhile,
you also need to ensure that there are long enough chains.  In the process of using this strategy, one might actually sacrifice some boxes
to your opponent, for a better final score.  For instance, in Figure~\ref{sacrifice-necessary}, the only winning move is to prematurely
sacrifice two boxes.
\begin{figure}[H]
\begin{center}
\includegraphics[width=2.5in]
					{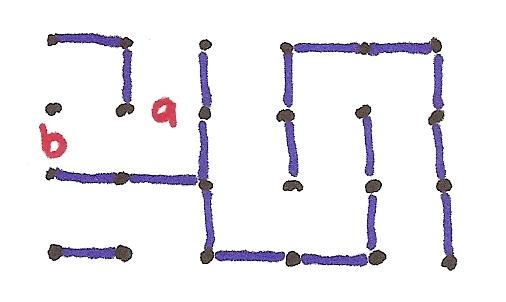}
\caption{The only winning move is (a), which sacrifices two boxes.  The alternative move at (b) sacrifices zero boxes,
but ultimately loses.}
\label{sacrifice-necessary}
\end{center}
\end{figure}

The mathematical strategy is imperfect, so some people have advocated alternative strategies. On his now-defunct Geocities page, Ilan Vardi
suggested a strategy based on
\begin{description}
\item[(a)] Making lots of shorter chains, and loops, which tend to decrease the value of winning the Nimdots fight.
\item[(b)] ``Nibbling,'' allowing your opponent to win the Nimdots/loony-move fight, but at a cost.
\item[(c)] ``Pre-emptive sacrifices,'' in which you make a loony-move in a long chain before the chain gets especially long.  This breaks up chains
early, helping to accomplish (a).  Such moves can only work if you are already ahead score-wise, via (b).
\end{description}
As Ilan Vardi notes, there are some cases in Dots-and-Boxes in which the \emph{only} winning move is loony:
\begin{figure}[H]
\begin{center}
\includegraphics[width=3.5in]
					{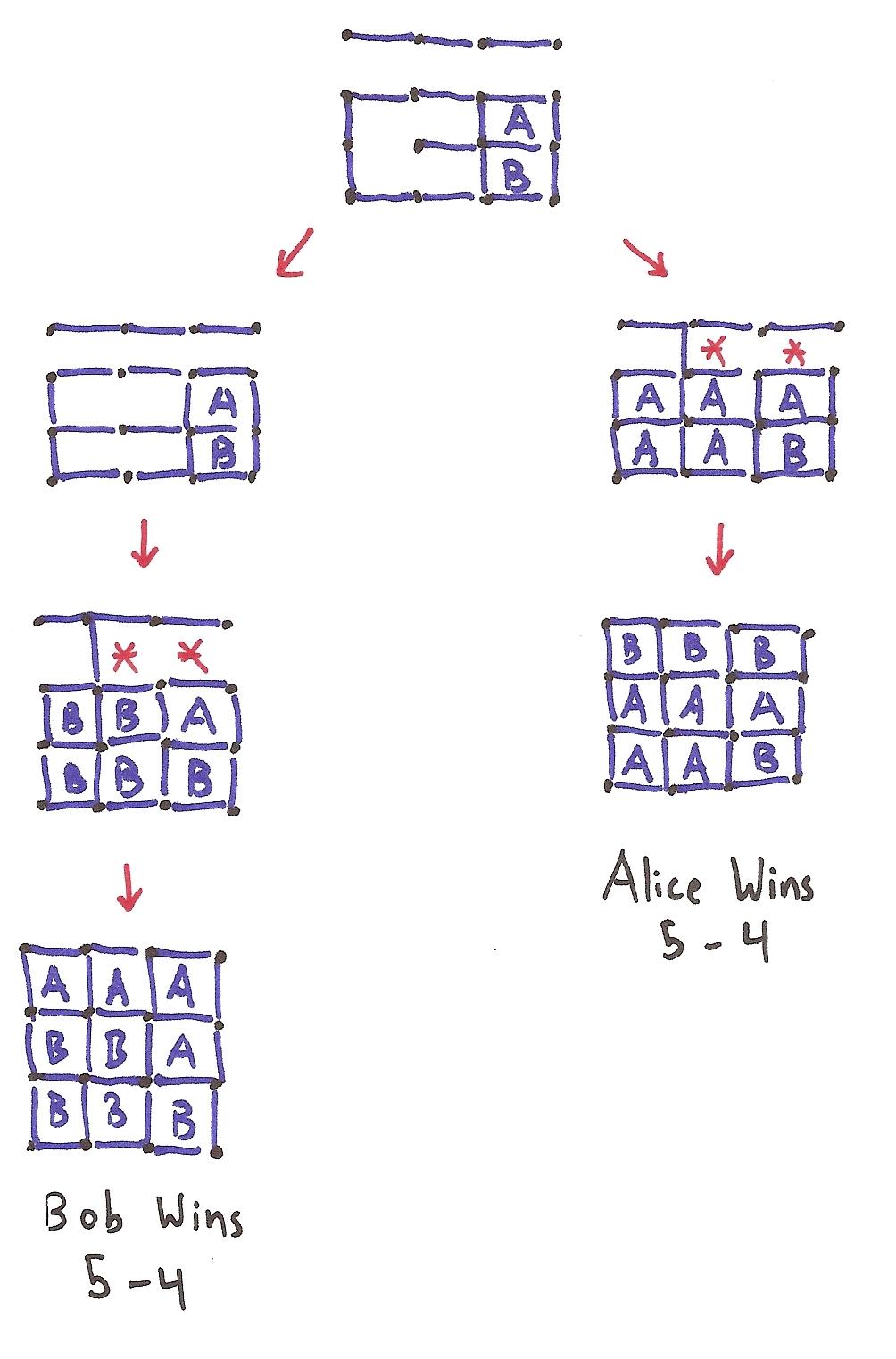}
\caption{In the top position, with Alice to move, the move at the left is the only non-loony move.  However,
it ultimately loses, giving Bob most of the boxes.  On the other hand, the move on the right is
technically loony, but gives Alice the win, with 5 of the 9 boxes already.}
\label{be-crazy}
\end{center}
\end{figure}
According to Vardi, some of the analyses of specific positions in Berlekamp's \emph{The Dots and Boxes Game} are incorrect because of the false assumption that
loony moves are always bad.

Unlike many of the games we have considered so far, there is little hope of giving a general analysis of Dots-and-Boxes, since determining
the outcome of a Dots-and-Boxes position is NP-hard, as shown by Elwyn Berlekamp in the last chapter of his book.

\section{Go}
\emph{Go} (also known as Baduk and Weiqi) is an ancient boardgame that is popular in China, Japan, the United States, and New Zealand, among other places.  It
is frequently considered to have the most strategic depth of any boardgame commonly played, more than Chess.\footnote{In fact, while computers
can now beat most humans at Chess, computer Go programs are still routinely defeated by novices and children.}

In Go, two players, Black and White, alternatively place stones on a $19 \times 19$ board.  Unlike Chess or Checkers, pieces are played on the corners
of the squares, as in Figure~\ref{go-1}.
A \emph{group} of stones is a set of stones of one color that is connected (by means of direct orthogonal connections).  So in the following position,
Black has 4 groups and White has 1 group:
\begin{figure}[H]
\begin{center}
\includegraphics[width=2.5in]
					{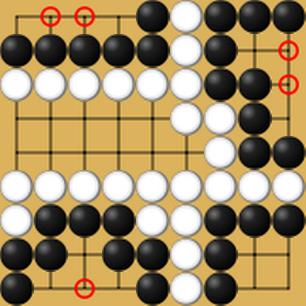}
\caption{Image taken from the Wikipedia article \href{http://en.wikipedia.org/wiki/Life_and_death}{\emph{Life and Death}} on June 6, 2011.}
\label{go-1}
\end{center}
\end{figure}
The \emph{liberties} of a group are the number of empty squares.  Once a group has no liberties, its pieces are captured and removed from the board,
and given to the opposing player.  There are some additional prohibitions against suicidal moves and moves which exactly reverse the previous move or
return the board to a prior state.  Some of these rules vary between different rulesets.

Players are also allowed to pass, and the game ends when both players pass.  The rules for scoring are actually very complicated and vary by ruleset,
but roughly speaking
you get a point for each captured opponent stone, and a point for each empty space that is surrounded by pieces of your own color.\footnote{What if the losing player decides to never pass?  If understand
the rules correctly, he will eventually be \emph{forced} to pass, because his alternative to passing is filling up his own territory.  He could also try
invading the empty spaces in his opponent's territory, but then his pieces would be captured and eventually the opponent's territory would also fill up.
After a very long time, all remaining spots on the board would become illegal to move to, by the no suicide rule, and then he would be forced to pass.
At any rate, it seems like this would be a pointless exercise in drawing out the game, and the sportsmanlike thing to do is to resign, i.e., to pass.}

In the following position, if there were no stones captured, then Black would win by four points:
\begin{figure}[H]
\begin{center}
\includegraphics[width=2.5in]
					{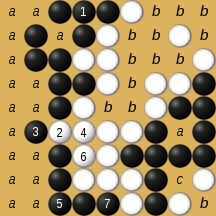}
\caption{Black has 17 points of territory (the \emph{a}'s) and White has 13 (the \emph{b}'s). Image taken from the Wikipedia article \href{http://en.wikipedia.org/wiki/Rules_of_Go}{\emph{Rules of Go}} on June 6, 2011.}
\label{go-2}
\end{center}
\end{figure}
(The scoring rule mentioned above is the one used in Japan and the United States.  In China, you also get points for your own pieces on the board,
but not for prisoners, which tends to make the final score difference almost identical to the result of Japanese scoring.)

There is a great deal of terminology and literature related to this game, so we can barely scratch the surface.  One thing worth pointing out
is that it is sometimes possible for a group of stones to be indestructible.  This is called \emph{life}.  Here is an example:
\begin{figure}[H]
\begin{center}
\includegraphics[width=2.5in]
					{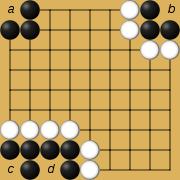}
\caption{The black group in the bottom left corner has two eyes, so it is alive.  There is no way
for White to capture it, since White would need to move in positions $c$ and $d$ simultaneously.
The other black groups do not have two eyes, and could be taken.  For example, if White moves at $b$, the top right black group
would be captured.  (Image taken from the Wikipedia article \href{http://en.wikipedia.org/wiki/Life_and_Death}{\emph{Life and Death}}
on June 6, 2011.)}
\label{go-3}
\end{center}
\end{figure}
This example shows the general principle that two ``eyes'' ensures life.

Another strategic concept is \emph{seki}, which refers to positions in which neither player wants to move, like the following:
\begin{figure}[H]
\begin{center}
\includegraphics[width=2.5in]
					{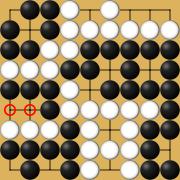}
\caption{If either player moves in one of the red circled positions, his opponent will move in the other and take
one of his groups.  So neither player will play in those positions, and they will remain empty. (Image taken from the Wikipedia article
\href{http://en.wikipedia.org/wiki/Go_(game)}{\emph{Go (game)}} on June 6, 2011.)}
\label{go-4}
\end{center}
\end{figure}
Because neither player has an obligation to move, both players will simply ignore this area until the end of the game, and the spaces
in this position will count towards neither player.

Like Dots-and-Boxes, Go is not played by the normal play rule, but uses scores instead.  However, there is a na\"ive way to turn
a Go position into a partizan game position that actually works fairly well, and is employed by Berlekamp and Wolfe in their book
\emph{Mathematical Go: Chilling Gets the Last Point.}  Basically, each final position in which no moves remain is replaced by its score,
interpreted as a surreal number.

For instance, we have
\begin{figure}[H]
\begin{center}
\includegraphics[width=4in]
					{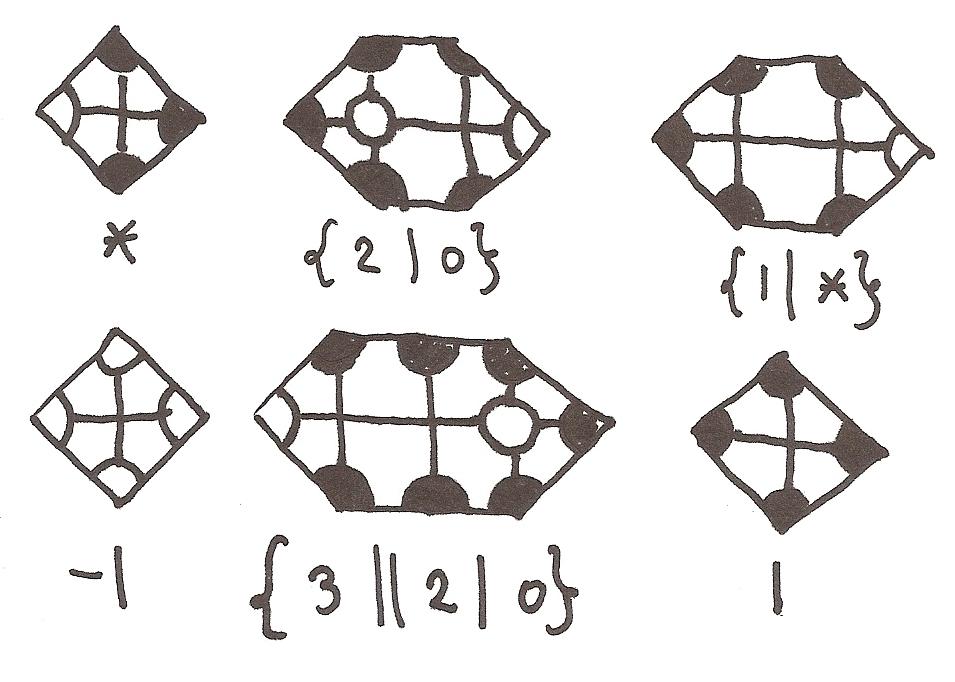}
\caption{Small positions in Go, taken from Berlekamp and Wolfe.  The pieces along the boundary are assumed to be alive.}
\label{go-dictionary}
\end{center}
\end{figure}
This approach works because of number avoidance.  Converting Go endgames into surreal numbers adds extra options, but
we can assume that the players never use these extra options, because of number avoidance.  For this to work,
we need the fact that a non-endgame Go position isn't a number.  Unfortunately, some are, like the following:
\begin{figure}[H]
\begin{center}
\includegraphics[width=2in]
					{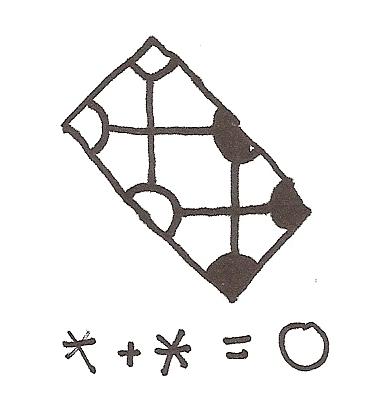}
\end{center}
\end{figure}
However, something slightly stronger than number-avoidance is actually true:
\begin{theorem}
Let $A,B,C,\ldots,D,E,F,\ldots$ be short partizan games, such that
\[ \max( R(A), R(B),\ldots) \ge \min(L(D), L(E), \ldots),\]
and let $x$ be a number. Then
\[ \{A,B,C,\ldots|D,E,F,\ldots\} + x = \{A + x, B + x, \ldots | D + x, E + x, \ldots \}.\]
\end{theorem}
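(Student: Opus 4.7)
The plan is to split on whether $G := \{A, B, \ldots \mid D, E, \ldots\}$ equals a number, letting $G^* := \{A + x, B + x, \ldots \mid D + x, E + x, \ldots\}$ denote the right-hand side. If $G$ does not equal a number, then the ordinary Number Avoidance Theorem applies verbatim and gives $G + x = \{G^L + x \mid G^R + x\} = G^*$, so we are done. All the work is in the case where $G$ equals a number $y_0$, and the hypothesis $\max_i R(A_i) \ge \min_j L(D_j)$ is what lets us push through in that case.

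In that case I would first pin down $y_0$ via stopping values. For each left option $A$, the relation $A \lhd y_0$ forces $R(A) \le y_0$: otherwise $y_0 < R(A)$, and the stopping-value theorem (``$x < R(G) \Rightarrow x < G$'') yields $y_0 < A$, hence $y_0 \le A$, contradicting $A \lhd y_0$. Symmetrically $y_0 \le L(D)$ for every right option. Taking max and min gives $\max_i R(A_i) \le y_0 \le \min_j L(D_j)$, and combining this with the hypothesis collapses everything to the single value $y_0 = \max_i R(A_i) = \min_j L(D_j)$.

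The second step is to show $G^* = y_0 + x$. Shifting $A \lhd y_0$ and $y_0 \lhd D$ by the number $x$ gives $A + x \lhd y_0 + x \lhd D + x$, so $y_0 + x$ lies strictly (in the $\lhd$ sense) between all the left and right options of $G^*$. By the simplicity rule, $G^*$ therefore equals some number $y_1$, and $y_1$ itself must lie in the same ``gap.'' Re-running the stopping-value argument of the previous paragraph for $G^*$, and invoking the standard shift identities $R(A + x) = R(A) + x$ and $L(D + x) = L(D) + x$, I get $\max_i R(A_i) + x \le y_1 \le \min_j L(D_j) + x$; both sides equal $y_0 + x$ by the preceding paragraph, so $y_1 = y_0 + x$. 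Hence $G^* = y_0 + x = G + x$, completing the proof.

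The main delicacy is the implication $A \lhd y_0 \Rightarrow R(A) \le y_0$. One is tempted to derive it from ``$R(A) \le A$'', but that estimate is false in general (for example $R(*) = 0$ while $0 \not\le *$), so the argument has to route through the contrapositive of ``$x < R(G) \Rightarrow x < G$'' as above. Once that is in hand, the rest is a mechanical combination of the simplicity rule with the translation-invariance of stopping values.
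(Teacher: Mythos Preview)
Your proof is correct and follows essentially the same approach as the paper: dispatch the non-number case by Number Avoidance, then in the number case use the stopping-value inequalities $G \lhd y \Rightarrow R(G) \le y$ and $y \lhd G \Rightarrow y \le L(G)$ together with the hypothesis to pin the value down exactly. The only organizational difference is that the paper also disposes of the case ``$G^*$ is not a number'' directly via Number Avoidance (applied to $G^*$ and $-x$), whereas you instead show $G^*$ is a number by exhibiting $y_0 + x$ in its gap; either route works and the core computation is identical.
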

\begin{proof}
If $\{A,B,C,\ldots |D,E,F,\ldots\}$ is not a number, then this follows by number avoidance.  It also follows
by number avoidance if $\{A + x, B + x, \ldots | D + x, E + X, \ldots\}$ is not a number.  Otherwise,
there is some number $y$, equal to $\{A,B,\ldots|D,E,\ldots\}$ such that $A,B,C,\ldots \lhd y \lhd D, E, F, \ldots$.  But by definition of $L(\cdot)$
and $R(\cdot)$, it follows that
\[ \max(R(A),R(B),\ldots) \le y \le \min(L(D),L(E),\ldots),\]
since it is a general fact that $y \lhd G$ implies that $y \le L(G)$ and similarly $G \lhd y \Rightarrow R(G) \le y$.
So it must be the case that $\max(R(A),R(B),\ldots) = y = \min(L(A),L(B),\ldots)$.  Thus
\[ \{A,B,C,\ldots|D,E,F,\ldots\} = \max(R(A),R(B),\ldots).\]
By the same token,
\[ \{A + x, B + x, \ldots | D + x, E + x, \ldots\} = \max(R(A + x), R(B + x),\ldots) \]\[ = \max(R(A) + x, R(B) + x,\ldots) = \{A,B,\ldots|D,E,\ldots\} + x.\]
\end{proof}
Now Go positions always have the property that $\max_{G^L}(R(G^L)) \ge \min_{G^R}(L(G^R))$, because players are not under compulsion to pass.
There is no way to create a position like $\{0|4\}$ in Go (which would asymmetrically be given the value $1$ by our translation), because in such a position,
neither player wants to move, and the position will be a \emph{seki} endgame position that should have been directly turned into a number.

Interestingly enough, many simple Go positions end up taking values that are Even or Odd, in the sense of Section \ref{sec:evenandodd}.  This comes about
because we can assign a parity to each Go position, counting the number of prisoners and emtpy spaces on the board, and the parity is reversed after each
move.  And an endgame will have an odd score iff its positional parity is odd (unless there are \emph{dame}).

Then because most of the values that arise are even and odd, we can describe them as Norton multiples of $1*$.  Replacing the position $X.(1*)$
with $X$ creates a simpler description of the same position.  This operation is the ``chilling'' operation referenced in the title
of Wolfe and Berlekamp's books.  It is an instance of the \emph{cooling} operation of ``thermography,'' which is closely related to the
mean value theory.

A lot of research has gone into studying \emph{ko} situations like the following:
\begin{figure}[H]
\begin{center}
\includegraphics[width=4in]
					{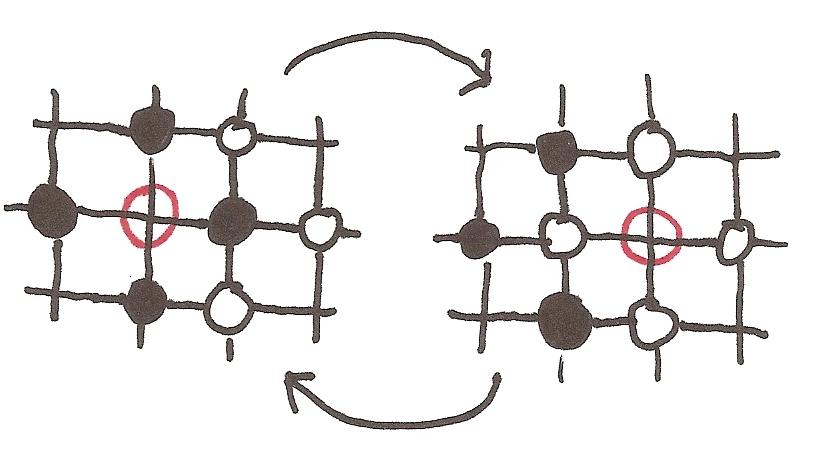}
\caption{From the position on the left, White can move to the position on the right by playing in the circled position.  But from the position
on the right, Black can move directly back to the position on the right, by playing in the circled position.}
\label{ko}
\end{center}
\end{figure}
%
The rules of Go include a proviso that forbids directly undoing a previous move.
However, nothing prevents White from making a threat somewhere else, which Black must respond to - and then
after Black responds elsewhere, White can move back in the ko.  This can go back and forth several rounds,
in what is known as a \emph{kofight}.  While some players of Go see kofights as mere randomness (a player once
told me it was like shooting craps), many combinatorial game theorists have mathematically examined
ko positions, using extensions of mean value theory and ``thermography'' for loopy games.

\section{Changing the theory}
In some cases, we need a different theory from the standard one discussed so far.
The examples in this section are not games people play, but show how alternative theories
can arise in constructed games.

Consider first the following game, \emph{One of the King's Horses}: a number of chess knights
sit on a square board, and players take turns moving them towards the top left corner.  The pieces
move like knights in chess, except only in the four northwestern directions:
\begin{figure}[H]
\begin{center}
\includegraphics[width=2in]
					{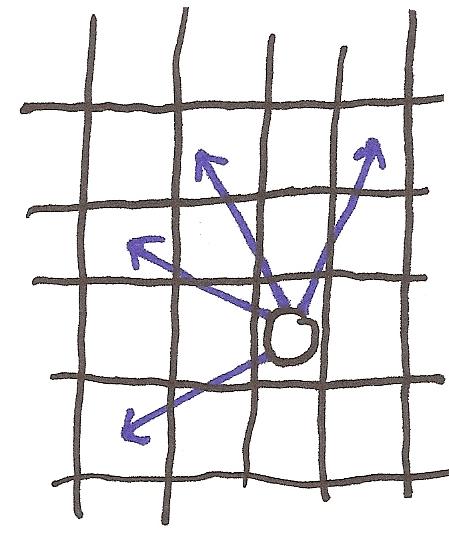}
\end{center}
\end{figure}
Each turn, you move \emph{one} piece, but multiple pieces are allowed
to occupy the same square.  You lose when you cannot move.

Clearly, this is an impartial game with the normal play rule, and each piece is moving completely independently of all the others,
so the game decomposes as a sum.  Consequently we can ``solve'' the game by figuring out the Sprague-Grundy value
of each position on the board.  Here is a table showing the values in the top left corner of the board:
\begin{center}
\begin{tabular}{|c|c|c|c|c|c|c|c|c|c|c|c|c|c|c|c|}
\hline
0 & 0 & 1 & 1 & 0 & 0 & 1 & 1 & 0 & 0 & 1 & 1 & 0 & 0 & 1 & 1 \\
\hline
0 & 0 & 2 & 1 & 0 & 0 & 1 & 1 & 0 & 0 & 1 & 1 & 0 & 0 & 1 & 1 \\
\hline
1 & 2 & 2 & 2 & \textcircled{3} & 2 & 2 & 2 & 3 & 2 & 2 & 2 & 3 & 2 & 2 & 2 \\
\hline
1 & 1 & 2 & 1 & 4 & \textcircled{3} & 2 & 3 & 3 & 3 & 2 & 3 & 3 & 3 & 2 & 3 \\
\hline
0 & 0 & 3 & \textcircled{4} & 0 & 0 & 1 & 1 & 0 & 0 & 1 & 1 & 0 & 0 & 1 & 1 \\
\hline
0 & 0 & \textcircled{2} & \textcircled{3} & 0 & 0 & 2 & 1 & 0 & 0 & 1 & 1 & 0 & 0 & 1 & 1 \\
\hline
1 & 1 & 2 & 2 & 1 & 2 & 2 & 2 & 3 & 2 & 2 & 2 & 3 & 2 & 2 & 2 \\
\hline
1 & 1 & 2 & 3 & 1 & 1 & 2 & 1 & 4 & 3 & 2 & 3 & 3 & 3 & 2 & 3 \\
\hline
0 & 0 & 3 & 3 & 0 & 0 & 3 & 4 & 0 & 0 & 1 & 1 & 0 & 0 & 1 & 1 \\
\hline
0 & 0 & 2 & 3 & 0 & 0 & 2 & 3 & 0 & 0 & 2 & 1 & 0 & 0 & 1 & 1 \\
\hline
1 & 1 & 2 & 2 & 1 & 1 & 2 & 2 & 1 & 2 & 2 & 2 & 3 & 2 & 2 & 2 \\
\hline
1 & 1 & 2 & 3 & 1 & 1 & 2 & 3 & 1 & 1 & 2 & 1 & 4 & 3 & 2 & 3 \\
\hline
0 & 0 & 3 & 3 & 0 & 0 & 3 & 3 & 0 & 0 & 3 & 4 & 0 & 0 & 1 & 1 \\
\hline
0 & 0 & 2 & 3 & 0 & 0 & 2 & 3 & 0 & 0 & 2 & 3 & 0 & 0 & 2 & 1 \\
\hline
1 & 1 & 2 & 2 & 1 & 1 & 2 & 2 & 1 & 1 & 2 & 2 & 1 & 2 & 2 & 2 \\
\hline
1 & 1 & 2 & 3 & 1 & 1 & 2 & 3 & 1 & 1 & 2 & 3 & 1 & 1 & 2 & 1 \\
\end{tabular}
\end{center}
So for instance, if there are pieces on the circled positions,
the combined value is
\[ 3 +_2 3 +_2 2 +_2 3 +_2 4 = 5 \]
Since $5 \ne 0$, this position is a first-player win.  The table shown above
has a fairly simple and repetitive pattern, which gives the general
strategy for One of the King's Horses.

Now consider the variant \emph{All of the King's Horses}, in which you move \emph{every} piece on your turn, rather than
selecting one.  Note that once \emph{one} of the pieces reaches the top left corner of the board, the game is over, since you are required
to move all the pieces on your turn, and this becomes impossible once one of the pieces reaches the home corner.

This game no longer corresponds to a \emph{sum}, but instead to what \emph{Winning Ways} calls a \emph{join}.  Whereas
a sum is recursively defined as
\[ G + H = \{G^L + H, G + H^L|G^R + H, G + H^R\},\]
 a join is defined recursively as
\[ G \wedge H = \{G^L \wedge H^L| G^R \wedge H^R\},\]
where $G^*$ and $H^*$ range over the options of $G$ and $H$.  In a join of two games, you must move in both components on each turn.

Just as sums of impartial games are governed by Sprague-Grundy numbers, joins of impartial games are governed by \emph{remoteness}.
If $G$ is an impartial game, its remoteness $r(G)$ is defined recursively as follows:
\begin{itemize}
\item If $G$ has no options, then $r(G)$ is zero.
\item If some option of $G$ has even remoteness, then $r(G)$ is one more than the minimum $r(G')$ where
$G'$ ranges over options of $G$ such that $r(G')$ is even.
\item Otherwise, $r(G)$ is one more than the maximum $r(G')$ for $G'$ an option of $r(G)$.
\end{itemize}
Note that $r(G)$ is odd if and only if some option of $G$ has even remoteness.  Consequently,
a game is a second-player win if its remoteness number is even,
and a first-player win otherwise.  The remoteness of a Nim heap with $n$ counters is $0$ if $n = 0$, and $1$ otherwise, since
every Nim-heap after the zeroth one has the zeroth one as an option.

The remoteness is roughly a measure of how quickly the winning player can bring the game
to an end, assuming that the losing player is trying to draw out the game as long as possible.

Remoteness governs the outcome of joins of impartial games in the same way that
Sprague-Grundy numbers govern the outcome of sums:
\begin{theorem}\label{remoteness}
If $G_1, \ldots, G_n$ are impartial games, then the join $G_1 \wedge \cdots \wedge G_n$
is a second-player win if $\min(r(G_1),\ldots,r(G_n))$ is even, and a first-player win otherwise.
\end{theorem}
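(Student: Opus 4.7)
The plan is to prove the stronger statement $r(G_1 \wedge \cdots \wedge G_n) = \min_i r(G_i)$ by induction on the game graph of the join. This graph satisfies the ending condition because every move strictly reduces at least one component (in fact all of them), and once this equality is established the theorem is immediate, since for impartial games the definition of $r$ makes ``second-player win'' synonymous with ``even remoteness''.

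For the base case, if some $G_i$ has no options then neither does the join, and both sides of the target equation are $0$. For the inductive step every $G_i$ has options, every option of the join has the form $G_1' \wedge \cdots \wedge G_n'$, and by the inductive hypothesis such an option has remoteness $\min_j r(G_j')$. Write $m = \min_i r(G_i)$. I would first isolate a small lemma: no impartial game $G$ with $r(G) \geq m$ admits an option $G'$ whose remoteness is even and strictly less than $m - 1$. The proof of the lemma is a direct parity split on $r(G)$ — when $r(G)$ is even, $G$ has no even-remoteness option at all; when $r(G)$ is odd, the definition forces the smallest even-remoteness option to have remoteness exactly $r(G) - 1$, which is $\geq m - 1$.

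With the lemma in hand, the two parity cases become mirror images. When $m$ is odd, I exhibit a single option of the join with remoteness $m - 1$: in each component $G_i$ with $r(G_i) = m$, move to the even-remoteness option promised by the definition of $r(G_i)$; in each component $G_j$ with $r(G_j) > m$, move to some option with remoteness $\geq m - 1$, which always exists (take the option witnessing the relevant clause in the definition of $r(G_j)$, splitting on the parity of $r(G_j)$). The lemma then forbids any option of the join from having even remoteness below $m - 1$, giving $r(J) = 1 + (m - 1) = m$. When $m$ is even, the component $G_i$ with $r(G_i) = m$ has only odd-remoteness options of remoteness $\leq m - 1$, so every option of the join has remoteness $\leq m - 1$; the lemma forces the minimum across any join-option to be odd (no smaller even value can arise from any $G_j$), so no join-option is even-remoteness, and a symmetric construction realizes the maximum $m - 1$, giving $r(J) = m$.

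The main obstacle I anticipate is the bookkeeping in the odd case: one must check that for \emph{every} component $G_j$ with $r(G_j) > m$ an option of remoteness at least $m - 1$ is actually available, and then dovetail these $n$ choices so the overall minimum lands on exactly $m - 1$ rather than something larger or smaller. This is where the parity split on $r(G_j)$ matters, because the two clauses in the definition of remoteness (``one more than the min over even options'' versus ``one more than the max over all options'') are what supply the needed option in each subcase. Getting the inequalities to line up uniformly across parities, without accidentally creating an option of the join of small even remoteness, is the place where the argument is most fragile.
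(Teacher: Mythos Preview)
Your proof is correct, but it takes a genuinely different route from the paper.  The paper proves \emph{only} the parity statement directly: in the odd case it picks, for each $i$, an option $G_i'$ with $r(G_i') = r(G_i)-1$ and invokes the inductive hypothesis on the resulting join; in the even case it runs a short contradiction argument showing no option of the join can have even minimum.  The equality $r(G_1\wedge\cdots\wedge G_n) = \min_i r(G_i)$ is then established separately in the subsequent Corollary, by an indirect trick: one introduces chains $a_n$ with $r(a_n)=n$, observes that the outcomes of $K\wedge a_n$ for all $n$ determine $r(K)$, and reads the equality off from the theorem.

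Your approach inverts this: you prove the equality $r(J)=\min_i r(G_i)$ head-on by induction, with your small lemma (no component with $r\ge m$ has an even-remoteness option below $m-1$) doing the work that the paper's contradiction argument does, and then the theorem drops out immediately.  This buys you the Corollary for free and is arguably cleaner as a package; the cost is that both parity cases require you to explicitly construct a witnessing option of the join and verify the min lands exactly on $m-1$, whereas the paper's odd case is a one-liner.  The bookkeeping you flagged as fragile --- ensuring every $G_j$ with $r(G_j)>m$ admits an option of remoteness at least $m-1$ --- goes through cleanly since any nonzero impartial game $G$ has an option of remoteness exactly $r(G)-1$, regardless of the parity of $r(G)$.
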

\begin{proof}
First of all note that if $G$ is any nonzero impartial game, then $r(G) = r(G') - 1$ for some option $G'$ of $G$.
Also, if $r(G)$ is odd then some option of $G$ has even remoteness.

To prove the theorem, first consider the case where one of the $G_i$ has no options, so $r(G_i) = 0$.  Then neither
does the $G_1 \wedge \cdots \wedge G_n$. A game with no options
is a second-player win (because whoever goes first immediately loses).  And as expected,
and $\min(r(G_1),\ldots,r(G_n)) = 0$ which is even.  

Now suppose that every $G_i$ has an option.  First consider the case where $\min(r(G_1),\ldots,r(G_n))$ is odd.
Then for every $i$ we can find an option $G_i'$ of $G_i$, such that $r(G_i) = r(G_i') + 1$.  In particular then,
\[ \min(r(G_1'),\ldots,r(G_n')) = \min(r(G_1),\ldots,r(G_n)) \text{ is even,},\]
so by induction $G_1' \wedge \cdots \wedge G_n'$ is a second-player win.  Therefore $G_1 \wedge \cdots \wedge G_n$
is a first-player win, as desired.

On the other hand, suppose that $\min(r(G_1),\ldots,r(G_n))$ is even.  Let $r(G_i) = \min(r(G_1),\ldots,r(G_n))$.
Suppose for the sake of contradiction that there is some option $G_1' \wedge \cdots \wedge G_n'$
of $G_1 \wedge \cdots \wedge G_n$ such that $\min(r(G_1'),\ldots,r(G_n'))$ is also even.  Let
$r(G_j') = \min(r(G_1'),\ldots,r(G_n'))$.  Since $r(G_j')$ is even, it follows that
$r(G_j)$ is odd and at most $r(G_j') + 1$.  Then
\begin{equation} r(G_i) = \min(r(G_1),\ldots,r(G_n)) \le r(G_j) \le r(G_j') + 1,\label{remotenesseq}\end{equation}
On the other hand, since $r(G_i)$ is even, every option of $G_i$ has odd
remoteness, and in particular $r(G_i')$ is odd and at most $r(G_i) - 1$.  Then
\[ r(G_j') = \min(r(G_1'),\ldots,r(G_n')) \le r(G_i') \le r(G_i) - 1.\]
Combining with (\ref{remotenesseq}), it follows that $r(G_j') = r(G_i) - 1$, contradicting the fact
that $r(G_i)$ and $r(G_j')$ are both even.
\end{proof}

In fact, from this we can determine the remoteness of a join of two games:
\begin{corollary}\label{remoteness2}
Let $G$ and $H$ be impartial games.  Then $r(G \wedge H) = \min(r(G),r(H))$.
\end{corollary}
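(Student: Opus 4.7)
The plan is to prove the corollary by joint well-founded induction on the pair $(G, H)$, leveraging Theorem~\ref{remoteness} to control parity and the recursive definition of remoteness to pin down the magnitude. By symmetry of $\wedge$, I assume throughout that $r(G) \le r(H)$, and set $m = r(G) = \min(r(G), r(H))$.

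For the base case, if $G$ has no options then $r(G) = 0 = m$, and the join $G \wedge H$ also has no options (a move in the join requires moving in both components simultaneously), so $r(G \wedge H) = 0 = m$ as required. For the inductive step, both $G$ and $H$ have options, and the options of $G \wedge H$ are exactly the games $G' \wedge H'$ with $G'$ an option of $G$ and $H'$ an option of $H$. By the inductive hypothesis, $r(G' \wedge H') = \min(r(G'), r(H'))$. Moreover, Theorem~\ref{remoteness} (applied to $G_1 = G, G_2 = H$) tells us that $G \wedge H$ is a second-player win iff $m$ is even, which is exactly the statement that $r(G \wedge H)$ has the same parity as $m$. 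So the work left is to show the magnitudes agree.

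I will unwind the definition of $r(G \wedge H)$: if $m$ is even then $r(G \wedge H) = 1 + \max_{G', H'} \min(r(G'), r(H'))$, and if $m$ is odd then $r(G \wedge H) = 1 + \min\{\min(r(G'), r(H')) : \min(r(G'), r(H')) \text{ is even}\}$. The key numerical facts I will use are that the largest odd option-remoteness of any game $K$ with $r(K)$ even equals $r(K) - 1$, and the smallest even option-remoteness of $K$ with $r(K)$ odd also equals $r(K) - 1$. Splitting into the four parity sub-cases for $(r(G), r(H))$, each reduces to finding an extremal pair $(G', H')$ achieving $\min(r(G'), r(H')) = r(G) - 1 = m - 1$ and verifying that no smaller (resp. larger) value is possible; the existence of a matching $H'$ always follows from $r(H) \ge r(G)$, which guarantees that $H$ has an option with remoteness $\ge r(G) - 1$.

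The main obstacle is the mixed-parity case in which $r(G)$ is even and $r(H)$ is odd. Here I must show that \emph{every} option $G' \wedge H'$ has odd remoteness, since an even-remoteness option would force $r(G \wedge H)$ to be odd, contradicting the parity established above. The potentially dangerous sub-case is $r(H')$ even and $r(H') < r(G')$, which would make $\min(r(G'), r(H')) = r(H')$ even. I will rule this out by noting that the smallest even option-remoteness of $H$ is $r(H) - 1$, and since $r(H) \ge r(G) + 1$ (different parities together with $r(G) \le r(H)$), every even $r(H')$ is at least $r(G) \ge r(G') + 1$, so the inequality $r(H') < r(G')$ cannot occur. Once this is settled, the remaining extremal computation follows the template of the other three cases, and the induction closes.
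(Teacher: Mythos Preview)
Your proof is correct but takes a genuinely different route from the paper's.

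The paper avoids induction and parity case analysis entirely. It introduces ``probe'' games $a_n$ (with $a_0 = \{\,|\,\}$ and $a_n = \{a_{n-1}\,|\,a_{n-1}\}$) satisfying $r(a_n) = n$, and observes that the remoteness of any impartial game $K$ is recoverable from the collection of outcomes of $K \wedge a_k$ as $k$ varies, via Theorem~\ref{remoteness}. By associativity, $(G \wedge H) \wedge a_k = G \wedge H \wedge a_k$, and Theorem~\ref{remoteness} says the outcome of the three-fold join depends only on the parity of $\min(r(G), r(H), k) = \min(n,k)$, where $n = \min(r(G), r(H))$. This coincides with the outcome of $a_n \wedge a_k$ for every $k$, forcing $r(G \wedge H) = r(a_n) = n$. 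No induction, no parity split.

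Your approach instead unwinds the recursive definition of remoteness and handles the four parity sub-cases for $(r(G), r(H))$ by hand. This is more laborious but perfectly valid; it has the minor advantage of being self-contained once Theorem~\ref{remoteness} is in hand, whereas the paper's proof buys brevity by exploiting associativity and the probe games. One small remark: your ``main obstacle'' paragraph is redundant. Once Theorem~\ref{remoteness} has pinned down the parity of $r(G \wedge H)$, the claim that every option $G' \wedge H'$ has odd remoteness (in the $m$ even case) is immediate from the definition of remoteness itself --- if some option had even remoteness, $r(G \wedge H)$ would be odd. So the inequality chain $r(H') \ge r(H) - 1 \ge r(G) > r(G')$ you give is correct but unnecessary; you had already earned that conclusion two paragraphs earlier.
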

\begin{proof}
Let $a_0 = \{|\}$, and $a_n = \{a_{n-1}|a_{n-1}\}$ for $n > 0$.  Then $r(a_n) = n$.
Now if $K$ is any impartial game, then $r(K)$ is uniquely determined by the outcomes
of $K \wedge a_n$ for every $n$.  To see this, suppose that $K_1$ and $K_2$ have differing
remotenesses, specifically $n = r(K_1) < r(K_2) $.  Then $\min(r(K_1),r(a_{n+1})) = \min(n,n+1) = n$,
while $r(K_2) \ge n+1$, so that $\min(r(K_2),r(a_{n+1})) = n + 1$.  Since $n$ and $n+1$ have different
parities, it follows by the theorem that $K_1 \wedge a_{n+1}$ and $K_2 \wedge a_{n+1}$ have different outcomes.

Now let $G$ and $H$ be impartial games, and let $n = \min(r(G),r(H))$.  Then for every $k$,
\[ \min(r(G),r(H),r(a_k)) = \min(r(a_n),r(a_k)),\]
so that $G \wedge H \wedge a_k$ has the same outcome as $a_n \wedge a_k$ for all $k$.  But
then since $G \wedge H \wedge a_k = (G \wedge H) \wedge a_k$, it follows by the previous paragraph
that $(G \wedge H)$ and $a_n$ must have the same remoteness.  But since the remoteness of
$a_n$ is $n$, $r(G \wedge H)$ must also be $n = \min(r(G),r(H))$.
\end{proof}

Using these rules, we can evaluate a position of All the King's Horses using the following table
showing the remoteness of each location:
\begin{center}
\begin{tabular}{|c|c|c|c|c|c|c|c|c|c|c|c|c|c|c|c}
\hline
0 & 0 & 1 & 1 & 2 & 2 & 3 & 3 & 4 & 4 & 5 & 5 & 6 & 6 & 7 & 7 \\
\hline
0 & 0 & 1 & 1 & 2 & 2 & 3 & 3 & 4 & 4 & 5 & 5 & 6 & 6 & 7 & 7 \\
\hline
1 & 1 & 1 & 1 & \textcircled{3} & 3 & 3 & 3 & 5 & 5 & 5 & 5 & 7 & 7 & 7 & 7 \\
\hline
1 & 1 & 1 & 3 & 3 & \textcircled{3} & 3 & 5 & 5 & 5 & 5 & 7 & 7 & 7 & 7 & 9 \\
\hline
2 & \textcircled{2} & 3 & 3 & 4 & 4 & 5 & 5 & \textcircled{6} & 6 & 7 & 7 & 8 & 8 & 9 & 9 \\
\hline
2 & 2 & 3 & 3 & 4 & 4 & 5 & 5 & 6 & 6 & 7 & 7 & 8 & 8 & 9 & 9 \\
\hline
3 & 3 & 3 & 3 & 5 & 5 & 5 & 5 & 7 & 7 & 7 & 7 & 9 & 9 & 9 & 9 \\
\hline
3 & 3 & 3 & 5 & 5 & 5 & 5 & 7 & 7 & 7 & 7 & 9 & 9 & 9 & 9 & 11 \\
\hline
4 & 4 & 5 & 5 & 6 & 6 & 7 & 7 & 8 & 8 & 9 & 9 & 10 & 10 & 11 & 11 \\
\hline
4 & 4 & 5 & 5 & 6 & 6 & 7 & 7 & 8 & 8 & 9 & 9 & 10 & 10 & 11 & 11 \\
\hline
5 & 5 & 5 & 5 & 7 & 7 & 7 & 7 & 9 & 9 & 9 & 9 & 11 & 11 & 11 & 11 \\
\hline
5 & 5 & 5 & 7 & 7 & 7 & 7 & 9 & 9 & 9 & 9 & 11 & 11 & 11 & 11 & 13 \\
\hline
6 & 6 & 7 & 7 & 8 & 8 & 9 & 9 & 10 & 10 & 11 & 11 & 12 & 12 & 13 & 13 \\
\hline
6 & 6 & 7 & 7 & 8 & 8 & 9 & 9 & 10 & 10 & 11 & 11 & 12 & 12 & 13 & 13 \\
\hline
7 & 7 & 7 & 7 & 9 & 9 & 9 & 9 & 11 & 11 & 11 & 11 & 13 & 13 & 13 & 13 \\
\hline
7 & 7 & 7 & 9 & 9 & 9 & 9 & 11 & 11 & 11 & 11 & 13 & 13 & 13 & 13 & 15 \\
\end{tabular}
\end{center}
So for instance, if there are pieces on the circled positions, then 
the combined remoteness is
\[ \min(3,3,2,6) = 2,\]
and $2$ is even, so the combined position is a win for the second-player.

The full partizan theory of joins isn't much more complicated than the impartial theory,
because of the fact that play necessarily alternates in each component (unlike in the
theory of sums, where a player might make two moves in a component without an intervening
move by the opponent).

A third operation, analogous to sums and joins, is the \emph{union}, defined recursively as
\[ G \vee H = \{G^L \vee H, G^L \vee H^L, G \vee H^L|G^R \vee H, G^R \vee H^R, G \vee H^R\}\]
In a union of two games, you can move in one or both components.  More generally, in a union
of $n$ games, you can on your turn move in any (nonempty) set of components.  The corresponding
variant of All the King's Horses is \emph{Some of the King's Horses}, in which you can move
any positive number of the horses, on each turn.

For impartial games, the theory of unions turns out to be trivial: a union of two games is a second-player
win if and only if both are second-player wins themselves.  If $G$ and $H$ are both second-player wins, then any move in $G$, $H$, or both, will result in at least
one of $G$ and $H$ being replaced with a first-player win - and by induction such a union is itself a first player win.
On the other hand, if at least one of $G$ and $H$ is a first-player win, then the first player to move
in $G \vee H$ can just move in whichever components are not second-player wins, creating a position whose every
component is a second-player win.

So to analyze Some of the King's Horses, we only need to mark whether each position is a first-player win or a second-player win:
\begin{center}
\begin{tabular}{|c|c|c|c|c|c|c|c}
\hline
2 & 2 & 1 & 1 & 2 & 2 & 1 & 1 \\
\hline
2 & 2 & 1 & 1 & 2 & 2 & 1 & 1 \\
\hline
1 & 1 & 1 & 1 & 1 & 1 & 1 & 1 \\
\hline
1 & 1 & 1 & 1 & 1 & 1 & 1 & 1 \\
\hline
2 & 2 & 1 & 1 & 2 & 2 & 1 & 1 \\
\hline
2 & 2 & 1 & 1 & 2 & 2 & 1 & 1 \\
\hline
1 & 1 & 1 & 1 & 1 & 1 & 1 & 1 \\
\hline
1 & 1 & 1 & 1 & 1 & 1 & 1 & 1 \\
\end{tabular}
\end{center}

\section{Highlights from \emph{Winning Ways} Part 2}
The entire second volume of \emph{Winning Ways} is an exposition of alternate theories to the
standard partizan theory.

\subsection{Unions of partizan games}
In the partizan case, unions are much more interesting.  Without proofs, here is a summary of what happens, taken
from Chapter 10 of \emph{Winning Ways}:
\begin{itemize}
\item To each game, we associate an expression of the form $x_ny_m$ where $x$ and $y$ are dyadic rationals and
$n$ and $m$ are nonnegative integers.  The $x_n$ part is the ``left tally'' consisting of a ``toll'' of $x$
and a ``timer'' $n$, and similarly $y_m$ is the ``right tally.''  The expression $x$ is short for
$x_0x_0$.
\item These expressions are added as follows:
\[ x_ny_m + w_iz_j = (x+w)_{\max(n,i)}(y+z)_{\max(m,j)}.\]
\item In a position with value $x_ny_m$, if Left goes first then she wins iff $x > 0$
or $x = 0$ and $n$ is odd.  If Right goes first then he wins iff $y < 0$ or $y = 0$ and $m$ is odd.
\end{itemize}
Given a game $G = \{G^L|G^R\}$, the tallies can be found by the following complicated procedure, copied verbatim
out of page 308 of \emph{Winning Ways}:
\begin{quote}
\begin{center}\emph{To find the tallies from options:}\end{center}
Shortlist all the $G^L$ with the GREATEST RIGHT toll, and all the $G^R$ with the LEAST LEFT toll.  Then on each side select
the tally with the LARGEST EVEN timer if there is one, and otherwise the LEAST ODD timer, obtaining the form
\[ G = \{\ldots x_a|y_b \ldots\}.\]
\begin{itemize}
\item If $x > y$ (HOT), the tallies are $x_{a+1}y_{b+1}$.
\item If $x < y$ (COLD), $G$ is the simplest number between $x$ and $y$, including $x$ as a possibility just if $a$ is odd,
$y$ just if $b$ is odd.
\item If $x = y$ (TEPID), try $x_{a+1}y_{b+1}$.  But if \emph{just one} of $a+1$ and $b+1$ is an even number,
increase the other (if necessary) by just enough to make it a larger odd number.  If \emph{both} are even, replace each of
them by 0.
\end{itemize}
\end{quote}
Here they are identifying a number $z$ with tallies $z_0z_0$.  If I understand \emph{Winning Ways} correctly,
the cases where there are no left options or no right options fall under the COLD case.

\subsection{Loopy games}
Another part of \emph{Winning Ways} Volume 2 considers \emph{loopy} games, which have no guarantee of ever
ending.  The situation where play continues indefinitely are \emph{draws}, are ties by default.  However,
we actually allow games to specify the winner of every infinite sequence of plays.  Given a game
$\gamma$, the variants $\gamma^+$ and $\gamma^-$ are the games formed by resolving all ties in favor of
Left and Right, respectively.

Sums of infinite games are defined in the usual way, though to specify the winner,
the following rules are used:
\begin{itemize}
\item If the sum of the games comes to an end, the winner is decided by the normal rule, as usual.
\item Otherwise, if Left or Right wins every component in which play never came to an end, then Left or Right wins the sum.
\item Otherwise the game is a tie.
\end{itemize}

Given sums, we define equivalence by $G = H$ if $G + K$ and $H + K$ have the same outcome under perfect play
for every loopy game $K$.

A \emph{stopper} is a game which is guaranteed to end when played in isolation, because it has no infinite
\emph{alternating} sequences of play, like
\[ G \to G^L \to G^{LR} \to G^{LRL} \to G^{LRLR} \to \cdots\]
Finite stoppers have canonical forms in the same way that loopfree partizan games do.

For most (but not all\footnote{For some exceptional game $\gamma$, $\gamma^+$ and $\gamma^-$
can not be taken to be stoppers, but this does not happen for most of the situations considered in \emph{Winning Ways}.})
loopy games $\gamma$, there exist stoppers $s$ and $t$ such that
\[ \gamma^+ = s^+ \text{ and } \gamma^- = t^-.\]
These games are called the \emph{onside} and \emph{offside} of $\gamma$ respectively, and we
write $\gamma = s\&t$ to indicate this relationship.  These stoppers can be found by the operation
of ``sidling'' described on pages 338-342 of \emph{Winning Ways}.  It is always the case that $s \ge t$.
When $\gamma$ is already a stopper,
$s$ and $t$ can be taken to be $\gamma$.  

Given two games $s\&t$ and $x\&y$, the sum $(s\&t) + (x\&y)$ is $u\&v$ where
$u$ is the \emph{upsum} of $s$ and $x$, while $v$ is the \emph{downsum}
of $t$ and $y$.  The upsum of two games is the onside of their sum,
and the downsum is the offside of their sum.

\subsection{Mis\`ere games}
\emph{Winning ways} Chapter 13 ``Survival in the Lost World'' and \emph{On Numbers and Games} Chapter 12 ``How to Lose when you Must''
both consider the theory of impartial mis\`ere games.  These are exactly like normal impartial games, except
that we play by a different rule, the \emph{mis\`ere rule} in which the last player able to move loses.  The theory
turns out to be far more complicated and lest satisfactory than the Sprague-Grundy theory for normal impartial games.
For games $G$ and $H$, Conway says that $G$ is \emph{like} $H$ if $G + K$ and $H + K$ have the same mis\`ere outcome
for all $K$, and then goes on to show that every game $G$ has a canonical simplest form, modulo this relation.
However, the reductions allowed are not very effective, in the sense that the number of mis\`ere impartial games born on
day $n$ grows astronomically, like the sequence
\[ \left\lceil \gamma_0 \right\rceil, \left\lceil  2^{\gamma_0} \right\rceil, \left\lceil  2^{2^{\gamma_0}} \right\rceil, \left\lceil  2^{2^{2^{\gamma_0}}}\right\rceil,
\ldots\]
for $\gamma_0 \approx 0.149027$ (see page 152 of \emph{ONAG}).
Conway is able to give more complete analyses of certain ``tame'' games which behave similar to mis`ere Nim positions,
and \emph{Winning Ways} contains additional comments about games that are almost tame
but in general, the theory is very spotty.  For example, these results do not provide a complete analysis
of Mis\`ere Kayles.

\section{Mis\`ere Indistinguishability Quotients}
However, a solution of Mis\`ere Kayles was obtained through other means by William Sibert.  Sibert found a complete description
of the Kayles positions for which mis\`ere outcome differs from normal outcome.  His solution can be found on page
446-451 of \emph{Winning Ways}.

Let $\mathcal{K}$ be the set of all Kayles positions.  We say that $G$ and $H \in \mathcal{K}$ are \emph{indistinguishable}
if $G + X$ and $H + X$ have the same mis\`ere outcome for every $X$ \emph{in $\mathcal{K}$}.  If we let $X$ range over
\emph{all} impartial games, this would be the same as Conway's relation $G$ ``is like'' $H$.  By limiting $X$ to range
over only positions that occur in Kayles, the equivalence relation becomes coarser, and the quotient space becomes
smaller.  In fact, using Sibert's solution, one can show that the quotient space has size 48.  An alternate way of describing Sibert's
solution is to give a description of this monoid, a table showing which equivalence classes have which outcomes, and a table
showing which element of the monoid corresponds to a Kayles row of each possible length.

This sort of analysis has been extended to many other mis\`ere games by Plambeck, Siegel, and others.  For a given class of
mis\`ere games, let $\mathcal{G}$ be the closure of this class under addition.  Then for $X, Y \in \mathcal{G}$, we say
that $X$ and $Y$ are \emph{indistinguishable} if $X + Z$ and $Y + Z$ have the same mis\`ere outcome for all $z \in \mathcal{G}$.
We then let the \emph{indistinguishability quotient} be $\mathcal{G}$ modulo indistinguishability.  The point of this construction
is that
\begin{itemize}
\item The indistinguishablity quotient is a monoid, and there is a natural surjective monoid homomorphism (the ``pretending function'') from $\mathcal{G}$
(as a monoid with addition)
to the indistinguishability quotient.
\item There is a map from the indistinguishability quotient to the set of outcomes, whose composition with the pretending function yields
the map from games to their outcomes.
\end{itemize}
Using these two maps, we can then analyze any sum of games in $\mathcal{G}$, assuming the structure of the indistinguishability quotient is manageable.

For many cases, like Kayles, the indistinguishability quotient is finite.  In fact Aaron Siegel has written software to
calculate the indistinguishability quotient when it is finite, for a large class of games.
This seems to be the best way to solve or analyze mis\`ere games so far.

\section{Indistinguishability in General}
The general setup of (additive) combinatorial game theory could be described as follows: we have
a collection of \emph{games,} each of which has an \emph{outcome}.  Additionally, we have various \emph{operations} - ways of combining games.
We want to characterize each game with a simpler object, a \emph{value}, satisfying two conditions.  First of all, the outcome
of a game must be determined by the value, and second, the value of a combination of games must be determined by the values of the games
being combined.  The the value of a game contains all the information about the game that we care about, and two games having
the same value can be considered \emph{equivalent}.

Our goal is to make the set of values as simple and small as possible.  We first throw out values that correspond to no games,
making the map from games to values a surjection.  Then the set of values becomes the quotient space of games modulo equivalence.
This quotient space will be smallest when the equivalence relation is coarsest.

However, there are two requirements on the equivalence relation.  First of all, it needs to respect outcomes: if two games are
equivalent, then they must have the same outcome.  And second, it must be \emph{compatible} with the operations on games, so that
the operations are well-defined on the quotient space.

\emph{Indistinguishability} is the uniqe coarsest equivalence relation satisfying these properties, and the \emph{indistinguishability quotient}
of games modulo indistinguishability is thus the smallest set of values that are usable.  It thus provides a canonical and optimal
solution to the construction of the set of ``values.''

The basic idea of indistinguishability is that two games should be indistinguishable if they are fully \emph{interchangeable},
meaning that they can be exchanged in any context within a larger combination of games, without changing the outcome of the entire combination.
For example if $G$ and $H$ are two indistinguishable partizan games, then
\begin{itemize}
\item $G$ and $H$ must have the same outcome.
\item $23 + G + \downarrow$ and $23 + H + \downarrow$ must have the same outcome.
\item $\{17*|G,6 - G\}$ and $\{17*|H,6 - H\}$ must have the same outcome
\item And so on\ldots
\end{itemize}
Conversely, if $G$ and $H$ are not indistinguishable, then there must be some context in which they cannot be interchanged.

The notion of indistinguishability is a relative one, that depends on the class of games being considered, the map from games to
outcomes, and the set of operations being considered.  Restricting the class of games makes indistinguishability coarser,
which is how mis\`ere indistinguishability quotients are able to solve games like Mis\`ere Kayles, even when we cannot
classify positions of Mis\`ere Kayles up to indistinguishability in the broader context of all mis\`ere impartial games.

Similarly, adding new operations into the mix makes indistinguishability finer.  In the case of partizan games, by a lucky coincidence
indistinguishability for the operation of addition alone is already compatible with negation and game-construction, so adding in
these other two operations does not change indistinguishability.  In other contexts this might not always work.

While never defined formally, the notion of indistinguishability is implicit in every chapter of the second volume of \emph{Winning Ways}.
For example, one can show that if our class of games is partizan games and our operation is unions, then two games $G$
and $H$ are indistinguishable if and only if they have the same tally.  Similarly, if we are working with
impartial games and joins, then two games $G$ and $H$ are indistinguishable if and only if they have the same remoteness
(this follows by Theorem~\ref{associndist} below and what was shown in the first paragraph of the proof of Corollary~\ref{remoteness2} above).
For mis\`ere impartial games, our indistinguishability agrees with the usual definition used by Siegel and Plambeck, because of
Theorem~\ref{associndist} below.  And for the standard theory of sums of partizan games, indistinguishability
will just be the standard notion of equality that we have used so far.

To formally define indistinguishability, we need some notation.  Let $S$ be a set of ``games,'' $O$ a set of ``outcomes,''
and $\outcome:S \to O$ a map which assigns an outcome to each game.  Let $f_1, \ldots, f_k$ be ``operations''
$f_i:S^{n_i} \to S$ on the set of games.
\begin{theorem}\label{indist}
There is a unique \emph{largest}
equivalence relation $\sim$ on $S$ having the following properties:
\begin{description}
\item[(a)] If $x \sim y$ then $\outcome(x) = \outcome(y)$.
\item[(b)] If $1 \le i \le k$, and if $x_1, \ldots, x_{n_i}, y_1, \ldots, y_{n_i}$ are games in $S$
for which $x_j \sim y_j$ for every $1 \le j \le n_i$, then $f_i(x_1,\ldots,x_{n_i}) \sim f_i(y_1,\ldots,y_{n_i})$.
\end{description}
\end{theorem}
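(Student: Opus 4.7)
The plan is to construct $\sim$ directly as the join, in the lattice of equivalence relations on $S$, of the family $\mathcal{E}$ of all equivalence relations satisfying (a) and (b). Uniqueness is automatic once we show a largest exists: two maximum elements of a partially ordered set must coincide. So the content lies in the existence.

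First I would note that $\mathcal{E}$ is nonempty, since the identity relation on $S$ trivially satisfies (a) and (b). Then I would define a relation $\sim$ on $S$ by declaring $x \sim y$ to hold iff there is a finite chain
\[ x = z_0, z_1, z_2, \ldots, z_m = y \]
such that for each $i$ there is some equivalence relation ${\sim_{\alpha_i}} \in \mathcal{E}$ with $z_i \sim_{\alpha_i} z_{i+1}$. In other words, $\sim$ is the transitive closure of the union $\bigcup \mathcal{E}$. Reflexivity and symmetry are immediate (each $\sim_\alpha$ is reflexive and symmetric, and transitive closure preserves symmetry), and transitivity holds by construction, so $\sim$ is an equivalence relation.

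Next I would verify property (a). Given $x \sim y$ via a chain as above, each step $z_i \sim_{\alpha_i} z_{i+1}$ forces $\outcome(z_i) = \outcome(z_{i+1})$, so by induction along the chain $\outcome(x) = \outcome(y)$.

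The step I expect to be the main obstacle is property (b), since property (b) is not automatically preserved under taking unions of equivalence relations. The trick is to change one coordinate at a time. Suppose $x_j \sim y_j$ for $1 \le j \le n_i$, witnessed for each $j$ by a chain
\[ x_j = z_{j,0}, z_{j,1}, \ldots, z_{j,m_j} = y_j \]
with $z_{j,\ell} \sim_{\beta_{j,\ell}} z_{j,\ell+1}$ for some ${\sim_{\beta_{j,\ell}}} \in \mathcal{E}$. I would interpolate between $f_i(x_1,\ldots,x_{n_i})$ and $f_i(y_1,\ldots,y_{n_i})$ in two layers: an outer layer that replaces the $j$-th slot one value of $j$ at a time, and an inner layer that walks along the chain in the $j$-th slot. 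Each individual inner step changes a single coordinate, say from $z_{j,\ell}$ to $z_{j,\ell+1}$, while leaving all other coordinates fixed. Because ${\sim_{\beta_{j,\ell}}}$ is itself an equivalence relation satisfying (b), and because each unchanged coordinate is related to itself by reflexivity of $\sim_{\beta_{j,\ell}}$, applying (b) to $\sim_{\beta_{j,\ell}}$ gives that the two consecutive interpolants are $\sim_{\beta_{j,\ell}}$-related, hence are linked by a single step of the union, hence are $\sim$-related. Concatenating all these single steps yields $f_i(x_1,\ldots,x_{n_i}) \sim f_i(y_1,\ldots,y_{n_i})$.

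Finally I would close the argument by noting that $\sim \in \mathcal{E}$ by the previous paragraphs, and by construction every ${\sim'} \in \mathcal{E}$ is contained in $\sim$ (take the one-step chain $z_0 = x, z_1 = y$ with $\alpha_0 = {\sim'}$). Hence $\sim$ is the maximum of $\mathcal{E}$ and, being the maximum of a poset, is unique.
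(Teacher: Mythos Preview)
Your proof is correct, but it takes a genuinely different route from the paper's. The paper does \emph{not} take the join in the lattice of equivalence relations. Instead, it first replaces condition (b) by its one-coordinate-at-a-time form (your inner-step trick, essentially), then passes to complements: the complements of relations satisfying (a) and the one-coordinate conditions form a class $\mathcal{T}$ closed under arbitrary intersection, so $\mathcal{T}$ has a unique smallest element $\not\sim$, and its complement $\sim$ is automatically the largest \emph{relation} (not just equivalence relation) satisfying the conditions. The paper then has to work to show that $\sim$ is reflexive, symmetric, and transitive, the transitivity argument being the clever part. Your approach reverses the division of labor: by taking the transitive closure of the union of $\mathcal{E}$, you get an equivalence relation for free, but you then have to work to verify (b), which you do by changing one slot at a time and exploiting reflexivity of each $\sim_{\beta_{j,\ell}}$. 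Your argument is more concrete and self-contained; the paper's is more in the spirit of universal algebra (closure properties of classes of relations) and makes explicit that the largest \emph{relation} with (a) and the coordinate conditions is already an equivalence relation, a slightly stronger conclusion.
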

So if we have just one operation, say $\oplus$, then $\sim$ is the largest equivalence relation
such that
\[ x_1 \sim y_1 \text{ and } x_2 \sim y_2 \implies x_1 \oplus x_2 \sim y_1 \oplus y_2,\]
and such that $x \sim y$ implies that $x$ and $y$ have the same outcome.
These conditions are equivalent to the claim
that $\oplus$ and $\outcome(\cdot)$ are well-defined on the quotient space of $\sim$.
\begin{proof}
For notational simplicity, we assume that there is only one $f$, and that its arity is 2: $f:S^2 \to S$.  The proof
works the same for more general situations.

Note that as long as $\sim$ is an equivalence relation, (b) is logically equivalent to the following assumptions
\begin{description}
\item[(c1)] If $x \sim x'$, then $f(x,y) \sim f(x',y)$.
\item[(c2)] If $y \sim y'$, then $f(x,y) \sim f(x,y')$.
\end{description}
For if
(b) is satisfied, then (c1) and (c2) both follow by reflexitivity of $\sim$.  On the other hand, given
(c1) and (c2), $x_1 \sim y_1$ and $x_2 \sim y_2$ imply that
\[ f(x_1,y_1) \sim f(x_2,y_1) \sim f(x_2,y_2),\]
using (c1) and (c2) for the first and second $\sim$, so that by transitivity $f(x_1,y_1) \sim f(x_2,y_2)$.
These proofs easily generalize to the case where there is more than one $f$ or higher arities, though we need
to replace (c1) and (c2) with $n_1 + n_2 + \cdots + n_k$ separate conditions, one for each parameter
of each function.

We show that there is a unique largest relation satisfying (a), (c1) and (c2), and that it is an equivalence relation.
This clearly implies our desired result.

Let $\mathcal{T}$ be the class of all relations $R$ satisfying
\begin{description}
\item[(A)] If $\outcome(x) \ne \outcome(y)$, then $x ~R~ y$.
\item[(C1)] If $f(x,a) ~R~ f(y,a)$, then $x ~R~ y$.
\item[(C2)] If $f(a,x) ~R~ f(a,y)$, then $x ~R~ y$.
\end{description}
It's clear that $R$ satisfies (A), (C1), and (C2) if an only if
the complement of $R$ satisfies (a), (c1), and (c2).  Moreover,
there is a unique smallest element $\not\sim$ of $\mathcal{T}$, the intersection
of all relations in $\mathcal{T}$, and its complement is the unique largest
relation satisfying (a), (c1), and (c2).  We need to show that the complement $\sim$
of this minimal relation $\not\sim$ is an equivalence relation.

First of all, the relation $\ne$ also satisfies (A), (C1), (C2).  By minimality
of $\not\sim$, it follows that $x \not\sim y \implies x \ne y$, i.e., $x = y \implies x \sim y$.
So $\sim$ is reflexive.

Second of all, if $R$ is any relation in $\mathcal{T}$, then the transpose relation $R'$ given by
$x~R'~y \iff y~R~x$ also satisfies (A), (C1), and (C2).  Thus $\not\sim$ must lie inside
its transpose: $x \not\sim y \implies y \not\sim x$,  and therefore $\sim$ is symmetric.

Finally, to see that $\not\sim$ is transitive, let $R$ be the relation given by
\[ x~R~z \iff \forall y \in S:~x\not\sim y \vee y \not\sim z\]
where $\vee$ here means logical ``or.''  I claim that $R \in \mathcal{T}$.  Indeed
\[ \outcome(x) \ne \outcome(z) \implies \forall y \in S:~ \outcome(x) \ne \outcome(y) \vee \outcome(y) \ne \outcome(z)
\]\[\implies  \forall y \in S:~ x \not \sim y \vee y \not \sim z \implies x ~ R ~ z\]
so (A) is satisfied.  Similarly, for (C1):
\[ f(x,a) ~ R ~ f(z,a) \implies \forall y \in S:~f(x,a) \not\sim y \vee y \not\sim f(z,a)
\]\[ \implies \forall y \in S:~f(x,a) \not\sim f(y,a) \vee f(y,a) \not\sim f(z,a) \implies \]\[
\forall y \in S:~x \not \sim y \vee y \not\sim z \implies x ~ R ~ z,\]
using the fact that $\not\sim$ satisfies (C1).  A similar argument shows that
$R$ satisfies (C2).  Then by minimality of $\not\sim$, we see that $x\not\sim y \implies x~R~y$, i.e.,
\[ x \not \sim z \implies \forall y \in S:~ x\not\sim y \vee y \not\sim z\]
which simply means that $\sim$ is transitive.
\end{proof}

\begin{definition}
Given a class of games and a list of operations on games, we define \emph{indistinguishability (with respect to the given operations)}
to be the equivalence relation from the previous theorem, and denote it as
$\approx_{f_1,f_2,\ldots,f_k}$.  The quotient space of $S$ is the \emph{indistinguishability quotient}.
\end{definition}

In the case where there is a single binary operation, turning the class of games into a commutative monoid,
indistinguishability has a simple definition:
\begin{theorem}\label{associndist}
Suppose that $\otimes:G \times G \to G$ is commutative and associative and has an identity $e$.  Then $G, H \in S$ are indistinguishable (with
respect to $\otimes$)
if and only if $\outcome(G \otimes X) = \outcome(G \otimes X)$ for every $X \in S$.
\end{theorem}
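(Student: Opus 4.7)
The plan is to show the two relations coincide by proving each is contained in the other. Define the candidate relation $\sim$ by $G \sim H$ iff $\outcome(G \otimes X) = \outcome(H \otimes X)$ for every $X \in S$. I need to show $\sim$ equals the indistinguishability relation $\approx_\otimes$ given by Theorem~\ref{indist}.

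For the direction $\approx_\otimes \subseteq \sim$: suppose $G \approx_\otimes H$. For any fixed $X \in S$, reflexivity gives $X \approx_\otimes X$, so property (b) of Theorem~\ref{indist} yields $G \otimes X \approx_\otimes H \otimes X$. Then property (a) gives $\outcome(G \otimes X) = \outcome(H \otimes X)$. Since $X$ was arbitrary, $G \sim H$.

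For the direction $\sim \subseteq \approx_\otimes$: by the maximality of $\approx_\otimes$, it suffices to check that $\sim$ itself is an equivalence relation satisfying conditions (a) and (b) of Theorem~\ref{indist}. Reflexivity, symmetry, and transitivity of $\sim$ are immediate from the corresponding properties of equality of outcomes. For (a), take $X = e$: then $\outcome(G) = \outcome(G \otimes e) = \outcome(H \otimes e) = \outcome(H)$. The main step, and the only place where commutativity and associativity are used in an essential way, is verifying (b): given $G_1 \sim H_1$ and $G_2 \sim H_2$, I want to conclude $G_1 \otimes G_2 \sim H_1 \otimes H_2$. For any $X \in S$, I will chain
\[ \outcome((G_1 \otimes G_2) \otimes X) = \outcome(G_1 \otimes (G_2 \otimes X)) = \outcome(H_1 \otimes (G_2 \otimes X)), \]
using associativity and then applying $G_1 \sim H_1$ with the test element $G_2 \otimes X$. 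Next, rewriting $H_1 \otimes (G_2 \otimes X) = G_2 \otimes (H_1 \otimes X)$ via commutativity and associativity and applying $G_2 \sim H_2$ with the test element $H_1 \otimes X$ gives
\[ \outcome(G_2 \otimes (H_1 \otimes X)) = \outcome(H_2 \otimes (H_1 \otimes X)) = \outcome((H_1 \otimes H_2) \otimes X), \]
again by commutativity and associativity. Hence $G_1 \otimes G_2 \sim H_1 \otimes H_2$.

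The anticipated obstacle is nothing deep; it is just making sure the two successive substitutions in the verification of (b) really work with a single binary operation and without any extra hypotheses. The trick is that commutativity lets me move whichever factor I am currently substituting into the outermost position, and associativity lets me regroup so that the remaining factors form a single element of $S$ that I can feed in as the test ``$X$.'' Once that bookkeeping is clean, the rest is formal, and maximality of $\approx_\otimes$ finishes the argument.
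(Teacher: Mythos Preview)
Your proof is correct and follows essentially the same approach as the paper: define the candidate relation, verify it satisfies conditions (a) and (b) of Theorem~\ref{indist} (using the identity for (a) and the associativity/commutativity chain for (b)) to get one inclusion by maximality, and get the reverse inclusion by applying (b) with $X \approx_\otimes X$ and then (a). The only difference is cosmetic: you explicitly check that the candidate relation is an equivalence relation, which the paper leaves implicit.
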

\begin{proof}
Let $\rho$ be the relation $G\,\rho\,H$ iff $\outcome(G \otimes X) = \outcome(H \otimes X)$ for every $X \in S$.
I first claim that $\rho$ satisfies conditions (a) and (b) of Theorem~\ref{indist}.  For (a), note that
\[ G \,\rho\,H \Rightarrow \outcome(G \otimes e) = \outcome(H \otimes e).\]
But $G \otimes e = G$ and $H \otimes e = H$, so $G\,\rho\,H \Rightarrow \outcome(G) = \outcome(H)$.
For (b), suppose that $G \,\rho\, G'$ and $H\,\rho,H'$.  Then $G \otimes H \,\rho\, G' \otimes H'$, because for any
$X \in S$,
\[ \outcome((G \otimes H) \otimes X) = \outcome(G \otimes (H \otimes X)) = \outcome(G' \otimes (H \otimes X)) =\]\[
\outcome(H \otimes (G' \otimes X)) = \outcome(H' \otimes (G' \otimes X)) = \outcome((G' \otimes H') \otimes X).\]
It then follows that if $\sim$ is true indisinguishability, then $\sim$ must be coarser than $\rho$, i.e.,
$\rho \subseteq (\sim)$.  On the other hand, suppose that $G$ and $H$ are indistinguishable, $G \sim H$.  Then
for any $X \in S$ we must have
\[ G \otimes X \sim H \otimes X,\]
so that $\outcome(G \otimes X) = \outcome(H \otimes X)$.  Thus $(\sim) \subseteq \rho$, and so $\rho$ is true
indistinguishability and we are done.
\end{proof}

For the standard theory of sums of normal play partizan games, indistinguishability is just equality:
\begin{theorem}
In the class of partizan games with normal outcomes, indistinguishability with respect to addition
is equality.
\end{theorem}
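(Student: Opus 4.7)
The plan is to exploit Theorem~\ref{associndist}, which tells us that since addition on partizan games is commutative, associative, and has identity $0$, indistinguishability with respect to addition amounts to the condition that $G$ and $H$ are indistinguishable iff $G + X$ and $H + X$ have the same outcome for every partizan game $X$. It then remains to show that this coincides with the equality relation $G = H$ defined earlier (namely, $G - H = 0$, meaning $G - H$ is a second-player win).

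First I would handle the easy direction: if $G = H$, then by Theorem~\ref{congruences}(a) we have $G + X = H + X$ for every $X$, and since the outcome of any game is determined by its comparison to $0$, equal games have equal outcomes. Thus $G + X$ and $H + X$ have the same outcome for all $X$, so $G$ and $H$ are indistinguishable.

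For the converse, suppose $G + X$ and $H + X$ have the same outcome for every partizan game $X$. The key trick is to specialize to $X = -H$, which is a legitimate partizan game by our constructions. Then $H + X = H + (-H) = H - H$, which by Lemma~\ref{outcomesums}(e) equals $0$ and therefore has outcome ``second player wins.'' Hence $G + X = G - H$ must also have outcome ``second player wins,'' i.e., $G - H = 0$ in the CGT sense, which by definition means $G = H$.

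The hard part is essentially just invoking the right earlier results in the right order; there is no deep obstacle because the partizan theory we have developed has been engineered precisely so that the equality relation already agrees with indistinguishability. The only subtlety worth flagging is the implicit use of the fact that $-H$ is available as a test game $X$, together with $H + (-H) = 0$ from Lemma~\ref{outcomesums}(e); these together let the universal quantifier over $X$ collapse to a single well-chosen instance.
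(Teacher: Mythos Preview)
Your proposal is correct and essentially identical to the paper's proof: both invoke Theorem~\ref{associndist} to reduce indistinguishability to the condition on outcomes of $G+X$ and $H+X$, then specialize $X$ to a negative (you use $-H$, the paper uses $-G$) and use $G-G=0$ from Lemma~\ref{outcomesums}(e) to force equality. The easy direction is handled the same way in both.
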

\begin{proof}
By the previous theorem, $G$ and $H$ are indistinguishable if and only if $G + X$ and $H + X$ have the same
outcome for all $X$.  Taking $X \equiv -G$, we see that $G + (-G)$ is a second player win (a zero game), and
therefore $H + (-G)$ must also be a second player win.  But this is the definition of equality, so $G = H$.

Conversely, if $G = H$, then $G + X$ and $H + X$ are equal, and so have the same outcome, for any $X$.
\end{proof}
Note that this is indistinguishability for the operation of \emph{addition}.  We could also throw the operations
of negation and game-building ($\{\cdots|\cdots\}$) into the mix, but they would not change indistinguishability,
because they are already compatible with equality, by Theorem~\ref{congruences}.

In the case where there is a poset structure on the class of outcomes $O$, the indistinguishability quotient
inherits a partial order, by the following theorem:
\begin{theorem}
Suppose $O$ has a partially ordered structure.  Then there is a maximum reflexive and transitive relation
$\lesssim$ on the set of games $S$ such that
\begin{itemize}
\item If $G \lesssim H$ then $\outcome(G) \le \outcome(H)$.
\item For every $i$, if $G_1, \ldots, G_{n_i}$ and $H_1, \ldots, H_{n_i}$ are such that
$G_j \lesssim H_j$ for every $j$, then $f_i(G_1,\ldots,G_{n_i}) \lesssim f_i(H_1,\ldots,H_{n_i})$.
\end{itemize}
Moreover, $G \lesssim H$ and $H \lesssim G$ if and only if $G$ and $H$ are indistinguishable.
\end{theorem}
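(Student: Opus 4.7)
The plan is to construct $\lesssim$ directly as a kind of ``maximum'' element of the class $\mathcal{P}$ of all reflexive transitive relations $R$ on $S$ satisfying the two monotonicity conditions (the outcome condition and the compatibility with each $f_i$). The class $\mathcal{P}$ is non-empty since the diagonal relation is easily seen to lie in it. I would then define $\lesssim$ by declaring $G \lesssim H$ if and only if there exists a finite chain $G = G_0, G_1, \ldots, G_n = H$ together with relations $R_1, \ldots, R_n \in \mathcal{P}$ such that $G_{k-1}\,R_k\,G_k$ for each $k$.

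Reflexivity and transitivity of $\lesssim$ are immediate (take chains of length zero, or concatenate). The outcome condition follows by applying transitivity of $\le$ in $O$ along the chain. The main obstacle is verifying compatibility with the operations $f_i$: given $G_j \lesssim H_j$ for $j = 1, \ldots, n_i$, each witnessed by its own chain and its own preorders in $\mathcal{P}$ (which may all be different), I need to produce a single chain from $f_i(G_1, \ldots, G_{n_i})$ to $f_i(H_1, \ldots, H_{n_i})$. The trick is interleaving: I change one coordinate at a time, passing from $f_i(\ldots, G_j^{(l-1)}, \ldots)$ to $f_i(\ldots, G_j^{(l)}, \ldots)$ using the relation $R_j^{(l)}$ from $G_j$'s chain, with all other coordinates related to themselves by the reflexivity of that same $R_j^{(l)}$. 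Since each $R_j^{(l)} \in \mathcal{P}$ individually satisfies the compatibility condition with $f_i$, every single step of the interleaved chain is valid. Thus $\lesssim \in \mathcal{P}$, and by construction any $R \in \mathcal{P}$ is contained in $\lesssim$ (use a length-one chain), so $\lesssim$ is the maximum.

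For the characterization in terms of indistinguishability, I would argue in two directions. First, indistinguishability $\approx$ itself lies in $\mathcal{P}$: condition (a) of Theorem~\ref{indist} gives $\outcome(G) = \outcome(H)$, which implies $\outcome(G) \le \outcome(H)$; condition (b) is exactly the $f_i$-compatibility; and $\approx$ is a preorder because it is an equivalence relation. By maximality of $\lesssim$, we get $\approx \;\subseteq\; \lesssim$, and by the symmetry of $\approx$ also $\approx \;\subseteq\; \gtrsim$, so $G \approx H$ implies $G \lesssim H$ and $H \lesssim G$. Conversely, let $\sim$ be the symmetric part of $\lesssim$, namely $G \sim H$ iff $G \lesssim H$ and $H \lesssim G$. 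This $\sim$ is an equivalence relation (reflexive and transitive from $\lesssim$, symmetric by construction); it satisfies condition (a) of Theorem~\ref{indist} because $\outcome(G) \le \outcome(H) \le \outcome(G)$ forces equality in the partial order $O$; and it satisfies (b) because $\lesssim$ satisfies the $f_i$-compatibility in each direction. So by maximality of $\approx$ we conclude $\sim \;\subseteq\; \approx$. Combining both inclusions finishes the proof.
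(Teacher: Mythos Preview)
Your proof is correct. The paper actually leaves this theorem as an exercise, hinting only that the argument should be ``completely analogous to the proof of Theorem~\ref{indist}'' and that for the last claim one should use that $\lesssim \cap \gtrsim$ satisfies (a) and (b) of Theorem~\ref{indist} while $\sim$ satisfies (a) and (b) of the present theorem.

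Your treatment of the indistinguishability characterization follows exactly this hint. For the existence of the maximum preorder, however, you take a genuinely different route from the one the paper points to. The proof of Theorem~\ref{indist} works by passing to complements: one shows that the class of relations satisfying the \emph{negated} conditions is closed under intersection, takes the smallest such relation, and then proves its complement is an equivalence relation by exhibiting suitable members of the complement class. The intended analogue here would construct $\not\lesssim$ as the intersection of all relations satisfying the contrapositive conditions, and then verify reflexivity and transitivity of its complement. Your approach instead builds $\lesssim$ directly as the transitive closure of the union $\bigcup_{R \in \mathcal{P}} R$, realized concretely via heterogeneous chains, and checks the two monotonicity conditions by hand. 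The outcome condition is immediate from transitivity of $\le$ in $O$; the $f_i$-compatibility you handle with the coordinate-by-coordinate interleaving, which works precisely because every $R \in \mathcal{P}$ is reflexive. This is arguably more transparent than the complement method, and it makes the role of reflexivity explicit (it is what lets you hold all but one argument fixed at each step). The complement approach, on the other hand, has the virtue of making the existence of a maximum automatic and isolating reflexivity and transitivity as separate lemmas, mirroring the structure already established for $\approx$.
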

For example, in the case of partizan games, the four outcomes are arranged into a poset as in Figure~\ref{outcome-poset},
and this partial order gives rise to the $\le$ order on the class of games modulo equivalence.\footnote{But note that
throwing negation into the mix now breaks everything, because negation is order-reversing!  It's probably possible to
flag certain operations as being order-reversing, and make everything work out right.}
\begin{figure}[tb]
\begin{center}
\includegraphics[width=2in]
					{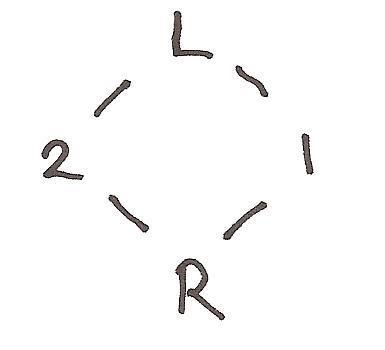}
\caption{From Left's point of view, $L$ is best, $R$ is worst, and $1$ and $2$ are inbetween,
and incomparable with each other.  Here $L$ denotes a win for Left, $R$ denotes a win for Right, $1$
denotes a win for the first player, and $2$ denotes a win for the second player.}
\label{outcome-poset}
\end{center}
\end{figure}
\begin{proof}
The proof is left as an exercise to the reader, though it seems like it is probably completely analogous
to the proof of Theorem~\ref{indist}.  To show that $\lesssim \cap \gtrsim$ is $\sim$, use the fact
that $\lesssim \cap \gtrsim$ satisfies (a) and (b) of Theorem~\ref{indist}, while $\sim$ satisfies
(a) and (b) of this theorem.
\end{proof}
In the case where we have a single commutative and associative operation with identity, we have
the following analog of Theorem~\ref{associndist}:
\begin{theorem}
With the setup of the previous theorem, if $\otimes$ is the sole operation, and $\otimes$ has an identity,
then $G \lesssim H$ if and only if $\outcome(G \otimes X) \le \outcome(H \otimes X)$ for all $X$.
\end{theorem}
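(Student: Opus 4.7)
The plan is to imitate the proof of Theorem~\ref{associndist} but with the partial order in place of equality. Define a candidate relation $\preceq$ on $S$ by
\[ G \preceq H \iff \outcome(G \otimes X) \le \outcome(H \otimes X) \text{ for every } X \in S. \]
I will show two things: first, that $\preceq$ satisfies the hypotheses of the previous theorem, so by maximality $\preceq \subseteq \lesssim$; second, that conversely $\lesssim \subseteq \preceq$.

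For the first direction, reflexivity and transitivity of $\preceq$ are immediate from the corresponding properties of $\le$ on $O$. To get the outcome-compatibility property, simply take $X = e$ and use that $G \otimes e = G$ and $H \otimes e = H$, giving $G \preceq H \Rightarrow \outcome(G) \le \outcome(H)$. The slightly less trivial point is the operation-compatibility: if $G \preceq G'$ and $H \preceq H'$, then for any $X$,
\[ \outcome((G \otimes H) \otimes X) = \outcome(G \otimes (H \otimes X)) \le \outcome(G' \otimes (H \otimes X)), \]
using $G \preceq G'$ with test element $H \otimes X$. Rewriting $G' \otimes (H \otimes X)$ as $H \otimes (G' \otimes X)$ via commutativity and associativity, applying $H \preceq H'$ with test element $G' \otimes X$, and then rearranging back yields
\[ \outcome(H \otimes (G' \otimes X)) \le \outcome(H' \otimes (G' \otimes X)) = \outcome((G' \otimes H') \otimes X). \]
Thus $G \otimes H \preceq G' \otimes H'$, and $\preceq$ satisfies both bulleted conditions of the previous theorem. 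By maximality, $G \preceq H \Rightarrow G \lesssim H$.

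For the reverse inclusion, assume $G \lesssim H$. By reflexivity of $\lesssim$ we have $X \lesssim X$ for every $X$, and then the operation-compatibility of $\lesssim$ gives $G \otimes X \lesssim H \otimes X$. The outcome-compatibility of $\lesssim$ then yields $\outcome(G \otimes X) \le \outcome(H \otimes X)$, for arbitrary $X$, i.e., $G \preceq H$. Combining the two inclusions gives $\lesssim \,=\, \preceq$, which is the claim. The argument is really just the analogue of Theorem~\ref{associndist} carried out in the ordered setting; the only step requiring any care is the operation-compatibility check, where commutativity is needed to feed the relation $H \preceq H'$ into the ``second slot'' of $\otimes$.
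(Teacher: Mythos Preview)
Your proof is correct and is exactly the intended argument: the paper leaves this result as an exercise to the reader, and the natural solution is precisely to rerun the proof of Theorem~\ref{associndist} with $\le$ in place of $=$, which is what you do. One small remark: the theorem statement only mentions that $\otimes$ has an identity, but your argument (like the proof of Theorem~\ref{associndist}) also uses commutativity and associativity; these hypotheses are clearly intended here as well, so your use of them is appropriate.
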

The proof is left as an exercise to the reader.

\part{Well-tempered Scoring Games}
\chapter{Introduction}
\section{Boolean games}
The combinatorial game theory discussed so far doesn't seem very relevant to the game \textsc{To Knot or Not to Knot}.
The winner of TKONTK is decided by neither the normal rule or the mis\`ere rule, but is instead specified
explicitly by the game.  On one hand, TKONTK feels impartial, because at each position, both players have
identical options, but on the other hand, the positions are clearly not symmetric between the two players - a position
can be a win for Ursula no matter who goes first, unlike any impartial game.
Moreover, our way of combining games is asymmetric, favoring King Lear in the case where each player
won a different component.

By the philosophy of indistinguishability, we should consider the class of all positions in TKONTK, and the operation
of connected sum, and should determine the indistinguishability quotient.  This enterprise is very complicated, so
we instead consider a \emph{larger} class of games, with a combinatorial definition, and apply the same methodology to them.
\begin{definition}
A \emph{Boolean game} born on day $n$ is
\begin{itemize}
\item One of the values \textsc{True} or \textsc{False} if $n = 0$.
\item A pair $(L,R)$ of two finite sets $L$ and $R$ of Boolean games born on day $n - 1$, if $n > 0$.  The elements
of $L$ and $R$ are called the \emph{left options} and \emph{right options} of $(L,R)$.
\end{itemize}
We consider a game born on day $0$ to have no options of either sort.

The sum $G \vee H$ of two Boolean games $G$ and $H$ born on days $n$ and $m$ is the logical OR of $G$ and $H$ when $n = m = 0$,
and is otherwise recursively defined as
\[ G \vee H = (\{G^L \vee H, G \vee H^L\},\{G^R \vee H, G \vee H^R\}),\]
where $G^L$ ranges over the left options of $G$, and so on.

The \emph{left outcome} of a Boolean game $G$ is \textsc{True} if $G = $\textsc{True}, or
some right option of $G$ has right outcome \textsc{True}.  Otherwise, the left outcome of $G$ is \textsc{False}.

Similarly, the \emph{right outcome} of a Boolean game $G$ is \textsc{False} if $G = $\textsc{False}, or
some left option of $G$ has left outcome \textsc{False}.  Otherwise, the left outcome of $G$ is \textsc{True}.
\end{definition}
In other words, a Boolean game is a game between two players that ends with either Left = True winning, or Right = False winning.
But we require that all sequences of play have a prescribed length.  In other words, if we make a gametree, every leaf must be at the same depth:
\begin{figure}[H]
\begin{center}
\includegraphics[width=5in]
					{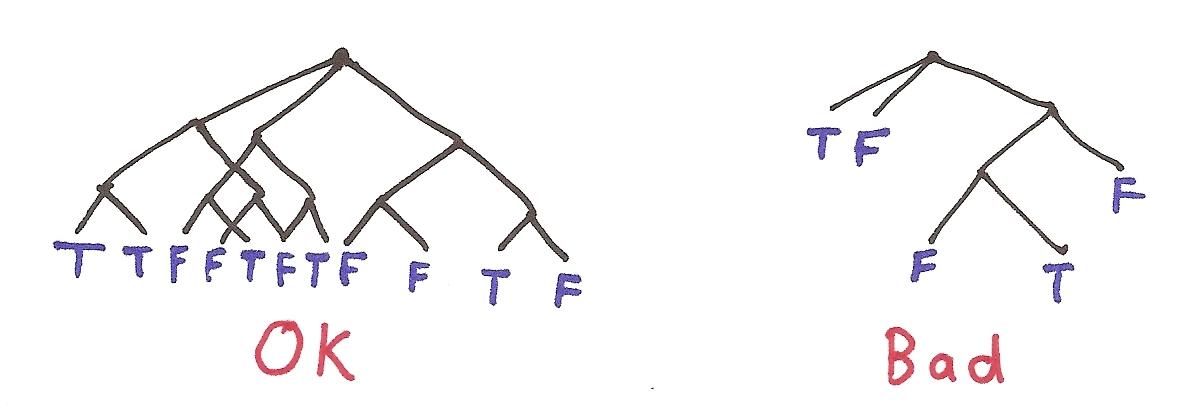}
\end{center}
\end{figure}
This includes the case of TKONTK, because the length of a game of TKONTK is a fixed number, namely the number of unresolved crossings initially present.

The indistinguishability-quotient program can be carried out for Boolean games, by brute force means.  When I first tried to analyze
these games, this was the approach that I took.  It turns out that there are exactly 37 types of Boolean games, modulo
indistinguishability.  Since addition of Boolean games is commutative and associative, the quotient space (of size 37) has a monoid structure,
and here is part of it, in my original notation:
\begin{center}
\begin{figure}[H]
\begin{center}
\begin{tabular}{l || l | l | l | l | l | l | l | l}
 & $00$ & $01^-$ & $01^+$ & $11$ & $02$ & $12^-$ & $12^+$ & $22$ \\
\hline
\hline
$00$ & $00$ & $01^-$ & $01^+$ & $11$ & $02$ & $12^-$ & $12^+$ & $22$ \\ \hline
$01^-$ & $01^-$ & $02$ & $12^-$ & $12^-$ & $12^+$ & $12^+$ & $22$ & $22$ \\ \hline
$01^+$ & $01^+$ & $12^-$ & $12^+$ & $12^+$ & $12^+$ & $22$ & $22$ & $22$ \\ \hline
$11$ & $11$ & $12^-$ & $12^+$ & $22$ & $12^+$ & $22$ & $22$ & $22$ \\ \hline
$02$ & $02$ & $12^+$ & $12^+$ & $12^+$ & $22$ & $22$ & $22$ & $22$ \\ \hline
$12^-$ & $12^-$ & $12^+$ & $22$ & $22$ & $22$ & $22$ & $22$ & $22$ \\ \hline
$12^+$ & $12^+$ & $22$ & $22$ & $22$ & $22$ & $22$ & $22$ & $22$ \\ \hline
$22$ & $22$ & $22$ & $22$ & $22$ & $22$ & $22$ & $22$ & $22$ \\
\end{tabular}
\caption{There is no clear rule governing this table, which is mostly verified by a long case-by-case analysis.  But compare
with Figure~\ref{cap-cup-tables} below!}
\label{old-school}
\end{center}
\end{figure}
\end{center}

Subsequently, I found a better way of describing Boolean games, by viewing them as part of a larger class of \emph{well-tempered scoring games}.
While the end result takes longer to prove, it seems like it gives better insight into what is actually happening. For instance, it helped me relate
the analysis of Boolean games to the standard theory of partizan games.  We will return to Boolean games in Chapter~\ref{chap:bool},
and give a much cleaner explanation of the mysterious 37 element monoid mentioned above.

\section{Games with scores}
A \emph{scoring game} is one in which the winner is determined by a final score rather than by the normal play rule or the mis\`ere rule.
In a loose sense this includes games like Go and Dots-and-Boxes.  Such games can be added in the usual way, by playing two in parallel.
The final score of a sum is obtained by adding the scores of the two summands.  This is loosely how independent positions
in Go and Dots-and-Boxes are added together.

Scoring games were first studied by John Milnor in a 1953 paper ``Sums of Positional Games'' in \emph{Contributions to the Theory of Games},
which was one of the earliest papers in unimpartial combinatorial game theory.  Milnor's paper
was followed in 1957 by Olof Hanner's paper ``Mean Play of Sums of Positional Games,'' which studied the mean values of games, well before the later
work of Conway, Guy, and Berlekamp.

The \emph{outcome} of a scoring game is the final score under perfect play (Left trying to maximize the score, Right trying to minimize the score).
There are actually two outcomes, the left outcome and the right outcome, depending on which player goes first.  Milnor and Hanner
only considered games in which there was a non-negative incentive to move, in the sense that the left outcome was always
as great as the right outcome - so each player would prefer to move rather than pass.
This class of games forms a group modulo indistinguishability, and is closely connected to the later theory of partizan games.

Many years later, in the 1990's, J. Mark Ettinger studied the broader class of all scoring games, and tried to show that scoring games
formed a cancellative monoid.  Ettinger refers to scoring games as ``positional games,'' following Milnor and Hanner's terminology.  However,
the term ``positional game'' is now a standard synonym for \emph{maker-breaker games}, like Tic-Tac-Toe or Hex.\footnote{These are games in which
players take turns placing pieces of their own color on the board, trying to make one of a prescribed list of configurations with their pieces.
In Tic-Tac-Toe, a player wins by having three pieces in a row.  In Hex, a player wins by having pieces in a connected path from
one side of the board to the other.}  Another name might be ``Milnor game,'' but Ettinger uses this to refer to the restricted
class of games studied by Milnor and Hanner, in which there is a nonnegative incentive to move.  So I will instead call the general class of games
``scoring games,'' following Richard Nowakowski's terminology in his \emph{History of Combinatorial Game Theory}.

To notationally separate scoring games from Conway's partizan games, we will use angle brackets rather than curly brackets to construct games.
For example
\[ X = \langle 0 | 4 \rangle\]
is a game in which Left can move to $0$ and Right can move
to $4$, with either move ending the game.  Similarly, $Y = \langle X, 4 | X, 0\rangle$ is a game in which Left can move to $4$
and Right can move to $0$ (with either move ending the game), but either player can also move to $X$.
To play $X$ and $Y$ together, we add the final scores, resulting in
\[ X + Y = \langle 0 + Y, X + X, X + 4 \,|\, 4 + Y, X + X, X + 0 \rangle =\]\[
\langle Y, \langle X \,|\, 4 + X \rangle, 4 + X \,|\, 4 + Y, \langle X \,|\, 4 + X \rangle, X \rangle,\]
where $4 + X = \langle 4 \,|\, 8 \rangle$ and $4 + Y = \langle 4 + X, 8 \,|\, 4 + X, 4 \rangle$.

Unlike the case of partizan games, we rule out games like
\[ \langle 3 \,|\, \rangle,\]
in which one player has options but the other does not.  Without this prohibition, we would need
an ad hoc rule for deciding the final outcome of $\langle 3 \,|\, \rangle$ in the case where Right goes first:
perhaps Right passes and lets Left move to 3, or perhaps Right gets a score of $0$ or $-\infty$.  Rather
than making an arbitrary rule, we follow Milnor, Hanner, and Ettinger and exclude this possibility.

\section{Fixed-length Scoring Games}
Unfortunately I have no idea how to deal with scoring games in general.
However, a nice theory falls out
if we restrict to
\emph{fixed-length scoring games} - those in which the duration of the game
from start to finish is the same under all lines of play.  The Boolean games defined in the previous section
are examples, if we identify \textsc{False} with 0, and \textsc{True} with 1.  So in particular, \textsc{To Knot or Not to Knot}
is an example.  But because its structure is opaque and unplayable, we present a couple alternative examples, that also
demonstrate a wider range of final scores.

\emph{Mercenary Clobber} is a variant of Clobber (see Section~\ref{sec:examples}) in which players have two types of moves allowed.
First, they can make the usual clobbering move, moving one of their own pieces onto one of their opponent's pieces.
But second, they can \emph{collect} any piece which is part of an \emph{isolated} group of pieces - a connected group of
pieces of a single color which is not connected to any opposing pieces.  Such isolated groups are no longer
accessible to the basic clobbering rule.  So in the following position:
\begin{figure}[H]
\begin{center}
\includegraphics[width=2.5in]
					{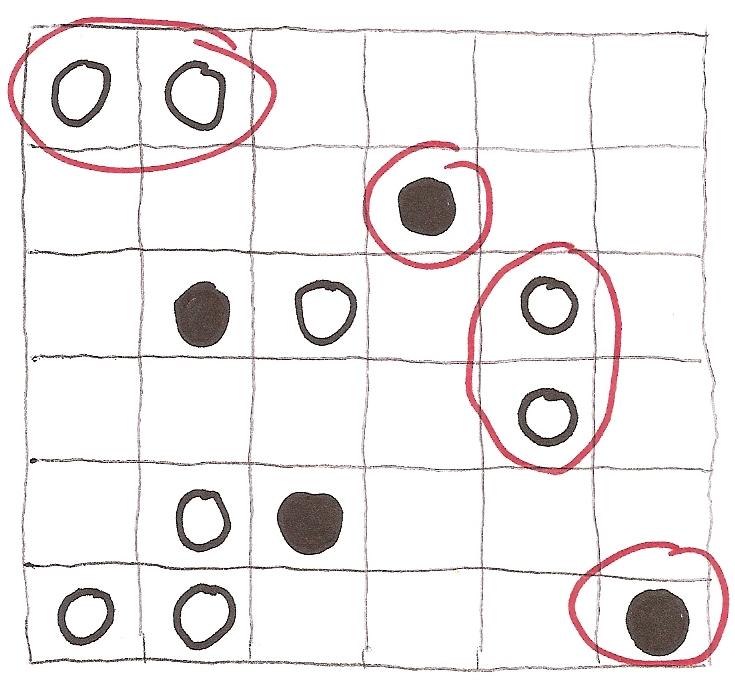}
\end{center}
\end{figure}
the circled pieces are available for collection.  Note that you can collect your own pieces or your opponent's.
You get one point for each of your opponent's pieces you collect, and zero points for each of your own pieces.
Which player makes the last move is immaterial.  Your goal is to maximize your score (minus your opponent's).

Each move in Mercenary Clobber reduces the number of pieces on the board by one.  Moreover, the game does not end
until every piece has been removed: as long as at least one piece remains on the board, there is either an available
clobbering move, or at least one isolated piece.  Thus Mercenary Clobber is an example of a fixed-length scoring game.

\emph{Scored Brussel Sprouts} is a variant of the joke game \emph{Brussel Sprouts}, which is itself a variant of the
pen-and-paper game \emph{Sprouts}.  A game of Brussel Sprouts begins with a number of crosses:
\begin{figure}[H]
\begin{center}
\includegraphics[width=2in]
					{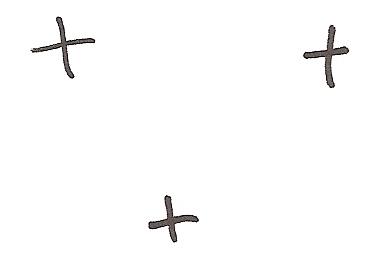}
\end{center}
\end{figure}
Players take turns drawing lines which connect to of the loose ends.  Every time you draw a line, you make an additional
cross in the middle of the line:
\begin{figure}[H]
\begin{center}
\includegraphics[width=2in]
					{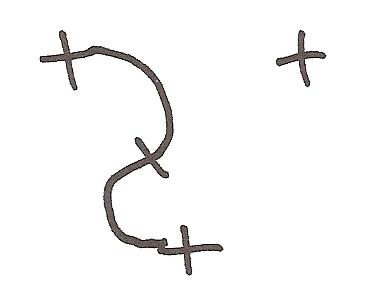}
\end{center}
\end{figure}
Play continues in alternation until there are no available moves.  The first player unable to move loses.
\begin{figure}[H]
\begin{center}
\includegraphics[width=2in]
					{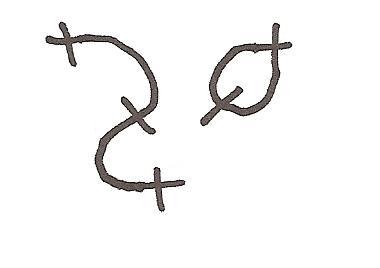}
\end{center}
\end{figure}
\begin{figure}[H]
\begin{center}
\includegraphics[width=2in]
					{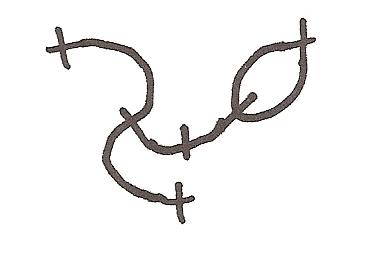}
\end{center}
\end{figure}
\begin{figure}[H]
\begin{center}
\includegraphics[width=2in]
					{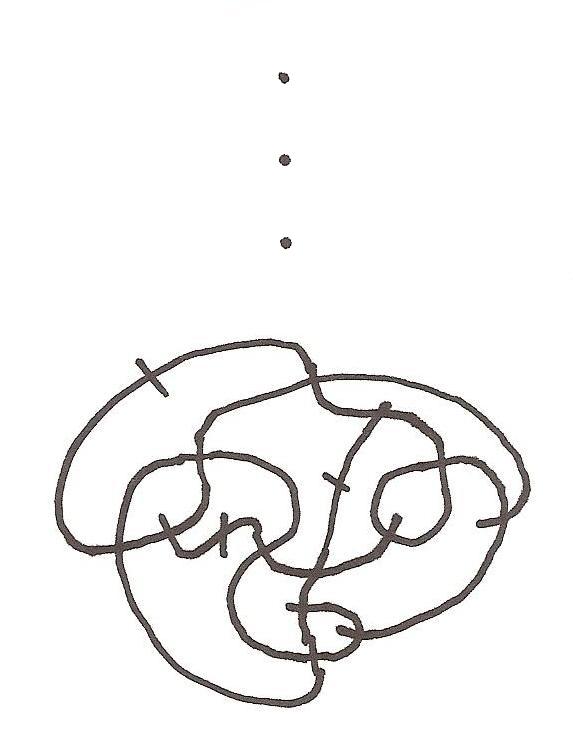}
\end{center}
\end{figure}
The ``joke'' aspect of Brussel Sprouts is that the number of moves that the game will last is completely predictable
in advance, and therefore so is the winner.  In particular, the winner is not determined in any way by the actual decisions
of the players.  If a position starts with $n$ crosses, it will last exactly $5n - 2$ moves.  To see this, note first of all
that the total number of loose ends is invariant.  Second, each move either creates a new ``region'' or decreases the number
of connected components by one (but not both).  In particular, $regions - components$ increases by exactly one on each turn.
Moreover, it is impossible to make a region which has no loose ends inside of it, so in the final position, each region
must have exactly one loose end:
\begin{figure}[H]
\begin{center}
\includegraphics[width=3.5in]
					{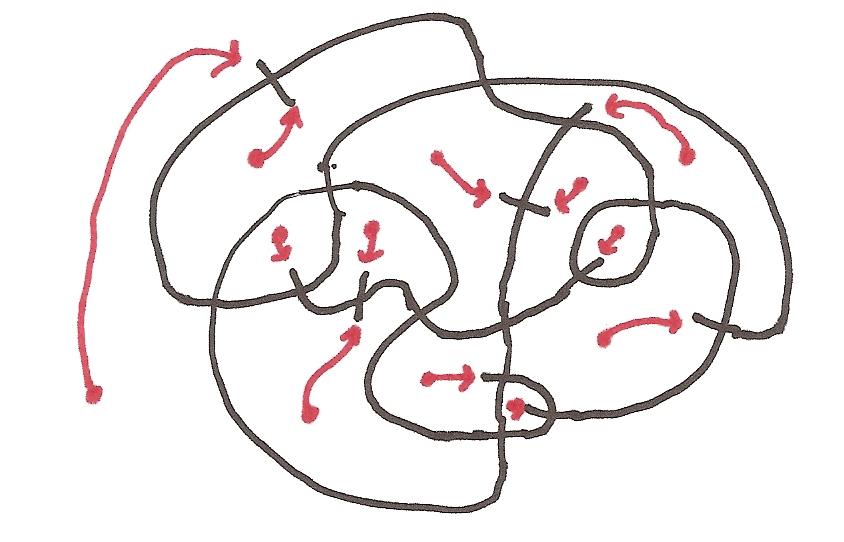}
\end{center}
\end{figure}
or else there would be more moves possible.  Also, there must be exactly one connected component, or else there would
be a move connecting two of them:
\begin{figure}[H]
\begin{center}
\includegraphics[width=2in]
					{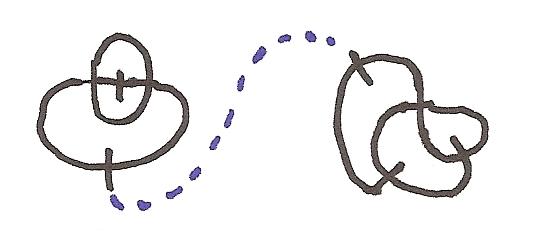}
\caption{If two connected components remain, each will have a loose end on its outside, so a move remains that can
connect the two.  A similar argument works in the case where one connected component lies inside another.}
\label{b-s-conn-comp}
\end{center}
\end{figure}

So in the final position, $regions - components = 4n - 1$ because there are $4n$ loose ends.  But initially,
there is only one region and $n$ components, so $regions - components = 1 - n$.  Therefore the total number
of moves is $(4n - 1) - (1 - n) = 5n - 2$.  So in particular, if $n$ is odd, then $5n - 2$ is odd, so the first
player to move will win, while if $n$ is even, then $5n - 2$ is even, and so the second player will win.

To make Brussel Sprouts more interesting, we assign a final score based on the regions that arise.  We give Left
one point for every triangLe, and Right one point for every squaRe.  We are counting a region as a ``triangle''
or a ``square'' if it has 3 or 4 corners (not counting the ones by the loose end).
\begin{figure}[H]
\begin{center}
\includegraphics[width=3.5in]
					{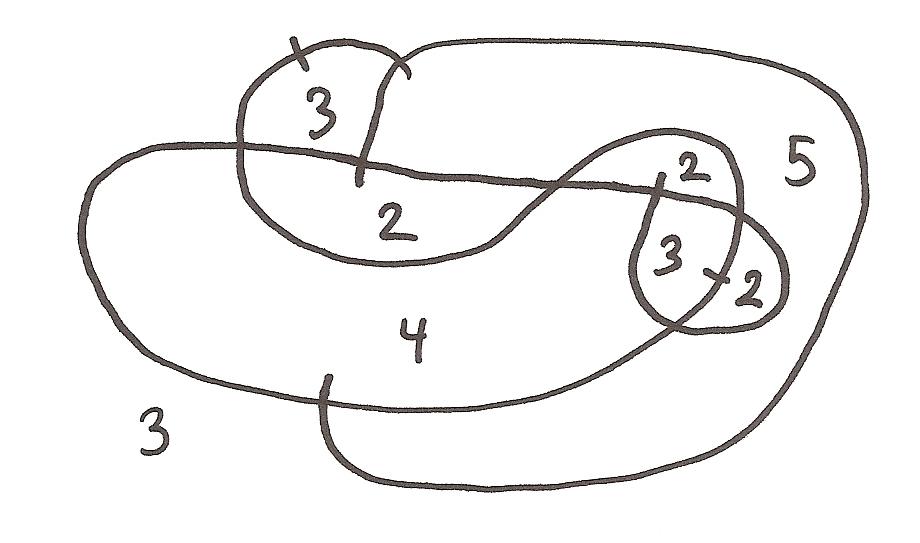}
\caption{The number of ``sides'' of each region.  Note that the outside counts as a region,
in this case a triangle.}
\label{sbsp-side-count}
\end{center}
\end{figure}
\begin{figure}[H]
\begin{center}
\includegraphics[width=3.5in]
					{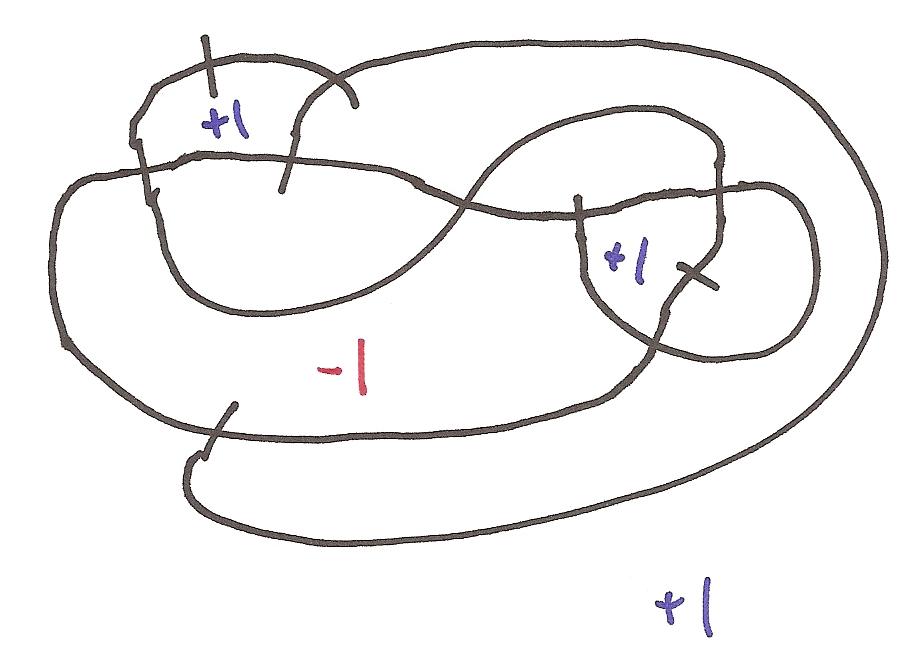}
\caption{A scored version of Figure~\ref{sbsp-side-count}.  There is one square and four triangles,
so Left wins by three points.}
\label{sbsp-scored}
\end{center}
\end{figure}
We call this variant Scored Brussel Sprouts.  Note that the game is no longer impartial, because
we have introduced an asymmetry between the two players in the scoring.

Both Mercenary Clobber and Scored Brussel Sprouts have a tendency to decompose into independent positions whose scores are combined by addition:
\begin{figure}[H]
\begin{center}
\includegraphics[width=2.7in]
					{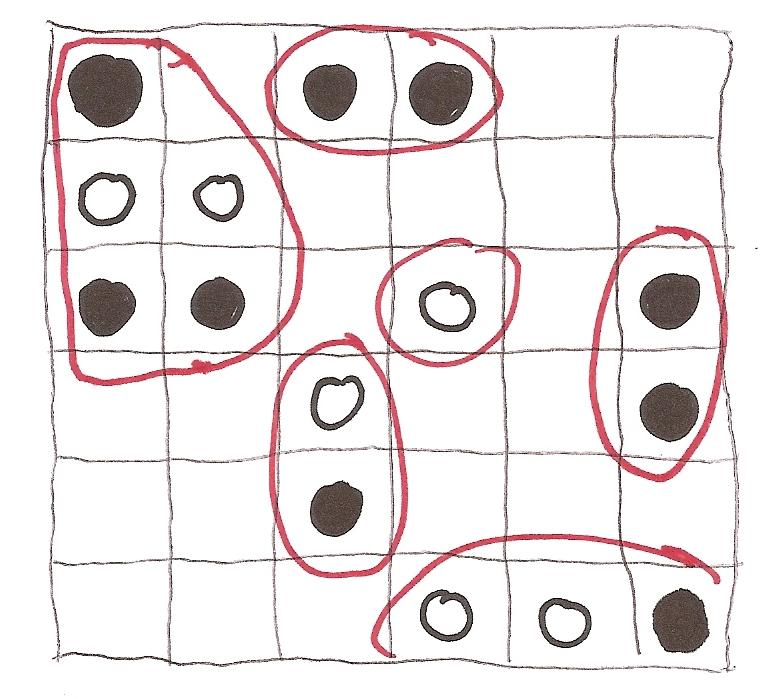}
\caption{Each circled region is an independent subgame.}
\label{mercenary-clobber-subdivision}
\end{center}
\end{figure}
In Scored Brussel Sprouts, something more drastic happens: each
individual region becomes its own subgame.  This makes the gamut of indecomposable positions
smaller and more amenable to analysis.
\begin{figure}[H]
\begin{center}
\includegraphics[width=2in]
					{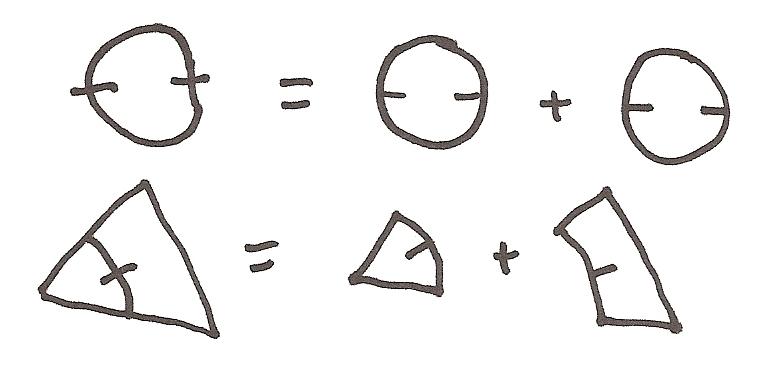}
\caption{A Scored Brussel Sprouts position is the sum of its individual cells.}
\label{sbsp-subdivision}
\end{center}
\end{figure}

Because both Mercenary Clobber and Scored Brussel Sprouts decompose into
independent subpositions, they will be directly amenable to the theory we develop.
In contrast, \textsc{To Knot or Not to Knot} does
not \emph{add} scores, but instead combines them by a maximum, or a Boolean \textsc{OR}.  We will see in Chapter~\ref{chap:bool}
why it can still be analyzed by the theory of fixed-length scoring games and addition.

For technical reasons we will actually consider a slightly larger class of games than fixed-length games.  We will study
\emph{fixed-parity} or \emph{well-tempered} games, in which the parity of the game's length is predetermined, rather than its exact length.
In other words, these are the games where we can say at the outset which player will make the last move.
While this class of games is much larger, we will see below (Corollary~\ref{fixed-length-exhausts}) that every fixed-parity game is equivalent
to a fixed-length game, so that the resulting indistinguishability quotients are identical.  I cannot think of any
natural examples of games which are fixed-parity but not fixed-length, since it seems difficult to arrange
for the game's length to vary but not its parity.

By restricting to well-tempered games, we are excluding strategic concerns of getting the last move, and thus one might expect that the resulting theory
would be orthogonal to the standard theory of partizan games.  However, it actually turns out to closely duplicate it, as we will
see in Chapter~\ref{chap:psi}.

There is one more technical restriction we make: we will only consider games taking values in the \emph{integers}.  Milnor, Hanner,
and Ettinger all considered games taking values in the full real numbers, but we will only allow integers, mainly so that Chapter~\ref{chap:psi}
works out.  Most of the results we prove will be equally valid for real-valued well-tempered games, and the full indistinguishability
quotient of real-valued well-tempered games can be described in terms of integer-valued games using the results of Chapter~\ref{chap:Distortions}.  We leave
these generalizations as an exercise to the motivated reader.

\chapter{Well-tempered Scoring Games}
\section{Definitions}
We will focus on the following class of games:\footnote{Our main goal will be determining the structure of this class of games, and the appropriate notion
of equivalence.  Because of this, we will \emph{not} use $\equiv$ and $=$ to stand for identity and equivalence (indistinguishability).
Instead, we will use $=$ and $\approx$ respectively.  For comparison, Ettinger uses $=$ and $\equiv$ (respectively!), while Milnor
uses $=$ and $\sim$ in his original paper.}
\begin{definition}
Let $\mathcal{S} \subseteq \mathbb{Z}$.  Then an \emph{even-tempered $\mathcal{S}$-valued game} is either an element
of $\mathcal{S}$ or a pair  $\langle L \,|\, R \rangle$
where $L$ and $R$ are finite nonempty sets of odd-tempered $\mathcal{S}$-valued games.
An \emph{odd-tempered $\mathcal{S}$-valued game} is a pair $\langle L \,|\, R \rangle$ where $L$ and $R$ are finite nonempty sets
of even-tempered $\mathcal{S}$-valued games.  A \emph{well-tempered $\mathcal{S}$-valued game} is an even-tempered $\mathcal{S}$-valued game or an odd-tempered
$\mathcal{S}$-valued game.  The set of well-tempered $\mathcal{S}$-valued games is denoted $\mathcal{W}_\mathcal{S}$.  The subsets of
even-tempered and odd-tempered games are denoted $\mathcal{W}_\mathcal{S}^0$ and $\mathcal{W}_\mathcal{S}^1$, respectively.  If $G = \langle L \,|\, R \rangle$,
then the elements of $L$ are called the \emph{left options} of $G$, and the elements of $R$ are called the \emph{right options}.
If $G = n$ for some $n \in \mathcal{S}$, then we say that $G$ has no left or right options.  In this case, we call
$G$ a \emph{number}.
\end{definition}
As usual, we omit the curly braces in $\langle \{L_1, L_2, \ldots \} \,|\, \{R_1, R_2, \ldots \} \rangle$, writing it
as $\langle L_1, L_2, \ldots \,|\, R_1, R_2, \ldots \rangle$ instead.  We also adopt some of the same notational conventions
from partizan theory, like letting $\langle x || y | z \rangle$ denote $\langle x | \langle y | z \rangle \rangle$.  We also
use $*$ and $x*$ to denote $\langle 0 | 0 \rangle$ and $\langle x | x \rangle$.

We will generally refer to well-tempered scoring games simply as ``games'' or ``$\mathbb{Z}$-valued games'' in what follows,
and refer to the games of Conway's partizan game theory as ``partizan games'' when we need them.

We next define outcomes.
\begin{definition}
For $G \in \mathcal{W}_\mathcal{S}$ we define $\loutcome(G) = \routcome(G) = n$ if $G = n \in \mathcal{S}$,
and otherwise, if $G = \langle L_1, L_2, \ldots \,|\, R_1, R_2, \ldots \rangle$, then we define
\[ \loutcome(G) = \max\{\routcome(L_1), \routcome(L_2), \ldots\}\]
\[ \routcome(G) = \min\{\loutcome(R_1), \loutcome(R_2), \ldots \}.\]
For any $G \in \mathcal{W}_\mathcal{S}$, $\loutcome(G)$ is called the \emph{left outcome}
of $G$, $\routcome(G)$ is called the \emph{right outcome} of $G$, and the ordered pair
$(\loutcome(G),\routcome(G))$ is called the \emph{(full) outcome} of $G$, denoted $\outcome(G)$.
\end{definition}
The outcomes of a game $G$ are just the final scores of the game when Left and Right play first,
and both players play perfectly.  It is clear from the definition that if
$G \in \mathcal{W}_\mathcal{S}$, then $\outcome(G) \in \mathcal{S} \times \mathcal{S}$.
We compare outcomes of games using the obvious partial order on $\mathcal{S} \times \mathcal{S}$.
So for example $\outcome(G_1) \le \outcome(G_2)$ iff $\routcome(G_1) \le \routcome(G_2)$ and $\loutcome(G_1) \le \loutcome(G_2)$.
In what follows, we will use bounds like
\[ \loutcome(G^R) \ge \routcome(G) \text{ for all $G^R$}\]
without explanation.

We next define operations on games.  For $\mathcal{S}, \mathcal{T} \subseteq \mathbb{Z}$, we let
$\mathcal{S} + \mathcal{T}$ denote $\{s + t\,:\, s \in \mathcal{S},\, t \in \mathcal{T}\}$,
and $-\mathcal{S}$ denote $\{-s\,:\, s \in \mathcal{S}\}$.
\begin{definition}
If $G$ is an $\mathcal{S}$-valued game, then its \emph{negative} $-G$ is the $(-\mathcal{S})$-valued game
defined recursively as $-n$ if $G = n \in \mathcal{S}$, and as
\[ -G = \langle -R_1, -R_2, \ldots \,|\, -L_1, -L_2, \ldots \rangle \]
if $G = \langle L_1, L_2, \ldots \,|\, R_1, R_2, \ldots \rangle$.
\end{definition}
Negation preserves parity: the negation of an even-tempered or odd-tempered game is an even-tempered or odd-tempered game.  Moreover,
$-(-G) = G$ for any game $G$.  It is also easy to check that $\loutcome(-G) = -\routcome(G)$
and $\routcome(-G) = -\loutcome(G)$.

We next define the sum of two games, in which we play the two games in parallel (like a sum of partizan games),
and add together the final scores at the end.
\begin{definition}
If $G$ is an $\mathcal{S}$-valued game and $H$ is a $\mathcal{T}$-valued game, then the \emph{sum}
$G + H$ is defined in the usual way if $G$ and $H$ are both numbers, and otherwise defined recursively as
\begin{equation} G + H = \langle G + H^L,\, G^L + H \,|\, G + H^R,\, G^R + H \rangle \label{combo}\end{equation}
where $G^L$ and $G^R$ range over all left and right options of $G$, and $H^L$ and $H^R$ range over all left and right
options of $H$.
\end{definition}
Note that (\ref{combo}) is used even when one of $G$ and $H$ is a number but the other isn't.  For instance,
\[ 2 + \langle 3 | 4 \rangle = \langle 5 | 6 \rangle .\]
In this sense, number avoidance is somehow built in to our theory.

It is easy to verify that $0 + G = G = G + 0$ for any $\mathbb{Z}$-valued game $G$, and that
addition is associative and commutative.  Moreover, the sum of two even-tempered games or two odd-tempered games
is even-tempered, while the sum of an even-tempered and an odd-tempered game is odd-tempered.  Another important fact which we'll need later is the following:
\begin{proposition}\label{numavoid}
If $G$ is a $\mathbb{Z}$-valued game and $n$ is a number, then
\[ \loutcome(G + n) = \loutcome(G) + n\]
and
\[ \routcome(G + n) = \routcome(G) + n.\]
\end{proposition}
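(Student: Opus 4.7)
The plan is to prove both identities simultaneously by structural induction on $G$. By symmetry (applying the identity $\loutcome(-G) = -\routcome(G)$ together with $-(G+n) = (-G) + (-n)$), the statement about $\routcome$ would follow from the one about $\loutcome$, but it is just as easy to prove both in parallel because the induction only peels off one layer of options.

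For the base case, suppose $G$ is a number, say $G = m \in \mathbb{Z}$. Then by the definition of addition on numbers, $G + n = m + n$, and by definition $\loutcome(G+n) = \routcome(G+n) = m + n = \loutcome(G) + n = \routcome(G) + n$.

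For the inductive step, assume $G$ is not a number, so $G = \langle G^L \,|\, G^R \rangle$ with nonempty option sets. Since $n$ is a number it has no left or right options, so the recursive clause (\ref{combo}) collapses to
\[ G + n \;=\; \langle\, G^L + n \;\big|\; G^R + n \,\rangle, \]
(where $G^L$ and $G^R$ range over the left and right options of $G$). Every $G^L + n$ and $G^R + n$ falls under the inductive hypothesis, so by definition of outcomes,
\[ \loutcome(G + n) \;=\; \max_{G^L} \routcome(G^L + n) \;=\; \max_{G^L}\bigl(\routcome(G^L) + n\bigr) \;=\; \max_{G^L}\routcome(G^L) \,+\, n \;=\; \loutcome(G) + n, \]
and analogously
\[ \routcome(G + n) \;=\; \min_{G^R} \loutcome(G^R + n) \;=\; \min_{G^R}\bigl(\loutcome(G^R) + n\bigr) \;=\; \routcome(G) + n. \]

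There is no real obstacle here; the only point to be careful about is that the definition of $G+H$ branches on whether \emph{both} $G$ and $H$ are numbers, so when $G$ is not a number but $n$ is, the options listed in (\ref{combo}) indexed by options of $n$ simply drop out, leaving exactly the form used above. Induction is well-founded because the well-tempered recursion has bounded depth.
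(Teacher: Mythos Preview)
Your proof is correct and follows essentially the same approach as the paper's: structural induction on $G$, observing that since $n$ has no options the sum collapses to $\langle G^L+n \mid G^R+n\rangle$, then applying the inductive hypothesis inside the $\max$/$\min$. The paper's version is terser but otherwise identical.
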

\begin{proof}
Easily seen inductively from the definition.  If $G$ is a number, this is obvious, and otherwise,
if $G = \langle L_1, L_2, \ldots \,|\, R_1, R_2, \ldots \rangle$, then $n$ has no options, so
\[ G + n = \langle L_1 + n, L_2 + n, \ldots \,|\, R_1 + n, R_2 + n, \ldots \rangle.\]
Thus by induction
\[ \loutcome(G + n) = \max\{\routcome(L_1 + n),\routcome(L_2 + n),\ldots\} =
\max\{\routcome(L_1) + n, \routcome(L_2) + n, \ldots\} = \loutcome(G) + n,\]
and similarly $\routcome(G + n) = \routcome(G) + n$.
\end{proof}

We also define $G - H$ in the usual way, as $G + (-H)$.

%
%

\section{Outcomes and Addition}
In the theory of normal partizan games, $G \ge 0$ and
$H \ge 0$ implied that $G + H \ge 0$, because Left could combine
her strategies in the two games to win in their sum.  Similarly,
for $\mathbb{Z}$-valued games, we have the following:
\begin{claim}\label{baseclaim}
If $G$ and $H$ are \emph{even-tempered} $\mathbb{Z}$-valued games,
and $\routcome(G) \ge 0$ and $\routcome(H) \ge 0$, then
$\routcome(G + H) \ge 0$.
\end{claim}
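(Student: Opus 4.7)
The plan is to strengthen the claim into a joint statement about mixed parities and prove it by simultaneous induction. Specifically, I would prove together:

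\begin{itemize}
\item[(a)] If $G, H$ are even-tempered with $\routcome(G) \ge 0$ and $\routcome(H) \ge 0$, then $\routcome(G+H) \ge 0$.
\item[(b)] If $G$ is odd-tempered with $\loutcome(G) \ge 0$ and $H$ is even-tempered with $\routcome(H) \ge 0$, then $\loutcome(G+H) \ge 0$.
\end{itemize}

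Statement (a) is the desired claim; statement (b) is the companion needed to make the induction close up, since a single Right move in $G+H$ lands us in a mixed-parity position of exactly the form handled by (b).

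The inductive step for (a) would go as follows: since $G, H$ are both even-tempered and at least one is not a number (the pure-number case is handled separately), $G+H$ is not a number, so $\routcome(G+H)$ is the minimum of $\loutcome(G+H^R)$ and $\loutcome(G^R+H)$ over right options. For any right option $G^R$ of $G$, $G^R$ is odd-tempered and $\loutcome(G^R) \ge \routcome(G) \ge 0$; also $H$ is even-tempered with $\routcome(H) \ge 0$. By the inductive hypothesis for (b), $\loutcome(G^R+H) \ge 0$. The same argument, swapping roles, gives $\loutcome(G+H^R) \ge 0$. The inductive step for (b) is the dual: since $G$ is odd-tempered it is not a number and has at least one left option; $\loutcome(G) \ge 0$ implies that some $G^L$ satisfies $\routcome(G^L) \ge 0$, and $G^L$ is even-tempered. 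By the inductive hypothesis for (a), $\routcome(G^L+H) \ge 0$, and hence the particular Left move $G+H \to G^L+H$ witnesses $\loutcome(G+H) \ge 0$.

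The base cases are exactly when $G$ or $H$ is a number, and here Proposition \ref{numavoid} does all the work: if $G = n \in \mathbb{Z}$ then the hypotheses force $n \ge 0$, and $\routcome(n + H) = n + \routcome(H) \ge 0$ in (a), while in (b) $G$ cannot be a number since $G$ is odd-tempered. The induction is well-founded because in each step the multiset of positions $\{G, H\}$ is replaced by one containing a strict option of $G$ or $H$, so the combined game-tree complexity strictly decreases.

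I do not expect a serious obstacle here. The only subtlety is making sure the asymmetry between (a) (which needs to handle every right option) and (b) (which only needs to produce one good left option) is exploited correctly; this is precisely what the definitions of $\routcome$ as a minimum and $\loutcome$ as a maximum provide, together with the fact that options of an even-tempered game are odd-tempered and vice versa. Once the companion statement (b) is identified, the proof is essentially forced.
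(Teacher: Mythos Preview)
Your proposal is correct and matches the paper's approach exactly: the paper states this claim together with the companion mixed-parity claim (your (b)) and then gives the formal joint inductive proof as Theorem~\ref{outsum}, using Proposition~\ref{numavoid} for the number case just as you do. The only cosmetic difference is that the paper phrases the induction in the slightly more general form $\routcome(G+H) \ge \routcome(G) + \routcome(H)$ and $\loutcome(G+H) \ge \loutcome(G) + \routcome(H)$, but the argument is identical.
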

Left combines her strategies in $G$ and $H$.  Whenever Right
moves in either component, Left responds in the same component,
playing responsively.
Similarly, just as $G \rhd 0$ and $H \ge 0$ implied $G + H \rhd 0$ for partizan
games, we have
\begin{claim}
If $G, H$ are $\mathbb{Z}$-valued games, with $G$ odd-tempered and $H$ even-tempered, and
$\loutcome(G) \ge 0$ and $\routcome(H) \ge 0$, then $\loutcome(G + H) \ge 0$.
\end{claim}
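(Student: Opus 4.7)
The plan is to prove this by joint induction with Claim~\ref{baseclaim}, since the two statements feed into each other in exactly the same way that $G \ge 0$ and $G \rhd 0$ do in the partizan setting. The induction will be on the total structural complexity of the pair $(G,H)$ (say, the number of positions appearing in $G$ plus the number appearing in $H$).

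For the statement at hand, the key observation is that because $G$ is odd-tempered, $G$ is \emph{not} a number and so has nonempty left and right option sets. Hence $\loutcome(G) = \max_{G^L}\routcome(G^L)$, and the hypothesis $\loutcome(G) \ge 0$ furnishes some left option $G^L$ with $\routcome(G^L)\ge 0$. This $G^L$ is even-tempered (being an option of an odd-tempered game), and $H$ is even-tempered with $\routcome(H)\ge 0$ by hypothesis, so Claim~\ref{baseclaim} applied to the pair $(G^L,H)$ gives $\routcome(G^L + H) \ge 0$. Now $G^L + H$ is a left option of $G+H$ (by the defining formula for addition, which fires since $G$ is not a number), so
\[
\loutcome(G+H) \;\ge\; \routcome(G^L + H) \;\ge\; 0,
\]
which is exactly what we wanted. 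The inductive call to Claim~\ref{baseclaim} is legitimate because $G^L$ is structurally smaller than $G$.

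To make the joint induction go through, I would simultaneously verify Claim~\ref{baseclaim}: when both $G$ and $H$ are numbers the conclusion $\routcome(G+H)=G+H\ge 0$ is immediate; when exactly one is a number (say $G=n\ge 0$), Proposition~\ref{numavoid} gives $\loutcome(n + H^R) = n + \loutcome(H^R)\ge n + \routcome(H)\ge 0$ for every right option $H^R$ of $H$, using the general inequality $\loutcome(H^R)\ge \routcome(H)$ that holds by definition of right outcome; and when neither is a number, every right option of $G+H$ has the form $G+H^R$ or $G^R+H$ with the moved-in component odd-tempered, so the second claim (inductive hypothesis) applies to show each such option has $\loutcome \ge 0$, hence $\routcome(G+H)\ge 0$.

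The main thing to be careful about is the book-keeping of tempers and the nonemptiness of option sets, since odd-tempered games are forbidden from being numbers by definition. The substantive content of the argument is extremely light — essentially just ``Left plays the same winning move she would play in $G$ alone'' — and once one accepts that the two claims have to be bundled into a single induction, there is no real obstacle. The mild subtlety is that in the base claim we cannot avoid calling the second claim on subgames where one summand is a number, which is why Proposition~\ref{numavoid} (number avoidance) is needed to close the loop cleanly rather than waving at an informal strategy-stealing argument.
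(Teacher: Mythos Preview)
Your proposal is correct and matches the paper's approach: the informal argument immediately following the claim is exactly your ``pick a good $G^L$ and invoke the first claim'' step, and the formal joint-induction packaging you describe is what the paper carries out in Theorem~\ref{outsum}. One cosmetic point: in the ``exactly one summand is a number'' case of Claim~\ref{baseclaim} you compute $\loutcome(n+H^R)$ option-by-option, whereas the paper (and Proposition~\ref{numavoid}) lets you conclude $\routcome(n+H)=n+\routcome(H)\ge 0$ in one stroke---same content, slightly shorter.
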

Since $G$ is odd-tempered (thus not a number) and $\loutcome(G) \ge 0$, there must be some
left option $G^L$ with $\routcome(G^L) \ge 0$.  Then by the previous claim,
$\routcome(G^L + H) \ge 0$.  So moving first, Left can ensure a final score
of at least zero, by moving to $G^L + H$.

Of course there is nothing special about the score 0.  More generally, we have
\begin{itemize}
\item If $G$ and $H$ are even-tempered, $\routcome(G) \ge m$ and $\routcome(H) \ge n$,
then $\routcome(G + H) \ge m + n$.  In other words, $\routcome(G + H) \ge \routcome(G) + \routcome(H)$.
\item If $G$ is odd-tempered, $H$ is even-tempered, $\loutcome(G) \ge m$, and $\routcome(H) \ge n$,
then $\loutcome(G + H) \ge m + n$.  In other words, $\loutcome(G + H) \ge \loutcome(G) + \routcome(H)$.
\end{itemize}

We state these results in a theorem, and give formal inductive proofs:
\begin{theorem}\label{outsum}
Let $G$ and $H$ be $\mathbb{Z}$-valued games.  If $G$ and $H$ are both even-tempered,
then
\begin{equation} \routcome(G + H) \ge \routcome(G) + \routcome(H) \label{uno}\end{equation}
Likewise, if $G$ is odd-tempered and $H$ is even-tempered, then
\begin{equation} \loutcome(G + H) \ge \loutcome(G) + \routcome(H) \label{dos}\end{equation}
\end{theorem}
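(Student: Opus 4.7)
The plan is to prove (\ref{uno}) and (\ref{dos}) simultaneously by induction on the total number of non-number positions occurring in $G$ and $H$ (equivalently, on the combined game tree), exactly mirroring the joint induction used for Lemma~\ref{outcomesums} in the partizan setting. I will organize the argument so that the inductive step for (\ref{uno}) calls on (\ref{dos}) applied to strictly simpler games, and vice versa; this is the well-worn ``Tweedledum/responsive-play'' structure, just phrased in terms of outcomes rather than $\ge 0$.

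For the base cases I would dispose of the situation when one of $G, H$ is a number. If both are numbers, then both sides of the relevant inequality coincide with $G + H$, so there is nothing to prove. If exactly one of them is a number, say $H = n$, then Proposition~\ref{numavoid} gives $\routcome(G+n) = \routcome(G) + n$ and $\loutcome(G+n) = \loutcome(G) + n$, and the desired inequalities become equalities. This lets me assume in the inductive step that neither $G$ nor $H$ is a number, so both have options on both sides.

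For the inductive step of (\ref{uno}), suppose $G$ and $H$ are even-tempered non-numbers. Every right option of $G + H$ has the form $G^R + H$ or $G + H^R$, where $G^R$ and $H^R$ are odd-tempered. By the definition of right outcome, every such $G^R$ satisfies $\loutcome(G^R) \ge \routcome(G)$, and similarly for $H^R$. Applying (\ref{dos}) inductively to the pair $(G^R, H)$ yields $\loutcome(G^R + H) \ge \loutcome(G^R) + \routcome(H) \ge \routcome(G) + \routcome(H)$, and symmetrically for $G + H^R$. Since $\routcome(G+H)$ is the minimum of $\loutcome(\cdot)$ over the right options, the desired inequality follows. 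For (\ref{dos}), with $G$ odd-tempered non-number and $H$ even-tempered, choose a left option $G^L$ (necessarily even-tempered) realizing $\routcome(G^L) \ge \loutcome(G)$; then $G^L + H$ is a left option of $G + H$, and (\ref{uno}) applied inductively to $(G^L, H)$ gives $\routcome(G^L + H) \ge \routcome(G^L) + \routcome(H) \ge \loutcome(G) + \routcome(H)$, whence $\loutcome(G+H) \ge \routcome(G^L + H)$ yields the bound.

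The only mild obstacle is bookkeeping: one has to verify that the tempers line up properly so that each inductive appeal is to the statement for the correct parity pair (even--even versus odd--even), and that the options of $G+H$ really decompose as claimed even when one summand is an odd- or even-tempered non-number while the other's options may be numbers. Once that is tracked, the whole argument is a direct parity-sensitive rewrite of the strategy-stealing proof for partizan games, with $\routcome$ and $\loutcome$ playing the roles previously played by $\ge 0$ and $\rhd 0$.
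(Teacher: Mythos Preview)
Your proposal is correct and follows essentially the same approach as the paper: a joint induction on $G$ and $H$, using Proposition~\ref{numavoid} to dispose of the case where one summand is a number, then for (\ref{uno}) bounding each right option $G^R + H$ or $G + H^R$ via the inductive instance of (\ref{dos}), and for (\ref{dos}) picking an optimal left option $G^L$ of the odd-tempered summand and invoking (\ref{uno}) on $G^L + H$. The parity bookkeeping you flag is exactly what the paper tracks as well.
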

\begin{proof}
Proceed by induction on the complexity of $G$ and $H$.  If $G$ and $H$ are both even-tempered,
then (\ref{uno}) follows from Proposition~\ref{numavoid} whenever $G$ or $H$ is a number,
so suppose both are not numbers.  Then every right-option of $G + H$ is either of the
form $G^R + H$ or $G + H^R$.  Since $G^R$ is odd-tempered, by induction (\ref{dos}) tells
us that $\loutcome(G^R + H) \ge \loutcome(G^R) + \routcome(H)$.  Clearly
$\loutcome(G^R) + \routcome(H) \ge \routcome(G) + \routcome(H)$, because $\routcome(G)$
is the minimum value of $\loutcome(G^R)$.  So $\loutcome(G^R + H)$ is always at least
$\routcome(G) + \routcome(H)$.  Similarly, $\loutcome(G + H^R)$ is always at least
$\routcome(G) + \routcome(H)$.  So every right option of $G + H$ has left-outcome
at least $\routcome(G) + \routcome(H)$, and so the best right can do with $G + H$ is
$\routcome(G) + \routcome(H)$, proving (\ref{uno}).

If $G$ is odd-tempered and $H$ is even-tempered, then $G$ is not a number so there is some left option
$G^L$ with $\loutcome(G) = \routcome(G^R)$.  Then by induction, (\ref{uno}) gives
\[ \routcome(G^R + H) \ge \routcome(G^R) + \routcome(H) = \loutcome(G) + \routcome(H).\]
But clearly $\loutcome(G + H) \ge \routcome(G^R + H)$, so we are done. 
\end{proof}
Similarly we have
\begin{theorem}\label{outsum2}
Let $G$ and $H$ be $\mathbb{Z}$-valued games.  If $G$ and $H$ are both even-tempered,
then
\begin{equation} \loutcome(G + H) \le \loutcome(G) + \loutcome(H) \label{tres}\end{equation}
Likewise, if $G$ is odd-tempered and $H$ is even-tempered, then
\begin{equation} \routcome(G + H) \le \routcome(G) + \loutcome(H) \label{cuatro}\end{equation}
\end{theorem}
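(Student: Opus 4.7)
The plan is to deduce Theorem~\ref{outsum2} from Theorem~\ref{outsum} by a negation-symmetry argument, rather than repeating the inductive proof verbatim. The key facts I would invoke are the identities $\loutcome(-K) = -\routcome(K)$ and $\routcome(-K) = -\loutcome(K)$ (noted in the paragraph after the definition of negation), the observation that negation preserves parity, and the obvious fact that $-(G+H) = (-G) + (-H)$, which follows by an easy induction on the complexity of $G$ and $H$ from the recursive definitions.

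First I would verify the analogue of the base case: when $G$ or $H$ is a number, Proposition~\ref{numavoid} immediately gives the desired equalities (not just inequalities), so the interesting content is in the non-numerical case. Then, to prove (\ref{tres}) for even-tempered $G$ and $H$, I would note that $-G$ and $-H$ are likewise even-tempered, so Theorem~\ref{outsum} applied to the pair $(-G, -H)$ yields
\[
\routcome((-G) + (-H)) \ge \routcome(-G) + \routcome(-H).
\]
Translating both sides through the negation identities, the left side is $-\loutcome(G + H)$ and the right side is $-\loutcome(G) - \loutcome(H)$, so negating gives exactly (\ref{tres}).

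For (\ref{cuatro}), the argument is analogous: $-G$ is odd-tempered and $-H$ is even-tempered, so applying (\ref{dos}) of Theorem~\ref{outsum} to the pair $(-G, -H)$ gives
\[
\loutcome((-G) + (-H)) \ge \loutcome(-G) + \routcome(-H) = -\routcome(G) - \loutcome(H),
\]
and negating produces $\routcome(G + H) \le \routcome(G) + \loutcome(H)$.

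There is no real obstacle here; the only point that requires checking is that the negation identities and the equation $-(G + H) = (-G) + (-H)$ hold at the level of games (not just outcomes), which is a routine induction using the recursive definitions of $+$ and negation together with the symmetry between left and right options. Alternatively, one could simply rerun the induction of Theorem~\ref{outsum} with the roles of ``max over left options'' and ``min over right options'' swapped; this gives a self-contained proof of the same length but is less enlightening, since it obscures the fact that Theorem~\ref{outsum2} is nothing but Theorem~\ref{outsum} seen from Right's point of view.
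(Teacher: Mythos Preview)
Your proposal is correct. The paper gives no explicit proof of Theorem~\ref{outsum2} beyond the phrase ``Similarly we have'', which is naturally read either as ``rerun the induction of Theorem~\ref{outsum} with Left and Right swapped'' or as ``apply the negation symmetry''; you have chosen the latter and executed it cleanly, and you even note the former as an alternative, so there is nothing to add.
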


Another key fact in the case of partizan games was that $G + (-G) \ge 0$.  Here
we have the analogous fact that
\begin{theorem}\label{selfdestruct}
If $G$ is a $\mathbb{Z}$-valued game (of either parity), then
\begin{equation} \routcome(G + (-G)) \ge 0 \label{cinco}\end{equation}
\begin{equation} \loutcome(G + (-G)) \le 0 \label{seis}\end{equation}
\end{theorem}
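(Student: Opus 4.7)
The plan is to prove (\ref{cinco}) by induction and then deduce (\ref{seis}) by a negation symmetry. Observe first that $G$ and $-G$ always have the same temper, so $G + (-G)$ is always even-tempered; in particular both $\loutcome$ and $\routcome$ are defined. Moreover, applying (\ref{cinco}) to $-G$ and using $\loutcome(-H) = -\routcome(H)$ together with $-(-G) = G$ gives $0 \le \routcome((-G) + G) = -\loutcome(G + (-G))$, so (\ref{seis}) follows from (\ref{cinco}). Hence it suffices to prove (\ref{cinco}).

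To carry out the Tweedledum-Tweedledee mirror strategy, I would prove the following three statements together by simultaneous induction on the complexity of $G$:
\begin{itemize}
\item[(i)] $\routcome(G + (-G)) \ge 0$,
\item[(ii)] $\loutcome(G^R + (-G)) \ge 0$ for every right option $G^R$ of $G$,
\item[(iii)] $\loutcome(G + (-G^L)) \ge 0$ for every left option $G^L$ of $G$.
\end{itemize}
The base case is $G = n \in \mathbb{Z}$, where $G + (-G) = 0$ and (ii) and (iii) are vacuous. For the inductive step of (i), if $G$ is not a number then every right option of $G + (-G)$ has the form $G^R + (-G)$ or $G + (-(G^L))$; by (ii) and (iii) each has left outcome $\ge 0$, and $\routcome(G + (-G))$ is the minimum of these left outcomes. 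For (ii), note that $-G^R$ is a left option of $-G$ (this is where we use that $G$ is not a number, so $-G$ has options); thus $G^R + (-G^R)$ is a left option of $G^R + (-G)$, and by the inductive instance of (i) applied to $G^R$ we get $\routcome(G^R + (-G^R)) \ge 0$, which suffices. Case (iii) is symmetric, using $G^L + (-G^L)$ as a witness left option of $G + (-G^L)$.

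The main thing to be careful about is that the auxiliary claims (ii) and (iii) really do reduce to the inductive hypothesis on strictly simpler games: the key point is that $G^R + (-G^R)$ (respectively $G^L + (-G^L)$) involves only $G^R$ (respectively $G^L$), which has lower complexity than $G$, so the induction is well-founded. One also needs that $G^R + (-G)$ and $G + (-G^L)$ are not numbers so that $\loutcome$ is computed via options; this holds because, having produced a witness left option above, these games have at least one left option and hence cannot be numbers. No appeal to Theorem~\ref{outsum} is needed — the mirror strategy is built directly into the induction.
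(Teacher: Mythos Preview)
Your proof is correct and takes essentially the same approach as the paper: both rely on the Tweedledum--Tweedledee mirror strategy, with the paper stating it informally (``Left has an obvious Tweedledum and Tweedledee Strategy mirroring moves in the two components, which guarantees a score of exactly zero'') while you unwind it into the explicit three-clause induction. Your formalization matches the inductive proof the paper gave earlier for the analogous partizan fact, and your derivation of (\ref{seis}) from (\ref{cinco}) via $-(G+(-G)) = G+(-G)$ is a clean way to handle ``the other case is similar.''
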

\begin{proof}
Consider the game $G + (-G)$.  When Right goes first, Left has an obvious
Tweedledum and Tweedledee Strategy mirroring moves in the two components, which guarantees a score of exactly zero.
This play may not be optimal, but it at least shows that $\routcome(G + (-G)) \ge 0$.
The other case is similar.
\end{proof}

Unfortunately, some of the results above are contingent on parity.  Without the
conditions on parity, equations (\ref{uno}-\ref{cuatro}) would fail.  For example,
if $G$ is the even-tempered game $\langle -1|-1||1|1 \rangle = \langle \langle -1 | -1 \rangle | \langle 1 | 1 \rangle \rangle$ and
$H$ is the odd-tempered game $\langle G \,|\, G \rangle$, then the reader can easily check
that $\routcome(G) = 1$, $\routcome(H) = -1$, but $\routcome(G + H) = -2$ (Right moves from $H$ to $G$),
and $-2 \not \ge 1 + (-1)$, so that (\ref{uno}) fails.  The problem here is that since $H$ is odd-tempered,
Right can end up making the last move in $H$, and then Left is forced to move in
$G$, breaking her strategy of only playing responsively.

To amend this situation, we consider a restricted class of games, in which being forced
to unexpectedly move is not harmful.
\begin{definition}
An \emph{i-game} is a $\mathbb{Z}$-valued game $G$ which has the property that
every option is an i-game, and if $G$ is even-tempered, then $\loutcome(G) \ge \routcome(G)$.
\end{definition}
So for instance, numbers are always i-games, $*$ and $1*$ and even $\langle -1 | 1 \rangle$
are i-games, but the game $G = \langle -1 + * \,|\, 1 + * \rangle$ mentioned above is not,
because it is even-tempered and $\loutcome(G) = -1 < 1 = \routcome(G)$.  It may seem arbitrary
that we only require $\loutcome(G) \ge \routcome(G)$ when $G$ is \emph{even}-tempered, but later
we will see that this definition is more natural than it might first appear.

Now we can extend Claim~\ref{baseclaim} to the following:
\begin{claim}
If $G$ and $H$ are $\mathbb{Z}$-valued games, $G$ is even-tempered
and an i-game, $H$ is odd-tempered,
and $\routcome(G) \ge 0$ and $\routcome(H) \ge 0$, then
$\routcome(G + H) \ge 0$.
\end{claim}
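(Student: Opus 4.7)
The plan is to prove the claim jointly with an auxiliary ``dual'' assertion by induction on the combined complexity of $G$ and $H$, because the recursion must traverse mixed-parity configurations that arise after each type of move. The auxiliary statement I will carry alongside the given one is:
\begin{quote}
(B) If $G$ is an even-tempered i-game with $\routcome(G) \ge 0$ and $H$ is an even-tempered game with $\loutcome(H) \ge 0$, then $\loutcome(G+H) \ge 0$.
\end{quote}
Calling the original claim (A), I will prove (A) and (B) simultaneously.

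For (A), the sum $G+H$ is odd-tempered and Right moves first. If $G$ is a number $n \ge 0$, then Proposition~\ref{numavoid} gives $\routcome(G+H) = n + \routcome(H) \ge 0$. Otherwise Right can play either $G \to G^R$ or $H \to H^R$. In the $H$-move case, $H^R$ is even-tempered with $\loutcome(H^R) \ge \routcome(H) \ge 0$, so (B) applied inductively yields $\loutcome(G+H^R) \ge 0$. In the $G$-move case, $G^R$ is an odd-tempered i-game with $\loutcome(G^R) \ge \routcome(G) \ge 0$, so there is a left option $G^{RL}$ (itself an even-tempered i-game, since options of i-games are i-games) with $\routcome(G^{RL}) \ge 0$; Left replies $G^R \to G^{RL}$, and (A) applied inductively to $G^{RL}+H$ gives $\routcome(G^{RL}+H) \ge 0$, so $\loutcome(G^R+H) \ge 0$. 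Combining the two cases, every right option of $G+H$ has left-outcome at least $0$, hence $\routcome(G+H) \ge 0$.

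For (B), $G+H$ is even-tempered and Left moves first. If $H$ is a number $n \ge 0$, the i-game property of $G$ gives $\loutcome(G) \ge \routcome(G) \ge 0$, and Proposition~\ref{numavoid} yields $\loutcome(G+n) = \loutcome(G)+n \ge 0$. Otherwise $\loutcome(H) \ge 0$ supplies a left option $H^L$ with $\routcome(H^L) \ge 0$; since $H^L$ is odd-tempered and $\routcome(G) \ge 0$, (A) applied inductively to $G+H^L$ gives $\routcome(G+H^L) \ge 0$, so $\loutcome(G+H) \ge \routcome(G+H^L) \ge 0$.

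The main obstacle is not computational but structural: one must notice that a lone inductive statement is not enough, because Right's two types of moves in (A) produce sub-positions with different parity patterns, one matching (A) again and the other matching (B), so carrying (B) along is indispensable. The i-game hypothesis plays a sharply targeted role—it is invoked only in the base case of (B) where $H$ collapses to an integer and Left has no choice but to move inside $G$; at that moment $\loutcome(G) \ge \routcome(G)$ is exactly what guarantees a safe left option. Everywhere else the argument is a mild enhancement of the response strategy already used in Theorems~\ref{outsum} and~\ref{outsum2}.
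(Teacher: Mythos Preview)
Your proof is correct and follows the same joint-induction scheme the paper uses. The only difference is cosmetic: the paper packages the induction as three mutually recursive inequalities (its equations (\ref{siete}), (\ref{ocho}), (\ref{nueve})), whereas you carry only two, (A) and (B), corresponding to (\ref{siete}) and (\ref{nueve}). You avoid needing (\ref{ocho}) by inlining its one-step content---when Right moves to $G^R+H$ you have Left immediately reply $G^R\to G^{RL}$ and invoke (A) again---which is exactly what the paper's proof of (\ref{ocho}) does anyway. So your argument is a mild streamlining of the paper's, not a different route.
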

In this case, Left is again able to play responsively in each component,
but in the situation where Right makes the final move in the second component,
Left is able to leverage the fact that the first component's left-outcome
is at least its right-outcome, because the first component will be an even-tempered i-game.
And similarly, we also have
\begin{claim}
If $G$ and $H$ are odd-tempered $\mathbb{Z}$-valued games, $G$ is an i-game,
$\loutcome(G) \ge 0$ and $\routcome(H) \ge 0$, then
$\loutcome(G + H) \ge 0$.
If $G$ and $H$ are even-tempered $\mathbb{Z}$-valued games, $G$ is an i-game,
$\routcome(G) \ge 0$ and $\loutcome(H) \ge 0$, then
$\loutcome(G + H) \ge 0$.
\end{claim}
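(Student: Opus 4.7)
The plan is to prove both halves of the present statement \emph{simultaneously} with the claim stated just above, by a single induction on the total complexity of $G+H$ (for instance, the sum of the game-tree depths). Call the three statements (A), (B), (C), where (A) is the already-displayed claim about $\routcome(G+H) \ge 0$ and (B), (C) are the two halves of the present statement, in order. The three are genuinely interdependent: (A) will reduce to (B) and (C) on simpler pairs, while each of (B) and (C) will reduce back to (A) on simpler pairs.

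For (B), both $G$ and $H$ odd-tempered: since $G$ is odd-tempered it is not a number, so $\loutcome(G)\ge 0$ guarantees a left option $G^L$ with $\routcome(G^L)\ge 0$. Because every option of an i-game is an i-game, $G^L$ is an even-tempered i-game, and (A) applied inductively to the simpler pair $(G^L, H)$ yields $\routcome(G^L + H)\ge 0$. Hence $\loutcome(G+H)\ge \routcome(G^L + H)\ge 0$.

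For (C), both $G$ and $H$ even-tempered: if either is a number, Proposition~\ref{numavoid} finishes the job: when $G=n$ the i-game hypothesis and $\routcome(G)\ge 0$ force $n\ge 0$ and $\loutcome(n+H) = n+\loutcome(H)\ge 0$; when $H=m$ we use the i-game inequality $\loutcome(G)\ge \routcome(G)\ge 0$ to get $\loutcome(G+m) = \loutcome(G) + m \ge 0$. Otherwise $H$ has options, and $\loutcome(H)\ge 0$ supplies an (odd-tempered) left option $H^L$ with $\routcome(H^L)\ge 0$; note that $H^L$ need not be an i-game, but (A) does not require it. Then (A) applied to the simpler pair $(G, H^L)$ yields $\routcome(G + H^L)\ge 0$, so $\loutcome(G+H)\ge 0$. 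Finally, for (A) itself, needed at strictly simpler pairs for the above appeals: if $G$ equals a number, Proposition~\ref{numavoid} together with $\routcome(H)\ge 0$ closes it; otherwise every right option of $G+H$ has the form $G^R + H$ or $G + H^R$, which are handled respectively by (B) (using that $G^R$ is an odd-tempered i-game with $\loutcome(G^R)\ge \routcome(G)\ge 0$) and by (C) (using that $H^R$ is even-tempered with $\loutcome(H^R)\ge \routcome(H)\ge 0$), each at strictly smaller total complexity.

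The main obstacle is entirely organizational: the circular dependence among (A), (B), (C) means one must verify that every inductive appeal is to a strictly simpler pair. This is secured because each reduction above strips at least one option from $G$ or $H$, so the total complexity drops. The only place where some care is needed is in (C), where the obvious attempt -- picking a left option of $G$ and appealing to (A) -- fails to reduce to any of the three statements, so one must instead descend into $H$ and use the fact that (A) imposes no i-game hypothesis on its odd-tempered summand.
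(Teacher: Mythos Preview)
Your proof is correct and matches the paper's approach essentially line for line. The paper does not prove this claim directly but instead proves the quantitative generalization (inequalities (\ref{siete})--(\ref{nueve})) by the same simultaneous induction: (\ref{siete}) is handled by descending into right options and invoking (\ref{ocho}) and (\ref{nueve}), while (\ref{ocho}) and (\ref{nueve}) are dispatched by having Left move optimally in $G$ and in $H$ respectively and invoking (\ref{siete}). Your observation that in case (C) one must descend into $H$ rather than $G$---because descending into $G$ would produce an odd-tempered i-game paired with an even-tempered non-i-game, which fits none of the three statements---is exactly right, and is a point the paper leaves implicit in the phrase ``having Left make an optimal move in $G$ or $H$, respectively.''
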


As before, these results can be generalized to the following:
\begin{theorem}
Let $G$ and $H$ be $\mathbb{Z}$-valued games, and $G$ an i-game.
\begin{itemize}
\item If $G$ is even-tempered and $H$ is odd-tempered, then
\begin{equation} \routcome(G + H) \ge \routcome(G) + \routcome(H) \label{siete}\end{equation}
\item If $G$ and $H$ are both odd-tempered, then
\begin{equation} \loutcome(G + H) \ge \loutcome(G) + \routcome(H) \label{ocho}\end{equation}
\item If $G$ and $H$ are both even-tempered, then
\begin{equation} \loutcome(G + H) \ge \routcome(G) + \loutcome(H) \label{nueve}\end{equation}
\end{itemize}
\end{theorem}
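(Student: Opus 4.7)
I will prove the three inequalities (\ref{siete}), (\ref{ocho}), (\ref{nueve}) simultaneously by a joint induction on the combined complexity of $G$ and $H$. The strategy is to show that (\ref{siete}) follows from the two ``passive'' cases (\ref{ocho}) and (\ref{nueve}) applied to options, while (\ref{ocho}) and (\ref{nueve}) follow easily from (\ref{siete}) by picking a good opening left move for Left. Throughout I will also need Theorem~\ref{outsum} as the non-i-game backbone (for sub-sums where neither game is an i-game in the required sense), and Proposition~\ref{numavoid} to clear the degenerate cases where one of $G$, $H$ is a number.

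First I would dispose of the base cases. If $G$ is a number, all three inequalities reduce, via Proposition~\ref{numavoid}, to the tautology $\loutcome(H) = \loutcome(H)$ or $\routcome(H) = \routcome(H)$ (possibly shifted by the integer $G$). If $H$ is a number, the same trick works, but note that in (\ref{siete}) and (\ref{ocho}) the game $H$ is odd-tempered and hence cannot be a number, so this case only arises in (\ref{nueve}).

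Next I would handle (\ref{siete}). Assume $G$ is even-tempered, $H$ is odd-tempered, and neither is a number. Every right option of $G+H$ has the form $G^R+H$ or $G+H^R$. For $G^R+H$, both summands are odd-tempered and $G^R$ is an i-game, so by the inductive hypothesis for (\ref{ocho}),
\[
\loutcome(G^R+H) \;\ge\; \loutcome(G^R)+\routcome(H) \;\ge\; \routcome(G)+\routcome(H),
\]
where the last step uses that $\routcome(G)$ is the minimum of $\loutcome(G^R)$ over right options $G^R$. For $G+H^R$, both summands are even-tempered and $G$ is an i-game, so by the inductive hypothesis for (\ref{nueve}),
\[
\loutcome(G+H^R) \;\ge\; \routcome(G)+\loutcome(H^R) \;\ge\; \routcome(G)+\routcome(H),
\]
where the last step uses the i-game hypothesis $\loutcome(H^R)\ge\routcome(H^R)$ applied to the even-tempered i-game $H^R$ (which is an option of the odd-tempered i-game $H$), combined with $\routcome(H^R)\ge\routcome(H)$. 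Taking the minimum over all right options of $G+H$ gives (\ref{siete}).

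Finally I would knock off (\ref{ocho}) and (\ref{nueve}). For (\ref{ocho}), $G$ is odd-tempered, hence not a number, so choose a left option $G^L$ with $\routcome(G^L)=\loutcome(G)$; then $G^L$ is an even-tempered i-game and $H$ is odd-tempered, so by the inductive hypothesis for (\ref{siete}),
\[
\loutcome(G+H) \;\ge\; \routcome(G^L+H) \;\ge\; \routcome(G^L)+\routcome(H) \;=\; \loutcome(G)+\routcome(H).
\]
For (\ref{nueve}), having dismissed the cases where $G$ or $H$ is a number, pick a left option $H^L$ with $\routcome(H^L)=\loutcome(H)$; then $G$ is an even-tempered i-game and $H^L$ is odd-tempered, so by the inductive hypothesis for (\ref{siete}),
\[
\loutcome(G+H) \;\ge\; \routcome(G+H^L) \;\ge\; \routcome(G)+\routcome(H^L) \;=\; \routcome(G)+\loutcome(H).
\]
The one subtle point — the main obstacle to watch — is the asymmetric i-game definition: the hypothesis $\loutcome\ge\routcome$ is only available on \emph{even}-tempered positions. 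The argument above is set up so that this hypothesis is invoked only on the even-tempered option $H^R$ in the (\ref{siete})-step, which is exactly where the definition allows it; this is why the condition $\loutcome\ge\routcome$ needs to be imposed only on the even-tempered subpositions.
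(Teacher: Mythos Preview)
Your overall architecture matches the paper's proof exactly: a joint induction, with (\ref{siete}) handled by looking at both types of right options and invoking (\ref{ocho}) and (\ref{nueve}) inductively, and (\ref{ocho}), (\ref{nueve}) handled by choosing an optimal Left move and invoking (\ref{siete}).

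There is, however, a genuine error in your justification of the $G+H^R$ case inside (\ref{siete}). You write that $H^R$ is ``an option of the odd-tempered i-game $H$'' and then chain $\loutcome(H^R)\ge\routcome(H^R)\ge\routcome(H)$. But the hypotheses of the theorem say only that $G$ is an i-game; $H$ is an arbitrary $\mathbb{Z}$-valued game, so you cannot invoke the i-game inequality on $H^R$. Moreover, the second link $\routcome(H^R)\ge\routcome(H)$ is simply false in general: take $H=\langle G_1\mid G_2\rangle$ with $G_2$ even-tempered satisfying $\loutcome(G_2)=5$, $\routcome(G_2)=0$, and $\loutcome(G_1)\ge 5$; then $\routcome(H)=5$ while $\routcome(G_2)=0$.

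The fix is immediate and is what the paper does: the inequality you actually need, $\loutcome(H^R)\ge\routcome(H)$, is a tautology from the definition $\routcome(H)=\min_{H^R}\loutcome(H^R)$. No i-game hypothesis on $H$ is required anywhere in the proof (which is the whole point of the theorem). Once you replace your two-step chain by this one-liner, your argument is correct and coincides with the paper's. Your remark about needing Theorem~\ref{outsum} as a ``backbone'' is also unnecessary: it is never invoked.
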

\begin{proof}
We proceed by induction on $G$ and $H$.
If $G$ or $H$ is a number, then every equation follows from Proposition~\ref{numavoid}
and the stipulation that $\loutcome(G) \ge \routcome(G)$ if $G$ is even-tempered.
So suppose that $G$ and $H$ are both not numbers.

To see (\ref{siete}), note that every right option of $G + H$ is either of the form
$G^R + H$ or $G + H^R$.  Since $\loutcome(G^R) \ge \routcome(G)$ and $G^R$ is an odd-tempered i-game,
(\ref{ocho}) tells us inductively that
\[ \loutcome(G^R + H) \ge \loutcome(G^R) + \routcome(H) \ge \routcome(G) + \routcome(H).\]
And likewise since $\loutcome(H^R) \ge \routcome(H)$ and $H^R$ is even-tempered,
(\ref{nueve}) tells us inductively that
\[ \loutcome(G + H^R) \ge \routcome(G) + \loutcome(H^R) \ge \routcome(G) + \routcome(H).\]
So no matter how Right moves in $G + H$, he produces a position with left-outcome at least
$\routcome(G) + \routcome(H)$.  This establishes (\ref{siete}).

Equations (\ref{ocho} - \ref{nueve}) can easily be seen by having Left make an optimal
move in $G$ or $H$, respectively, and using (\ref{siete}) inductively.  I leave the details
to the reader.
\end{proof}
Similarly we have
\begin{theorem}
Let $G$ and $H$ be $\mathbb{Z}$-valued games, and $G$ an i-game.
\begin{itemize}
\item If $G$ is even-tempered and $H$ is odd-tempered, then
\begin{equation} \loutcome(G + H) \le \loutcome(G) + \loutcome(H) \label{diez}\end{equation}
\item If $G$ and $H$ are both odd-tempered, then
\begin{equation} \routcome(G + H) \le \routcome(G) + \loutcome(H) \label{once}\end{equation}
\item If $G$ and $H$ are both even-tempered, then
\begin{equation} \routcome(G + H) \le \loutcome(G) + \routcome(H) \label{doce}\end{equation}
\end{itemize}
\end{theorem}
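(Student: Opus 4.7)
The plan is to reduce each of the three inequalities to the corresponding inequality of the previous theorem by taking negatives, exploiting the identities $\loutcome(-X) = -\routcome(X)$ and $\routcome(-X) = -\loutcome(X)$.

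First I would verify the auxiliary fact that negation preserves the class of i-games. Negation preserves parity by construction, and if $G$ is an even-tempered i-game then
\[\loutcome(-G) - \routcome(-G) = -\routcome(G) - (-\loutcome(G)) = \loutcome(G) - \routcome(G) \ge 0,\]
so $-G$ also satisfies the required inequality. Since negation sends options to negatives of options, a straightforward induction then shows that every option of $-G$ is an i-game whenever every option of $G$ is.

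Next, for each of the three target inequalities, I would apply the dual inequality from the previous theorem to the negated games $-G$ and $-H$. For \eqref{diez} (with $G$ even-tempered i-game, $H$ odd-tempered), I apply \eqref{siete} to $-G$ and $-H$ to obtain
\[\routcome((-G)+(-H)) \;\ge\; \routcome(-G) + \routcome(-H) \;=\; -\loutcome(G) - \loutcome(H),\]
then negate both sides, using $-\routcome((-G)+(-H)) = \loutcome(G+H)$. For \eqref{once} (both odd-tempered, $G$ i-game), I apply \eqref{ocho} to $-G, -H$, yielding $\loutcome((-G)+(-H)) \ge \loutcome(-G) + \routcome(-H) = -\routcome(G) - \loutcome(H)$, and negate. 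For \eqref{doce} (both even-tempered, $G$ i-game), I apply \eqref{nueve} to $-G, -H$, yielding $\loutcome((-G)+(-H)) \ge \routcome(-G) + \loutcome(-H) = -\loutcome(G) - \routcome(H)$, and negate.

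Since the preservation of i-games under negation is essentially immediate and the previous theorem is assumed, there is no real obstacle here: the content of the argument is packaged entirely in checking that the parity hypotheses in \eqref{siete}--\eqref{nueve} line up with those in \eqref{diez}--\eqref{doce} after negation, which they do because negation preserves parity. A completely parallel direct proof by induction mirroring the structure of the proof of Theorems \ref{outsum}--\ref{outsum2} and the previous theorem would also work, with Right playing responsively instead of Left, but the negation trick avoids redoing that bookkeeping.
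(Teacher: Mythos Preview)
Your proposal is correct and matches the paper's approach: the paper gives no explicit proof at all, simply writing ``Similarly we have'' before the statement, exactly as it did for Theorem~\ref{outsum2} after Theorem~\ref{outsum}. Your negation argument is the standard way to make this ``similarly'' precise, and your verification that negation preserves i-games is the content of the negation half of Theorem~\ref{closure} (which the paper also leaves as an exercise), so there is no circularity.
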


As an application of this pile of inequalities, we prove some useful results about i-games.
\begin{theorem}\label{closure}
If $G$ and $H$ are i-games, then $-G$ and $G + H$ are i-games.
\end{theorem}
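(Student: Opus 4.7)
The plan is to handle the two closure statements separately, using the substantial arsenal of inequalities developed just before the statement. Both proofs will go by induction on the combined complexity (formation depth) of the games involved, and in both cases the options will be handled routinely by the inductive hypothesis; the substantive content is verifying the outcome inequality $\loutcome(\cdot) \ge \routcome(\cdot)$ at the top level whenever the result is even-tempered.

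For $-G$, I would first note that negation preserves parity, and that $\loutcome(-G) = -\routcome(G)$ and $\routcome(-G) = -\loutcome(G)$. The options of $-G$ are precisely $\{-G^R\}$ (as left options) and $\{-G^L\}$ (as right options), each of which is an i-game by induction. If $-G$ is even-tempered then so is $G$, so $\loutcome(G) \ge \routcome(G)$; negating gives $-\routcome(G) \ge -\loutcome(G)$, which is exactly $\loutcome(-G) \ge \routcome(-G)$.

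For $G + H$, the base case where both are numbers is immediate. In the inductive step, every option of $G+H$ has the form $G^L+H$, $G+H^L$, $G^R+H$, or $G+H^R$, each a sum of two i-games of smaller combined complexity, hence an i-game by induction. It remains to verify $\loutcome(G+H) \ge \routcome(G+H)$ in the case that $G+H$ is even-tempered, which splits into two subcases. If $G$ and $H$ are both even-tempered, I apply (\ref{nueve}) (using $G$ as the i-game) to get $\loutcome(G+H) \ge \routcome(G) + \loutcome(H)$, and apply (\ref{doce}) (same orientation) to get $\routcome(G+H) \le \routcome(G) + \loutcome(H)$; chaining gives the desired inequality. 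If $G$ and $H$ are both odd-tempered, I apply (\ref{ocho}) to get $\loutcome(G+H) \ge \loutcome(G) + \routcome(H)$, and (\ref{once}) after swapping the roles of $G$ and $H$ (legitimate because $H$ is also an i-game) to get $\routcome(G+H) \le \loutcome(G) + \routcome(H)$; again chaining gives what we want.

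The main subtlety is bookkeeping: one must remember that the inequalities (\ref{siete})--(\ref{doce}) are asymmetric, requiring one of the two summands to be an i-game, and one has to apply them in both orientations to squeeze $\loutcome(G+H)$ and $\routcome(G+H)$ against the same intermediate quantity. Once the correct pair of inequalities is chosen in each parity subcase, the argument is essentially two lines. No separate treatment is needed for the mixed-parity case since $G+H$ is then odd-tempered and only the options-are-i-games clause must be checked, which is already handled by induction.
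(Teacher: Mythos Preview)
Your proof is correct and essentially identical to the paper's: induction on complexity, options handled by the inductive hypothesis, and in the even-tempered case squeeze $\loutcome(G+H)$ and $\routcome(G+H)$ against a common intermediate using one of (\ref{nueve})/(\ref{doce}) in each direction (and (\ref{ocho})/(\ref{once}) in the odd-odd case). One small bookkeeping slip: in the even-even case you say you apply (\ref{doce}) ``same orientation'' to get $\routcome(G+H) \le \routcome(G) + \loutcome(H)$, but (\ref{doce}) with $G$ as the designated i-game gives $\routcome(G+H) \le \loutcome(G) + \routcome(H)$; to match your intermediate $\routcome(G)+\loutcome(H)$ you must swap roles and use $H$ as the i-game, exactly as you (correctly) do in the odd-odd case.
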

\begin{proof}
Negation is easy, and left to the reader as an exercise.  We show $G + H$ is an i-game inductively.
For the base case, $G$ and $H$ are both numbers, so $G + H$ is one too, and is therefore an i-game.  Otherwise,
by induction, every option of $G + H$ is an i-game, so it remains to show that $\loutcome(G + H) \ge \routcome(G + H)$
if $G + H$ is even-tempered.  In this case $G$ and $H$ have the same parity.  If both are even-tempered, then
by equations~(\ref{nueve}) and (\ref{doce}), we have
\[ \routcome(G + H) \le \loutcome(G) + \routcome(H) \le \loutcome(G + H),\]
and if both are odd-tempered the same follows by equations~(\ref{ocho}) and (\ref{once}) instead.
\end{proof}
\begin{theorem}\label{annihilate}
If $G$ is an i-game, then $G + (-G)$ is an i-game and
\[ \loutcome(G + (-G)) = \routcome(G + (-G)) = 0.\]
\end{theorem}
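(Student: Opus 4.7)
The plan is to reduce the claim to the two earlier results we have already proved: Theorem~\ref{selfdestruct}, which bounds the outcomes of $G + (-G)$, and Theorem~\ref{closure}, which tells us that i-games are closed under negation and addition.

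First I would note that the closure statement is immediate: by Theorem~\ref{closure}, $-G$ is an i-game and hence $G + (-G)$ is an i-game as well. Next, observe that $G + (-G)$ is always \emph{even}-tempered: if $G$ is a number, then $G + (-G) = 0$; otherwise $G$ and $-G$ share the same parity, so their sum is even-tempered. This matters because the inequality $\loutcome \ge \routcome$ built into the definition of an i-game is only asserted for even-tempered games, and the parity check is what lets us invoke it here.

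Now I would assemble the two-sided sandwich. From Theorem~\ref{selfdestruct} we have
\[
\routcome(G + (-G)) \ge 0 \quad\text{and}\quad \loutcome(G + (-G)) \le 0.
\]
From the i-game property applied to the even-tempered i-game $G + (-G)$ we get
\[
\loutcome(G + (-G)) \ge \routcome(G + (-G)).
\]
Chaining these three inequalities yields
\[
0 \le \routcome(G + (-G)) \le \loutcome(G + (-G)) \le 0,
\]
so both outcomes are forced to equal $0$, as desired.

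There is no real obstacle here: everything follows by concatenating previously established bounds, and the only small subtlety is remembering to justify why $G + (-G)$ is even-tempered so that the i-game inequality applies in the direction we need. No induction is required in the proof itself — all the inductive work has already been absorbed into Theorems~\ref{selfdestruct} and~\ref{closure}.
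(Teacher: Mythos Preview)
Your proof is correct and essentially identical to the paper's own argument: both invoke Theorem~\ref{selfdestruct} for the bounds $\loutcome(G+(-G)) \le 0 \le \routcome(G+(-G))$, use Theorem~\ref{closure} to conclude $G+(-G)$ is an i-game, and then apply the even-tempered i-game inequality $\loutcome \ge \routcome$ to squeeze both outcomes to zero. The only difference is that you spell out the even-temperedness justification slightly more explicitly than the paper does.
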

\begin{proof}
We know in general, by equations~(\ref{cinco}-\ref{seis}), that
\[ \loutcome(G + (-G)) \le 0 \le \routcome(G + (-G)).\]
By the previous theorem we know that $G + (-G)$ is an i-game,
and it is clearly even-tempered, so
\[ \loutcome(G + (-G)) \ge \routcome(G + (-G))\]
and we are done.
\end{proof}
\begin{theorem}\label{nonnegative}
If $G$ is an even-tempered i-game, and $\routcome(G) \ge 0$,
then for any $X \in \mathcal{W}_\mathbb{Z}$, we have
$\outcome(G + X) \ge \outcome(X)$.
\end{theorem}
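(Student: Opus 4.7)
The plan is to unpack the statement $\outcome(G + X) \ge \outcome(X)$ into its two coordinate inequalities, $\loutcome(G + X) \ge \loutcome(X)$ and $\routcome(G + X) \ge \routcome(X)$, and then split into two cases according to the parity of $X$. In each case, the hypothesis $\routcome(G) \ge 0$ will be used to absorb the $G$-contribution on the right-hand side of the inequalities already established in Theorems \ref{outsum} and the parallel i-game theorem containing equations (\ref{siete}-\ref{nueve}).

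For the case where $X$ is even-tempered, $G + X$ is also even-tempered, so I would apply equation (\ref{uno}) of Theorem \ref{outsum} to get $\routcome(G + X) \ge \routcome(G) + \routcome(X) \ge \routcome(X)$, and then apply equation (\ref{nueve}) (which requires that $G$ be an even-tempered i-game, exactly our hypothesis) to get $\loutcome(G + X) \ge \routcome(G) + \loutcome(X) \ge \loutcome(X)$. For the case where $X$ is odd-tempered, $G + X$ is odd-tempered, and I would apply equation (\ref{siete}) (again using that $G$ is an even-tempered i-game and $X$ is odd-tempered) to get $\routcome(G + X) \ge \routcome(G) + \routcome(X) \ge \routcome(X)$, and apply equation (\ref{dos}) of Theorem \ref{outsum} with $X$ playing the role of the odd-tempered summand and $G$ the even-tempered one to get $\loutcome(G + X) = \loutcome(X + G) \ge \loutcome(X) + \routcome(G) \ge \loutcome(X)$.

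This is essentially a bookkeeping proof: each of the four needed inequalities matches exactly one of the eight already-proved inequalities, and the hypothesis $\routcome(G) \ge 0$ is used in each line to drop the $\routcome(G)$ term. There is no real obstacle — the only mild care required is verifying that for each application the parity hypotheses and i-game hypothesis of the corresponding prior theorem are satisfied, and that equations (\ref{siete}-\ref{nueve}) are the right ones to invoke (rather than the $\le$-direction bounds (\ref{diez}-\ref{doce}), which would go the wrong way). No induction on $G$ or $X$ is needed, since the inductive work is already embedded in the prior theorems.
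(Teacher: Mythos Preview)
Your proposal is correct and essentially identical to the paper's own proof: it splits on the parity of $X$ and invokes exactly equations~(\ref{uno}) and (\ref{nueve}) in the even-tempered case and equations~(\ref{siete}) and (\ref{dos}) in the odd-tempered case, using $\routcome(G) \ge 0$ to drop that term each time. The only cosmetic difference is that you make the commutativity step $\loutcome(G+X) = \loutcome(X+G)$ explicit when applying~(\ref{dos}), whereas the paper leaves it implicit.
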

\begin{proof}
If $X$ is even-tempered, then by Equation~(\ref{uno}),
\[ \routcome(X) \le \routcome(G) + \routcome(X) \le \routcome(G + X), \]
and by Equation~(\ref{nueve}) we have
\[ \loutcome(X) \le \routcome(G) + \loutcome(X) \le \loutcome(G + X).\]
If $X$ is odd-tempered, then by Equation~(\ref{siete}),
\[ \routcome(X) \le \routcome(G) + \routcome(X) \le \routcome(G + X),\]
and by Equation~(\ref{dos}) we have
\[ \loutcome(X) \le \routcome(G) + \loutcome(X) \le \loutcome(G + X).\]
\end{proof}
\begin{theorem}\label{nonpositive}
If $G$ is an even-tempered i-game, and $\loutcome(G) \le 0$,
the for any $X \in \mathcal{W}_\mathbb{Z}$, we have
$\outcome(G + X) \le \outcome(X)$.
\end{theorem}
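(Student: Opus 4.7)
The plan is to deduce this as a dual of Theorem~\ref{nonnegative}, rather than grinding out a direct induction. The key observation is that negation behaves well with respect to everything in sight: by Theorem~\ref{closure}, $-G$ is an i-game; it is clearly still even-tempered; and from $\loutcome(-G) = -\routcome(G)$ and $\routcome(-G) = -\loutcome(G)$, the hypothesis $\loutcome(G) \le 0$ becomes $\routcome(-G) \ge 0$. So $-G$ satisfies the hypotheses of Theorem~\ref{nonnegative}.

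Next I would apply Theorem~\ref{nonnegative} to the i-game $-G$ and to the game $-X$, obtaining
\[
\outcome((-G) + (-X)) \ge \outcome(-X).
\]
Since negation distributes over addition (an easy inductive verification straight from the definition, which we may take for granted here), the left side equals $\outcome(-(G+X))$.

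Finally, I would use the general order-reversal property of negation: for any games $A, B$, the componentwise identities $\loutcome(-A) = -\routcome(A)$ and $\routcome(-A) = -\loutcome(A)$ together imply $\outcome(-A) \ge \outcome(-B) \iff \outcome(A) \le \outcome(B)$. Applied to the inequality above with $A = G + X$ and $B = X$, this yields $\outcome(G+X) \le \outcome(X)$, as required.

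There is no real obstacle here; the only thing to be careful about is bookkeeping the correspondence between left and right outcomes under negation, since the inequality $\outcome(\cdot) \ge \outcome(\cdot)$ is a conjunction of two scalar inequalities and both flip simultaneously. A purely direct proof mirroring Theorem~\ref{nonnegative} would also work, invoking equations~(\ref{tres}) and (\ref{doce}) when $X$ is even-tempered, and equations~(\ref{cuatro}) and (\ref{diez}) when $X$ is odd-tempered, together with $\loutcome(G) \le 0$; but the negation route is shorter and makes the symmetry with Theorem~\ref{nonnegative} manifest.
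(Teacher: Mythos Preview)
Your proof is correct. The paper's own proof is the single line ``Analogous to Theorem~\ref{nonnegative},'' which is exactly the direct argument you sketch at the end: use (\ref{tres}) and (\ref{doce}) for $X$ even-tempered, and (\ref{diez}) and (\ref{cuatro}) for $X$ odd-tempered, together with $\loutcome(G) \le 0$. Your primary route---negate $G$ and $X$, apply Theorem~\ref{nonnegative} itself, and then unwind using the order-reversal of $\outcome$ under negation---is a genuine alternative: it avoids re-invoking the four inequalities and instead makes the Left/Right symmetry do all the work. Both arguments are equally short once written out; yours has the mild advantage of deriving the theorem \emph{from} Theorem~\ref{nonnegative} rather than proving it in parallel, so any future strengthening of Theorem~\ref{nonnegative} would transfer automatically.
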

\begin{proof}
Analogous to Theorem~\ref{nonnegative}.
\end{proof}
\begin{theorem}\label{zero}
If $G$ is an even-tempered i-game, and $\loutcome(G) = \routcome(G) = 0$,
then for any $X \in \mathcal{W}_\mathbb{Z}$, we have
$\outcome(G + X) = \outcome(X)$.
\end{theorem}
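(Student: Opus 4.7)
The plan is to derive Theorem~\ref{zero} as an immediate consequence of the two preceding theorems, since the hypotheses package together what those theorems need separately.

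First I would observe that the hypothesis $\routcome(G) = 0$ gives in particular $\routcome(G) \ge 0$, so that Theorem~\ref{nonnegative} applies and yields $\outcome(G + X) \ge \outcome(X)$ for every $X \in \mathcal{W}_\mathbb{Z}$. Symmetrically, $\loutcome(G) = 0$ gives $\loutcome(G) \le 0$, so that Theorem~\ref{nonpositive} applies and yields $\outcome(G + X) \le \outcome(X)$. Combining the two inequalities in the product order on $\mathbb{Z} \times \mathbb{Z}$ forces $\outcome(G + X) = \outcome(X)$, which is the conclusion.

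There is essentially no obstacle here: the work was already done in Theorems~\ref{nonnegative} and~\ref{nonpositive}, which in turn rested on the inequalities (\ref{uno})--(\ref{doce}) and the closure facts for i-games from Theorem~\ref{closure}. The only thing one should double-check in writing this up is that no parity hypothesis on $X$ is needed: both invoked theorems are stated for arbitrary $X \in \mathcal{W}_\mathbb{Z}$, so the argument goes through uniformly in the even- and odd-tempered cases for $X$. Hence the proof is a two-line application of the previous two theorems.
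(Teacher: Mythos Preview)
Your proposal is correct and matches the paper's proof exactly: the paper simply says ``Combine Theorems~\ref{nonnegative} and~\ref{nonpositive},'' which is precisely the two-line argument you describe.
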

\begin{proof}
Combine Theorems~\ref{nonnegative} and \ref{nonpositive}.
\end{proof}
This last result suggests that if $G$ is an even-tempered i-game with vanishing outcomes,
then $G$ behaves very much like $0$.  Let us investigate this indistinguishability further\ldots

\section{Partial orders on integer-valued games}
\begin{definition}
If $G_1, G_2 \in \mathcal{W}_\mathbb{Z}$, then we say that $G_1$ and $G_2$ are \emph{equivalent},
denoted $G_1 \approx G_2$, iff $\outcome(G_1 + X) = \outcome(G_2 + X)$ for all $X \in \mathcal{W}_\mathbb{Z}$.
We also define a preorder on $\mathcal{W}_\mathbb{Z}$ by
$G_1 \lesssim G_2$ iff $\outcome(G_1 + X) \le \outcome(G_2 + X)$ for all $X \in \mathcal{W}_\mathbb{Z}$.
\end{definition}
So in particular, $G_1 \approx G_2$ iff $G_1 \gtrsim G_2$ and $G_1 \lesssim G_2$.  Taking $X = 0$ in the
definitions, we see that $G_1 \approx G_2$ implies $\outcome(G_1) = \outcome(G_2)$, and similarly,
$G_1 \gtrsim G_2$ implies that $\outcome(G_1) \ge \outcome(G_2)$.  It is straightforward
to see that $\approx$ is indeed an equivalence relation, and a congruence with respect to addition and negation,
so that the quotient space $\mathcal{W}_\mathbb{Z}/\approx$ retains its commutative monoid structure.
If two games $G_1$ and $G_2$ are equivalent, then they are interchangeable in all context involving addition.
Later we'll see that they're interchangeable in all contexts made of weakly-order preserving functions.
Our main goal is to understand the quotient space $\mathcal{W}_\mathbb{Z}/\approx$.

We restate the results at the end of last section in terms of $\approx$ and its quotient space:
\begin{corollary}\label{restatement}
If $G$ is an even-tempered i-game, then $G \lesssim 0$ iff $\loutcome(G) \le 0$, $G \gtrsim 0$ iff $\routcome(G) \ge 0$,
and $G \approx 0$ iff $\loutcome(G) = \routcome(G) = 0$.  Also, if $G$ is any i-game, then $G + (-G) \approx 0$,
so every i-game is invertible modulo $\approx$ with inverse given by negation.
\end{corollary}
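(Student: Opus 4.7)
The plan is to handle the three biconditionals uniformly, and then derive invertibility as an immediate consequence.

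For the forward directions of the three biconditionals, I will simply specialize the defining inequality of $\lesssim$ (respectively $\gtrsim$, $\approx$) to the test game $X = 0$. This gives $\outcome(G) = \outcome(G + 0) \le \outcome(0) = (0,0)$ (respectively $\ge$, $=$), and since $\outcome(G) = (\loutcome(G), \routcome(G))$, reading off the appropriate coordinate yields $\loutcome(G) \le 0$ (resp.\ $\routcome(G) \ge 0$, or both equal to $0$). Note that this direction uses nothing about $G$ being an i-game; the i-game hypothesis is only needed for the converses.

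For the reverse directions, the three preceding theorems do exactly what is needed: Theorem~\ref{nonnegative} says that an even-tempered i-game $G$ with $\routcome(G) \ge 0$ satisfies $\outcome(G + X) \ge \outcome(X)$ for every $X \in \mathcal{W}_\mathbb{Z}$, which is the definition of $G \gtrsim 0$. Theorem~\ref{nonpositive} gives the $\lesssim 0$ case symmetrically, and Theorem~\ref{zero} gives the $\approx 0$ case by combining the two. So all three biconditionals follow with no further computation.

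For the invertibility claim, let $G$ be any i-game. Theorem~\ref{closure} tells us that $G + (-G)$ is an i-game, and it is even-tempered because $G$ and $-G$ have the same temper. Theorem~\ref{annihilate} then gives $\loutcome(G + (-G)) = \routcome(G + (-G)) = 0$. Applying the third biconditional just established to the even-tempered i-game $G + (-G)$ yields $G + (-G) \approx 0$, so $-G$ serves as an additive inverse of $G$ in the quotient $\mathcal{W}_\mathbb{Z}/{\approx}$.

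There is no real obstacle here: the work was all done in the previous section, and the corollary is purely a repackaging in the language of the preorder $\lesssim$ and the equivalence $\approx$. The only minor subtlety worth flagging is remembering that $G + (-G)$ is automatically even-tempered regardless of the parity of $G$, so the third biconditional applies even when $G$ itself is odd-tempered.
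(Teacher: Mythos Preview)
Your proof is correct and follows essentially the same approach as the paper: specialize to $X=0$ for the forward directions, invoke Theorems~\ref{nonnegative}, \ref{nonpositive}, and \ref{zero} for the converses, and then apply the third biconditional to $G+(-G)$ using Theorem~\ref{annihilate}. Your explicit mention of Theorem~\ref{closure} is slightly redundant since Theorem~\ref{annihilate} already records that $G+(-G)$ is an i-game, but this does no harm.
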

\begin{proof}
Theorems \ref{nonnegative}, \ref{nonpositive}, and \ref{zero} give the implications in the direction $\Leftarrow$.
For the reverse directions, note that if $G \lesssim 0$, then by definition $\loutcome(G + 0) \le \loutcome(0 + 0) = 0$.
And similarly $G \gtrsim 0$ implies $\routcome(G) \ge 0$, and $G \approx 0$ implies $\outcome(G) = \outcome(0)$.
For the last claim, note that by Theorem~\ref{annihilate}, $G + (-G)$ has vanishing outcomes,
so by what has just been shown $G + (-G) \approx 0$.
\end{proof}
Note that this gives us a test for $\approx$ and $\lesssim$ between i-games: $G \lesssim H$
iff $G + (-H) \lesssim H + (-H) \approx 0$, iff $\loutcome(G + (-H)) \le 0$.  Here we have used the fact
that $G \lesssim H$ implies $G + X \lesssim H + X$, which is easy to see from the definition.  And if $X$ is
invertible, then the implication holds in the other direction.

Also, by combining Corollary~\ref{restatement}, and Theorem~\ref{closure}, we see that i-games modulo
$\approx$ are an abelian group, partially ordered by $\lesssim$.

We next show that even-tempered and odd-tempered games are never comparable.
\begin{theorem}\label{sameparity}
If $G_1, G_2 \in \mathcal{W}_\mathbb{Z}$ but $G_1$ is odd-tempered and $G_2$ is even-tempered, then
$G_1$ and $G_2$ are incomparable with respect to the $\gtrsim$ preorder.
Thus no two games of differing parity are equivalent.
\end{theorem}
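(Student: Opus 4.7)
The plan is to exhibit witness games showing that neither $G_1 \lesssim G_2$ nor $G_1 \gtrsim G_2$ can hold, forcing incomparability (and \emph{a fortiori} nonequivalence). Let $X_N = \langle 0 \,|\, N \rangle$, an odd-tempered depth-one game with $\outcome(X_N) = (0, N)$. The key claim, to be proved by induction on depth, is that as $N \to \infty$, $\loutcome(G + X_N) \to \infty$ when $G$ is odd-tempered, but $\loutcome(G + X_N)$ stays bounded in $N$ when $G$ is even-tempered. Granted this, take $N$ so large that $\loutcome(G_1 + X_N) > \loutcome(G_2 + X_N)$; then $\outcome(G_1 + X_N) \not\le \outcome(G_2 + X_N)$, so $G_1 \not\lesssim G_2$. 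A mirror argument using $Y_N = -X_N = \langle -N \,|\, 0 \rangle$ and the right outcome yields $\routcome(G_1 + Y_N) \to -\infty$ against $\routcome(G_2 + Y_N)$ bounded, hence $G_1 \not\gtrsim G_2$.

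For both asymptotic claims, expanding by the definition of sum (using that the options of $X_N$ are the numbers $0$ and $N$) yields
\[
\loutcome(G + X_N) = \max\bigl(\routcome(G),\, \max_{G^L} \routcome(G^L + X_N)\bigr),
\]
\[
\routcome(H + X_N) = \min\bigl(\loutcome(H) + N,\, \min_{H^R} \loutcome(H^R + X_N)\bigr).
\]
If $G$ is odd-tempered, every $G^L$ is even-tempered. When $G^L$ is a number $k$, $\routcome(G^L + X_N) = k + N \to \infty$ directly; otherwise each $(G^L)^R$ is odd-tempered of strictly smaller depth, so $\loutcome((G^L)^R + X_N) \to \infty$ by induction, and together with $\loutcome(G^L) + N \to \infty$ the entire min defining $\routcome(G^L + X_N)$ blows up. Either way $\loutcome(G + X_N) \to \infty$. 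If instead $G$ is even-tempered, every $G^L$ is odd-tempered; for $N$ large the min in $\routcome(G^L + X_N)$ picks out the term $\min_{(G^L)^R} \loutcome((G^L)^R + X_N)$, whose constituents are $\loutcome$-values of even-tempered games of strictly smaller depth (or numbers), bounded by induction. Hence $\loutcome(G + X_N)$ stays bounded.

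The main technical point is simply the bookkeeping: each parity's recursion calls itself (not the opposite parity) at games two levels shallower, so the odd-tempered and even-tempered inductions are independent and terminate on numbers as base cases. The one subtle step is ensuring that the threshold ``$N$ sufficiently large'' in the even-tempered case can be chosen uniformly over the finitely many subpositions in the tree of $G$, so that the min in $\routcome(G^L + X_N)$ stably selects the bounded term rather than $\loutcome(G^L) + N$; since the subposition tree is finite, this is immediate.
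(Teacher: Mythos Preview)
Your proof is correct and follows essentially the same idea as the paper: add an odd-tempered witness game with a large parameter $N$ so that the parity of $G_i$ determines who is forced to make the (decisive) last move in the witness. The paper uses the single symmetric witness $\langle -N \mid N \rangle$ and argues informally that neither player will touch it until forced, so the sign of $\loutcome$ flips with parity; you instead use the asymmetric pair $\langle 0 \mid N\rangle$, $\langle -N \mid 0\rangle$ and replace the strategic argument by an explicit induction on depth showing $\loutcome(G + X_N)\to\infty$ for odd-tempered $G$ versus bounded for even-tempered $G$. One minor point: your ``uniform threshold'' remark in the even-tempered case is unnecessary, since $\routcome(G^L + X_N)$ is bounded above by the bounded term of the min regardless of which term the min selects.
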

\begin{proof}
Since we are only considering finite games, there is some $N \in \mathbb{Z}$ such
that $N$ is greater in magnitude than all numbers occuring within $G_1$ and $G_2$.
Since $\langle -N | N \rangle \in \mathcal{W}_\mathbb{Z}$,
it suffices to show that $\loutcome(G_1 + \langle -N | N \rangle)$ is positive while
$\loutcome(G_2 + \langle -N | N \rangle)$ is negative.  In both sums, $G_1 + \langle -N \,|\, N \rangle$
and $G_2 + \langle -N \,|\, N \rangle$, $N$ is so large that no player will move in $\langle -N \,|\, N \rangle$
unless they have no other alternative.  Moreover, the final score will be positive iff Right had to move in this
component, and negative iff Left had to move in this component, because $N$ is so large that it dwarves the
score of the other component.  Consequently, we can assume that the last move of the game will be in
the $\langle -N \,|\, N \rangle$ component.  Since $G_1 + \langle -N | N \rangle$ is even-tempered,
Right will make the final move if Left goes first, so $\loutcome(G_1 + \langle -N \,|\, N \rangle) > 0$.
But on the other hand, $G_2 + \langle -N | N \rangle$ is odd-tempered, so Left will make the final
move if Left goes first, and therefore $\loutcome(G_2 + \langle -N \,|\, N \rangle) < 0$, so we are done.
\end{proof}
Thus $\mathcal{W}_\mathbb{Z}/\approx$ is naturally the disjoint union of its even-tempered and odd-tempered components:
$\mathcal{W}_\mathbb{Z}^0/\approx$ and $\mathcal{W}_\mathbb{Z}^1/\approx$.

Although they are incomparable with each other, $\mathcal{W}_\mathbb{Z}^0$ and $\mathcal{W}_\mathbb{Z}^1$ are
very closely related, as the next results show:
\begin{theorem}\label{fundinv}
Let $*$ be $\langle 0 | 0 \rangle$.
If $G \in \mathcal{W}_\mathbb{Z}$, then $G + * + * \approx G$.
The map $G \to G + *$ establishes an involution on $\mathcal{W}_\mathbb{Z}/\approx$ interchanging
$\mathcal{W}_\mathbb{Z}^0/\approx$ and
$\mathcal{W}_\mathbb{Z}^1/\approx$.  In fact, as a commutative monoid, $\mathcal{W}_\mathbb{Z}$ is isomorphic to the direct product
of the cyclic group $\mathbb{Z}_2$ and the submonoid $\mathcal{W}_\mathbb{Z}^0/\approx$.
\end{theorem}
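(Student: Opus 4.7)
The plan is to unwind everything from the single observation that $*$ is an i-game with $-* = *$ and $\loutcome(*) = \routcome(*) = 0$. By Theorem~\ref{annihilate} we have $* + (-*) = * + * \approx 0$ (using Corollary~\ref{restatement} to upgrade the vanishing outcomes to true equivalence with $0$). Adding $G$ to both sides and invoking the fact that $\approx$ is a congruence with respect to addition gives $G + * + * \approx G$ for every $G \in \mathcal{W}_\mathbb{Z}$. Defining $\sigma: \mathcal{W}_\mathbb{Z}/\approx \to \mathcal{W}_\mathbb{Z}/\approx$ by $\sigma([G]) = [G + *]$, the relation just shown gives $\sigma \circ \sigma = \mathrm{id}$, so $\sigma$ is an involution. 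Since $*$ is odd-tempered, the parity of $G + *$ is opposite to that of $G$; combined with Theorem~\ref{sameparity} (equivalent games share parity), $\sigma$ interchanges $\mathcal{W}_\mathbb{Z}^0/\approx$ and $\mathcal{W}_\mathbb{Z}^1/\approx$.

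Next I would define
\[ \varphi: \mathbb{Z}_2 \times (\mathcal{W}_\mathbb{Z}^0/\approx) \to \mathcal{W}_\mathbb{Z}/\approx, \qquad \varphi(n, [G]) = [G + n*], \]
where $n * $ means $0$ if $n = 0$ and $*$ if $n = 1$. Well-definedness on the second coordinate is immediate from the congruence property of $\approx$. To check that $\varphi$ is a monoid homomorphism, compute $\varphi(n_1, [G_1]) + \varphi(n_2, [G_2]) = [G_1 + G_2 + (n_1 + n_2)*]$, and observe that because $*+* \approx 0$ the class $[(n_1 + n_2)*]$ depends only on $n_1 + n_2 \bmod 2$, matching $\varphi(n_1 + n_2 \bmod 2, [G_1 + G_2])$.

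For surjectivity, split on parity: an even-tempered $H$ is $\varphi(0, [H])$, while an odd-tempered $H$ satisfies $H \approx (H + *) + *$, so $H = \varphi(1, [H + *])$, with $H + *$ indeed even-tempered. For injectivity, suppose $\varphi(n_1, [G_1]) = \varphi(n_2, [G_2])$, i.e.\ $G_1 + n_1 * \approx G_2 + n_2 *$. Theorem~\ref{sameparity} forces $n_1 \equiv n_2 \pmod 2$, so $n_1 = n_2$ in $\mathbb{Z}_2$. To cancel the $n_1 *$ we use that $*$ is an i-game, so by Corollary~\ref{restatement} it has an additive inverse modulo $\approx$ (namely itself), which makes $\mathcal{W}_\mathbb{Z}/\approx$ a $*$-cancellative structure; adding $n_1 *$ to both sides and applying $*+* \approx 0$ gives $G_1 \approx G_2$.

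The only genuinely delicate step is the cancellation at the end of injectivity — one must be sure that $G_1 + * \approx G_2 + *$ really does imply $G_1 \approx G_2$ rather than just giving equality of outcomes. But this is exactly what having an inverse for $[*]$ in $\mathcal{W}_\mathbb{Z}/\approx$ provides, and that inverse is available because $*$ is an i-game; everything else is bookkeeping with parity and the congruence property of $\approx$.
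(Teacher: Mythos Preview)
Your proof is correct and follows essentially the same route as the paper: observe that $*$ is an i-game with $-* = *$, conclude $* + * \approx 0$ via Corollary~\ref{restatement}, deduce $G + * + * \approx G$, and then read off the involution and the direct-product decomposition. The paper compresses the last step to a single sentence (``since every $\overline{G}$ is of the form $\overline{H}$ or $\overline{H} + \overline{*}$, and $\overline{*} + \overline{*} = \overline{0}$, the desired direct sum decomposition follows''), whereas you spell out the isomorphism $\varphi$ and check surjectivity and injectivity explicitly; but there is no substantive difference in the argument.
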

\begin{proof}
It's clear that $*$ is an i-game, and it is its own negative, so that $* + * \approx 0$.  Therefore
$G + * + * \approx G$ for any $G$.
Because $\approx$ is a congruence with respect to addition, $G \to G + *$ is a well-defined map on $\mathcal{W}_\mathbb{Z}/\approx$.
Because $*$ is odd-tempered, this map will certainly interchange even-tempered and odd-tempered games.  It is an involution by the first claim.
Then, since every $\overline{G} \in \mathcal{W}_\mathbb{Z}/\approx$ is of the form $\overline{H}$ or
$\overline{H} + \overline{*}$, and since $\overline{*} + \overline{*} = \overline{0 + * + *} = \overline{0}$, the desired direct sum
decomposition follows.
\end{proof}
As a corollary, we see that every fixed-parity (well-tempered) game is equivalent to a fixed-\emph{length} game:
\begin{definition}
Let $G$ be a well-tempered scoring game.  Then $G$ has \emph{length} 0 if $G$ is a number,
and has \emph{length} $n+1$ if every option of $G$ has length 0.  A well-tempered game is \emph{fixed-length}
if it has length $n$ for any $n$.
\end{definition}
It is easy to see by induction that if $G$ and $H$ have lengths $n$ and $m$, then $G + H$ has length $n + m$.
\begin{corollary}\label{fixed-length-exhausts}
Every well-tempered game $G$ is equivalent ($\approx$) to a fixed-length game.
\end{corollary}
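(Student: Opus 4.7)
The plan is to prove the corollary by induction on the structural depth of $G$, relying on an auxiliary \emph{substitution lemma} that says equivalent options yield equivalent games. Specifically, the substitution lemma asserts that if $G = \langle L_1, \ldots, L_k \,|\, R_1, \ldots, R_j \rangle$ and $G' = \langle L_1', \ldots, L_k' \,|\, R_1', \ldots, R_j' \rangle$ with $L_i \approx L_i'$ and $R_i \approx R_i'$ for each $i$, then $G \approx G'$. Granting this, the corollary follows quickly: if $G$ is a number, it already has length $0$; otherwise, each option of $G$ is by induction $\approx$-equivalent to some fixed-length game, and since $G$ is well-tempered all its options share a common parity, so their fixed-length equivalents have lengths of a common parity. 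Theorem~\ref{fundinv} gives $H + * + * \approx H$, so adding pairs of $*$'s lets us pad any fixed-length equivalent by $2$ without changing its $\approx$-class while raising its length by $2$; pad all of them to a common length $n-1$. Assembling these padded representatives as the options of a new game yields a fixed-length game $G'$ of length $n$, and the substitution lemma delivers $G \approx G'$.

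To prove the substitution lemma, it suffices by transitivity to handle a single replacement, say $L_1 \approx L_1'$, with the resulting game denoted $\tilde G$. I will show $\outcome(G + X) = \outcome(\tilde G + X)$ for all $X \in \mathcal{W}_\mathbb{Z}$ by induction on the structural complexity of $X$. When $X$ is a number, the definition of addition gives $G + X = \langle L_i + X \,|\, R_j + X \rangle$ and $\tilde G + X$ the analogous form with $L_1 + X$ replaced by $L_1' + X$; since $L_1 \approx L_1'$ implies $\outcome(L_1 + X) = \outcome(L_1' + X)$, the max and min defining $\loutcome$ and $\routcome$ are unchanged. When $X$ is not a number, $\loutcome(G + X)$ is the maximum of $\routcome(L_i + X)$ over $i$ together with $\routcome(G + X^L)$ over the left options $X^L$ of $X$, and likewise for $\tilde G + X$; the first family agrees by $L_1 \approx L_1'$, and the second by the inductive hypothesis applied to the simpler game $X^L$. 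The argument for $\routcome$ is symmetric.

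The main obstacle is the substitution lemma itself; once it is in hand, producing the fixed-length representative is a routine padding argument. The delicate feature of the lemma is that the induction must run on $X$ rather than on $G$, since $\approx$ is defined by quantifying over all $X$, and the base case relies crucially on the ``number avoidance'' built into the definition of addition, which ensures that when $X$ has no options neither do the added terms $G + X^L, G + X^R$.
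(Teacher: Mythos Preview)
Your proof is correct and follows essentially the same route as the paper: induct on $G$, use $H + * + * \approx H$ to pad fixed-length representatives of the options up to a common length, and then invoke a substitution lemma to assemble the padded options into a fixed-length game equivalent to $G$. The paper's version states the inductive hypothesis a bit more strongly (``for all sufficiently large $n$ of the right parity'') and defers your substitution lemma to a later result (Theorem~\ref{monotone}), but your inline proof of that lemma by induction on $X$ is exactly the argument the paper gives there.
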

\begin{proof}
We prove the following claims by induction on $G$:
\begin{itemize}
\item If $G$ is even-tempered, then for all large enough even $n$, $G \approx H$ for some game $H$ of length $n$.
\item If $G$ is odd-tempered, then for all large enough odd $n$, $G \approx H$ for some game $H$ of length $n$.
\end{itemize}
If $G$ is a number, then $G$ is even-tempered.  By definition, $G$ already has length 0.  On the other hand,
$*$ has length 1, so $G + * + *$ has length 2, $G + * + * + * + *$ has length 4, and so on.  By Theorem~\ref{fundinv},
these games are all equivalent to $G$.  This establishes the base case.

For the inductive step, suppose that $G = \langle A, B, C, \ldots | D, E, F, \ldots \rangle$.  If $G$ is even-tempered,
then $A,B,C,\ldots, D, E, F, \ldots$ are all odd-tempered.  By induction, we can find length $n - 1$ games $A', B', \ldots$
with
\[ A \approx A',\]
\[ B \approx B',\]
and so on, for all large enough even $n$.  By Theorem~\ref{monotone} below, we then have
\[ G = \langle A, B, C, \ldots | D, E, F, \ldots \rangle \approx \langle A', B', C', \ldots | D', E', F', \ldots \rangle,\]
where $H = \langle A', B', C', \ldots | D', E', F', \ldots \rangle$ has length $n$.  So $G$ is equivalent to a game
of length $n$, for large enough even $n$.  The case where $G$ is odd-tempered is handled in a completely analogous way.
\end{proof}
Because of this corollary, the indistinguishability quotient of well-tempered games is the same as the
indistinguishability quotient of fixed-length games.

Unfortunately our definition of $\approx$
is difficult to use in practice, between non-i-games, since we have to check all $X \in \mathcal{W}_\mathbb{Z}$.  Perhaps
we can come up with a better equivalent definition?

\section{Who goes last?}
The outcome of a partizan game or a $\mathbb{Z}$-valued game depends on which player goes first.  However,
since our $\mathbb{Z}$-valued games are fixed-parity games, saying who goes first is the same as saying who goes last.
We might as well consider the following:
\begin{definition}
The \emph{left-final outcome} $\lfout(G)$ of a $\mathcal{S}$-valued game $G$ is the outcome of $G$ when Left makes
the final move, and the \emph{right-final outcome} $\rfout(G)$ is the outcome when Right makes the final move.
In other words, if $G$ is odd-tempered (whoever goes first also goes last), then
\[ \lfout(G) = \loutcome(G) \]
\[ \rfout(G) = \routcome(G) \]
while if $G$ is even-tempered, then
\[ \lfout(G) = \routcome(G) \]
\[ \rfout(G) = \loutcome(G).\]
\end{definition}
This may seem arbitrary, but it turns out to be the key to understanding well-tempered scoring games.  We will see in Section~\ref{sec:sides}
that a $\mathbb{Z}$-valued game is schizophrenic in nature, acting as one of two different i-games depending on which
player will make the final move.  With this in mind, we make the following definitions:
\begin{definition} (Left's partial order)
If $G$ and $H$ are two $\mathbb{Z}$-valued games of the same parity, we define
$G \lesssim_+ H$ if $\lfout(G + X) \le \lfout(H + X)$ for all $X \in \mathcal{W}_\mathbb{Z}$,
and $G \approx_+ H$ if $\lfout(G + X) = \lfout(H + X)$ for all $X \in \mathcal{W}_\mathbb{Z}$.
\end{definition}
\begin{definition} (Right's partial order)
If $G$ and $H$ are two $\mathbb{Z}$-valued games of the same parity, we define
$G \lesssim_- H$ if $\rfout(G + X) \le \rfout(H + X)$ for all $X \in \mathcal{W}_\mathbb{Z}$,
and $G \approx_- H$ if $\rfout(G + X) = \rfout(H + X)$ for all $X \in \mathcal{W}_\mathbb{Z}$.
\end{definition}
It is clear that $\lesssim_\pm$ are preorders and $\approx_\pm$ are equivalence relations,
and that $G \lesssim H$ iff $G \lesssim_+ H$ and $G \lesssim_- H$, and that
$G \approx H$ iff $G \approx_+ H$ and $G \approx_- H$, in light of Theorem~\ref{sameparity}.
Also, $\approx_\pm$ are still congruences with respect to addition (though not negation), i.e.,
if $G \approx_+ G'$ and $H \approx_+ H'$, then $G + H \approx_+ G' + H'$.

All three equivalence relations are also congruences with respect to the operation of game formation.
In fact, we have
\begin{theorem}\label{monotone}
Let $\Box$ be one of $\approx$, $\approx_+$, $\approx_-$, $\lesssim$, $\lesssim_+$, $\lesssim_-$,
$\gtrsim$, $\gtrsim_-$, or $\gtrsim_+$. Suppose that
$L_1\ \Box\ L_1'$, $L_2\ \Box\ L_2'$, \ldots, $R_1\ \Box\ R_1'$, $R_2\ \Box R_2'$, \ldots.
Then
\[ \langle L_1, L_2, \ldots\ |\ R_1, R_2, \ldots \rangle \ \Box\ \langle L_1', L_2', \ldots \ |\ R_1', R_2', \ldots \rangle.\]
\end{theorem}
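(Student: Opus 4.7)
The plan is to reduce the nine cases down to just $\Box = \lesssim_+$. First, $\approx, \approx_+, \approx_-$ are the symmetric parts of $\lesssim, \lesssim_+, \lesssim_-$, so they follow once the $\lesssim$-type results are in hand. Second, $\gtrsim, \gtrsim_+, \gtrsim_-$ follow from $\lesssim, \lesssim_+, \lesssim_-$ by left--right symmetry: swapping the roles of Left and Right (equivalently, negating and interchanging the $L$'s with the $R$'s) converts each $\lesssim$-statement into the corresponding $\gtrsim$-statement. Third, $\lesssim$ itself equals $\lesssim_+ \,\cap\, \lesssim_-$: for fixed $G, G', X$ of fixed parity, $\lfout$ and $\rfout$ are just $\loutcome$ and $\routcome$ in some order, so $\outcome(G+X) \le \outcome(G'+X)$ for every $X$ is equivalent to the conjunction $\lfout(G+X) \le \lfout(G'+X)$ and $\rfout(G+X) \le \rfout(G'+X)$ for every $X$. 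This reduces the entire theorem to the $\lesssim_+$ case.

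So fix the hypothesis $L_i \lesssim_+ L_i'$ and $R_j \lesssim_+ R_j'$ for every $i, j$, and set $G = \langle L_1, L_2, \ldots \,|\, R_1, R_2, \ldots \rangle$ and $G' = \langle L_1', L_2', \ldots \,|\, R_1', R_2', \ldots \rangle$; these have the same (fixed) parity, opposite that of the common parity of their options. I would prove by induction on the complexity of $X \in \mathcal{W}_\mathbb{Z}$ that $\lfout(G+X) \le \lfout(G'+X)$. Note that $G$ is not a number (it has options), so neither is $G + X$, and the recursive formula for addition always applies.

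Split into two cases on the parity of $G + X$. When $G + X$ is even-tempered, $\lfout(G+X) = \routcome(G+X)$, and
\[ \routcome(G+X) = \min\!\Bigl(\min_j \loutcome(R_j + X),\; \min_{X^R} \loutcome(G + X^R)\Bigr),\]
with the identical formula for $G'$. For each $j$, $R_j + X$ is odd-tempered, so $\loutcome(R_j+X) = \lfout(R_j+X) \le \lfout(R_j'+X) = \loutcome(R_j'+X)$ by the hypothesis $R_j \lesssim_+ R_j'$. For each $X^R$, since $X^R$ has the opposite parity of $X$, $G + X^R$ is odd-tempered, so $\loutcome(G+X^R) = \lfout(G+X^R) \le \lfout(G'+X^R) = \loutcome(G'+X^R)$ by the inductive hypothesis applied to $X^R$. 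Taking minimums gives $\routcome(G+X) \le \routcome(G'+X)$. When $G + X$ is odd-tempered the argument is dual: $\lfout(G+X) = \loutcome(G+X)$ decomposes as a maximum over $\routcome(L_i+X)$ and $\routcome(G+X^L)$, each of which is the $\lfout$ of an even-tempered game, and we apply the hypothesis on $L_i$ and the inductive hypothesis on $X^L$.

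The main obstacle is the parity bookkeeping. The reason the induction closes is that in each decomposition of $\lfout(G+X)$, every subterm $\loutcome(\cdot)$ or $\routcome(\cdot)$ that appears is secretly a $\lfout$, because the $R_j + X$ and $G + X^R$ (respectively $L_i + X$ and $G + X^L$) turn out to have exactly the parity needed for $\loutcome$ (respectively $\routcome$) to coincide with $\lfout$. This delicate matching is precisely why we must work one of $\lesssim_+$ or $\lesssim_-$ at a time rather than attacking $\lesssim$ directly, and is an early hint of the ``schizophrenic'' double nature of well-tempered games alluded to before Section~\ref{sec:sides}.
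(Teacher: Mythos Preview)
Your argument is correct and follows essentially the same route as the paper's: split on the parity of $G+X$, express $\lfout(G+X)$ as a min or max over options, and bound termwise using the hypothesis $L_i \lesssim_+ L_i'$, $R_j \lesssim_+ R_j'$ for the option-of-$G$ terms and induction on $X$ for the option-of-$X$ terms. The paper phrases the whole thing as a joint induction ``on $G$, $H$, and $X$ all together'' (and, incidentally, writes $\lesssim_-$ while working with $\lfout$, which should read $\lesssim_+$), whereas you separate out hypothesis-vs-induction more carefully; your reduction of the nine relations to the single case $\lesssim_+$ is also sound, though note that the passage from $\lesssim_+$ to $\lesssim_-$ really uses the negation symmetry, while the passage from each $\lesssim$ to its corresponding $\gtrsim$ is simply swapping the primed and unprimed games.
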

\begin{proof}
All have obvious inductive proofs.  For example, suppose $\Box$ is $\lesssim_-$.
Let $G = \langle L_1, L_2, \ldots\ |\ R_1, R_2, \ldots \rangle$ and $H = \langle L_1', L_2', \ldots \ |\ R_1', R_2', \ldots \rangle$.
Then for any $X \in \mathcal{W}_\mathbb{Z}$, if $G + X$ is even-tempered then
\[ \lfout(G + X) = \min\{\lfout(G^R + X), \lfout(G + X^R)\} \le \min\{\lfout(H^R + X), \lfout(H + X^R)\},\]
where the inequality follows by induction, and if $G + X$ is odd-tempered, then
\[ \lfout(G + X) = \max\{\lfout(G^L + X), \lfout(G + X^L)\} \le \max\{\lfout(H^L + X), \lfout(H + X^L)\}.\]
Note that the induction is on $G$, $H$, and $X$ all together. 
\end{proof}

The next four lemmas are key:
\begin{lemma}\label{topcompare}
If $G, H \in \mathcal{W}_\mathbb{Z}$ and $H$ is an i-game, then
$G \lesssim H \iff G \lesssim_+ H$.
\end{lemma}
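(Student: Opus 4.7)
The forward direction $G \lesssim H \Rightarrow G \lesssim_+ H$ is immediate. By Theorem~\ref{sameparity}, comparable games share a common parity, so $\lfout$ coincides with whichever of $\loutcome, \routcome$ corresponds to Left making the final move. Hence $\outcome(G+X) \le \outcome(H+X)$ for every $X$ forces $\lfout(G+X) \le \lfout(H+X)$ for every $X$.

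For the backward direction, the central observation is that $H$ is invertible modulo $\approx$: by Corollary~\ref{restatement}, $H + (-H) \approx 0$. Substituting $Y = X - H$ into the hypothesis $\lfout(G+Y) \le \lfout(H+Y)$ yields $\lfout((G-H)+X) \le \lfout(X + (H-H)) = \lfout(X)$, using $\approx$-invariance of $\lfout$; hence $G - H \lesssim_+ 0$. The analogous translation shows that the goal $G \lesssim H$ is equivalent to $G - H \lesssim 0$. Setting $F := G - H$, the task reduces to deriving $\rfout(F+Y) \le \rfout(Y)$ for every $Y$ from the known $\lfout(F+Y) \le \lfout(Y)$ for every $Y$.

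To bridge the two I would use the identity $\lfout(-A) = -\rfout(A)$, which follows at once from $\loutcome(-A) = -\routcome(A)$ together with the parity-dependent definition of $\lfout,\rfout$. Feeding the hypothesis the test game $Y = -X$, this identity gives $\rfout(X) \le \rfout((-F)+X)$ for every $X$, i.e., $(-F) \gtrsim_- 0$. Substituting $X \mapsto F+X$ would then yield $\rfout(F+X) \le \rfout(X + (F-F))$, at which point one would like to invoke $F - F \approx 0$ to conclude.

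The hard part is precisely this last step: Theorem~\ref{selfdestruct} only gives $\loutcome(F-F) \le 0 \le \routcome(F-F)$, which is the \emph{opposite} of the i-game outcome property, so $F - F$ need not be $\approx 0$ unless $F$ itself is an i-game (which is not given). The way around this is to avoid the full reduction to $F$ and exploit the $H$-i-game hypothesis directly: perform the substitution $Y = -X - H$ at the level of the original hypothesis to derive $\rfout((H-G)+X) \ge \rfout(X)$, with $H - H \approx 0$ supplying the crucial cancellation, and then combine with a careful induction on the complexity of $X$ — using, if needed, the auxiliary identity $\loutcome(A+*) = \max(\loutcome(A),\routcome(A))$ (provable by a short induction on $A$ via Proposition~\ref{numavoid}) to handle the parity-flipped test game $X+*$ — to extract the desired $\rfout(G+X) \le \rfout(H+X)$ for every $X$. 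The main obstacle throughout is that all cancellations must be routed through $H$, where the i-game hypothesis guarantees $H - H \approx 0$, rather than through $G$ or $F$ where no such cancellation is available.
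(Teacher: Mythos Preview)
Your reduction to $F := G - H$ is correct, and your observation that the only available cancellation is $H - H \approx 0$ (not $G - G$ or $F - F$) is exactly right. However, the argument does not close. After your substitutions you obtain $(-F) \gtrsim_- 0$, i.e., $\rfout((-F)+X) \ge \rfout(X)$ for all $X$. But this is \emph{equivalent} to the hypothesis $F \lesssim_+ 0$ via the identity $\lfout(-A) = -\rfout(A)$ you already used; negation swaps $\lesssim_+$ and $\gtrsim_-$. So no new information has been extracted, and the ``careful induction on $X$'' and the auxiliary $*$-identity are not doing any work that could bridge $\lfout$ to $\rfout$. Your final-paragraph rerouting through $H$ lands on the same statement $H - G \gtrsim_- 0$, which again is just a restatement of the hypothesis.

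The missing idea is to extract a \emph{scalar} constraint on $F$ rather than a relational one. The paper tests the hypothesis $\lfout(F + X) \le \lfout(X)$ at the specific point $X = -F$: since $F$ is even-tempered, $\lfout(F + (-F)) = \routcome(F - F) \ge 0$ by Theorem~\ref{selfdestruct}, while $\lfout(-F) = \routcome(-F) = -\loutcome(F)$. This yields the single number $\loutcome(F) \le 0$. That is enough: the parity-dependent outcome inequalities (\ref{tres}) and (\ref{cuatro}) of Theorem~\ref{outsum2} (which require the first summand to be even-tempered, precisely what $F$ is) give $\rfout(F + X) \le \loutcome(F) + \rfout(X) \le \rfout(X)$ in both parities of $X$, and hence $F \lesssim_- 0$ directly. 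Your approach never invokes these sum-outcome bounds, which is why it cannot close.
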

\begin{proof}
The direction $\Rightarrow$ is obvious.  So suppose that $G \lesssim_+ H$. Since $H$ is invertible modulo
$\approx$ and therefore $\approx_+$, assume without loss of generality that $H$ is zero.  In this case, we are given
that $G$ is even-tempered and $\lfout(G + X) \le \lfout(X)$ for every $X \in \mathcal{W}_\mathbb{Z}$,
and we want to show $\rfout(G + X) \le \rfout(X)$ for every $X \in \mathcal{W}_\mathbb{Z}$.
Taking $X = -G$, we see by Equation~(\ref{cinco}) above that
\[ 0 \le \routcome(G + (-G)) = \lfout(G + X) \le \lfout(-G) = -\rfout(G) = -\loutcome(G),\]
so that
\[ \loutcome(G) \le 0.\]
Now let $X$ be arbitrary.  If $X$ is even-tempered, then by Equation~(\ref{tres})
\[ \rfout(G + X) = \loutcome(G + X) \le \loutcome(G) + \loutcome(X) \le \loutcome(X) = \rfout(X).\]
Otherwise, by Equation~(\ref{cuatro})
\[ \rfout(G + X) = \routcome(G + X) \le \loutcome(G) + \routcome(X) \le \routcome(X) = \rfout(X).\]
\end{proof}
\begin{lemma}
If $G, H \in \mathcal{W}_\mathbb{Z}$ and $G$ is an i-game, then
$G \lesssim H \iff G \lesssim_- H$.
\end{lemma}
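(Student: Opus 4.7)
The plan is to mirror the proof of Lemma~\ref{topcompare} by swapping the roles of Left-final and Right-final and putting the i-game hypothesis on $G$ instead of $H$. The forward direction $\Rightarrow$ is immediate: $\rfout$ is always either $\loutcome$ or $\routcome$, so the $\lesssim$ bound on the full outcome automatically yields the $\rfout$ bound.

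For $\Leftarrow$, I would first reduce to the case $G = 0$. Since $G$ is an i-game, Corollary~\ref{restatement} gives $G + (-G) \approx 0$, and $\approx$ trivially refines $\approx_-$, so adding $-G$ to both sides of $G \lesssim_- H$ yields $0 \lesssim_- H + (-G)$. Conversely, if we prove $0 \lesssim H + (-G)$, then adding $G$ back and using the congruence of $\approx$ with respect to addition recovers $G \lesssim H$. So it suffices to show: for any $K \in \mathcal{W}_\mathbb{Z}$ with $0 \lesssim_- K$, we have $0 \lesssim K$. Note that $0 \lesssim_- K$ forces $K$ to be even-tempered, by the parity constraint implicit in the definition of $\lesssim_-$.

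The next step extracts a numerical bound on $K$ by evaluating the hypothesis at a well-chosen test game. Setting $X = -K$ in $0 \lesssim_- K$ gives $\rfout(-K) \le \rfout(K + (-K))$. Since $K + (-K)$ and $-K$ are both even-tempered, $\rfout$ coincides with $\loutcome$ on each, so this reads $-\routcome(K) = \loutcome(-K) \le \loutcome(K + (-K)) \le 0$, where the final inequality is Equation~(\ref{seis}) of Theorem~\ref{selfdestruct}. Hence $\routcome(K) \ge 0$.

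From $\routcome(K) \ge 0$ I would then deduce the missing $\lfout$ inequality $\lfout(K + X) \ge \lfout(X)$ for every $X$, splitting on the parity of $X$. When $X$ is even-tempered, so is $K + X$, and Equation~(\ref{uno}) of Theorem~\ref{outsum} gives $\routcome(K + X) \ge \routcome(K) + \routcome(X) \ge \routcome(X)$, which is precisely $\lfout(K + X) \ge \lfout(X)$. When $X$ is odd-tempered, so is $K + X$, and Equation~(\ref{dos}) gives $\loutcome(K + X) \ge \loutcome(X) + \routcome(K) \ge \loutcome(X)$, again what we want. Combined with the $\rfout$ bound supplied by $0 \lesssim_- K$ itself, this yields $0 \lesssim K$. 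The only subtle point, as in Lemma~\ref{topcompare}, is that we cannot assume $K = H + (-G)$ is an i-game, so we must rely on the general stopping-value inequalities of Theorems~\ref{outsum} and \ref{selfdestruct}, none of which require an i-game hypothesis on both summands — that is why this argument goes through.
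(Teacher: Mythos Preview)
Your proof is correct and is precisely the ``analogous'' argument the paper alludes to (the paper's own proof consists of the single line ``Analogous to the previous lemma''). You have correctly dualized Lemma~\ref{topcompare}: the invertibility of the i-game $G$ lets you reduce to $G=0$, the test $X=-K$ together with Equation~(\ref{seis}) yields $\routcome(K)\ge 0$, and then Equations~(\ref{uno}) and~(\ref{dos}) supply the missing $\lfout$ inequality in the even- and odd-tempered cases respectively.
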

\begin{proof}
Analogous to the previous lemma.
\end{proof}
\begin{proposition}\label{allthesame}
If $G$ and $H$ are i-games, then $G \lesssim H \iff G \lesssim_- H \iff G \lesssim_+ H$.
\end{proposition}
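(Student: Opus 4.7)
The plan is to observe that this proposition follows almost immediately by chaining the two preceding lemmas. Since $G$ and $H$ are both i-games, both hypotheses required by those lemmas are simultaneously available, so we get a ``squeeze'' on the three relations.

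More precisely, first I would apply Lemma~\ref{topcompare} (which requires only that $H$ be an i-game) to conclude that $G \lesssim H \iff G \lesssim_+ H$. Then I would apply the second lemma (which requires only that $G$ be an i-game) to conclude that $G \lesssim H \iff G \lesssim_- H$. Combining these two biconditionals gives $G \lesssim_+ H \iff G \lesssim H \iff G \lesssim_- H$, which is exactly the statement.

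Since both lemmas are already proved, there is essentially no obstacle here; the only subtlety is noting that the definitions of $\lesssim_+$ and $\lesssim_-$ implicitly require $G$ and $H$ to have the same parity, but this is automatic because each of the two lemmas already presupposes this (indeed, if $G \lesssim H$ in either of the separate senses, then from taking $X = 0$ one reads off an outcome comparison which forces the same parity via Theorem~\ref{sameparity}). Thus the proposition reduces to a one-line deduction from the two lemmas immediately preceding it, with no new ingredients needed.
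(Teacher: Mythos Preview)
Your proposal is correct and matches the paper's own proof, which simply says ``Follows directly from the preceding two lemmas.'' Your parenthetical about parity is slightly overcautious (the relations $\lesssim_+$ and $\lesssim_-$ are only defined for games of the same parity, so there is nothing to ``force''), but the core argument is exactly right.
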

\begin{proof}
Follows directly from the preceding two lemmas.
\end{proof}
With Corollary~\ref{restatement}, this gives us a way of testing $\lesssim_\pm$ between i-games, but it doesn't seem to help us compute
$\lesssim$ for arbitrary games!

\begin{lemma}\label{numcompare}
If $G$ is an even-tempered $\mathbb{Z}$-valued game, and $n$ is an integer, then $G \lesssim_- n$
iff $\loutcome(G) \le n$.  Similarly, $n \lesssim_+ G$
iff $\routcome(G) \ge n$.
\end{lemma}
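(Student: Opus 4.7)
The plan is to prove each of the two biconditionals by first disposing of the forward direction with a trivial test case, and then reducing the reverse direction to a claim about $G - n$ that follows from Theorems~\ref{outsum} and \ref{outsum2} \emph{without} any i-game hypothesis. I expect the subtlety to be exactly this last point: although Theorems~\ref{nonpositive} and \ref{nonnegative} require $G$ to be an i-game to conclude $G \lesssim 0$ or $G \gtrsim 0$, here we only need $G \lesssim_- n$ or $G \gtrsim_+ n$, which examines only one of the two final-move scenarios; this is precisely the situation in which the i-game hypothesis can be dropped.

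For the forward directions, take $X \equiv 0$. Since $G$ is even-tempered, $\rfout(G) = \loutcome(G)$ and $\lfout(G) = \routcome(G)$, while $\rfout(n) = \lfout(n) = n$. Thus $G \lesssim_- n$ immediately gives $\loutcome(G) \le n$, and $n \lesssim_+ G$ gives $n \le \routcome(G)$.

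For the reverse direction of the first claim, assume $\loutcome(G) \le n$ and set $G' = G + (-n)$, which is again even-tempered and satisfies $\loutcome(G') \le 0$ by Proposition~\ref{numavoid}. Since addition by the number $n$ shifts $\rfout$ by $n$ as well, it suffices to show $\rfout(G' + X) \le \rfout(X)$ for every $X \in \mathcal{W}_\mathbb{Z}$. I split on the parity of $X$. If $X$ is even-tempered then $G' + X$ is even-tempered, so $\rfout(G' + X) = \loutcome(G' + X)$ and $\rfout(X) = \loutcome(X)$; Equation~(\ref{tres}) then gives $\loutcome(G' + X) \le \loutcome(G') + \loutcome(X) \le \loutcome(X)$. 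If $X$ is odd-tempered then $G' + X$ is odd-tempered, so $\rfout(G' + X) = \routcome(G' + X)$ and $\rfout(X) = \routcome(X)$; Equation~(\ref{cuatro}) gives $\routcome(G' + X) \le \loutcome(G') + \routcome(X) \le \routcome(X)$.

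For the reverse direction of the second claim, the argument is entirely parallel. Assume $\routcome(G) \ge n$, set $G' = G + (-n)$, so $\routcome(G') \ge 0$, and show $\lfout(G' + X) \ge \lfout(X)$ for all $X$. When $X$ is even-tempered, both sides equal $\routcome$ of the relevant game, and Equation~(\ref{uno}) supplies the bound; when $X$ is odd-tempered, both sides equal $\loutcome$, and Equation~(\ref{dos}) supplies it. Nothing more is needed, and in particular no appeal to the i-game hypothesis or to Proposition~\ref{allthesame}.
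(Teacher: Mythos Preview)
Your proof is correct and follows essentially the same route as the paper's: forward direction by testing $X = 0$, reverse direction by splitting on the parity of $X$ and invoking (\ref{tres})/(\ref{cuatro}) for the first claim and (\ref{uno})/(\ref{dos}) for the second. The only difference is cosmetic: you first translate by $-n$ to reduce to $G'$ with $\loutcome(G') \le 0$ (resp.\ $\routcome(G') \ge 0$), whereas the paper works directly with $G$ and carries the $n$ through the inequalities.
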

\begin{proof}
First, note that if $G \lesssim_- n$, then certainly $\loutcome(G) = \rfout(G + 0) \le \rfout(n + 0) = n$.
Conversely, suppose that $\loutcome(G) \le n$.  Let $X$ be arbitrary.  If $X$ is even-tempered
then by Proposition~\ref{numavoid} and Equation~(\ref{tres}),
\[ \rfout(G + X) = \loutcome(G + X) \le \loutcome(G) + \loutcome(X) \le n + \loutcome(X) = \rfout(n + X).\]
If $X$ is odd-tempered, we use Equation~(\ref{cuatro}) instead:
\[ \rfout(G + X) = \routcome(G + X) \le \loutcome(G) + \routcome(X) \le n + \routcome(X) = \rfout(n + X).\]
So for every $X$, $\rfout(G + X) \le \rfout(n + X)$ and we have shown the first sentence.  The second
is proven analogously.
\end{proof}

\begin{lemma}\label{gap}
Let $G$ be an even-tempered game, whose options are all i-games.  If $\loutcome(G) \le \routcome(G)$,
then $G \approx_- \loutcome(G)$ and $G \approx_+ \routcome(G)$.
\end{lemma}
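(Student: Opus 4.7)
The plan is to reduce to a normalized form and prove the result by induction on $X$. By symmetry, the second claim $G \approx_+ \routcome(G)$ follows from the first: applying the first claim to $-G$ (which is even-tempered with i-game options by Theorem~\ref{closure}, and which has $\loutcome(-G) = -\routcome(G) \le -\loutcome(G) = \routcome(-G)$) yields $-G \approx_- -\routcome(G)$, and the identity $\lfout(H) = -\rfout(-H)$ then rearranges this into the statement $G \approx_+ \routcome(G)$. So I focus on the first claim. Set $n = \loutcome(G)$ and replace $G$ by $G - n$; since $\approx_-$ is a congruence with respect to addition and Proposition~\ref{numavoid} shifts outcomes by $-n$, I may assume $\loutcome(G) = 0$ and $\routcome(G) \ge 0$, and the goal becomes $G \approx_- 0$.

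The direction $G \lesssim_- 0$ is immediate from Lemma~\ref{numcompare}, since $\loutcome(G) = 0$. For $G \gtrsim_- 0$, I need $\rfout(G + X) \ge \rfout(X)$ for all $X \in \mathcal{W}_\mathbb{Z}$, and I proceed by induction on the complexity of $X$, with three cases. If $X$ is a number $x$, Proposition~\ref{numavoid} gives $\rfout(G + x) = \loutcome(G) + x = x$. If $X$ is even-tempered but not a number, pick a left option $X^L_0$ with $\routcome(X^L_0) = \loutcome(X)$; then $G + X^L_0$ is a left option of the even-tempered game $G + X$, so
\[ \rfout(G + X) = \loutcome(G + X) \ge \routcome(G + X^L_0) = \rfout(G + X^L_0) \ge \rfout(X^L_0) = \loutcome(X) = \rfout(X),\]
where the penultimate inequality is the inductive hypothesis applied to $X^L_0$. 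If $X$ is odd-tempered, I must show $\loutcome(Y) \ge \routcome(X) = \rfout(X)$ for every right option $Y$ of the odd-tempered game $G + X$. For $Y = G + X^R$, the game $G + X^R$ is even-tempered, and the inductive hypothesis gives $\loutcome(G + X^R) = \rfout(G + X^R) \ge \rfout(X^R) = \loutcome(X^R) \ge \routcome(X)$. For $Y = G^R + X$, I invoke Equation~(\ref{ocho}): $G^R$ is an odd-tempered i-game (since every option of $G$ is an i-game), and $\loutcome(G^R) \ge \routcome(G) \ge 0$ by the definition of $\routcome(G)$, so
\[ \loutcome(G^R + X) \ge \loutcome(G^R) + \routcome(X) \ge \routcome(X).\]

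The only potentially delicate step is the sub-case $Y = G^R + X$, which is the unique place where the hypothesis that \emph{every option} of $G$ is an i-game is used (rather than $G$ itself being an i-game, which in general it is not here, since we allow $\loutcome(G) < \routcome(G)$). This is precisely what licenses the application of the i-game inequality~(\ref{ocho}) from Theorem~\ref{outsum} to $G^R$ and makes the induction close.
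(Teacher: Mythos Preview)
Your proof is correct and follows essentially the same approach as the paper: one direction via Lemma~\ref{numcompare}, the other by induction on $X$ with the key odd-tempered subcase $G^R + X$ handled via inequality~(\ref{ocho}) applied to the i-game $G^R$. The only cosmetic differences are your normalization $n=0$ (the paper keeps $n$ throughout) and your explicit symmetry reduction via $-G$; note also that (\ref{ocho}) lives in the unlabeled i-game theorem, not in Theorem~\ref{outsum}.
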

\begin{proof}
We show $G \approx_- \loutcome(G)$, because the other result follows by symmetry.  Let $n = \loutcome(G)$.
Now since
$\loutcome(G) \le n$, we have $G \lesssim_- n$ by the preceding lemma,
so it remains to show that $n = \lesssim_- G$.  If $G$ is a number, it must be $n$,
and we are done.  Otherwise, by definition of $\loutcome$,
it must be the case that every $G^L$ satisfies $\routcome(G^L) \le n$.

Let $X$ be an arbitrary game.  We show by induction on $X$
that $n + \rfout(X) \le \rfout(G + X)$.  (This suffices because $n + \rfout(X) = \rfout(n + X)$.)
If $X$ is even-tempered, then we need to show
\[ n + \loutcome(X) \le \loutcome(G + X).\]
This is obvious if $X$ is a number (since then $\loutcome(G + X) = \loutcome(G) + \loutcome(X)$),
so suppose $X$ is not a number.  Then there is some left option $X^L$ of $X$
with $\loutcome(X) = \routcome(X^L)$, and by the inductive hypothesis applied to $X^L$,
\[ n + \loutcome(X) = n + \routcome(X^L) \le \routcome(G + X^L) \le \loutcome(G + X),\]
where the last inequality follows because $G + X^L$ is a left option in $G + X$.

Alternatively, if $X$ is odd-tempered, we need to show that
\[ n + \routcome(X) \le \routcome(G + X).\]
We show that every right option of $G + X$ has left outcome at least $n + \routcome(X)$.
The first kind of right option is of the form $G^R + X$.  By assumption,
$\loutcome(G^R) \ge \routcome(G) \ge \loutcome(G) = n$, and $G^R$ is an odd-tempered i-game, so by Equation~(\ref{ocho})
\[ \loutcome(G^R + X) \ge \loutcome(G^R) + \routcome(X) \ge n + \routcome(X).\]
The second kind of right option is of the form $G + X^R$.  By induction we know that
\[ n + \routcome(X) \le n + \loutcome(X^R) \le \loutcome(G + X^R),\]
So every right option of $G + X$ has left outcome at least $n + R(X)$, and we are done.
\end{proof}

\section{Sides}\label{sec:sides}
We now put everything together.
\begin{theorem}\label{sides-complete}
If $G$ is any $\mathbb{Z}$-valued game, then there exist i-games $G^-$
and $G^+$ such that $G \approx_- G^-$ and $G \approx_+ G^+$,
These games are unique, modulo $\approx$.
\end{theorem}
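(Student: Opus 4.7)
The plan is to proceed by induction on the structure of $G$, defining $G^+$ and $G^-$ simultaneously, and then to handle uniqueness separately using Proposition~\ref{allthesame}. For the base case, if $G$ is a number $n$, then $n$ is itself an i-game and we may take $G^- = G^+ = n$. For the inductive step, suppose $G = \langle L_1, L_2, \ldots \,|\, R_1, R_2, \ldots \rangle$. By induction each $L_i$ and $R_j$ admits i-game sides $L_i^\pm$ and $R_j^\pm$ of the same parity as $L_i$ and $R_j$, with $L_i \approx_\pm L_i^\pm$ and $R_j \approx_\pm R_j^\pm$. Form the two candidate games
\[ \tilde G_- \;=\; \langle L_1^-, L_2^-, \ldots \,|\, R_1^-, R_2^-, \ldots \rangle, \qquad \tilde G_+ \;=\; \langle L_1^+, L_2^+, \ldots \,|\, R_1^+, R_2^+, \ldots \rangle.\]
By Theorem~\ref{monotone} applied with $\Box$ equal to $\approx_-$ and $\approx_+$ respectively, $G \approx_- \tilde G_-$ and $G \approx_+ \tilde G_+$. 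Moreover both $\tilde G_-$ and $\tilde G_+$ have only i-game options, by the inductive hypothesis.

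If $G$ is odd-tempered, then so are $\tilde G_-$ and $\tilde G_+$, and an odd-tempered game whose options are i-games is itself an i-game (no extra condition to verify), so we may take $G^- \coloneqq \tilde G_-$ and $G^+ \coloneqq \tilde G_+$. If $G$ is even-tempered, then $\tilde G_\pm$ are even-tempered, and we must check the condition $\loutcome \ge \routcome$. If $\loutcome(\tilde G_-) \ge \routcome(\tilde G_-)$, then $\tilde G_-$ is already an i-game and we set $G^- \coloneqq \tilde G_-$; otherwise $\loutcome(\tilde G_-) < \routcome(\tilde G_-)$ and Lemma~\ref{gap} applies to $\tilde G_-$ (whose options are all i-games), yielding $\tilde G_- \approx_- \loutcome(\tilde G_-)$, so we set $G^- \coloneqq \loutcome(\tilde G_-)$, a number and hence trivially an i-game. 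The construction of $G^+$ is entirely symmetric, using the $\approx_+$ half of Lemma~\ref{gap} to replace $\tilde G_+$ by the number $\routcome(\tilde G_+)$ when necessary. In every case we obtain i-games $G^\pm$ of the same parity as $G$, with $G \approx_\pm G^\pm$ by transitivity.

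For uniqueness, suppose $G^-$ and $H^-$ are both i-games with $G \approx_- G^-$ and $G \approx_- H^-$. Then $G^- \approx_- H^-$ by transitivity of $\approx_-$, and since both are i-games Proposition~\ref{allthesame} forces $G^- \approx H^-$. The same argument with the $+$ subscript handles $G^+$. This completes the proof.

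The only substantive obstacle is arranging that $\tilde G_\pm$ is actually an i-game when $G$ is even-tempered, which is exactly what Lemma~\ref{gap} is tailor-made to fix; everything else is a bookkeeping exercise in parity and in applying Theorem~\ref{monotone}. It is worth emphasizing that we must build $\tilde G_-$ and $\tilde G_+$ as separate games: Theorem~\ref{monotone} with $\Box = \approx_+$ demands options that are $\approx_+$-equivalent, which only the ``$+$'' sides of options provide, and likewise for the ``$-$'' side, so a single uniform construction that yields both sides at once is not available from the tools at hand.
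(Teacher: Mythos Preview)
Your proof is correct and follows essentially the same approach as the paper: induct on $G$, replace each option by its inductively-constructed side to form a candidate game, invoke Theorem~\ref{monotone} to get the $\approx_\pm$ equivalence, and patch the even-tempered case with Lemma~\ref{gap} when the outcome inequality fails; uniqueness via Proposition~\ref{allthesame} is likewise the paper's argument. The only differences are cosmetic---you treat both sides in parallel and spell out the odd-tempered case explicitly, whereas the paper handles $G^+$ alone and leaves $G^-$ to symmetry.
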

\begin{proof}
We prove that $G^+$ exists, by induction on $G$. If $G$ is a number,
this is obvious, taking $G^+ = G$.  Otherwise,
let $G = \langle L_1, L_2, \ldots \,|\, R_1, R_2, \ldots \rangle$.  By
induction, $L_i^+$ and $R_i^+$ exist.
Consider the game
\[ H = \langle L_1^+, L_2^+, \ldots \,|\, R_1^+, R_2^+, \ldots \rangle.\]
By the inductive assumptions and Theorem~\ref{monotone}, we clearly have
$H \approx_+ G$.  However,
$H$ might not be an i-game.  This can only happen if $G$ and $H$ are even-tempered,
and $\loutcome(H) < \routcome(H)$.  In this case, by Lemma~\ref{gap},
$\routcome(H) \approx_+ H \approx_+ G$, and $\routcome(H)$ is a number, so
take $G^+ = \routcome(H)$.

A similar argument shows that $G^-$ exists.  Uniqueness follows by Proposition~\ref{allthesame}.
\end{proof}
The games $G^+$ and $G^-$ in the theorem are called the \emph{upside} and \emph{downside} of $G$,
respectively, because they have formal similarities with the ``onside'' and ``offside''
of loopy partizan game theory.  An algorithm for producing the upside and downside can be extracted from the proof of the
theorem.

Here are the key facts about these games
\begin{theorem}\label{plethora}$\quad$
\begin{description}
\item[(a)]
If $G$ and $H$ are $\mathbb{Z}$-valued games, then $G \lesssim_- H$ iff
$G^- \lesssim H^-$, and $G \lesssim_+ H$ iff $G^+ \lesssim H^+$.
\item[(b)]
If $G$ is an i-game, then $G^+ \approx G^- \approx G$.
\item[(c)]
If $G$ is a $\mathbb{Z}$-valued game, then $(-G)^- \approx -(G^+)$ and $(-G)^+ \approx -(G^-)$.
\item[(d)]
If $G$ and $H$ are $\mathbb{Z}$-valued games,then $(G+H)^+ \approx G^+ + H^+$
and $(G + H)^- \approx G^- + H^-$.
\item[(e)]
For any $G$, $G^- \lesssim G \lesssim G^+$.
\item[(f)]
For any $G$, $G$ is invertible modulo $\approx$ iff it is equivalent to
an i-game, iff $G^+ \approx G^-$.  When $G$ has an inverse, it is given by $-G$.
\item[(g)] If $\mathcal{S} \subset \mathbb{Z}$ and  $G$ is an $\mathcal{S}$-valued game, then $G^+$ and $G^-$ can be taken
to be $\mathcal{S}$-valued games too.
\item[(h)] If $G$ is even-tempered, then $\loutcome(G) = \loutcome(G^-)$, and this is the smallest $n$ such that
$G^- \lesssim n$.  Similarly, $\routcome(G) = \routcome(G^+)$, and this is the biggest $n$ such that
$n \lesssim G^+$.
\item[(i)] If $G$ is odd-tempered, then $\loutcome(G) = \loutcome(G^+)$ and $\routcome(G) = \routcome(G^-)$.
\end{description}
\end{theorem}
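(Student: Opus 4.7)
The strategy is to dispatch the routine parts by systematically combining the uniqueness clause in Theorem~\ref{sides-complete}, the congruence of $\approx_+$ and $\approx_-$ under addition and game formation (Theorem~\ref{monotone} and the paragraph preceding it), closure of i-games under negation and addition (Theorem~\ref{closure}), and the upgrade from $\approx_\pm$ to $\approx$ between i-games provided by Proposition~\ref{allthesame}. Part (b) is immediate: when $G$ is itself an i-game, $G$ satisfies the defining property of both its upside and its downside, so uniqueness forces $G^+ \approx G \approx G^-$. For part (a), I would chain through the sides: $G \lesssim_- H$ iff $G^- \lesssim_- H^-$ by the equivalences $G \approx_- G^-$ and $H \approx_- H^-$, and since $G^-, H^-$ are i-games, Proposition~\ref{allthesame} converts $\lesssim_-$ into $\lesssim$; the $\lesssim_+$ case is parallel.

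For part (c), a direct case analysis on parity shows that $\rfout(-G + X) = -\lfout(G - X)$ for every $X \in \mathcal{W}_\mathbb{Z}$. Substituting $Y = -X$ then converts the hypothesis $G \approx_+ G^+$ into $-G \approx_- -G^+$, and since $-G^+$ is an i-game by Theorem~\ref{closure}, uniqueness of the downside identifies it (up to $\approx$) with $(-G)^-$; the other identity is symmetric. Part (d) is analogous: $\approx_+$ is a congruence for addition, so $G + H \approx_+ G^+ + H^+$, and the right side is an i-game by Theorem~\ref{closure}, so uniqueness gives $(G+H)^+ \approx G^+ + H^+$, and likewise for the $-$ version.

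Part (e) is a direct application of Lemma~\ref{topcompare} and its dual: since $G^+$ is an i-game, $G \lesssim G^+$ reduces to $G \lesssim_+ G^+$, which is immediate from $G \approx_+ G^+$, and symmetrically $G^- \lesssim G$. For (h) and (i) I would unwind $\lfout$ and $\rfout$ using that $\lfout = \routcome$ and $\rfout = \loutcome$ on even-tempered games (opposite on odd-tempered): for example, evaluating $G \approx_- G^-$ at $X = 0$ gives $\loutcome(G) = \loutcome(G^-)$ when $G$ is even-tempered, and Lemma~\ref{numcompare} combined with Proposition~\ref{allthesame} characterizes this as the least $n$ with $G^- \lesssim n$; the three remaining outcome identities are proved identically. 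For part (g), induct through the construction in the proof of Theorem~\ref{sides-complete}: if $G$ is $\mathcal{S}$-valued, all options are $\mathcal{S}$-valued, hence so are the $L_i^\pm$ and $R_j^\pm$ by induction, hence so is the intermediate game $H$; in the exceptional case $G^+ \approx \routcome(H)$, and outcomes of $\mathcal{S}$-valued games always lie in $\mathcal{S}$ since $\loutcome$ and $\routcome$ recursively return values of options.

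The main obstacle is part (f). The forward direction is easy: if $G^+ \approx G^-$, then $G \approx_+ G^+$ and $G \approx_- G^- \approx G^+$ together give $G \approx G^+$, and $-G^+$ is an inverse by Corollary~\ref{restatement}. For the converse, suppose $G + K \approx 0$. First, by (d) and (b), $(G+K)^+ \approx G^+ + K^+$ and $(G+K)^+ \approx 0^+ \approx 0$, so $G^+ + K^+ \approx 0$. Then apply (e) twice: $G \lesssim G^+$ and $K \lesssim K^+$ sandwich $G^+ + K$ as
\[ 0 \approx G + K \lesssim G^+ + K \lesssim G^+ + K^+ \approx 0, \]
forcing $G^+ + K \approx 0$. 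Since $G^+$ is an i-game and hence invertible, cancellation gives $K \approx -G^+$. The dual argument with $G^- \lesssim G$ and $K^- \lesssim K$ gives $K \approx -G^-$, so $-G^+ \approx -G^-$ and therefore $G^+ \approx G^-$. The delicate point is the logical order: (f) needs both (d) (to commute $^+$ with $+$) and (e) (to produce the sandwich), so it must be proved last.
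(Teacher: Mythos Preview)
Your proposal is correct and follows essentially the same route as the paper for every part. The one place the mechanics differ is (f): the paper packages the argument via the ``deficit'' $\defi(G) = G^+ - G^-$, observes it is $\gtrsim 0$ and additive by (d)--(e), and then uses that in the partially ordered group of i-games two nonnegative elements summing to $0$ must each vanish; your sandwich $0 \approx G+K \lesssim G^+ + K \lesssim G^+ + K^+ \approx 0$ followed by cancellation is a direct unpacking of the same idea and uses the same ingredients.
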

\begin{proof}$\quad$
\begin{description}
\item[(a)]
Since $G^- \approx_- G$ and $H^- \approx_- H$, it's clear that
$G^- \lesssim_- H^-$ iff $G \lesssim_- H$.  But by Proposition~\ref{allthesame}, $G^- \lesssim_- H^- \iff G^- \lesssim H^-$,
because $G^-$ and $H^-$ are i-games.
The other case is handled similarly.
\item[(b)]
By definition, $G^- \approx_- G$.  But since both are i-games,
Proposition~\ref{allthesame} implies that $G^- \approx G$.  The other case is handled similarly.
\item[(c)]
Obvious by symmetry.  This can be proven by noting that $A \approx_- B$ iff $(-A) \approx_+ (-B)$.
\item[(d)]
Note that since $\approx_+$ is a congruence with respect to addition, we have
\[ (G + H)^+ \approx_+ (G + H) \approx_+ G^+ + H^+.\]
But since i-games are closed under addition (Theorem~\ref{closure}), $G^+ + H^+$ is an i-game,
and since $(G + H)^+$ is too, Proposition~\ref{allthesame} shows that $(G + H)^+ \approx G^+ + H^+$.
The other case is handled similarly.
\item[(e)] By definition $G \approx_+ G^+$, so $G \lesssim_+ G^+$.  But $G^+$ is an i-game,
so by Lemma~\ref{topcompare}, $G \lesssim G^+$.  The other case is handled similarly.
\item[(f)] We already showed in part (b) that if $G$ is an i-game, then
$G^+ \approx G^-$.  Conversely, if $G^+ \approx G^-$, then $G \approx_+ G^+$ and
$G \approx_- G^- \approx G^+$, so $G \approx_\pm G^+$, so $G$ is equivalent to the i-game $G^+$.
Moreover, we showed in Corollary~\ref{restatement} that i-games are invertible.  Conversely,
suppose that $G$ is invertible.  Define the \emph{deficit} $\defi(G)$ to be the i-game
$G^+ - G^-$.  This is an even-tempered i-game.  By part (e), $\defi(G) \gtrsim 0$, and by
part (d), $\defi(G + H) \approx \defi(G) + \defi(H)$.  By part (b), $\defi(G) \approx 0$
when $G$ is an i-game.   Now suppose that $G$ is invertible, and $G + H \approx 0$.
Then $\defi(G + H) \approx \defi(G) + \defi(H) \approx 0$. Since i-games are a partially
ordered abelian group, it follows that $0 \lesssim \defi(G) \approx -\defi(H) \lesssim 0$, so that
$\defi(G) \approx 0$, or in other words, $G^+ \approx G^-$.  This then implies that $G$
is equivalent to an i-game.

If $G$ has an inverse, then $G$ is equivalent to an i-game, so the inverse of $G$ is
$-G$, by Corollary~\ref{restatement}.
\item[(g)] This is clear from the construction of $G^+$ and $G^-$ given in Theorem~\ref{sides-complete}.
At some points, we replace a game by one of its outcomes.  However, the outcome of
an $\mathcal{S}$-valued game is always in $\mathcal{S}$, so this doesn't create any new outcomes.
\item[(h)] Since $G^- \approx_- G$, it follows that $\loutcome(G^-) = \rfout(G^-) = \rfout(G) = \loutcome(G)$.
Then by Lemma~\ref{numcompare}, $\loutcome(G)$ is the smallest $n$ such that $G \lesssim_- n$,
which by part (a) is the smallest $n$ such that $G^- \lesssim n$.  The other case is handled similarly.
\item[(i)] Since $G^+ \approx_+ G$, it follows that $\loutcome(G^+) = \lfout(G^+) = \lfout(G) = \loutcome(G)$.
The other case is handled similarly.
\end{description}
\end{proof}

Borrowing notation from loopy partizan theory, we use $A\&B$ to denote a well-tempered game
that has $A$ as its upside and $B$ as its downside.  By Theorem~\ref{plethora}(a), $A\&B$ is well-defined
modulo $\approx$, as long as it exists: if $G^+ \approx H^+$ and $G^- \approx H^-$, then $G \approx_+ H$
and $G \approx_- H$, so
$G \approx H$.  So the elements of $\mathcal{W}_\mathbb{Z}/\approx$ correspond to certain pairs
$A\&B$ of i-games, with $A \gtrsim B$.  In fact, all such pairs $A\&B$ with $A \gtrsim B$ occur.
\begin{theorem}\label{allpairs-early}
If $A \gtrsim B$ are i-games, then there is some game $G$ with $G^+ \approx A$ and
$G^- \approx B$.  Moreover, if $A$ and $B$ are both $\mathcal{S}$-valued games
for some $\mathcal{S} \subset \mathbb{Z}$, then $G$ can also be taken to be
$\mathcal{S}$-valued.
\end{theorem}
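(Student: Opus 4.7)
The plan is to build $G$ in two steps: first reduce to the case $B \approx 0$, and then give a recursive construction of a witness.

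For the reduction, it suffices to prove the following lemma: for every even-tempered i-game $C$ with $C \gtrsim 0$, there is an even-tempered well-tempered game $E_C$ with $E_C^+ \approx C$ and $E_C^- \approx 0$. Granting this, set $C := A - B$; by Theorem~\ref{closure}, Theorem~\ref{sameparity}, and the hypothesis $A \gtrsim B$, $C$ is an even-tempered i-game with $C \gtrsim 0$. Define $G := B + E_C$, which has the same parity as $B$ (and hence as $A$). Theorem~\ref{plethora}(d) then gives $G^+ \approx B^+ + E_C^+ \approx B + C \approx A$ and $G^- \approx B^- + E_C^- \approx B + 0 \approx B$.

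For the lemma I induct on the game-tree depth of $C$. If $C \approx 0$, take $E_C := 0$. If $C$ is a positive integer $n$, take $E_C := \langle * \,|\, n + * \rangle$: then $\loutcome(E_C) = 0 < n = \routcome(E_C)$, so the algorithm of Theorem~\ref{sides-complete} yields $E_C^+ = n$ and $E_C^- = 0$. Otherwise write $C = \langle C_1^L, \ldots, C_m^L \,|\, C_1^R, \ldots, C_p^R \rangle$ and set $K := \loutcome(C)$. Since $C \gtrsim 0$ and $C \not\approx 0$, the i-game inequality $\loutcome(C) \ge \routcome(C) \ge 0$ combined with Corollary~\ref{restatement} forces $K > 0$. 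For each $i$, apply the main theorem recursively to the odd-tempered i-game pair $(C_i^L,\,C_i^L - K)$ (ordered correctly since $K \ge 0$) to obtain an odd-tempered game $L_i$ with $L_i^+ \approx C_i^L$ and $L_i^- \approx C_i^L - K$, and define
\[ E_C := \langle L_1, \ldots, L_m \,|\, C_1^R, \ldots, C_p^R \rangle. \]
Since each $(C_j^R)^+ \approx C_j^R$, the tentative upside $\approx \langle C^L \,|\, C^R \rangle = C$, which is already an i-game, so $E_C^+ \approx C$. The tentative downside is equivalent to $\langle C^L - K \,|\, C^R \rangle$, whose outcomes are $\loutcome = \loutcome(C) - K = 0$ and $\routcome = \routcome(C) \ge 0$; if $\routcome(C) > 0$ the algorithm replaces it by its left outcome $0$, and otherwise the tentative downside is an i-game with outcomes $(0,0)$, hence $\approx 0$ by Corollary~\ref{restatement}. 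Termination follows because building $L_i$ reduces, via the first step, to the base-case number pair $(K, 0)$.

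The main obstacle will be the $\mathcal{S}$-valued refinement: the reduction uses $C = A - B$ and the auxiliary integer $K = \loutcome(A) - \routcome(B)$, which typically produce values outside $\mathcal{S}$. For the $\mathcal{S}$-valued case, I would instead argue directly by induction on the combined complexity of $(A, B)$, never forming $A - B$ explicitly. When both $A$ and $B$ are numbers $a \ge b$ in $\mathcal{S}$, take $G := \langle \langle b \,|\, b \rangle \,|\, \langle a \,|\, a \rangle \rangle$, whose value set $\{a, b\}$ lies in $\mathcal{S}$. Otherwise one builds $G$'s options by pairing options of $A$ with options of $B$: each left (resp.\ right) option of $G$ is obtained by invoking the induction hypothesis on an $\mathcal{S}$-valued pair whose upside is a left (resp.\ right) option of $A$ and whose downside is a left (resp.\ right) option of $B$, using the numbers $\loutcome(A), \routcome(A), \loutcome(B), \routcome(B) \in \mathcal{S}$ to pin down the key outcomes. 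The hard part will be arranging these pairings consistently so that both the tentative upside is an i-game $\approx A$ and the tentative downside is an i-game $\approx B$, while every intermediate construction remains in $\mathcal{W}_\mathcal{S}$.
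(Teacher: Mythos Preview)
Your $\mathbb{Z}$-valued argument is correct and shares the paper's overall architecture: both reduce to the case $B \approx 0$ by setting $C = A - B$ and then adding $B$ back. The constructions of the witness $E_C$ with $E_C^+ \approx C$, $E_C^- \approx 0$ differ, however. The paper replaces every positive number $n$ occurring anywhere inside $C$ by the gadget $D_{0,n} = \langle * \,|\, n{+}* \rangle$ (your $E_n$), and then uses the order-preserving map $f(x) = \min(x,0)$ together with Lemma~\ref{distortions} to see that the resulting downside is $\tilde{f}(C) \approx 0$. Your construction instead modifies only the top-level left options, taking $E_C = \langle (C^L - K) + D_{0,K} \,|\, C^R \rangle$ with $K = \loutcome(C)$; this is a legitimate alternative and arguably more explicit, though the paper's substitution is a one-line construction that avoids the case analysis on $\routcome(C)$.

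For the $\mathcal{S}$-valued refinement your proposal is incomplete, as you acknowledge, and the direct inductive pairing of options you sketch would be painful to carry out. The paper sidesteps the difficulty entirely. Once you have any $\mathbb{Z}$-valued $G_0$ with $G_0^+ \approx A$ and $G_0^- \approx B$, choose an order-preserving retraction $f: \mathbb{Z} \to \mathcal{S}$ (for instance, send each integer to a nearest element of $\mathcal{S}$), and set $G = \tilde{f}(G_0)$. Then $G$ is $\mathcal{S}$-valued by construction, and Lemma~\ref{sidedistort} gives $G^\pm \approx \tilde{f}(G_0^\pm)$; since $G_0^\pm \approx A, B$ and $f$ restricts to the identity on $\mathcal{S}$, Theorem~\ref{fineenough} yields $\tilde{f}(A) = A$ and $\tilde{f}(B) = B$. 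This projection trick, which relies on the distortion machinery of Chapter~\ref{chap:Distortions}, replaces the combinatorial induction you were anticipating.
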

This will be proven below, in Chapter \ref{chap:Distortions} Theorem~\ref{allpairs}.

A generic well-tempered game $G$ acts like its upside $G^+$ when Left is going to move last,
and like its downside $G^-$ when Right is going to move last.  The two sides act fairly independently.
For example, we have
\[ A\&B + C\&D \approx (A+C)\&(B+D)\]
by Theorem~\ref{plethora}(d), and
\[ \langle A\&B, C\&D, \ldots| E\&F,\ldots \rangle = \langle A, C, \ldots |E,\ldots \rangle\&\langle B,D,\ldots|F,\ldots \rangle\]
by Theorem~\ref{monotone} (applied to $\approx_\pm$).

\section{A summary of results so far}\label{sec:so-far}
Let's review what we've done so far.

For every set of integers $S$, we constructed a class $\mathcal{W}_S$ of (well-tempered) $S$-valued games.
We then focused on $\mathbb{Z}$-valued games, and considered the operations of addition
and negation.  We defined $\approx$ to be the appropriate indistinguishability relation
to deal with addition and negation, and then considered the structure of the quotient monoid $M = \mathcal{W}_\mathbb{Z}/\approx$.
The monoid $M$ is a partially ordered commutative monoid, and has an additional
order-reversing map of negation, which does not necessarily correspond to the inverse of addition.
We showed that parity was
well defined modulo $\approx$, so that $M$ can be partitioned into even-tempered games $M_0$ and odd-tempered games
$M_1$, and that even-tempered and odd-tempered games are incomparable with respect to the partial order, and that $M_0$ is a submonoid,
and in fact $M \cong \mathbb{Z}_2 \times M_0$.

Moreover, letting $\mathcal{I}$ denote the invertible elements
of $M$, we showed that $M$ is in bijective correspondence with the set of all ordered pairs $(a,b) \in \mathcal{I}$ such that $a \ge b$.
Moreover, addition is pairwise, so that $(a,b) + (c,d) = (a + c, b + d)$, and negation is pairwise with a flip:
$-(a,b) = (-b,-a)$.  The elements of $\mathcal{I}$ themselves are in correspondence with the pairs $(a,a)$.  The maps
$(a,b) \to (a,a)$ and $(a,b) \to (b,b)$ are monoid homomorphisms.  The set $\mathcal{I}$ forms a partially ordered
abelian group.  The even-tempered i-games form an index two subgroup $\mathcal{J}$ containing a copy of the integers,
and in fact every even-tempered i-game is $\le$ some integers (the least of which is the left outcome), and is $\ge$ some integers
(the greatest of which is the right outcome).  Moreover, the left outcome of an arbitrary even-tempered game $(a,b)$ is the
left outcome of $b$, and the right outcome is the right outcome of $a$.

\chapter{Distortions}\label{chap:Distortions}

\section{Order-preserving operations on Games}
So far, we have been playing the sum of two games $G$ and $H$ by playing them in parallel,
and combining the final scores by addition.  Nothing stops us from using another operation,
however.  In fact, we can take a function with any number of arguments.
\begin{definition}\label{def:extension}
Let $S_1, S_2, \ldots, S_k, T \subseteq \mathbb{Z}$, and let
$f$ be a function $f:S_1 \times \cdots \times S_k \to T$.  Then the \emph{extension
of $f$ to games} is a function \[\tilde{f}:\mathcal{W}_{S_1} \times \cdots \times \mathcal{W}_{S_k} \to \mathcal{W}_T\]
defined recursively by
\[ \tilde{f}(G_1,G_2,\ldots,G_k) = f(G_1,G_2,\ldots,G_k)\]
when $G_1, \ldots, G_k$ are all numbers, and otherwise,
\[ \tilde{f}(G_1, G_2, \ldots, G_k) = \]\[
\langle \tilde{f}(G_1^L,G_2, \ldots, G_k), \tilde{f}(G_1,G_2^L,G_3,\ldots,G_k),\ldots
\tilde{f}(G_1,\ldots,G_{k-1},G_k^L) |\]\[
\tilde{f}(G_1^R,G_2, \ldots, G_k), f(G_1,G_2^R,G_3,\ldots,G_k),\ldots
\tilde{f}(G_1,\ldots,G_{k-1},G_k^R) \rangle \]
\end{definition}
So for example, if $f:\mathbb{Z} \times \mathbb{Z} \to \mathbb{Z}$ is ordinary addition of integers,
then $\tilde{f}$ is the addition of games that we've been studying so far.  In general,
$\tilde{f}(G_1,G_2,\ldots,G_k)$ is a composite game in which the players play
$G_1, G_2, \ldots,$ and $G_k$ in parallel, and then combine the final score of each game
using $f$. Structurally, $\tilde{f}(G_1,G_2,\ldots,G_k)$ is just like
$G_1 + \cdots + G_k$, except with different final scores.  In particular, the parity
of $\tilde{f}(G_1,G_2,\ldots,G_k)$ is the same as the parity
of $G_1 + \cdots + G_k$.

\begin{figure}[htb]
\begin{center}
\includegraphics[width=4in]
					{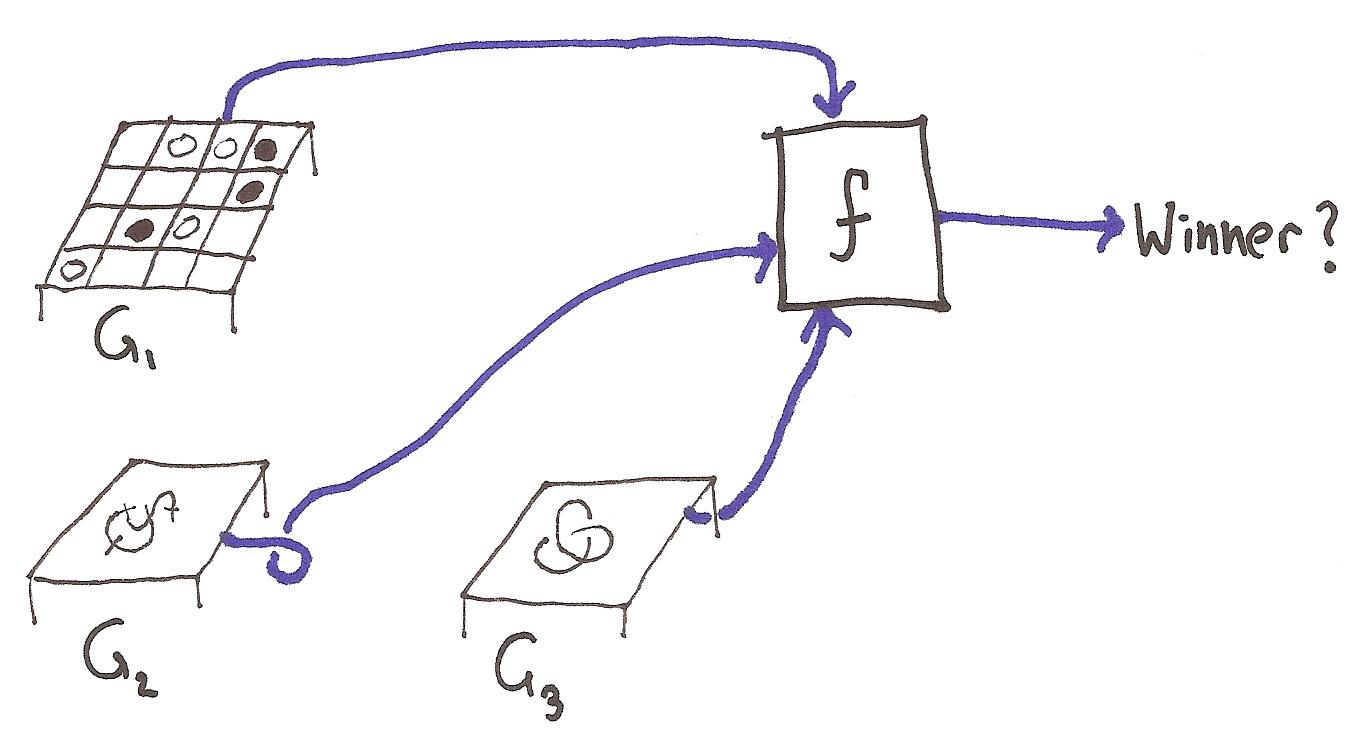}
\caption{Schematically, we are playing several games in parallel and using $f$ to combine their
final scores.}
\label{extension-schematic}
\end{center}
\end{figure}

Algebraic properties of $f$ often lift to algebraic properties of $\tilde{f}$. For example,
since addition of integers is associative and commutative, so is addition of games.  
Or if $f, g$, and $h$ are functions from $\mathbb{Z}$ to $\mathbb{Z}$,
then $f \circ g = h \implies \tilde{f} \circ \tilde{g} = \tilde{h}$.  Similar results hold
for compositions of functions of higher arities.  We will use these facts without comment
in what follows.

\begin{exercise}
Show than an algebraic identity will be maintained as long as each variable occurs
exactly once on each side.  So associativity and commutativity are maintained.
\end{exercise}
On the other hand, properties like idempotence and distributivity are not preserved,
because even structurally, $\tilde{f}(G,G)$ is
very different than $G$, having many more positions.  In fact if $G$ is odd-tempered,
then $\tilde{f}(G,G)$ will be even-tempered and so cannot equal or even be equivalent to $G$.

We are mainly interested in cases where $f$ has the following property:
\begin{definition}
We say that $f~S_1 \times \cdots \times S_k \to T$ is \emph{(weakly) order-preserving}
if whenever $(a_1, \ldots, a_n) \in S_1 \times \cdots \times S_k$ and
$(b_1, \ldots, b_n) \in S_1 \times \cdots \times S_k$ satisfy $a_i \le b_i$ for every
$1 \le i \le k$, then $f(a_1, \ldots, a_n) \le f(b_1, \ldots, b_n)$.
\end{definition}

Order-preserving operations are closed under composition.  Moreover, unary
order-preserving functions have the following nice property, which will be used later:
\begin{lemma}\label{outcomeslide}
If $S, T$ are subsets of $\mathbb{Z}$, and $f:S\to T$ is order-preserving, then
for any $G \in \mathcal{W}_S$, \[\routcome(\tilde{f}(G)) = f(\routcome(G)),\] and
\[ \loutcome(\tilde{f}(G)) = f(\loutcome(G))\]
\end{lemma}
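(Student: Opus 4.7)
The plan is to prove the lemma by induction on the recursive structure of $G$, using the fact that a weakly order-preserving function on $\mathbb{Z}$ commutes with finite $\max$ and $\min$.

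First, I would handle the base case where $G$ is a number $n \in S$. By Definition~\ref{def:extension}, $\tilde{f}(G) = f(n)$, which is itself a number, so $\routcome(\tilde{f}(G)) = f(n) = f(\routcome(G))$ and $\loutcome(\tilde{f}(G)) = f(n) = f(\loutcome(G))$.

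For the inductive step, suppose $G$ is not a number, so $G = \langle G^L_1, G^L_2, \ldots \,|\, G^R_1, G^R_2, \ldots \rangle$. By Definition~\ref{def:extension},
\[
\tilde{f}(G) = \langle \tilde{f}(G^L_1), \tilde{f}(G^L_2), \ldots \,|\, \tilde{f}(G^R_1), \tilde{f}(G^R_2), \ldots \rangle,
\]
so by the definition of $\loutcome$ and the inductive hypothesis applied to each $G^L_i$,
\[
\loutcome(\tilde{f}(G)) = \max_i \routcome(\tilde{f}(G^L_i)) = \max_i f(\routcome(G^L_i)).
\]
The key step is the observation that for any weakly order-preserving $f:S \to T$ and any finite nonempty $A \subseteq S$, one has $\max_{a \in A} f(a) = f(\max_{a \in A} a)$: if $a^* = \max_{a \in A} a$, then $f(a^*) \ge f(a)$ for each $a \in A$ by monotonicity, so $f(a^*)$ is the maximum. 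Applying this with $A = \{\routcome(G^L_i)\}_i$ gives $\max_i f(\routcome(G^L_i)) = f(\max_i \routcome(G^L_i)) = f(\loutcome(G))$, as desired. The analogous argument, using $\min_{a \in A} f(a) = f(\min_{a \in A} a)$ for monotone $f$, handles $\routcome$.

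There is no real obstacle here: the proof is a clean induction once one notes that monotone functions on a totally ordered set commute with finite $\max$ and $\min$. The only subtlety is remembering that left and right options are always nonempty (guaranteed by the definition of well-tempered game), so the $\max$ and $\min$ in question are taken over nonempty finite sets and are attained.
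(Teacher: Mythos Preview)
Your proof is correct and is exactly the induction the paper has in mind; the paper itself simply writes ``Easy by induction; left as an exercise to the reader.'' Your observation that a weakly order-preserving map commutes with finite $\max$ and $\min$ is precisely the content of the inductive step.
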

\begin{proof}
Easy by induction; left as an exercise to the reader.
\end{proof}

We also have
\begin{lemma}\label{boundingfunctions}
Let $f$ and $g$ be two functions from $S_1 \times \cdots \times S_k \to T$,
such that $f(x_1,\ldots,x_k) \le g(x_1,\ldots,x_k)$ for every
$(x_1,\ldots,x_k) \in S_1 \times \cdots \times S_k$.  Then for
every $(G_1,\ldots,G_k) \in \mathcal{W}_{S_1} \times \cdots \times \mathcal{W}_{S_k}$,
\[ \tilde{f}(G_1,\ldots,G_k) \lesssim \tilde{g}(G_1,\ldots,G_k) \]
and in particular
\[ \outcome(\tilde{f}(G_1,\ldots,G_k)) \le \outcome(\tilde{g}(G_1,\ldots,G_k))\]
\end{lemma}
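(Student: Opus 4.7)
The plan is to prove the displayed inequality by straightforward induction on the combined complexity (say, the sum of lengths) of the tuple $(G_1,\ldots,G_k)$, and then derive the outcome statement as a trivial corollary by specializing $X = 0$ in the definition of $\lesssim$.

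For the base case, suppose every $G_i$ is a number. Then by Definition~\ref{def:extension} we have $\tilde{f}(G_1,\ldots,G_k) = f(G_1,\ldots,G_k)$ and $\tilde{g}(G_1,\ldots,G_k) = g(G_1,\ldots,G_k)$, both of which are integers (viewed as trivial well-tempered games). The hypothesis $f \le g$ pointwise gives $f(G_1,\ldots,G_k) \le g(G_1,\ldots,G_k)$ as integers, and Proposition~\ref{numavoid} upgrades this to $\lesssim$ between the corresponding number-games: indeed $\outcome(m + X) = m + \outcome(X) \le n + \outcome(X) = \outcome(n + X)$ for any $X \in \mathcal{W}_\mathbb{Z}$ whenever $m \le n$ in $\mathbb{Z}$.

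For the inductive step, assume at least one $G_i$ is not a number. Then both $\tilde{f}(\vec{G})$ and $\tilde{g}(\vec{G})$ are non-numeric, and from Definition~\ref{def:extension} they have matching sets of options: the left options are the games $\tilde{f}(G_1,\ldots,G_i^L,\ldots,G_k)$ and $\tilde{g}(G_1,\ldots,G_i^L,\ldots,G_k)$ (ranging over $i$ and over left options $G_i^L$), and similarly for right options. By the inductive hypothesis, applied to each such tuple of strictly smaller total complexity, we get
\[ \tilde{f}(G_1,\ldots,G_i^L,\ldots,G_k) \lesssim \tilde{g}(G_1,\ldots,G_i^L,\ldots,G_k) \]
and likewise for right-option substitutions. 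Theorem~\ref{monotone} (applied to $\lesssim$) then lets us glue these componentwise comparisons together to conclude
\[ \tilde{f}(\vec{G}) = \langle\, \tilde{f}(\vec{G}^L) \,|\, \tilde{f}(\vec{G}^R) \,\rangle \;\lesssim\; \langle\, \tilde{g}(\vec{G}^L) \,|\, \tilde{g}(\vec{G}^R) \,\rangle = \tilde{g}(\vec{G}). \]

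The second displayed inequality, about outcomes, is then immediate: specializing $X = 0$ in the definition of $\lesssim$ gives $\outcome(\tilde{f}(\vec{G})) = \outcome(\tilde{f}(\vec{G}) + 0) \le \outcome(\tilde{g}(\vec{G}) + 0) = \outcome(\tilde{g}(\vec{G}))$. There is no real obstacle here — the entire content of the proof is packaging Theorem~\ref{monotone} into an induction, and the only point that requires any care is making sure Theorem~\ref{monotone} is the right tool (it is, precisely because it is monotone in both the left and right option sets in the same direction, which matches how $\tilde{f}$ and $\tilde{g}$ differ at every position simultaneously).
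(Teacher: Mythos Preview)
Your proof is correct and is exactly the approach the paper takes: the paper's entire proof is the single sentence ``An obvious inductive proof using Theorem~\ref{monotone},'' and you have simply spelled out the base case and inductive step that this sentence leaves implicit.
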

\begin{proof}
An obvious inductive proof using Theorem~\ref{monotone}.
\end{proof}

Another easy fact is the following:
\begin{lemma}\label{staradd}
Let $f : S_1 \times \cdots \times S_k \to T$ be a function.
Then for any games $(G_1, G_2, \ldots, G_k) \in \mathcal{W}_{S_1} \times \cdots \times \mathcal{W}_{S_k}$,
we have
\[ f(G_1 + *, G_2, \ldots, G_k) = f(G_1, G_2 + *, \ldots, G_k) = \cdots
=\]\begin{equation} f(G_1, G_2, \ldots, G_k + *) = f(G_1, \ldots, G_k) + *,\label{staraddeq}\end{equation}
where $*$ is $\langle 0 | 0 \rangle$ as usual.
\end{lemma}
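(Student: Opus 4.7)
The plan is to prove by induction on the total complexity of the tuple $(G_1, \ldots, G_k)$ the uniform statement: for every index $j$,
\[ \tilde{f}(G_1, \ldots, G_j + *, \ldots, G_k) = \tilde{f}(G_1, \ldots, G_k) + *. \]
(Here I identify $f$ with its extension $\tilde{f}$ from Definition~\ref{def:extension}, since otherwise $f$ applied to non-number games would not typecheck.) A subtle point is that I will induct on the complexity of the \emph{unstarred} tuple, not on $(G_1, \ldots, G_j + *, \ldots, G_k)$, because adding a $*$ strictly increases complexity; the inductive calls will always be to tuples obtained by replacing some $G_i$ by one of its options.

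For the base case, all $G_i$ are numbers, so $\tilde{f}(G_1, \ldots, G_k) = f(G_1, \ldots, G_k) \in T$, and $G_j + * = \langle G_j \mid G_j \rangle$. A direct expansion of both sides using the recursive definitions yields $\langle f(G_1, \ldots, G_k) \mid f(G_1, \ldots, G_k) \rangle$, so the base case is immediate.

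For the inductive step, I will expand both sides and match up left and right options. The left options of $\tilde{f}(G_1, \ldots, G_j + *, \ldots, G_k)$ come in two flavors: (i) moving in some other component $G_i$ with $i \ne j$, which by induction applied to $(G_1, \ldots, G_i^L, \ldots, G_k)$ produces $\tilde{f}(G_1, \ldots, G_i^L, \ldots, G_k) + *$; and (ii) moving in the $G_j + *$ component, either to $G_j^L + *$ (yielding $\tilde{f}(G_1, \ldots, G_j^L, \ldots, G_k) + *$ by induction) or to $G_j$ itself (yielding $\tilde{f}(G_1, \ldots, G_k)$). Meanwhile, the left options of $\tilde{f}(G_1, \ldots, G_k) + *$ are exactly $\tilde{f}(G_1, \ldots, G_i^L, \ldots, G_k) + *$ for each $i$ (moving in the $\tilde{f}$-summand, which is not a number since not every $G_i$ is) together with $\tilde{f}(G_1, \ldots, G_k)$ (moving in the $*$-summand). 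These two lists coincide, and a symmetric argument handles the right options, which gives identity of forms.

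The only mild nuisance is the sub-case in which $G_j$ itself is a number: then $G_j + * = \langle G_j \mid G_j \rangle$ contributes only the single option $G_j$ rather than the mixed options in flavor (ii), but this just deletes matching terms from both lists and the identification still goes through. Once $\tilde{f}(G_1, \ldots, G_j + *, \ldots, G_k) = \tilde{f}(G_1, \ldots, G_k) + *$ is proven for each $j$, chaining through the common right-hand side yields the full string of equalities in (\ref{staraddeq}). I don't anticipate any serious obstacle — the whole argument is essentially the observation that inserting a $*$ into one argument of $\tilde{f}$ is indistinguishable (even as \emph{forms}) from adding a $*$ to the outside.
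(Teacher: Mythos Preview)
Your proof is correct, but it takes a different route from the paper's. The paper gives a one-line conceptual argument: view each expression as the extension of a $(k+1)$-ary function of $(x_1,\ldots,x_k,y)\in S_1\times\cdots\times S_k\times\{0\}$, namely $f(x_1+y,x_2,\ldots,x_k)$, $f(x_1,x_2+y,\ldots,x_k)$, \ldots, $f(x_1,\ldots,x_k)+y$. Since $y$ ranges only over $\{0\}$, all of these are literally the same function on numbers, hence their extensions to games coincide (this is the ``algebraic identities lift to $\tilde{f}$'' principle stated just before the lemma). In particular the result holds with $*$ replaced by \emph{any} $\{0\}$-valued game.

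Your approach instead does an explicit structural induction, matching up option sets on both sides. This is longer but entirely self-contained: it does not appeal to the composition/lifting principle, which the paper only sketched and relegated to an exercise. The paper's argument is slicker and immediately yields the generalization to arbitrary $\{0\}$-valued games; yours is more hands-on and makes the identity of forms transparent. Both are valid.
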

\begin{proof}
Note that for $(x_1,\ldots, x_k, y) \in S_1 \times \cdots \times S_k \times \{0\}$,
\[ f(x_1 + y, x_2, \ldots, x_k) = f(x_1, x_2 + y, \ldots, x_k) = \cdots =\]
\[ f(x_1, x_2, \ldots, x_k + y) = f(x_1, \ldots, x_k) + y.\]
It follows that (\ref{staraddeq}) is true more generally if we replace $*$ by any $\{0\}$-valued game.
\end{proof}

\section{Compatibility with Equivalence}
We defined $\approx$ to be indistinguishability for the operation of addition.  By adding new operations
into the mix, indistinguishability could conceivably become a finer relation.  In this section,
we'll see that this does not occur when our operations are extensions of order-preserving functions.
In other words, the $\approx$ equivalence relation is already compatible with extensions
of order-preserving functions.

\begin{theorem}\label{fineenough}
Let $S_1, \ldots, S_k, T$ be subsets of $\mathbb{Z}$, $f:S_1 \times \cdots \times S_k \to T$
be order-preserving, and $\tilde{f}$ be its extension to
$\mathcal{W}_{S_1} \times \cdots \times \mathcal{W}_{S_k} \to \mathcal{W}_T$.  Let $\Box$ be one
of $\lesssim$, $\gtrsim$, $\lesssim_\pm$, $\gtrsim_\pm$, $\approx$, or $\approx_\pm$. If
$(G_1,\ldots,G_k)$ and $(H_1,\ldots,H_k)$ are elements of $\mathcal{W}_{S_1} \times \cdots \mathcal{W}_{S_k}$,
such that $G_i \Box H_i$ as integer-valued games, then
\[ \tilde{f}(G_1,\ldots,G_k) \Box \tilde{f}(H_1,\ldots,H_k).\]
\end{theorem}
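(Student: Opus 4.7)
The plan is to first perform several reductions that collapse the nine relations $\Box$ into a single case, then unfold the definition and recast the resulting statement so that the test game $X$ is absorbed into the function, and finally attack the remaining statement via the sides machinery of Chapter 2.

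First I would reduce to the two cases $\Box \in \{\lesssim_+,\lesssim_-\}$. Because $\lesssim \;=\; (\lesssim_+)\cap(\lesssim_-)$ and $\approx \;=\; (\lesssim)\cap(\gtrsim)$ with the corresponding decomposition of $\approx_\pm$, and because the $\gtrsim$-statements are just the $\lesssim$-statements applied to the swapped tuples $(\vec H,\vec G)$, every case of the theorem follows from the $\lesssim_+$ and $\lesssim_-$ cases. Moreover, negating everything interchanges Left and Right (and $\lesssim_+$ with $\lesssim_-$), so by the symmetry of the hypothesis ``$f$ order-preserving'' under that involution, it actually suffices to treat $\Box=\lesssim_+$.

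Unfolding, I must show $\lfout(\tilde f(\vec G)+X)\le \lfout(\tilde f(\vec H)+X)$ for every $X\in\mathcal{W}_\mathbb{Z}$. The key simplification is that $\tilde f(\vec G)+X=\tilde g(\vec G,X)$, where $g(\vec x,y):=f(\vec x)+y$ is again order-preserving (of arity $k+1$); this follows immediately from Definition~\ref{def:extension} by comparing the recursive constructions. Since $X\lesssim_+ X$ holds trivially, it therefore suffices to prove the pointwise sub-claim
$$(\star)\qquad G_i\lesssim_+ H_i \text{ for all }i \;\Longrightarrow\; \lfout\bigl(\tilde f(\vec G)\bigr)\le \lfout\bigl(\tilde f(\vec H)\bigr),$$
together with its $\rfout$/$\lesssim_-$ analogue, for every order-preserving $f$ of arbitrary arity.

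To prove $(\star)$, I would not induct directly on the option structures of the $G_i$ and $H_i$, since the hypothesis $G_i\lesssim_+ H_i$ is a universal statement over test games and therefore gives no a priori comparison of the options themselves. Instead, I would translate everything through the sides functor of Theorem~\ref{plethora}: $G_i\lesssim_+ H_i$ is equivalent to $G_i^+\lesssim H_i^+$ as i-games, and by Proposition~\ref{allthesame} the three preorders $\lesssim,\lesssim_+,\lesssim_-$ collapse on i-games. The reduction then rests on two auxiliary facts: (A) if $f$ is order-preserving and every $G_i$ is an i-game, then so is $\tilde f(\vec G)$; and (B) for arbitrary $\vec G$, $\tilde f(\vec G)^+\approx \tilde f(\vec G^+)$ and dually for downsides. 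Given (A) and (B), and the i-game case of the theorem (proven separately by induction, using (A) and the generalizations of Theorems~\ref{outsum}--\ref{outsum2} to $\tilde f$), we chain $\tilde f(\vec G)^+\approx\tilde f(\vec G^+)\lesssim \tilde f(\vec H^+)\approx \tilde f(\vec H)^+$, which by Theorem~\ref{plethora}(a) is exactly $\tilde f(\vec G)\lesssim_+ \tilde f(\vec H)$.

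The hard part will be (B), the compatibility of sides with $\tilde f$. I would prove it by induction on the game-tree complexity of $\vec G$, following the recursive recipe in the proof of Theorem~\ref{sides-complete}: build the tentative upside $H=\langle \tilde f(\vec G)^{L\,+}\,|\,\tilde f(\vec G)^{R\,+}\rangle$, apply the inductive hypothesis to each option to rewrite it in terms of the $G_i^+$, then invoke Theorem~\ref{monotone} to replace each option by an $\approx$-equivalent game; the delicate case is when $H$ fails to be an i-game, where Lemma~\ref{gap} forces $G^+$ to collapse to the number $\routcome(H)$ and one must match this collapse with the corresponding collapse for $\tilde f(\vec G^+)$. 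This case analysis, governed by the simplicity rule and the interplay between $\loutcome$, $\routcome$, and the order-preserving $f$, is the technical heart of the argument and the main obstacle to a short proof.
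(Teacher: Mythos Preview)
Your reductions to a single relation $\Box=\lesssim_+$ (or $\lesssim_-$) and the absorption of the test game $X$ into an order-preserving function of arity $k{+}1$ are correct and match the paper's opening moves. The difficulty is in the plan for~$(\star)$.

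The facts you call (A) and (B) are exactly the content of Theorem~\ref{generaldistortions}, which in the paper is proven \emph{after} Theorem~\ref{fineenough}; the second half of that theorem (your (B)) is deduced there in two lines directly \emph{from} Theorem~\ref{fineenough}. Your proposed independent inductive proof of (B) runs into trouble in precisely the ``delicate case'' you flag. When $G_j^+$ collapses to a number $n_j$ (the second branch in the construction of Theorem~\ref{sides-complete}), the game $\tilde f(\vec G^+)$ has no options in coordinate~$j$, whereas $\tilde f(\vec G)$ still has the options $\tilde f(\ldots,G_j^{L/R},\ldots)$. Theorem~\ref{monotone} lets you match the options in the non-collapsed coordinates, but to discard the extra coordinate-$j$ options up to $\approx_+$ you need to know that substituting $G_j\approx_+ n_j$ into $\tilde f$ preserves $\approx_+$. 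That is exactly the one-coordinate case of the theorem you are proving, and the complexity of the other $G_i$ has not decreased, so the induction does not close. Matching the collapse of $\tilde f(\vec G)^+$ with the collapse of $\tilde f(\vec G^+)$ via Lemma~\ref{gap}, as you suggest, would require comparing $\loutcome$ and $\routcome$ of these two games, which again presupposes some monotonicity of $\tilde f$ in the $\lesssim_+$ order.

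The paper's argument avoids sides and i-games entirely. After reducing to a single coordinate (Lemma~\ref{onebyone}), it observes (Lemma~\ref{functions}) that for each threshold $n$ there is a function $g_n:S_2\times\cdots\times S_{k+1}\to\mathbb{Z}$, not necessarily order-preserving, with
\[ f'(x_1,\ldots,x_{k+1})>n \iff x_1+g_n(x_2,\ldots,x_{k+1})>0, \]
because order-preservation in $x_1$ makes $\{x_1:f'(x_1,\ldots)>n\}$ an upward ray whose cutoff is an integer. Lifting to games (Lemma~\ref{applyit}) yields
\[ \rfout\bigl(\tilde f(G,G_2,\ldots,G_k)+K\bigr)>n \iff \rfout\bigl(G+\tilde g_n(G_2,\ldots,G_k,K)\bigr)>0, \]
so a witness to $\tilde f(G,\ldots)\not\lesssim_-\tilde f(H,\ldots)$ is converted into a single integer-valued game $J=\tilde g_n(G_2,\ldots,G_k,K)$ with $\rfout(G+J)\not\le\rfout(H+J)$, contradicting $G\lesssim_- H$ by definition. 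The whole context is bundled into a test game, and the proof never touches the recursive structure of sides.
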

This theorem says that extensions of order-preserving maps are compatible with equivalence and all
our other relations.  In particular, order-preserving extensions are well-defined on the quotient spaces
of $\approx$ and $\approx_\pm$.

To prove Theorem~\ref{fineenough}, we reduce to the case where $G_i = H_i$ for all but one $i$, by the usual means.
By symmetry, we only need to show that
$\tilde{f}(G,G_2,\ldots,G_k) \lesssim_- \tilde{f}(H,G_2,\ldots,G_k)$, when $G \lesssim_- H$.  We also reduce
to the case where the codomain $T$ is $\{0,1\}$.  This makes $T^{S_1}$, the set of order-preserving functions
from $S_1$ to $T$ be itself finite and linearly ordered.  We then view $f(\cdot, G_2, \ldots, G_k)$, the context into which
$G$ and $H$ are placed, as a $T^{S_1}$-valued game whose score is combined with the final score of $G$ or $H$.  (See Figure~\ref{context-argument}).

\begin{figure}[htb]
\begin{center}
\includegraphics[width=4in]
					{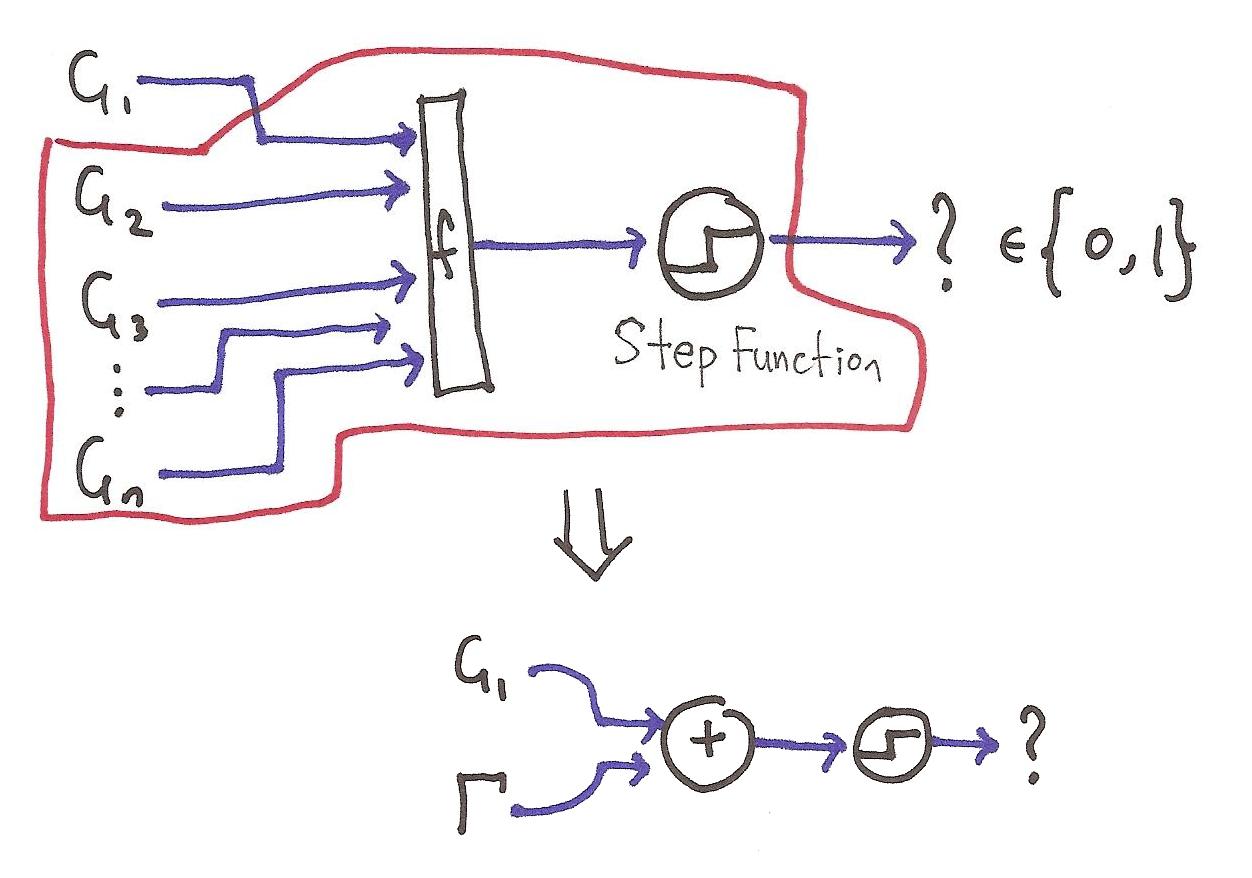}
\caption{Schematically, we are taking all the component games other than $G$, as well as the function $f$, and bundling them up
into a ``context'' $\Gamma$.  In order to pull this off, we have to run the output of the game through a step function.  By varying
the cutoff of the step function, the true outcome of $\tilde{f}(G_1,\ldots,G_n)$ is recoverable, so this is no great loss.}
\label{context-argument}
\end{center}
\end{figure}

As a finite total order, $T^{S_1}$ can then be identified with a set of integers slightly larger than $S_1$,
and applying this identification to $f(\cdot, G_2, \ldots, G_k)$, we make
an integer-valued game $A$ such that
\[ \outcome(G + A) \le \outcome(H + A) \implies \outcome(G,G_2,\ldots,G_k) \le \outcome(H,G_2,\ldots,G_k).\]
Many of these steps will not be spelled out explicitly in what follows.

\begin{lemma}\label{functions}
Let $S, S_2, \ldots, S_{k+1}$ be subsets of the integers, with $S$ finite, and let $f':S \times S_2 \times \cdots \times S_{k+1} \to \mathbb{Z}$
be order-preserving.  Then for any $n \in \mathbb{Z}$, there is a function $g_n:S_2\times\cdots\times S_{k+1} \to \mathbb{Z}$, such that
for any $(x_1, \ldots, x_{k+1}) \in S \times S_2 \times \ldots S_{k+1}$,
\begin{equation} x_1 + g_n(x_2, x_3, \ldots, x_{k+1}) > 0 \iff f'(x_1, x_2, \ldots, x_{k+1}) > n.\label{t0}\end{equation}
\end{lemma}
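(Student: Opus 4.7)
The plan is to construct $g_n$ pointwise: for each fixed tuple $(x_2,\ldots,x_{k+1}) \in S_2 \times \cdots \times S_{k+1}$, I will pick an integer $g_n(x_2,\ldots,x_{k+1})$ so that the threshold $-g_n(x_2,\ldots,x_{k+1})$ on $x_1$ cuts $S$ into exactly the right up-set and down-set to match the condition $f'(x_1,x_2,\ldots,x_{k+1}) > n$.

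The key observation is that, because $f'$ is order-preserving and in particular non-decreasing in its first coordinate, the set
\[ A(x_2,\ldots,x_{k+1}) \;=\; \{\, x_1 \in S \;:\; f'(x_1,x_2,\ldots,x_{k+1}) > n \,\} \]
is an upward-closed subset of the finite totally ordered set $S$. Such an up-set has three possible forms: it is empty, it equals $S$, or it has a minimum element $m(x_2,\ldots,x_{k+1}) \in S$. On the other side of the desired equivalence, for any integer $c$, the set $\{x_1 \in S : x_1 + c > 0\} = \{x_1 \in S : x_1 \ge 1-c\}$ ranges over exactly the same collection of up-sets of $S$ as $c$ varies over $\mathbb{Z}$: taking $c$ very negative gives the empty up-set, $c$ very positive gives all of $S$, and intermediate integer values of $c$ give each nontrivial up-set exactly once.

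Concretely, I would define
\[ g_n(x_2,\ldots,x_{k+1}) \;=\;
\begin{cases}
1 - m(x_2,\ldots,x_{k+1}), & \text{if } A(x_2,\ldots,x_{k+1}) \text{ has a minimum } m,\\
-\max(S) - 1, & \text{if } A(x_2,\ldots,x_{k+1}) = \emptyset,\\
-\min(S) + 1, & \text{if } A(x_2,\ldots,x_{k+1}) = S.
\end{cases}
\]
(The last two cases can be collapsed with the first by the convention that $m = \max(S)+1$ resp.\ $m = \min(S)$.) Verifying \eqref{t0} is then a routine check in each case using the fact that $x_1, g_n \in \mathbb{Z}$, so $x_1 + g_n > 0 \iff x_1 \ge 1 - g_n$.

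There is no real obstacle here; the only thing to keep in mind is that $S$ must be finite in order to guarantee that every up-set has a minimum (or can be handled by the boundary cases) and that the values $\max(S)$ and $\min(S)$ exist. The lemma makes no assumption on $S_2,\ldots,S_{k+1}$ beyond being subsets of $\mathbb{Z}$, and none is needed, because $g_n$ is defined pointwise in those variables without any regularity requirement.
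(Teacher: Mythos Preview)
Your proposal is correct and takes essentially the same approach as the paper: fix the tail $(x_2,\ldots,x_{k+1})$, observe that the set $A = \{x_1 \in S : f'(x_1,\ldots) > n\}$ is an up-set of the finite chain $S$, and choose $g_n$ to be the negative of a threshold separating $A$ from its complement. The only cosmetic difference is that the paper picks a single integer $m$ with $A = \{x \in S : x > m\}$ and sets $g_n = -m$, thereby absorbing your three cases into one line; your explicit case split with $g_n = 1 - \min A$ (and boundary values for $A=\emptyset$ or $A=S$) is equivalent.
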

(Note that there is no stipulation that $g_n$ be order-preserving.)
\begin{proof}
Fix $x_2, x_3, \ldots, x_{k+1}$.  Partition $S$ as $A \cup B$, where
\[ A = \{x \in S~:~ f'(x,x_2,\ldots,x_{k+1}) > n\}\]
\[ B = \{x \in S~:~ f'(x,x_2,\ldots,x_{k+1}) \le n\}\]
Then because $f'$ is order-preserving, every element of $A$ is greater than every element of $B$.  Since $S$ is finite, this
implies that there is some
integer $m$ such that $A = \{x \in S: x > m\}$ and $B = \{x \in S:x \le m\}$.  Let $g_n(x_2,\ldots,x_{k+1}) = -m$.  Then
clearly (\ref{t0}) will hold.
\end{proof}

\begin{lemma}\label{applyit}
With the setup of the previous lemma, if $G$ is an $S$-valued game, and $G_i$ is a $S_i$-valued game for $2 \le i \le k+1$,
then
\[ \loutcome(G + \tilde{g}(G_2,\ldots,G_{k+1})) > 0 \iff \loutcome(\tilde{f'}(G,G_2,\ldots,G_{k+1})) > n\]
and similarly
\[ \routcome(G + \tilde{g}(G_2,\ldots,G_{k+1})) > 0 \iff \routcome(\tilde{f'}(G,G_2,\ldots,G_{k+1})) > n\]
\end{lemma}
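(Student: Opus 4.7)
The plan is to prove both statements simultaneously by induction on the combined complexity of $G, G_2, \ldots, G_{k+1}$, exploiting the fact that $G + \tilde{g}_n(G_2,\ldots,G_{k+1})$ and $\tilde{f'}(G,G_2,\ldots,G_{k+1})$ have \emph{identical} game-tree structure: at any position, a legal move for Left (or Right) is a move in $G$ or in exactly one of the $G_i$'s, and these options correspond one-to-one between the two games. The only difference is the numerical label placed on each terminal position. So the whole lemma reduces to checking that the ``$>0$ vs. $>n$'' threshold behaves the same way under max/min propagation up the shared game tree.

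First I would set up the base case, where $G$ and every $G_i$ is a number. Then $\tilde{g}_n(G_2,\ldots,G_{k+1}) = g_n(G_2,\ldots,G_{k+1})$, so $G + \tilde{g}_n(G_2,\ldots,G_{k+1})$ is the integer $G + g_n(G_2,\ldots,G_{k+1})$, whose left and right outcomes both equal itself. Similarly $\tilde{f'}(G,G_2,\ldots,G_{k+1}) = f'(G,G_2,\ldots,G_{k+1})$. The required equivalences then reduce exactly to the defining property of $g_n$ from Lemma~\ref{functions}.

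For the inductive step, suppose at least one of $G, G_2, \ldots, G_{k+1}$ is not a number. Using the recursive definitions of addition and of $\tilde{\cdot}$, the left options of $G + \tilde{g}_n(G_2,\ldots,G_{k+1})$ are exactly $G^L + \tilde{g}_n(G_2,\ldots,G_{k+1})$ together with $G + \tilde{g}_n(\ldots, G_i^L, \ldots)$ for each $i$ with $G_i$ a non-number; and the left options of $\tilde{f'}(G,G_2,\ldots,G_{k+1})$ are $\tilde{f'}(G^L,G_2,\ldots,G_{k+1})$ together with $\tilde{f'}(G,\ldots,G_i^L,\ldots)$ for the same indices. These match in pairs, and by the inductive hypothesis each paired right outcome satisfies the equivalence ``$>0$ iff $>n$.'' Since $\loutcome$ is defined as the maximum of those right outcomes, and the predicate ``some element of a finite set exceeds a fixed threshold'' is preserved under this pairing (each $a_i > 0 \iff b_i > n$ forces $\max_i a_i > 0 \iff \max_i b_i > n$), the equivalence lifts to $\loutcome$. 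The analogous argument, using min in place of max, handles the $\routcome$ claim.

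There is no real obstacle — the proof is essentially bookkeeping — but the one point requiring care is to verify that the structural correspondence between left (resp. right) options holds in the degenerate sub-cases, for instance when $G$ is a number (so the $G^L$ branch disappears on both sides) or when all $G_i$ are numbers (so only the $G^L$ branches remain). Once that is noted, the max/min threshold argument closes the induction with no further work.
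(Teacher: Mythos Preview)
Your proof is correct. The direct structural induction works exactly as you describe: the two games have isomorphic game trees, the base case is precisely Lemma~\ref{functions}, and the threshold condition ``some option exceeds the bound'' passes through $\max$ and $\min$ under the option-by-option bijection.

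The paper takes a different, more packaged route. Instead of inducting directly, it introduces step functions $\delta_m(x) = 1$ if $x > m$ and $0$ otherwise, and observes that Lemma~\ref{functions} says exactly
\[
\delta_0\bigl(x_1 + g_n(x_2,\ldots,x_{k+1})\bigr) \;=\; \delta_n\bigl(f'(x_1,\ldots,x_{k+1})\bigr)
\]
as functions on integers. Since extensions of functions to games respect composition, this identity lifts to an identity of games:
\[
\tilde{\delta_0}\bigl(G + \tilde{g_n}(G_2,\ldots,G_{k+1})\bigr) \;=\; \tilde{\delta_n}\bigl(\tilde{f'}(G,G_2,\ldots,G_{k+1})\bigr).
\]
Then Lemma~\ref{outcomeslide} (which says $\loutcome(\tilde{h}(K)) = h(\loutcome(K))$ for order-preserving unary $h$) converts this into the desired equivalence of outcome thresholds. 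In effect, the paper hides your induction inside the earlier general facts about composition of extensions and Lemma~\ref{outcomeslide}, while you unpack that induction explicitly. Your version is more self-contained; the paper's version is shorter once the infrastructure is in place.
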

\begin{proof}
For any integer $m$, let $\delta_m~:~\mathbb{Z} \to \{0,1\}$ be the function $\delta_m(x) = 1$ if
$x > m$ and $\delta_m(x) = 0$ if $x \le m$.  Then using Lemma~\ref{outcomeslide}, we have
\begin{equation} \loutcome(G + \tilde{g_n}(G_2,\ldots,G_{k+1})) > 0 \iff
\loutcome(\tilde{\delta_0}(G + \tilde{g_n}(G_2,\ldots,G_{k+1}))) = 1,\label{t1}\end{equation}
and similarly,
\begin{equation} \loutcome(\tilde{f'}(G,G_2,\ldots,G_{k+1})) > n \iff
\loutcome(\tilde{\delta_n}(\tilde{f'}(G,G_2,\ldots,G_{k+1}))) = 1.\label{t2}\end{equation}
Now by the previous lemma, we know that for any $(x_1,\ldots,x_{k+1}) \in S \times S_2 \times \cdots \times S_{k+1}$, we have
\[ \delta_0(x_1 + g_n(x_2,x_3,\ldots,x_k)) = \delta_n(x_1,x_2,\ldots,x_k)\]
But then this equation continues to be true when we extend everything to games, so that
\[ \tilde{\delta_0}(G + \tilde{g_n}(G_2,\ldots,G_{k+1})) = \tilde{\delta_n}(\tilde{f'}(G,G_2,\ldots,G_{k+1})) \]
and then we are done, after combining with (\ref{t1}) and (\ref{t2}) above.
\end{proof}
Now since $G + \tilde{g_n}(G_2,\ldots,G_k)$ has the same parity as $\tilde{f}(G,G_2,\ldots,G_k)$,
the two equations in Lemma~\ref{applyit} are equivalent to the following two:
\begin{equation} \lfout(G + \tilde{g_n}(G_2,\ldots,G_k)) > 0 \iff \lfout(\tilde{f}(G,G_2,\ldots,G_k)) > n\label{distort-lfout}\end{equation}
\begin{equation} \rfout(G + \tilde{g_n}(G_2,\ldots,G_k)) > 0 \iff \rfout(\tilde{f}(G,G_2,\ldots,G_k)) > n\label{distort-rfout}\end{equation}

\begin{lemma}\label{onebyone}
If $(G_1,\ldots,G_k)$ and $(H_1,\ldots,H_k)$ are
in $\mathcal{W}_{S_1} \times \mathcal{W}_{S_2} \times \cdots \times \mathcal{W}_{S_k}$, and
$G_i \lesssim_- H_i$ for every $i$, and $G_i = H_i$ for all but one $i$, then
\[ \tilde{f}(G_1,\ldots,G_k) \lesssim_- \tilde{f}(H_1,\ldots,H_k).\]
Also, the same holds if we replace $\lesssim_-$ with $\lesssim_+$.
\end{lemma}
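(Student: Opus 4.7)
The plan is to reduce the statement to a direct application of $G_1 \lesssim_- H_1$ via the ``context argument'' illustrated in Figure~\ref{context-argument}, using Lemmas~\ref{functions} and \ref{applyit}. By symmetry among the $k$ arguments I will assume that the differing component is the first; write $G = G_1$ and $H = H_1$ with $G \lesssim_- H$, and $G_i = H_i$ for $i \ge 2$. To unwind the definition of $\lesssim_-$, I must show that for every $Y \in \mathcal{W}_{\mathbb{Z}}$,
\[ \rfout\bigl(\tilde{f}(G,G_2,\ldots,G_k) + Y\bigr) \;\le\; \rfout\bigl(\tilde{f}(H,G_2,\ldots,G_k) + Y\bigr). \]

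The first move is to absorb the outer ``$+Y$'' into the operation $f$ itself. Define \[f' \colon S_1 \times S_2 \times \cdots \times S_k \times \mathbb{Z} \to \mathbb{Z}, \qquad f'(x_1,\ldots,x_k,y) = f(x_1,\ldots,x_k) + y.\] Since $f$ is order-preserving in its arguments, so is $f'$. A short induction on game trees (paralleling the recursive definitions of $+$ and of extension in Definition~\ref{def:extension}) shows $\tilde{f}(G_1,\ldots,G_k) + Y = \tilde{f'}(G_1,\ldots,G_k,Y)$, so the displayed inequality becomes $\rfout(\tilde{f'}(G,G_2,\ldots,G_k,Y)) \le \rfout(\tilde{f'}(H,G_2,\ldots,G_k,Y))$.

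Next, since the games $G$ and $H$ are short, only finitely many elements of $S_1$ actually appear inside them, so I may restrict $f'$ to $S \times S_2 \times \cdots \times S_k \times \mathbb{Z}$ where $S \subseteq S_1$ is a finite set containing every number occurring in $G$ or $H$. Lemma~\ref{functions} then supplies, for each integer $n$, a function $g_n : S_2 \times \cdots \times S_k \times \mathbb{Z} \to \mathbb{Z}$ such that $x_1 + g_n(x_2,\ldots,x_k,y) > 0$ iff $f'(x_1,\ldots,x_k,y) > n$. By Lemma~\ref{applyit} (specifically Equation~(\ref{distort-rfout})), for every $Y$,
\[ \rfout\bigl(G + \tilde{g_n}(G_2,\ldots,G_k,Y)\bigr) > 0 \iff \rfout\bigl(\tilde{f'}(G,G_2,\ldots,G_k,Y)\bigr) > n, \]
and the analogous equivalence holds with $G$ replaced by $H$. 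Set $A = \tilde{g_n}(G_2,\ldots,G_k,Y)$; this is a perfectly good $\mathbb{Z}$-valued game. Now $G \lesssim_- H$ (as integer-valued games, noting that $G$ and $H$ share a parity by hypothesis) gives $\rfout(G + A) \le \rfout(H + A)$, hence $\rfout(G+A) > 0 \Rightarrow \rfout(H+A) > 0$. Translating back through the equivalences above, $\rfout(\tilde{f'}(G,G_2,\ldots,G_k,Y)) > n$ implies $\rfout(\tilde{f'}(H,G_2,\ldots,G_k,Y)) > n$; since this holds for every $n$, we conclude the desired inequality, proving the $\lesssim_-$ case.

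The $\lesssim_+$ case is handled identically, using Equation~(\ref{distort-lfout}) in place of Equation~(\ref{distort-rfout}). The main obstacle is purely one of bookkeeping rather than ideas: one must correctly absorb the arbitrary test game $Y$ from the definition of $\lesssim_\pm$ into the function $f$ so that the previously-established machinery applies, and one must check that the resulting $\tilde{g_n}(G_2,\ldots,G_k,Y)$ is legitimately integer-valued (which it is, by construction of $g_n$) so that the hypothesis $G \lesssim_- H$ can be used directly with it as the test game.
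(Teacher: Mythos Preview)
Your proof is correct and follows essentially the same route as the paper's: absorb the test game into an augmented $f'$, restrict the first argument to a finite set, and invoke Lemmas~\ref{functions} and \ref{applyit} to convert the question into an ordinary comparison of $G + A$ with $H + A$ for a suitable $A = \tilde{g_n}(G_2,\ldots,G_k,Y)$. The only cosmetic difference is that the paper phrases the final step as a proof by contradiction (assuming a bad test game $K$ and a separating integer $n$), whereas you argue the implication directly for every $n$.
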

\begin{proof}
Without loss of generality, $G_i = H_i$ for all $i \ne 1$.  We only need to consider the case
where $G_1 \lesssim_- H_1$, as the $G_1 \lesssim_+ H_1$ case follows by symmetry.

We want to show that
\[ \tilde{f}(G,G_2,\ldots,G_k) \lesssim_- \tilde{f}(H,G_2,\ldots,G_k)\]
given that $G \lesssim_- H$ are $S_1$-valued games.  In particular, we need to show that for every game $K$,
\begin{equation} \lfout(\tilde{f}(G,G_2,\ldots,G_k) + K) \le \lfout(\tilde{f}(H,G_2,\ldots,G_k) + K).\label{gaol}\end{equation}
Suppose for the sake of contradiction that there is some $K$ for which (\ref{gaol}) doesn't hold.  Then there is some
integer $n$ such that
\[ \lfout(\tilde{f}(H,G_2,\ldots,G_k) + K) \not > n\]
but
\[ \lfout(\tilde{f}(G,G_2,\ldots,G_k) + K) > n.\]
Let $S_{k+1}$ be $\mathbb{Z}$, so that $K$ is an $S_{k+1}$-valued game.  Since all our games are finite, only finitely many
values occur within each of $G$ and $H$.  Thus there is some finite subset $S$ of $S_1$ so that $G$ and $H$ are both $S$-valued games.
Let $f'~:~S \times S_1 \times \cdots \times S_k \times S_{k+1}$ be the function
\[ f'(x_1,\ldots,x_{k+1}) = f(x_1,\ldots,x_k) + x_{k+1},\]
which is still order-preserving.
Then $\tilde{f'}(G_1,\ldots,G_{k+1}) = \tilde{f}(G_1,\ldots,G_k) + G_{k+1}$ for any appropriate $G_1,\ldots,G_{k+1}$.  In
particular, then, we have that
\[ \lfout(\tilde{f'}(H,G_2,\ldots,G_k,K)) \not > n\]
and
\[ \lfout(\tilde{f'}(G,G_2,\ldots,G_k,K)) > n\]
Let $g_n$ be the function from Lemma~\ref{applyit}.  Then by (\ref{distort-lfout}), it follows that
\[ \lfout(H + \tilde{g_n}(G_2,\ldots,G_k,K)) \not > 0\]
and
\[ \lfout(G + \tilde{g_n}(G_2,\ldots,G_k,K)) > 0.\]
Thus, if $J = \tilde{g_n}(G_2,\ldots,G_k,K)$, we have $\lfout(G + J) \not\le \lfout(H + J)$, contradicting the
fact that $G \lesssim_- H$.

The other case, in which $G \lesssim_+ H$, follows by symmetry.
\end{proof}

\begin{proof}[Proof (of Theorem~\ref{fineenough})]
With the setup of Theorem~\ref{fineenough}, first consider the case where $\Box$ is $\lesssim_-$.
So $G_i \lesssim_- H_i$ for every $i$.  Then by Lemma~\ref{onebyone},
\[ \tilde{f}(G_1,\ldots,G_k) \lesssim_- \tilde{f}(H_1,G_2\ldots,G_k)
\lesssim_- \tilde{f}(H_1,H_2,G_3,\ldots,G_k) \]\[\lesssim_- \cdots
\lesssim_- \tilde{f}(H_1,\ldots,H_{k-1},G_k) \lesssim_- \tilde{f}(H_1,\ldots,H_k).\]
So $\tilde{f}(G_1,\ldots,G_k) \lesssim_- \tilde{f}(H_1,\ldots,H_k)$ by transitivity
of $\lesssim_-$.  This establishes Theorem~\ref{fineenough} when $\Box$ is $\lesssim_-$.

The cases where $\Box$ is one of $\lesssim_+$, $\gtrsim_+$ or $\gtrsim_-$ follow immediately.
All the other remaining possibilities for $\Box$ can be written as intersections
of $\lesssim_\pm$ and $\gtrsim_\pm$, so the remaining cases follow easily.  For example,
if $G_i \approx_+ H_i$ for all $i$, then we have $G_i \lesssim_+ H_i$ and $G_i \gtrsim_+ H_i$
for all $i$, so that
\[ \tilde{f}(G_1,\ldots,G_k) \lesssim_+ \tilde{f}(H_1,\ldots,H_k)\]
and
\[ \tilde{f}(G_1,\ldots,G_k) \gtrsim_+ \tilde{f}(H_1,\ldots,H_k)\]
by the cases where $\Box$ is $\lesssim_+$ or $\gtrsim_+$.  Thus
\[ \tilde{f}(G_1,\ldots,G_k) \approx_+ \tilde{f}(H_1,\ldots,H_k).\]
\end{proof}



%
%

As a corollary of Theorem~\ref{fineenough}, we see that the action of an order-preserving
extension on $S$-valued games is determined by its action on even-tempered i-games.

\begin{corollary}\label{determined}
Let $S_1, S_2, \ldots, S_k, T$ be subsets of $\mathbb{Z}$, and
$f:S_1 \times \cdots \times S_k \to T$ be order-preserving.  For $1 \le i \le k$,
let $G_i$ be an $S_i$-valued game.  Let $e_i$ be $0$ or $*$, so that
$e_i$ has the same parity as $G_i$.
For each $G_i$, choose an upside and downside $G_i^+$ and $G_i^-$ which are $S_i$-valued,
possible by Theorem~\ref{plethora}(g).  Then for every $i$, $G_i^+ + e_i$
and $G_i^- + e_i$ are even-tempered $S_i$-valued i-games, and
\[ \tilde{f}(G_1,\ldots,G_k) \approx A\&B + (e_1 + \cdots + e_k),\]
where
\[ A \approx \tilde{f}(G_1^+ + e_1, G_2^+ + e_2, \ldots, G_k^+ + e_k)^+\]
\[ B \approx \tilde{f}(G_1^- + e_2, G_2^- + e_2, \ldots, G_k^- + e_k)^-\]
are even-tempered i-games.
\end{corollary}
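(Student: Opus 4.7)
The plan is to compute the upside and downside of $\tilde{f}(G_1,\ldots,G_k)$ modulo $\approx$ and check that they agree with those of $A\&B + e$, where $e := e_1 + \cdots + e_k$. Since an element of $\mathcal{W}_\mathbb{Z}/\approx$ is determined by the pair of its sides (as in the discussion around Theorem~\ref{allpairs-early}), this will suffice. The three key ingredients are Theorem~\ref{fineenough} (order-preserving extensions respect $\approx_+$ and $\approx_-$), Lemma~\ref{staradd} (shifts by $*$ pull out of $\tilde{f}$), and Theorem~\ref{plethora}(d) (sides are additive).

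First I would handle the bookkeeping. Equivalent games share parity by Theorem~\ref{sameparity}, so $G_i^\pm$ has the same parity as $G_i$, which matches $e_i$; hence $G_i^\pm + e_i$ is even-tempered, and it is an i-game by Theorem~\ref{closure} (using that $*$ is an i-game). A similar observation shows $e + e \approx 0$, since each $e_i + e_i$ is either $0$ or $* + * \approx 0$.

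Next I would compute $A$. Iterating Lemma~\ref{staradd} gives
\[ \tilde{f}(G_1^+ + e_1, \ldots, G_k^+ + e_k) = \tilde{f}(G_1^+, \ldots, G_k^+) + e. \]
Because $G_i \approx_+ G_i^+$ for each $i$, Theorem~\ref{fineenough} yields $\tilde{f}(G_1,\ldots,G_k) \approx_+ \tilde{f}(G_1^+,\ldots,G_k^+)$, and taking upsides (which are i-games, so that $\approx_+$ coincides with $\approx$ by Proposition~\ref{allthesame}) gives $\tilde{f}(G_1,\ldots,G_k)^+ \approx \tilde{f}(G_1^+,\ldots,G_k^+)^+$. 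Since $e$ is an i-game, $e^+ \approx e$ by Theorem~\ref{plethora}(b), and Theorem~\ref{plethora}(d) then gives
\[ A \approx \tilde{f}(G_1^+,\ldots,G_k^+)^+ + e \approx \tilde{f}(G_1,\ldots,G_k)^+ + e. \]
The mirrored argument produces $B \approx \tilde{f}(G_1,\ldots,G_k)^- + e$. From $\tilde{f}(G_1,\ldots,G_k)^+ \gtrsim \tilde{f}(G_1,\ldots,G_k)^-$ (Theorem~\ref{plethora}(e)) we get $A \gtrsim B$, so $A\&B$ is a legitimate element of the quotient monoid by Theorem~\ref{allpairs-early}.

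Finally, Theorem~\ref{plethora}(d) yields
\[ (A\&B + e)^+ \approx A + e \approx \tilde{f}(G_1,\ldots,G_k)^+ + (e + e) \approx \tilde{f}(G_1,\ldots,G_k)^+, \]
and analogously $(A\&B + e)^- \approx \tilde{f}(G_1,\ldots,G_k)^-$. Matching sides establishes $\tilde{f}(G_1,\ldots,G_k) \approx A\&B + e$. The main obstacle is purely cosmetic: keeping track of the parity-correcting $e_i$'s through the calculation. All the genuinely nontrivial content has already been packaged into Theorems~\ref{fineenough}, \ref{plethora}, and \ref{allpairs-early}.
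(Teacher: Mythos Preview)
Your argument is correct and uses the same ingredients as the paper's proof (Theorem~\ref{fineenough}, Lemma~\ref{staradd}, Theorem~\ref{plethora}), just organized differently: the paper first treats the all-even-tempered case and then reduces the general case to it via $H_i := G_i + e_i$, while you compute both sides directly without a case split. Your appeal to Theorem~\ref{allpairs-early} is unnecessary (and is a forward reference at this point in the paper)---the game $\tilde{f}(G_1,\ldots,G_k) + e$, whose sides you have already computed to be $A$ and $B$, itself witnesses the existence of $A\&B$.
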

\begin{proof}
First consider the case where every $G_i$ is even-tempered, so that all the $e_i$ vanish.
Then we need to show that
\[ \tilde{f}(G_1,\ldots,G_k)^+ \approx A\]
\[ \tilde{f}(G_1,\ldots,G_k)^- \approx B\]
or equivalently,
\[ \tilde{f}(G_1,\ldots,G_k) \approx_+ \tilde{f}(G_1^+,G_2^+,\ldots,G_k^+)\]
\[ \tilde{f}(G_1,\ldots,G_k) \approx_- \tilde{f}(G_1^-,G_2^-,\ldots,G_k^-).\]
But these follow directly from Theorem~\ref{fineenough} in the case where
$\Box$ is $\approx_\pm$, since $G_i \approx_\pm G_i^\pm$ for all $i$.

Now suppose that some of the $G_i$ are odd-tempered.  Since $*$ is an i-game,
every $G_i^\pm + e_i$ is an i-game too, and is $S_i$-valued because
$G_i^\pm$ is $S_i$-valued and $e_i$ is $\{0\}$-valued.  Now
$G_i$, $G_i^\pm$, and $e_i$ all have the same parity, so
$G_i^\pm + e_i$ will be even-tempered $S_i$-valued i-games.

Letting $H_i = G_i + e_i$, we see that $H_i$ is an even-tempered $S_i$-valued
game for every $i$, and that $H_i^\pm \approx G_i^\pm + e_i$ because
$e_i$ is an i-game.  Now $0 + 0 = 0$, and $* + * \approx 0$ (as $*$ equals
its own negative), so $G_i \approx H_i + e_i$ for every $i$.  Then
by Theorem~\ref{fineenough} and repeated applications of Lemma~\ref{staradd}, we see that
\[ \tilde{f}(G_1,\ldots,G_k) \approx
\tilde{f}(H_1 + e_1, \ldots, H_k + e_k) = \tilde{f}(H_1, \ldots, H_k) + (e_1 + \cdots + e_k).\]
But by the even-tempered case that we just proved,
\[ \tilde{f}(H_1,\ldots,H_k) \approx A\&B.\]
\end{proof}

\section{Preservation of i-games}\label{sec:presigame}
By Corollary~\ref{determined}, any order-preserving extension $\tilde{f}$ is determined
by two maps on even-tempered i-games, one that sends
\[ (G_1,\ldots,G_k) \to \tilde{f}(G_1,\ldots,G_k)^+\]
and one that sends
\[ (G_1,\ldots,G_k) \to \tilde{f}(G_1,\ldots,G_k)^-.\]
In this section we show that in fact $\tilde{f}(G_1,\ldots,G_k)$ will always be an
i-game itself, so that these two maps in fact agree.\footnote{In this way the theory diverges
from the case of loopy partizan games, where there are distinct upsums and downsums used
to add onsides and offsides.}  Thus every order-preserving map $f$
induces a single map on equivalence classes of even-tempered i-games, and this map determines the action
of $f$ on all games.

We first prove that i-games are preserved for the case where $f$ is unary, and use it to answer a question from
a previous chapter: for which $A \gtrsim B$ does $A\&B$ exist?

\begin{lemma}\label{distortions}
Let $S$ be a set of integers, and $f:S\to \mathbb{Z}$ be weakly order-preserving.
Then for any $S$-valued i-game $G$, $\tilde{f}(G)$ is an i-game.
\end{lemma}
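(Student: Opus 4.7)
The plan is to induct on the structure of $G$. For the base case, if $G$ equals a number $n \in S$, then by Definition~\ref{def:extension}, $\tilde{f}(G) = f(n)$ is a number, hence trivially an i-game.

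For the inductive step, suppose $G = \langle L_1, L_2, \ldots \mid R_1, R_2, \ldots \rangle$ is an $S$-valued i-game. Every option of $G$ is then an $S$-valued i-game (by the definition of i-game). By the inductive hypothesis, each $\tilde{f}(L_i)$ and each $\tilde{f}(R_j)$ is an i-game. Since $\tilde{f}(G) = \langle \tilde{f}(L_1), \ldots \mid \tilde{f}(R_1), \ldots \rangle$ by Definition~\ref{def:extension}, every option of $\tilde{f}(G)$ is an i-game. Moreover, the parity of $\tilde{f}(G)$ matches the parity of $G$, as is clear from the recursive construction (the option structure is preserved verbatim).

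It remains to verify the outcome condition: if $\tilde{f}(G)$ is even-tempered, then $\loutcome(\tilde{f}(G)) \ge \routcome(\tilde{f}(G))$. Since $\tilde{f}(G)$ and $G$ have the same parity, $G$ is then also even-tempered, and because $G$ is an i-game we have $\loutcome(G) \ge \routcome(G)$. Applying Lemma~\ref{outcomeslide} and the hypothesis that $f$ is weakly order-preserving,
\[
\loutcome(\tilde{f}(G)) = f(\loutcome(G)) \ge f(\routcome(G)) = \routcome(\tilde{f}(G)),
\]
which completes the induction.

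There is no serious obstacle here: the argument is essentially a direct combination of the recursive definitions with Lemma~\ref{outcomeslide}. The only point worth flagging is the need to verify that parity is preserved under $\tilde{f}$, so that the even-tempered check can be transferred from $\tilde{f}(G)$ back to $G$; this is immediate from the fact that $\tilde{f}$ merely relabels the terminal numbers and leaves the tree structure of options untouched.
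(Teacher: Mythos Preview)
Your proof is correct and follows essentially the same approach as the paper: induct on $G$, use the inductive hypothesis to handle the options, and invoke Lemma~\ref{outcomeslide} together with the order-preserving property of $f$ to verify the even-tempered outcome inequality. The paper's version is simply more terse, leaving the base case and parity preservation implicit.
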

\begin{proof}
By induction, we only need to show that if $G$ is even-tempered, then
\[ \loutcome(\tilde{f}(G)) \ge \routcome(\tilde{f}(G)),\]
which follows by Lemma~\ref{outcomeslide} and the fact that $\loutcome(G) \ge \routcome(G)$.
\end{proof}
\begin{lemma}\label{sidedistort}
If $G$ is an $S$-valued game and $f:S \to \mathbb{Z}$ is order-preserving, then
\[ \tilde{f}(G)^+ \approx \tilde{f}(G^+)\]
\[ \tilde{f}(G)^- \approx \tilde{f}(G^-)\]
where we take $G^\pm$ to be $S$-valued.
\end{lemma}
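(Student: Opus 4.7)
The plan is to deduce this immediately from Theorem~\ref{fineenough}, Lemma~\ref{distortions}, and the characterization of sides from Theorem~\ref{sides-complete} and Theorem~\ref{plethora}(b). By symmetry, I will only treat the upside claim; the downside claim is completely analogous.

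First I would observe that $G \approx_+ G^+$ by the very definition of the upside. Since $f$ is order-preserving, Theorem~\ref{fineenough} (applied to $\tilde f$, a unary extension, with $\Box$ taken to be $\approx_+$) yields
\[
\tilde f(G) \approx_+ \tilde f(G^+).
\]
Next, since $G^+$ is an $S$-valued i-game, Lemma~\ref{distortions} tells us that $\tilde f(G^+)$ is itself an i-game. Thus $\tilde f(G^+)$ is an i-game which is $\approx_+$-equivalent to $\tilde f(G)$.

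Now by Theorem~\ref{sides-complete}, the upside of $\tilde f(G)$ is the unique (up to $\approx$) i-game which is $\approx_+$-equivalent to $\tilde f(G)$, and $\tilde f(G^+)$ satisfies both requirements. Therefore $\tilde f(G)^+ \approx \tilde f(G^+)$, as desired. (Equivalently: from $\tilde f(G) \approx_+ \tilde f(G^+)$ we obtain $\tilde f(G)^+ \approx_+ \tilde f(G^+)^+$, and since both sides are i-games, Proposition~\ref{allthesame} upgrades this to $\approx$; then Theorem~\ref{plethora}(b) gives $\tilde f(G^+)^+ \approx \tilde f(G^+)$ since $\tilde f(G^+)$ is an i-game, completing the chain.)

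There is no real obstacle here: all the heavy lifting was done in Theorem~\ref{fineenough} (compatibility of order-preserving extensions with $\approx_\pm$) and in Lemma~\ref{distortions} (preservation of the i-game property under unary order-preserving extensions). The only thing to check carefully is the side stipulation that $G^\pm$ may be taken $S$-valued so that $\tilde f(G^\pm)$ is well-defined; this is exactly Theorem~\ref{plethora}(g).
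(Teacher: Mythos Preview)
Your proof is correct and follows essentially the same approach as the paper's: both use $G \approx_\pm G^\pm$, apply Theorem~\ref{fineenough} to get $\tilde f(G) \approx_\pm \tilde f(G^\pm)$, and then invoke Lemma~\ref{distortions} to see that $\tilde f(G^\pm)$ is an i-game. You spell out the concluding step (via uniqueness in Theorem~\ref{sides-complete} or equivalently via Proposition~\ref{allthesame} and Theorem~\ref{plethora}(b)) in more detail than the paper, which simply says ``the desired result follows,'' but the argument is the same.
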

\begin{proof}
We have $G^\pm \approx_\pm G$, so by Theorem~\ref{fineenough},
\[ \tilde{f}(G) \approx_\pm \tilde{f}(G^\pm).\]
But by Lemma~\ref{distortions}, $\tilde{f}(G^\pm)$ is an i-game because $G^\pm$ is.
So the desired result follows.
\end{proof}

We now complete the description of $\mathcal{W}_\mathbb{Z}$ in terms of i-games:
\begin{lemma}
If $A$ is an i-game (necessarily even-tempered) with $A \gtrsim 0$, then there is some $\mathbb{Z}$-valued game
$H$ with $H^- \approx 0$ and $H^+ \approx A$.
\end{lemma}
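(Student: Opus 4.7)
The plan is to proceed by induction on the complexity of $A$, with the integer case as the base and a reduction to the subcase $\routcome(A) = 0$.

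For the integer base case, if $A \approx n$ with $n \ge 0$, set $H = \langle * \mid n+* \rangle$. Both options are odd-tempered i-games, $\loutcome(H) = \routcome(*) = 0$, and $\routcome(H) = \loutcome(n+*) = n$ (by Proposition~\ref{numavoid}); since $0 \le n$ and all options are i-games, Lemma~\ref{gap} gives $H^- \approx 0$ and $H^+ \approx n \approx A$. For general $A$, let $n = \routcome(A) \ge 0$ and $A' = A - n$. Then $A'$ is an even-tempered i-game with $\routcome(A') = 0$ and $A' \gtrsim 0$; if $H'$ realizes the lemma for $A'$ and $K$ is the base-case game for $n$, then $H = H' + K$ has $H^+ \approx A' + n \approx A$ and $H^- \approx 0$ by Theorem~\ref{plethora}(d). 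It thus suffices to handle the case $\routcome(A) = 0$.

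For this core case, with $A \not\approx 0$ written in canonical form $A = \langle A^L_i \mid A^R_j \rangle$ (each $A^L_i, A^R_j$ an odd-tempered i-game, each $\loutcome(A^R_j) \ge 0$), I would construct $H$ recursively. The key observation is that no valid $H$ can have every option be an i-game: otherwise $\loutcome(H) = 0 = \routcome(H)$ would force $H \approx 0$ via Lemma~\ref{gap}, contradicting $H^+ \approx A \not\approx 0$. Accordingly, $H$ is built using odd-tempered non-i-game options constructed recursively, each of whose $+$-side reproduces the corresponding option of $A$ (so Theorem~\ref{monotone} applied to $\approx_+$ gives $H^+ \approx \langle A^L_i \mid A^R_j \rangle = A$), and each of whose $-$-side is equivalent to $*$ (so Theorem~\ref{monotone} applied to $\approx_-$ gives $H^- \approx \langle * \mid * \rangle \approx 0$, using that $\langle * \mid * \rangle$ is an even-tempered i-game with vanishing outcomes).

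The main obstacle is the recursive construction of these auxiliary odd-tempered games with prescribed upside and downside $*$, which amounts to a parity-shifted version of the lemma that must be proved jointly. Verification proceeds by tracking $H^+$ and $H^-$ through the game tree using Corollary~\ref{determined}, Theorem~\ref{plethora}, and Lemma~\ref{gap}, carefully ensuring that the non-i-game options persist through the upside recursion and yield $A$ rather than collapsing (via Lemma~\ref{gap}) to the integer $\routcome(A) = 0$.
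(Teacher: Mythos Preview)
Your inductive scheme breaks at the recursive step.  You want, for each option $A^{L}$ (and $A^{R}$) of $A$, an odd-tempered game whose upside is $A^{L}$ and whose downside is $*$.  By Theorem~\ref{plethora}(e) this forces $A^{L}\gtrsim *$ and $A^{R}\gtrsim *$.  But the hypothesis $A\gtrsim 0$ does not propagate to the options in this way.  Concretely, take
\[
A=\bigl\langle\,\langle -1\mid 1\rangle \,\bigm|\, * \,\bigr\rangle.
\]
This is an even-tempered i-game with $\loutcome(A)=1$, $\routcome(A)=0$, hence $A\gtrsim 0$, $A\not\approx 0$, and $\routcome(A)=0$.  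Its sole left option is $\langle -1\mid 1\rangle$, and one checks directly that $\routcome(\langle -1\mid 1\rangle + *)=-1<0$, so $\langle -1\mid 1\rangle\not\gtrsim *$.  No odd-tempered game can have upside $\langle -1\mid 1\rangle$ and downside $*$, so your option for $H$ cannot be built.  Invoking ``canonical form'' does not help: no such form is available at this point in the development, and even in the partizan picture (via $\psi$) this $A$ corresponds to $\uparrow=\{0\mid *\}$, whose canonical left option $0$ satisfies $0\not\ge *$.

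The paper avoids this obstruction by not recursing on the options of $A$ at all.  It performs a single global substitution at the \emph{leaves}: every positive number $n$ occurring in $A$ is replaced by $D_{0,n}=\langle *\mid n+*\rangle$ (your base-case game), leaving nonpositive leaves untouched.  On the $+$ side each $D_{0,n}$ acts as $n$, so $H^{+}\approx A$ by the $\approx_{+}$ case of Theorem~\ref{monotone}.  On the $-$ side each $D_{0,n}$ acts as $0$, so $H^{-}$ is the downside of the game obtained by zeroing all positive leaves of $A$; this is exactly $\tilde f(A)$ for $f(x)=\min(x,0)$, which is an i-game with both outcomes $0$ (since $A\gtrsim 0$ forces $\loutcome(A),\routcome(A)\ge 0$), hence $\approx 0$ by Corollary~\ref{restatement}.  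The point is that working at the leaves imposes no positivity condition on intermediate subpositions, which is precisely where your induction stalls.
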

\begin{proof}
For any integers $n \le m$, let $D_{n,m}$ denote the even-tempered game $\langle n + * \,|\, m + *\rangle$.
The reader can verify from the definitions that $D_{n,m}^+ = m$ and $D_{n,m}^- = n$, so that $D_{n,m}$ acts like either
$n$ or $m$ depending on its context.

We create $H$ from $A$ by substituting $D_{0,n}$ for every positive number $n$ occurring within $A$.
As $D_{0,n}^+ = n$, it follows by an inductive argument using the $\approx_+$ case of Theorem~\ref{monotone} that $A$ is still the upside
of $H$.

It is also clear by the $\approx_-$ case of Theorem~\ref{monotone} that the downside of $H$ is the downside of the game obtained by replacing
every positive number in $A$ with $0$.  Letting $f(n) = \min(n,0)$, this game
is just $\tilde{f}(A)$.  So $H^- \approx f(A)^-$.  But $A \approx_- A^-$, so by Theorem~\ref{fineenough} $f(A) \approx_- f(A^-)$.
Then by Lemma~\ref{distortions}, $f(A^-)$ is an i-game, so $f(A)^- \approx f(A^-)$.  Thus
\[ H^- \approx f(A)^- \approx f(A^-) \approx f(A).\]
Moreover, since $A \gtrsim 0$, Corollary~\ref{restatement}
implies that $\routcome(A) \ge 0$.  As an even-tempered i-game, $\loutcome(A) \ge \routcome(A) \ge 0$.
Therefore $f(\loutcome(A)) = f(\routcome(A)) = 0$.  By Lemma~\ref{outcomeslide}, it then follows
that $\loutcome(f(A)) = \routcome(f(A)) = 0$.  Then by Corollary~\ref{restatement},
$f(A) \approx 0$, so $H^- \approx 0$ and we are done.
\end{proof}
Using this, we see that all possible pairs $A \& B$ occur:
\begin{theorem}\label{inorder2}
If $A$, $B$ are i-games with $A \gtrsim B$, then there is some $\mathbb{Z}$-valued game
$G$ with $G^+ \approx A$ and $G^- \approx B$.
\end{theorem}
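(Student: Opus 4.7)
The plan is to reduce the general claim to the special case already handled by the preceding lemma, using the group structure on i-games (Corollary~\ref{restatement}, Theorem~\ref{closure}) together with the additivity of sides (Theorem~\ref{plethora}(d)).

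First I would form the difference $C := A + (-B)$. Since $A$ and $B$ are i-games and i-games form a partially ordered abelian group modulo $\approx$ (Theorem~\ref{closure} and Corollary~\ref{restatement}), $C$ is again an i-game; and because $A \gtrsim B$ we have $C \gtrsim 0$. The preceding lemma then supplies a $\mathbb{Z}$-valued game $H$ with $H^- \approx 0$ and $H^+ \approx C$.

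Next I would set $G := H + B$. Using Theorem~\ref{plethora}(d), which says that taking upsides and downsides is additive up to $\approx$, we get
\[ G^+ \approx H^+ + B^+ \quad \text{and} \quad G^- \approx H^- + B^-. \]
Because $B$ is itself an i-game, Theorem~\ref{plethora}(b) gives $B^+ \approx B \approx B^-$. Hence
\[ G^+ \approx C + B \approx (A + (-B)) + B \approx A \quad \text{and} \quad G^- \approx 0 + B \approx B, \]
using that $(-B) + B \approx 0$ (again by Corollary~\ref{restatement}, since $B$ is an i-game and hence invertible). This is exactly what was required.

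There is no real obstacle here: the preceding lemma does all the heavy lifting by producing a game whose two sides are $0$ and a prescribed nonnegative i-game, and the only work left is to translate that by $B$ and check that the sides transform additively, which is Theorem~\ref{plethora}(d). The one thing to be careful about is keeping track that $B$ itself being an i-game makes $B^\pm \approx B$ so that the translation really moves both sides by the same thing; this is why the argument needs $B$ to be an i-game rather than an arbitrary game.
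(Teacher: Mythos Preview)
Your proposal is correct and follows essentially the same route as the paper: form the nonnegative i-game $A-B$, invoke the preceding lemma to get $H$ with $H^+\approx A-B$ and $H^-\approx 0$, then translate by $B$ and use additivity of sides (Theorem~\ref{plethora}(d)) together with $B^\pm\approx B$. If anything, you are more explicit than the paper about citing the supporting results.
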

\begin{proof}
Since $A - B \gtrsim 0$, we can produce a game $H$ with $H^+ \approx A - B$
and $H^- \approx 0$ by the lemma.  Then letting $G = H + B$, we have
\[ G^+ \approx H^+ + B^+ \approx A - B + B \approx A,\]
\[ G^- \approx H^- + B^- \approx 0 + B = B.\] 
\end{proof}
Moreover, we can refine this slightly:
\begin{theorem}\label{allpairs}
Let $\mathcal{S} \subseteq \mathbb{Z}$ and let $A$ and $B$ be $\mathcal{S}$-valued i-games
with $A \gtrsim B$.  Then there is some $\mathcal{S}$-valued game $G$ with
$G^+ \approx A$ and $G^- \approx B$.
\end{theorem}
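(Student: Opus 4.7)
The plan is to bootstrap off Theorem~\ref{inorder2}, which gives the existence of \emph{some} $\mathbb{Z}$-valued game with the desired sides, and then push that game into $\mathcal{W}_\mathcal{S}$ via a weakly order-preserving distortion that is designed to leave $A$ and $B$ untouched. By Theorem~\ref{inorder2} we obtain a $\mathbb{Z}$-valued game $G_{0}$ with $G_{0}^{+}\approx A$ and $G_{0}^{-}\approx B$; the only thing that can go wrong is that $G_{0}$ may contain integer leaf values that lie outside $\mathcal{S}$.

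To repair this, let $V$ be the (finite, nonempty) set of all integers that appear as positions (numbers) inside the fixed $\mathcal{S}$-valued forms of $A$ and $B$. By hypothesis $V\subseteq\mathcal{S}$. Define
\[
f\colon\mathbb{Z}\to V,\qquad f(x)=\max\{v\in V:v\le x\}\ \text{if this set is nonempty},\ \text{and}\ f(x)=\min V\ \text{otherwise}.
\]
A routine case check shows that $f$ is weakly order-preserving, that $f$ takes all of its values in $V\subseteq\mathcal{S}$, and crucially that $f$ restricts to the identity on $V$. Setting $G=\tilde{f}(G_{0})$, Definition~\ref{def:extension} makes $G$ an $\mathcal{S}$-valued game of the same parity as $G_{0}$.

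It remains to verify that $G$ has the right sides. By Lemma~\ref{sidedistort}, $G^{+}\approx\tilde{f}(G_{0}^{+})$ and $G^{-}\approx\tilde{f}(G_{0}^{-})$, where we choose $\mathbb{Z}$-valued representatives of $G_{0}^{\pm}$. Since $G_{0}^{+}\approx A$ and $G_{0}^{-}\approx B$, Theorem~\ref{fineenough} applied to the unary order-preserving map $f$ gives $\tilde{f}(G_{0}^{+})\approx\tilde{f}(A)$ and $\tilde{f}(G_{0}^{-})\approx\tilde{f}(B)$. Now the key observation: because every position of the chosen form of $A$ is $V$-valued and $f$ acts as the identity on $V$, a straightforward induction on the structure of $A$ using the recursive definition of $\tilde{f}$ shows $\tilde{f}(A)\equiv A$ as forms, and similarly $\tilde{f}(B)\equiv B$. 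Therefore $G^{+}\approx A$ and $G^{-}\approx B$, as required.

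The only real obstacle is designing the distortion $f$; the two competing demands are that $f$ must land entirely in $\mathcal{S}$ (so that $\tilde{f}(G_{0})\in\mathcal{W}_{\mathcal{S}}$) and that $f$ must be the identity on every integer appearing anywhere inside $A$ or $B$ (so that $\tilde{f}$ passes $A$ and $B$ through unchanged). The ``round down into $V$'' construction above satisfies both constraints simultaneously; once it is in place, Lemma~\ref{sidedistort}, Theorem~\ref{fineenough}, and Theorem~\ref{plethora}(g) do all the remaining work.
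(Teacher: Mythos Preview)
Your proof is correct and follows essentially the same approach as the paper: start from the $\mathbb{Z}$-valued $G_0$ supplied by Theorem~\ref{inorder2}, then push it into $\mathcal{W}_{\mathcal{S}}$ via a weakly order-preserving retraction $f$ that fixes the values appearing in $A$ and $B$, invoking Lemma~\ref{sidedistort} and Theorem~\ref{fineenough} to check the sides. The only cosmetic difference is that the paper retracts onto all of $\mathcal{S}$ (nearest element of $\mathcal{S}$) rather than onto the finite set $V$ of values actually used in $A$ and $B$; either choice works for the same reason.
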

\begin{proof}
By the previous theorem, we can construct a game $G_0$ with $G_0^+ \approx A$
and $G_0^- \approx B$.  Let $f\,:\,\mathbb{Z} \to \mathbb{Z}$
be a weakly order-preserving function that projects $\mathbb{Z}$ onto $\mathcal{S}$.
That is, $f \circ f = f$, and $f(\mathbb{Z}) = \mathcal{S}$.  We can construct such an
$f$ by sending every integer to the closest element of $\mathcal{S}$, breaking ties arbitrarily.
(Note that by existence of $A$ and $B$, $\mathcal{S}$ cannot be empty.)  Then by Lemma~\ref{sidedistort}
and Theorem~\ref{fineenough},
\[ f(G_0)^+ \approx f(G_0^+) \approx f(A) = A\]
and
\[ f(G_0)^- \approx f(G_0^-) \approx f(B) = B,\]
so taking $G = f(G_0)$, we have $G \in \mathcal{W}_\mathcal{S}$, and $G^+ \approx A$ and $G^- \approx B$.
\end{proof}
So for any $\mathcal{S} \subseteq \mathbb{Z}$, the $\mathcal{S}$-valued games modulo $\approx$ are in
one-to-one correspondence with the pairs $(a,b) \in \mathcal{I}_\mathcal{S} \times \mathcal{I}_\mathcal{S}$
for which $a \gtrsim b$, where $\mathcal{I}_\mathcal{S}$ is the $\mathcal{S}$-valued i-games modulo $\approx$.
In particular, $A \& B$ exists and can be $S$-valued whenever $A$ and $B$ are $S$-valued i-games with $A \gtrsim B$.

We now return to proving that order-preserving extensions preserve i-games:
\begin{theorem}\label{generaldistortions}
Let $S_1, \ldots,  S_k, T$ be subsets of $\mathbb{Z}$, $f:S_1 \times \cdots \times S_k \to T$
be order-preserving, and $\tilde{f}$ be the extension of $f$ to
$\mathcal{W}_{S_1} \times \cdots \times \mathcal{W}_{S_k} \to \mathcal{W}_T$.  If
$G_1, \ldots, G_k$ are i-games, with $G_i \in \mathcal{W}_{S_i}$, then
$\tilde{f}(G_1,\ldots,G_k)$ is also an i-game.  Moreover, if
$H_1,\ldots, H_k$ are general games, with $H_i \in \mathcal{W}_{S_i}$, then
\[ (\tilde{f}(H_1,\ldots,H_k))^+ \approx \tilde{f}(H_1^+,H_2^+,\ldots,H_k^+)\]
and
\[ (\tilde{f}(H_1,\ldots,H_k))^- \approx \tilde{f}(H_1^-,H_2^-,\ldots,H_k^-).\]
\end{theorem}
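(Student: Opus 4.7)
The plan is to prove the two assertions together, reducing the side-identities to the i-game assertion via Theorem~\ref{fineenough}. Indeed, since $H_i \approx_+ H_i^+$ and $H_i \approx_- H_i^-$, applying Theorem~\ref{fineenough} yields
\[ \tilde{f}(H_1,\ldots,H_k) \approx_+ \tilde{f}(H_1^+,\ldots,H_k^+) \qquad \text{and} \qquad \tilde{f}(H_1,\ldots,H_k) \approx_- \tilde{f}(H_1^-,\ldots,H_k^-). \]
Once we know that each $\tilde{f}(H_1^\pm,\ldots,H_k^\pm)$ is an i-game, Proposition~\ref{allthesame} together with Theorem~\ref{plethora} identifies it, modulo $\approx$, with the upside or downside of $\tilde{f}(H_1,\ldots,H_k)$. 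Thus everything reduces to the first assertion.

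For the i-game assertion, I would use strong induction on the total complexity of the tuple $(G_1,\ldots,G_k)$. If every $G_i$ is a number, then $\tilde{f}(G_1,\ldots,G_k)=f(G_1,\ldots,G_k)$ is itself a number, hence an i-game. Otherwise every option of $\tilde{f}(G_1,\ldots,G_k)$ has the form $\tilde{f}(G_1,\ldots,G_i^{L/R},\ldots,G_k)$, and since the options of an i-game are i-games, the inductive hypothesis forces each such option to be an i-game as well. What remains is the outcome condition: if $\tilde{f}(G_1,\ldots,G_k)$ is even-tempered, then $\loutcome(\tilde{f}(G_1,\ldots,G_k)) \ge \routcome(\tilde{f}(G_1,\ldots,G_k))$.

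This outcome inequality is the main obstacle, because the unary proof in Lemma~\ref{distortions} relied on the exact formula $\loutcome(\tilde{f}(G))=f(\loutcome(G))$ of Lemma~\ref{outcomeslide}, which has no clean multivariate analog. The plan is to imitate the proof of Theorem~\ref{closure}, where the intermediate value $\loutcome(G)+\routcome(H)$ supplies a witness satisfying $\routcome(G+H) \le \loutcome(G)+\routcome(H) \le \loutcome(G+H)$ by equations~(\ref{nueve}) and (\ref{doce}). By a joint induction patterned on Theorems~\ref{outsum} and \ref{outsum2}, I would establish a multivariate sandwich
\[ \loutcome(\tilde{f}(G_1,\ldots,G_k)) \ge f(a_1,\ldots,a_k) \ge \routcome(\tilde{f}(G_1,\ldots,G_k)), \]
for a tuple $(a_1,\ldots,a_k)$ in which each $a_i$ is drawn from $\{\loutcome(G_i),\routcome(G_i)\}$ according to the parity of $G_i$, mirroring the parity-dependent bounds of Theorems~\ref{outsum}--\ref{outsum2}. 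Order-preservation of $f$ lets us pass from coordinatewise inequalities to inequalities between $f$-values, and the delicate step is the parity bookkeeping: when a player moves in the $i$th component, the parity of $G_i$ flips and the prescription for $a_i$ must be updated accordingly. Careful case analysis along the lines of the parity gymnastics in Theorems~\ref{outsum}--\ref{outsum2} should complete the argument.
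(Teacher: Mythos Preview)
Your reduction of the side-identities to the i-game assertion via Theorem~\ref{fineenough} is correct and matches the paper exactly.

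The gap is in the i-game assertion. A sandwich value $f(a_1,\ldots,a_k)$ with each $a_i\in\{\loutcome(G_i),\routcome(G_i)\}$ chosen ``according to the parity of $G_i$'' need not exist. Take $k=2$, $f=+$, and $G_1=G_2=\langle -1|1\rangle$: both are odd-tempered i-games and $G_1+G_2\approx 0$, so $\loutcome(G_1+G_2)=\routcome(G_1+G_2)=0$; but any parity-driven rule must treat the two identical components identically and gives $f(a_1,a_2)\in\{-2,2\}$, never $0$. Already in the $k=2$ proof (Theorem~\ref{closure}) the both-odd sandwich value is the \emph{asymmetric} quantity $\loutcome(G)+\routcome(H)$, and that asymmetry comes from the distinguished role of $G$ in inequalities (\ref{ocho})--(\ref{once}), not from parity. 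You have not indicated how to designate a special component, or otherwise break this symmetry, when $k\ge 3$ with several odd-tempered factors; and the theorems you cite, Theorems~\ref{outsum}--\ref{outsum2}, do not even cover the both-odd case for $k=2$.

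The paper bypasses any sandwich by proving invertibility instead. Setting $g(x_1,\ldots,x_k)=-f(-x_1,\ldots,-x_k)$, it shows $\tilde f(G_1,\ldots,G_k)+\tilde g(-G_1,\ldots,-G_k)\approx 0$, whence Theorem~\ref{plethora}(f) gives equivalence to an i-game; since every subposition is again $\tilde f$ applied to i-games, a short lemma upgrades ``equivalent to an i-game'' into ``is an i-game''. The vanishing is obtained by comparing $h_2(x,y,z)=f(x)+g(y)+z$ with the penalty function $h_1(x,y,z)=\sum_i\delta(x_i+y_i)+z$ (where $\delta(t)=0$ for $t\ge 0$ and $\delta(t)=-M$ otherwise, $M$ chosen to dominate the range of $f$) via Lemma~\ref{boundingfunctions}: each $\tilde\delta(G_i-G_i)$ is an i-game with outcomes $(0,0)$ by Lemma~\ref{distortions} and hence $\approx 0$, so $\tilde h_1(G_1,\ldots,-G_1,\ldots,K)$ has the same outcome as $K$, and the pointwise bound $h_1\le h_2$ supplies one direction of the required equality; the other follows by symmetry.
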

In other words, $f$ preserves i-games, and interacts nicely with upsides and downsides.
\begin{proof}
The first claim is the more difficult to show.  It generalizes Theorem~\ref{closure} and Lemma~\ref{distortions}.
We first prove a slightly weaker form:

\begin{lemma}
If $(G_1, \ldots, G_k) \in \mathcal{W}_{S_1} \times \cdots \times \mathcal{W}_{S_k}$ are all i-games, then $\tilde{f}(G_1,\ldots,G_k) \approx H$
for some i-game $H \in \mathcal{W}_\mathbb{Z}$.
\end{lemma}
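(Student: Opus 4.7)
I proceed by strong induction on the combined complexity of the tuple $(G_1,\ldots,G_k)$, measured for instance by $\sum_i |G_i|$ where $|G_i|$ is the number of subpositions of $G_i$. In the base case all $G_i$ are numbers, so $\tilde f(G_1,\ldots,G_k) = f(G_1,\ldots,G_k)$ is itself a number, which is trivially an i-game.

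For the inductive step, let $L = \tilde f(G_1,\ldots,G_k)$. Every option of $L$ has the form $\tilde f(G_1,\ldots,G_i^\ast,\ldots,G_k)$, where $G_i^\ast$ is an option of $G_i$ and hence itself an i-game. Each such option is therefore $\tilde f$ applied to a tuple of i-games of strictly smaller combined complexity, so by the inductive hypothesis each option of $L$ is $\approx$-equivalent to an i-game. Applying the $\approx$ case of Theorem~\ref{monotone}, I may replace each option of $L$ by its i-game equivalent to obtain $L' \approx L$ whose options are literal i-games. If $L'$ is odd-tempered, or if $L'$ is even-tempered with $\loutcome(L') \ge \routcome(L')$, then $L'$ itself satisfies the in-form definition of an i-game, and I am done with $H = L'$.

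The only remaining case is $L'$ even-tempered with $\loutcome(L') < \routcome(L')$. Here Lemma~\ref{gap} would give $(L')^+ \approx \routcome(L') \ne \loutcome(L') \approx (L')^-$, so by Theorem~\ref{plethora}(f) $L$ would fail to be equivalent to any i-game. I must rule this case out by establishing the outcome inequality $\loutcome(L) \ge \routcome(L)$ whenever $L = \tilde f(G_1,\ldots,G_k)$ is even-tempered with i-game arguments, and this is the crux of the lemma. My plan is to imitate the proof of Theorem~\ref{closure} (which handled $f = {+}$ using Equations~(\ref{nueve}) and (\ref{doce})) and establish, for general order-preserving $\tilde f$, a pair of ``sandwich'' inequalities bounding $\loutcome(\tilde f(G_1,\ldots,G_k))$ from below and $\routcome(\tilde f(G_1,\ldots,G_k))$ from above by values of $f$ evaluated at mixtures of left and right stopping values of the $G_i$, analogous to Theorems~\ref{outsum} and \ref{outsum2}. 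Combined with the i-game hypothesis $\loutcome(G_i) \ge \routcome(G_i)$ on even-tempered arguments and the order-preserving property of $f$, these should squeeze $\loutcome(L) \ge \routcome(L)$. The main obstacle is proving these generalized stopping-value inequalities, which I expect to require a strategy-stealing argument adapted from the addition case along with a careful case analysis across the parities of the individual $G_i$'s; the parity bookkeeping is subtle because $\tilde f$ couples all components through a single function rather than adding independent scores.
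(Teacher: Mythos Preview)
Your inductive setup through the reduction to the even-tempered case with $\loutcome(L')<\routcome(L')$ is correct, but the acknowledged gap is essentially the whole lemma, and the paper does \emph{not} close it the way you propose.

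The paper avoids any generalized stopping-value inequalities entirely. Instead it uses the characterization of Theorem~\ref{plethora}(f): a game is equivalent to an i-game iff it is invertible. It then constructs an explicit candidate inverse, namely $\tilde g(-G_1,\ldots,-G_k)$ where $g(y_1,\ldots,y_k)=-f(-y_1,\ldots,-y_k)$, and proves
\[
\tilde f(G_1,\ldots,G_k)+\tilde g(-G_1,\ldots,-G_k)\approx 0.
\]
The proof of this equivalence compares the sum (plus an arbitrary $K$) with $\sum_i \tilde\delta(G_i-G_i)+K$, where $\delta$ is a large-penalty threshold function; each $\tilde\delta(G_i-G_i)\approx 0$ since $G_i$ is an i-game and $\tilde\delta$ preserves i-games (Lemma~\ref{distortions}). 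The comparison goes through Lemma~\ref{boundingfunctions} via a single pointwise inequality $h_1\le h_2$ between the combined scoring functions, which follows immediately from $f$ being order-preserving. No induction and no parity casework are needed.

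Your proposed route---generalizing equations~(\ref{siete})--(\ref{doce}) to $k$-ary $\tilde f$---runs into a real obstacle: the sandwich in Theorem~\ref{closure} works because commutativity of $+$ lets you swap the roles of $G$ and $H$ so that the upper bound on $\routcome$ and the lower bound on $\loutcome$ share the \emph{same} middle value $\loutcome(G)+\routcome(H)$. For a general order-preserving $f$ there is no such swap, so you would need to prove matching bounds for each placement of the distinguished argument and each parity pattern of the $G_i$, a $2^k$-fold case analysis whose base step is itself a multi-way mutual induction. This may be doable, but it is substantially harder than what you have written, and the paper's invertibility trick sidesteps all of it.
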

\begin{proof}
Since each $G_i$ is finite, it has only finitely many elements of $S_i$ as subpositions - so we can take finite
subsets $S'_i \subseteq S_i$ such that $G_i \in \mathcal{W}_{S_i'}$ for every $i$.  Restricting
$f$ from $S_i$ to $S'_i$, we can assume without loss of generality that $S_i = S_i'$ is finite.
Then there is some positive integer $M$ so that
\[ |f(x_1,\ldots,x_k)| < \frac{M}{2}\]
for every $(x_1,\ldots,x_k) \in S_1 \times \cdots \times S_k$.

For each $i$, let $Z_i = \{-s\,:\,s \in S\}$ and let $g:~Z_1\times \cdots \times Z_k \to \mathbb{Z}$ be the order
preserving function
\[ g(x_1,\ldots,x_k) = -f(-x_1,\ldots,-x_k).\]
I claim that
\begin{equation} \tilde{f}(G_1,\ldots,G_k) + \tilde{g}(-G_1,-G_2,\ldots,-G_k) \approx 0\label{temp0}\end{equation}
so that $\tilde{f}(G_1,\ldots,G_k) \approx$ an i-game by Theorem~\ref{plethora}(f).
To show (\ref{temp0}) we need to show that for any integer-valued game $K$,
\begin{equation} \outcome(\tilde{f}(G_1,\ldots,G_k) + \tilde{g}(-G_1,\ldots,-G_k) + K) = \outcome(K).\label{h2form}\end{equation}
We show that the left hand side is $\ge$ than the right hand side by essentially showing
that Left can play the sum $G_1 + \cdots + G_k + (-G_1) + \cdots + (-G_k) + K$ in such a way
that her score in each $G_i$ component outweighs the score in the corresponding
$-G_i$ component.  The other direction of the inequality follows by symmetry.

First of all, notice that since the $G_i$ are i-games by assumption, $G_i - G_i \approx 0$ for every
$i$ and so
\[ \outcome((G_1 - G_1) + \cdots + (G_k - G_k) + K) = \outcome(K).\]
Now we distort this sum by putting exorbitant penalties on Left for failing to
ensure that the final score of any of the $G_i - G_i$ components is $\ge 0$.  Specifically,
let $\delta:~ \mathbb{Z} \to \mathbb{Z}$ be the function given by
$\delta(x) = 0$ if $x \ge 0$, and $\delta(x) = -M$ if $x < 0$.  Then by Lemma~\ref{distortions}
$\delta(G_i - G_i)$ is an i-game because $G_i - G_i$ is.  Moreover, $\delta(G_i - G_i)$ must
have outcomes $(\delta(0),\delta(0)) = (0,0)$, so that by Lemma~\ref{numcompare} $\delta(G_i - G_i) \approx 0$.
Therefore,
\begin{equation} \outcome(\delta(G_1 - G_1) + \cdots + \delta(G_k - G_k) + K) = \outcome(K).\label{h1form}\end{equation}
Now let $h_1$ and $h_2$ be functions $S_1 \times \cdots \times S_k \times Z_1 \times \cdots \times Z_k \times \mathbb{Z} \to \mathbb{Z}$
given by
\[ h_1(x_1,\ldots,x_k,y_1,\ldots,y_k,z) = \delta(x_1 + y_1) + \delta(x_2 + y_2) + \cdots + \delta(x_k + y_k) + z\]
and
\[ h_2(x_1,\ldots,x_k,y_1,\ldots,y_k,z) = f(x_1,\ldots,x_k) + g(y_1,\ldots,y_k) + z\]
Then we can write (\ref{h1form}) as
\[ \outcome(\tilde{h_1}(G_1,\ldots,G_k,-G_1,\ldots,-G_k,K)) = \outcome(K).\]
Suppose we know that $h_1 \le h_2$ for all possible inputs.  Then Lemma~\ref{boundingfunctions}
implies that
\[ \outcome(\tilde{h_2}(G_1,\ldots,G_k,-G_1,\ldots,-G_k,K)) \ge \outcome(K)\]
which is just the $\ge$ direction of (\ref{h2form}).  By symmetry the $\le$ direction of (\ref{h2form}) also
follows and we are done.  So it remains to show that $h_1 \le h_2$, i.e.,
\begin{equation} \delta(x_1 + y_1) + \cdots + \delta(x_k + y_k) \le f(x_1, \ldots, x_k) + g(y_1,\ldots,y_k) \label{finalgoal}\end{equation}
for all $(x_1,\ldots,x_k,y_1,\ldots,y_k) \in S_1 \times \cdots \times S_k \times Z_1 \times \cdots \times Z_k$.

Suppose first that $x_i + y_i \ge 0$ for all $i$.  Then
$\delta(x_i + y_i) = 0$ for all $i$ so the left hand side of (\ref{finalgoal}) is zero.  On the other hand,
since $-y_i \le x_i$ for every $i$, and $f$ is order preserving,
\[ -g(y_1,\ldots,y_k) = f(-y_1,\ldots,-y_k) \le f(x_1,\ldots,x_k),\]
so (\ref{finalgoal}) holds.  Otherwise, $x_i + y_i < 0$ for some $i$, and so
the left hand side of (\ref{finalgoal}) is $\le -M$.  On the other hand,
the right hand side is at least $-M$, by choice of $M$ (and the fact that
the range of $g$ is also bounded between $-M$ and $M$).  Therefore
(\ref{finalgoal}) again holds, and we are done.
\end{proof}

Now the Lemma shows that $\tilde{f}(G_1,\ldots,G_k)$ is equivalent to an i-game.  We can easily use this to show
that it is in fact an i-game.  Note that every subposition of $\tilde{f}(G_1,\ldots,G_k)$
is of the form $\tilde{f}(G_1',\ldots,G_k')$ where $G_i'$ is a subposition of $G_i$ for every $i$.  By definition
of i-game, the $G_i'$ will also be equivalent to i-games, and so by the previous lemma
every subposition of $\tilde{f}(G_1,\ldots,G_k)$ is equivalent to an i-game.  So by the following lemma, $\tilde{f}(G_1,\ldots,G_k)$
is itself an i-game:

\begin{lemma}
If $G$ is an integer-valued game, and every subposition of $G$ is equivalent ($\approx$) to an i-game, then $G$ is an i-game.
\end{lemma}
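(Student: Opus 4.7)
The plan is to induct on the well-founded subposition relation to upgrade ``equivalent to an i-game'' into ``is an i-game.'' Concretely, I will show by structural induction that every subposition $P$ of $G$ is an i-game. Since the base case (a number) is trivially an i-game, and since every subposition of a subposition $P$ of $G$ is itself a subposition of $G$ (hence equivalent to an i-game by hypothesis), the inductive hypothesis will apply: every option of $P$ is a subposition, hence by induction an i-game, which handles the first clause in the definition of i-game.

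The second clause requires a small observation. If $P$ is even-tempered, we need $\loutcome(P) \ge \routcome(P)$. By assumption $P \approx H$ for some i-game $H$. Since parity is invariant under $\approx$ (Theorem~\ref{sameparity}), $H$ is also even-tempered, so by the definition of i-game $\loutcome(H) \ge \routcome(H)$. Setting $X = 0$ in the definition of $\approx$ gives $\outcome(P) = \outcome(H)$, and in particular $\loutcome(P) = \loutcome(H) \ge \routcome(H) = \routcome(P)$, as required. Applying this inductive argument to $G$ itself (which is a subposition of $G$) finishes the proof.

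There really is no main obstacle here; the lemma is essentially a bookkeeping step that leverages two facts already established, namely the parity invariance of $\approx$ and the evaluation of $\approx$ at $X=0$. The only subtle point is making sure the induction is structured correctly: one inducts on the subposition relation rather than directly on ``$G$'' to ensure that the inductive hypothesis (``equivalent to an i-game'') is inherited by all options, which it is precisely because subpositions of subpositions are subpositions.
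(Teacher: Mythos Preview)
Your proof is correct and follows essentially the same approach as the paper: both arguments reduce to checking that every even-tempered subposition $P$ satisfies $\loutcome(P)\ge\routcome(P)$, and both do so by invoking parity invariance under $\approx$ (Theorem~\ref{sameparity}) together with $\outcome(P)=\outcome(H)$ from $P\approx H$. The only cosmetic difference is that the paper unfolds the recursive definition of i-game in one step (``$G$ is an i-game iff every even-tempered subposition has $\loutcome\ge\routcome$''), whereas you set up an explicit structural induction; the content is the same.
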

\begin{proof}
$G$ is an i-game as long as every even-tempered subposition $G'$ satisfies $\loutcome(G') \ge \routcome(G')$.
But if $G'$ is an even-tempered subposition of $G$, then $G$ equals an i-game $H$ by assumption, and
$H$ is even-tempered by Theorem~\ref{sameparity}.  So $\loutcome(H) \ge \routcome(H)$.  But
$G' \approx H$ implies that $\outcome(G') = \outcome(H)$.
\end{proof}

For the last claim of Theorem~\ref{generaldistortions}, let $H_1, \ldots, H_k$ be general games
with $H_i \in \mathcal{W}_{S_i}$.  Then $H_i \lesssim_+ H_i^+$
and $H_i^+ \lesssim H_i$ for every $i$, so by Theorem~\ref{fineenough},
\[ \tilde{f}(H_1,\ldots,H_k) \lesssim_+ \tilde{f}(H_1^+,\ldots,H_k+)\]
and
\[ \tilde{f}(H_1^+,\ldots,H_k^+) \lesssim_+ \tilde{f}(H_1,\ldots,H_k)\]
so
\[ \tilde{f}(H_1,\ldots,H_k) \approx_+ \tilde{f}(H_1^+,\ldots,H_k^+).\]
But we just showed that $\tilde{f}(H_1^+,\ldots,H_k^+)$ is equivalent
to an i-game, so therefore
\[ \tilde{f}(H_1,\ldots,H_k)^+ \approx \tilde{f}(H_1^+,\ldots,H_k^+).\]
The proof of the claim for downsides is completely analogous.  This completes the proof
of Theorem~\ref{generaldistortions}.
\end{proof}

\chapter{Reduction to Partizan theory}\label{chap:psi}
\section{Cooling and Heating}
The following definition is an imitation of the standard cooling and heating operators for partizan games, defined and discussed on pages 102-108 of \emph{ONAG}
and Chapter 6 of \emph{Winning Ways}.
\begin{theorem}
If $G$ is a $\mathbb{Z}$-valued game and $n$ is an integer (possibly negative), we define $G$ \emph{cooled by $n$}, $G_n$,
to be $G$ is $G$ is a number, and otherwise $\langle (G^L)_n - n \,|\, (G^R)_n + n\rangle$, where
$G^L$ and $G^R$ range over the left and right options of $G$.  We define $G$ \emph{heated by $n$}
to be $G_{-n}$, $G$ cooled by $-n$.
\end{theorem}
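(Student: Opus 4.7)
The statement is in essence a recursive definition, so the content to verify is that the recursion is well-posed, that $G_n$ inherits the expected parity and membership in $\mathcal{W}_{\mathbb{Z}}$, and—most substantively—that $G_n$ descends to the quotient $\mathcal{W}_{\mathbb{Z}}/{\approx}$. The first two are routine inductions on the depth of the game tree of $G$: the base case is trivial since numbers serve as their own coolings, and in the inductive step each $(G^L)_n$ and $(G^R)_n$ exists as a $\mathbb{Z}$-valued game by hypothesis, integer translations preserve both membership in $\mathcal{W}_{\mathbb{Z}}$ and parity, and packaging the translated options into a bracket $\langle\cdot \mid \cdot\rangle$ yields a $\mathbb{Z}$-valued game of the same parity as $G$.

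The main content is showing that $G \approx H$ implies $G_n \approx H_n$. Theorem~\ref{fineenough} does not apply directly, because cooling is not the extension of any single order-preserving function on leaf scores: the $\pm n$ shifts are levied at every internal node rather than only at the terminal positions. My plan is to reduce to i-games via the upside/downside machinery. By Corollary~\ref{determined}, an operation on $\mathcal{W}_{\mathbb{Z}}/{\approx}$ is determined by its action on even-tempered i-games, so it suffices to establish the commutation identities $(G_n)^+ \approx (G^+)_n$ and $(G_n)^- \approx (G^-)_n$. Once these are in hand, equivalence transfers automatically: if $G^\pm \approx H^\pm$ then $(G_n)^\pm \approx (G^\pm)_n \approx (H^\pm)_n \approx (H_n)^\pm$, whence $G_n \approx H_n$.

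The hard part is establishing these commutation identities. Upsides and downsides are constructed by replacing outcome-gapped even-tempered subpositions with their outcomes, which is exactly the content of Lemma~\ref{gap}, and one must check that this collapse is compatible with the $\pm n$ taxation imposed by cooling. A fallback, if the inductive argument resists, is a direct strategy-matching argument: given an arbitrary test game $X$, translate any strategy for the left-final outcome in $G_n + X$ into one for $H_n + X$ by routing moves through the equivalence $G \approx H$ and tracking the accumulated taxes, which differ by a constant depending only on the total numbers of Left and Right moves in each summand—and in a well-tempered game these totals are determined by parity, so the bookkeeping closes up cleanly.
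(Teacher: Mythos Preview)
The statement you are asked to prove is a \emph{definition}, not a theorem with content; the paper gives no proof for it because none is needed beyond the routine well-foundedness check you sketch in your first paragraph. Your first paragraph is therefore already more than the paper does for this statement.

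Everything from your second paragraph onward is addressing a \emph{different} result, namely that cooling respects $\approx$ (and more generally $\lesssim_\pm$). In the paper this is Theorem~\ref{heating-preserves-sides}, stated and proved separately. The paper's argument there is far simpler than what you propose: it first records the elementary identities $(G+H)_n = G_n + H_n$ and $(G_m)_n = G_{m+n}$ together with the outcome formulas $\loutcome(G_n) = \loutcome(G)$, $\routcome(G_n) = \routcome(G)$ for even-tempered $G$ (and the shifted versions for odd-tempered $G$). Then the key observation is that for any test game $X$ one has $G_n + X = (G + X_{-n})_n$, so $\rfout(G_n + X)$ is determined by $\rfout(G + X_{-n})$ via a fixed monotone transformation depending only on parity. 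Since $X_{-n}$ ranges over all of $\mathcal{W}_{\mathbb{Z}}$ as $X$ does, the relation $G \lesssim_- H$ transfers immediately to $G_n \lesssim_- H_n$.

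Your route through upsides and downsides is not wrong in spirit, but it is backwards relative to the paper: the commutation $(G_n)^\pm \approx (G^\pm)_n$ is something the paper later \emph{derives from} Theorem~\ref{heating-preserves-sides} (see the proof that every even-tempered game is divisible by two), not a lemma used to prove it. You also do not actually establish the commutation identities; you only gesture at an induction and a fallback strategy-matching argument without carrying either out. The paper's additivity-plus-invertibility trick avoids all of this.
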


The following results are easily verified from the definition:
\begin{theorem}\label{coolfacts}
Let $G$ and $H$ be games, and $n$ and $m$ be integers.
\begin{description}
\item[(a)] $(-G)_n = -(G_n)$.
\item[(b)] $(G + H)_n = G_n + H_n$.
\item[(c)] If $G$ is even-tempered, then $\loutcome(G_n) = \loutcome(G)$ and $\routcome(G_n) = \routcome(G)$.
\item[(d)] If $G$ is odd-tempered, then $\loutcome(G_n) = \loutcome(G) - n$ and $\routcome(G_n) = \routcome(G) + n$.
\item[(e)] $(G_n)_m = G_{n+m}$.
\item[(f)] $G$ is an i-game iff $G_n$ is an i-game.
\end{description}
Note that there are no references to $\approx$.
\end{theorem}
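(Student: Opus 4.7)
The plan is to prove all six parts by straightforward induction on the recursive structure of $G$ (and, for (b) and (e), jointly on $G$ and $H$ or on the pair $(n,m)$ using the first argument). Each proof reduces the outermost cooling operator via its definition, applies the inductive hypothesis to the cooled options, and repackages the result. The main preliminary observation is that for any integer $k$ (viewed as a number-valued game), $k_n = k$ by definition, which together with (b) gives $(G + k)_n = G_n + k$; this is the trick that lets the shifts by $\pm n$ inside the cooling definition be pushed around freely. None of the parts is truly hard, but (b) is the one requiring the most care, since it is the basis for (e) and (f) and must be handled in the case where exactly one of $G$, $H$ is a number.

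First I would dispose of (a) by induction on $G$: in the number case it is trivial, and otherwise one computes $(-G)_n = \langle ((-G)^L)_n - n \mid ((-G)^R)_n + n\rangle = \langle -(G^R)_n - n \mid -(G^L)_n + n\rangle$ using the inductive hypothesis on the options, which is $-\langle (G^L)_n - n \mid (G^R)_n + n\rangle = -(G_n)$. Then (b) follows by joint induction: if either $G$ or $H$ is a number the definition of addition and cooling collapse the two expressions to the same thing; if neither is a number, one expands both $(G+H)_n$ and $G_n + H_n$ using the respective recursive definitions and sees that the left and right option sets match term-by-term after applying the inductive hypothesis.

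For (c) and (d) I would run a single induction keeping both parities in mind. When $G$ is a number, (c) is immediate and (d) is vacuous. Otherwise, the options of $G$ have the opposite parity, so the inductive hypothesis gives the stopping values of $(G^L)_n$ and $(G^R)_n$; the explicit shifts by $\pm n$ in the definition of $G_n$ then either cancel (when $G$ is even-tempered, because the options are odd-tempered and already carry a $\mp n$ shift in their own stopping values) or reinforce (when $G$ is odd-tempered, because the options are even-tempered and carry no shift). A short computation of $\loutcome(G_n) = \max_{G^L} \routcome((G^L)_n - n)$ then yields exactly the formulas claimed.

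Part (e) is another induction on $G$: if $G$ is a number both sides equal $G$, and otherwise $(G_n)_m$ unfolds to $\langle ((G^L)_n - n)_m - m \mid ((G^R)_n + n)_m + m \rangle$; part (b) applied to the number summand $\pm n$ lets me rewrite $((G^L)_n - n)_m$ as $((G^L)_n)_m - n$, and the inductive hypothesis then gives $((G^L)_n)_m = (G^L)_{n+m}$, so the expression collapses to $G_{n+m}$. Finally, (f) follows by induction using (c): a game is an i-game precisely when all of its subpositions are i-games and every even-tempered subposition has $\loutcome \ge \routcome$; by induction, the options of $G$ are i-games iff the options of $G_n$ are (since shifting by an integer and cooling both preserve the i-game property on options by inductive hypothesis together with Theorem~\ref{closure}), and by (c) the inequality $\loutcome \ge \routcome$ is preserved on even-tempered $G$ since both stopping values are unchanged by cooling. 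The only mild subtlety throughout is that cooling is defined on forms rather than values, so I will consistently use $=$ in the sense of identity of forms (as the theorem statement does) and not invoke $\approx$ anywhere.
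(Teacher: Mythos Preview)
Your proposal is correct and follows essentially the same inductive approach as the paper's proof, which likewise handles each part by unfolding the cooling definition and invoking the inductive hypothesis on options, using (b) to push the integer shifts through in (e). You are slightly more explicit than the paper in two places: in (b) you separately treat the case where exactly one of $G,H$ is a number (the paper's one-line computation leaves those option slots vacuous), and in (f) you note that Theorem~\ref{closure} is needed to conclude that the shifted options $(G^L)_n - n$ are i-games, which the paper sweeps into ``an easy induction using part (c).''
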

\begin{proof}
We proceed with inductive proofs in the style of \emph{ONAG}.  We mark the inductive
step with $\stackrel{!}{=}$.
\[ (-G)_n = \langle (-G^R)_n - n\,|\, (-G^L)_n + n \rangle \stackrel{!}{=}\]\[ \langle -(G^R_n) - n | -(G^L_n) + n \rangle
= -\langle (G^L)_n - n | (G^R)_n + n \rangle = -(G_n),\]
unless $G$ is a number, in which case (a) is obvious.
\[ (G + H)_n = \langle (G + H)^L_n - n | (G + H)^R_n + n \rangle =\]\[ \langle (G^L + H)_n - n,\,(G + H^L)_n - n\,|\,(G^R + H)_n + n,\, (G + H^R)_n + n \rangle \stackrel{!}{=} \]\[\langle G^L_n + H_n - n,\,G_n + H^L_n - n\,|\, G^R_n + H_n + n,\, G_n + H^R_n + n \rangle
= G_n + H_n,\]
unless $G$ and $H$ are both numbers, in which case (b) is obvious.
If $G$ is even-tempered, then
\[ \loutcome(G_n) = \max\{\routcome(G^L_n - n)\} \stackrel{!}{=} \max\{\routcome(G^L) + n - n\} = \max\{\routcome(G^L)\} = \loutcome(G),\]
unless $G$ is a number, in which case $\loutcome(G_n) = \loutcome(G)$ is obvious.  And similarly, $\routcome(G_n) = \routcome(G)$.
If $G$ is odd-tempered, then
\[ \loutcome(G_n) = \max\{\routcome(G^L_n - n)\} \stackrel{!}{=} \max\{\routcome(G^L) - n\} = \loutcome(G) - n,\]
and similarly $\routcome(G_n) = \routcome(G) + n$.
\[ (G_n)_m = \langle (G_n)^L_m - m \,|\, (G_n)^R_m + m \rangle = \langle (G^L_n - n)_m - m \,|\, (G^R_n + n)_m + m \rangle \stackrel{*}{=}\]\[
\langle (G^L_n)_m - (n + m) \,|\, (G^R_n)_m + (n + m) \rangle \stackrel{!}{=} \langle G^L_{n + m} - (n + m) \,|\, G^R_{n +m} + (n + m)\rangle =
G_{n + m},\]
unless $G$ is a number, in which case (e) is obvious.  Here the $\stackrel{*}{=}$ follows by part (b).

Finally, part (f) follows by an easy induction using part (c).
\end{proof}

Using these, we show that heating and cooling are meaningful modulo $\approx$ and $\approx_\pm$:
\begin{theorem}\label{heating-preserves-sides}
If $G$ and $H$ are games, $n \in \mathbb{Z}$, and $\Box$ is one of $\approx$, $\approx_\pm$, $\lesssim$,
$\lesssim_\pm$, etc., then $G\ \Box\ H \iff G_n\ \Box\ H_n$.
\end{theorem}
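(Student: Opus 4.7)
The plan is to exploit two features of cooling that are already recorded in Theorem~\ref{coolfacts}: it is an involutive bijection on games (part (e): $(G_n)_{-n} = G$), and it is additive (part (b): $(G+X)_n = G_n + X_n$). Because of (e), it suffices to prove only the implication $G \Box H \Rightarrow G_n \Box H_n$; the reverse direction follows by replacing $(G,H,n)$ with $(G_n, H_n, -n)$. Also, by Theorem~\ref{sameparity}, in every relation $\Box$ under consideration the games $G$ and $H$ have a common parity, so for any test game $X$ the sums $G+X$ and $H+X$ are of the same parity as well.

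The main step is to show that cooling by $n$ shifts each of $\loutcome$, $\routcome$, $\lfout$, $\rfout$ by an amount that depends only on the parity of the game. Parts (c) and (d) of Theorem~\ref{coolfacts} already say this for $\loutcome$ and $\routcome$: on even-tempered games cooling leaves both outcomes fixed, while on odd-tempered games it sends $\loutcome$ to $\loutcome - n$ and $\routcome$ to $\routcome + n$. From this and the definitions of $\lfout$ and $\rfout$ one reads off the same statement for these: $\lfout$ and $\rfout$ are each translated by a constant that depends only on the parity. Consequently, if $K_1$ and $K_2$ have the same parity, then for $\varphi$ any of $\loutcome, \routcome, \lfout, \rfout$ we have
\[ \varphi(K_1) \le \varphi(K_2) \iff \varphi((K_1)_n) \le \varphi((K_2)_n), \]
and likewise with $=$ in place of $\le$.

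Now for the implication itself: assume $G \Box H$ and let $Y \in \mathcal{W}_\mathbb{Z}$ be arbitrary. Set $X = Y_{-n}$, so that by (e), $X_n = Y$. By additivity (b),
\[ G_n + Y \;=\; G_n + X_n \;=\; (G + X)_n, \qquad H_n + Y \;=\; (H + X)_n. \]
Since $G + X$ and $H + X$ share a parity, the preceding paragraph converts the hypothesis $\outcome(G+X)\ \Box_0\ \outcome(H+X)$ (for the appropriate comparison $\Box_0$ on $\mathbb Z\times\mathbb Z$, read off from whichever of $\loutcome, \routcome, \lfout, \rfout$ governs $\Box$) into the corresponding comparison between $G_n+Y$ and $H_n+Y$. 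Since $Y$ was arbitrary, this gives $G_n \Box H_n$, completing the proof.

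The only potential subtlety is book-keeping across the various relations; but because each of $\lesssim, \lesssim_\pm, \approx, \approx_\pm$ is defined as a universally quantified condition on one of $\loutcome, \routcome, \lfout, \rfout$ of $G+X$ versus $H+X$, and because those four outcome functions all obey a parity-dependent translation rule under cooling, a single argument as above handles every case uniformly. No significant obstacle is expected: the proof is essentially a change of variables $X \leftrightarrow X_n$ combined with the transparency of cooling on outcomes.
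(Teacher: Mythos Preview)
Your proposal is correct and is essentially the same argument as the paper's: reduce to one direction using $(G_n)_{-n}=G$, substitute $X=Y_{-n}$ so that $G_n+Y=(G+X)_n$ by additivity, and then transfer the outcome inequality through cooling using Theorem~\ref{coolfacts}(c,d). The paper simply carries this out explicitly for $\lesssim_-$ and invokes symmetry for the other relations, while you phrase it uniformly in terms of the parity-dependent translation of $\loutcome,\routcome,\lfout,\rfout$; the content is the same.
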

\begin{proof}
It's enough to consider $\lesssim_-$ and $\lesssim_+$. By symmetry, we only consider
$\lesssim_-$.  By part (e) of the preceding theorem, cooling by $-n$ is exactly the inverse of
cooling by $n$, so we only show that $G \lesssim_- H \Rightarrow G_n \lesssim_- H_n$.
Let $G \lesssim_- H$.  Then $G$ and $H$ have the same parity.  Let $X$ be arbitrary.
Note that $G_n + X$ and $H_n + X$ are just $(G + X_{-n})_n$ and $(H + X_{-n})$.  By
assumption, $\rfout(G + X_{-n}) \le \rfout(H + X_{-n})$, so by part (c) or (d) of the previous theorem
(depending on the parities of $G, H,$ and $X$) we see that
\[ \rfout(G_n + X) = \rfout((G + X_{-n})_n) \le \rfout((H + X_{-n})_n) = \rfout(H_n + X).\]
Then since $X$ was arbitrary, we are done.
\end{proof}

So heating and cooling induce automorphisms of the commutative monoid $\mathcal{W}_\mathbb{Z}/\approx$
that we are interested in.

\begin{definition}
For $n \in \mathbb{Z}$, let $I_n$ be recursively defined set of $\mathbb{Z}$-valued games such that
$G \in I_n$ iff every option of $G$ is in $I_n$, and
\begin{itemize}
\item If $G$ is even-tempered, then $\loutcome(G) \ge \routcome(G)$.
\item If $G$ is odd-tempered, then $\loutcome(G) - \routcome(G) \ge n$.
\end{itemize}
\end{definition}
It's clear that $I_n \subseteq I_m$ when $n > m$, and that the i-games are precisely the elements
of $I = \bigcup_{n \in \mathbb{Z}} I_n$.  Also, the elements of $\cap_{n \in \mathbb{Z}} I_n$ are nothing but
the numbers, since any odd-tempered game fails to be in $I_n$ for some $n$.

The class $I_0$ consists of the games in which being unexpectedly forced to move is harmless.\footnote{This makes $I_0$ the class of well-tempered
games which are also Milnor games, in the sense used by Ettinger in the papers ``On the Semigroup of Positional Games'' and ``A Metric for Positional Games.''}
Just as looking at i-games allowed us to extends Equations (\ref{uno}-\ref{dos}) to other parities,
the same thing happens here: if $G$ and $H$ are two games in $I_0$ with $\routcome(G) \ge 0$ and $\routcome(H) \ge 0$,
then $\routcome(G + H) \ge 0$, \emph{regardless of parity}.  We have already seen this if
$G$ and $H$ are even-tempered (Equation~(\ref{uno})) or if one of $G$ is odd-tempered (Equation~(\ref{siete})), and the final case,
where both games are odd-tempered, comes from the following results:
\begin{theorem}
Let $G$ and $H$ be $\mathbb{Z}$-valued games in $I_n$ for some $n$.
\begin{itemize}
\item If $G$ is odd-tempered and $H$ is odd-tempered, then
\begin{equation} \routcome(G + H) \ge \routcome(G) + \routcome(H) + n \label{trece}\end{equation}
\item If $G$ is even-tempered and $H$ is odd-tempered, then
\begin{equation} \loutcome(G + H) \ge \loutcome(G) + \routcome(H) + n \label{catorce}\end{equation}
\end{itemize}
\end{theorem}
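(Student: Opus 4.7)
The plan is to prove (\ref{trece}) and (\ref{catorce}) simultaneously by joint induction on the combined complexity (total number of subpositions) of $G$ and $H$, in the same style as the earlier chain of inequalities (\ref{uno}--\ref{nueve}). The two statements reinforce each other: in the inductive step for (\ref{trece}) I will apply (\ref{catorce}) to pairs $(G^R,H)$ and $(G,H^R)$, while in the inductive step for (\ref{catorce}) I will apply (\ref{trece}) to a pair $(G^L,H)$. Since every option of a game in $I_n$ is again in $I_n$, the inductive hypothesis is available in each case, and the recursion descends in complexity.

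For (\ref{catorce}), where $G$ is even-tempered and $H$ odd-tempered in $I_n$: if $G$ is a number, then $G$ has no options and by Proposition~\ref{numavoid}, $\loutcome(G+H)=G+\loutcome(H)$; the odd-tempered clause of the definition of $I_n$ gives $\loutcome(H)\ge\routcome(H)+n$, which yields the claim. Otherwise $G$ has options, and the even-tempered clause of $I_n$ gives $\loutcome(G)\ge\routcome(G)$. I pick a left option $G^L$ witnessing $\loutcome(G)=\routcome(G^L)$; this $G^L$ is odd-tempered and in $I_n$. Applying (\ref{trece}) inductively to the pair $(G^L,H)$ gives
\[ \routcome(G^L+H) \ge \routcome(G^L)+\routcome(H)+n = \loutcome(G)+\routcome(H)+n, \]
and since $G^L+H$ is a left option of the odd-tempered game $G+H$,
\[ \loutcome(G+H) \ge \routcome(G^L+H) \ge \loutcome(G)+\routcome(H)+n. \]

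For (\ref{trece}), where $G$ and $H$ are both odd-tempered in $I_n$: neither is a number, and $G+H$ is even-tempered, so every right option of $G+H$ is of the form $G^R+H$ or $G+H^R$, with $G^R$ and $H^R$ even-tempered and still in $I_n$. For $G^R+H$, the pair $(G^R,H)$ satisfies the hypotheses of (\ref{catorce}), so inductively
\[ \loutcome(G^R+H) \ge \loutcome(G^R)+\routcome(H)+n. \]
By the definition $\routcome(G)=\min_{G^R}\loutcome(G^R)$, we have $\loutcome(G^R)\ge\routcome(G)$, hence $\loutcome(G^R+H)\ge\routcome(G)+\routcome(H)+n$. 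The case $G+H^R$ is handled identically after swapping the roles of $G$ and $H$ (using commutativity of addition). Taking the minimum over right options gives $\routcome(G+H)\ge\routcome(G)+\routcome(H)+n$, as required.

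The main obstacle is purely bookkeeping: keeping track of which clause of the definition of $I_n$ is being invoked (the even clause $\loutcome\ge\routcome$ versus the odd clause $\loutcome-\routcome\ge n$), and being careful that the correct option (left or right) is chosen so that the inductive hypothesis applies with the correct parity pairing. There is no real content beyond the strategic intuition already developed for i-games: the even-tempered components of $I_n$ behave like i-games and can absorb an unexpected move from the opponent, while the odd-tempered components carry an extra $+n$ bonus to whichever player is forced to move, which is exactly the additive correction appearing in the conclusions.
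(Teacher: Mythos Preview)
Your proof is correct and follows essentially the same approach as the paper: a joint induction on $G$ and $H$, using (\ref{catorce}) on each right option $G^R+H$ and $G+H^R$ to establish (\ref{trece}), and using (\ref{trece}) on an optimal $G^L+H$ (after handling the numerical base case via Proposition~\ref{numavoid} and the odd-tempered $I_n$ condition) to establish (\ref{catorce}). The only difference is that you spell out more of the bookkeeping details that the paper elides.
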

\begin{proof}
We proceed by induction as usual.  To see (\ref{trece}), note that $G + H$ is not a number,
and every right option of $G + H$ is of the form $G^R + H$ or $G + H^R$.  By induction,
Equation~(\ref{catorce}) tells us that $\loutcome(G^R + H) \ge \loutcome(G^R) + \routcome(H) + n \ge \routcome(G) + \routcome(H) + n$.
Similarly, every option of the form $G + H^R$ also has left-outcome at least $\routcome(G) + \routcome(H) + n$,
establishing (\ref{trece}).
Likewise, to see (\ref{catorce}), note that if $G$ is a number, this follows from Proposition~\ref{numavoid}
and the definition of $I_n$, and otherwise, letting $G^L$ be a left option of $G$
such that $\routcome(G^L) = \loutcome(G)$, we have by induction
\[ \loutcome(G + H) \ge \routcome(G^L + H) \ge \routcome(G^L) + \routcome(H) + n = \loutcome(G) + \routcome(H) + n.\]
\end{proof}
Similarly we have
\begin{theorem}
Let $G$ and $H$ be $\mathbb{Z}$-valued games in $I_n$ for some $n$.
\begin{itemize}
\item If $G$ and $H$ are both odd-tempered, then
\begin{equation} \loutcome(G + H) \le \loutcome(G) + \loutcome(H) - n \label{quince}\end{equation}
\item If $G$ is even-tempered and $H$ is odd-tempered, then
\begin{equation} \routcome(G + H) \le \routcome(G) + \loutcome(H) - n \label{pear}\end{equation}
\end{itemize}
\end{theorem}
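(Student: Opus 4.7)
The plan is to derive both inequalities from the preceding theorem by exploiting the symmetry between Left and Right via negation. The first step is to observe that the class $I_n$ is closed under negation. Indeed, if $G \in I_n$, then $-G$ has the same parity, and for any subposition $G'$ of $G$ we have $\loutcome(-G') - \routcome(-G') = -\routcome(G') - (-\loutcome(G')) = \loutcome(G') - \routcome(G')$, which is $\geq 0$ in the even-tempered case and $\geq n$ in the odd-tempered case. So the defining conditions of $I_n$ are preserved under negation, term-by-term on subpositions.

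Now to prove Equation~(\ref{quince}), suppose $G$ and $H$ are odd-tempered elements of $I_n$. Then $-G$ and $-H$ are also odd-tempered elements of $I_n$, so by Equation~(\ref{trece}) applied to $-G$ and $-H$,
\[ \routcome((-G) + (-H)) \geq \routcome(-G) + \routcome(-H) + n. \]
Since $(-G) + (-H) = -(G+H)$ and $\routcome(-K) = -\loutcome(K)$ for every game $K$, this rewrites as
\[ -\loutcome(G+H) \geq -\loutcome(G) - \loutcome(H) + n, \]
which is exactly Equation~(\ref{quince}) after negation.

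The proof of Equation~(\ref{pear}) is analogous: with $G$ even-tempered and $H$ odd-tempered in $I_n$, the games $-G$ and $-H$ are respectively even-tempered and odd-tempered elements of $I_n$, so Equation~(\ref{catorce}) gives $\loutcome(-G + (-H)) \geq \loutcome(-G) + \routcome(-H) + n$, and translating through the identities $\loutcome(-K) = -\routcome(K)$ and $\routcome(-K) = -\loutcome(K)$ yields $-\routcome(G+H) \geq -\routcome(G) - \loutcome(H) + n$, which is Equation~(\ref{pear}). There is essentially no obstacle here; the only thing to verify carefully is the closure of $I_n$ under negation, which is a routine inspection of the two defining inequalities. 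One could alternatively redo the induction directly, mirroring the proof of the previous theorem with Left and Right interchanged and with right-options replaced by left-options throughout, but the negation argument is cleaner and avoids any repetition.
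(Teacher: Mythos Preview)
Your proof is correct. The paper does not give an explicit proof of this theorem; it simply writes ``Similarly we have'' after proving the preceding theorem (equations~(\ref{trece}) and~(\ref{catorce})). Your negation argument is one of the two standard ways to make such a ``similarly'' precise, and you correctly identify the other (redoing the induction with Left and Right interchanged) at the end. The closure of $I_n$ under negation that you verify is in fact stated later in the paper as part of Theorem~\ref{filtration}, with the proof ``Negation is easy,'' so your observation is consistent with the paper's treatment.
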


Mimicking the proof that i-games are closed under negation and addition, we also have
\begin{theorem}\label{filtration}
The class of games $I_n$  is closed under negation and addition.
\end{theorem}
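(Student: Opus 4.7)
The plan is to prove both closures by induction. Negation is straightforward: inducting on $G \in I_n$, the options of $-G$ are exactly the negatives of the options of $G$ and so lie in $I_n$ by the inductive hypothesis, while $-G$ and $G$ share the same parity and satisfy $\loutcome(-G) - \routcome(-G) = \loutcome(G) - \routcome(G)$, so the temperament-outcome condition transfers directly.

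For addition I would induct on the combined complexity of $G, H \in I_n$. The base case of two numbers is immediate, and in the inductive step every option of $G + H$ lies in $I_n$ by induction, so only the condition at $G + H$ itself needs checking. When $G + H$ is odd-tempered, say with $G$ even and $H$ odd, the proof is a direct calculation from the equations just established. Choosing $G^{L\ast}$ with $\routcome(G^{L\ast}) = \loutcome(G)$ and applying equation~(\ref{trece}) to the odd-odd sum $G^{L\ast} + H$ gives $\loutcome(G+H) \ge \loutcome(G) + \routcome(H) + n$, while choosing $H^{L\ast}$ with $\routcome(H^{L\ast}) = \loutcome(H)$ and applying equation~(\ref{uno}) to the even-even sum $G + H^{L\ast}$ gives $\loutcome(G+H) \ge \routcome(G) + \loutcome(H)$. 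Symmetric right-option choices using (\ref{tres}) and (\ref{quince}) produce the upper bounds $\routcome(G+H) \le \loutcome(G) + \routcome(H)$ and $\routcome(G+H) \le \routcome(G) + \loutcome(H) - n$. Comparing the second lower bound against the first upper bound yields $\loutcome(G+H) - \routcome(G+H) \ge n$, as required.

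The even-tempered case (both $G, H$ even, or both odd) is more delicate, since equations~(\ref{trece})--(\ref{pear}) do not directly cover same-parity sums. Here I would set up a secondary mutual induction establishing six $I_n$-analogs of equations~(\ref{ocho})--(\ref{doce}): for $G, H \in I_n$ both even, the claims (N1) $\loutcome(G+H) \ge \routcome(G) + \loutcome(H)$ and (N2) $\routcome(G+H) \le \loutcome(G) + \routcome(H)$; for both odd, the claims (N3) $\loutcome(G+H) \ge \loutcome(G) + \routcome(H)$ and (N4) $\routcome(G+H) \le \routcome(G) + \loutcome(H)$; and for $G$ even, $H$ odd, the auxiliary claims (T5) $\routcome(G+H) \ge \routcome(G) + \routcome(H)$ and (T6) $\loutcome(G+H) \le \loutcome(G) + \loutcome(H)$. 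Claim (N1) follows from (T5) by selecting $H^{L\ast}$ with $\routcome(H^{L\ast}) = \loutcome(H)$ and applying the inductive hypothesis to the odd game $G + H^{L\ast}$; similarly (N3) follows from (T5) by selecting $G^{L\ast}$. Conversely (T5) follows from (N1) and (N3) applied to the two kinds of right option of $G + H$, using the elementary observations $\loutcome(G^R) \ge \routcome(G)$ and $\loutcome(H^R) \ge \routcome(H)$. The upper-bound claims (N2), (N4), (T6) follow by dual arguments. Once these auxiliaries are in hand, applying (N1) together with its role-swapped form gives two lower bounds on $\loutcome(G+H)$, and (N2) similarly gives two upper bounds on $\routcome(G+H)$; a short calculation shows that the maximum of the two lower bounds always exceeds the minimum of the two upper bounds, giving $\loutcome(G+H) \ge \routcome(G+H)$ exactly as in the i-game proof of Theorem~\ref{closure}. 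The both-odd subcase is handled identically using (N3) and (N4).

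The main obstacle will be organizing the mutual induction cleanly, in particular verifying the base cases where one or both of $G, H$ is a number, and checking that each reduction strictly decreases the total complexity so that the combined induction closes.
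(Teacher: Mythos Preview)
Your argument is correct in outline, but it re-proves machinery that is already available, and there is one mis-stated comparison.

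For the odd-tempered case you re-derive, via explicit option choices, bounds that the paper has \emph{just} proved as equations~(\ref{catorce}) and~(\ref{cuatro}). The paper simply quotes these: with $G$ odd-tempered and $H$ even-tempered (both in $I_n$), equation~(\ref{catorce}) gives $\loutcome(G+H) \ge \loutcome(H) + \routcome(G) + n$, and equation~(\ref{cuatro}) gives $\routcome(G+H) \le \routcome(G) + \loutcome(H)$, so the difference is $\ge n$ immediately. Your ``second lower bound against first upper bound'' does \emph{not} give $\ge n$ in general: that difference is $(\routcome(G)-\loutcome(G)) + (\loutcome(H)-\routcome(H))$, and the first term can be arbitrarily negative. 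You want first-against-first (or second-against-second), which does give exactly $n$; the needed bounds are already in your list, so this is a trivial fix.

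More importantly, the entire even-tempered case collapses once you notice that every $I_n$-game is an i-game (this is immediate from the definitions: $I_n \subseteq \bigcup_k I_k = I$). Hence Theorem~\ref{closure} already tells you that $G+H$ is an i-game, and in particular $\loutcome(G+H) \ge \routcome(G+H)$ whenever $G+H$ is even-tempered. Your six auxiliary claims (N1)--(T6) are exactly the i-game inequalities~(\ref{nueve}), (\ref{doce}), (\ref{ocho}), (\ref{once}), (\ref{siete}), (\ref{diez}), all of which have already been proved for arbitrary i-games, so the secondary mutual induction is unnecessary. The paper's proof of Theorem~\ref{filtration} therefore reduces to one line for the even case and two quoted inequalities for the odd case.
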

\begin{proof}
Negation is easy.  We show closure under addition.  Let $G, H \in I_n$.  By induction,
every option of $G + H$ is in $I_n$.  It remains to show that $\loutcome(G + H) - \routcome(G + H)$
is appropriately bounded.  If $G + H$ is even-tempered, we already handled this case in Theorem~\ref{closure}, which
guarantees that $G + H$ is an i-game so that $\loutcome(G + H) \ge \routcome(G + H)$.
So assume $G + H$ is odd-tempered.  Without loss of generality, $G$ is odd-tempered and $H$ is even-tempered.  Then by Equation~(\ref{catorce})
\[ \loutcome(G + H) \ge \routcome(G) + \loutcome(H) + n \]
while by Equation~(\ref{cuatro}),
\[ \routcome(G + H) \le \routcome(G) + \loutcome(H).\]
Therefore
\[ \loutcome(G + H) - \routcome(G + H) \ge n.\]
\end{proof}

Next we relate the $I_n$ to cooling and heating:
\begin{theorem}\label{slide}
For any $G$, $G \in I_n$ iff $G_m \in I_{n - 2m}$.
\end{theorem}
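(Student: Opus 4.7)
The plan is to prove the equivalence by straightforward induction on the complexity of $G$, leveraging Theorem~\ref{coolfacts} to translate the outcome condition across cooling and Theorem~\ref{filtration} to absorb the integer shifts $\pm m$ that appear in the options of $G_m$. The key bookkeeping fact is that cooling preserves parity (immediate from the recursive definition $G_m = \langle G^L_m - m \mid G^R_m + m\rangle$), so $G$ and $G_m$ always satisfy the same branch of the definition of $I_k$.

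For the base case, suppose $G$ is a number. Then $G_m = G$ (by definition), $G$ is even-tempered with $\loutcome(G)=\routcome(G)$, and a number has no options, so $G \in I_k$ for every $k$. Both sides of the biconditional hold.

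For the inductive step, assume $G$ is not a number and the result holds for every option of $G$. The defining conditions of $I_k$ split into two parts: (i) every option of the game lies in $I_k$, and (ii) the appropriate outcome bound on $\loutcome - \routcome$. For (i), the options of $G_m$ are of the form $G^L_m - m$ and $G^R_m + m$. By the inductive hypothesis, $G^L \in I_n$ iff $G^L_m \in I_{n-2m}$, and by Theorem~\ref{filtration} together with the fact that integers (as numbers) lie in every $I_k$, the class $I_{n-2m}$ is invariant under adding or subtracting the integer $m$; so the options of $G_m$ all lie in $I_{n-2m}$ iff the options of $G$ all lie in $I_n$. For (ii), split on parity. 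If $G$ (hence $G_m$) is even-tempered, Theorem~\ref{coolfacts}(c) gives $\loutcome(G_m) = \loutcome(G)$ and $\routcome(G_m) = \routcome(G)$, so $\loutcome(G) \geq \routcome(G)$ iff $\loutcome(G_m) \geq \routcome(G_m)$, which is the even-tempered condition for both $I_n$ and $I_{n-2m}$. If instead $G$ is odd-tempered, Theorem~\ref{coolfacts}(d) gives
\[ \loutcome(G_m) - \routcome(G_m) = (\loutcome(G) - m) - (\routcome(G) + m) = \loutcome(G) - \routcome(G) - 2m, \]
so $\loutcome(G) - \routcome(G) \geq n$ iff $\loutcome(G_m) - \routcome(G_m) \geq n - 2m$, exactly matching the odd-tempered conditions on each side. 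Combining (i) and (ii) closes the induction.

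There is no genuine obstacle beyond keeping the index bookkeeping straight; the entire argument is really a single calculation demonstrating \emph{why} the shift from $n$ to $n - 2m$ is the correct one, namely that in the odd-tempered case cooling by $m$ simultaneously drops $\loutcome$ by $m$ and raises $\routcome$ by $m$, eating $2m$ out of the slack $\loutcome - \routcome$. Once that observation is isolated, everything else is automatic from the cited results.
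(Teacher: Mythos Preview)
Your proof is correct and follows essentially the same approach as the paper: both reduce to the observation, via Theorem~\ref{coolfacts}(c,d), that cooling by $m$ leaves the even-tempered outcome gap unchanged and subtracts exactly $2m$ from the odd-tempered gap. The paper's proof is terser---it states only this outcome translation and leaves the recursive structure implicit---whereas you spell out the induction and invoke Theorem~\ref{filtration} to absorb the $\pm m$ shifts on the options; this extra care is justified but not a genuinely different route.
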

\begin{proof}
By Theorem~\ref{coolfacts}(d), we know that whenever
$H$ is an odd-tempered game,  $\loutcome(H) - \routcome(H) \ge n$ iff
$\loutcome(H_m) - \routcome(H_m)  = (\loutcome(H) - m) - (\routcome(H) + m) \ge n - 2m$,
while if $H$ is even-tempered, then $\loutcome(H) - \routcome(H) \ge 0$ iff $\loutcome(H_m) - \routcome(H_m) \ge 0$,
by Theorem~\ref{coolfacts}(c).
\end{proof}
So heating by 1 unit establishes a bijection from $I_n$ to $I_{n + 2}$ for all $n$.
Also, note that $G$ is an i-game iff $G_{-n} \in I_0$ for some $n \ge 0$.

Let $\mathcal{I}$ denote the invertible elements of $\mathcal{W}_\mathbb{Z}/\approx$, i.e., the equivalence
classes containing i-games.  Also, let $\mathcal{I}_n$ be the equivalence classes containing games
in $I_n$.  By Theorem~\ref{filtration} each $\mathcal{I}_n$ is a subgroup of $\mathcal{I}$.  And we have a filtration:
\[ \cdots \subseteq \mathcal{I}_2 \subseteq \mathcal{I}_1 \subseteq \mathcal{I}_0 \subseteq \mathcal{I}_{-1} \subseteq \cdots \subseteq \mathcal{I}.\]
Furthermore, heating by $m$ provides an isomorphism of partially ordered abelian groups from
$\mathcal{I}_n$ to $\mathcal{I}_{n + 2m}$.  Because $\mathcal{I}$ is the union $\bigcup_{k \in \mathbb{Z}} \mathcal{I}_{2k}$,
it follows that as a partially-ordered abelian group, $\mathcal{I}$ is just the direct limit (colimit) of
\[ \cdots \hookrightarrow \mathcal{I}_{-2} \hookrightarrow \mathcal{I}_{-2} \hookrightarrow \mathcal{I}_{-2} \hookrightarrow \cdots \]
where each arrow is heating by 1.
In the next two sections we will show that the even-tempered component of $\mathcal{I}_{-2}$ is isomorphic to $\mathcal{G}$,
the group of (short) partizan games, and that the action of heating by 1 is equivalent to the Norton multiplication by $\{1*|\}$, which is the
same as overheating from $1$ to $1*$.

\section{The faithful representation}
We construct a map $\psi$ from $I_{-2}$ to $\mathcal{G}$, the abelian group of short partizan games.  This map does
the most obvious thing possible:
\begin{definition}
If $G$ is in $I_{-2}$, then the \emph{representation of $G$}, denoted $\psi(G)$, is defined recursively by $\psi(n) = n$ (as a surreal number)
if $n \in \mathbb{Z}$, and by
\[ \psi(G) = \{\psi(G^L)|\psi(G^R)\}\]
if $G = \langle G^L | G^R \rangle$ is not a number.
\end{definition}
So for instance, we have
\[ \psi(2) = 2,\]
\[ \psi(\langle 3 | 4 \rangle) = \{3|4\} = 3.5\]
\[ \psi(\langle 2|2 || 1|3 \rangle) = \{2|2||1|3\} = \{2*|2\} = 2 + \downarrow.\]
Usually, turning angle brackets into curly brackets causes chaos to ensue: for example $\langle 0 \,|\, 3 \rangle + \langle 0 \,|\, 3 \rangle
\approx 3$ but $\{0\,|\,3\} + \{0\,|\,3\} = 2$.  But $\langle 0\,|\, 3\rangle$ isn't in $I_{-2}$.

It is clearly the case that $\psi(-G) = -\psi(G)$.  But how does $\psi$ interact with other operations?

The next two results are very straightforward:
\begin{theorem}\label{unused-label}
If $G$ is odd-tempered, then $\psi(G) \ge 0$ iff $\routcome(G) > 0$, and $\psi(G) \le 0$ iff $\loutcome(G) < 0$.
\end{theorem}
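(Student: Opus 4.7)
The two biconditionals are interchanged by negation: applying $\psi(-G) = -\psi(G)$, the parity-preserving identity $\routcome(-G) = -\loutcome(G)$, and the identity $\psi(G) \le 0 \iff -\psi(G) \ge 0$ reduces the second equivalence to the first. So my plan is to prove only
\[ \psi(G) \ge 0 \iff \routcome(G) > 0 \]
for $G \in I_{-2}$ odd-tempered, by induction on the game-tree complexity of $G$.

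An odd-tempered game is never a number, so $G$ has right options, and by the definition of $\psi$,
\[ \psi(G) \ge 0 \iff \psi(G^R) \rhd 0 \text{ for every } G^R. \]
On the other hand, since $G$ is odd-tempered,
\[ \routcome(G) > 0 \iff \loutcome(G^R) > 0 \text{ for every } G^R. \]
So the entire theorem reduces to the following auxiliary claim, which I prove for all even-tempered $G^R \in I_{-2}$ as part of the same induction:
\[ \psi(G^R) \rhd 0 \iff \loutcome(G^R) > 0. \]

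For this auxiliary claim I split on whether $G^R$ is a number. If $G^R = n \in \mathbb{Z}$, both sides unfold to $n > 0$ and we are done. If $G^R$ is not a number it has left options; then $\psi(G^R) \rhd 0$ means that some left option $\psi(G^{RL})$ is $\ge 0$, while $\loutcome(G^R) = \max_{G^{RL}} \routcome(G^{RL})$, so $\loutcome(G^R) > 0$ means that some $G^{RL}$ satisfies $\routcome(G^{RL}) > 0$. The games $G^{RL}$ are odd-tempered and, since $I_{-2}$ is closed under taking subpositions, they lie in $I_{-2}$, so the main inductive hypothesis applies and matches these two conditions term by term.

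There is no real obstacle here beyond bookkeeping; the only subtlety worth flagging is the strict-versus-non-strict asymmetry between $\ge$ on the $\psi$-side and $>$ on the outcome side. This asymmetry is exactly what makes the case ``$G^R$ is an integer $n$'' work out: $n \rhd 0$ in the partizan sense degenerates to $n > 0$ because numbers are never fuzzy with $0$. That the hypothesis $G \in I_{-2}$ is never needed explicitly (beyond closure under subpositions, which guarantees $\psi(G^{RL})$ is defined) is expected, since $I_{-2}$ is the precise domain on which $\psi$ is defined; the real content of the theorem is simply that the recursive definitions of $\psi(G) \ge 0$ and of $\routcome(G) > 0$ run in lock-step through the game tree.
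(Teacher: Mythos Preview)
Your proof is correct and, at bottom, the same observation as the paper's: both verify that the recursive definition of $\psi(G)\ge 0$ in partizan theory and the recursive definition of $\routcome(G)>0$ in scoring theory unfold identically through the game tree, with the base case hinging on the fact that reaching the surreal integer $n$ with Left to move is a Left win iff $n>0$. The only difference is presentational: the paper compresses this into a two-sentence strategic remark (``if Right goes first then Right goes last, so a move to $0$ is a loss for Left; thus Left needs the final score to be at least $1$''), whereas you carry out the matching explicitly as a mutual induction with the auxiliary claim $\psi(G^R)\rhd 0 \iff \loutcome(G^R)>0$ for even-tempered $G^R$.
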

\begin{proof}
By definition $\psi(G) \ge 0$ iff $\psi(G)$ is a win for Left when Right goes first.  If Right goes first, then Right goes last,
so a move to 0 is a loss for Left.  Thus Left needs the final score of $G$ to be at least 1.  The other case is handled similarly.
\end{proof}
Similarly,
\begin{theorem}
If $G$ is even-tempered, then $\psi(G) \ge 0$ iff $\routcome(G) \ge 0$, and $\psi(G) \le 0$ iff $\loutcome(G) \le 0$.
\end{theorem}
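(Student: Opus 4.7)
The plan is to prove both biconditionals by induction on $G$, using the preceding theorem for odd-tempered games as a lemma rather than redoing any joint induction. The base case is when $G$ is a number $n$; then $\psi(G) = n$ as a surreal number and $\loutcome(G) = \routcome(G) = n$, so both biconditionals reduce to the trivial statements $n \ge 0 \iff n \ge 0$ and $n \le 0 \iff n \le 0$.

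For the inductive step, assume $G$ is even-tempered and not a number; then every option of $G$ is odd-tempered (and in particular not a number), so the preceding theorem applies to $G^L$ and $G^R$. By the partizan criterion for $\psi(G) = \{\psi(G^L) \mid \psi(G^R)\}$, we have $\psi(G) \ge 0$ iff every right option satisfies $\psi(G^R) \rhd 0$, i.e., iff $\psi(G^R) \not\le 0$. By the previous theorem applied to the odd-tempered $G^R$, $\psi(G^R) \le 0 \iff \loutcome(G^R) < 0$, so $\psi(G^R) \rhd 0 \iff \loutcome(G^R) \ge 0$. Therefore
\[ \psi(G) \ge 0 \iff \forall G^R:\ \loutcome(G^R) \ge 0 \iff \routcome(G) \ge 0,\]
where the last equivalence is just the definition of $\routcome$ for the non-numerical $G$.

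The second biconditional is handled symmetrically: $\psi(G) \le 0$ iff every left option satisfies $\psi(G^L) \lhd 0$, i.e., $\psi(G^L) \not\ge 0$; by the previous theorem for odd-tempered $G^L$ this is $\routcome(G^L) \le 0$; and taking the maximum over $G^L$ gives $\loutcome(G) \le 0$.

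There is no real obstacle here; the only point that requires care is the shift between strict and non-strict inequalities. For odd-tempered games the previous theorem uses strict inequalities ($\routcome > 0$, $\loutcome < 0$) because a move to the integer $0$ counts as a loss for the player who made the last move, whereas in the even-tempered case, using $\rhd$ and $\lhd$ instead of $\ge$ and $\le$ on the options exactly flips the strictness, recovering the non-strict inequalities $\routcome(G) \ge 0$ and $\loutcome(G) \le 0$. The argument goes through cleanly because odd-tempered options have even-tempered successors and vice versa, which aligns perfectly with the structure of the partizan comparison test.
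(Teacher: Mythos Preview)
Your proof is correct. The paper's own proof is a one-line strategic argument parallel to its proof of the odd-tempered case: it observes that $\psi(G) \ge 0$ means Left wins $\psi(G)$ as a partizan game when Right moves first, and that in the even-tempered case Left makes the last move of the ``scoring'' portion, so reaching the surreal integer $0$ is now a win for Left (Right is next to move and cannot), whence the threshold becomes $\ge 0$ rather than $> 0$. You instead unfold the recursive definition of $\ge 0$ one level and invoke the odd-tempered theorem as a black box on the options $G^R$ and $G^L$. Both routes are short and natural; yours is more modular, the paper's more direct. One minor stylistic point: you frame the argument as ``induction on $G$,'' but in the non-number case you never actually use an inductive hypothesis for smaller even-tempered games --- the odd-tempered theorem does all the work --- so what you have is really just a two-case split rather than an induction.
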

\begin{proof}
The same as before, except now when Right makes the first move, Left makes the last move, so a final score of zero is a win for \emph{Left}.
\end{proof}

But since we are working with $I_{-2}$ games, we can strengthen these a bit:
\begin{theorem}\label{subtler}
If $G$ is an even-tempered $I_{-2}$ game, and $n$ is an integer, then $\psi(G) \ge n$ iff $\routcome(G) \ge n$, and $\psi(G) \le n$ iff $\loutcome(G) \le n$.
If $G$ is an odd-tempered $I_{-2}$ game instead, then $\psi(G) \ge n$ iff $\routcome(G) > n$, and $\psi(G) \le n$ iff $\loutcome(G) < n$.
\end{theorem}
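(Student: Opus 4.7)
The plan is to prove Theorem~\ref{subtler} by induction on $G$, proving all four sub-claims (the $\ge n$ and $\le n$ versions for both even-tempered and odd-tempered $G$) simultaneously, with a secondary ordering on $|n|$ when needed. The base case is when $G$ is an integer $s$, necessarily even-tempered: then $\psi(G)=s$ as a surreal number and $\routcome(G)=\loutcome(G)=s$, so every applicable sub-claim is immediate. In the inductive step I assume all four sub-claims hold for every proper subposition of $G$ (each of which inherits membership in $I_{-2}$) and deduce them for $G$ itself.

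For the inductive step I would unpack $\psi(G)\ge n$ using Theorem~\ref{betwixt}: $\psi(G)\ge n$ iff no right option of $\psi(G)$ in $\mathcal{G}$ is $\le n$ and no left option of the partizan integer $n$ is $\ge\psi(G)$. The right options of $\psi(G)$ are exactly $\psi(G^R)$ for the opposite-parity options $G^R$ of $G$, so the first clause reduces via the induction hypothesis applied to each $G^R$ to a condition on $\routcome(G)=\min_{G^R}\loutcome(G^R)$. The second clause depends on the canonical form of $n$ as a partizan game: for $n\le 0$ the integer $n$ has no left option and the clause is vacuous, while for $n>0$ the unique left option is $n-1$, so the clause becomes $\psi(G)\not\le n-1$, which is the $\le(n-1)$ sub-claim for the same $G$ at smaller $|n|$. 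The $\le n$ direction is handled symmetrically. Ordering the sub-claims appropriately (proving the $\le n'$ claims for $n'\ge 0$ and the $\ge n'$ claims for $n'\le 0$ first using only the IH on options, then bootstrapping from them) closes the induction within each $G$.

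The crucial use of the $I_{-2}$ hypothesis appears when collapsing the conjoined side condition into the main one. For instance, for odd-tempered $G\in I_{-2}$ and $n>0$ the unpacking yields $\psi(G)\ge n$ iff $\routcome(G)>n$ \emph{and} $\loutcome(G)\ge n-1$; but $G\in I_{-2}$ forces $\loutcome(G)\ge\routcome(G)-2$, so $\routcome(G)\ge n+1$ already guarantees $\loutcome(G)\ge n-1$, making the second conjunct redundant and giving the clean equivalence $\psi(G)\ge n\iff\routcome(G)>n$. For even-tempered $G\in I_{-2}$ the stronger slack $\loutcome(G)\ge\routcome(G)$ plays the corresponding role: $\routcome(G)\ge n$ already implies $\loutcome(G)\ge n$. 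I expect the main obstacle to be arranging the case distinctions cleanly and verifying that the $I_{-2}$ slack is exactly enough in each of the four parity $\times$ direction combinations; the example $G=\langle -5\,|\,5\rangle$, which lies outside $I_{-2}$ and satisfies $\psi(G+3)=0\ne 3=\psi(G)+3$, confirms both that the bound is tight and that some such hypothesis is genuinely necessary for the theorem to hold.
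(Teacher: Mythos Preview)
Your proposal is correct and follows essentially the same approach as the paper's own proof: joint induction on $G$ and on the integer $n$ (the paper phrases the latter as induction on $n$ viewed as a partizan game, which for integers is precisely your induction on $|n|$), unpacking $\psi(G)\ge n$ via Theorem~\ref{betwixt}, and using the $I_{-2}$ slack to absorb the extra conjunct coming from the left option $n-1$ of $n$. Your presentation is a bit more explicit about the case split on the sign of $n$ and the ordering of sub-claims, but the substance is identical.
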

\begin{proof}
We proceed by induction on $G$ and $n$ (interpreting $n$ as a partizan game).  If $G$ is a number, the result is obvious.

Next, suppose $G$ is not a number, but is even-tempered. If $n \le \psi(G)$, then
$\psi(G^R) \not \le n$ for any $G^R$.  By induction, this means that $\loutcome(G^R) \not < n$ for every
$G^R$, i.e., $\loutcome(G^R) \ge n$ for every $G^R$.  This is the same as $\routcome(G) \ge n$.
Conversely, suppose that $\routcome(G) \ge n$.  Then reversing our steps, every $G^R$ has $\loutcome(G^R) \ge n$,
so by induction $\psi(G^R) \not \le n$ for any $G^R$.  Then the only way that $n \le \psi(G)$ can fail to be true
is if $\psi(G) \le n'$ for some $n'$ that is less than $n$ and simpler than $n$.  By induction, this implies
that $\loutcome(G) \le n' < n \le \routcome(G)$, contradicting the definition of $I_n$.  So we
have shown that $n \le \psi(G) \iff n \le \routcome(G)$.  The proof that $n \ge \psi(G) \iff n \ge \routcome(G)$ is similar.

Next, suppose that $G$ is odd-tempered.  If $n \le \psi(G)$, then $\psi(G^R) \not \le n$ for any $G^R$.  By induction,
this means that $\loutcome(G^R) \not \le n$ for every $G^R$, i.e., $\loutcome(G^R) > n$ for every $G^R$. This is the
same as $n < \routcome(G)$.  Conversely, suppose that $n < \routcome(G)$.  Reversing our steps,
every $G^R$ has $\loutcome(G^R) > n$, so by induction $\psi(G^R) \not \le n$ for every $G^R$.  Then the only
way that $n \le \psi(G)$ can fail to be true is if $\psi(G) \le n'$ for some $n' < n$, $n'$ simpler than $n$.
But then by induction, this implies that $\loutcome(G) < n' < n < \routcome(G)$, so that $\routcome(G) - \loutcome(G) \ge 3$,
contradicting the definition of $I_{-2}$.
\end{proof}
The gist of this proof is that the $I_{-2}$ condition prevents the left and right stopping values of $\psi(G)$ from being too spread out.

Next we show
\begin{theorem}\label{additive1}
If $G$ and $H$ are in $I_{-2}$, then $\psi(G + H) = \psi(G) + \psi(H)$.
\end{theorem}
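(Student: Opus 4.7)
The plan is to prove this by structural induction on the joint complexity of $G$ and $H$ (for instance the total number of subpositions), splitting into three cases according to whether each of $G, H$ is a number. Throughout, the inductive hypothesis applies to every subproblem encountered, because $I_{-2}$ is closed under addition (Theorem~\ref{filtration}) and contains all options of its members by definition, so every pair $(G^L,H)$, $(G,H^L)$, $(G+H^L,H)$, etc.\ that arises is a pair in $I_{-2} \times I_{-2}$.

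When $G$ and $H$ are both integers $n$ and $m$, the claim reduces to checking that the standard embedding $\mathbb{Z} \hookrightarrow \mathcal{G}$ (sending $n$ to the surreal integer $n$) is additive; this is a standard short induction on $|n|+|m|$ using the canonical forms $n = \{n-1|\}$ for $n\ge 1$ and $n = \{|n+1\}$ for $n \le -1$, and was already noted in the text (e.g.\ the displayed calculation $1+1=2$). When neither $G$ nor $H$ is a number, the well-tempered definition gives $G + H = \langle G+H^L,\,G^L+H \mid G+H^R,\,G^R+H\rangle$, so unwinding $\psi$ and then applying the inductive hypothesis yields
\[
\psi(G+H) = \{\psi(G)+\psi(H^L),\,\psi(G^L)+\psi(H)\mid \psi(G)+\psi(H^R),\,\psi(G^R)+\psi(H)\}.
\]
But this is \emph{literally} the definition of the partizan sum of the forms $\{\psi(G^L)|\psi(G^R)\}$ and $\{\psi(H^L)|\psi(H^R)\}$, which is just $\psi(G)+\psi(H)$ (and by Theorem~\ref{congruences}(c), equality of options forces equality of the assembled game).

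The genuinely delicate case is the mixed one, say $G = n \in \mathbb{Z}$ and $H$ not a number. Here $G$ contributes no options to the well-tempered sum, so $G+H = \langle n+H^L \mid n+H^R\rangle$ and therefore by induction $\psi(G+H) = \{n+\psi(H^L) \mid n+\psi(H^R)\}$. On the partizan side, however, $n$ as a partizan integer does have a canonical option (either $n-1$ on the left or $n+1$ on the right), so the raw sum $n + \psi(H)$ produces an \emph{extra} option $(n\mp 1)+\psi(H)$ that has no well-tempered counterpart. The rescue is the Number Avoidance Theorem: since $\psi(H)$ either is or is not equal to an integer, we have two sub-subcases. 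If $\psi(H)$ does not equal an integer, Integer Avoidance (Theorem~\ref{intavoid}) directly gives $n+\psi(H) = \{n+\psi(H^L)\mid n+\psi(H^R)\}$ as values, matching. If $\psi(H)$ equals an integer $k$, then $n+\psi(H) = n+k$, and the simplicity rule applied to $\{n+\psi(H^L)\mid n+\psi(H^R)\}$ (whose options lie symmetrically around $n+k$ because those of $\psi(H)$ lie symmetrically around $k$) also produces $n+k$.

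The main obstacle is not algebraic but conceptual: one must keep carefully separate the notion of partizan $=$ (equivalence in $\mathcal{G}$) from the notion of form $\equiv$, and recognize that every form-vs.-value mismatch encountered here is precisely what the Number/Integer Avoidance machinery was built to absorb. Notably, the hypothesis $G,H \in I_{-2}$ is \emph{not} used in the additive argument itself; it enters only in the surrounding theory (e.g.\ Theorem~\ref{subtler}) to ensure that $\psi$ is a faithful representation on the right class. Once the Number/Integer Avoidance step is handled, the induction closes cleanly.
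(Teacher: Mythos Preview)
Your overall structure matches the paper's proof exactly: induct, split into the three cases (both numbers, neither a number, mixed), and in the mixed case split again on whether $\psi(H)$ equals an integer. The first two cases and the ``$\psi(H)$ not an integer'' subcase are fine and agree with the paper.

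The gap is in the final subcase, where $\psi(H)=k\in\mathbb{Z}$. Your justification---that the options ``lie symmetrically around $n+k$'' so the simplicity rule gives $n+k$---is not an argument, and your remark that the $I_{-2}$ hypothesis is not used is actually false here. Consider $H=\langle -3\mid 5\rangle$: then $\psi(H)=\{-3\mid 5\}=0$, but $\psi(H+10)=\{7\mid 15\}=8\neq 10=\psi(H)+10$. This $H$ is odd-tempered with $\loutcome(H)-\routcome(H)=-8<-2$, so it lies outside $I_{-2}$; the example shows that without the $I_{-2}$ bound the shift can land on the wrong integer.

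What the paper does (and what your argument is missing) is invoke Theorem~\ref{subtler}, whose proof genuinely uses the $I_{-2}$ condition. From $\psi(H)=k$ one gets $\loutcome(H)\le k\le\routcome(H)$ in the even-tempered case (or strict inequalities in the odd-tempered case); combined with the $I_{-2}$ inequality this forces $k$ to be the \emph{unique} integer satisfying $\psi(H^L)\lhd k\lhd\psi(H^R)$. Uniqueness survives the shift by $n$, so $n+k$ is the unique (hence simplest) integer between $n+\psi(H^L)$ and $n+\psi(H^R)$, giving $\psi(H+n)=n+k$. Equivalently, the paper shifts the outcome bounds via Proposition~\ref{numavoid} and reapplies Theorem~\ref{subtler} to $H+n$. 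Either way, the $I_{-2}$ hypothesis is doing real work in exactly this subcase.
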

\begin{proof}
We proceed inductively.  If $G$ and $H$ are both numbers, this is obvious.  If both are not numbers,
this is again straightforward:
\[ \psi(G + H) = \{ \psi(G^L + H), \psi(G + H^L)\,|\, \psi(G^R + H), \psi(G + H^R) \} \]\[
 \stackrel{!}{=} \{ \psi(G^L) + \psi(H), \psi(G) + \psi(H^L)\,|\, \psi(G^R) + \psi(H), \psi(G) + \psi(H^R)\} = \psi(G) + \psi(H),\]
where the middle equality follows by induction on subgames.  The one remaining case is when exactly one of $G$ and $H$
is a number.  Consider $G + n$, where $n$ is a number and $G$ is not.  If $\psi(G)$ is not an integer,
then by integer avoidance,
\[ \psi(G) + n = \{ \psi(G^L) + n \,|\, \psi(G^R) + n \} = \{ \psi(G^L + n) \,|\, \psi(G^R + n) \} = \psi(G + n),\]
where the middle step is by induction.

So suppose that $\psi(G)$ is an integer $m$.  Thus $m \le \psi(G) \le m$.  If $G$ is even-tempered, then by Theorem~\ref{subtler},
$\loutcome(G) \le m \le \routcome(G)$.  Then by Proposition~\ref{numavoid}, $\loutcome(G + n) \le m + n \le \routcome(G + n)$,
so by Theorem~\ref{subtler} again, $m + n \le \psi(G + n) \le m + n$.  Therefore $\psi(G + n) = m + n = \psi(G) + n$.
Similarly, if $G$ is odd-tempered, then by Theorem~\ref{subtler},
$\loutcome(G) < m < \routcome(G)$.  So by Proposition~\ref{numavoid}, $\loutcome(G + n) < m + n < \routcome(G + n)$,
which by Theorem~\ref{subtler} implies that $m + n \le \psi(G + n) \le m + n$, so that again, $\psi(G + n) = m + n = \psi(G) + n$.
\end{proof}

As in the previous section, let $\mathcal{I}_{-2}$ denote the quotient space of $I_{-2}$ modulo $\approx$.  Putting
everything together,
\begin{theorem}\label{embedding}
If $G, H \in I_{-2}$ have the same parity, then $\psi(G) \le \psi(H)$ if and only if $G \lesssim H$.
In fact $\psi$ induces a weakly-order preserving homomorphism from $\mathcal{I}_{-2}$ to $\mathcal{G}$ (the group of short partizan games).
Restricted to even-tempered games in $\mathcal{I}_{-2}$, this map is strictly order-preserving.  For $G \in I_{-2}$,
$\psi(G) = 0$ if and only if $G \approx 0$ or $G \approx \langle -1 | 1 \rangle$.  The kernel of the homomorphism
from $\mathcal{I}_{-2}$ has two elements.
\end{theorem}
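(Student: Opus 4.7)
The plan is to reduce the entire theorem to the single ingredient already supplied by Theorem~\ref{subtler} together with Corollary~\ref{restatement}: for an even-tempered $G \in I_{-2}$ one has $\psi(G) \ge 0 \iff \routcome(G) \ge 0 \iff G \gtrsim 0$ and dually $\psi(G) \le 0 \iff \loutcome(G) \le 0 \iff G \lesssim 0$, where the second equivalence in each chain uses that $I_{-2} \subseteq I$, so any even-tempered $G \in I_{-2}$ is automatically an i-game and Corollary~\ref{restatement} applies.

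For the main equivalence $\psi(G) \le \psi(H) \iff G \lesssim H$ with $G, H \in I_{-2}$ of the same parity, I would exploit that $I_{-2}$ is closed under addition and negation (Theorem~\ref{filtration}), so $G - H \in I_{-2}$, and since $G$ and $H$ share parity, $G - H$ is even-tempered. Then
\[ G \lesssim H \iff G - H \lesssim 0 \iff \psi(G - H) \le 0 \iff \psi(G) \le \psi(H), \]
using the homomorphism property (Theorem~\ref{additive1}) and $\psi(-G) = -\psi(G)$ (immediate from the definition of $\psi$) at the last step. Theorem~\ref{sameparity} forbids equivalences between games of differing parity, so there is no ambiguity in quotienting, and this shows that $\psi$ descends to a weakly order-preserving group homomorphism on $\mathcal{I}_{-2}$. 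Strict order-preservation on the even-tempered part is then an immediate corollary: if $\psi(G) = \psi(H)$ for even-tempered $G, H$, applying the equivalence in both directions yields $G \approx H$.

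For the kernel, I would first check by the simplicity rule that $\psi(\langle -1 \,|\, 1\rangle) = \{-1 \,|\, 1\} = 0$, while noting that $\langle -1 \,|\, 1\rangle$ is an odd-tempered element of $I_{-2}$ (it is not a number, and $\loutcome - \routcome = -1 - 1 = -2$). Conversely, given $G \in I_{-2}$ with $\psi(G) = 0$, I split on parity: if $G$ is even-tempered, the strict order-preservation just established forces $G \approx 0$; if $G$ is odd-tempered, then $G$ and $\langle -1 \,|\, 1\rangle$ are both odd-tempered elements of $I_{-2}$ with the same image under $\psi$, so the same-parity equivalence from the first part yields $G \approx \langle -1 \,|\, 1\rangle$. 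Theorem~\ref{sameparity} ensures these two equivalence classes are genuinely distinct in $\mathcal{I}_{-2}$, giving a kernel of exactly two elements.

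The only genuinely nontrivial input is Theorem~\ref{subtler} itself, which is already in hand; everything else is bookkeeping. The main care required is to reduce every comparison to the even-tempered case by passing to $G - H$ before invoking Theorem~\ref{subtler} or Corollary~\ref{restatement}, since those two results only apply cleanly to even-tempered i-games and would give the wrong inequality (strict versus non-strict) if applied directly to odd-tempered games.
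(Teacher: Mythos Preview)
Your proposal is correct and follows essentially the same route as the paper: reduce to the even-tempered difference $G-H$ via Theorem~\ref{filtration}, then chain Theorem~\ref{subtler} with Corollary~\ref{restatement} and Theorem~\ref{additive1}, and handle the kernel by splitting on parity against $0$ and $\langle -1\,|\,1\rangle$. The only step you leave implicit is that $G-H\lesssim 0\iff G\lesssim H$ relies on $H$ being invertible (i.e., $H-H\approx 0$), which holds because $H\in I_{-2}$ is an i-game; the paper makes this explicit, and you should too.
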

\begin{proof}
First of all, suppose that $G, H \in I_{-2}$ have the same parity.  Then $G - H$ is an even-tempered game in $I_{-2}$.  So by
Theorem~\ref{subtler} and Corollary~\ref{restatement}
\[ \psi(G - H) \le 0 \iff \loutcome(G - H) \le 0 \iff G - H \lesssim 0.\]
Since i-games are invertible, $G - H \lesssim 0 \iff G \lesssim H$.  And by Theorem~\ref{additive1} and the remarks
before Theorem~\ref{unused-label}, $\psi(G - H) = \psi(G) - \psi(H)$.  Thus
\[ \psi(G) \le \psi(H) \iff \psi(G) - \psi(H) \le 0 \iff G \lesssim H.\]

It then follows that if $G \approx H$, then $\psi(G) = \psi(H)$, so $\psi$ is well-defined on the quotient space $\mathcal{I}_{-2}$.
And if $G \gtrsim H$, then $G$ and $H$ have the same parity, so by what was just shown $\psi(G) \ge \psi(H)$.  Thus
$\psi$ is weakly order-preserving.  It is a homomorphism by Theorem~\ref{additive1}.

When $G$ and $H$ are both even-tempered, then $G$ and $H$ have the same parity, so $\psi(G) \le \psi(H) \iff G \lesssim H$.  Thus,
restricted to even-tempered games, $\psi$ is strictly order preserving on the quotient space.

Suppose that $\psi(G) = 0$.  Then $G$ has the same parity as either $0$ which is even-tempered, or $\langle -1 | 1 \rangle$, which is odd-tempered.
Both $0$ and $\langle -1 | 1 \rangle$ are in $I_{-2}$.
Now
\[ 0 = \psi(0) = \psi(G) = \psi(\langle -1 | 1 \rangle),\]
so by what has just been shown, either $G \approx 0$ or $G \approx \langle -1 | 1 \rangle$.

Then considering games modulo $\approx$, the kernel of $\psi$ has two elements, because $0 \not \approx \langle -1 | 1 \rangle$.
\end{proof}

Thus the even-tempered part of $\mathcal{I}_{-2}$ is isomorphic to a subgroup of $\mathcal{G}$.  In fact, it's isomorphic
to all of $\mathcal{G}$:
\begin{theorem}\label{psisurj}
The map $\psi$ is surjective.  In fact, for any $X \in \mathcal{G}$, there is some even-tempered $H$ in $I_{-2}$ with
$\psi(H) = X$.  In fact, if $X$ is not an integer, then we can choose $H$ such that the left options of $\psi(H)$ are equal to the left options
of $X$ and the right options of $\psi(H)$ are equal to the right options of $X$.
\end{theorem}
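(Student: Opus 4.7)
The plan is to induct on a form of $X$, strengthening the hypothesis to prove simultaneously that (a) every $X \in \mathcal{G}$ has an even-tempered $I_{-2}$ representative and (b) every $X$ has an odd-tempered $I_{-2}$ representative. The strengthening is forced because the options of an even-tempered game are odd-tempered and vice versa. The ``options match'' claim in the second sentence will be built in automatically by the construction.

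Base cases are the integers: for $X = n$, take $H = n$ for (a), and $H = \langle n-1 \,|\, n+1 \rangle$ for (b). The latter is odd-tempered with integer options, satisfies $\loutcome - \routcome = -2$ so lies in $I_{-2}$, and has $\psi$-value $\{n-1\,|\,n+1\} = n$ by the simplicity rule. For the inductive step, assume $X = \{X^L \,|\, X^R\}$ is a form of a non-integer. For case (a), apply the inductive hypothesis to each option to obtain odd-tempered $I_{-2}$ representatives $H^L, H^R$, and form $H = \langle H^L \,|\, H^R \rangle$; this is even-tempered, and $\psi(H) = \{\psi(H^L) \,|\, \psi(H^R)\} = \{X^L \,|\, X^R\} = X$, so the left and right options of $\psi(H)$ are literally those of $X$. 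Case (b) uses even-tempered representatives of the options and yields an odd-tempered $H$.

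The substantive step is showing the constructed $H$ lies in $I_{-2}$. In case (a), $H$ is even-tempered, so I need $\loutcome(H) \ge \routcome(H)$. Suppose instead that $a = \max_{X^L} \routcome(H^L) < \min_{X^R} \loutcome(H^R) = b$. Applying Theorem~\ref{subtler} to the odd-tempered $H^L$ and $H^R$: $\routcome(H^L) \le a$ gives $X^L \not\ge a$ (so $X^L \lhd a$), and $\loutcome(H^R) > a$ gives $X^R \not\le a$ (so $a \lhd X^R$). Thus $X^L \lhd a \lhd X^R$ for every option, and the simplicity rule then forces the form $\{X^L \,|\, X^R\}$, hence $X$ itself, to equal an integer — contradicting the hypothesis of the inductive step. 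Case (b) proceeds in parallel, but because $H$ is now odd-tempered one only needs $\loutcome(H) - \routcome(H) \ge -2$; a failure $b - a \ge 3$ leaves room for an integer witness $c = a + 1$ strictly between $a$ and $b$, and the even-tempered form of Theorem~\ref{subtler} again yields $X^L \lhd c \lhd X^R$, delivering the same contradiction.

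The main obstacle is precisely this final bookkeeping: reconciling the strict-versus-nonstrict bifurcation in Theorem~\ref{subtler} (strict for odd-tempered, nonstrict for even-tempered) against the $-2$ slack in $I_{-2}$, and locating an integer $a$ or $a+1$ whose simplicity (integers being the simplest surreal numbers) turns $X^L \lhd c \lhd X^R$ into the contradiction ``$X$ is an integer'' via the simplicity rule.
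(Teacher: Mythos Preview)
Your proof is correct and follows the same strategy as the paper's: induct on $X$, build $H$ from representatives of the options, and if $H \notin I_{-2}$ use Theorem~\ref{subtler} together with the simplicity rule to force $X$ to be an integer. The only difference is organizational: you strengthen the inductive hypothesis to both parities and construct odd-tempered representatives directly (your case~(b)), whereas the paper proves only the even-tempered claim and manufactures odd-tempered representatives of the options by adding $\langle -1 \,|\, 1\rangle$, which is odd-tempered, lies in $I_{-2}$, and has $\psi$-value $0$.
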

\begin{proof}
We proceed by induction on $X$.  If $X$ equals an integer, the result is obvious.  If not,
let $X = \{L_1, L_2, \ldots\,|\, R_1, R_2, \ldots \}$.  By induction, we can produce even-tempered $I_{-2}$ games
$\lambda_1, \lambda_2, \ldots, \rho_1, \rho_2, \ldots \in I_2$ with $\psi(\lambda_i) = L_i$
and $\psi(\rho_i) = R_i$.  Replacing $\lambda_i$ and $\rho_i$ by $\lambda_i + \langle -1 | 1 \rangle$
and $\rho_i + \langle -1 | 1 \rangle$, we can instead take the $\lambda_i$ and $\rho_i$ to be odd-tempered.

Then consider the even-tempered game
\[ H = \langle \lambda_1, \lambda_2, \ldots\,|\, \rho_1, \rho_2, \ldots \rangle.\]
As long as $H \in I_{-2}$, then $H$ will have all the desired properties.  So suppose that
$H$ is not in $I_{-2}$.  As $H$ is even-tempered, and all of its options are in $I_{-2}$,
this implies that $\loutcome(H) < \routcome(H)$.
 Let $n = \loutcome(H)$.  Then we have
\[ \routcome(\lambda_i) \le n \]
for every $i$, which by Theorem~\ref{subtler} is the same as $L_i = \psi(\lambda_i) \not \ge n$
for every $i$.  Similarly, we have
\[ n < \routcome(H) \le \loutcome(\rho_i)\]
for every $i$, so by Theorem~\ref{subtler} again, $R_i = \psi(\rho_i) \not \le n$ for every $i$.
Therefore, every left option of $X$ is less than or fuzzy with $n$, and every right option of
$X$ is greater than or fuzzy with $n$.  So $X$ is $n$, or something simpler.  But nothing
is simpler than an integer, so $X$ is an integer, contradicting our assumption that it wasn't.
\end{proof}

Therefore, the even-tempered subgroup of $\mathcal{I}_{-2}$ is isomorphic to $\mathcal{G}$.  This subgroup
has as a complement the two-element kernel of $\psi$, so therefore
we have the isomorphism
\[ \mathcal{I}_{-2} \cong \mathbb{Z}_2 \oplus \mathcal{G}.\]

\section{Describing everything in terms of $\mathcal{G}$}
As noted above, $\mathcal{I}$ as a whole is the direct limit of
\[ \cdots \hookrightarrow \mathcal{I}_{-2} \hookrightarrow \mathcal{I}_{-2} \hookrightarrow \mathcal{I}_{-2} \hookrightarrow \cdots \]
where each arrow is the injection $G \to G_{-1}$.  What is the corresponding injection
in $\mathcal{G}$?  It turns out to be Norton multiplication by $\{1*|\}$.

Let $E$ be the partizan game form $\{1*|\}$. Note that $E = 1$, and $E + (E^L - E) = E^L = 1*$.  Thus
\[ n.E = n\]
for $n$ an integer, and
\[ G.E = \langle G^L.E + 1*| G^R.E - 1* \rangle\]
when $G$ is not an integer.  So Norton multiplication by $\{1*|\}$ is the same as
overheating from $1$ to $1*$:
\[ G.E = \int_1^{1*} G\]
(On the other hand, in Sections~\ref{sec:norton} and \ref{sec:even-and-odd-revisited} we considered overheating from $1*$ to $1$!)

\begin{theorem}

If $G$ is an even-tempered game in $I_{-2}$, then
\begin{equation} \psi(G_{-1}) = \psi(G).E \label{heating1}\end{equation}
and if $G$ is odd-tempered and in $I_{-2}$, then
\begin{equation} \psi(G_{-1}) = * + \psi(G).E. \label{heating2}\end{equation}
\end{theorem}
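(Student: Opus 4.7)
I will prove both identities simultaneously by joint structural induction on $G \in I_{-2}$, establishing the even statement ($P(G)$: $\psi(G_{-1}) = \psi(G).E$) and the odd statement ($Q(G)$: $\psi(G_{-1}) = * + \psi(G).E$) together. The base case is $G = n \in \mathbb{Z}$ (even): then $G_{-1} = n$ and $n.E = n$, so $P(G)$ holds. For the inductive step, $G = \langle G^L \mid G^R \rangle$ is not a number. Theorem~\ref{slide} gives $G_{-1} \in I_0 \subseteq I_{-2}$, so $\psi(G_{-1})$ is defined, and Theorem~\ref{coolfacts}(b) together with additivity of $\psi$ (Theorem~\ref{additive1}) gives
\[ \psi(G_{-1}) = \{\psi((G^L)_{-1}) + 1 \mid \psi((G^R)_{-1}) - 1\}. \]

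For the even case, $G^L, G^R$ are odd, so IH $Q$ yields $\psi((G^L)_{-1}) = * + \psi(G^L).E$, etc. Substituting and using $* + 1 = 1*$ and $* - 1 = -1*$ gives $\psi(G_{-1}) = \{\psi(G^L).E + 1* \mid \psi(G^R).E - 1*\}$. If $\psi(G)$ is not an integer, this matches the Norton multiplication formula using the form $\psi(G) = \{\psi(G^L) \mid \psi(G^R)\}$. If $\psi(G) = m \in \mathbb{Z}$, I apply Theorem~\ref{subtler} (even) together with the $I_{-2}$ i-game bound $\loutcome(G) \ge \routcome(G)$ to force $\loutcome(G) = \routcome(G) = m$, whence $G \approx m$ by Corollary~\ref{restatement} and $G_{-1} \approx m_{-1} = m$ by Theorem~\ref{heating-preserves-sides}, giving $\psi(G_{-1}) = m = m.E$.

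For the odd case, $G^L, G^R$ are even, so IH $P$ gives $\psi(G_{-1}) = \{\psi(G^L).E + 1 \mid \psi(G^R).E - 1\}$. If $\psi(G) = m \in \mathbb{Z}$, Theorem~\ref{subtler} (odd) combined with the $I_{-2}$ bound $\routcome(G) - \loutcome(G) \le 2$ forces $\loutcome(G) = m-1$ and $\routcome(G) = m+1$; then by Theorem~\ref{coolfacts}(d), $G_{-1}$ is odd in $I_0$ with outcomes $(m,m)$. The game $G_{-1} - m + *$ is an even i-game in $I_0$ whose outcomes are both $0$ (by Equations~(\ref{trece})--(\ref{quince}) for $I_0$), hence $\approx 0$ by Corollary~\ref{restatement}, so $G_{-1} \approx m + *$ (using $-* = *$), yielding $\psi(G_{-1}) = m + * = * + \psi(G).E$.

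The most delicate case is odd $G$ with $\psi(G)$ not an integer. The hard part will be disposing of the extraneous $\psi(G).E$ options that appear when one computes $* + \psi(G).E$. I expect the cleanest route to be the following. A stopping-value calculation (using $L(\psi(G)) = \loutcome(G) + 1$, $R(\psi(G)) = \routcome(G) - 1$ from Theorem~\ref{subtler}) shows that $\psi(G)$ is not a number at all, and a parallel computation shows the same for $X := \psi(G).E$. Then Norton multiplication gives $X = \{X^L \mid X^R\}$ with $X^L = \psi(G^L).E + 1*$ and $X^R = \psi(G^R).E - 1*$. Using $* + * = 0$, the IH formula rewrites as $\psi(G_{-1}) = \{X^L + * \mid X^R + *\}$. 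The proof then reduces to the general identity $\{X^L + * \mid X^R + *\} = X + *$, valid for any non-number partizan game $X$: expanding $X + *$ by the sum rule gives $\{X,\, X^L + * \mid X,\, X^R + *\}$, and since $X - (X + *) = -* = *$ is fuzzy with $0$, the gift-horse principle lets us strike the two extraneous $X$ options, producing $\{X^L + * \mid X^R + *\}$. Applying this with $X = \psi(G).E$ gives $\psi(G_{-1}) = \psi(G).E + * = * + \psi(G).E$, completing $Q(G)$.
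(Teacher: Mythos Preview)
Your even case and the two integer subcases are fine, but the odd non-integer case has a genuine gap. The identity $\{X^L + * \mid X^R + *\} = X + *$ is \emph{not} valid for arbitrary non-number partizan $X$: take $X = \uparrow = \{0 \mid *\}$, where $\{X^L + * \mid X^R + *\} = \{* \mid 0\} = \downarrow$, while $X + * = \uparrow*$, and $\downarrow \ne \uparrow*$. Your gift-horse justification is circular: to add $X$ as a left and as a right option to $Y := \{X^L + * \mid X^R + *\}$ you would need $X \,\|\, Y$, but you instead argue $X \,\|\, (X + *)$, which already presupposes the conclusion $Y = X + *$. Your preliminary claim that $\psi(G)$ is not a number is also wrong: for $G = \langle 0 \mid 1 \rangle$ (odd, in $I_{-2}$, $\psi(G)=\tfrac12$) the image is a non-integer \emph{number}. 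Theorem~\ref{subtler} only controls comparisons of $\psi(G)$ with \emph{integers} and does not yield $L(\psi(G)) = \loutcome(G)+1$ or $R(\psi(G)) = \routcome(G)-1$ as you assert.

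The paper avoids this difficulty by inducting on $H := \psi(G)$ rather than on $G$, and by collapsing the odd case to the even one \emph{before} any option-level computation. With $O := \langle -1 \mid 1 \rangle$ one has $\psi(O) = 0$ and $O_{-1} = *$; for odd $G$ set $G' := G + O$, an even $I_{-2}$ game with $\psi(G') = H$ unchanged, and then $\psi(G_{-1}) = \psi(G'_{-1}) + \psi(O_{-1}) = H.E + *$ once the even case is known for $G'$. This reduction is unavailable under your structural induction on $G$, since $G' = G + O$ is more complex than $G$; switching the induction to $H$ is exactly what makes the reduction legitimate. (For the even non-integer step the paper then passes, via Theorem~\ref{psisurj}, to a surrogate $K \approx G$ whose options realize the $H^L,H^R$, so that the inductive hypothesis on $H$ applies to them; your direct use of $G$'s own options is actually a bit cleaner there, but it is precisely the odd case where inducting on $G$ stops working.)
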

\begin{proof}

Let $O = \langle -1 | 1 \rangle$.
Then $O$ is an odd-tempered game in $I_{-2}$ with
$\psi(O) = 0$, so $O + O \approx 0$ and the map $G \to G + O$
is an involution on $\mathcal{I}_{-2}$ interchanging odd-tempered and even-tempered games, and leaving
$\psi(G)$ fixed.  Also, $O_{-1} = \langle 0 | 0 \rangle$, so $\psi(O_{-1}) = *$.

Let $H = \psi(G)$.  We need to show that $\psi(G_{-1}) = H.E$ if $G$ is even-tempered
and $\psi(G_{-1}) = * + H.E$ if $G$ is odd-tempered.  We proceed by induction on $H$, rather than $G$.

We first reduce the case where $G$ is odd-tempered to the case where $G$ is even-tempered (without changing $H$).
If $G$ is odd-tempered, then $G \approx G' + O$, where $G' = G + O$ is even-tempered.
Then
\[ H = \psi(G) = \psi(G' + O) = \psi(G') + \psi(O) = \psi(G').\]
If we can show the claim for $H$ when $G$ is even-tempered, then $\psi(G'_{-1}) = \psi(G').E = H.E$.
But in this case,
\[ \psi(G_{-1}) = \psi((G' + O)_{-1}) = \psi(G'_{-1}) + \psi(O_{-1}) = H.E + *,\]
establishing (\ref{heating2}).

So it remains to show that if $G$ is even-tempered, and $\psi(G) = H$, then $\psi(G_{-1}) = H.E$.  For the base case,
if $H$ equals an integer $n$, then $\psi(G) = H = n = \psi(n)$, and $G$ and $n$ are both
even-tempered, so $G \approx n$.  Therefore $G_{-1} \approx n_{-1} = n$, and so $\psi(G_{-1}) = \psi(n) = n = n.E = H.E$
and we are done.

So suppose that $H$ does not equal any integer.  By Theorem~\ref{psisurj} there exists an even-tempered game
$K \in \mathcal{I}_{-2}$ for which $\psi(K) = H$, and the $H^L$ and $H^R$ are exactly the $\psi(K^L)$ and $\psi(K^R)$.
Then by faithfulness, $\psi(G) = H = \psi(K)$, so $G \approx K$.  Moreover,
\[ \psi(K_{-1}) = \psi(\langle K^L_{-1} + 1 | K^R_{-1} - 1\rangle)
= \{ \psi(K^L_{-1}) + 1 | \psi(K^R_{-1}) - 1 \}.\]
Now every $\psi(K^L)$ or $\psi(K^R)$ is an $H^L$ or $H^R$, so by induction,
\[ \psi(K^L_{-1}) = \psi(K^L).E + *\]
\[ \psi(K^R_{-1}) = \psi(K^R).E + *\]
since $K^L$ and $K^R$ are odd-tempered.  Thus
\[ \psi(K_{-1}) = \{\psi(K^L).E + * + 1 | \psi(K^R).E + * - 1\} = \]\[\{\psi(K)^L.E + 1* | \psi(K)^R.E - 1*\} = \psi(K).E,\]
where the last step follows because $\psi(K) = H$ equals no integer.  But then since $G \approx K$,
$G_{-1} \approx K_{-1}$ and so $\psi(G_{-1}) = \psi(K_{-1}) = \psi(K).E = H.E$.
\end{proof}

In light of all this, the even-tempered component of $\mathcal{I}$ is isomorphic to the direct limit of
\[ \cdots \stackrel{(-).E}{\longrightarrow} \mathcal{G} \stackrel{(-).E}{\longrightarrow}  \mathcal{G} \stackrel{(-).E}{\longrightarrow}  \cdots \]
where each map is $x \to x.E$.
Together with the comments of Section~\ref{sec:so-far}, this gives a complete description of $\mathcal{W}_\mathbb{Z}$ in terms of $\mathcal{G}$.
The map assigning outcomes to $\mathbb{Z}$-valued games
can be recovered using Theorem~\ref{subtler} and Theorem~\ref{coolfacts}(c,d) - we leave this as an exercise to the reader.

The reduction of $\mathcal{W}_\mathbb{Z}$ to $\mathcal{G}$ has
a number of implications, because much of the theory of partizan games carries over.  For example, every even-tempered $\mathbb{Z}$-valued
game is divisible by two:
\begin{theorem}
If $G$ is an even-tempered $\mathbb{Z}$-valued game, then there exists an even-tempered $\mathbb{Z}$-valued game $H$
such that $H + H = G$.
\end{theorem}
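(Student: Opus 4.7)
The plan is to reduce the problem to the analogous divisibility fact for partizan games (Corollary~\ref{twodivide}) via the representation homomorphism $\psi$ from this chapter, and to run the reduction separately on the upside and downside of $G$. First I would use Theorem~\ref{plethora} and Theorem~\ref{allpairs-early} to write $G \approx G^+ \& G^-$ where $G^\pm$ are even-tempered i-games with $G^+ \gtrsim G^-$. If I can produce even-tempered i-games $H^+ \gtrsim H^-$ with $H^+ + H^+ \approx G^+$ and $H^- + H^- \approx G^-$, then $H := H^+ \& H^-$ is an even-tempered $\mathbb{Z}$-valued game, and Theorem~\ref{plethora}(d) gives $H + H \approx (H^+ + H^+) \& (H^- + H^-) \approx G^+ \& G^- \approx G$. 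So the whole problem reduces to halving even-tempered i-games while preserving the order $\gtrsim$.

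For the halving, I would transport $G^\pm$ into $\mathcal{I}_{-2}$ by heating. Each $G^\pm$ lies in some $I_n$, and by Theorem~\ref{slide} heating by a sufficiently large $m$ places both $G^+_{-m}$ and $G^-_{-m}$ in $I_{-2}$. Apply $\psi$ to obtain partizan games $A := \psi(G^+_{-m})$ and $B := \psi(G^-_{-m})$ with $A \ge B$ in $\mathcal{G}$ (Theorem~\ref{embedding}). Now use Corollary~\ref{twodivide} twice: first pick any $Y$ with $Y + Y = B$, then pick $Z \ge 0$ with $Z + Z = A - B$ (the corollary guarantees a half of the same sign as $A - B \ge 0$), and set $X := Y + Z$. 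Then $X + X = A$ and $X \ge Y$. By Theorem~\ref{psisurj} there are even-tempered $J_X, J_Y \in I_{-2}$ with $\psi(J_X) = X$ and $\psi(J_Y) = Y$; faithfulness and the order-preservation of $\psi$ on even-tempered games (Theorem~\ref{embedding}) give $J_X + J_X \approx G^+_{-m}$, $J_Y + J_Y \approx G^-_{-m}$, and $J_X \gtrsim J_Y$.

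Finally I would cool back. Set $H^+ := (J_X)_m$ and $H^- := (J_Y)_m$. By Theorem~\ref{coolfacts}(b,f) and Theorem~\ref{heating-preserves-sides}, cooling commutes with addition, preserves the i-game property, and is compatible with $\approx$ and $\gtrsim$, so $H^+$ and $H^-$ are even-tempered i-games with $H^+ + H^+ \approx G^+$, $H^- + H^- \approx G^-$, and $H^+ \gtrsim H^-$. This completes the reduction identified in the first paragraph and yields the required $H$.

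The main obstacle is the order-preserving halving inside $\mathcal{G}$: Corollary~\ref{twodivide} on its own produces halves but not monotonically, since e.g.\ $A = 0 \ge 0 = B$ admits halves $X = *$, $Y = 0$ with $X \not\ge Y$. The trick of first halving the \emph{difference} $A - B$ with a nonnegative representative — which Corollary~\ref{twodivide} explicitly permits — and then adding that half to a half of $B$ is what makes the order come out right. The remaining work is pure bookkeeping: keeping track of parity (preserved by cooling and by $\&$), choosing $m$ uniformly for $G^+$ and $G^-$, and invoking the appropriate isomorphism $\mathcal{I}_{-2} \cong \mathbb{Z}_2 \oplus \mathcal{G}$.
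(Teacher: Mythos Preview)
Your proposal is correct and is essentially the paper's argument: heat into $I_{-2}$, transport to $\mathcal{G}$ via $\psi$, halve the downside and then the nonnegative difference to get order-compatible halves, lift back via Theorem~\ref{psisurj}, and cool. The only cosmetic difference is that the paper forms the $\&$-product at the heated level (defining $J \approx (X_1+X_2)\&X_1$ and then cooling $J$), whereas you cool the two sides first and form $H^+\&H^-$ afterward; by Theorem~\ref{heating-preserves-sides} and Theorem~\ref{plethora}(d) these commute and yield the same $H$ up to $\approx$.
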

\begin{proof}
Choose $n$ big enough that $(G^+)_{-n}$ and $(G^-)_{-n}$ are in $I_{-2}$.  Then $(G^+)_{-n}$ is an i-game,
by Theorem~\ref{coolfacts}(f), and
\[ (G^+)_{-n} \approx_+ G_{-n}\]
by Theorem~\ref{heating-preserves-sides}, so therefore $(G^+)_{-n} \approx (G_{-n})^+$.  Similarly
$(G^-)_{-n} \approx (G_{-n})^-$.  So both the upside and downside of $G_{-n}$ are in $I_{-2}$.

Let $K = G_{-n}$.  Then $\psi(K^+) \ge \psi(K^-)$, so by Corollary~\ref{twodivide} we can find partizan games
$H_1$ and $H_2$ with
\[ H_1 + H_1 = \psi(K^-)\]
\[ H_2 + H_2 = \psi(K^+) - \psi(K^-)\]
\[ H_2 \ge 0.\]
Then by Theorem~\ref{psisurj} there are i-games $X_1$ and $X_2$ in $I_{-2}$ with
\[ \psi(X_1) = H_1\]
\[ \psi(X_2) = H_2\]
Adding $\langle -1 | 1 \rangle$ to $X_1$ or $X_2$, we can assume $X_1$ and $X_2$ are even-tempered.  Then
$\psi(X_2) = H_2 \ge 0$, so $X_2 \gtrsim 0$.  Thus $X_1 + X_2 \gtrsim X_1$, so by Theorem~\ref{inorder2}
there is a $\mathbb{Z}$-valued game $J \approx (X_1 + X_2)\& X_1$.  Then
\[ \psi((J+J)^+) = \psi(J^+ + J^+) = \psi(J^+) + \psi(J^+) = \psi(X_1 + X_2) + \psi(X_1 + X_2) =\]\[
\psi(X_1) + \psi(X_2) + \psi(X_1) + \psi(X_2) = H_1 + H_1 + H_2 + H_2 = \]\[\psi(K^-) + \psi(K^+) - \psi(K^-) = \psi(K^+)\]
and
\[ \psi((J+J)^-) = \psi(J^- + J^-) = \psi(J^-) + \psi(J^-) =\]\[ \psi(X_1) + \psi(X_1) = H_1 + H_1 = \psi(K^-).\]
Then $J + J \approx\pm K$ by Theorem~\ref{embedding}, so $J + J \approx K$.  Then taking $H = J_n$, we have
\[ H + H = J_n + J_n = (J+J)_n \approx K_n = (G_{-n})_n = G.\]
\end{proof}
As another example, a theorem of Simon Norton (proven on page 207-209 of \emph{On Numbers and Games})
says that no short partizan game has odd order.  For instance, if $G + G + G = 0$, then $G = 0$.  By our results, one can easily show
that the same thing
is true in $\mathcal{W}_\mathbb{Z}$.  Or for another corollary, the problem of determining the outcome
of a sum of $\mathbb{Z}$-valued games, given in extensive form, is PSPACE-complete, because the same problem
is PSPACE-complete for partizan games, as shown by Morris and Yedwab (according to David Wolfe's ``Go Endgames are PSPACE-Hard''
in \emph{More Games of No Chance}).

It also seems likely that i-games have canonical simplest forms, just like partizan games, and that this can
be shown using the map $\psi$.  Moreover, the mean-value theorem carries over for i-games, though for non-invertible
games, the two sides can have different mean values.  In this case, $\lfout(n.G)$ and $\rfout(n.G)$ will gradually
drift apart as $n$ goes to $\infty$ - but at approximately linear rates.
We leave such explorations to the reader.



\chapter{Boolean and n-valued games}\label{chap:bool}
\section{Games taking only a few values}
For any positive integer $n$, we follow the convention for von Neumann Ordinals and identify $n$ with the set $\{0,1,\ldots,n-1\}$.
In this chapter, we examine the structure of $n$-valued games.
For $n = 2$, this gives us Boolean games, the
theory we need to analyze \textsc{To Knot or Not to Knot}.

When considering one of these restricted classes of games, we can no longer let addition and negation
be our main operations, because $\{0,1,\ldots,n-1\}$ is not closed under either operation.
Instead, we will use order-preserving operations like those of Chapter~\ref{chap:Distortions}.  These
alternative sets of games and operations can yield different indistinguishability relations from $\approx$.

We will examine four order-preserving binary operations on $n = \{0,\ldots,n-1\}$:
\begin{itemize}
\item $x \wedge y = \min(x,y)$.
\item $x \vee y = \max(x,y)$.
\item $x \oplus_n y = \min(x+y,n-1)$.
\item $x \odot_n y = \max(0,x + y - (n-1))$.
\end{itemize}
All four of these operations are commutative and associative, and each has an identity
when restricted to $n$.  The operation $\oplus_n$ corresponds to adding and rounding down in case
of an overflow,
and $\odot$ is a dual operation going in the other direction.  Here are tables showing what
these fuctions look like for $n = 3$:
\begin{center}
\begin{tabular}{c  c}
\begin{tabular}{|c||c|c|c|}
\hline
$\oplus_3$ & 0 & 1 & 2 \\
\hline
\hline
0 & 0 & 1 & 2 \\
\hline
1 & 1 & 2 & 2 \\
\hline
2 & 2& 2 & 2 \\
\hline
\end{tabular} &
\begin{tabular}{|c||c|c|c|}
\hline
$\odot_3$ & 0 & 1 & 2 \\
\hline
\hline
0 & 0 & 0 & 0 \\
\hline
1 & 0 & 0 & 1 \\
\hline
2 & 0 & 1 & 2 \\
\hline
\end{tabular} \\ \\
\begin{tabular}{|c||c|c|c|}
\hline
$\wedge$ & 0 & 1 & 2 \\
\hline
\hline
0 & 0 & 0 & 0 \\
\hline
1 & 0 & 1 & 1 \\
\hline
2 & 0 & 1 & 2 \\
\hline
\end{tabular} & 
\begin{tabular}{|c||c|c|c|}
\hline
$\wedge$ & 0 & 1 & 2 \\
\hline
\hline
0 & 0 & 1 & 2 \\
\hline
1 & 1 & 1 & 2 \\
\hline
2 & 2 & 2 & 2 \\
\hline
\end{tabular}
\end{tabular}
\end{center}

\begin{definition}
If $G$ and $H$ are $n$-valued games, we define
$G \wedge H$, $G \vee H$, $G \oplus_n H$ and $G \odot_n H$ by extending
these four operations to $n$-valued games, in the sense of Definition~\ref{def:extension}.
\end{definition}

Clearly $G \wedge H$ and $G \vee H$ don't depend on $n$,
justifying the notational lack of an $n$.  For $n = 2$, $\wedge$ is the same as $\odot_2$
and $\vee$ is the same as $\oplus_2$.  It is this case, specifically the operation
of $\vee = \oplus_2$, that is needed to analyze sums of positions in \textsc{To Knot or Not to Knot}.

%


Our goal is to understand the indistinguishability quotients of $n$-valued games for various combinations of these operations.
The first main result, which follows directly from Theorem~\ref{fineenough}, is that
indistinguishability is always as coarse as the standard $\approx$ relation.

\begin{theorem}\label{fine2}
Let $f_1, f_2, \ldots, f_k$ be order-preserving operations $f_i:~(n)^i \to n$, and
$\sim$ be indistinguishability on $n$-valued games with respect to $\tilde{f}_1, \tilde{f}_2, \ldots, \tilde{f}_k$.
Then $\sim$ is as coarse as $\approx$.
\end{theorem}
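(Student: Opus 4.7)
The plan is to invoke the maximality characterization of indistinguishability together with Theorem~\ref{fineenough}. Recall from Theorem~\ref{indist} that $\sim$ is defined as the \emph{largest} equivalence relation on $n$-valued games that (a) respects outcomes and (b) is a congruence with respect to each $\tilde{f}_i$. So to show $\sim$ is at least as coarse as $\approx$, it suffices to verify that $\approx$ (restricted to $\mathcal{W}_n$, viewed as a subclass of $\mathcal{W}_\mathbb{Z}$) satisfies these two properties; maximality of $\sim$ then forces $G \approx H \Rightarrow G \sim H$.

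First I would check condition (a). If $G \approx H$ then by definition $\outcome(G + X) = \outcome(H + X)$ for all $X \in \mathcal{W}_\mathbb{Z}$; taking $X = 0$ gives $\outcome(G) = \outcome(H)$, which is exactly what is needed since outcomes of $n$-valued games are determined as a special case of the $\mathbb{Z}$-valued outcome function.

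Next I would handle condition (b), which is where Theorem~\ref{fineenough} does all the work. Each $f_i:(n)^{n_i} \to n$ is order-preserving by hypothesis, and regarding it as a function into $\mathbb{Z}$ (via $n \hookrightarrow \mathbb{Z}$) it remains order-preserving. Thus if $G_j \approx H_j$ for $1 \le j \le n_i$, Theorem~\ref{fineenough} (applied with $\Box$ being $\approx$) yields
\[\tilde{f}_i(G_1,\ldots,G_{n_i}) \approx \tilde{f}_i(H_1,\ldots,H_{n_i}),\]
which is precisely the congruence condition required.

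The conclusion is then immediate: $\approx$ restricted to $\mathcal{W}_n$ is an equivalence relation satisfying (a) and (b) of Theorem~\ref{indist}, so by the maximality of $\sim$ we have $(\approx) \subseteq (\sim)$, i.e., $\sim$ is coarser than $\approx$. There is essentially no obstacle here — all the heavy lifting was already done in Theorem~\ref{fineenough}, and the only subtle point is making sure the quantifier in the definition of $\approx$ (which ranges over all $X \in \mathcal{W}_\mathbb{Z}$, not just $n$-valued $X$) is strong enough to imply the corresponding property on the smaller class $\mathcal{W}_n$ — and it trivially is, since a universal statement over a larger class implies the same statement over any subclass.
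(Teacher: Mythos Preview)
Your proposal is correct and follows essentially the same approach as the paper: verify that $\approx$ satisfies conditions (a) and (b) of Theorem~\ref{indist}, with (b) following directly from Theorem~\ref{fineenough}, and then invoke the maximality of $\sim$. The paper's proof is a two-sentence version of exactly this argument.
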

\begin{proof}\
If $A \approx B$, then $\outcome(A) = \outcome(B)$, so $\approx$ satisfies condition (a)
of Theorem~\ref{indist}.  Part (b) of Theorem~\ref{indist} follows from Theorem~\ref{fineenough}.
\end{proof}
%

So at this point, we know that $\approx$ does a good enough job
of classifying $n$-valued games, and much of the theory for addition
and negation carries over to this case.  However we can possibly do better,
in specific cases, by considering the coarser relation of indistinguishability.

\section{The faithful representation revisited: n = 2 or 3}
Before examining the indistinguishability quotient in the cases of $\wedge$, $\vee$,
$\oplus_n$ and $\odot_n$, we return to the map $\psi : I_{-2} \to \mathcal{G}$.

\begin{theorem}
If $G$ is an $n$-valued i-game, then $G \in I_{-n+1}$.
\end{theorem}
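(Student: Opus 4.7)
The plan is to prove this by a straightforward induction on the game tree of $G$, with the crux being a simple observation about the range of outcomes for an $n$-valued game.

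First I would establish the two facts that let induction go through: options of an i-game are i-games (by definition of i-game), and options of an $n$-valued game are $n$-valued (by construction of $\mathcal{W}_\mathcal{S}$). Together these mean that if $G$ is an $n$-valued i-game, every option of $G$ is again an $n$-valued i-game, so by the inductive hypothesis every option lies in $I_{-n+1}$. This handles the recursive clause in the definition of $I_{-n+1}$.

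It then remains to verify the local condition at $G$ itself. Split on parity. If $G$ is even-tempered, then the i-game hypothesis gives $\loutcome(G) \ge \routcome(G)$, which is exactly the even-tempered requirement for membership in $I_{-n+1}$. If $G$ is odd-tempered, I need $\loutcome(G) - \routcome(G) \ge -(n-1)$. Here is where being $n$-valued is used: since $G \in \mathcal{W}_n$, its outcomes satisfy $\outcome(G) \in n \times n = \{0,1,\ldots,n-1\}^2$, so $\loutcome(G) \ge 0$ and $\routcome(G) \le n-1$, giving $\loutcome(G) - \routcome(G) \ge 0 - (n-1) = -n+1$, exactly as required.

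There is no genuine obstacle here; the only subtle point worth flagging explicitly is that the bound $-n+1$ in the definition of $I_{-n+1}$ is chosen precisely to match the maximum possible spread between the two outcomes of an $n$-valued game. This is why the theorem works uniformly across all $n$ rather than requiring an ad hoc argument. The base case (when $G$ is a number in $\{0,1,\ldots,n-1\}$) is trivial since numbers have no options and both local conditions of $I_{-n+1}$ are vacuously or numerically satisfied by $\loutcome(G) = \routcome(G) = G$.
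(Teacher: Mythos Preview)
Your proof is correct and follows the same approach as the paper. The paper's version is more compressed: it simply observes that since every subposition of an $n$-valued i-game is again an $n$-valued i-game, and the even-tempered clause of $I_{-n+1}$ is already guaranteed by the i-game hypothesis, the only thing to check is the odd-tempered outcome bound $\loutcome(G) - \routcome(G) \ge -n+1$, which follows from $\loutcome(G), \routcome(G) \in \{0,\ldots,n-1\}$. Your explicit induction unpacks exactly this reasoning.
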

\begin{proof}
In other words, whenever $G$ is an $n$-valued odd-tempered i-game, $\loutcome(G) - \routcome(G) \ge -n + 1$.  This
follows from the fact that $\loutcome(G)$ and $\routcome(G)$ must both lie in the range $n = \{0,1,\ldots,n-1\}$.
\end{proof}
So in particular, if $n = 2$ or 3, then $G$ is in $I_{-2}$, the domain of $\psi$, so we can apply
the map $\psi$ to $G$.  Thus if $G$ and $H$ are two $2$- or $3$-valued i-games, then $G \approx H$
if and only if
\[ \psi(G) = \psi(H) \text{ and $G$ and $H$ have the same parity.}\]

In fact, in these specific cases we can do better, and work with non-i-games, taking sides and representing
them in $\mathcal{G}$ in one fell swoop:
\begin{definition}
If $G$ is a $3$-valued game, then we recursively define $\psi^+(G)$ to be
\[ \psi^+(n) = n\]
when $n = 0,1,2$, and otherwise
\[ \psi^+(G) = \text{``}\{\psi^+(G^L)|\psi^+(G^R) \}\text{''},\]
where ``$\{H^L|H^R\}$'' is $\{H^L|H^R\}$ unless there is more than
one integer $x$ satisfying $H^L \lhd x \lhd H^R$ for every $H^L$ and
$H^R$, in which case we take the \emph{largest} such $x$.

Similarly, we define $\psi^-(G)$ to be
\[ \psi^-(n) = n\]
when $n = 0,1,2$, and otherwise
\[ \psi^-(G) = \text{,,}\{\psi^-(G^L)|\psi^-(G^R)\}\text{,,},\]
where ,,$\{H^L|H^R\}$,, is $\{H^L|H^R\}$ unless there is more than
one integer $x$ satisfying $H^L \lhd x \lhd H^R$ for every $H^L$ and
$H^R$, in which case we take the \emph{smallest} such $x$.
\end{definition}
As an example of funny brackets,
\[ \text{``}\{*|2*\}\text{''} = 2 \ne 0 = \{*|2*\}\]

The point of these functions $\psi^\pm$ is the following:
\begin{theorem}\label{psipm}
If $G$ is a $3$-valued game, then $\psi^+(G) = \psi(G^+)$ and $\psi^-(G) = \psi(G^-)$,
where we take $G^+$ and $G^-$ to be $3$-valued games, as made possible by Theorem~\ref{plethora}(g).
\end{theorem}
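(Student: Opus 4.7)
The plan is to prove both $\psi^+(G) = \psi(G^+)$ and $\psi^-(G) = \psi(G^-)$ simultaneously by induction on $G$, with the $\psi^-$ case being symmetric (one replaces ``largest'' by ``smallest'' and uses $G^- \approx_- \loutcome(H)$ in the degenerate case via Lemma~\ref{gap}). The base case, where $G$ is a number $n \in \{0,1,2\}$, is immediate from the definitions. For the inductive step, write $G = \langle L_1, \ldots \,|\, R_1, \ldots \rangle$, choose $3$-valued representatives $L_i^+, R_j^+$ (possible by Theorem~\ref{plethora}(g)), and form $H = \langle L_1^+, \ldots \,|\, R_1^+, \ldots \rangle$. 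Recall from the proof of Theorem~\ref{sides-complete} that $G^+ \approx H$ whenever $H$ is an i-game, and otherwise (possible only when $G$ is even-tempered with $\loutcome(H) < \routcome(H)$) $G^+ \approx \routcome(H)$.

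The heart of the argument is to compare the ``funny bracket'' in the definition of $\psi^+$ with the ordinary partizan bracket that would give $\psi(H)$. By the inductive hypothesis $\psi^+(L_i) = \psi(L_i^+)$ and $\psi^+(R_j) = \psi(R_j^+)$, so I would apply Theorem~\ref{subtler} (each option is a $3$-valued i-game and hence lies in $I_{-2}$) to compute the set
\[ \mathcal{N} = \{x \in \mathbb{Z} : \psi^+(L_i) \lhd x \lhd \psi^+(R_j) \text{ for all } i,j\}. \]
When $H$ is odd-tempered, its options are even-tempered and Theorem~\ref{subtler} yields $\mathcal{N} = \{x : \loutcome(H) < x < \routcome(H)\}$; when $H$ is even-tempered, its options are odd-tempered and Theorem~\ref{subtler} yields $\mathcal{N} = \{x : \loutcome(H) \le x \le \routcome(H)\}$. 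Since $\loutcome(H), \routcome(H) \in \{0,1,2\}$, we have $|\mathcal{N}| \le 1$ both in the odd-tempered case (strict interval in a $3$-point range) and in the even-tempered i-game case (where $\loutcome(H) \ge \routcome(H)$). In these cases the funny bracket collapses to the ordinary partizan bracket, and because $H$ then lies in $I_{-2}$ (automatic for odd-tempered $3$-valued games, equivalent to being an i-game for even-tempered ones), we conclude $\psi^+(G) = \{\psi(L_i^+)\,|\,\psi(R_j^+)\} = \psi(H) = \psi(G^+)$.

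The only remaining case is the ``bad'' one, where $H$ is even-tempered with $\loutcome(H) < \routcome(H)$. Here $\mathcal{N}$ has at least two elements and its maximum is $\routcome(H)$, so the definition of $\psi^+$ yields $\psi^+(G) = \routcome(H)$. On the other hand, the construction of $G^+$ in the proof of Theorem~\ref{sides-complete} uses Lemma~\ref{gap} to take $G^+ = \routcome(H)$ in exactly this case, so $\psi(G^+) = \routcome(H)$ as well, and the equality holds.

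The main obstacle is purely bookkeeping: Theorem~\ref{subtler} gives strict versus non-strict inequalities depending on parity, and the $3$-valued hypothesis is precisely what keeps $\mathcal{N}$ small enough that the funny bracket degenerates to the ordinary partizan bracket outside the ``bad'' case. The conceptual point behind the definition is that the funny bracket is engineered to reproduce the $\routcome(H)$-replacement of Theorem~\ref{sides-complete}: the partizan simplicity rule would select the \emph{simplest} integer in $\mathcal{N}$, which in general differs from $\max \mathcal{N} = \routcome(H)$, so the ``take the largest'' modification is genuinely needed to make $\psi^+$ compute $\psi \circ (\cdot)^+$ on the nose.
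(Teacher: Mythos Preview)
Your proof is correct and follows essentially the same route as the paper's: induct on $G$, form $H$ from the upsides of the options, and split into the cases where $H$ is or is not an i-game, using Theorem~\ref{subtler} to identify the set $\mathcal{N}$ of integers between all $\psi(H^L)$ and all $\psi(H^R)$ and show it has at most one element except in the ``bad'' even-tempered case. The only cosmetic difference is that you compute $\mathcal{N}$ explicitly (as $(\loutcome(H),\routcome(H))$ or $[\loutcome(H),\routcome(H)]$ according to parity) and bound its cardinality directly, whereas the paper argues by contradiction assuming two integers lie in $\mathcal{N}$; the two arguments unwind to the same use of Theorem~\ref{subtler} and the $3$-valued bound.
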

\begin{proof}
We prove $\psi^+(G) = \psi(G^+)$; the other equation follows similarly.
Proceed by induction.  If $G$ is one of $0,1,2$, this is obvious.  Otherwise, let
$G = \langle G^L | G^R \rangle $ and let $H = \langle H^L | H^R \rangle$ be a game whose
options are $H^L \approx (G^L)^+$ and $H^R \approx (G^R)^+$, similar to the proof of Theorem~\ref{sides-complete}.
By Theorem~\ref{plethora}(g), we can assume that $H^L,H^R,H$ are $3$-valued games, because $G$ is.
Then $G \approx_+ H$, and in fact $G^+ \approx H$ as long as $H$ is an i-game.  We break into
two cases:

(Case 1) $H$ is an i-game.  Then $G^+ \approx H$, so we want to show that
$\psi^+(G) = \psi(H)$.  By induction, $\psi^+(G^L) = \psi(H^L)$ and
$\psi^+(G^R) = \psi(H^R)$.  Then we want to show the equality of
\[ \psi(H) = \{\psi(H^L)|\psi(H^R)\}\]
and
\[ \psi^+(G) = \text{``}\{\psi^+(G^L)|\psi^+(G^R)\}\text{''}
= \text{``}\{\psi(H^L)|\psi(H^R)\text{''}.\]
So, in light of the simplicity rule, it suffices to show that there 
is a most one integer $n$ with $\psi(H^L) \lhd n \lhd \psi(H^R)$ for all $H^L$
and $H^R$.  Suppose for the sake of contradiction
that
\begin{equation} \psi(H^L) \lhd n \le n + 1 \lhd \psi(H^R)\label{dubious}\end{equation}
 for all $H^L$ and $H^R$.

Now if $H$ is even-tempered, then by Theorem~\ref{subtler} this indicates that
$\routcome(H^L) \le n$ and $n + 1 \le \loutcome(H^R)$ for every $H^L$
and $H^R$.  Thus $\loutcome(H) \le n < n + 1\le \routcome(H)$ contradicting
the assumption that $H$ is an i-game.  Similarly, if $H$ is odd-tempered, then Theorem~\ref{subtler}
translates (\ref{dubious}) into $\routcome(H^L) < n < n + 1 < \loutcome(H^R)$
for every $H^L$ and $H^R$ so
that $\loutcome(H) < n$ and $\routcome(H) > n + 1$.  Thus $\loutcome(H) - \routcome(H) \le 3$,
which is impossible since $H$ is a $3$-valued game.

(Case 2) $H$ is not an i-game. Then $H$ is even-tempered (and thus $G$ is also) and $\loutcome(H) - \routcome(H) < 0$.
Then by Lemma~\ref{gap}, $G \approx_+ H \approx_+ \routcome(H)$.  Since $\routcome(H)$
is a number, it is an i-game and $G^+ \approx \routcome(H)$.

Since $H^R$ is odd-tempered, if $n$ is an integer then $n \lhd \psi(H^R) \iff n \le \loutcome(H^R)$,
by Theorem~\ref{subtler}.
Similarly, $\psi(H^L) \lhd n \iff \routcome(H^L) \le n$.  So an integer $n$ satisfies
\[ \psi(H^L) \lhd n \lhd \psi(H^R) \]
for every $H^L$ and $H^R$ iff $\routcome(H^L) \le n \le \loutcome(H^R)$ for every $H^L$ and $H^R$,
which is the same as saying that $\loutcome(H) \le n \le \routcome(H)$.  Since
$\loutcome(H) - \routcome(H) < 0$, it follows that
\[ \psi^+(G) = \text{``}\{\psi^+(G^L)|\psi^+(G^R)\}\text{''}
= \text{``}\{\psi(H^L)|\psi(H^R)\text{''} = \routcome(H) = \psi(\routcome(H)).\]
Since $G^+ \approx \routcome(H)$, $\psi(G^+) = \psi(\routcome(H))$ and we are done.
\end{proof}
It then follows that a $3$-valued game $G$ is determined up to $\approx$ by its parity, $\psi^+(G)$, and
$\psi^-(G)$.

Also, we see that $\psi^-(G)$ could have been defined using ordinary $\{\cdot|\cdot\}$ brackets
rather than funny ,,$\{\cdot|\cdot\}$,, brackets, since by the simplicity rule, a difference could only arise if
$\psi^-(G)$ equaled an integer $n < 0$, in which case $\psi(G^-) = \psi^-(G) = n$, so that $G^- \approx n$
or $G^- \approx \langle n - 1 | n + 1 \rangle = n + \langle - 1 | 1 \rangle$.  But neither
$n$ nor $\langle n - 1 | n + 1 \rangle$ could equal a $3$-valued game, because both games have
negative left outcome, and the left outcome of a $3$-valued game should be 0, 1, or 2.  Then taking
$G^-$ to be a 3-valued game, we would get a contradiction.

\section{Two-valued games}
If we restrict to $2$-valued games, something nice happens: there are only finitely
many equivalence classes, modulo $\approx$.
\begin{theorem}\label{twoval}
Let $G$ be a $2$-valued game.  If $G$ is even-tempered, then $\psi^-(G)$
is one of the following eight values:
\[ 0,~a = \{\frac{1}{2}|*\},~b = \{\{1|\frac{1}{2}*\}|*\},~c = \frac{1}{2}*,\]
\[ d = \{1*|*\},~e = \{1*|\{\frac{1}{2}*|0\}\},~f = \{1*|\frac{1}{2}\},~1\]
Similarly, if $G$ is odd-tempered, then $\psi^-(G)$ is one of the following
eight values:
\[ *,~a* = \{\frac{1}{2}*|0\},~b* = \{\{1*|\frac{1}{2}\}|0\},~c* = \frac{1}{2},\]
\[ d* = \{1|0\},~e* = \{1|\{\frac{1}{2}|*\}\},~f* = \{1|\frac{1}{2}*\},~1*\]
\end{theorem}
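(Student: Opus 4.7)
The plan is to prove this by strong induction on the construction of $G$. The base cases $G = 0$ and $G = 1$ give directly $\psi^-(G) = 0$ and $\psi^-(G) = 1$, both of which appear on the even-tempered list. For the inductive step, suppose $G = \langle L_1, \ldots, L_r \mid R_1, \ldots, R_s \rangle$ is not a number. All of its options are $2$-valued games of the opposite parity, so by the inductive hypothesis each $\psi^-(L_i)$ and each $\psi^-(R_j)$ lies in the $8$-element list for the opposite parity.

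Before writing down a closed-form expression for $\psi^-(G)$, I would verify that the ``funny'' brackets ,,$\{\cdot|\cdot\}$,, in the definition of $\psi^-$ reduce to ordinary partizan-game brackets when $G$ is $2$-valued, by the same argument used in the remark following Theorem~\ref{psipm}: any discrepancy would force the simplicity rule to select a strictly negative integer, which by Theorem~\ref{subtler} would force $G^-$ to have negative left outcome, contradicting the fact that left outcomes of $2$-valued games lie in $\{0,1\}$. Consequently
\[ \psi^-(G) \;=\; \bigl\{\, \psi^-(L_1), \ldots, \psi^-(L_r) \,\big|\, \psi^-(R_1), \ldots, \psi^-(R_s) \,\bigr\} \]
as a partizan game, and the theorem reduces to a closure statement: for every nonempty pair of subsets $\mathcal{L}$ and $\mathcal{R}$ of one of the two $8$-element lists, the partizan game $\{\mathcal{L} \mid \mathcal{R}\}$ equals, in $\mathcal{G}$, one of the $8$ values of the opposite list.

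My approach to the closure statement is first to tabulate the partial order induced by $\le$ on each $8$-element list using Theorems~\ref{betwixt} and~\ref{subtler}. (Not all pairs are comparable: a direct computation shows $d - c \equiv \{\pm 1/2\}$, which is fuzzy with $0$, so $c$ and $d$ are incomparable.) Once this partial order is known, any subset $\mathcal{L}$ reduces after dropping dominated options to one of its antichains, and similarly for $\mathcal{R}$; this cuts the number of cases down from $(2^8-1)^2$ to a manageable number indexed by pairs of antichains. For each such pair I would compute the canonical form of $\{\mathcal{L}|\mathcal{R}\}$ by repeated application of the simplicity rule together with the dominated-move and reversible-move theorems, and verify that the result lies on the listed $8$ values. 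The involution $G \mapsto G + *$ of Theorem~\ref{fundinv}, which at the level of canonical forms simply adds $*$ and swaps the two $8$-element lists, cuts the work in half: only one of the two parities needs independent verification.

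The main obstacle is the tedium of this closure verification. Even after reducing to antichains, there remain many cases to process, and correctly evaluating each $\{\mathcal{L}|\mathcal{R}\}$ requires repeated reversibility reductions; for example, a prospective left option of the form $b* = \{\{1*|1/2\}|0\}$ may be reversible through its right option $0$, and a prospective right option of the form $1* = \{1|1\}$ is typically reversed through $1$. These reductions are exactly what force the image of $\psi^-$ to remain confined to $16$ values rather than proliferating; the content of the theorem is essentially the correctness of the resulting pair of $8\times 8$ tables, which can be presented explicitly in the final writeup.
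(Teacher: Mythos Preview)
Your approach is the same as the paper's: reduce by induction to the closure statement that $\{A\,|\,B\}$ lands in the appropriate eight-element set whenever $A$ and $B$ are nonempty subsets of the other, pass to antichains (each poset has exactly nine), and verify the resulting finite tables. The paper carries this out verbatim, presenting both tables as computed with Wolfe's gamesman's toolkit and noting that by monotonicity only the extremal (bold) entries need direct checking.

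One correction: the involution $G \mapsto G+*$ does \emph{not} let you derive one table from the other. For that you would need $\{A+*\,|\,B+*\} = \{A\,|\,B\}+*$ as partizan games whenever $A,B\subseteq S$, and this already fails at $A=\{0\}$, $B=\{1\}$: the simplicity rule gives $\{*\,|\,1*\}=0$ (both $0$ and $1$ fit between, and $0$ is simplest), whereas $\{0\,|\,1\}+* = \tfrac12+* = \tfrac12*$. So both tables require independent verification; the paper indeed checks them separately.
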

\begin{proof}
Let $S = \{0,a,b,c,d,e,f,1\}$ and $T = \{*,a*,b*,c*,d*,e*,f*,1*\}$.
Because of the recursive definition of $\psi^-$, it suffices to show that
\begin{enumerate}
\item $0,1 \in S$.
\item If $A, B$ are nonempty subsets of $S$, then $\{A|B\} \in T$.
\item If $A, B$ are nonempty subsets of $T$, then $\{A|B\} \in S$.
\end{enumerate}
Because $S$ and $T$ are finite, all of these can be checked by inspection: (1) is obvious,
but (2) and (3) require a little more work.
To make life easier, we can assume that $A$ and $B$ have no dominated moves,
i.e., that $A$ and $B$ are antichains.  Now as posets $S$ and $T$ look like:
\begin{figure}[H]
\begin{center}
\includegraphics[width=2.7in]
					{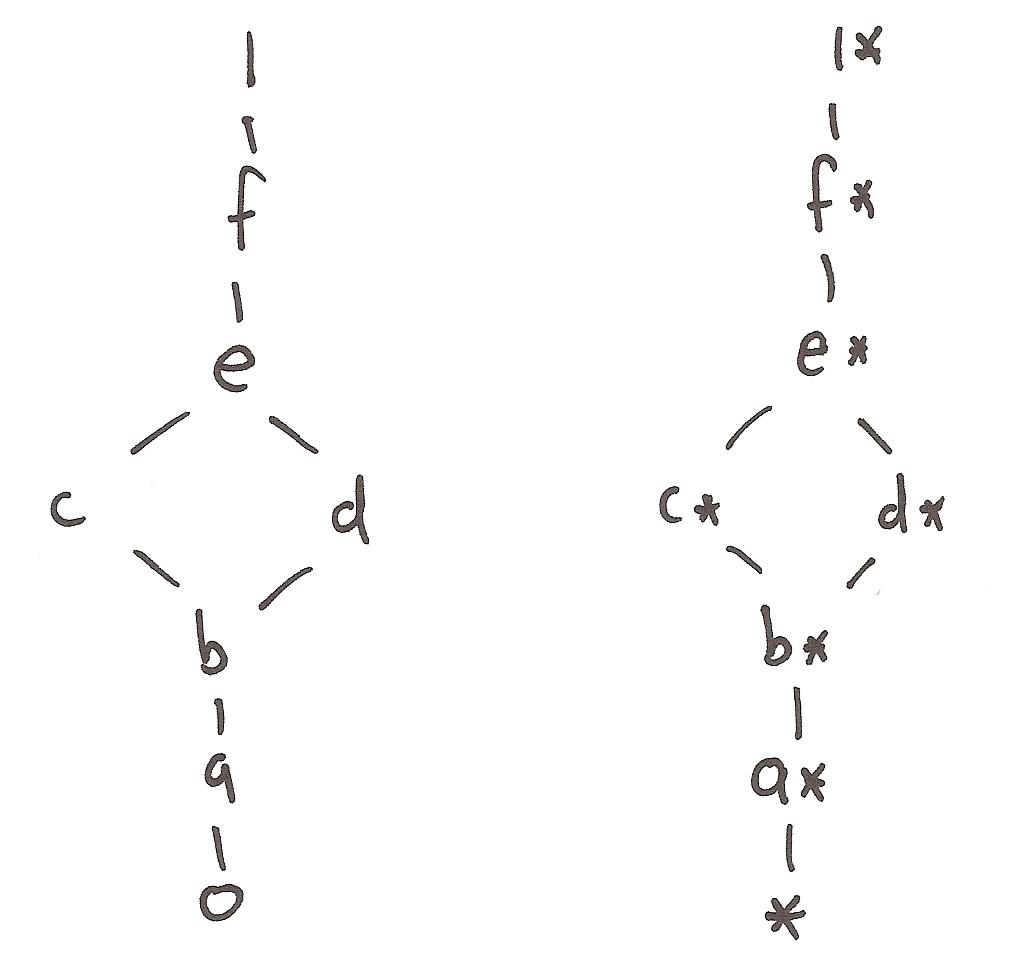}
\end{center}
\end{figure}
and it is clear that these posets have very few antichains.  In particular,
each of $S$ and $T$ has only nine nonempty antichains.

Using David Wolfe's gamesman's toolkit,
I produced the following tables.  In each table, Left's options
are along the left side and Right's options are along the top.
For even-tempered games:
\begin{center}
\begin{tabular}{l || c|c|c|c|c|c|c|c|c }
A$~\setminus~$B & 0 & a & b & c,d & c & d & e & f & 1 \\
\hline
\hline
0 &\emph{\textbf{*}}&\emph{\textbf{c*}}&c*&c*&c*&c*&c*&c*&c* \\ \hline
a &*&c*&c*&c*&c*&c*&c*&c*&c* \\ \hline
b &*&c*&c*&c*&c*&c*&c*&c*&c* \\ \hline
d &\emph{\textbf{*}}&c*&c*&c*&c*&c*&c*&c*&c* \\ \hline
c &\emph{\textbf{a*}}&c*&c*&c*&c*&c*&c*&c*&c* \\ \hline
c,d &a*&c*&c*&c*&c*&c*&c*&c*&c* \\ \hline
e &\emph{\textbf{a*}}&c*&c*&c*&c*&c*&c*&c*&c* \\ \hline
f &\emph{\textbf{b*}}&c*&c*&c*&c*&c*&c*&c*&\emph{\textbf{c*}} \\ \hline
1 &\emph{\textbf{d*}}&\emph{\textbf{e*}}&\emph{\textbf{f*}}&f*&\emph{\textbf{f*}}&\emph{\textbf{1*}}&1*&1*&\emph{\textbf{1*}} \\
\end{tabular}
\end{center}
and for odd-tempered games:
\begin{center}
\begin{tabular}{l || c|c|c|c|c|c|c|c|c }
A$~\setminus~$B & * & a* & b* & c*,d* & c* & d* & e* & f* & 1* \\
\hline
\hline
* &\emph{\textbf{0}}&0&0&0&0&0&0&0&0 \\ \hline
a* &0&0&0&0&0&0&0&0&0 \\ \hline
b* &0&0&0&0&0&0&0&0&0 \\ \hline
d* &0&0&0&0&0&0&0&0&\emph{\textbf{0}} \\ \hline
c* &\emph{\textbf{a}}&\emph{\textbf{c}}&c&c&c&\emph{\textbf{1}}&1&1&1 \\ \hline
c*,d* &a&c&c&c&c&1&1&1&1 \\ \hline
e* &\emph{\textbf{a}}&c&c&c&c&1&1&1&1 \\ \hline
f* &\emph{\textbf{b}}&c&c&c&\emph{\textbf{c}}&1&1&1&1 \\ \hline
1* &\emph{\textbf{d}}&\emph{\textbf{e}}&\emph{\textbf{f}}&f&\emph{\textbf{f}}&1&1&1&\emph{\textbf{1}} \\
\end{tabular}
\end{center}
In fact, by monotonicity, only the bold entries need to be checked.
\end{proof}

\begin{corollary}
If $G$ is an even-tempered $2$-valued game, then $\psi^+(G)$ and $\psi^-(G)$
are among $S = \{0,a,b,c,d,e,f,1\}$, and if $G$ is an odd-tempered $2$-valued game,
then $\psi^+(G)$ and $\psi^-(G)$ are among $T = \{*,a*,b*,c*,d*,e*,f*,1*\}$.
Moreover, all these values can occur: if $x,y \in S$ or $x,y \in T$ have $x \le y$
then there is a game $G$ with $\psi^-(G) = x$ and $\psi^+(G) = y$.  Modulo
$\approx$, there are exactly sixteen $2$-valued i-games and seventy $2$-valued games.
\end{corollary}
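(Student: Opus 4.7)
The plan has four main steps.

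First, the claim for $\psi^-$ is exactly the content of Theorem~\ref{twoval}, so I only need to transfer it to $\psi^+$. I would do this by the negation symmetry. Given a $2$-valued game $G$, the game $G' := (-G) + 1$ is again $2$-valued (since $-G$ is $\{-1,0\}$-valued and $1$ is an i-game), and by Theorem~\ref{plethora}(b,c) together with the fact that adding an i-game commutes with taking sides, we have $(G')^{-} \approx 1 - G^{+}$ and $(G')^{+} \approx 1 - G^{-}$. Applying $\psi$ and invoking Theorem~\ref{psipm}, this gives $\psi^{+}(G) = 1 - \psi^{-}(G')$ (and similarly with the roles swapped). A direct check on the eight elements of $S$ and the eight elements of $T$ shows that each is fixed setwise by $x \mapsto 1-x$: $0 \leftrightarrow 1$, $a \leftrightarrow f$, $b \leftrightarrow e$, $c \leftrightarrow c$, $d \leftrightarrow d$, and likewise for $T$. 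Combined with Theorem~\ref{twoval} applied to $G'$, this forces $\psi^{+}(G) \in S$ (resp.\ $T$) whenever $G$ is even- (resp.\ odd-)tempered.

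Second, for realization, I would argue that each $x \in S$ occurs as the common value $\psi^{-}(G) = \psi^{+}(G) = \psi(G)$ for some $2$-valued i-game $G$. The recursion in the proof of Theorem~\ref{twoval} in fact constructs, for each $x \in S \cup T$, a $2$-valued game form whose $\psi^{-}$ equals $x$; by the closure calculation the same form has $\psi^{+}$ equal to $x$ too, so $G^+ \approx G^-$, hence $G$ is an i-game by Theorem~\ref{plethora}(f). With the $2$-valued i-game representatives in hand, Theorem~\ref{allpairs} (applied with $\mathcal{S} = \{0,1\}$) furnishes, for every pair $x \le y$ in $S$ (or in $T$), a $2$-valued game $G$ with $G^{-} \approx $ the representative of $x$ and $G^{+} \approx $ the representative of $y$; by Theorem~\ref{psipm} this is exactly what we want.

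Third, the count of i-games is immediate: Theorem~\ref{plethora}(f) says a game is invertible iff $G^+ \approx G^-$, so $2$-valued i-games correspond via $\psi$ bijectively to $S \sqcup T$, yielding $8 + 8 = 16$. For the total count, by Theorem~\ref{sameparity} the two parities contribute disjointly, and within each parity an equivalence class of games corresponds to a pair $(\psi^{-}(G), \psi^{+}(G))$ with $\psi^{-}(G) \le \psi^{+}(G)$. I would read off the Hasse diagram in the figure \texttt{poset-of-eight} and count comparable pairs (including the diagonal) to get $35$ per parity, hence $70$ in total.

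The main obstacle I expect is the bookkeeping in the last step: verifying directly from the Hasse diagram of $S$ that there are exactly $35$ pairs $x \le y$ (equivalently, $27$ strict comparabilities beyond the $8$ diagonal pairs), and that the induced partial order on $S$ really is the one implied by the $\psi$-embedding into $\mathcal{G}$ (so that no spurious comparabilities appear). This reduces to a finite calculation among the eight explicit partizan game values listed in Theorem~\ref{twoval}, which, while tedious, is entirely mechanical given the tools of Chapter~2.
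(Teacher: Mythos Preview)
Your proof is correct, and Steps~2--4 essentially match the paper's argument. The one place you diverge is Step~1, where you transfer the $\psi^-$ result to $\psi^+$ via the negation involution $G \mapsto 1 + (-G)$ and check that $S$ and $T$ are stable under $x \mapsto 1-x$. This works, but the paper's route is shorter: by Theorem~\ref{plethora}(g) we may take $G^+$ itself to be a $2$-valued game, and since $G^+$ is an i-game we have $\psi^+(G) = \psi(G^+) = \psi^-(G^+)$, so Theorem~\ref{twoval} applied directly to $G^+$ already gives $\psi^+(G) \in S$ (resp.\ $T$). Your detour through negation is not needed, though it does buy you the pleasant side observation that $x \mapsto 1-x$ permutes $S$ (equivalently, that the Norton coordinates satisfy $u \mapsto 1-u$), which the paper only notes later.
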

\begin{proof}
All eight values of $\psi^-$ actually occur, because they are (by inspection) built up in a parity-respecting
way from $0$, $1$, $\frac{1}{2} = \{0|1\}$, and $* = \{0|0\}$.  Now if $G$ is an i-game,
then $\psi(G) = \psi(G^-) = \psi^-(G) \in S \cup T$, and so if $H$ is any two-valued game,
then $\psi^+(H) = \psi(H^+) = \psi^-(H^+) \in S \cup T$.  Moreover, $\psi(G)$
and $\psi^+(H)$ will clearly be in $S$ if $G$ or $H$ is even-tempered, and $T$ if odd-tempered.  All pairs
of values occur because of Theorem~\ref{allpairs}.  Since $S$ and $T$ have eight elements,
and an i-game is determined by its image under $\psi$, it follows that there are exactly eight even-tempered i-games
and eight odd-tempered i-games, making sixteen total.  Similarly, by inspecting $S$ and $T$
as posets, we can see that there are exactly 35 pairs $(x,y) \in S \times S$
with $x \le y$.  So there are exactly 35 even-tempered games and similarly 35 odd-tempered games, making 70 in total.
\end{proof}

A couple of things should be noted about the values in $S$ and in $T$.
First of all, $S \cap T = \emptyset$.  It
follows that a $2$-valued game $G$ is determined modulo $\approx$
by $\psi^+(G)$ and $\psi^-(G)$, since they in turn determine the parity of $G$.
Second and more importantly,
by direct calculation one can verify that the values of $S$ are actually
all obtained by Norton multiplication with $\mathbf{1} \equiv \{\frac{1}{2}|\}$:
\[ 0 = 0.\mathbf{1},\quad a = \frac{1}{4}.\mathbf{1},\quad b = \frac{3}{8}.\mathbf{1},\quad c = \frac{1}{2}.\mathbf{1}\]
\[ d = \frac{1}{2}*.\mathbf{1},\quad e = \frac{5}{8}.\mathbf{1},\quad f = \frac{3}{4}.\mathbf{1},\quad 1 = 1.\mathbf{1}\]
So the poset structure of $S$ comes directly from the poset structure of
\[ U = \{0,\frac{1}{4},\frac{3}{8},\frac{1}{2},\frac{1}{2}*,\frac{5}{8},\frac{3}{4},1\}.\]
Similarly, $T$ is just $\{s + *\,:\,s \in S\}$, so $T$ gets its structure in the same way.  

Understanding these values through Norton multiplication makes the structure of $2$-valued games more transparent.
\begin{lemma}\label{halves}
For $G \in \mathcal{G}$, $G.\mathbf{1} \ge * \iff G \ge 1/2$ and similarly $G.\mathbf{1} \le * \iff G \le -1/2$.
\end{lemma}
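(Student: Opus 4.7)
The plan is to mimic the proof of Lemma~\ref{slippery} almost line-for-line, replacing the game $1*$ with $\mathbf{1} = \{\tfrac12 \mid\}$. First I would reduce the two claimed equivalences to one. Since Norton multiplication is an endomorphism, $(-G).\mathbf{1} \equiv -(G.\mathbf{1})$, and since $*$ equals its own negative, the second equivalence $G.\mathbf{1} \le * \iff G \le -\tfrac12$ follows by applying the first equivalence to $-G$. So it suffices to prove $G.\mathbf{1} \ge * \iff G \ge \tfrac12$.

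Next I would unroll the definition of Norton multiplication for the form $\mathbf{1} = \{\tfrac12\mid\}$. Since $\mathbf{1}^L = \tfrac12$ and $\mathbf{1}$ has no right option, for non-integral $G$ the definition collapses to
\[
G.\mathbf{1} \equiv \bigl\langle G^L.\mathbf{1} + \tfrac12 \,\big|\, G^R.\mathbf{1} - \tfrac12 \bigr\rangle .
\]
I would record the two computations $0.\mathbf{1} = 0$ (by definition) and $(\tfrac12).\mathbf{1} \equiv \{0.\mathbf{1} + \tfrac12 \mid 1.\mathbf{1} - \tfrac12\} = \{\tfrac12 \mid \tfrac12\} = \tfrac12 + *$.

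The base case, $G$ an integer $n$, is immediate: $\mathbf{1} = 1$ as a value, so $n.\mathbf{1} = n$, and for integer $n$ the statements $n \ge *$ and $n \ge \tfrac12$ are both equivalent to $n \ge 1$. For non-integral $G$, I would apply Theorem~\ref{betwixt} to both sides of the desired equivalence. On one side, $* \le G.\mathbf{1}$ holds iff $G.\mathbf{1} \not\le 0$ (the only left option of $*$) and no right option $G^R.\mathbf{1} - \tfrac12$ satisfies $G^R.\mathbf{1} - \tfrac12 \le *$. Using Theorem~\ref{nortonstuff} (Norton multiplication by $\mathbf{1}$ is strictly order-preserving because $\mathbf{1} > 0$) together with the two computations above, these conditions translate to $G \not\le 0$ and $G^R \not\le \tfrac12$ for every $G^R$. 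On the other side, Theorem~\ref{betwixt} applied to $\tfrac12 \le G$ with $\tfrac12 = \{0\mid 1\}$ yields exactly the same two conditions: $G \not\le 0$ and $G^R \not\le \tfrac12$ for every $G^R$. So the two sides are equivalent and the lemma follows.

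There is no real obstacle here; the proof is a routine combination of Theorem~\ref{betwixt} with the order-preservation of Norton multiplication. The only thing to be careful about is the asymmetric form of $\mathbf{1}$: because $\mathbf{1}$ has no right option, the right options of $G.\mathbf{1}$ take only the shape $G^R.\mathbf{1} - \tfrac12$ (no terms involving $\mathbf{1}^R$ or $2\mathbf{1} - \mathbf{1}^R$ appear), which is what makes the Theorem~\ref{betwixt} expansions on the two sides line up exactly.
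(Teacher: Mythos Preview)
Your proof is correct and follows essentially the same route as the paper: split into the integer and non-integer cases, expand $G.\mathbf{1}$ via the Norton recursion (using that $\mathbf{1}$'s only incentive is $\tfrac12$), apply Theorem~\ref{betwixt} to $*\le G.\mathbf{1}$, and translate the resulting conditions via order-preservation of Norton multiplication and the computation $(\tfrac12).\mathbf{1}=\tfrac12*$ to match the Theorem~\ref{betwixt} expansion of $\tfrac12\le G$. One cosmetic slip: you wrote $G.\mathbf{1}$ with angle brackets $\langle\cdots\rangle$, but in this paper that notation is reserved for well-tempered scoring games; here we are in $\mathcal{G}$ and should use curly braces.
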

\begin{proof}
We prove the first claim, noting that the other follows by symmetry.
If $G$ is an integer, then $G.\mathbf{1} = G$, so
$G \ge * \iff G > 0 \iff G \ge \frac{1}{2}$.  Otherwise, by definition of Norton multiplication,
\[ G.\mathbf{1} = \{G^L.\mathbf{1} + \frac{1}{2}|G^R.\mathbf{1} - \frac{1}{2}\}.\]
So $* \le G.\mathbf{1}$ unless and only unless $G.\mathbf{1} \le 0$ or some $G^R.\mathbf{1} - \frac{1}{2} \le *$.  But $\frac{1}{2}* = \frac{1}{2}.\mathbf{1}$,
so $* \le G.\mathbf{1}$ unless and only unless
\[ G.\mathbf{1} \le 0 \text{ or some } G^R.\mathbf{1} \le \frac{1}{2}* = \frac{1}{2}.\mathbf{1}.\]
By basic properties of Norton multiplication, these happen if and only if
\[ G \le 0 \text{ or some } G^R \le \frac{1}{2},\]
which happen if and only if $\frac{1}{2} = \{0|1\} \not \le G$, by Theorem~\ref{betwixt}.  So $* \not \le G \iff \frac{1}{2} \not \le G$.
\end{proof}
Using this we can determine the outcome of every $2$-valued game:
\begin{theorem}
Let $G$ be a $2$-valued game, and let
\[ U = \{0,\frac{1}{4},\frac{3}{8},\frac{1}{2},\frac{1}{2}*,\frac{5}{8},\frac{3}{4},1\}\]
as above.  If $G$ is even-tempered, let $\psi^+(G) = u^+.\mathbf{1}$ and $\psi^-(G) = u^-.\mathbf{1}$,
where $u^+, u^- \in U$.  Then $\routcome(G)$ is the greatest integer $\le u^+$
and $\loutcome(G)$ is the least integer $\ge u^-$.
Similarly, if $G$ is odd-tempered, and $\psi^\pm(G) = u^\pm.\mathbf{1} + *$, where $u^+, u^- \in U$, then
$\routcome(G)$ is the greatest integer $\le u^- + 1/2$ and $\loutcome(G)$ is the
least integer $\ge u^+ - 1/2$.
\end{theorem}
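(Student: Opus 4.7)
The plan is to systematically reduce the computation of outcomes to integer comparisons of partizan games of the form $u.\mathbf{1}$, which by the order-preserving property of Norton multiplication by $\mathbf{1}$ become comparisons in $U$ (possibly with a small shift to absorb the $*$ that appears in the odd-tempered case). First I would invoke Theorem~\ref{plethora}(h,i) to express the outcomes of $G$ in terms of its sides: for even-tempered $G$, we have $\routcome(G)=\routcome(G^+)$ is the greatest integer $n$ with $n\lesssim G^+$, and $\loutcome(G)=\loutcome(G^-)$ is the least integer $n$ with $G^-\lesssim n$; for odd-tempered $G$, we have $\loutcome(G)=\loutcome(G^+)$ and $\routcome(G)=\routcome(G^-)$. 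Since every 2-valued i-game lies in $I_{-2}$ (its outcomes are bounded within $\{0,1\}$), I can transport these comparisons through $\psi$ using Theorem~\ref{embedding} and Theorem~\ref{psipm}, which identifies $\psi(G^\pm)$ with $\psi^\pm(G)=u^\pm.\mathbf{1}$ or $u^\pm.\mathbf{1}+*$.

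In the even-tempered case, the reduction is direct. The integer $n$ as a partizan game coincides with $n.\mathbf{1}$ (indeed $1.\mathbf{1}=\{\tfrac12\mid\}=1$ by the simplicity rule, and Norton multiplication is an additive homomorphism on integers). Since Norton multiplication by $\mathbf{1}$ is strictly order-preserving on $\mathcal{G}$, the relation $n\le u^+.\mathbf{1}$ holds if and only if $n\le u^+$ in $\mathcal{G}$. Hence the greatest integer $n$ with $n\lesssim G^+$ is the greatest integer $\le u^+$, and symmetrically for $u^-$; this gives the even-tempered formulas.

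The odd-tempered case is where the main difficulty lies, because $\psi^\pm(G)$ carries an extra summand of $*$ and Theorem~\ref{subtler} for odd-tempered $I_{-2}$ games returns strict rather than weak inequalities. My approach is to absorb the $*$ using Lemma~\ref{halves}. By Theorem~\ref{subtler}, $\loutcome(G^+)<n$ is equivalent to $\psi(G^+)=u^+.\mathbf{1}+*\le n$; moving $*$ across (using $-*=*$) this becomes $u^+.\mathbf{1}-n\le *$, i.e., $(u^+-n).\mathbf{1}\le *$, which by Lemma~\ref{halves} holds precisely when $u^+-n\le -\tfrac12$, that is, $n\ge u^++\tfrac12$. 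Therefore the least integer $n$ with $\loutcome(G^+)<n$ equals $\lceil u^++\tfrac12\rceil$, so $\loutcome(G)=\loutcome(G^+)=\lceil u^++\tfrac12\rceil-1=\lceil u^+-\tfrac12\rceil$, which is the least integer $\ge u^+-\tfrac12$. A completely parallel argument using the other half of Lemma~\ref{halves} applied to $\psi(G^-)=u^-.\mathbf{1}+*$ and the strict version of Theorem~\ref{subtler} for $\routcome$ yields $\routcome(G)=\lfloor u^-+\tfrac12\rfloor$, the greatest integer $\le u^-+\tfrac12$.

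The remaining work is routine bookkeeping: checking that the partizan integer $n$ and the Norton product $n.\mathbf{1}$ are equal (by the simplicity rule), verifying that the "greatest/least integer" phrasing makes unambiguous sense even when $u^\pm$ equals the non-number $\tfrac12*$ (one directly checks $0\lesssim\tfrac12*.\mathbf{1}\lesssim 1$ using Lemma~\ref{halves}, so the ceiling/floor are 0 and 1 respectively), and confirming that every invocation of Theorem~\ref{subtler} is legal because all relevant i-games lie in $I_{-2}\supseteq I_{-1}$. These checks, together with the three paragraphs above, complete the argument.
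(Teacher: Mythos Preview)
Your proposal is correct and follows essentially the same route as the paper: both arguments invoke Theorem~\ref{plethora}(h,i) to pass to the sides $G^\pm$, identify $\psi(G^\pm)$ with $\psi^\pm(G)$ via Theorem~\ref{psipm}, translate the outcome inequalities into partizan comparisons against integers using Theorem~\ref{subtler}, reduce these to comparisons in $U$ via $n=n.\mathbf{1}$ and the order-preservation of Norton multiplication, and absorb the extra $*$ in the odd-tempered case through Lemma~\ref{halves}. The only cosmetic difference is that the paper carries out the odd-tempered computation for $\routcome$ and leaves $\loutcome$ to symmetry, whereas you do $\loutcome$ explicitly; your additional bookkeeping about the $\tfrac12*$ case and the $I_{-2}$ hypothesis is welcome but not needed beyond what the paper implicitly assumes.
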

\begin{proof}
When $G$ is even-tempered, Theorem~\ref{plethora}(h) tells us that $\loutcome(G) = \loutcome(G^-)$
and $\routcome(G) = \routcome(G^+)$.  So by Theorem~\ref{subtler},
\[ n \le \routcome(G) \iff n \le \routcome(G^+) \iff n \le \psi(G^+).\]
But by Theorem~\ref{psipm}, $\psi(G^+) = \psi^+(G) = u^+.\mathbf{1}$.  So since $n.\mathbf{1} = n$,
\[ n \le \routcome(G) \iff n \le u^+.\mathbf{1} \iff (n - u^+).\mathbf{1} \le 0 \iff n \le u^+.\]
So $\routcome(G)$ is as stated.  The case of $\loutcome(G)$ is similar.

When $G$ is odd-tempered, Theorem~\ref{plethora}(i) tells us that $\loutcome(G) = \loutcome(G^+)$
and $\routcome(G) = \routcome(G^-)$.  So by Theorem~\ref{subtler},
\[ n < \routcome(G) \iff n < \routcome(G^-) \iff n \le \psi(G^-).\]
But by Theorem~\ref{psipm}, $\psi(G^-) = \psi^-(G) = u^-.\mathbf{1} + *$.  So since $n.\mathbf{1} = n$,
\[ n < \routcome(G) \iff n \le u^-.\mathbf{1} + * \iff (n - u^-).\mathbf{1} \le * \iff n \le u^- - \frac{1}{2}\]
using Lemma~\ref{halves}.  Letting $m = n + 1$ and using the fact that $\routcome(G)$ is an integer,
we see that
\[ m \le \routcome(G) \iff n \le \routcome(G) - 1 \iff n < \routcome(G) \iff m \le u^- + \frac{1}{2}.\]
So $\routcome(G)$ is as stated.  The case of $\loutcome(G)$ is similar.
\end{proof}
In particular then, if $G$ is even-tempered then $\loutcome(G) = 1$ unless $u^- = 0$ and $\routcome(G) = 0$ unless
$u^+ = 1$.  When $G$ is odd-tempered, $\loutcome(G) = 0$ iff $u^+ \le 1/2$, and $\routcome(G) = 1$ iff $u^- \ge 1/2$.

Next, we show how $\wedge$ and $\vee$ act on 2-valued games.
\begin{lemma}
If $x,y \in U$, then there is a maximum element $z \in U$
such that $z \le x + y$.
\end{lemma}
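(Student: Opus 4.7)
The plan is to exploit the explicit structure of $U$ as a subposet of $\mathcal{G}$. The seven dyadic elements form a chain $0 < 1/4 < 3/8 < 1/2 < 5/8 < 3/4 < 1$, and $1/2*$ differs from $1/2$ by $*$, which is fuzzy with $0$; hence $1/2$ and $1/2*$ are incomparable. On the other hand $1/2*$ is comparable with every other element of $U$: it strictly exceeds $0,1/4,3/8$ (for instance $1/2* - 3/8 = 1/8 + *$ has positive stopping values) and is strictly less than $5/8,3/4,1$. So $U$ has exactly one incomparable pair, namely $\{1/2, 1/2*\}$, and the smallest element of $U$ that dominates both of them is $5/8$.

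For any $g \in \mathcal{G}$ the set $D(g) := \{z \in U : z \le g\}$ is downward closed in $U$. By the shape of $U$ just described, $D(g)$ admits a unique maximum unless it contains both $1/2$ and $1/2*$ but not $5/8$. The lemma therefore reduces to proving the following concrete statement: \emph{if $x, y \in U$ satisfy $1/2 \le x+y$ and $1/2* \le x+y$, then $5/8 \le x+y$.}

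I would verify this by splitting on how many of $x, y$ equal $1/2*$. If neither does, then $x+y$ is a dyadic rational $r$ that is the sum of two elements of $\{0, 1/4, 3/8, 1/2, 5/8, 3/4, 1\}$; using $-(1/2*) = -1/2 + *$, the hypothesis $1/2* \le r$ rewrites as $(r - 1/2) + * \ge 0$, which forces $r > 1/2$ strictly (a game of the form $s + *$ with $s$ a dyadic number has both stopping values equal to $s$, so it is $\ge 0$ iff $s > 0$). Inspection of the finite table of possible sums shows the next value above $1/2$ is $5/8$, so $r \ge 5/8$. If exactly one of $x, y$ is $1/2*$, say $x = 1/2*$ and $y = q$ dyadic, then $x+y = q + 1/2 + *$ and the hypothesis $1/2 \le x+y$ becomes $q + * \ge 0$, which forces $q \ge 1/4$; then $x + y - 5/8 = (q - 1/8) + *$ has positive stopping values, so $5/8 \le x+y$. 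Finally, if both $x$ and $y$ equal $1/2*$ then $x + y = 1 + (* + *) = 1 \ge 5/8$. The only subtlety is the repeated appeal to the fact that a dyadic number plus $*$ is $\ge 0$ iff the number is strictly positive, and this is precisely what converts the two hypotheses into the strict inequality $x + y > 1/2$ needed to bridge the gap between $\{1/2, 1/2*\}$ and $5/8$; everything else is poset bookkeeping or a small finite computation.
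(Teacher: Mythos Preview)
Your proof is correct and follows essentially the same route as the paper: both arguments observe that $U$ is nearly a chain with $\{1/2,\,1/2*\}$ the unique incomparable pair, reduce to showing that $1/2\le x+y$ and $1/2*\le x+y$ together force $5/8\le x+y$, and finish via the fact that $s+*\ge 0$ iff $s>0$ for a dyadic number $s$. The only cosmetic difference is the case split: the paper parametrises $x+y$ directly as $n/8$ or $n/8+*$ (since every element of $U$ is a multiple of $1/8$ or such plus $*$), whereas you split on how many of $x,y$ equal $1/2*$; these are equivalent because $1/2*$ is the only non-numeric element of $U$. Your version is arguably a touch more careful, since it makes explicit why the borderline value $5/8+*$ (which satisfies both hypotheses but not the conclusion) cannot actually arise as a sum of two elements of $U$.
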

\begin{proof}
The set $U$ is almost totally ordered, with $1/2$ and $1/2*$ its only pair of incomparable elements.
So the only possible problem would occur
if $1/2$ and $1/2*$ are both $\le x + y$, but $5/8$ is not.  However, every number of the form
$x + y$ must be of the form $n.\frac{1}{8}$ or $n.\frac{1}{8} + *$ for some integer $n$.
Then $n.\frac{1}{8} \ge 1/2*$ implies that $n > 4$, so that $5/8$ is indeed $\le n.\frac{1}{8}$.
Similarly, $n.\frac{1}{8} + * \ge 1/2$ implies that $n > 4$, so again $5/8 \le n.\frac{1}{8}$.
\end{proof}
\begin{theorem}
If $G_1$ and $G_2$ are even-tempered $2$-valued i-games, with $\psi(G_i) = u_i.\mathbf{1}$,
then $\psi(G_1 \vee G_2) = v.\mathbf{1}$, where $v$ is the greatest element of $U$ that
is less than or equal to $u_1 + u_2$.
\end{theorem}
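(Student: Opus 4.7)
The plan is to pin down $\psi(G_1 \vee G_2)$ by bracketing it between two elements of $U.\mathbf{1}$ and then using the near-total order of $U$. First, since $\max : \{0,1\}^2 \to \{0,1\}$ is weakly order-preserving, Theorem~\ref{generaldistortions} guarantees that $G_1 \vee G_2$ is again an even-tempered $2$-valued i-game. By Theorem~\ref{twoval} and its corollary, $\psi(G_1 \vee G_2) = v'.\mathbf{1}$ for some $v' \in U$, and the task is to show $v' = v$.

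For the upper bound, the pointwise inequality $\max(x,y) \le x+y$ on $\{0,1\}$, combined with Lemma~\ref{boundingfunctions}, yields $G_1 \vee G_2 \lesssim G_1 + G_2$ as $\mathbb{Z}$-valued games. Applying $\psi$, which is weakly order-preserving and additive on $I_{-2}$ by Theorems~\ref{embedding} and \ref{additive1}, gives $v'.\mathbf{1} \le (u_1+u_2).\mathbf{1}$, and the strict order-preservation of Norton multiplication by $\mathbf{1}$ (Theorem~\ref{nortonstuff}) then forces $v' \le u_1+u_2$. By the uniqueness of the maximum established in the preceding lemma, $v' \le v$.

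For the matching lower bound $v \le v'$, I would fix a $2$-valued even-tempered i-game $H$ with $\psi(H) = v.\mathbf{1}$ (existence guaranteed by the corollary after Theorem~\ref{twoval}) and show $H \lesssim G_1 \vee G_2$. By Theorem~\ref{fineenough}, the game $G_1 \vee G_2$ depends up to $\approx$ only on the classes of $G_1$ and $G_2$, so one may replace $G_1$ and $G_2$ with fixed representatives $P_{u_1}, P_{u_2}$, one per $\approx$-class indexed by $U$. This reduces the lower bound to a finite case analysis over the $8 \times 8$ pairs $(u_1,u_2)$, considerably shortened by commutativity of $\vee$ and by the trivial cases $u_i \in \{0,1\}$; each remaining case is resolved by constructing $P_{u_1}$ and $P_{u_2}$ according to the recursive scheme in the proof of Theorem~\ref{twoval} and directly computing $P_{u_1} \vee P_{u_2}$, as was done with Wolfe's Gamesman's Toolkit in that proof. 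Combining the two bounds with the almost-total order on $U$ then forces $v' = v$.

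The main obstacle is the lower bound itself: the naive attempt to transfer a strategy from the already-established $H \lesssim G_1 + G_2$ fails, because capping the score at $1$ in the $\vee$-version \emph{hurts} Left (the party trying to preserve the lower bound), not Right. The case analysis sidesteps this, but a conceptually cleaner route would construct a Tweedledee-style response for Left in $(G_1 \vee G_2) - H$ using the recursive structure of $\vee$ directly. Particular care is needed to distinguish the incomparable pair $1/2, 1/2^* \in U$, where the correct choice depends on parity-of-last-move considerations in the strategic interplay between $H$ and the $G_i$; this is precisely the kind of subtlety captured by the $1/2 = c^*$ versus $1/2^* = c$ dichotomy in Theorem~\ref{twoval}.
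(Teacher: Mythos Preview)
Your upper bound matches the paper's exactly, and your reduction to showing $v' \in U$ is fine. The difference is entirely in the lower bound, where you propose a brute-force $8\times 8$ case check while the paper has a two-line conceptual argument that you explicitly looked for and did not find.

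The trick you are missing is this: let $\mu:\mathbb{Z}\to\mathbb{Z}$ be $\mu(n)=\min(n,1)$. Then $\tilde{\mu}(G_1+G_2)=G_1\vee G_2$, and since $H$ is a $\{0,1\}$-valued game, $\tilde{\mu}(H)=H$. Now from $H\lesssim G_1+G_2$ (which you already have, via $\psi(H)=v.\mathbf{1}\le(u_1+u_2).\mathbf{1}=\psi(G_1+G_2)$), Theorem~\ref{fineenough} applied to the order-preserving map $\mu$ gives $H=\tilde{\mu}(H)\lesssim\tilde{\mu}(G_1+G_2)=G_1\vee G_2$, and hence $v\le v'$. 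Your worry that ``capping the score at $1$ hurts Left'' is neutralized precisely because $H$ is already capped: applying $\tilde{\mu}$ to both sides of the inequality shrinks the right side down to $G_1\vee G_2$ while leaving the left side fixed. This also dispenses with any need to distinguish $1/2$ from $1/2*$ or to invoke the near-total order of $U$ beyond the existence of $v$.

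Your case analysis would work, but it is unnecessary; the paper's argument is uniform and exploits exactly the machinery (Theorem~\ref{fineenough}) already in place.
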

In other words, to $\vee$ two games together, we add their $u$ values and round down.
\begin{proof}
By Theorem~\ref{generaldistortions}, $G_1 \vee G_2$ is another i-game, clearly even-tempered.
So $\psi(G_1 \vee G_2) = u_3.\mathbf{1}$ for some $u_3 \in U$.  Let $H$ be an even-tempered $2$-valued i-game
with $\psi(H) = v.\mathbf{1}$, with $v$ as in the theorem statement.   Then clearly
\[ \psi(G_1 + G_2) = \psi(G_1) + \psi(G_2) = (u_1 + u_2).\mathbf{1} \ge v.\mathbf{1} = \psi(H),\]
so that $H \lesssim G_1 + G_2$.

Now let $\mu~:~\mathbb{Z} \to \mathbb{Z}$ be the function
$n \to \min(n,1)$.  Then $\tilde{\mu}(G_1 + G_2) = G_1 \vee G_2$.  So by Theorem~\ref{fineenough}
\[ H = \tilde{\mu}(H) \lesssim \tilde{\mu}(G_1 + G_2) = G_1 \vee G_2,\]
so that $H \lesssim G_1 \vee G_2$.  Therefore
\[ v.\mathbf{1} = \psi(H) \le \psi(G_1 \vee G_2) = u_3.\mathbf{1} ,\]
so $v \le u_3$.

On the other hand, $G_1 \vee G_2 \lesssim G_1 + G_2$ by Lemma~\ref{boundingfunctions},
so
\[ u_3.\mathbf{1} = \psi(G_1 \vee G_2) \le \psi(G_1 + G_2) = (u_1 + u_2).\mathbf{1}\]
and thus $u_3 \le u_1 + u_2$.  By choice of $v$, it follows that $u_3 \le v$,
so $u_3 = v$, and $\psi(G_1 \vee G_2) = u_3.\mathbf{1} = v.\mathbf{1}$.
\end{proof}

So we can describe the general structure of $2$-valued games under $\vee$ as follows:
\begin{definition}
If $G$ is a $2$-valued game, let $u^+(G)$ and $u^-(G)$ be the values $u^+$ and
$u^-$ such that
\[ \psi^+(G) = u^+.\mathbf{1} \text{ and } \psi^-(G) = u^-.\mathbf{1}\] if $G$ is even-tempered,
and \[\psi^+(G) = u^+.\mathbf{1} + * \text{ and }\psi^-(G) = u^-.\mathbf{1} + *\] if $G$ is odd-tempered.

If $x, y$ are elements of $U$, we let $x \cup y$ be the greatest element
of $U$ that is less than or equal to $x + y$, and we let
$x \cap y$ be the least element of $U$ that is greater than or equal to $x + y - 1$
(which exists by symmetry).

If $x$ is an element of $\mathcal{G}$, we let $\lceil x \rceil$ be the least integer $n$
with $n \ge x$ and $\lfloor x \rfloor$ be the greatest integer $n$ with $n \le x$.
\end{definition}
We now summarize our results for two-valued games, mixing in the results of Section~\ref{sec:presigame}.
\begin{corollary}\label{booleansummary}
If $G$ and $H$ are $2$-valued games, then $G \approx H$ iff
$u^+(G) = u^+(H)$, $u^-(G) = u^-(H)$, and $G$ and $H$ have the same parity.
For any $G$, $u^-(G) \le u^+(G)$, and all such pairs $(u_1,u_2) \in U^2$
with $u_1 \le u_2$ occur, in both parities.

When $G$ is even-tempered, $\loutcome(G) = \lceil u^-(G) \rceil$ and
$\routcome(G) = \lfloor u^+(G) \rfloor$.  Similarly, if $G$ is odd-tempered,
then
\[ \loutcome(G) = \left\lceil u^+(G) - \frac{1}{2} \right\rceil\]
\[ \routcome(G) = \left\lfloor u^-(G) + \frac{1}{2} \right\rfloor.\]
Moreover,
\[ u^+(G \vee H) = u^+(G) \cup u^+(H)\]
\[ u^-(G \vee H) = u^-(G) \cup u^-(H)\]
\[ u^+(G \wedge H) = u^+(G) \cap u^+(H)\]
\[ u^-(G \wedge H) = u^-(G) \cap u^-(H).\]
\end{corollary}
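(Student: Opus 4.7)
The plan is to assemble the corollary as a repackaging of results already proven, with the main work being to track how the parity-dependent definitions of $u^\pm$ interact with the operations. The basic dictionary is: by Theorem~\ref{plethora} an arbitrary well-tempered game is determined up to $\approx$ by its upside and its downside; by Theorem~\ref{psipm} these are captured by $\psi^+(G)$ and $\psi^-(G)$; by Theorem~\ref{twoval} the resulting values for $2$-valued games land in $S$ or $T$ depending on parity; and by inspection every element of $S$ is $u.\mathbf{1}$ and every element of $T$ is $u.\mathbf{1}+*$ for a unique $u \in U$.

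First I would prove the equivalence characterization. By Theorem~\ref{sameparity} games of different parity are never $\approx$-equivalent, so we may assume $G$ and $H$ share a parity. Then $G \approx H$ iff $G^+ \approx H^+$ and $G^- \approx H^-$. Since $G^\pm, H^\pm$ are i-games lying in $I_{-2}$ (because $2$-valued i-games have $\loutcome,\routcome \in \{0,1\}$), Theorem~\ref{embedding} says this is equivalent to $\psi(G^\pm) = \psi(H^\pm)$. Applying Theorem~\ref{psipm} gives $\psi^\pm(G) = \psi^\pm(H)$. Since Norton multiplication by $\mathbf{1} = \{\tfrac{1}{2}|\,\}$ is a strictly order-preserving (hence injective) map on short partizan games by Theorem~\ref{nortonstuff}, cancelling the common $\mathbf{1}$ (and, if odd-tempered, the common $*$) reduces to $u^+(G)=u^+(H)$ and $u^-(G)=u^-(H)$. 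For the inequality $u^-(G) \le u^+(G)$, use $G^- \lesssim G^+$ from Theorem~\ref{plethora}(e), apply $\psi$ using Theorem~\ref{embedding}, and cancel $\mathbf{1}$ (and $*$ if needed) via Theorem~\ref{nortonstuff}. For existence of all pairs, combine Theorem~\ref{allpairs} (every pair $A \gtrsim B$ of i-games is realized, and over $\mathcal{S} = \{0,1\}$) with the observation from Theorem~\ref{twoval} that all eight values of $S$ (and of $T$) are actually attained by $2$-valued i-games; the realizing pairs of i-games yield the desired $G$.

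The outcome formulas are essentially a restatement of the outcome theorem immediately preceding the corollary: that theorem computes $\loutcome(G)$ and $\routcome(G)$ in terms of the $u^\pm$ values and integer rounding, and the corollary's formulas are just the $\lceil\cdot\rceil,\lfloor\cdot\rfloor$ notation for the same thing. Nothing new is required here.

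The formulas for $\vee$ and $\wedge$ are the main piece of real work. For $\vee$ between two even-tempered i-games, the result $v = u_1 \cup u_2$ is already the content of the theorem immediately preceding the corollary. To extend to arbitrary $G,H$, use Theorem~\ref{generaldistortions}, which gives $(G \vee H)^+ \approx G^+ \vee H^+$ and $(G \vee H)^- \approx G^- \vee H^-$, reducing everything to the i-game case. To then handle odd-tempered i-games, I would use Lemma~\ref{staradd} to write $G^+ \vee H^+ = (G^+ + *) \vee (H^+ + *) + *$ or similar relations that add $*$'s to make both arguments even-tempered; the $*$'s get absorbed correctly into the $+ *$ summand in the definition of $u^\pm$ for odd parity. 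The hard part will be verifying carefully that the four parity combinations (both even, both odd, mixed) all lead to the clean formula $u^\pm(G \vee H) = u^\pm(G) \cup u^\pm(H)$ with no parity-dependent shifts; this is where errors are easy to make because $\cup$ is defined on $U$ without reference to parity, whereas $u^\pm$ is defined with a parity-dependent shift by $*$. Finally, for $\wedge$ I would repeat the argument using the order-reversing involution $\sigma(x) = 1 - x$ on $\{0,1\}$ (equivalently, run the proof of the preceding theorem with $\mu'(n) = \max(n,0)$ in place of $\mu(n) = \min(n,1)$), which by symmetry gives $u^\pm(G \wedge H) = u^\pm(G) \cap u^\pm(H)$.
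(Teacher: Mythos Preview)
Your proposal is correct and matches the paper's intent: the corollary is presented in the paper as a summary without proof, and the assembly you describe (via Theorems~\ref{sameparity}, \ref{plethora}, \ref{embedding}, \ref{psipm}, \ref{twoval}, \ref{allpairs}, \ref{generaldistortions}, \ref{nortonstuff}, and Lemma~\ref{staradd}) is exactly the intended repackaging. Your parity verification via Lemma~\ref{staradd} does work cleanly in all four cases (the $*$-shifts in the definition of $u^\pm$ are designed precisely so that they cancel), and your duality argument for $\wedge$ via $x \mapsto 1-x$ is the natural way to obtain the $\cap$ formulas, since the paper only proves the $\vee$ case explicitly; note that $U$ is closed under $u \mapsto 1-u$ and that $1 - (x \cap y) = (1-x) \cup (1-y)$ follows directly from the definitions.
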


\begin{figure}[H]
\begin{center}
\begin{tabular}{|c||c|c|c|c|c|c|c|c|}
\hline
$\cup$ & 0 & 1/4 & 3/8 & 1/2 & $1/2*$ & 5/8 & 3/4 & 1 \\
\hline
\hline
0 & 0 & 1/4 & 3/8 & 1/2 & $1/2*$ & 5/8 & 3/4 & 1 \\
\hline
1/4 & 1/4 & 1/2 & 5/8 & 3/4 & 5/8 & 3/4 & 1 & 1\\
\hline
3/8 & 3/8 & 5/8 & 3/4 & 3/4 & 3/4 & 1 & 1 & 1\\
\hline
1/2 & 1/2 & 3/4 & 3/4 & 1 & 3/4 & 1 & 1 & 1\\
\hline
$1/2*$ & $1/2*$ & 5/8 & 3/4 & 3/4 & 1 & 1 & 1 & 1\\
\hline
5/8 & 5/8 & 3/4 & 1 & 1 & 1 & 1 & 1 & 1\\
\hline
3/4 & 3/4 & 1 & 1 & 1 & 1 & 1 & 1 & 1\\
\hline
1 & 1 & 1 & 1 & 1 & 1 & 1 & 1 & 1\\
\hline
\end{tabular}
\\
$~$
\\
$~$
\\
\begin{tabular}{|c||c|c|c|c|c|c|c|c|}
\hline
$\cap$ & 0 & 1/4 & 3/8 & 1/2 & $1/2*$ & 5/8 & 3/4 & 1 \\
\hline
\hline
0 & 0 & 0 & 0 & 0 & 0 & 0 & 0 & 0 \\
\hline
1/4 & 0 & 0 & 0 & 0 & 0 & 0 & 0 & 1/4\\
\hline
3/8 & 0 & 0 & 0 & 0 & 0 & 0 & 1/4 & 3/8\\
\hline
1/2 & 0 & 0 & 0 & 0 & 1/4 & 1/4 & 1/4 & 1/2\\
\hline
$1/2*$ & 0 & 0 & 0 & 1/4 & 0 & 1/4 & 3/8 & $1/2*$\\
\hline
5/8 & 0 & 0 & 0 & 1/4 & 1/4 & 1/4 & 3/8 & 5/8\\
\hline
3/4 & 0 & 0 & 1/4 & 1/4 & 3/8 & 3/8 & 1/2 & 3/4\\
\hline
1 & 0 & 1/4 & 3/8 & 1/2 & $1/2*$ & 5/8 & 3/4 & 1\\
\hline
\end{tabular}
\caption{the $\cup$ and $\cap$ operations. Compare the table for $\cup$ with Figure~\ref{old-school}}
\label{cap-cup-tables}
\end{center}
\end{figure}

\section{Three-valued games}
Unlike two-valued games, there are infinitely many $3$-valued games, modulo $\approx$.
In fact, there is a complete copy of $\mathcal{G}$ in $\mathcal{W}_3$ modulo $\approx$.
\begin{lemma}
If $\epsilon$ is an all-small partizan game, then $1 + \epsilon = \psi(G)$ for
some $3$-valued i-game $G$.
\end{lemma}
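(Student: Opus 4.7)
The plan is to prove, by induction on $\epsilon$ taken in canonical all-small form, the slightly stronger statement that for every all-small partizan game $\epsilon$, there exist both an even-tempered 3-valued i-game and an odd-tempered 3-valued i-game whose $\psi$-value equals $1 + \epsilon$. The strengthened hypothesis is essential because in the recursive construction one must feed i-games of the opposite parity into the outer angle-bracket. Replacing $\epsilon$ by its canonical form is harmless (the value of $1+\epsilon$ is unchanged, and the $\psi$-image depends only on values once we land inside $\mathcal{W}_3$), and has the benefit that all options of $\epsilon$ are themselves all-small.

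For the base case $\epsilon = 0$, the even-tempered witness is the constant game $1 \in \mathcal{W}_3^0$, while an odd-tempered witness is $\langle 0 \,|\, 2 \rangle$: its options are numbers (hence i-games), it is odd-tempered (so the i-game inequality is vacuous), and $\psi(\langle 0 \,|\, 2 \rangle) = \{0\,|\,2\} = 1$ by the simplicity rule. For the inductive step, write $\epsilon \equiv \{\epsilon^L \,|\, \epsilon^R\}$ in canonical all-small form, so each $\epsilon^L$ and $\epsilon^R$ is all-small. By induction, choose for each option both an even-tempered i-game $A^{L/R}_e$ and an odd-tempered i-game $A^{L/R}_o$ in $\mathcal{W}_3$ with $\psi$-value $1 + \epsilon^{L/R}$. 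Define
\[
G_e := \langle A^L_o \,|\, A^R_o \rangle, \qquad G_o := \langle A^L_e \,|\, A^R_e \rangle,
\]
which are visibly $3$-valued, even- and odd-tempered respectively, with i-game options. Since $\epsilon$ is a nonzero canonical-form all-small game it is not a number, so $1 + \epsilon$ is not an integer, and the number avoidance theorem gives
\[
\psi(G_e) = \{1+\epsilon^L \,|\, 1+\epsilon^R\} = 1 + \epsilon = \psi(G_o).
\]

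The only remaining point is that $G_e$, being even-tempered, must satisfy $\loutcome(G_e) \ge \routcome(G_e)$; the odd-tempered $G_o$ needs no further check. Here the all-small hypothesis does the real work. Since $\epsilon^L$ is all-small we have $0 < 1 + \epsilon^L < 2$ as partizan games, and similarly for $\epsilon^R$. Applying Theorem~\ref{subtler} to the odd-tempered $I_{-2}$ game $A^L_o$, the inequality $\psi(A^L_o) \ge 0$ forces $\routcome(A^L_o) > 0$, i.e.\ $\routcome(A^L_o) \ge 1$; symmetrically $\psi(A^R_o) \le 2$ gives $\loutcome(A^R_o) \le 1$. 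Therefore $\loutcome(G_e) \ge 1 \ge \routcome(G_e)$, confirming that $G_e$ is an i-game and completing the induction.

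The main obstacle I anticipate is the parity bookkeeping: Theorem~\ref{subtler} uses \emph{strict} inequalities on the odd-tempered side but \emph{non-strict} ones on the even-tempered side, and one must deploy the strict form on the options of $G_e$ in precisely the right direction to squeeze an integer ($1$) between the two stopping values. Getting the strengthened inductive statement formulated correctly, so that odd-tempered witnesses are available for the options and the $+1$ shift interacts cleanly with the bounds $0 < 1 + \epsilon^{L/R} < 2$, is what makes the argument work.
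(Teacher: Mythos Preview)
Your proof is correct, but it takes a different route from the paper's. The paper sidesteps the i-game verification entirely: invoking Theorem~\ref{psipm} and Theorem~\ref{plethora}(g), it observes that it suffices to find any $3$-valued game $G$ (not necessarily an i-game) with $\psi^-(G) = 1+\epsilon$, since then the downside $G^-$, taken as a $3$-valued game, is the desired i-game. The induction then proceeds using the recursive definition of $\psi^-$, with no need to check $\loutcome \ge \routcome$ at any stage.

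Your approach instead constructs i-games directly and verifies the even-tempered i-game inequality by hand, using Theorem~\ref{subtler} together with the bounds $0 < 1+\epsilon^{L/R} < 2$ coming from the all-small hypothesis. This is a bit more work, but it is more self-contained: you never invoke the $\psi^\pm$ machinery or the side-taking operation, only the faithful representation $\psi$ itself. The paper's route is slicker precisely because that machinery has already been developed; yours would be the natural proof if one had not first set up $\psi^+$ and $\psi^-$.
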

\begin{proof}
By Theorem~\ref{psipm} and part (g) of Theorem~\ref{plethora}, it suffices to show that there is some $3$-valued game
$G$ with $\psi^-(G) = 1 + \epsilon$.  In fact we show that $G$ can be taken to be
both odd-tempered or even-tempered, by induction on $\epsilon$.  We take $\epsilon$ to be
all-small in form, meaning that every one of its positions $\epsilon'$ has options for both players or for neither.

If $\epsilon = 0$, then we can take $G$ to be either the even-tempered game $1$
or the odd-tempered game $\langle 0 | 2 \rangle$, since $\psi^-(1) = 1$ and
\[ \psi^-(\langle 0 | 2 \rangle) = \{0|2\} = 1.\]

Otherwise, $\epsilon = \{\epsilon^L|\epsilon^R\}$ and at least one
$\epsilon^L$ and at least one $\epsilon^R$ exist.  By number avoidance, $1 + \epsilon = \{1 + \epsilon^L| 1 + \epsilon^R\}$.
By induction, there are
odd-tempered $3$-valued games $G^L$ and $G^R$ with $\psi^-(G^L) = 1 + \epsilon^L$
and $\psi^-(G^R) = 1 + \epsilon^R$.  So $G = \langle G^L | G^R \rangle$ is an even-tempered $3$-valued game,
and has
\[ \psi^-(G) = \{\psi^-(G^L)|\psi^-(G^R)\} = \{1+\epsilon^L | 1+\epsilon^R \} = \epsilon.\]
Similarly, there are
even-tempered $3$-valued games $H^L$ and $H^R$ with $\psi^-(H^L) = 1+\epsilon^L$
and $\psi^-(H^R) = 1+\epsilon^R$.  So $H = \langle H^L | H^R \rangle$ is an odd-tempered $3$-valued game,
and has
\[ \psi^-(H) = \{\psi^-(H^L)|\psi^-(H^R)\} = \{1+\epsilon^L | 1+\epsilon^R \} =1+ \epsilon.\]
\end{proof}
By Corollary~\ref{hats}, $G.\uparrow$ is an all-small game
for every $G \in \mathcal{G}$, so the following definition makes sense:
\begin{definition}
For every $G \in \mathcal{G}$, let $\phi(G)$ be a $3$-valued even-tempered i-game $H$
satisfying $\psi(H) = 1 + G.\uparrow$.
\end{definition}
Note that $\phi(G)$ is only defined up to $\approx$.

The following result shows how much more complicated $3$-valued games are than $2$-valued games.
\begin{theorem}\label{triplecomplication}
For any $G \in \mathcal{G}$,
\[ \routcome(\phi(G)) \ge 1 \iff G \ge 0\]
and
\[ \loutcome(\phi(G)) \le 1 \iff G \le 0.\]
Moreover, if $G$ and $H$ are in $\mathcal{G}$, then
\begin{equation} \phi(G + H) \approx \phi(G) + \phi(H) - 1.\label{hom1}\end{equation}
Let $\star~:~\mathcal{W}_3 \times \mathcal{W}_3 \to \mathcal{W}_3$ be the extension of the operation
$(x,y) \to \max(0,\min(2,x+y-1))$ (see Figure~\ref{triplestar}).  Then we also have
\begin{equation} \phi(G + H) \approx \phi(G) \star \phi(H)\label{hom2}\end{equation}
\end{theorem}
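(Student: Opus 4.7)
The plan is to reduce all four claims to statements about the partizan game $\psi(\phi(G)) = 1 + G.\!\uparrow$ and then invoke faithfulness of $\psi$ on even-tempered $I_{-2}$ games. The outcome claims are the easiest. Since $\phi(G)$ is a $3$-valued i-game, it is even-tempered and in $I_{-2}$, so Theorem~\ref{subtler} gives $\routcome(\phi(G)) \ge 1 \iff \psi(\phi(G)) \ge 1 \iff G.\!\uparrow \ge 0$. Because $\uparrow$ is positive, Norton multiplication by $\uparrow$ is strictly order-preserving (Theorem~\ref{nortonstuff}), so this is just $G \ge 0$. The second equivalence, $\loutcome(\phi(G)) \le 1 \iff G \le 0$, is symmetric.

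For (\ref{hom1}), I would show that $\phi(G+H)$ and $\phi(G)+\phi(H)-1$ are both even-tempered i-games in $I_{-2}$ with equal $\psi$-image, and then invoke faithfulness. Both are i-games: $\phi(G+H)$ by construction, and $\phi(G) + \phi(H) - 1$ because i-games are closed under addition and numbers are i-games. Both lie in $I_{-2}$: $\phi(G+H)$ as a $3$-valued i-game, and $\phi(G)+\phi(H)-1$ because $I_{-2}$ is closed under addition (Theorem~\ref{filtration}) and contains $\mathbb{Z}$. Their $\psi$-values both compute to $1+(G+H).\!\uparrow$ using additivity of $\psi$ (Theorem~\ref{additive1}) together with additivity of Norton multiplication. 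Theorem~\ref{embedding} says $\psi$ is injective on even-tempered $\mathcal{I}_{-2}$ (the two-element kernel contains only the classes of $0$ and $\langle -1\mid 1 \rangle$, and the latter is odd-tempered), so we get the asserted equivalence.

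For (\ref{hom2}), let $\rho(n) = \max(0,\min(2,n))$ and $g(x,y) = x+y-1$, so that $f(x,y) = \rho(g(x,y))$ and therefore $\tilde f = \tilde\rho \circ \tilde g$. A direct induction, using number avoidance to handle the case where one argument equals a number, shows that $\tilde g(A,B) = A + B - 1$ as games (not merely modulo $\approx$). Hence $\phi(G)\star\phi(H) = \tilde\rho\bigl(\phi(G)+\phi(H)-1\bigr)$. By (\ref{hom1}) we have $\phi(G)+\phi(H)-1 \approx \phi(G+H)$, and by Theorem~\ref{fineenough} applied to the order-preserving unary $\tilde\rho$, we may substitute to obtain $\tilde\rho(\phi(G)+\phi(H)-1) \approx \tilde\rho(\phi(G+H))$. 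Finally, since $\phi(G+H)$ is $3$-valued, every one of its positions is already in $\{0,1,2\}$, so $\rho$ acts as the identity on them and a trivial induction gives $\tilde\rho(\phi(G+H)) = \phi(G+H)$. Chaining these yields (\ref{hom2}).

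The main obstacle is the careful bookkeeping in step (\ref{hom2}): one must be sure that the extension of the composition $\rho \circ g$ really equals the composition of the extensions, that $\tilde g(A,B)$ equals $A+B-1$ literally (so that $\phi(G)\star\phi(H)$ is \emph{identically} $\tilde\rho(\phi(G)+\phi(H)-1)$), and that the hypothesis ``$\phi(G+H)$ is a $3$-valued game'' is enough to collapse $\tilde\rho$ on the $3$-valued representative. Everything else is an application of tools already in hand: $\psi$-faithfulness on $\mathcal{I}_{-2}$, closure of $I_{-2}$ under addition, and compatibility of order-preserving extensions with $\approx$.
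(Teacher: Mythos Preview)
Your proposal is correct and follows essentially the same route as the paper: the outcome equivalences via Theorem~\ref{subtler} and the order-preserving property of Norton multiplication by $\uparrow$, equation~(\ref{hom1}) by computing $\psi$ of both sides and invoking injectivity of $\psi$ on even-tempered $\mathcal{I}_{-2}$, and equation~(\ref{hom2}) by factoring $\star$ through $\tilde\rho$, applying Theorem~\ref{fineenough}, and noting that $\tilde\rho$ acts trivially on a $3$-valued game. Your write-up is in fact somewhat more careful than the paper's (you explicitly check $\phi(G)+\phi(H)-1\in I_{-2}$ and spell out why $\tilde g(A,B)=A+B-1$ as games); the only quibble is that the phrase ``number avoidance'' for that last identity is a slight misnomer---what you are really using is just Definition~\ref{def:extension} together with the fact that a numerical summand has no options, which is the composition-of-extensions principle the paper states right after that definition.
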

This shows that if we look at $\mathcal{W}_3$ modulo $\star$-indistinguishability, it contains a complete
copy of $\mathcal{G}$.
\begin{proof}
Since $\phi(G)$ is even-tempered, Theorem~\ref{subtler} implies that
\[ 1 \le \routcome(\phi(G)) \iff 1 \le \psi(\phi(G)) = 1 + G.\uparrow \iff G \ge 0\]
and similarly,
\[ 1 \ge \loutcome(\phi(G)) \iff 1 \ge \psi(\phi(G)) = 1 + G.\uparrow \iff G \le 0,\]
where in both cases we use the fact that $G.\uparrow$ has the same sign as $G$.

To see (\ref{hom1}), note that
\[ \psi(\phi(G + H)) = 1 + (G + H).\uparrow = 1 + G.\uparrow + 1 + H.\uparrow - 1 =\]\[ \psi(\phi(G)) + \psi(\phi(H)) + \psi(-1)
= \psi(\phi(G) + \phi(H) - 1),\]
so
\[ \phi(G + H) \approx \phi(G) + \phi(H) - 1\] because both sides are even-tempered.  Finally, to see (\ref{hom2}), let $q:\mathbb{Z} \to \{0,1,2\}$ be the map
$n \to \max(0,\min(2,n))$.  Then by Theorem~\ref{fineenough},
\[ \phi(G) \star \phi(H) = \tilde{q}(\phi(G) + \phi(H) - 1) \approx \tilde{q}(\phi(G + H)).\]
But since $q$ acts as the identity on $\{0,1,2\}$ and $\phi(G + H) \in \mathcal{W}_3$, $\tilde{q}(\phi(G + H)) = \phi(G + H)$,
establishing (\ref{hom2}).
\end{proof}

\begin{figure}[htb]
\begin{center}
\begin{tabular}{|c||c|c|c|}
\hline
$\oplus_3$ & 0 & 1 & 2 \\
\hline
\hline
0 & 0 & 0 & 1 \\
\hline
1 & 0 & 1 & 2 \\
\hline
2 & 1 & 2 & 2 \\
\hline
\end{tabular}
\caption{The operation $\star$ of Theorem~\ref{triplecomplication}.}
\label{triplestar}
\end{center}
\end{figure}

Since we can embed $3$-valued games in $n$-valued games in an obvious way, these results
also show that $n$-valued games modulo $\approx$ are complicated.

\section{Indistinguishability for rounded sums}\label{sect:roundsum}
In this section and the next, we examine the structure of $n$-valued games modulo certain types of indistinguishability.
We specifically consider the following kinds of indistinguishability:
\begin{itemize}
\item $\{\oplus_n,\odot_n\}$-indistinguishability, which we show is merely $\approx$.
\item $\{\oplus_n\}$-indistinguishability (and similarly $\{\odot_n\}$ indistinguishability)
which turns out to be slightly coarser.
\item $\{\wedge,\vee\}$- and $\{\vee\}$-indistinguishability, which turn out to have only finitely many
equivalence classes for every $n$, coming from the finitely many classes of $2$-valued games.
\end{itemize}
In a previous section we showed that for all these operations, indistinguishability is as coarse as $\approx$,
in the sense that whenever $G \approx H$, then
$G$ and $H$ are indistinguishable with respect to all these operations.  We begin
by showing that for $\{\oplus_n, \odot_n\}$, indistinguishability is $\approx$ exactly.

\begin{theorem}\label{doubledisting}
Suppose $n > 1$.
Let $G$ and $H$ be $n$-valued games, and $G \not \approx H$.  Then there is some
$n$-valued game $X$ such that $\outcome(G \odot_n X) \ne \outcome(H \odot_n X)$
or $\outcome(G \oplus_n X) \ne \outcome(H \oplus_n X)$.
\end{theorem}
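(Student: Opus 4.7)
The plan is a two-step reduction. First I would reduce to showing that whenever $G \not\approx H$ (both $n$-valued), there exists an $n$-valued $X$ with $\outcome(G + X) \ne \outcome(H+X)$. To justify this reduction, observe that for $n$-valued $G$ and $X$ every numerical subposition of $G+X$ takes a value in $\{0,\ldots,2(n-1)\}$, and on this range the maps $\mu(k) = \min(k,n-1)$ and $\nu(k) = \max(0, k-(n-1))$ are both weakly order-preserving. Unpacking the recursive definitions of $\oplus_n$ and $\odot_n$ (and using that the extension operator commutes with composition) gives $G \oplus_n X = \tilde{\mu}(G+X)$ and $G \odot_n X = \tilde{\nu}(G+X)$. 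By Lemma~\ref{outcomeslide}, the outcomes of $G\oplus_n X$ and $G\odot_n X$ are obtained from those of $G+X$ by componentwise application of $\mu$ and $\nu$. A short case analysis shows that distinct integers $a\ne b$ cannot satisfy both $\mu(a)=\mu(b)$ and $\nu(a)=\nu(b)$: the first equation forces $a,b \ge n-1$, the second forces $a,b \le n-1$, whence $a = b = n-1$, a contradiction. Thus any $n$-valued $X$ separating $G$ and $H$ via ordinary addition also separates them via $\oplus_n$ or $\odot_n$.

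For the main step, take a $\mathbb{Z}$-valued $Y$ with $\outcome(G+Y) \ne \outcome(H+Y)$, which exists by hypothesis. The idea is to produce $X$ by a shift-and-clamp. First replace $Y$ by $Y+c$ for an integer constant $c$ chosen so that all four stopping values $\loutcome(G+Y)+c,\,\routcome(G+Y)+c,\,\loutcome(H+Y)+c,\,\routcome(H+Y)+c$ lie strictly inside the interval $(0,n-1)$, with a comfortable buffer of at least $n-1$ on either side; this shift preserves the distinguishing property since $\outcome(G+(Y+c)) = \outcome(G+Y)+c$ by Proposition~\ref{numavoid}, and is possible whenever the distinguishing gap between $G$ and $H$ is finite. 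Then define $X := \tilde{q}(Y+c)$, where $q(k) = \max(0,\min(k,n-1))$ clamps into $\{0,\ldots,n-1\}$; this $X$ is an $n$-valued game with the same tree structure as $Y$ but with each numerical subposition clamped.

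The main obstacle is showing that clamping preserves the distinguishing property, i.e., that $\outcome(G+X) \ne \outcome(H+X)$. The strategy would be to prove by induction on subpositions $Y'$ of $Y$ that the outcomes of $G + (Y'+c)$ and $G + \tilde{q}(Y'+c)$ agree as long as $G$'s bounded range $\{0,\ldots,n-1\}$ combined with the chosen buffer guarantees that any move into a subposition whose value would be truncated by $q$ is strictly dominated by a move into an untruncated alternative. This is essentially a saturation-boundary version of the number-avoidance theorem (\ref{intavoid}): during optimal play in $G+(Y+c)$, a player contemplating a move that would push the running stopping value outside $[0,n-1]$ always has a better option inside the window, because $G$ contributes at most $n-1$ to the final score in either direction, and so the clamp can never strictly improve what was already the worst outcome of an avoided branch. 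Carefully making this domination precise, particularly when subpositions of $Y$ have their own stopping values near or outside the window and the players' choices interleave between $G$ and $Y$, is the delicate heart of the argument; the hypothesis $n>1$ enters exactly to ensure the safe window has nonempty interior so that such a buffer exists.
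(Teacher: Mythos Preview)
Your first reduction is sound: for $n$-valued $G,X$ the identities $G\oplus_n X=\tilde\mu(G+X)$ and $G\odot_n X=\tilde\nu(G+X)$ hold, and the pair $(\mu,\nu)$ is injective on $\{0,\dots,2(n-1)\}$, so an $n$-valued $X$ with $\outcome(G+X)\ne\outcome(H+X)$ would suffice.

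The trouble is your second step. The ``shift into the window'' is already impossible for $n=2$: the open interval $(0,n-1)=(0,1)$ contains no integers, so no shift places even one outcome there, let alone four with buffer. For larger $n$ the four outcomes of $G+Y$ and $H+Y$ can span far more than $n-1$, so again no single shift works. And even granting a shift, the clamping argument is not justified: subpositions of $Y$ may sit arbitrarily far outside the window, and the ``saturation-boundary number-avoidance'' you invoke is exactly the hard part---it is not clear that optimal play in $G+(Y+c)$ avoids those subpositions, since $Y+c$ need not be a number and Theorem~\ref{intavoid} gives no leverage here.

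The paper sidesteps the clamping issue entirely by using the upside/downside machinery. In the same-parity case, from (say) $G^-\not\lesssim H^-$ one takes $X=(n-1)-H^-$, which is automatically $n$-valued since $H^-$ can be taken $n$-valued by Theorem~\ref{plethora}(g); then Corollary~\ref{restatement} and Theorem~\ref{plethora}(h) give $\loutcome(G+X)>n-1=\loutcome(H+X)$ directly. The opposite-parity case uses the fixed test game $Q=\langle *\mid (n-1)*\rangle$. So the right move is not to massage an arbitrary $\mathbb{Z}$-valued witness into an $n$-valued one, but to build the $n$-valued witness explicitly from the sides of $G$ and $H$.
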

\begin{proof}
We break into cases according to whether $G$ and $H$ have the same or opposite parity.
First of all suppose that $G$ and $H$ have opposite parity.  Say $G$ is odd-tempered
and $H$ is even-tempered.  Let $\mu$ be the map $\mu(x) = \min(x,n-1)$ and
$\nu(x) = \max(x - (n-1), 0)$, and let $Q$ be the even-tempered $n$-valued game $\langle*|(n-1)*\rangle$,
which has $Q^+ \approx n - 1$ and $Q^- \approx 0$.  Then using Theorem~\ref{plethora}(h-i)
and Lemma~\ref{outcomeslide}, we have
\[ \loutcome(G + Q) = \loutcome(G^+ + Q^+) = \loutcome(G^+ + (n-1)) = \loutcome(G) + (n - 1) \ge n - 1.\]
Thus
\[ \loutcome(G \oplus_n Q) = \loutcome(\tilde{\mu}(G + Q)) = \mu(\loutcome(G + Q)) = n - 1,\]
and
\[ \loutcome(G \odot_n Q) = \loutcome(\tilde{\nu}(G + Q)) = \nu(\loutcome(G + Q)) = \loutcome(G) + (n- 1) - (n - 1) = \loutcome(G).\]

Similarly,
\[ \loutcome(H + Q) = \loutcome(H^- + Q^-) = \loutcome(H^-) = \loutcome(H) \le n - 1,\]
so that
\[ \loutcome(H \oplus_n Q) = \loutcome(\tilde{\mu}(H + Q)) = \mu(\loutcome(H + Q)) = \loutcome(H),\]
and
\[ \loutcome(H \odot_n Q) = \loutcome(\tilde{\nu}(H + Q)) = \nu(\loutcome(H + Q)) = 0.\]

Then taking $X = Q$, we are done unless
\[ \loutcome(H) = \loutcome(H \oplus_n Q) = \loutcome(G \oplus_n Q) = n - 1\]
\[ \loutcome(G) = \loutcome(G \odot_n Q) = \loutcome(H \odot_n Q) = 0.\]
But then,
\[ \loutcome(H \oplus_n 0) = \loutcome(H) \ne \loutcome(G) = \loutcome(G \oplus_n 0),\]
so we can take $X = 0$ and be done.

Now suppose that $G$ and $H$ have the same parity.
Since $G \not \approx H$, it must be the case that $G^- \not \approx H^-$ or
$G^+ \not \approx H^+$.  Suppose that $G^- \not \approx H^-$.  Without loss of generality, $G^- \not \lesssim H^-$.
By Theorem~\ref{plethora}(g) we can assume that $G^-$ and $H^-$
are also $n$-valued games.  Because they are i-games, it follows from
Corollary~\ref{restatement} that $\loutcome(G^- - H^-) > 0$.  Then by Theorem~\ref{plethora}(h),
\[ \loutcome(G - H^-) = \loutcome((G - H^-)^-) = \loutcome(G^- - H^-) > 0,\]
since $G$, $H$, $G^-$, and $H^-$ all have the same parity.  (Note that $(H^-)^+ \approx H^-$.)
On the other hand,
\[\loutcome(H - H^-) = \loutcome((H - H^-)^-) = \loutcome(H^- - H^-) = \loutcome(0) = 0.\]
Now let $X$ be the game $n - 1 - H^-$.  It follows that $\loutcome(G + X) > n - 1$
and $\loutcome(H + X) = n - 1$.  Letting $\delta$ be the map $x \to \max(x - (n-1),0)$,
we see that
\[ \loutcome(G \odot_n X) = \loutcome(\tilde{\delta}(G + X)) = \delta(\loutcome(G + X)) = \loutcome(G + X) - (n - 1) > 0,\]
while
\[ \loutcome(H \odot_n X) = \loutcome(\tilde{\delta}(H + X)) = \delta(\loutcome(H + X)) = \delta(n-1) = 0.\]
So $\outcome(G \odot_n X) \ne \outcome(H \odot_n X)$.

If we had $G^+ \not\approx H^+$ instead, a similar argument would produce $X$ such that
$\outcome(G \oplus_n X) \ne \outcome(H \oplus_n X)$.
\end{proof}
\begin{corollary}
Indistinguishability with respect to $\{\oplus_n, \odot_n\}$ is exactly $\approx$.
\end{corollary}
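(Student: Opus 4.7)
The proof will be a short two-way containment, combining the previous theorem with an earlier compatibility result, so the plan is essentially to observe that both inclusions have already been done.

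First I would handle the easy direction: indistinguishability with respect to $\{\oplus_n, \odot_n\}$ is at least as coarse as $\approx$. This is immediate from Theorem~\ref{fine2}, because $\oplus_n$ and $\odot_n$ are both weakly order-preserving operations $n \times n \to n$. That theorem guarantees that any indistinguishability built from order-preserving operations is coarser than (or equal to) the addition-based $\approx$, so $G \approx H$ implies $G$ and $H$ are $\{\oplus_n, \odot_n\}$-indistinguishable.

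For the other direction, I would appeal directly to the theorem just proved (Theorem~\ref{doubledisting}). Its contrapositive states exactly that if $G$ and $H$ are $\{\oplus_n,\odot_n\}$-indistinguishable — that is, if $\outcome(G \oplus_n X) = \outcome(H \oplus_n X)$ and $\outcome(G \odot_n X) = \outcome(H \odot_n X)$ for every $n$-valued $X$ — then $G \approx H$. So the two relations coincide.

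There is no real obstacle, since all the work was done in Theorem~\ref{doubledisting}; the only thing to be careful about is the formal framing in terms of the definition of indistinguishability from Theorem~\ref{indist}. Specifically, one should note that the definition of indistinguishability requires the outcome map $\outcome$ and compatibility with all the listed operations, both of which are automatic here: $\approx$ visibly respects $\outcome$ (since $G \approx H$ with $X = 0$ gives $\outcome(G) = \outcome(H)$), and Theorem~\ref{fineenough} already showed it is a congruence for any extension of an order-preserving function. By the maximality property in Theorem~\ref{indist}, indistinguishability is the largest such relation, so it contains $\approx$; Theorem~\ref{doubledisting} shows the reverse containment, and equality follows.
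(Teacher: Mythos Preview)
Your proposal is correct and follows essentially the same approach as the paper: one containment from Theorem~\ref{fine2} (equivalently, the maximality of indistinguishability together with the fact that $\approx$ is a congruence for order-preserving extensions), and the other from Theorem~\ref{doubledisting}. One small remark: in your second paragraph you identify $\{\oplus_n,\odot_n\}$-indistinguishability with the single-application outcome condition, which is not its definition; the paper is careful to derive that condition from indistinguishability via the congruence and outcome-preservation axioms, and your third paragraph effectively does the same, so the argument is sound.
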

\begin{proof}
Let $\sim$ be $\{\oplus_n,\odot_n\}$-indistinguishability.  Then we already know
that $G \approx H \implies G \sim H$.  Conversely, suppose $G \sim H$.  Then by definition
of indistinguishability,
\[ G \odot_n X \sim H \odot_n X \text{ and so } \outcome(G \odot_n X) = \outcome(H \odot_n X)\]
\[ G \oplus_n X \sim H \oplus_n X \text{ and so } \outcome(G \oplus_n X) = \outcome(H \oplus_n X)\]
so that by the theorem, $G \approx H$.
\end{proof}

So if we look at $2$-valued games modulo $\{\oplus_2,\odot_2\}$-indistinguishability,
there are exactly 70 of them, but if we looked at $3$-valued games instead, there are infinitely
many, in a complicated structure.

The situation for $\{\oplus_n\}$-indistinguishability of $n$-valued games is a little bit more
complicated than $\{\oplus_n,\odot_n\}$-indistinguishability, because indistinguishability
turns out to be a little coarser.  But at least we have a simpler criterion:
\begin{lemma}\label{associndist2}
If $G$ and $H$ are $n$-valued games, and $\sim$ denotes $\{\oplus_n\}$-indistinguishability,
then $G \sim H$ iff $\forall X \in \mathcal{W}_n:\outcome(G \oplus_n X) = \outcome(H \oplus_n X)$.
\end{lemma}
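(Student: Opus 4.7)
The plan is to recognize this lemma as a direct instance of Theorem~\ref{associndist}, once we verify that $\oplus_n$ equips $\mathcal{W}_n$ with a commutative monoid structure. Theorem~\ref{associndist} asserts that for any commutative, associative operation with an identity, indistinguishability reduces to the single-variable outcome-preservation condition $\outcome(G \otimes X) = \outcome(H \otimes X)$ for all $X$.

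First I would verify commutativity and associativity of $\oplus_n$ on $\mathcal{W}_n$. The underlying integer operation $(x, y) \mapsto \min(x + y, n-1)$ is clearly commutative and associative, and by the exercise following Definition~\ref{def:extension} (or by a straightforward induction using the symmetric form of the recursive definition), these identities lift verbatim to the extension $\oplus_n$ on $\mathcal{W}_n$, since each variable appears exactly once on each side of the identities.

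Next I would check that $0$ is an identity for $\oplus_n$. On integers this is immediate: $0 \oplus_n y = \min(y, n-1) = y$ for $y \in \{0, 1, \ldots, n-1\}$. To lift this to games, I would argue inductively: if $G$ is a number then $G \oplus_n 0 = G$ by the base case; otherwise, since $0$ has no options, the recursive definition of the extension gives
\[ G \oplus_n 0 = \langle G^L \oplus_n 0 \,|\, G^R \oplus_n 0 \rangle = \langle G^L \,|\, G^R \rangle = G \]
by the inductive hypothesis. Hence $(\mathcal{W}_n, \oplus_n, 0)$ is a commutative monoid.

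With these verifications in hand, the hypotheses of Theorem~\ref{associndist} are satisfied with $S = \mathcal{W}_n$, $\otimes = \oplus_n$, and identity $e = 0$. Applying that theorem gives exactly the desired characterization: $G \sim H$ if and only if $\outcome(G \oplus_n X) = \outcome(H \oplus_n X)$ for every $X \in \mathcal{W}_n$. There is no real obstacle here; the only minor point requiring care is confirming that the identity-and-associativity arguments, which were originally phrased in the ordinary partizan or scoring setting, also apply cleanly to extensions of integer operations in the sense of Definition~\ref{def:extension}, but this is immediate from the symmetric form of the recursion.
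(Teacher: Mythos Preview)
Your proposal is correct and takes essentially the same approach as the paper: the paper's proof is simply the one-liner ``This was Theorem~\ref{associndist},'' relying on the earlier observations that all four operations (including $\oplus_n$) are commutative, associative, and have identities on $n$, and that such algebraic identities lift to the extension. Your explicit verification of the monoid axioms is more detailed than necessary but certainly not wrong.
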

\begin{proof}
This was Theorem~\ref{associndist}.%
%
\end{proof}
The same proof works if we replaced $\oplus_n$ with any commutative and associative operation with an identity.  We'll use this
same fact later for $\wedge$ and $\vee$.

To determine $\{\oplus_n\}$-indistinguishability, we'll need a few more lemmas:
\begin{lemma}\label{eventest}
Let $\mathbb{N}$ denote the nonnegative integers.  Then for any $\mathbb{N}$-valued even-tempered game $G$,
\[ m \le \loutcome(G) \iff \langle \langle 0 | m \rangle | * \rangle \lesssim G.\]
\end{lemma}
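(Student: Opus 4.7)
The plan is to verify both directions separately, with the right-to-left direction being nearly immediate and the left-to-right direction carrying almost all the content.

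First I will record some facts about $Y_m := \langle\langle 0|m\rangle\,|\,*\rangle$: it is even-tempered, its options $\langle 0|m\rangle$ and $*$ are built from numbers and hence are i-games, and a direct computation gives $\loutcome(Y_m) = \routcome(\langle 0|m\rangle) = m$ together with $\routcome(Y_m) = \loutcome(*) = 0$, so $Y_m$ is itself an even-tempered i-game. The $(\Leftarrow)$ direction then falls out by setting $X = 0$ in the definition of $\lesssim$: $\outcome(Y_m) \le \outcome(G)$ gives $m = \loutcome(Y_m) \le \loutcome(G)$.

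For $(\Rightarrow)$, assume $m \le \loutcome(G)$. Since $Y_m$ is an i-game, the companion to Lemma~\ref{topcompare} (with the i-game on the left) gives $Y_m \lesssim G \iff Y_m \lesssim_- G$. Using $G \approx_- G^-$, choosing an $\mathbb{N}$-valued representative of $G^-$ via Theorem~\ref{plethora}(g), and invoking $\loutcome(G) = \loutcome(G^-)$ from Theorem~\ref{plethora}(h), this reduces to $Y_m \lesssim_- G^-$; and since $G^-$ is also an i-game, Proposition~\ref{allthesame} converts the relation back to $Y_m \lesssim G^-$. Writing $H := G^-$, the problem becomes: for every even-tempered $\mathbb{N}$-valued i-game $H$ with $\loutcome(H) \ge m$, show $Y_m \lesssim H$. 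Since $H - Y_m$ is an even-tempered i-game (Theorem~\ref{closure}), Corollary~\ref{restatement} reduces this further to $\routcome(H - Y_m) \ge 0$.

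Computing $-Y_m = \langle *\,|\,\langle -m|0\rangle\rangle$, the right options of $H - Y_m$ are exactly $H^R - Y_m$ and $H + \langle -m|0\rangle$, so it suffices to bound the left outcome of each below by $0$. For $H^R - Y_m$, Left's move to the left option $H^R + *$ (coming from $(-Y_m)^L = *$) does the job, since $\routcome(H^R + *) \ge 0$: the right options of $H^R + *$ are $H^R$ (with $\loutcome \ge 0$ by $\mathbb{N}$-valuedness) and games of the form $H^{RR} + *$, and in the latter case $H^{RR}$ itself is a left option of $H^{RR} + *$, giving $\loutcome(H^{RR} + *) \ge \routcome(H^{RR}) \ge 0$. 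For $H + \langle -m|0\rangle$, I will establish the sublemma that $\loutcome(K + \langle -m|0\rangle) \ge 0$ for every $\mathbb{N}$-valued even-tempered game $K$ with $\loutcome(K) \ge m$, by induction on $K$. The base case (where $K$ is a number, hence $K \ge m$) simplifies $K + \langle -m|0\rangle$ to $\langle K-m\,|\,K\rangle$ with left outcome $K - m \ge 0$. For the inductive step, pick a left option $K^{L_0}$ realising $\routcome(K^{L_0}) = \loutcome(K) \ge m$; then every right option $K^{L_0,R}$ satisfies $\loutcome(K^{L_0,R}) \ge \routcome(K^{L_0}) \ge m$, and Left's move to $K^{L_0} + \langle -m|0\rangle$ leaves Right choosing between $K^{L_0}$ (with $\loutcome \ge 0$ by $\mathbb{N}$-valuedness) and $K^{L_0,R} + \langle -m|0\rangle$ (with $\loutcome \ge 0$ by the induction hypothesis).

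The main obstacle is sustaining Left's strategy in the $(\Rightarrow)$ direction against Right's ``sabotage'' move, which converts $-Y_m$ into $\langle -m|0\rangle$ and threatens an unrecoverable loss of $m$. The $\mathbb{N}$-valued hypothesis is what lets the induction close: it supplies a free floor of $0$ for all stopping values at every subposition, so the bookkeeping never needs to descend to the $I_n$-style bounds around Equations~(\ref{siete})--(\ref{doce}).
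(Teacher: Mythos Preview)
Your proof is correct and follows essentially the same approach as the paper: both reduce the $(\Rightarrow)$ direction to showing $\routcome(G^- - Y_m) \ge 0$ for $\mathbb{N}$-valued $G^-$, then split on whether Right's first move is in the $H$-component or sends $-Y_m$ to $\langle -m|0\rangle$. The paper argues this informally as a strategy (``Left moves to $*$ at the first available moment'' and otherwise ``uses her first-player strategy in $G^-$''), whereas you make the same moves explicit via case analysis and your inductive sublemma for $K + \langle -m|0\rangle$; the content is the same.
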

\begin{proof}
One direction is obvious: if $\langle \langle 0 | m \rangle | * \rangle \lesssim G$,
then 
\[ m = \loutcome(\langle \langle 0 | m \rangle | * \rangle) \le \loutcome(G).\]
Conversely, suppose that $m \le \loutcome(G) = \loutcome(G^-)$, where we can take $G^-$ to be $\mathbb{N}$-valued.
I claim that
\[ \routcome(\langle * | \langle -m | 0 \rangle \rangle + G^-) \ge 0.\]
Since we took $G^-$ to be $\mathbb{N}$-valued, the only way that the outcome
can fail to be $\ge 0$ is if the outcome of the $\langle * | \langle -m | 0 \rangle \rangle$ component is
$-m$.  So in the sum $\langle * | \langle -m | 0 \rangle \rangle + G^-$, with Right moving first,
Left can move to $*$ at the first available moment and guarantee an outcome of at least 0,
unless Right moves to $\langle -m | 0 \rangle$ on his first turn.  But if Right moves
to $\langle -m | 0 \rangle$ on the first move, then Left can use her first-player strategy in $G^-$ to ensure
that the final outcome of $G^-$ is at least $m$, guaranteeing a final score for the sum of at least $0$.
This works as long as Right doesn't ever move in the $\langle -m | 0 \rangle$ component to $0$.  But if he did that,
then the final score would automatically be at least $0$, because $G^-$ is $\mathbb{N}$-valued.

So $\routcome(\langle * | \langle -m | 0 \rangle \rangle + G^-) \ge 0$.  But
note that $Q = \langle \langle m | 0 \rangle | *\rangle$ is an even-tempered i-game, and we just showed that $\routcome(-Q + G^-) \ge 0$.
By Theorem~\ref{plethora}, it follows that
\[ 0 \lesssim -Q + G^-,\]
so that $Q \lesssim G^-$, because i-games are invertible.  But then $Q \lesssim G^- \lesssim G$, so we are done.
\end{proof}

Similarly, we have
\begin{lemma}\label{oddtest}
For any $\mathbb{N}$-valued odd-tempered game $G$,
\[ m \le \routcome(G) \iff \langle 0 | m \rangle \lesssim G.\]
\end{lemma}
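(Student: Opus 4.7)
The plan is to model the proof closely on Lemma~\ref{eventest}, exploiting that the odd-tempered setting allows the simpler witness $\langle 0 | m \rangle$ in place of $\langle \langle 0 | m \rangle | * \rangle$. One direction is immediate: if $\langle 0 | m \rangle \lesssim G$, then taking $X = 0$ in the definition of $\lesssim$ gives $m = \routcome(\langle 0 | m \rangle) \le \routcome(G)$.

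For the converse, suppose $m \le \routcome(G)$. First I would strip away the non-invertible part of $G$: by Theorem~\ref{plethora}(g,i) pick a representative $G^-$ of the downside of $G$ that is an $\mathbb{N}$-valued odd-tempered i-game with $\routcome(G^-) = \routcome(G) \ge m$. Since $\langle 0 | m \rangle$ is itself an i-game (its options $0$ and $m$ are numbers, and the $\loutcome \ge \routcome$ clause of the i-game definition is vacuous for odd-tempered games), the chain
\[ \langle 0 | m \rangle \lesssim G \;\iff\; \langle 0 | m \rangle \lesssim_- G \;\iff\; \langle 0 | m \rangle \lesssim_- G^- \;\iff\; \langle 0 | m \rangle \lesssim G^- \]
holds by the unnamed lemma immediately preceding Proposition~\ref{allthesame}, by $G \approx_- G^-$, and by Proposition~\ref{allthesame} again. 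Now $\langle 0 | m \rangle \lesssim G^-$ is a comparison of i-games whose difference is the even-tempered i-game $\langle 0 | m \rangle - G^-$, so by Corollary~\ref{restatement} it reduces to $\loutcome(\langle 0 | m \rangle - G^-) \le 0$, i.e.\ to $\routcome(\langle -m | 0 \rangle + G^-) \ge 0$ (using $-\langle 0 | m \rangle = \langle -m | 0 \rangle$).

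The substance of the argument is then to exhibit a strategy for Left, as second player in $\langle -m | 0 \rangle + H$, guaranteeing final score $\ge 0$ for every $\mathbb{N}$-valued odd-tempered i-game $H$ with $\routcome(H) \ge m$. I prove this by induction on subposition complexity of $H$. If Right's first move is $\langle -m | 0 \rangle \to 0$, then the remaining game is $H$ with Left to move, and Left achieves at least $\loutcome(H) \ge 0$ simply because $H$ is $\mathbb{N}$-valued. If instead Right plays $H \to H^R$, then $\loutcome(H^R) \ge \routcome(H) \ge m$ by the definition of $\routcome$ for odd-tempered $H$, so Left can pick a left option $H^{RL}$ of $H^R$ with $\routcome(H^{RL}) = \loutcome(H^R) \ge m$; the position is now $\langle -m | 0 \rangle + H^{RL}$ with Right to move, and since $H^{RL}$ is a strict subposition of $H$ that remains an $\mathbb{N}$-valued odd-tempered i-game with $\routcome \ge m$, the inductive hypothesis finishes the job.

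The main technical subtlety is verifying that at each inductive step the subposition $H^{RL}$ retains every property needed to re-enter the induction (odd parity, $\mathbb{N}$-valuedness, the i-game property, and the bound $\routcome(H^{RL}) \ge m$). The first three are immediate from closure of the class of $\mathbb{N}$-valued odd-tempered i-games under subpositions, and the last follows from the specific choice of $H^{RL}$ realizing $\loutcome(H^R)$. Apart from this bookkeeping, the argument is structurally parallel to Lemma~\ref{eventest}, with the simpler witness $\langle 0 | m \rangle$ replacing $\langle \langle 0 | m \rangle | * \rangle$ throughout.
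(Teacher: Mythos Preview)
Your argument follows the same route as the paper's: reduce to showing $\routcome(G^- + \langle -m \mid 0 \rangle) \ge 0$ via the i-game machinery, then exhibit Left's second-player strategy. The paper phrases the strategy informally (Left plays responsively in $G^-$ to guarantee that component scores at least $m$, unless Right moves to $0$ in $\langle -m \mid 0 \rangle$, in which case the $\mathbb{N}$-valuedness of $G^-$ already gives score $\ge 0$), while you cast it as an explicit induction on $H$; the content is the same.

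There is one small omission in your inductive step. When Right plays $H \to H^R$, the even-tempered game $H^R$ may be a number $n$, in which case there is no left option $H^{RL}$ for Left to choose. This is in fact the base case of your induction: since $n = \loutcome(H^R) \ge \routcome(H) \ge m$, Left's only remaining move $\langle -m \mid 0 \rangle \to -m$ ends the game with score $n - m \ge 0$. Once this case is added, your argument is complete and matches the paper's.
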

\begin{proof}
Again, one direction is easy: if $\langle 0 | m \rangle \lesssim G$, then
\[ m = \routcome(\langle 0 | m \rangle) \lesssim \routcome(G).\]

Conversely, suppose that $m \le \routcome(G) = \routcome(G^-)$.
Take a $G^-$ which is $\mathbb{N}$-valued (possible by Theorem~\ref{plethora}(g)).
I claim that
\[ \routcome(G^- + \langle -m | 0 \rangle) \ge 0\]
By the same argument as in the previous lemma, Left can use her strategy in $G^-$ to ensure that the final score of $G^-$
is at least $m$, unless Right moves prematurely in $\langle -m | 0 \rangle$ to $0$, in which case Left automatically gets
a final score of at least 0, becaues $G^-$ is $\mathbb{N}$-valued.

Again, if $Q = \langle 0 | m \rangle$, then $Q$ is an odd-tempered i-game and we just showed that $\routcome(G^- - Q) \ge 0$.
So using Theorem~\ref{plethora}, and the fact that $G^- - Q$ is an even-tempered i-game,
\[ 0 \lesssim G^- - Q,\]
so that
\[ Q \lesssim G^- \lesssim G.\]
\end{proof}

\begin{lemma}\label{calculations}
For $m > 0$, let $Q_m = \langle \langle 0 | m \rangle | * \rangle$.  Then
\[ Q_m + Q_m \approx \langle \langle 0 | m \rangle | \langle 0 | m \rangle \rangle \approx \langle 0 | m \rangle + *\]
\[ Q_m + Q_m + Q_m \approx \langle m* | \langle 0 | m \rangle \rangle \]
\[ Q_m + Q_m + Q_m + Q_m \approx m\]
\end{lemma}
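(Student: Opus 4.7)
The plan is to reduce all three equivalences to outcome computations on i-games, anchor the argument at the fourth identity $4Q_m \approx m$, and then bootstrap the other two. Setup: $Q_m$ is an even-tempered game whose options $\langle 0\mid m\rangle$ and $*$ are both i-games (the former odd-tempered, the latter equivalent to $0$), and $\loutcome(Q_m)=m\ge 0=\routcome(Q_m)$, so $Q_m$ is itself an i-game. By Theorem~\ref{closure} every game appearing in the statement is an i-game, so by Corollary~\ref{restatement} each asserted $A\approx B$ reduces to showing $\loutcome(A-B)=\routcome(A-B)=0$.

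I would prove the fourth identity first, $\outcome(4Q_m)=(m,m)$. The bounds from Theorems~\ref{outsum} and~\ref{outsum2} only sandwich the outcomes in $[0,m]\times[m,2m]$, so explicit strategies are required. The key idea is a \emph{pairing strategy}. Left (aiming to maximize) maintains a bijection between unplayed copies, and answers each Right move $Q_m\to *$ with a move $Q_m\to \langle 0\mid m\rangle$ in the paired copy; once a pair $(\langle 0\mid m\rangle,*)$ is set up, any further move in one of its components is answered in the other, locking the pair's contribution at $m$. Iterating this across the two pairs and carefully analyzing the parities of ``half-played'' copies forces the final score to be exactly $m$, whether Left or Right moves first. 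A symmetric strategy for Right shows $\loutcome(4Q_m)\le m$ and $\routcome(4Q_m)\le m$, completing the computation and hence identity four.

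The third identity follows purely algebraically. Since $4Q_m\approx m$ and i-games are invertible, $3Q_m \approx m-Q_m$, and
\[ m - Q_m = m + \langle * \mid \langle -m \mid 0\rangle\rangle = \langle m+* \mid m+\langle -m\mid 0\rangle\rangle = \langle m* \mid \langle 0\mid m\rangle\rangle,\]
using the definition of negation, the definition of sum (with $m$ a number), and $m+\langle -m\mid 0\rangle=\langle 0\mid m\rangle$. For identity one, I split into the two $\approx$'s. For the right-hand equivalence, expand $\langle 0\mid m\rangle+*=\langle \langle 0\mid m\rangle,*\mid \langle 0\mid m\rangle,m*\rangle$ and use dominated-move removal: the outcome computation $\outcome(\langle 0\mid m\rangle-*)=(m,0)$ gives $*\lesssim\langle 0\mid m\rangle$, and $\outcome(m*-\langle 0\mid m\rangle)=(m,0)$ gives $\langle 0\mid m\rangle\lesssim m*$, so the options $*$ and $m*$ are dominated, leaving $\langle\langle 0\mid m\rangle\mid\langle 0\mid m\rangle\rangle$. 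For the left-hand equivalence $2Q_m\approx\langle 0\mid m\rangle+*$, I verify $\outcome(2Q_m-\langle 0\mid m\rangle-*)=(0,0)$ directly, pairing one $Q_m$-copy with $-\langle 0\mid m\rangle$ and the other with $-*$ and running the same responsive-pairing argument as for identity four.

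The main obstacle is formalizing the pairing strategy in identity four: the opponent need not respect Left's imagined pairing and may split her attention across copies in any order, so the strategy must be robust to arbitrary move-sequences. This is handled by induction on the number of unplayed copies plus the complexity of the partially-played copies, with an invariant tracking which components are ``half-played'' and which pair they belong to. The algebraic derivations of identity three and the dominated-move reduction in identity one are then routine, and the direct verification for the remaining $\approx$ in identity one reuses the same pairing machinery.
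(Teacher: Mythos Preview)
Your approach is genuinely different from the paper's, and it has a concrete gap in the key step.

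\textbf{The paper's route.} The paper does not touch strategies at all. It first handles $m=1$ by pushing everything through the faithful map $\psi$ into $\mathcal{G}$ and computing there: one checks that $\psi(Q_1)=\{\tfrac12\mid *\}=\tfrac14.\mathbf{1}$ where $\mathbf{1}=\{\tfrac12\mid\}$, and similarly that the three right-hand sides map to $\tfrac12.\mathbf{1}$, $\tfrac34.\mathbf{1}$, $1.\mathbf{1}$. Since $\psi$ is additive and Norton multiplication is additive, the identities for $m=1$ are immediate. For general $m$ one uses the order-preserving scaling map $\mu(x)=mx$: its extension satisfies $\tilde\mu(G+H)=\tilde\mu(G)+\tilde\mu(H)$ and preserves $\approx$ by Theorem~\ref{fineenough}, and $\tilde\mu(Q_1)=Q_m$, so applying $\tilde\mu$ to the $m=1$ identities gives the general case in one line. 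This buys a very short proof that leverages the machinery already built.

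\textbf{The gap in your argument.} Your pairing strategy for $4Q_m\approx m$ is not correct as stated. You claim that once a pair $(\langle 0\mid m\rangle,\,*)$ is set up, responding in the other component ``locks the pair's contribution at $m$''. But if Right plays $*\to 0$ in the pair, Left's only move in the partner $\langle 0\mid m\rangle$ is to $0$, so the pair contributes $0$, not $m$. Concretely, $\routcome(\langle 0\mid m\rangle+*)=0$ (Right moves $*\to 0$, then $\loutcome(\langle 0\mid m\rangle)=0$), so the pair does not lock at $m$ when Right moves first in it. A correct second-player strategy for Left in $4Q_m$ needs a different pairing or invariant; for instance, once all four copies are opened to $2\langle 0\mid m\rangle+2*$, pairing the two $*$'s together and the two $\langle 0\mid m\rangle$'s together (a Tweedledee--Tweedledum on like components) does give $m$, but handling the interleaving with the unopened $Q_m$'s still requires care that your sketch does not supply.

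Your derivation of the third identity from the fourth and the dominated-move reduction for the second $\approx$ in identity one are fine (the latter follows from Theorem~\ref{monotone} by replacing the dominated option with a duplicate of the dominating one). If you want to persist with a direct argument, it is actually easier to prove identity one first --- the outcome computation for $2Q_m-\langle 0\mid m\rangle-*$ is a small finite check --- and then get identity four from $4Q_m\approx 2(\langle 0\mid m\rangle+*)\approx 2\langle 0\mid m\rangle$, whose outcomes are easily seen to be $(m,m)$.
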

\begin{proof}
If $m = 1$, all these results follow by direct computation, using the map $\psi$ and basic properties
of Norton multiplication
\[ \psi(\langle \langle 0 | 1 \rangle | * \rangle) = \{\{0|1\}|*\} = \{\frac{1}{2}|*\} = \frac{1}{4}.\{\frac{1}{2}|\}\]
\[\psi(\langle \langle 0 | 1 \rangle | \langle 0 | 1 \rangle \rangle)
= \{\{0|1\}|\{0|1\}\} = \{\frac{1}{2}|\frac{1}{2}\} = \frac{1}{2}* = \frac{1}{2}.\{\frac{1}{2}|\}\]
\[ \psi(\langle 0 | 1 \rangle + *) = \psi(\langle 0 | 1 \rangle) + \psi(*) = \{0|1\} + * = \frac{1}{2}* = \frac{1}{2}.\{\frac{1}{2}|\}\]
\[ \psi(\langle 1* | \langle 0 | 1 \rangle \rangle) = \{1*|\{0|1\}\} = \{1*|\frac{1}{2}\} = \frac{3}{4}.\{\frac{1}{2}|\}\]
\[ \psi(1) = 1 = \frac{4}{4}.\{\frac{1}{2}|\}\]

For $m > 1$, let $\mu$ be the order-preserving map of multiplication by $m$.  Then $Q_m = \tilde{\mu}(Q_1)$, and the fact
that $\mu(x + y) = \mu(x) + \mu(y)$ for $x,y \in \mathbb{Z}$ implies that $\tilde{\mu}(G + H) = \tilde{\mu}(G) + \tilde{\mu}(H)$
for $\mathbb{Z}$-valued games $G$ and $H$.  So
\[ Q_m + Q_m = \tilde{\mu}(Q_1) + \tilde{\mu}(Q_1) = \tilde{\mu}(Q_1 + Q_1) \approx \tilde{\mu}(\langle \langle 0 | 1 \rangle |
\langle 0 | 1 \rangle \rangle) = \langle \langle 0 | m \rangle | \langle 0 | m \rangle \rangle\]
and
\[ \tilde{\mu}(\langle \langle 0 | 1 \rangle | \langle 0 | 1 \rangle \rangle)
\approx \tilde{\mu}(\langle 0 | 1 \rangle + *) = \tilde{\mu}(\langle 0 | 1\rangle) + \tilde{\mu}(*) = \langle 0 | m \rangle + *.\]
The other cases are handled analogously.
\end{proof}

\begin{lemma}\label{minimalcompare}
Let $\mu\,:\, \mathbb{Z} \to \mathbb{N}$ be the map $\mu(x) = \max(0,x)$.  Then for
any $\mathbb{Z}$-valued game $X$ and any $\mathbb{N}$-valued game $Y$,
\[ X \lesssim Y \iff \tilde{\mu}(X) \lesssim Y.\]
\end{lemma}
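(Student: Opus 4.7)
The plan is to prove both directions using results already established. The lemma falls out almost immediately from Theorem~\ref{fineenough} and Lemma~\ref{boundingfunctions}, once one notices two elementary facts about the function $\mu$: first, it acts as the identity on $\mathbb{N}$, so $\tilde{\mu}(Y) = Y$ whenever $Y$ is $\mathbb{N}$-valued; second, $x \le \mu(x)$ for every $x \in \mathbb{Z}$.

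For the forward direction $(\Rightarrow)$, I would apply Theorem~\ref{fineenough} (in the case $\Box$ is $\lesssim$) to the order-preserving function $\mu\colon \mathbb{Z} \to \mathbb{Z}$ with one argument. From $X \lesssim Y$ we get $\tilde{\mu}(X) \lesssim \tilde{\mu}(Y)$. A trivial induction on $Y$ using the recursive definition of extension (Definition~\ref{def:extension}) shows that $\tilde{\mu}(Y) = Y$ whenever $Y$ is $\mathbb{N}$-valued, since $\mu$ restricted to $\mathbb{N}$ is the identity. Thus $\tilde{\mu}(X) \lesssim Y$.

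For the reverse direction $(\Leftarrow)$, I would note that the identity function $\mathrm{id}\colon \mathbb{Z} \to \mathbb{Z}$ is order-preserving and pointwise bounded above by $\mu$, since $x \le \max(0,x)$ for all $x \in \mathbb{Z}$. By Lemma~\ref{boundingfunctions} (with codomain $T = \mathbb{Z}$), $\widetilde{\mathrm{id}}(X) \lesssim \tilde{\mu}(X)$. But a trivial induction on $X$ shows $\widetilde{\mathrm{id}}(X) = X$, so $X \lesssim \tilde{\mu}(X)$. Combining with the hypothesis $\tilde{\mu}(X) \lesssim Y$ and transitivity of $\lesssim$ gives $X \lesssim Y$.

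There is no real obstacle here; both halves reduce to invoking earlier general results, and the only content is observing that $\mu$ restricts to the identity on $\mathbb{N}$ and dominates the identity on $\mathbb{Z}$.
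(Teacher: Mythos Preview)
Your proof is correct and is essentially identical to the paper's own proof: both directions use exactly the same ingredients you name (Lemma~\ref{boundingfunctions} with $x \le \mu(x)$ for the $\Leftarrow$ direction, and Theorem~\ref{fineenough} plus $\tilde{\mu}(Y)=Y$ for the $\Rightarrow$ direction). The only difference is cosmetic---the paper applies Lemma~\ref{boundingfunctions} directly to conclude $X \lesssim \tilde{\mu}(X)$ without explicitly naming the identity function.
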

\begin{proof}
By Lemma~\ref{boundingfunctions} (applied to the fact that $x \le \mu(x)$ for all $x$), $X \lesssim \tilde{\mu}(X)$, so the $\Leftarrow$ direction is obvious.
Conversely, suppose that $X \lesssim Y$.  Then by Theorem~\ref{fineenough},
\[ \tilde{\mu}(X) \lesssim \tilde{\mu}(Y) = Y.\]
\end{proof}

\begin{lemma}\label{zerosmallest}
If $G$ is an $n$-valued even-tempered game, then $0 \lesssim G$.
\end{lemma}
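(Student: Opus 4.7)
The plan is to reduce the claim to a statement about the upside $G^+$ and downside $G^-$, and then invoke the characterization of the partial order on i-games via outcomes from Corollary~\ref{restatement}.

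First I would observe that since $G$ is $n$-valued, it is in particular $\mathbb{N}$-valued, so every outcome of $G$ (or any of its subpositions) lies in $\mathbb{N}$. By Theorem~\ref{plethora}(g), we may choose representatives $G^+$ and $G^-$ of the upside and downside of $G$ that are also $n$-valued games (hence $\mathbb{N}$-valued i-games). In particular $\routcome(G^+) \ge 0$ and $\routcome(G^-) \ge 0$, because the outcome of an $\mathcal{S}$-valued game always lies in $\mathcal{S} \times \mathcal{S}$.

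Next I would use Corollary~\ref{restatement}: for an even-tempered i-game $H$, $H \gtrsim 0$ iff $\routcome(H) \ge 0$. Applying this to $G^+$ and $G^-$ (both of which are even-tempered i-games, since $G$ is even-tempered), we conclude $G^+ \gtrsim 0$ and $G^- \gtrsim 0$. Now Theorem~\ref{plethora}(a) translates these back into statements about $G$ itself: since $0^+ \approx 0 \approx 0^-$ (the game $0$ being its own upside and downside), we have $0 \lesssim_+ G$ and $0 \lesssim_- G$.

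Finally, because $G$ is even-tempered, $G$ and $0$ have the same parity, so $0 \lesssim G$ is equivalent to the conjunction $0 \lesssim_+ G$ and $0 \lesssim_- G$, and we are done. There is no real obstacle here; the lemma is essentially a bookkeeping consequence of the fact that upsides and downsides of $\mathbb{N}$-valued games may be chosen $\mathbb{N}$-valued, combined with the characterization of $\lesssim$ on i-games via right outcomes.
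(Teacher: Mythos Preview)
Your proof is correct and follows essentially the same approach as the paper: take $n$-valued representatives of $G^+$ and $G^-$ via Theorem~\ref{plethora}(g), observe that their right outcomes are nonnegative, deduce $0 \lesssim G^\pm$ for the i-games, and pass back to $G$. The only cosmetic difference is that the paper cites ``another part of Theorem~\ref{plethora}'' (part (h), or equivalently Corollary~\ref{restatement}) and leaves the final step implicit, whereas you spell out the passage through $\lesssim_+$ and $\lesssim_-$ via Theorem~\ref{plethora}(a); one could also shortcut the last step by noting $0 \lesssim G^- \lesssim G$ from part (e).
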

\begin{proof}
By Theorem~\ref{plethora}, we can take $G^+$ and $G^-$ to be $n$-valued games.  Then
$\routcome(G+), \routcome(G^-) \in n = \{0,\ldots,n-1\}$, so that $0 \le \routcome(G^+)$ and
$0 \le \routcome(G^-)$.  By another part of Theorem~\ref{plethora}, it follows
that $0 \lesssim G^+$ and $0 \lesssim G^-$, so therefore $0 \lesssim G$.
\end{proof}

\begin{theorem}\label{sameparrf}
Let $G$ and $H$ be $n$-valued games of the same parity.  Let $Q = \langle \langle 0 | n - 1 \rangle | * \rangle$,
and let $\mu$ be the map $\mu(x) = \max(0,x)$ from Lemma~\ref{minimalcompare}.  Then the following statements are equivalent:
\begin{description}
\item[(a)] For every $n$-valued game $X$,
\[ \rfout(G \oplus_n X) \le \rfout(H \oplus_n X)\]
\item[(b)]
For every $n$-valued game $X$,
\[ \rfout(G + X) \ge n - 1 \implies \rfout(H + X) \ge n - 1.\]
\item[(c)] For every $n$-valued i-game $Y$, if $G + Y$ is even-tempered then
\[ \loutcome(G^- + Y) \ge n - 1 \implies \loutcome(H^- + Y) \ge n - 1,\]
and if $G + Y$ is odd-tempered, then
\[ \routcome(G^- + Y) \ge n - 1 \implies \routcome(H^- + Y) \ge n - 1.\]
\item[(d)] For every $n$-valued i-game $Y$,
\[ \langle 0 | n - 1 \rangle \lesssim G^- + Y \implies \langle 0 | n - 1 \rangle \lesssim H^- + Y\]
and
\[ Q \lesssim G^- + Y \implies Q \lesssim H^- + Y.\]
\item[(e)]
\[ \tilde{\mu}(\langle 0 | n - 1 \rangle - H^-) \lesssim \tilde{\mu}(\langle 0 | n- 1 \rangle - G^-)\]
and
\[ \tilde{\mu}(Q - H^-) \lesssim \tilde{\mu}(Q - G^-).\]
\item[(f)] $G^- \oplus_n \langle 0 | n - 1\rangle \lesssim H^- \oplus_n \langle 0 | n - 1 \rangle$.
\end{description}
\end{theorem}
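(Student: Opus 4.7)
The plan is to prove the six equivalences via the cyclic chain (a)$\Rightarrow$(b)$\Rightarrow$(c)$\Rightarrow$(d)$\Rightarrow$(e)$\Rightarrow$(f)$\Rightarrow$(a), each implication using a distinct tool already developed. The implication (a)$\Rightarrow$(b) is immediate: when $\rfout(G+X)\ge n-1$, Lemma~\ref{outcomeslide} (applied to $\nu(x)=\min(x,n-1)$, through which $\oplus_n$ factors) yields $\rfout(G\oplus_n X)=n-1$, so (a) forces $\rfout(H\oplus_n X)=n-1$, which back-translates to $\rfout(H+X)\ge n-1$.

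For (b)$\Leftrightarrow$(c), I apply Theorem~\ref{fineenough} to see that $G+X\approx_- G^-+X^-$, so $\rfout(G+X)=\rfout(G^-+X^-)$, which reduces the quantifier in (b) to range over $n$-valued i-games $Y=X^-$; Theorem~\ref{plethora}(h--i) then identifies $\rfout(G^-+Y)$ with $\loutcome(G^-+Y)$ or $\routcome(G^-+Y)$ according to parity, matching (c) exactly, where the parity-mismatched implications in (c) are vacuous by Theorem~\ref{sameparity}. The equivalence (c)$\Leftrightarrow$(d) follows directly from Lemmas~\ref{eventest} and \ref{oddtest}, which translate ``$\loutcome\ge n-1$'' and ``$\routcome\ge n-1$'' into the $\lesssim$-tests with $Q$ and $\langle 0\,|\,n-1\rangle$ respectively. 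And (d)$\Leftrightarrow$(e) combines the invertibility of i-games (Corollary~\ref{restatement}) with Lemma~\ref{minimalcompare}: the comparison $Q\lesssim G^-+Y$ rewrites as $Q-G^-\lesssim Y$ and then as $\tilde{\mu}(Q-G^-)\lesssim Y$ (using that any $n$-valued $Y$ is $\mathbb{N}$-valued); specializing $Y$ to $\tilde{\mu}(Q-G^-)$, which is itself an $n$-valued i-game by Theorem~\ref{generaldistortions}, extracts the (e)-inequality, with the reverse direction coming from transitivity. The same argument with $R=\langle 0\,|\,n-1\rangle$ in place of $Q$ produces the companion inequality.

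The principal obstacle will be closing the loop via (e)$\Rightarrow$(f)$\Rightarrow$(a), where one must relate the distortion-by-$\tilde{\mu}$ perspective of (e) to the capped-addition $\oplus_n$ appearing in (f) and (a). The key observation is that on the restricted domain $\{0,\ldots,n-1\}\times\{0,n-1\}$, the functions $\min(x+y,n-1)$ and $\max(x,y)$ coincide, and the latter satisfies $\max(x,y)=\mu(y-x)+x$; although this last identity does not lift directly to extensions (since $x$ is re-used and structurally produces a different game), it suggests that the two halves of (e)---the $R$-inequality controlling the opposite-parity test-game and the $Q$-inequality controlling the same-parity test-game---together pin down the comparison $G^-\oplus_n R\lesssim H^-\oplus_n R$ appearing in (f). For (f)$\Rightarrow$(a), I would first replace $G,H,X$ by their downsides via Theorem~\ref{fineenough} (so that $\rfout(G\oplus_n X)=\rfout(G^-\oplus_n X^-)$), reducing (a) to a universal statement over $n$-valued i-games; then use (f) together with the universality of $R$ as the threshold-$(n-1)$ test game and Lemma~\ref{calculations} to obtain the inequality (b) for all such tests, after which the cycle (b)$\Rightarrow\cdots\Rightarrow$(e) already established yields the full (a). The bookkeeping around parity and around the non-commutativity of $\oplus_n$ with subtraction will be the subtle part.
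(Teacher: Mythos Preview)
Your chain (a)$\Rightarrow$(b)$\Rightarrow$(c)$\Rightarrow$(d)$\Rightarrow$(e) matches the paper almost exactly, and those steps are fine. The problem is in closing the loop. The paper does \emph{not} prove the theorem cyclically: it establishes (a)$\Leftrightarrow$(b) as a pair (with a separate contrapositive argument for (b)$\Rightarrow$(a): if $\rfout(G\oplus_n Y)>\rfout(H\oplus_n Y)$ for some $Y$, set $k=(n-1)-\rfout(G\oplus_n Y)$ and take $X=Y\oplus_n k$ to force the threshold in (b) to fail), and then (e)$\Leftrightarrow$(f) as a pair. Your plan instead requires (f)$\Rightarrow$(a), and your sketch of that implication is circular: you propose to deduce ``inequality (b)'' from (f) and then invoke ``the cycle (b)$\Rightarrow\cdots\Rightarrow$(e)'' to reach (a), but that cycle does not pass through (a) at all---you still need a direct (b)$\Rightarrow$(a), which you never supply.

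Separately, your (e)$\Rightarrow$(f) is not really an argument. You correctly sense an algebraic link but then concede that $\max(x,y)=\mu(y-x)+x$ ``does not lift.'' The identity the paper actually uses \emph{does} lift, because each variable occurs once on each side: for $i,j\in\{0,\dots,n-1\}$ one has $(n-1)-\mu(i-j)=\nu((n-1-i)+j)=(n-1-i)\oplus_n j$. Applying this to games turns the two inequalities in (e) into $G^-\oplus_n(n-1-Q)\lesssim H^-\oplus_n(n-1-Q)$ and $G^-\oplus_n(n-1-R)\lesssim H^-\oplus_n(n-1-R)$ with $R=\langle 0\mid n-1\rangle$. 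Now $n-1-R=R$, giving (f) directly, and Lemma~\ref{calculations} gives $n-1-Q\approx Q\oplus_n R$, so the $Q$-inequality is just (f)$\oplus_n Q$ and hence redundant. Without this identity you have no mechanism to collapse the two conditions of (e) into the single condition (f), and your ``the two halves together pin down (f)'' is only a hope.
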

\begin{proof}
Let $\nu$ be the order-preserving map $\nu(x) = \min(x,n-1)$.
\begin{description}
\item[(a) $\Rightarrow$ (b)] Suppose that $(a)$ is true, and $\rfout(G + X) \ge n - 1$.
Then $G \oplus_n X = \tilde{\nu}(G + X)$, so that by Lemma~\ref{outcomeslide},
\[ \rfout(G \oplus_n X) = \nu(\rfout(G + X)) = n - 1.\]
Then by truth of (a), it follows that $\rfout(H \oplus_n X) \ge \rfout(G \oplus_n X) = n - 1$.
So since
\[ n - 1 \le \rfout(H \oplus_n X) = \nu(\rfout(H + X)) = \min(\rfout(H + X),n-1),\]
it must be the case that $\rfout(H + X) \ge n - 1$ too.
\item[(b) $\Rightarrow$ (a)] Suppose that (a) is false, so that
\[ \rfout(G \oplus_n Y) > \rfout(H \oplus_n Y)\]
for some $Y$.  Let $k = (n - 1) - \rfout(G \oplus_n Y)$, so that
\[ \rfout(G \oplus_n Y) + k = n - 1\]
\[ \rfout(H \oplus_n Y) + k < n - 1.\]
Since $G \oplus_n Y$ is an $n$-valued game,
$k \ge 0$.  Then
\[ \rfout(G \oplus_n Y \oplus_n k) = \min(\rfout(G \oplus_n Y) + k, n - 1) = \min(n - 1, n - 1) = n - 1,\]
while
\[ \rfout(H \oplus_n Y \oplus_n k) = \min(\rfout(H \oplus_n Y) + k, n - 1) = \rfout(H \oplus_n Y) + k < n - 1.\]
So letting $X = Y \oplus_n k$, we have
\[ \min(\rfout(G + X), n - 1) = \rfout(G \oplus_n X) = n - 1 >\]\[ \rfout(H \oplus_n X) = \min(\rfout(H + X), n - 1),\]
implying that $\rfout(G + X) \ge n - 1$ and $\rfout(H + X) < n - 1$, so that (b) is false.
\item[(b) $\Leftrightarrow$ (c)] An easy exercise using Theorem~\ref{plethora}(b,d,g,h,i).  Apply part (b) to $Y$,
part (g) to see that $Y$ ranges over the same things as $X^-$, and parts (d,h,i) to see that
\[ \rfout(G + X) = \loutcome(G^- + X^-)\]
when $G + X$ is even-tempered and
\[ \rfout(G + X) = \routcome(G^- + X^-)\]
when $G + X$ is odd-tempered.  And similarly for $\rfout(H + X)$.
\item[(c) $\Leftrightarrow$ (d)] An easy exercise using Lemma~\ref{eventest}, Lemma~\ref{oddtest}, and Theorem~\ref{sameparity}.
\item[(d) $\Leftrightarrow$ (e)] For any $n$-valued game $Y$, by Lemma~\ref{minimalcompare} we have
\[ Q  \lesssim G^- + Y \iff Q - G^- \lesssim Y
\iff \tilde{\mu}(Q - G^-) \lesssim Y,\]
and similarly
\[ Q \lesssim H^- + Y \iff \tilde{\mu}(Q - G^-) \lesssim Y\]
\[ \langle 0 | n - 1 \rangle \lesssim G^- + Y \iff \tilde{\mu}(\langle 0 | n - 1 \rangle - G^-) \lesssim Y\]
\[ \langle 0 | n - 1 \rangle \lesssim H^- + Y \iff \tilde{\mu}(\langle 0 | n - 1 \rangle - H^-) \lesssim Y.\]

Using these, (d) is equivalent to the claim that for every $n$-valued i-game $Y$,
\begin{equation} \tilde{\mu}(Q - G^-) \lesssim Y \Rightarrow \tilde{\mu}(Q - H^-) \lesssim Y\label{s1}\end{equation}
and
\begin{equation} \tilde{\mu}(\langle 0 | n - 1 \rangle - G^-) \lesssim Y \Rightarrow \tilde{\mu}(\langle 0 | n - 1 \rangle - H^-) \lesssim Y.\label{s2}\end{equation}
Then (e) $\Rightarrow$ (d) is obvious.  For the converse, let $Z_1 = \tilde{\mu}(Q - G^-)$ and $Z_2 = \tilde{\mu}(\langle 0 | n - 1 \rangle - G^-)$.
Then $Z_1$ and $Z_2$ are i-games, by Theorem~\ref{generaldistortions} and the fact that $Q$, $G^-$, and $\langle 0 | n - 1 \rangle$
are i-games.  Additionally, $Z_1$ and $Z_2$ are $n$-valued games because $\mu(x - y) \in n$ whenever $x, y \in n$,
and all of $Q$, $G^-$, and $\langle 0 | n - 1 \rangle$ are $n$-valued games.  So if (d) is true, we can substitute
$Z_1$ into (\ref{s1}) and $Z_2$ into (\ref{s2}), yielding (e).
\item[(e) $\Leftrightarrow$ (f)] It is easy to verify that
\[ (n - 1) - \mu(i - j) = \nu((n-1-i) + j) = (n - 1 - i) \oplus_n j\]
for $i,j \in n$.  Consequently
\[ (n- 1) - \tilde{\mu}(Q - H^-) = \tilde{\nu}((n-1 - Q) + H^-) = (n-1 - Q) \oplus_n H^-\]
and similarly
\[ (n - 1) - \tilde{\mu}(Q - G^-) = \tilde{\nu}((n- 1 - Q) + G^-) = (n - 1 - Q) \oplus_n G^-\]
\[ (n - 1) - \tilde{\mu}(\langle 0 | n - 1\rangle - H^-) = (n - 1 - \langle 0 | n - 1 \rangle) \oplus_n H^-\]
\[ (n - 1) - \tilde{\mu}(\langle 0 | n - 1\rangle - G^-) = (n - 1 - \langle 0 | n - 1 \rangle) \oplus_n G^-.\]
So (e) is equivalent to
\[ G^- \oplus_n (n - 1 - Q) \lesssim H^- \oplus_n (n - 1 - Q)\]
and
\[ G^- \oplus_n (n - 1 - \langle 0 | n- 1 \rangle) \lesssim H^- \oplus_n (n - 1 - \langle 0 | n-1\rangle).\]

But by Lemma~\ref{calculations}
\[ n - 1 - \langle 0 | n- 1 \rangle = \langle 0 | n - 1 \rangle,\] and
\[ n - 1 - Q = \langle n * | \langle 0 | n \rangle \rangle \approx Q + \langle 0 | n - 1 \rangle.\]
So then
\[ n - 1 - Q = \tilde{\nu}(n - 1 - Q) \approx \tilde{\nu}(Q + \langle 0 | n - 1 \rangle) = Q \oplus_n \langle 0 | n - 1 \rangle.\]
So (e) is even equivalent to
\begin{equation} G^- \oplus_n \langle 0 | n - 1 \rangle \lesssim H^- \oplus_n \langle 0 | n - 1 \rangle\label{st1}\end{equation}
and
\begin{equation} G^- \oplus_n \langle 0 | n - 1 \rangle \oplus_n Q \lesssim H^- \oplus_n \langle 0 | n - 1 \rangle \oplus_n Q.\label{st2}\end{equation}
However (\ref{st2}) obviously follows from (\ref{st1}), so (e) is equivalent to (\ref{st1}), which is (f).
\end{description}
\end{proof}

On a simpler note, we can reuse the proof of the same-parity case of Theorem~\ref{doubledisting} to prove the following:
\begin{theorem}\label{sameparlf}
If $G$ and $H$ are $n$-valued games with the same parity, then $G^+ \lesssim H^+$ if and only if
\[ \lfout(G \oplus_n X) \le \lfout(H \oplus_n X)\]
for all $n$-valued games $X$.
\end{theorem}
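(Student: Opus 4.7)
The plan is to split into the two directions, with the forward direction being a direct corollary of previously established machinery and the reverse direction being the substantive work, which I handle by the contrapositive along the lines sketched parenthetically in the same-parity case of Theorem~\ref{doubledisting}.

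For the forward direction, suppose $G^+ \lesssim H^+$. By Theorem~\ref{plethora}(a), this is the same as $G \lesssim_+ H$, and of course $X \lesssim_+ X$ holds by reflexivity. Since $\oplus_n$ is an extension of an order-preserving binary function, Theorem~\ref{fineenough} (the $\lesssim_+$ case) yields $G \oplus_n X \lesssim_+ H \oplus_n X$ for every $n$-valued $X$. Taking $Y = 0$ in the defining condition of $\lesssim_+$ gives the inequality $\lfout(G \oplus_n X) \le \lfout(H \oplus_n X)$.

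For the converse I prove the contrapositive. Assume $G^+ \not\lesssim H^+$, and fix $n$-valued representatives of $G^+$ and $H^+$ using Theorem~\ref{plethora}(g); these have the same parity as $G$ and $H$, so $G^+ - H^+$ is an even-tempered i-game, and Corollary~\ref{restatement} gives $m := \loutcome(G^+ - H^+) \ge 1$. I then set $X := (n-1) - G^+$. Since $G^+$ is $n$-valued with the parity of $G$, the game $X$ is an $n$-valued i-game of that same parity, whence both $G + X$ and $H + X$ are even-tempered. Observing that $\oplus_n(a,b) = \mu(a+b)$ for $\mu(z) := \min(z,n-1)$, the composition principle for extensions (noted after Definition~\ref{def:extension}) yields $G \oplus_n X = \tilde\mu(G + X)$ and $H \oplus_n X = \tilde\mu(H + X)$. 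Using Theorem~\ref{plethora}(d,h) together with $X^+ \approx X$, I compute
\[ (G + X)^+ \approx G^+ + X = (n-1), \qquad (H + X)^+ \approx H^+ + X = (n-1) + (H^+ - G^+), \]
so $\routcome(G + X) = n - 1$ and $\routcome(H + X) = (n-1) + \routcome(H^+ - G^+) = (n-1) - m$ using $\routcome(-K) = -\loutcome(K)$. Since $\lfout$ coincides with $\routcome$ on even-tempered games, Lemma~\ref{outcomeslide} applied to $\mu$ gives $\lfout(G \oplus_n X) = \min(n-1, n-1) = n-1$ while $\lfout(H \oplus_n X) = \min(n-1, n-1-m) = n-1-m < n-1$, contradicting the hypothesis.

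The only real obstacle is choosing the right $X$: the substitution $X = (n-1) - G^+$ (in place of the $(n-1) - H^-$ used with $\odot_n$ in Theorem~\ref{doubledisting}) is what makes $G$'s own upside cancel out while leaving a controlled residue $H^+ - G^+$ in the upside of $H + X$, whose right outcome is precisely $-m < 0$; after that, the book-keeping through Theorem~\ref{plethora} and Lemma~\ref{outcomeslide} is routine.
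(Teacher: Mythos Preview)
Your proof is correct and follows essentially the same approach as the paper. The converse direction is identical in substance: both choose $X = (n-1) - G^+$ and exploit Lemma~\ref{outcomeslide} together with Theorem~\ref{plethora}(d,g,h) to show $\lfout(G \oplus_n X) = n-1 > \lfout(H \oplus_n X)$; your forward direction routes through Theorem~\ref{fineenough} applied to $\lesssim_+$ rather than the paper's direct computation via $\lfout(G+X) = \lfout(G^+ + X^+)$, but this is only a cosmetic difference.
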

\begin{proof}
Clearly if $G^+ \lesssim H^+$, then $G^+ + X^+ \lesssim H^+ + X^+$, so that
\[ \lfout(G + X) = \lfout(G^+ + X^+) \le \lfout(H^+ + X^+) = \lfout(H + X),\]
using the fact from Theorem~\ref{plethora} that $\lfout(K) = \lfout(K^+)$ for any game $K$.
But then
\[ \lfout(G \oplus_n X) = \min(\lfout(G + X),n-1) \le \min(\lfout(H + X),n-1) = \lfout(H \oplus_n X).\]
 
Conversely, suppose that $G^+ \not\lesssim H^+$.  By Theorem~\ref{plethora}(g) we can assume
that $G^+$ and $H^+$ are $n$-valued games too.  Because they are i-games, it follows
from Corollary~\ref{restatement} that $\routcome(H^+ - G^+) < 0$.  Then by Theorem~\ref{plethora}(h),
\[ \routcome(H - G^+) = \routcome((H - G^+)^+) = \routcome(H^+ - G^+) < 0,\]
since $G, H, G^+$, and $H^+$ all have the same parity.  On the other hand,
\[ \routcome(G - G^+) = \routcome(G^+ - G^+) = \routcome(0) = 0.\]
Now let $X$ be the game $n - 1 - G^+$, so that $\routcome(H + X) < n- 1$ and $\routcome(G + X) = n - 1$.
Let $\nu$ be the map $\nu(x) = \min(x,n-1)$.  Then
\[ \routcome(H \oplus_n X) = \routcome(\tilde{\nu}(H + X)) = \nu(\routcome(H + X)) < n - 1,\]
while
\[ \routcome(G \oplus_n X) = \nu(\routcome(G + X)) = \nu(n - 1) = n - 1.\]
So then
\[ \lfout(G \oplus_n X) = \routcome(G \oplus_n X) = n - 1 > \routcome(H \oplus_n X) = \lfout(H \oplus_n X),\]
where $\lfout$ is $\routcome$ because $X$ has the same parity as $G$ and $H$.  Then we are done, because $X$ is clearly an $n$-valued game.
\end{proof}

Finally, in the case where $G$ and $H$ have different parities, it is actually possible for
$G$ and $H$ to be $\{\oplus_n\}$-indistinguishable, surprisingly:
\begin{theorem}\label{diffpar}
If $G$ and $H$ are $n$-valued games of different parities, then $G$ and $H$ are $\oplus_n$
indistinguishable if and only if
\begin{equation} \outcome(G) = \outcome(G + *) = \outcome(H) = \outcome(H + *) = (n-1,n-1).\label{unlikely}\end{equation}
\end{theorem}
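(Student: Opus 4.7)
My plan is to handle the two directions separately, both relying on the side decomposition $G\approx G^+\&G^-$ (Theorem~\ref{plethora}) and the stopping-value bounds from Theorem~\ref{outsum} and Equations~(\ref{uno})--(\ref{doce}). Throughout I take $G^+,G^-,H^+,H^-$ to be $n$-valued by Theorem~\ref{plethora}(g).

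For $\Leftarrow$, I first unpack the hypothesis structurally. The condition $\outcome(G)=(n-1,n-1)$ combined with the $n$-valuedness of $G^+$ pins both outcomes of $G^+$ to $n-1$, forcing $G^+\approx n-1$ when $G$ is even-tempered and $G^+\approx(n-1)\ast$ when $G$ is odd-tempered; the extra condition $\outcome(G+\ast)=(n-1,n-1)$ then yields (via Theorem~\ref{plethora}(d,h,i)) the additional control $\routcome(G^-+\ast)=n-1$ in the even case and $\loutcome(G^-+\ast)=n-1$ in the odd case. To show $\outcome(G\oplus_n X)=(n-1,n-1)$ for every $n$-valued $X$, I verify $\loutcome(G+X)\ge n-1$ and $\routcome(G+X)\ge n-1$ by splitting on the parity of $X$. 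In each case, one side of $(G+X)^\pm \approx G^\pm+X^\pm$ involves $G^+$, which is a number (or number-plus-$\ast$) so Proposition~\ref{numavoid} gives the required bound immediately; the other side involves $G^-+X^-$, which I rewrite as $(G^-+\ast)+(X^-+\ast)$ (using $\ast+\ast\approx 0$) so that the first summand carries the guaranteed $n-1$ stopping value, at which point Equation~(\ref{siete}) applied to the $\ast$-shifted summands supplies the desired lower bound. Clipping by $\nu(x)=\min(x,n-1)$ yields $\outcome(G\oplus_n X)=(n-1,n-1)$; by symmetry the same holds for $H$, so $G$ and $H$ are $\oplus_n$-indistinguishable.

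For $\Rightarrow$, Lemma~\ref{associndist2} reformulates the hypothesis as $\outcome(G\oplus_n X)=\outcome(H\oplus_n X)$ for every $n$-valued $X$. Substituting $X=0$ gives $\outcome(G)=\outcome(H)$, and substituting $X=\ast$ (using Lemma~\ref{staradd}) gives $\outcome(G+\ast)=\outcome(H+\ast)$. Assume toward contradiction that one of these two common outcomes has a coordinate strictly less than $n-1$; using the symmetries $G\leftrightarrow H$, $G\mapsto G+\ast$, and the negation $G\mapsto (n-1)-G$ (order-preserving on $n$-valued games via Theorem~\ref{fineenough}), reduce to the case $\loutcome(G)<n-1$ with $G$ even-tempered. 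I claim that $X=G$ is itself a distinguishing witness. Indeed $G+G$ is even while $H+G$ is odd, so by Theorem~\ref{plethora}(h,i) the quantity $\loutcome(G+G)$ is computed through $G^-+G^-$ while $\loutcome(H+G)$ is computed through $H^++G^+$; using the matching-but-not-equal values of $G^\pm,H^\pm$ extracted from $\outcome(G)=\outcome(H)$ and $\outcome(G+\ast)=\outcome(H+\ast)$, I compute both stopping values directly and show that their clipped values disagree, contradicting indistinguishability.

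The main obstacle is the $\Rightarrow$ direction: verifying that the witness $X=G$ (or an analogous witness built from $H$ in some subcases) genuinely produces a gap that survives the clipping by $\nu$. The argument exploits the asymmetry between $G^+$ and $G^-$, which is precisely the asymmetry that makes $G$ fail to be $\oplus_n$-equivalent to the constant $n-1$; the same asymmetry for $H$ plays a symmetric role, but because $G$ and $H$ contribute these asymmetries through sides of opposite parity (the parity flip induced by adding $G$ swaps which of $G^+,G^-$ governs each of the two outcomes), the two asymmetries cannot cancel, and the clipped outcomes must differ. The remaining three failure modes of the four-outcome hypothesis are reduced to the one treated above by the symmetries noted, so no additional witness construction is needed.
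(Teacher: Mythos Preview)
Your $\Leftarrow$ direction is essentially correct, though heavier than necessary and the citation of Equation~(\ref{siete}) via a uniform $*$-shift is not quite right: after shifting, the two summands $G^-+*$ and $X^-+*$ have the \emph{same} parity, so in the various cases you actually need (\ref{ocho}), (\ref{nueve}), or a direct application of (\ref{siete}) without the shift, choosing which summand plays the role of the i-game so that the known value $n-1$ lands on the correct factor. These are fixable details. For comparison, the paper's argument is a one-liner: for even-tempered $X$ one has $0\lesssim X$ (Lemma~\ref{zerosmallest}), hence $G\lesssim G\oplus_n X$, and $\outcome(G)$ is already maximal; the odd case reduces to this by replacing $G,H,X$ with $G+*,H+*,X+*$.

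Your $\Rightarrow$ direction has two genuine gaps.

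First, the symmetry $G\mapsto (n-1)-G$ does not do what you claim. The map $x\mapsto (n-1)-x$ is order-\emph{reversing}, so Theorem~\ref{fineenough} does not apply; more importantly, a short computation shows $((n-1)-x)\oplus_n((n-1)-y)=(n-1)-(x\odot_n y)$, so this ``negation'' intertwines $\oplus_n$ with $\odot_n$ rather than with itself. Hence $\oplus_n$-indistinguishability of $G,H$ does not pass to $(n-1)-G,(n-1)-H$, and you cannot use this to collapse the $\routcome$ case onto the $\loutcome$ case.

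Second, and more seriously, the claim that $X=G$ is a distinguishing witness is not substantiated. You write that you ``compute both stopping values directly,'' but from $\outcome(G)=\outcome(H)$ and $\outcome(G+*)=\outcome(H+*)$ you only extract $\loutcome(G^-)=\loutcome(H^+)$ and $\routcome(G^+)=\routcome(H^-)$; these do not determine $\loutcome(G^-+G^-)$ or $\loutcome(H^++G^+)$. The inequalities (\ref{uno})--(\ref{doce}) give only an \emph{upper} bound $\loutcome(G^-+G^-)\le 2\loutcome(G^-)$ on one side and a \emph{lower} bound $\loutcome(H^++G^+)\ge \loutcome(H^+)+\routcome(G^+)$ on the other, which need not be separated, and certainly need not remain separated after clipping by $\nu$.

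The paper's approach to $\Rightarrow$ avoids both problems by testing against the single game $Q=\langle *\,|\,(n-1)*\rangle$, which has $Q^+\approx n-1$ and $Q^-\approx 0$. Taking (say) $G$ odd and $H$ even, one has $G\oplus_n Q$ odd and $H\oplus_n Q$ even, so $\loutcome(G\oplus_n Q)$ is governed by the $+$-side (picking up $Q^+\approx n-1$ and forcing the value $n-1$), while $\loutcome(H\oplus_n Q)$ is governed by the $-$-side (picking up $Q^-\approx 0$ and reducing to $\loutcome(H)$). Equating these gives $\loutcome(H)=n-1$; a symmetric chain gives $\routcome(G)=n-1$; combining with $\outcome(G)=\outcome(H)$ from $X=0$ finishes the outcomes of $G,H$, and repeating with $G+*,H+*$ handles the remaining two. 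The point is that $Q$ is engineered to have maximally spread sides, so the parity mismatch between $G$ and $H$ is converted into an exact equality rather than an inequality. Your witness $X=G$ carries no such structure.
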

\begin{proof}
First of all, suppose that (\ref{unlikely}) is true.  I claim that for every game $X$,
\[ \outcome(G \oplus_n X) = \outcome(H \oplus_n X) = (n - 1, n - 1).\]
If $X$ is even-tempered, then $0 \lesssim X$ by Lemma~\ref{zerosmallest}, so that
\[ G \lesssim G \oplus_n X \text{ and } H \lesssim H \oplus_n X,\]
and therefore $\outcome(G \oplus_n X)$ and $\outcome(H \oplus_n X)$ must be at least
as high as $\outcome(G)$ and $\outcome(H)$.  But $\outcome(G)$ and $\outcome(H)$
are already the maximum values, so $\outcome(G \oplus_n X)$ and $\outcome(H \oplus_n X)$ must
also be $(n- 1, n - 1)$.

On the other hand, if $X$ is odd-tempered, then $X \oplus_n * = X + *$ is even-tempered, and the same argument
applied to $X + *$, $G + *$, and $H + *$ shows that
\[ \outcome(G \oplus_n X) = \outcome((G + *) \oplus_n (X + *)) = (n-1,n-1)\]
and
\[ \outcome(H \oplus_n X) = \outcome((H + *) \oplus_n (X + *)) = (n-1,n-1).\]
(Note that for any $n$-valued game $K$, $K \oplus_n * = K + *$, by Lemma~\ref{staradd}.)

Now for the converse, suppose that $G$ and $H$ are indistinguishable.  Without loss of generality,
$G$ is odd-tempered and $H$ is even-tempered.  Let $Q = \langle*|(n-1)*\rangle$, so that $Q^- \approx 0$, $Q^+ \approx n - 1$, and $Q$ is even-tempered.
Then
\[ n - 1 = \loutcome(G^+ \oplus_n (n-1)) = \loutcome(G^+ \oplus_n Q^+) = \loutcome((G \oplus_n Q)^+) = \]\[\loutcome(G \oplus_n Q) = \loutcome(H \oplus_n Q) =
\loutcome((H \oplus_n Q)^-) = \loutcome(H^- \oplus_n Q^-) = \]\[\loutcome(H^- \oplus_n 0) = \loutcome(H^-) = \loutcome(H)\]
and
\[ \routcome(G) = \routcome(G^-) = \routcome(G^- \oplus_n 0) = \routcome(G^- \oplus_n Q^-) =\]\[ \routcome((G \oplus_n Q)^-) = \routcome(G \oplus_n Q) = \routcome(H \oplus_n Q) = \routcome((H \oplus_n Q)^+) = \]\[\routcome(H^+ \oplus_n Q^+) = \routcome(H^+ \oplus_n (n - 1)) = n - 1.\]
So
\[ \loutcome(H) = n - 1 = \routcome(G).\]
But if $G$ and $H$ are indistinguishable, then
\[ \outcome(G) = \outcome(G \oplus_n 0) = \outcome(H \oplus_n 0) = \outcome(H),\]
so it must be the case that
\[ \loutcome(G) = \loutcome(H) = n - 1\]
and
\[ \routcome(H) = \routcome(G) = n - 1.\]
So every outcome of $G$ or $H$ is $n - 1$.  And by the same token, $G + *$ and $H + *$ are also indistinguishable $n$-valued games
of opposite parity, so every outcome of $G + *$ and of $H + *$ must also be $n - 1$.
\end{proof}

Combining Theorems~\ref{sameparrf}, \ref{sameparlf}, and \ref{diffpar}, we get a more explicit description
of $\{\oplus_n\}$-indistinguishability:
\begin{theorem}\label{oplusindist}
Let $\sim$ be $\{\oplus_n\}$-indistinguishability on $n$-valued games.  Then when $G$ and $H$ have the same parity,
$G \sim H$ iff
$G^+ \approx H^+$ and
\begin{equation}G^- \oplus_n \langle 0 | n - 1 \rangle \approx H^- \oplus_n \langle 0 | n - 1 \rangle.\label{complicon}\end{equation}
When $G$ is odd-tempered and $H$ is even-tempered, $G \sim H$ if and only if
\begin{equation} G^+ \approx G^- \oplus_n \langle 0 | n - 1 \rangle \approx n - 1\label{gcon}\end{equation}
and
\begin{equation} H^+ \approx H^- \oplus_n \langle 0 | n - 1 \rangle \approx (n - 1)*\label{hcon}\end{equation}
\end{theorem}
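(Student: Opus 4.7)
\medskip

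The plan is to derive this theorem by combining the three preceding theorems (\ref{sameparrf}, \ref{sameparlf}, \ref{diffpar}) through Lemma~\ref{associndist2}. First, I would invoke Lemma~\ref{associndist2} to recast $G \sim H$ as the statement that $\outcome(G \oplus_n X) = \outcome(H \oplus_n X)$ for every $n$-valued game $X$. Since the full outcome splits as $(\lfout, \rfout)$, this equality can be broken into two one-sided conditions: equality of $\lfout$ across all $X$, and equality of $\rfout$ across all $X$. I would then split into cases based on whether $G$ and $H$ share the same parity.

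In the same-parity case, note that $G \oplus_n X$ and $H \oplus_n X$ automatically have the same parity for every $X$, so $\lfout$ and $\rfout$ are well-defined and the outcome-equality decomposes cleanly. Applying Theorem~\ref{sameparlf} in both directions ($\le$ and $\ge$) turns the $\lfout$ equality into the condition $G^+ \approx H^+$. Similarly, applying Theorem~\ref{sameparrf} in both directions turns the $\rfout$ equality into condition (\ref{complicon}), that is, $G^- \oplus_n \langle 0 \mid n-1\rangle \approx H^- \oplus_n \langle 0 \mid n-1\rangle$. Conjoining these two equivalent formulations gives the desired characterization.

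In the opposite-parity case, the natural tool is Theorem~\ref{diffpar}, which reduces $G \sim H$ to the four outcome conditions $\outcome(G) = \outcome(G+*) = \outcome(H) = \outcome(H+*) = (n-1, n-1)$. The goal then is to repackage the two conditions on $G$ alone into (\ref{gcon}), and the two conditions on $H$ alone into (\ref{hcon}). To translate $\outcome(G) = (n-1,n-1)$, I would apply Theorem~\ref{plethora}(h,i) to express $\loutcome(G)$ and $\routcome(G)$ in terms of $\loutcome(G^+)$ and $\routcome(G^-)$; for the condition on $G + *$, I would use Theorem~\ref{plethora}(d) together with $*^\pm \approx *$ to reduce to conditions on $G^\pm + *$, and then use the computation that $\loutcome$ and $\routcome$ of $G^- \oplus_n \langle 0 \mid n-1\rangle$ both equal $n-1$ precisely when $\outcome(G^- + X)$ reaches the maximum value $n-1$ for appropriate choices of $X$. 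The upshot is a reformulation as ``the upside reaches the maximum $n-1$ on the appropriate side, and similarly for the downside after being $\oplus_n$'d with $\langle 0\mid n-1\rangle$.''

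The main obstacle I foresee is the last step: making precise the translation between the raw outcome conditions from Theorem~\ref{diffpar} and the algebraic identities in (\ref{gcon}) and (\ref{hcon}), particularly because the parities of the games on the two sides of each $\approx$ do not naively match. The resolution is that the equivalences should be read within the appropriate parity class: $G^+$ and $G^- \oplus_n \langle 0 \mid n-1\rangle$ encode the ``left-final'' and ``right-final'' behavior of $G$ separately, and each is compared to the appropriate parity-matched representative of the constant value $n-1$ (namely $n-1$ itself or $(n-1)*$, depending on parity). Verifying that the four extremal outcome conditions from Theorem~\ref{diffpar} exactly match these parity-corrected equivalences, via Lemmas~\ref{eventest} and \ref{oddtest} applied to $G^\pm$, is where the essential work lies.
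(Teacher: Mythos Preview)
Your treatment of the same-parity case matches the paper's proof exactly: invoke Lemma~\ref{associndist2}, split into $\lfout$ and $\rfout$, and apply Theorems~\ref{sameparlf} and \ref{sameparrf} in both directions.

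For the opposite-parity case, however, the paper takes a cleaner route than your direct translation. Rather than unpacking the four outcome conditions from Theorem~\ref{diffpar} and laboriously matching them against (\ref{gcon}) and (\ref{hcon}), the paper bootstraps from the same-parity case already established. The argument runs as follows: the conditions of Theorem~\ref{diffpar} define a set $\mathcal{A}$ of even-tempered games and a set $\mathcal{B}$ of odd-tempered games, and any $G \in \mathcal{B}$ is $\sim$-equivalent to any $H \in \mathcal{A}$. Since $n-1 \in \mathcal{A}$ and $(n-1)* \in \mathcal{B}$, transitivity of $\sim$ forces $\mathcal{A}$ to be precisely the $\sim$-class of $n-1$ and $\mathcal{B}$ to be the $\sim$-class of $(n-1)*$. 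Thus the cross-parity condition becomes: $G \sim (n-1)*$ (same parity) and $H \sim (n-1)$ (same parity). Now apply the same-parity criterion you have already proved, together with the small computations $(n-1)\oplus_n\langle 0\mid n-1\rangle = (n-1)*$ and $(n-1)*\oplus_n\langle 0\mid n-1\rangle \approx n-1$, to obtain (\ref{gcon}) and (\ref{hcon}).

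Your direct approach can be made to work, but Lemmas~\ref{eventest} and \ref{oddtest} are not the right tools: they bound $\loutcome$ for even-tempered games and $\routcome$ for odd-tempered games from below, whereas you need to characterize exactly when an $n$-valued i-game equals the extreme value $n-1$ (or $(n-1)*$). The relevant fact is simply that an even-tempered $n$-valued i-game $K$ satisfies $K \approx n-1$ iff $\routcome(K)=n-1$, since $\loutcome(K) \le n-1$ automatically and $\loutcome(K) \ge \routcome(K)$. Working this out for $G^+ + *$ and for $G^- \oplus_n \langle 0\mid n-1\rangle$ does eventually recover the conditions, but the paper's transitivity argument bypasses the obstacle you flagged entirely.
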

\begin{proof}
By Lemma~\ref{associndist2}, $G \sim H$ if and only if
\[ \forall X \in \mathcal{W}_n : \outcome(G \oplus_n X) = \outcome(H \oplus_n X).\]
If $G$ and $H$ have the same parity, then this is equivalent to
\[ \rfout(G \oplus_n X) = \rfout(H \oplus_n X)\]
and
\[ \lfout(G \oplus_n X) = \lfout(H \oplus_n X)\]
for all $n$-valued games $X$.  By Theorems~\ref{sameparrf} and \ref{sameparlf}, respectively,
these are equivalent to $G^+ \sim H^+$ and to (\ref{complicon}) above.  This handles the case
when $G$ and $H$ have the same parity.

Let $\mathcal{A}$ be the set of all even-tempered $n$-valued games $X$ with $\outcome(X) = \outcome(X + *) = (n-1,n-1)$,
and $\mathcal{B}$ be the set of all odd-tempered $n$-valued games with the same property.  Then Theorem~\ref{diffpar}
says that when $G$ is odd-tempered and $H$ is even-tempered, $G \sim H$ iff $G \in \mathcal{A}$ and $H \in \mathcal{B}$.
Now both $\mathcal{A}$ and $\mathcal{B}$ are nonempty, since $(n- 1) \in \mathcal{A}$ and
$(n - 1)* \in \mathcal{B}$, easily.  So by transitivity, $\mathcal{A}$ must be an equivalence class,
specifically the equivalence class of $(n - 1)$, and similarly $\mathcal{B}$ must be the equivalence
class of $(n - 1)*$.  Then (\ref{gcon}) and (\ref{hcon}) are just the conditions we just determined
for comparing games of the same parity, because of the easily-checked facts that
\[ (n-1) \oplus_n \langle 0 | n \rangle = (n-1)*\]
\[ (n-1)* \oplus_n \langle 0 | n \rangle = (n-1) + * + * \approx (n - 1).\]
\end{proof}

So as far as $\{\oplus_n\}$ indistinguishability is concerned, a game $G$ is determined by $G^+$,
$G^- \oplus_n \langle 0 | n - 1 \rangle$, and its parity - except that in one case one of the
even-tempered equivalence classes gets merged with one of the odd-tempered equivalence classes.

In the case that $n = 2$, $\langle 0 | 1 \rangle = \frac{1}{2}.\{\frac{1}{2}|\} + *$, so
the thirty five possible pairs of $(u^-,u^+)$ give rise
to the following nineteen pairs of $(u^- \cup \frac{1}{2}, u^+)$:
\[
\left(\frac{1}{2},0\right), \left(\frac{1}{2},\frac{1}{4}\right),
\left(\frac{1}{2},\frac{3}{8}\right), \left(\frac{1}{2},\frac{1}{2}\right),\]\[
\left(\frac{1}{2},\frac{1}{2}*\right), \left(\frac{1}{2},\frac{5}{8}\right),
\left(\frac{1}{2},\frac{3}{4}\right), \left(\frac{1}{2},1\right),\]\[
\left(\frac{3}{4},\frac{1}{4}\right), \left(\frac{3}{4},\frac{3}{8}\right),
\left(\frac{3}{4},\frac{1}{2}\right), \left(\frac{3}{4},\frac{1}{2}*\right),\]\[
\left(\frac{3}{4},\frac{5}{8}\right), \left(\frac{3}{4},\frac{3}{4}\right),
\left(\frac{3}{4},1\right),
\left(1,\frac{1}{2}\right),\]\[ \left(1,\frac{5}{8}\right),
\left(1,\frac{3}{4}\right), \left(1,1\right)
\]
So there are $2 \cdot 19 - 1 = 37$ equivalence classes of $2$-valued games, modulo
$\{\oplus_2\}$ indistinguishability.

We leave as an exercise to the reader the analogue of Theorem~\ref{oplusindist} for $\{\odot_n\}$-indistinguishability.

\section{Indistinguishability for min and max}
Unlike the case of $\{\oplus_n,\odot_n\}$- and $\{\oplus_n\}$- indistinguishability, $\{\wedge,\vee\}$-
and $\{\vee\}$- indinstinguishability are much simpler to understand, because they reduce in a simple way to the $n = 2$ case
(where $\oplus_2 = \vee$ and $\odot_2 = \wedge$).  In particular, there will be only finitely many equivalence
classes of $n$-valued games modulo $\{\wedge,\vee\}$-indistinguishability (and therefore modulo $\{\vee\}$-indistinguishability too,
because $\{\vee\}$-indistinguishability is a coarser relation).  The biggest issue will be showing that all the expected equivalence
classes are nonempty.

For any $n$, let $\delta_n \,:\, \mathbb{Z} \to \{0,1\}$ be given by $\delta_n(x) = 0$ if $x < n$, and $\delta_n(x) = 1$
if $x \ge n$.  Then for $m = 1, 2, \ldots, n - 1$, $\tilde{\delta_m}$ produces a map from $n$-valued games
to $2$-valued games.  By Lemma~\ref{boundingfunctions}, $\tilde{\delta_m}(G) \le \tilde{\delta_{m'}}(G)$
when $m \ge m'$.  We will see that a game is determined up to $\{\wedge,\vee\}$-indistinguishability
by the sequence $(\tilde{\delta_1}(G),\tilde{\delta_2}(G),\ldots,\tilde{\delta}_{n-1}(G))$.

\begin{theorem}\label{minmaxfirst}
If $G$ is an $n$-valued game, then $\loutcome(G)$ is the maximum $m$ between $1$ and $n - 1$ such that
$\loutcome(\tilde{\delta_m}(G)) = 1$, or 0 if no such $m$ exists.  Similarly, $\routcome(G)$ is the maximum
$m$ between $1$ and $n - 1$ such that $\routcome(\tilde{\delta_m}(G)) = 1$, or 0 if no such $m$ exists.
\end{theorem}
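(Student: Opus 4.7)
The plan is to derive the theorem as a more or less immediate corollary of Lemma~\ref{outcomeslide}, since each $\delta_m$ is a unary weakly order-preserving function on $\mathbb{Z}$ (in fact on $\{0,1,\ldots,n-1\}$). First I would observe that $\delta_m$ is indeed weakly order-preserving: if $x \le y$, then $x \ge m$ implies $y \ge m$, so $\delta_m(x) \le \delta_m(y)$. Consequently Lemma~\ref{outcomeslide} applies and gives the identities
\[
\loutcome(\tilde{\delta_m}(G)) = \delta_m(\loutcome(G)), \qquad \routcome(\tilde{\delta_m}(G)) = \delta_m(\routcome(G))
\]
for every $n$-valued game $G$ and every $1 \le m \le n-1$.

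Next I would unpack the definition of $\delta_m$: the right-hand side $\delta_m(\loutcome(G))$ equals $1$ precisely when $\loutcome(G) \ge m$, and equals $0$ otherwise. Therefore the set of $m \in \{1,\ldots,n-1\}$ for which $\loutcome(\tilde{\delta_m}(G)) = 1$ is exactly $\{m : 1 \le m \le n-1,\ m \le \loutcome(G)\}$. Since $G$ is $n$-valued, $\loutcome(G) \in \{0,1,\ldots,n-1\}$, so this set is $\{1, 2, \ldots, \loutcome(G)\}$ when $\loutcome(G) \ge 1$, and is empty when $\loutcome(G) = 0$. Taking the maximum (with the convention of $0$ when the set is empty) gives exactly $\loutcome(G)$, which is the desired statement. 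The argument for $\routcome$ is identical, using the second identity of Lemma~\ref{outcomeslide}.

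There is essentially no obstacle here: the only thing to verify is the trivial monotonicity of the step function $\delta_m$, after which Lemma~\ref{outcomeslide} does all of the work. The content of the theorem is really just that the outcome of a game is recoverable from the outcomes of its ``cutoff'' projections to Boolean games, and this is a direct consequence of the fact that unary order-preserving extensions commute with taking outcomes. I would write the proof as a single short paragraph invoking Lemma~\ref{outcomeslide} and then reading off $\loutcome(G)$ and $\routcome(G)$ from the pattern of zeros and ones.
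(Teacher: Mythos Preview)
Your proposal is correct and essentially identical to the paper's own proof: both invoke Lemma~\ref{outcomeslide} to get $\loutcome(\tilde{\delta_m}(G)) = \delta_m(\loutcome(G))$ (and likewise for $\routcome$), then read off the result from the definition of $\delta_m$ and the fact that the outcomes lie in $\{0,\ldots,n-1\}$. There is no meaningful difference in approach.
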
In particular, then,
the outcome of a game is determined by the values of $\tilde{\delta_m}(G)$, so that if
$\tilde{\delta_m}(G) \approx \tilde{\delta_m}(H)$ for every $m$ for some game $H$, then $\outcome(G) = \outcome(H)$.
\begin{proof}
Note that by Lemma~\ref{outcomeslide},
\[ \delta_m(\loutcome(G)) = \loutcome(\tilde{\delta_m}(G)) \text{ and } \delta_m(\routcome(G)) = \routcome(\tilde{\delta_m}(G)).\]
Then by definition of $\delta_m$, we see that $\loutcome(\tilde{\delta_m}(G)) = 1$ iff $m \le \loutcome(G)$,
and similarly for $\routcome(\tilde{\delta_m}(G))$ and $\routcome(G)$. So since $\loutcome(G)$ and $\routcome(G)$
are integers between $0$ and $n - 1$, the desired result follows.
\end{proof}

\begin{theorem}\label{minmaxsecond}
If $G$ and $H$ are $n$-valued games, then
\[ \tilde{\delta_m}(G \wedge H) = \tilde{\delta_m}(G) \wedge \tilde{\delta_m}(H) = \tilde{\delta_m}(G) \odot_2 \tilde{\delta_m}(H)\]
and
\[ \tilde{\delta_m}(G \vee H) = \tilde{\delta_m}(G) \vee \tilde{\delta_m}(H) = \tilde{\delta_m}(G) \oplus_2 \tilde{\delta_m}(H).\]
\end{theorem}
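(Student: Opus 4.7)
The plan is to reduce the game-theoretic statement to a pointwise statement about integers, then invoke the general principle (announced in the paragraph after Definition~\ref{def:extension}) that extensions commute with composition.

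First I would verify the two pointwise identities: for all $x,y \in \mathbb{Z}$ and all $m \ge 1$,
\[ \delta_m(\min(x,y)) = \min(\delta_m(x),\delta_m(y)) \quad \text{and} \quad \delta_m(\max(x,y)) = \max(\delta_m(x),\delta_m(y)). \]
These are immediate from the fact that $\delta_m$ is weakly order-preserving and $\{0,1\}$-valued: if $x \le y$, then both sides of the first equation equal $\delta_m(x)$, and symmetrically both sides of the second equal $\delta_m(y)$.

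Second, I would promote these pointwise identities to statements about extensions. The paper explicitly notes that $f \circ g = h$ implies $\tilde{f} \circ \tilde{g} = \tilde{h}$, with analogous results for higher arity. Viewing $\delta_m \circ \wedge_n$ and $\wedge_{\{0,1\}} \circ (\delta_m \times \delta_m)$ as two equal functions $n \times n \to \{0,1\}$, extension gives
\[ \tilde{\delta_m} \circ \tilde{\wedge_n} = \tilde{\wedge_{\{0,1\}}} \circ (\tilde{\delta_m} \times \tilde{\delta_m}), \]
which unpacks to $\tilde{\delta_m}(G \wedge H) = \tilde{\delta_m}(G) \wedge \tilde{\delta_m}(H)$. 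The argument for $\vee$ is identical.

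Third, for the remaining equalities, I would simply observe that $\wedge$ and $\odot_2$ agree as functions $\{0,1\}^2 \to \{0,1\}$ (both have output $1$ iff both inputs are $1$), and likewise $\vee$ and $\oplus_2$ agree on $\{0,1\}^2$. Since $\tilde{\delta_m}(G)$ and $\tilde{\delta_m}(H)$ are $2$-valued games, the extensions $\tilde{\wedge}$ and $\tilde{\odot_2}$ applied to them produce identical games, directly from Definition~\ref{def:extension}; similarly for $\tilde{\vee}$ and $\tilde{\oplus_2}$. This finishes the second equality in each line.

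The only potential obstacle is that someone might want the composition-of-extensions principle spelled out rather than cited. If so, I would instead give a direct joint induction on $G$ and $H$: the base case, where both are numbers, is exactly the pointwise identity above; for the inductive step, using the recursive definition of $\tilde{\wedge}$ and $\tilde{\delta_m}$, each option of $\tilde{\delta_m}(G \wedge H)$ is of the form $\tilde{\delta_m}(G^* \wedge H)$ or $\tilde{\delta_m}(G \wedge H^*)$, which by induction equals $\tilde{\delta_m}(G^*) \wedge \tilde{\delta_m}(H)$ or $\tilde{\delta_m}(G) \wedge \tilde{\delta_m}(H^*)$, exhibiting exactly the options of $\tilde{\delta_m}(G) \wedge \tilde{\delta_m}(H)$. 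This is entirely routine.
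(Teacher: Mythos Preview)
Your proposal is correct and follows essentially the same approach as the paper's proof: the paper simply cites the pointwise identities $\delta_m(\min(x,y)) = \min(\delta_m(x),\delta_m(y))$ and $\delta_m(\max(x,y)) = \max(\delta_m(x),\delta_m(y))$, together with the fact that $\wedge = \odot_2$ and $\vee = \oplus_2$ on $2$-valued games, and leaves the lifting to extensions implicit. Your write-up is more explicit about invoking the composition-of-extensions principle (and even offers the direct induction as backup), but the underlying argument is identical.
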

\begin{proof}
This follows immediately from the fact that when restricted to $2$-valued games, $\wedge = \odot_2$ and $\vee = \oplus_2$, together with the
obvious equations
\[ \delta_m(\min(x,y)) = \min(\delta_m(x),\delta_m(y)) \text{ and } \delta_m(\max(x,y)) = \max(\delta_m(x),\delta_m(y)) \]
\end{proof}
Let $\sim$ be the equivalence relation on $n$-valued games given by
\[ G \sim H \iff \forall 1 \le m \le n  - 1: \tilde{\delta_m}(G) \approx \tilde{\delta_m}(H)\]
Then Theorem~\ref{minmaxfirst} implies that $\outcome(G) = \outcome(H)$ when $G \sim H$, and Theorem~\ref{minmaxsecond} implies
that $G \wedge H \sim G' \wedge H'$ and $G \vee H \sim G' \vee H'$, when $G \sim G'$ and $H \sim H'$.  So by definition of indistinguishability, $G \sim H$ implies
that $G$ and $H$ are $\{\wedge,\vee\}$-indistinguishable (and $\{\vee\}$-indistinguishable too, of course).

\begin{theorem}\label{minmaxthird}
If $G$ and $H$ are $\{\wedge,\vee\}$-indistinguishable, then $G \sim H$.  In particular then, $\sim$ is $\{\wedge,\vee\}$-indistinguishability.
\end{theorem}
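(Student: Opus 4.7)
The plan is to reduce the problem to the $n=2$ case already resolved in Theorem~\ref{doubledisting}, by using the operator $\tilde{\delta_m}$ as a bridge between $n$-valued and $2$-valued indistinguishability. Concretely, fix any $m$ with $1 \le m \le n-1$; I will show that if $G$ and $H$ are $\{\wedge,\vee\}$-indistinguishable as $n$-valued games, then $\tilde{\delta_m}(G)$ and $\tilde{\delta_m}(H)$ are $\{\wedge,\vee\}$-indistinguishable as $2$-valued games, whence by Theorem~\ref{doubledisting} (for $n=2$, where $\vee=\oplus_2$ and $\wedge=\odot_2$) they are $\approx$-equivalent.

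The key construction is a right inverse to $\delta_m$. Define $\iota_m \colon \{0,1\} \to n$ by $\iota_m(0)=0$ and $\iota_m(1)=m$; this is weakly order-preserving, and $\delta_m \circ \iota_m$ is the identity on $\{0,1\}$. Given any $2$-valued test game $Y$, form the $n$-valued game $Y' = \tilde{\iota_m}(Y)$. Since extension-to-games respects composition of functions, $\tilde{\delta_m}(Y') = Y$. Moreover, by Theorem~\ref{minmaxsecond},
\[ \tilde{\delta_m}(G \wedge Y') = \tilde{\delta_m}(G) \wedge \tilde{\delta_m}(Y') = \tilde{\delta_m}(G) \wedge Y, \]
and the analogous identity holds for $H$ and for $\vee$.

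Because $G$ and $H$ are $\{\wedge,\vee\}$-indistinguishable, plugging in $Y'$ gives $\outcome(G \wedge Y') = \outcome(H \wedge Y')$ and $\outcome(G \vee Y') = \outcome(H \vee Y')$. Applying $\delta_m$ componentwise to both sides and invoking Lemma~\ref{outcomeslide} (which says $\loutcome(\tilde{\delta_m}(K)) = \delta_m(\loutcome(K))$ and likewise for $\routcome$, since $\delta_m$ is weakly order-preserving), we obtain
\[ \outcome(\tilde{\delta_m}(G) \wedge Y) = \outcome(\tilde{\delta_m}(H) \wedge Y) \]
and the same for $\vee$. Since $Y$ was an arbitrary $2$-valued game, $\tilde{\delta_m}(G)$ and $\tilde{\delta_m}(H)$ are $\{\wedge,\vee\} = \{\odot_2,\oplus_2\}$-indistinguishable as $2$-valued games. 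By Theorem~\ref{doubledisting} this forces $\tilde{\delta_m}(G) \approx \tilde{\delta_m}(H)$.

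Running this argument for each $m \in \{1,\ldots,n-1\}$ yields $G \sim H$ by the very definition of $\sim$. The combined forward direction (noted just before the theorem statement) together with this reverse direction then shows that $\{\wedge,\vee\}$-indistinguishability \emph{equals} $\sim$. There is no real obstacle: the only subtle point is noticing that the order-preserving retraction $\iota_m$ exists and that the extension-to-games machinery turns the identity $\delta_m \circ \iota_m = \mathrm{id}$ into $\tilde{\delta_m}(\tilde{\iota_m}(Y)) = Y$, which is what reduces the $n$-valued test space to the $2$-valued test space.
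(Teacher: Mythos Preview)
Your proof is correct and follows essentially the same route as the paper's: both arguments reduce to the $n=2$ case via Theorem~\ref{doubledisting} by lifting a $2$-valued test game along a section of $\delta_m$ and then pushing outcomes back down with Theorem~\ref{minmaxsecond} and Lemma~\ref{outcomeslide}. The only cosmetic differences are that the paper argues by contrapositive (starting from $G \not\sim H$) and uses the section $Y \mapsto (m-1)+Y$ rather than your $\iota_m$, but these are interchangeable choices.
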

\begin{proof}
Suppose that $G \not\sim H$, so that $\tilde{\delta_m}(G) \not\approx \tilde{\delta_m}(H)$ for some $1 \le m \le n - 1$.
Then by Theorem~\ref{doubledisting}, there is a two-valued game $Y$ such that
\begin{equation} \outcome(\tilde{\delta_m}(G) \odot_2 Y) \ne \outcome(\tilde{\delta_m}(H) \odot_2 Y) \text{ or }
\outcome(\tilde{\delta_m}(G) \oplus_2 Y) \ne \outcome(\tilde{\delta_m}(H) \oplus_2 Y).\label{minmaxthirdeq1}\end{equation}
Let $X = (m - 1) + Y$, which will be a $n$-valued game because $1 \le m \le n - 1$ and $Y$ is $\{0,1\}$-valued.
Then since $\delta_m((m-1) + y) = y$ for $y \in \{0,1\}$, it follows that $\tilde{\delta_m}(X) = Y$.
So by Theorem~\ref{minmaxsecond},
\[ \tilde{\delta_m}(G \wedge X) \approx \tilde{\delta_m}(G) \odot_2 Y\]
\[ \tilde{\delta_m}(H \wedge X) \approx \tilde{\delta_m}(H) \odot_2 Y\]
\[ \tilde{\delta_m}(G \vee X) \approx \tilde{\delta_m}(G) \oplus_2 Y\]
\[ \tilde{\delta_m}(G \vee Y) \approx \tilde{\delta_m}(H) \oplus_2 Y\]
Combining this with (\ref{minmaxthirdeq1}), we see that
\[ \outcome(\tilde{\delta_m}(G \wedge X)) \ne \outcome(\tilde{\delta_m}(H \wedge X)) \text{ or } \outcome(\tilde{\delta_m}(G \vee X))
\ne \outcome(\tilde{\delta_m}(H \vee X)).\]
By Lemma~\ref{outcomeslide}, this implies that either
\[ \outcome(G \wedge X) \ne \outcome(H \wedge X) \text{ or } \outcome(G \vee X) \ne \outcome(H \vee X),\]
so that $G$ and $H$ are not indistinguishable.

The converse direction, that $G \sim H$ implies that $G$ and $H$ are $\{\wedge,\vee\}$-indistinguishable, follows by the remarks before this theorem.
\end{proof}

By a completely analogous argument we see that
\begin{theorem}
If $G$ and $H$ are $n$-valued games, then $G$ and $H$ are $\{\vee\}$-indistinguishable if and only if
$\tilde{\delta_m}(G)$ and $\tilde{\delta_m}(H)$ are $\{\oplus_2\}$-indistinguishable for all $1 \le m \le n - 1$.
\end{theorem}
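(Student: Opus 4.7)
The plan is to mirror the proof of Theorem~\ref{minmaxthird}, only now restricted to the single operation $\vee$. Since $\vee$ is commutative, associative, and has identity $0$ (when restricted to $n$-valued games), Theorem~\ref{associndist} applies and gives the simplification $G \sim_\vee H$ iff $\outcome(G \vee X) = \outcome(H \vee X)$ for every $n$-valued game $X$. I will use this characterization throughout.

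For the easy direction, suppose $\tilde{\delta_m}(G) \approx_{\oplus_2} \tilde{\delta_m}(H)$ for every $1 \le m \le n-1$, and let $X$ be any $n$-valued game. By Theorem~\ref{minmaxsecond},
\[ \tilde{\delta_m}(G \vee X) = \tilde{\delta_m}(G) \oplus_2 \tilde{\delta_m}(X) \]
and analogously for $H \vee X$. The hypothesis (applied with the 2-valued test game $\tilde{\delta_m}(X)$) gives that $\tilde{\delta_m}(G \vee X)$ and $\tilde{\delta_m}(H \vee X)$ have the same outcome for every $m$. By Theorem~\ref{minmaxfirst}, the outcome of any $n$-valued game is recoverable from the outcomes of its $\tilde{\delta_m}$ images, so $\outcome(G \vee X) = \outcome(H \vee X)$, as desired.

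For the converse, suppose some $m$ satisfies $\tilde{\delta_m}(G) \not\approx_{\oplus_2} \tilde{\delta_m}(H)$. By definition (and Lemma~\ref{associndist2} for $\oplus_2$), there is a 2-valued game $Y$ with $\outcome(\tilde{\delta_m}(G) \oplus_2 Y) \ne \outcome(\tilde{\delta_m}(H) \oplus_2 Y)$. Set $X = (m-1) + Y$; since $1 \le m \le n-1$ and $Y$ is $\{0,1\}$-valued, $X$ is an $n$-valued game. Moreover $\delta_m(m-1) = 0$ and $\delta_m(m) = 1$, so the composition $\delta_m \circ (\cdot + (m-1))$ is the identity on $\{0,1\}$, which gives $\tilde{\delta_m}(X) = Y$. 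Applying Theorem~\ref{minmaxsecond} again,
\[ \tilde{\delta_m}(G \vee X) = \tilde{\delta_m}(G) \oplus_2 Y, \qquad \tilde{\delta_m}(H \vee X) = \tilde{\delta_m}(H) \oplus_2 Y, \]
so these two have differing outcomes. Lemma~\ref{outcomeslide} then forces $\outcome(G \vee X) \ne \outcome(H \vee X)$, so $G$ and $H$ are not $\{\vee\}$-indistinguishable.

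There is really no major obstacle here; the argument is essentially the $\vee$-half of the proof of Theorem~\ref{minmaxthird}. The only minor point requiring care is verifying that $X = (m-1) + Y$ is an $n$-valued game with $\tilde{\delta_m}(X) = Y$, which is immediate from the arithmetic of $\delta_m$ on the two relevant values. The rest is bookkeeping via Theorems~\ref{minmaxfirst} and \ref{minmaxsecond}, and Lemmas~\ref{associndist2} and \ref{outcomeslide}.
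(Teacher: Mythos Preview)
Your proof is correct and follows essentially the same route the paper indicates: it is the $\vee$-only version of the argument for Theorem~\ref{minmaxthird}, using Theorem~\ref{associndist} (equivalently Lemma~\ref{associndist2}) in place of Theorem~\ref{doubledisting} to obtain the distinguishing $2$-valued game $Y$, and then lifting via $X=(m-1)+Y$ exactly as in that proof.
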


Now since there are only finitely many classes of $2$-valued games modulo $\approx$, it follows that there
are only finitely many $n$-valued games modulo $\{\wedge,\vee\}$-indistinguishability, and a game's class is determined
entirely by its parity and the values of $u^+(\tilde{\delta_m}(G))$ and $u^-(\tilde{\delta_m}(G))$ for $1 \le m \le n - 1$.
We can see that these sequences are weakly decreasing, by Lemma~\ref{boundingfunctions}, and it is also clear that $u^-(\tilde{\delta_m}(G)) \le u^+(\tilde{\delta_m}(G))$,
but are there any other restrictions?

It turns out that there are none: given any weakly decreasing sequence of $2$-valued games modulo $\approx$, some $n$-valued game
has them as its sequence.  Unfortunately the proof is fairly complicated.  We begin with a technical lemma.
\begin{lemma}\label{technical}
Let $A$ be the subgroup of $\mathcal{G}$ generated by short numbers and $*$, and let
$B$ be the group of $\mathbb{Z}$-valued even-tempered i-games $G$ in $I_{-2}$ such that
$\psi(G) \in A$, modulo $\approx$.  Let $P$ be either $A$ or $B$.  Suppose we have sequences $a_1, \ldots, a_n$ and $b_1, \ldots, b_n$
of elements of $P$ such that $a_i \ge b_i, a_i \ge a_{i+1}$, and $b_i \ge b_{i+1}$ for all appropriate $i$.  Then
for $0 \le j \le i \le n$, we can choose $c_{ij} \in P$, such that $c_{ij} \ge 0$ for $(i,j) \ne (n,n)$, and
\begin{equation} a_k = \sum_{0 \le j \le i \le n,\, k \le i}c_{ij}\label{aeq}\end{equation}
and
\begin{equation} b_k = \sum_{0 \le j \le i \le n,\, k \le j}c_{ij}\label{beq}\end{equation}
for all $1 \le k \le n$.
\end{lemma}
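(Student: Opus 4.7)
The plan is to proceed by induction on $n$, building the triangular array one row at a time from the bottom up. The base case $n=0$ is vacuous: take $c_{00}$ to be any element of $P$, as no constraints appear for $k\ge 1$.

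For the inductive step, assume the result for length $n-1$. The idea is to choose the row-$n$ entries $c_{n,0},c_{n,1},\ldots,c_{n,n}$ explicitly, then invoke the inductive hypothesis on a reduced problem of length $n-1$. Begin by setting $c_{nn}=b_n$; this is the one entry exempt from nonnegativity. We then wish to choose $c_{n,0},\ldots,c_{n,n-1}\ge 0$ with
\begin{enumerate}
\item[(a)] $\sum_{j=0}^{n-1} c_{n,j}=a_n-b_n$,
\item[(b)] $c_{n,k}\le b_k-b_{k+1}$ for $1\le k\le n-1$,
\item[(c)] $\sum_{j=0}^{k-1} c_{n,j}\le a_k-b_k$ for $1\le k\le n-1$.
\end{enumerate}
A direct calculation shows that with these conditions, the reduced sequences
\[
a'_k:=a_k-a_n,\qquad b'_k:=b_k-\sum_{j=k}^{n}c_{n,j}\qquad(1\le k\le n-1)
\]
satisfy $a'_k\ge a'_{k+1}$ (from $a_k\ge a_{k+1}$), $b'_k\ge b'_{k+1}$ (equivalent to (b)), and $a'_k\ge b'_k$ (equivalent to (c)). Applying the induction hypothesis to these reduced sequences produces $c_{ij}$ for $0\le j\le i\le n-1$, completing the construction.

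To produce $c_{n,k}$'s satisfying (a)--(c), I would use a greedy ``water-filling'' argument: process $k=n-1,n-2,\ldots,1$ in turn, at each stage setting $c_{n,k}$ to be as large as the capacity bound (b) and remaining budget from (a) allow, then placing whatever is left in the unconstrained slot $c_{n,0}$. Because $a_n-b_n=\sum_{k=1}^{n-1}(b_k-b_{k+1})+b_1-a_1\cdot\text{(adjustment)}$ telescopes appropriately, the total capacity across columns $1,\ldots,n-1$ together with the unrestricted $c_{n,0}$ is always enough; condition (c) will follow automatically from $a_k\ge b_k$ and $b_k\ge b_{k+1}$.

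The main obstacle is that $P$ is only partially ordered, not totally ordered: in $A$, an element $x$ and $x+*$ have the same number part but different $*$-part and are incomparable, so taking ``maximum subject to a capacity'' is ambiguous, and nonnegativity is rigid in the sense that nonneg elements summing to zero must each vanish. To handle this, I would split each element of $A$ as $x+\varepsilon *$ with $x\in\mathbb{Z}[\tfrac12]$ and $\varepsilon\in\mathbb{Z}/2$, carry out the water-filling on the number parts (where the totally ordered group $\mathbb{Z}[\tfrac12]$ permits the greedy argument to proceed unambiguously), and then reconcile the $*$-parities at the end. The column-$0$ entries $c_{i,0}$ carry no column-sum constraint, so they provide the flexibility needed to match the $*$-parity of each $a_k$; whenever the number-part inequalities in (b) or (c) are strict, the corresponding $c_{ij}$'s $*$-component is free to be chosen. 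Finally, the case $P=B$ reduces to $P=A$ via the isomorphism of partially ordered abelian groups induced by $\psi$ on the even-tempered $I_{-2}$ i-games (Theorems \ref{embedding} and \ref{psisurj}), so it suffices to establish the claim for $P=A$.
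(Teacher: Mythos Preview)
Your inductive row-by-row approach is genuinely different from the paper's, which instead runs a direct iterative algorithm on the whole $2\times n$ array: after setting $c_{nn}=b_n$ and subtracting, it repeatedly locates the rightmost nonzero entries $a'_i,b'_j$ and subtracts $\min(a'_i,b'_j)$ as a $c_{ij}$-block, first nudging by a small dyadic $\epsilon$ when $a'_i$ and $b'_j$ are incomparable. Your reduction to conditions (a)--(c) on the bottom row is correct, and your claim that (c) holds automatically under the number-part greedy does check out. One minor omission is easy to patch: after invoking the inductive hypothesis you need $c_{(n-1)(n-1)}\ge 0$; this entry is forced to equal $b'_{n-1}=b_{n-1}-c_{n,n-1}-b_n$, which is $\ge 0$ precisely by your bound (b) at $k=n-1$.

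The substantive gap is the $*$-parity reconciliation. Your greedy on number parts can leave \emph{every} $\varepsilon_k$ pinned (by nonnegativity when $x_k=0$, and by (b) when $x_k=(b_k-b_{k+1})'$), and then the forced parities need not sum correctly. Concretely, take $n=2$ with $a_1=2$, $a_2=1$, $b_1=1*$, $b_2=0$: the greedy gives $x_1=1$, $x_0=0$, forcing $\varepsilon_0=0$ (from $x_0=0$) and $\varepsilon_1=1$ (from $x_1=(b_1-b_2)'$ tight), so $c_{2,0}+c_{2,1}=1*\ne 1=a_2-b_2$. A valid choice does exist, e.g.\ $c_{2,0}=c_{2,1}=\tfrac12$, but finding it requires deliberately avoiding the extremal number-part values to create slack---exactly what the paper's $\epsilon$-perturbation accomplishes. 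Your remark about the freedom in column-$0$ entries does not rescue this: in the inductive step you control only the single entry $c_{n,0}$, and its parity is itself constrained whenever $x_0=0$. To make your argument complete you would need either a careful case analysis showing the forced parities are always consistent, or (more cleanly) an $\epsilon$-shift of the number-part solution off all the tight constraints before assigning $*$-parts.
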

So for instance, in the $n = 2$ case, this is saying that if the rows and columns of
\[ \begin{pmatrix} a_1  & a_2 \\ b_1 & b_2 \end{pmatrix}\]
are weakly decreasing, then we can find $c_{ij} \in P$ such that
\[ \begin{pmatrix} a_1 & a_2 \\ b_1 & b_2 \end{pmatrix}
= \begin{pmatrix} c_{10} & 0 \\ 0 & 0 \end{pmatrix}
+ \begin{pmatrix} c_{11} & 0 \\ c_{11} & 0 \end{pmatrix}
+ \begin{pmatrix} c_{20} & c_{20} \\ 0 & 0 \end{pmatrix}
+ \begin{pmatrix} c_{21} & c_{21} \\ c_{21} & 0 \end{pmatrix}
+ \begin{pmatrix} c_{22} & c_{22} \\ c_{22} & c_{22} \end{pmatrix},\]
where $c_{10},$ $c_{11}$, $c_{20}$, and $c_{21} \ge 0$.
This is not trivial - if $P$ was instead the group of partizan games generated by the integers and $*$, then no
such $c_{ij} \in P$ could be found for the following matrix:
\[ \begin{pmatrix} 2 & 1* \\ 1 & 0 \end{pmatrix}.\]
\begin{proof}[Proof (of Lemma~\ref{technical})]
Since $A$ and $B$ are isomorphic as partially-ordered abelian groups (by Theorem~\ref{psisurj}), we only consider
the $P = A$ case.  The elements of $A$ are all of the form $x$ or $x*$, for $x$ a dyadic rational,
and are compared as follows:
\[ x* \ge y* \iff x \ge y\]
\[ x* \ge y \iff x \ge y* \iff x > y\]

We specify an algorithm for finding the $c_{ij}$ as follows.  First, take $c_{nn} = b_n$, because $c_{nn}$ is always
the only $c_{ij}$ that appears in the sum (\ref{beq}) for $b_n$.  Then subtract off $c_{nn}$ from every $a_k$
and $b_k$.  This clear the bottom right corner of the matrix
\[ \begin{pmatrix} a_1 & a_2 & \cdots & a_n \\
b_1 & b_2 & \cdots & b_n \end{pmatrix}\]
and preserves the weakly-decreasing property of rows and columns, leaving every element $\ge 0$.

Now, we find ways to clear more and more entries of this matrix by subtracting off matrices of the form
\[ \begin{pmatrix}
x & x & \cdots & x & x & \cdots & x & 0 & \cdots & 0 \\
x & x & \cdots & x & 0 & \cdots & 0 & 0 & \cdots & 0
\end{pmatrix}
\]
or
\[ \begin{pmatrix}
x & \cdots & x & 0 & \cdots & 0 \\
x & \cdots & x & 0 & \cdots & 0
\end{pmatrix} \]
for $x \in P, x \ge 0$.
Once the matrix is cleared, we are done.  At every step, the rows and columns of the matrix will
be weakly decreasing and there will be a zero in the bottom right corner.  Each step increases the number
of vanishing entries, so the algorithm eventually terminates.

Let the current state of the matrix be
\begin{equation} \begin{pmatrix}
a'_1 & \cdots & a'_{n-1} & a'_n \\
b'_1 & \cdots & b'_{n-1} & 0
\end{pmatrix},\label{currentstate}\end{equation}
and find the biggest $i$ and $j$ such that $a'_i$ and $b'_j$ are nonzero.  Since the rows and columns are weakly decreasing,
$j \le i$.  Also $a'_i$ and $b'_j$ are both $> 0$, so they must each be of the form $x$ or $x*$ for some number $x > 0$.

First of all suppose that $a'_i$ and $b'_j$ are comparable.  Let $k$ be $\min(a'_i,b'_j)$.  Then every nonzero element
of the matrix (\ref{currentstate}) is at least $k$, so subtracting off a matrix of type $c_{ij}$ having value $k$ in the appropriate places,
we clear either $a'_i$ or $b'_j$ (or both) and do not break the weakly-decreasing rows and columns requirement.

Otherwise, $a'_i$ and $b'_j$ are incomparable.  
I claim that we can subtract a small $\epsilon > 0$ from all the entries
which are $\ge a'_i$ and preserve the weakly-decreasing rows and columns condition.  For this to work, we need
\begin{equation} a'_{k} - a'_{k+1} \ge \epsilon\label{firstbound}\end{equation}
whenever $a'_{k+1} \not \ge a'_i$ but $a'_k \ge a'_i$,
\begin{equation} b'_k - b'_{k+1} \ge \epsilon\label{secondboud}\end{equation}
whenever $b'_{k+1} \not \ge a'_i$ but $b'_k \ge a'_i$,
and
\begin{equation} a'_k - b'_k \ge \epsilon\label{thirdbound}\end{equation}
whenever $b'_k \not \ge a'_i$ but $a'_k \ge a'_i$.  Now there are only finitely many positions in the
matrix, the dyadic rational numbers are dense, and $*$ is infinitesimal.  Consequently, it suffices to show that all of the upper bounds (\ref{firstbound}-\ref{thirdbound}) on $\epsilon$
are greater than zero.  In other words,
\[ a'_k > a'_{k+1}\]
whenever $a'_{k+1} \not \ge a'_i$ but $a'_k \ge a'_i$,
\[ b'_k > b'_{k+1}\]
whenever $b'_{k+1} \not \ge a'_i$ but $b'_k \ge a'_i$,
and
\[ a'_k > b'_k \]
whenever $b'_k \not \ge a'_i$ but $a'_k \ge a'_i$.  But all of these follow from the obvious fact that if $x, y \in P$, $x \ge y$,
$y \not \ge a'_i$, and $x \ge a'_i$, then $x > y$.

So such an $\epsilon > 0$ exists.  Because rows and columns are weakly decreasing, the set of positions in the matrix whose
values are $\ge a'_i$ is the set of nonzero positions in a $c_{ij}$ matrix for some $i,j$.  So we are allowed to subtract
off $\epsilon$ from each of those entries.  After doing so, $a'_i - \epsilon$ is no longer incomparable with $b'_i$, so we can clear one
or the other in the manner described above.
\end{proof}

To prove that all possible sequences $u^+$ and $u^-$ values occur, we use some specific functions, in a proof that generalizes
the technique of Theorems~\ref{inorder2} and \ref{allpairs}.  Fix $n$, the number of values that the $n$-valued games can take.
Here are the functions we will use:
\begin{itemize}
\item $\delta_m(x)$, as above, will be 1 if $x \ge m$ and 0 otherwise.
\item $\mu(x)$ will be $\min(0,x)$.
\item For $1 \le k \le n - 1$, $f_k\,:\, \mathbb{Z}^{n(n+1)/2 - 1} \to \mathbb{Z}$ will be
\[ f_k(x_{10},x_{11},x_{20},x_{21},x_{22},x_{30},\ldots)
= \]\[\sum_{0 \le j \le i \le n - 1, k \le i} x_{ij} + \sum_{0 \le j \le i \le n - 1, k > i} \mu(x_{ij}).\]
In other words, $f_k$ is the sum of all its arguments except for its positive arguments $x_{ij}$
where $i \ge k$.
\item For $1 \le k \le n - 1$, $g_k\,:\, \mathbb{Z}^{n(n+1)/2 - 1} \to \mathbb{Z}$ will be
\[ g_k(x_{10},x_{11},x_{20},x_{21},x_{22},x_{30},\ldots)
= \]\[\sum_{0 \le j \le i \le n - 1, k \le j} x_{ij} + \sum_{0 \le j \le i \le n - 1, k > j} \mu(x_{ij}).\]
In other words, $g_k$ is the sum of all its arguments except for its positive arguments $x_{ij}$
where $j \ge k$.
\item $h^+\,:\, \mathbb{Z}^{n(n+1)/2 - 1} \to \{0,1\}$ will be given by
\[ h^+(x_{10},x_{11},x_{20},\ldots) = a,\]
where $a$ is the unique number in $n = \{0,1,\ldots,n-1\}$ such that
\[ \delta_m(a) = \delta_1(f_m(x_{10},\ldots))\]
for every $1 \le m \le n - 1$.  Such a number exists because $f_m(x_{10},\ldots)$ is decreasing as a function of $m$.
\item $h^-(x_{10},x_{11},x_{20},\ldots)$ will be the unique $a$ such that
\[ \delta_m(a) = \delta_1(g_m(x_{10},\ldots))\]
for every $1 \le m \le n - 1$.
\end{itemize}
It is not difficult to show that all of these functions are order-preserving, and that $f_m \ge g_m$ for every $m$.
Note that $h^+$ and $h^-$ can alternatively be described as
\[ \sum_{m = 1}^{n-1} \delta_1(f_m(x_{10},\ldots)) \text{ and } \sum_{m = 1}^{n-1} \delta_1(g_m(x_{10},\ldots)),\]
respectively.  So they are order-preserving, and $h^- \le h^+$.

Repeating an argument we used in Theorem~\ref{allpairs}, we have
\begin{lemma}\label{muzero}
Let $G$ be an $\mathbb{Z}$-valued i-game $G \gtrsim 0$.  Then $\tilde{\mu}(G) \approx 0$.

Similarly, for $0 \le j \le i \le n - 1$, let $G_{ij}$ be $\mathbb{Z}$-valued i-games $G_{ij} \gtrsim 0$.  Then
for $1 \le k \le n - 1$,
\[ \tilde{f_k}(G_{10},G_{11},G_{20},\ldots) \approx \sum_{0 \le j \le i \le n - 1, k \le i} G_{ij}\]
and
\[ \tilde{g_k}(G_{10},G_{11},G_{20},\ldots) \approx \sum_{0 \le j \le i \le n - 1, k \le j} G_{ij}\]
\end{lemma}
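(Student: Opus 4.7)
The plan is to prove the two parts in sequence, reducing the multi-argument case to the one-argument case via a decomposition principle for extensions.

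For the first part, I would first observe that since $G \gtrsim 0$ and $0$ is even-tempered, Theorem~\ref{sameparity} forces $G$ to be even-tempered. By Corollary~\ref{restatement}, $G \gtrsim 0$ is equivalent to $\routcome(G) \ge 0$, and since $G$ is an even-tempered i-game we also have $\loutcome(G) \ge \routcome(G) \ge 0$. Now $\tilde{\mu}(G)$ is an i-game by Lemma~\ref{distortions}, and it is even-tempered (same parity as $G$). Applying Lemma~\ref{outcomeslide} to the unary order-preserving function $\mu$ gives $\loutcome(\tilde{\mu}(G)) = \mu(\loutcome(G)) = 0$ and $\routcome(\tilde{\mu}(G)) = \mu(\routcome(G)) = 0$. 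So by Corollary~\ref{restatement}, $\tilde{\mu}(G) \approx 0$.

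For the second part, the key technical step is to establish the following decomposition principle: if a function $h(x_1,\ldots,x_m,y_1,\ldots,y_\ell) = f(x_1,\ldots,x_m) + g(y_1,\ldots,y_\ell)$ splits as a sum over disjoint argument lists, then
\[ \tilde{h}(G_1,\ldots,G_m,H_1,\ldots,H_\ell) = \tilde{f}(G_1,\ldots,G_m) + \tilde{g}(H_1,\ldots,H_\ell)\]
as an actual identity of game forms. This is proved by joint induction on the complexity of the arguments: when every argument is a number, both sides evaluate to the integer $f(\cdots)+g(\cdots)$; in the inductive step the left and right options on both sides coincide by direct comparison, using that a move is always made in exactly one argument of $\tilde{h}$. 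Iterating this decomposition repeatedly, I obtain, as an identity of forms,
\[ \tilde{f_k}(G_{10},G_{11},G_{20},\ldots) = \sum_{0 \le j \le i \le n-1,\, k \le i} G_{ij} + \sum_{0 \le j \le i \le n-1,\, k > i} \tilde{\mu}(G_{ij}),\]
and analogously for $\tilde{g_k}$.

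With this decomposition in hand, the conclusion is immediate from the first part: each $G_{ij}$ appearing inside a $\tilde{\mu}$ satisfies $G_{ij} \gtrsim 0$, so $\tilde{\mu}(G_{ij}) \approx 0$, and summing yields $\tilde{f_k}(G_{10},G_{11},\ldots) \approx \sum_{k \le i} G_{ij}$. The $g_k$ case is identical, differing only in which index determines whether $\mu$ is applied. The only real obstacle is the decomposition principle, which is routine once correctly formulated but needs care because $\tilde{h}$ has more options than $f$ and $g$ individually do (a player can move in any coordinate, not just those belonging to one of the summand functions); the induction works because addition of games adds options in exactly the same way.
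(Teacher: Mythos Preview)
Your proof is correct and follows essentially the same route as the paper. The first part is identical to the paper's argument (using Lemma~\ref{outcomeslide}, Lemma~\ref{distortions}, and Corollary~\ref{restatement}), and for the second part the paper simply asserts the decomposition $\tilde{f_k}(G_{10},\ldots) = \sum_{k \le i} G_{ij} + \sum_{k > i} \tilde{\mu}(G_{ij})$ as following from the definition of $f_k$, whereas you spell out and prove the underlying decomposition principle for extensions of functions that split over disjoint variable lists; this is exactly the kind of identity the paper earlier remarked it would ``use without comment.''
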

\begin{proof}
For the first claim, notice that $G \gtrsim 0$ implies that $\loutcome(G) \ge 0$
and $\routcome(G) \ge 0$.  Thus $\loutcome(\tilde{\mu}(G)) = 0 = \routcome(\tilde{\mu}(G))$.
But since $\tilde{\mu}(G)$ is an i-game (by Lemma~\ref{distortions} or Theorem~\ref{generaldistortions}),
it follows from Corollary~\ref{restatement} that $\tilde{\mu}(G) \approx 0$.

For the second claim, the definition of $f_k$ implies that
\[ \tilde{f_k}(G_{10},\ldots) = \sum_{0 \le j \le i \le n - 1, k \le i}G_{ij} + \sum_{0 \le j \le i \le n - 1, k > i} \tilde{\mu}(G_{ij})
\approx \sum_{0 \le j \le i \le n - 1, k \le i}G_{ij},\]
and $g_k$ is handled similarly.
\end{proof}

Using these we can find all the equivalence classes of $n$-valued games modulo $\{\wedge,\vee\}$-indistinguishability.
\begin{theorem}
Let \[U = \{0,1/4,3/8,1/2,1/2*,5/8,3/4,1\}.\]  Let $a_1,\ldots,a_{n-1}$ and $b_1,\ldots,b_{n-1}$ be sequences of elements of $U$
such that $a_j \ge a_k$ and $b_j \ge b_k$ for $j \le k$, and $a_i \ge b_i$ for all $i$.  Then there is at least one
even-tempered $n$-valued game $G$ such that
\[ u^+(\tilde{\delta_i}(G)) = a_i\text{ and } u^-(\tilde{\delta_i}(G)) = b_i\]
for all $1 \le i \le n - 1$.
\end{theorem}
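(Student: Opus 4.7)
The overall strategy is to express $G$ in the form $H^+\&H^-$ (via Theorem~\ref{allpairs}), where $H^+ \gtrsim H^-$ are even-tempered $n$-valued i-games to be constructed. Lemma~\ref{sidedistort} then gives $\tilde{\delta_k}(G)^+ \approx \tilde{\delta_k}(H^+)$ and $\tilde{\delta_k}(G)^- \approx \tilde{\delta_k}(H^-)$, both 2-valued i-games by Theorem~\ref{generaldistortions}, so it suffices to arrange that $\psi(\tilde{\delta_k}(H^+)) = a_k$ and $\psi(\tilde{\delta_k}(H^-)) = b_k$ for every $k = 1,\ldots,n-1$.

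First apply Lemma~\ref{technical} with $n$ replaced by $n-1$ and $P = A$ to decompose the sequences as $a_k = \sum_{k \le i} c_{ij}$ and $b_k = \sum_{k \le j} c_{ij}$ with all $c_{ij} \ge 0$ (non-negativity of $b_{n-1}$ is automatic since $U$ consists of non-negative elements of $\mathcal{G}$). Pick even-tempered $\{0,1\}$-valued i-games $G_{ij}$ with $\psi(G_{ij}) = c_{ij}$ (see the obstacle below), and set $H^+ = \tilde{h^+}(G_{10},G_{11},G_{20},\ldots)$ and $H^- = \tilde{h^-}(G_{10},G_{11},G_{20},\ldots)$. Both are $n$-valued i-games by Theorem~\ref{generaldistortions}, and Lemma~\ref{boundingfunctions} gives $H^+ \gtrsim H^-$ because $h^+ \ge h^-$ pointwise. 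To verify the outcome requirements, use the functional identities $\delta_k \circ h^+ = \delta_1 \circ f_k$ and $\delta_k \circ h^- = \delta_1 \circ g_k$ (immediate from the definitions of $h^\pm$, since $f_m$ is decreasing in $m$ so $\delta_1(f_m)$ is one up to a cutoff and zero after), which lift to identities between extensions. Combined with Lemma~\ref{muzero} (applicable because every $G_{ij} \gtrsim 0$), this yields $\tilde{\delta_k}(H^+) \approx \tilde{\delta_1}\bigl(\sum_{k \le i} G_{ij}\bigr)$ and $\tilde{\delta_k}(H^-) \approx \tilde{\delta_1}\bigl(\sum_{k \le j} G_{ij}\bigr)$. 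Since the $G_{ij}$ are $\{0,1\}$-valued, $\tilde{\delta_1}(\sum G_{ij})$ coincides with $\bigvee G_{ij}$, so Corollary~\ref{booleansummary} computes $\psi(\tilde{\delta_k}(H^+)) = \bigcup_{k\le i} c_{ij}$; the identity $\bigcup_{k \le i} c_{ij} = a_k$ then follows from a finite case-check over $U$ valid whenever the sum lies in $U$, and similarly for $b_k$.

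The hard part will be arranging that every $c_{ij}$ lies in $U$, so that each admits a $\{0,1\}$-valued i-game realization with the prescribed $\psi$-value. This amounts to strengthening Lemma~\ref{technical} to the $U$-valued setting: the subtraction algorithm in its proof must be adapted to subtract only elements of $U$ at each step. When the entries $a'_i$ and $b'_j$ under inspection are comparable, $\min(a'_i,b'_j)$ stays in $U$ (the elements of $U$ are totally ordered except for the single incomparable pair $\{1/2,\,1/2*\}$, whose min is still in $U$); when they are incomparable (which forces $\{a'_i,b'_j\} = \{1/2,1/2*\}$), one subtracts a suitably small $U$-valued increment such as $1/4$. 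Induction on the number of cleared entries, together with careful bookkeeping that the partial sums remain in $U$ throughout (which is enforceable because the target totals $a_k,b_k$ are in $U$), should complete the strengthening and thereby the proof.
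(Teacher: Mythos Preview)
Your overall architecture matches the paper's: decompose via Lemma~\ref{technical}, feed the pieces into $\tilde{h^\pm}$, use the identity $\delta_k\circ h^\pm = \delta_1\circ f_k$ (resp.\ $g_k$), apply Lemma~\ref{muzero}, and finish with Theorem~\ref{allpairs}. The divergence is in one decision: you insist that the building blocks $G_{ij}$ be \emph{$\{0,1\}$-valued} i-games. This forces $c_{ij}\in U$, which Lemma~\ref{technical} does not give you, and it further forces the passage from $\tilde{\delta_1}\bigl(\sum G_{ij}\bigr)$ to $\bigvee G_{ij}$, after which you must recover $a_k$ from $\bigcup_{k\le i} c_{ij}$ rather than from $\sum_{k\le i} c_{ij}$. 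Neither obstacle is resolved: the proposed $U$-valued strengthening of Lemma~\ref{technical} is only sketched (and the algorithm's intermediate matrix entries will leave $U$ the moment you subtract, say, $1/4$ from $3/8$), and the claim $\bigcup c_{ij}=\sum c_{ij}$ whenever the total lies in $U$ would itself require that the partial $\cup$-accumulations never lose mass, which is not automatic.

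The paper sidesteps both difficulties by taking $P=B$ in Lemma~\ref{technical}, i.e.\ by letting the $G_{ij}$ be arbitrary $\mathbb{Z}$-valued even-tempered i-games with $\psi(G_{ij})\in A$. Then Lemma~\ref{muzero} gives $\tilde{f_k}(G_{10},\ldots)\approx\sum_{k\le i}G_{ij}=A_k$ directly, where $A_k$ is the prescribed $2$-valued i-game with $u^+(A_k)=a_k$. Since $A_k$ is already $\{0,1\}$-valued, $\tilde{\delta_1}(A_k)=A_k$ on the nose, and no rounding through $\cup$ ever occurs. In short: you only need the \emph{sums} $\sum G_{ij}$ to land among $2$-valued games, and that is guaranteed because those sums are the targets $A_k,B_k$ themselves; constraining the individual summands to be $2$-valued is unnecessary and is precisely what creates the ``hard part'' you identify.
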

\begin{proof}
Let $A_i$ and $B_i$ be $2$-valued even-tempered i-games with $u^+(A_i) = a_i$ and $u^+(B_i) = b_i$ (note that
if $A$ is a $2$-valued i-game, then $u^+(A) = u^-(A)$). 
By Lemma~\ref{technical} (taking $P$ to be the group of $\mathbb{Z}$-valued even-tempered i-games in the domain of $\psi$ generated
by numbers and $*$), we can find $\mathbb{Z}$-valued even-tempered i-games $G_{ij}$ for $0 \le j \le i \le n - 1$ such that $G_{ij} \gtrsim 0$
for $(i,j) \ne (n-1,n-1)$, and
\[ A_k = \sum_{0 \le j \le i \le n - 1, k \le i}G_{ij}\]
and
\[ B_k = \sum_{0 \le j \le i \le n - 1, k \le j}G_{ij}.\]
But then since $B_{n-1} = G_{(n-1)(n-1)}$, and all $2$-valued even-tempered games are $\gtrsim 0$ (by Theorem~\ref{plethora}(g,h,i)),
it follows that even $G_{(n-1)(n-1)}$ is $\gtrsim 0$.

Then by Lemma~\ref{muzero}, we have
\[ \tilde{f_k}(G_{10},G_{11},\ldots) \approx A_k\]
and
\[ \tilde{g_k}(G_{10},G_{11},\ldots) \approx B_k\]
for all $k$.
Now by definition above, the functions $h^+$ and $h^-$ have the property that
\[ \delta_m(h^+(x_{10},\ldots)) = \delta_1(f_m(x_{10},\ldots))\]
and
\[ \delta_m(h^-(x_{10},\ldots)) = \delta_1(g_m(x_{10},\ldots))\]
for all $m$.  It then follows that letting
\[ H_\pm = \tilde{h^\pm}(G_{10},\ldots),\]
we have
\[ \tilde{\delta_m}(H_+) = \tilde{\delta_1}(\tilde{f_m}(G_{10},\ldots))
\approx \tilde{\delta_1}(A_m) = A_m\]
and
\[ \tilde{\delta_m}(H_-) = \tilde{\delta_1}(\tilde{g_m}(G_{10},\ldots))
\approx \tilde{\delta_1}(B_m) = B_m\]
for all $m$.  Moreover, $H_- \lesssim H_+$ because of Lemma~\ref{boundingfunctions} and the fact that $h^- \le h^+$.
Also, $H_\pm$ are both $n$-valued games because they are in the image
of $\tilde{h^\pm}$.  So by Theorem~\ref{allpairs}, there is a $n$-valued game $G$ for which $G^\pm = H_\pm$.  Thus
\[ \tilde{\delta_m}(G)^+ \approx \tilde{\delta_m}(G^+) \approx A_m\]
and
\[ \tilde{\delta_m}(G)^- \approx \tilde{\delta_m}(G^-) \approx B_m\]
for all $m$, so that $u^+(\tilde{\delta_m}(G)) = a_m$ and $u^-(\tilde{\delta_m}(G)) = b_m$ for all $m$.
\end{proof}

\begin{corollary}
The class of $n$-valued games modulo $\{\wedge,\vee\}$-indistinguishability is in one-to-one correspondence
with weakly-decreasing length-$(n-1)$ sequences of $2$-valued games modulo $\approx$.
\end{corollary}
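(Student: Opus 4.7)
The plan is to exhibit an explicit bijection via the map
\[ \Phi : G \mapsto (\tilde{\delta_1}(G), \tilde{\delta_2}(G), \ldots, \tilde{\delta_{n-1}}(G)), \]
where the $\delta_m$ are the step functions used throughout this section. Theorem~\ref{minmaxthird} already does most of the work: it states that two $n$-valued games are $\{\wedge,\vee\}$-indistinguishable if and only if each coordinate $\tilde{\delta_m}(G)$ agrees modulo $\approx$. This simultaneously says that $\Phi$ descends to a well-defined map on indistinguishability classes and that the induced map is injective. So all that remains is to identify the image.

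First I would verify that the image lies in weakly-decreasing sequences. Since $\delta_m \le \delta_{m'}$ pointwise whenever $m \ge m'$, Lemma~\ref{boundingfunctions} gives $\tilde{\delta_m}(G) \lesssim \tilde{\delta_{m'}}(G)$, and by Theorem~\ref{sameparity} weak decrease in $\lesssim$ forces all entries to share a single parity (namely, the parity of $G$). Conversely, any weakly-decreasing sequence of $2$-valued equivalence classes necessarily has uniform parity, since incomparable elements could not appear in a $\lesssim$-chain. So the target set decomposes by parity, and I need to hit each weakly-decreasing sequence in both the even-tempered and odd-tempered components.

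For the even-tempered case, the theorem immediately preceding this corollary is essentially the surjectivity statement: given even-tempered $2$-valued equivalence classes $C_1, \ldots, C_{n-1}$ arranged in a weakly-decreasing chain, set $a_i = u^+(C_i)$ and $b_i = u^-(C_i)$ in $U$; Corollary~\ref{booleansummary} ensures that the chain condition on the $C_i$ translates into the chain conditions $a_j \ge a_k$, $b_j \ge b_k$ (for $j \le k$) and $b_i \le a_i$ required by the previous theorem, and that the $C_i$ are determined by these invariants. Applying the theorem yields an even-tempered $G \in \mathcal{W}_n$ with $\tilde{\delta_i}(G) \approx C_i$ for all $i$.

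For the odd-tempered case, I would reduce to the even-tempered one by translating by $*$. Given an odd-tempered weakly-decreasing chain $D_1, \ldots, D_{n-1}$, the classes $C_i := D_i + *$ form an even-tempered weakly-decreasing chain (since $*$ is invertible and $\lesssim$-preserving), so the previous paragraph produces an even-tempered $G \in \mathcal{W}_n$ with $\tilde{\delta_i}(G) \approx C_i$. Then $G + * \in \mathcal{W}_n$ (valuewise $G + *$ still lies in $\{0,\ldots,n-1\}$ since $*$ is $\{0\}$-valued), and Lemma~\ref{staradd} gives
\[ \tilde{\delta_i}(G + *) = \tilde{\delta_i}(G) + * \approx C_i + * \approx D_i, \]
as required. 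The only mildly delicate point is confirming that $\tilde{\delta_i}(G) + *$ is genuinely what $u^\pm$-analysis of $D_i$ demands; this is immediate from the fact that in Corollary~\ref{booleansummary}, $u^\pm(A + *) = u^\pm(A)$ for any $2$-valued i-game $A$, together with Theorem~\ref{plethora}(d) applied to $A + *$. No serious obstacle appears — all the heavy lifting was done in the preceding theorem, and the present corollary mainly repackages it together with the $* $-translation trick to cover both parities.
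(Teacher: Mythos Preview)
Your proposal is correct and follows the approach the paper sets up: injectivity and well-definedness come from Theorem~\ref{minmaxthird}, the image lands in weakly-decreasing sequences by Lemma~\ref{boundingfunctions}, and surjectivity for even-tempered sequences is exactly the preceding theorem. The paper states the corollary without proof, so you are filling in details the author left implicit; in particular, your $*$-translation to cover the odd-tempered case is a clean way to handle what the paper's preceding theorem (stated only for even-tempered $G$) does not address explicitly, and it works as you describe via Lemma~\ref{staradd} and $*+*\approx 0$.
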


As an exercise, it is also easy to show the following
\begin{corollary}
The class of $n$-valued games modulo $\{\vee\}$-indistinguishability is in one-to-one correspondence
with weakly-decreasing length-$(n-1)$ sequences of $2$-valued games modulo $\{\vee\}$-indistinguishability.
\end{corollary}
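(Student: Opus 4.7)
The plan is to adapt the proof of the preceding corollary more or less verbatim, replacing $\approx$ throughout by $\{\vee\}$-indistinguishability, which I will write as $\sim_\vee$. Define
\[ \Phi(G) = \bigl([\tilde{\delta_1}(G)]_\vee,\,[\tilde{\delta_2}(G)]_\vee,\,\ldots,\,[\tilde{\delta_{n-1}}(G)]_\vee\bigr), \]
where $[\,\cdot\,]_\vee$ is a $\sim_\vee$ class of $2$-valued games. The three things to check are that $\Phi$ descends to the $\sim_\vee$-quotient of $n$-valued games, that the descent is injective, and that its image is exactly the collection of weakly-decreasing length-$(n-1)$ sequences.

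First I would check well-definedness. Suppose $G \sim_\vee H$. Fix $m$ with $1 \le m \le n-1$ and an arbitrary $2$-valued $Y$; lift $Y$ to the $n$-valued game $X = (m-1) + Y$, which takes values only in $\{m-1,m\}$, and satisfies $\tilde{\delta_m}(X) = Y$ because $\delta_m((m-1)+y) = y$ for $y \in \{0,1\}$. By Theorem~\ref{minmaxsecond} we get $\tilde{\delta_m}(G \vee X) = \tilde{\delta_m}(G) \vee Y$ and likewise for $H$. Since $G \sim_\vee H$ gives $\outcome(G \vee X) = \outcome(H \vee X)$, applying Lemma~\ref{outcomeslide} yields $\outcome(\tilde{\delta_m}(G) \vee Y) = \outcome(\tilde{\delta_m}(H) \vee Y)$ for every $Y$; so by Lemma~\ref{associndist2} (which applies since $\vee$ is commutative, associative, and has identity $0$) we conclude $\tilde{\delta_m}(G) \sim_\vee \tilde{\delta_m}(H)$. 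For injectivity, suppose $\Phi(G) = \Phi(H)$. For any $n$-valued $X$, Theorem~\ref{minmaxsecond} gives $\tilde{\delta_m}(G \vee X) = \tilde{\delta_m}(G) \vee \tilde{\delta_m}(X)$ and similarly for $H$, and the $\sim_\vee$ hypothesis on each component forces these two $2$-valued games to have equal outcomes. Running this for every $m$ and appealing to Theorem~\ref{minmaxfirst} (outcomes of an $n$-valued game are determined by the outcomes of its $\tilde{\delta_m}$) yields $\outcome(G \vee X) = \outcome(H \vee X)$ for all $X$, which is $G \sim_\vee H$ by Lemma~\ref{associndist2}.

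For the image, observe that since $\delta_m \ge \delta_{m+1}$ pointwise, Lemma~\ref{boundingfunctions} gives $\tilde{\delta_m}(G) \gtrsim \tilde{\delta_{m+1}}(G)$ mod $\approx$, and this order descends to the $\sim_\vee$-quotient (by Theorem~\ref{fine2}, $\sim_\vee$ is coarser than $\approx$, so $\gtrsim$ is well-defined on $\sim_\vee$-classes with the obvious meaning ``has a weakly-decreasing pair of representatives''). Thus every sequence in the image is weakly decreasing. Conversely, given a weakly-decreasing sequence $([C_1]_\vee,\ldots,[C_{n-1}]_\vee)$, I will produce representatives $\overline{C}_m$ with $\overline{C}_{m} \gtrsim \overline{C}_{m+1}$ mod $\approx$ for every $m$, and then invoke the theorem just proved (the $\approx$-surjectivity result established in the previous proof) to obtain an even-tempered $n$-valued $G$ with $\tilde{\delta_m}(G) \approx \overline{C}_m$; projecting to $\sim_\vee$-classes finishes surjectivity.

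The main obstacle is this last lifting step: a weakly-decreasing chain downstairs need not automatically have a weakly-decreasing chain of representatives upstairs. To handle it, I would use the explicit structure of $2$-valued games modulo $\sim_\vee$ given by Theorem~\ref{oplusindist} (with $n = 2$), which classifies $\sim_\vee$-classes of $2$-valued games by the pair $(u^+(G),\,u^-(G) \cup \tfrac{1}{2})$, modulo one exceptional identification between an odd-tempered and an even-tempered class. Since there are only finitely many such classes and the combinatorial poset structure is explicit (compare the tables in Figure~\ref{cap-cup-tables}), one can do a direct case analysis: starting from representatives $C_1,\ldots,C_{n-1}$ with the correct $\sim_\vee$-classes, replace each $C_m$ in turn by a $\approx$-equivalent game obtained by adjusting its $u^-$ value upward within the same $\sim_\vee$-class so as to dominate $C_{m+1}$. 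This is where the bulk of the work lies; once it is done the rest of the argument is a direct transcription of the proof of the preceding corollary.
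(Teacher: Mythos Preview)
Your strategy matches the paper's, which in fact leaves this corollary as an exercise with only a one-line hint.  The well-definedness and injectivity parts of your argument simply reprove the theorem stated just before the two corollaries, and your identification of the lifting step as the only real content is correct.

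Where you propose an iterative case analysis (``adjust $u^-$ upward within the same $\sim_\vee$-class so as to dominate $C_{m+1}$''), the paper's hint is more direct: take $u^-$ to be $0$, $\tfrac14$, or $\tfrac12$ according as $u^- \cup \tfrac12$ must equal $\tfrac12$, $\tfrac34$, or $1$.  This is an order-preserving section of the projection $u^- \mapsto u^- \cup \tfrac12$, and since it selects the minimum of each fiber the constraint $u^- \le u^+$ is automatic.  With this choice the lifted sequence is weakly decreasing in one stroke, and the previous corollary finishes the job; no inductive adjustment is needed.  (A small slip in your write-up: where you say ``replace each $C_m$ by a $\approx$-equivalent game obtained by adjusting its $u^-$ value'', you mean $\sim_\vee$-equivalent, since changing $u^-$ changes the $\approx$-class.)
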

The only trick here is to let $u^-$ be $0$, $1/4$, or $1/2$, when $u^- \cup 1/2$ needs to be $1/2$, $3/4$, or $1$, respectively.

Since there are only finitely many $2$-valued games modulo $\approx$ or modulo
$\{\vee\}$-indistinguishability, one could in principle write down a formula for the number of
$n$-valued games modulo $\{\wedge,\vee\}$-indistinguishability or $\{\vee\}$-indistinguishability,
but we do not pursue the matter further here.

\part{Knots}
\chapter{To Knot or Not to Knot}
In Chapter~\ref{chap:zero} we defined the game \textsc{To Knot or Not to Knot}, in which two players,
King Lear, and Ursula take turns resolving crossings in a knot pseudodiagram,
until all crossings are resolved and a genuine knot diagram is determined.  Then \textbf{U}rsula wins
if the knot is equivalent to the \textbf{u}nknot, and \textbf{K}ing Lear wins if it is \textbf{k}notted.
We will identify King \textbf{L}ear with Left, and U\textbf{r}sula with Right, and view
TKONTK as a Boolean (2-valued) well-tempered scoring game.

For instance, the following pseudodiagram has the value $* = \langle 0 | 0 \rangle$, because
the game lasts for exactly one move, and Ursula wins no matter how the crossing is resolved.
\begin{figure}[H]
\begin{center}
\includegraphics[width=1in]
					{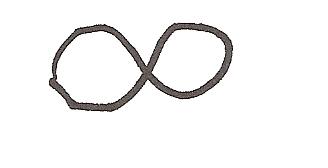}
\end{center}
\end{figure}
Similarly, the following position is $1* = \langle 1 | 1 \rangle$, because it lasts one move,
but King Lear is guaranteed to win:
\begin{figure}[H]
\begin{center}
\includegraphics[width=1in]
					{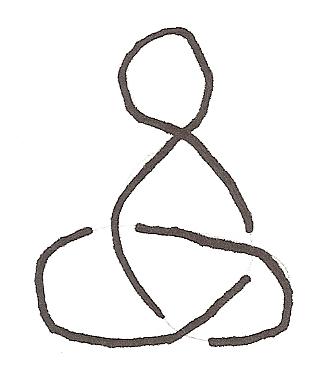}
\caption{One move remains, but King Lear has already won.}
\label{trefoilstar}
\end{center}
\end{figure}
On the other hand,
\begin{figure}[H]
\begin{center}
\includegraphics[width=2in]
					{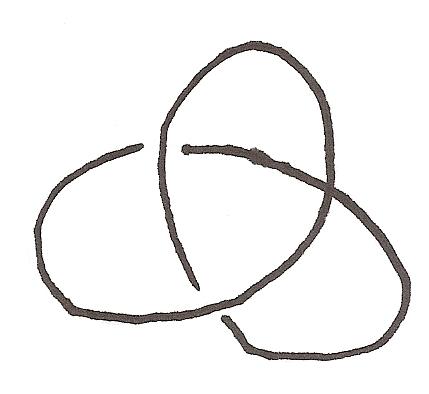}
\caption{The next move decides and ends the game.}
\label{ambiguous1}
\end{center}
\end{figure}
is $\langle 0, 1 | 0, 1 \rangle \approx \langle 1 | 0 \rangle$, because the remaining
crossing decides whether the resulting knot will be a knotted trefoil or an unknot.
\begin{figure}[H]
\begin{center}
\includegraphics[width=3in]
					{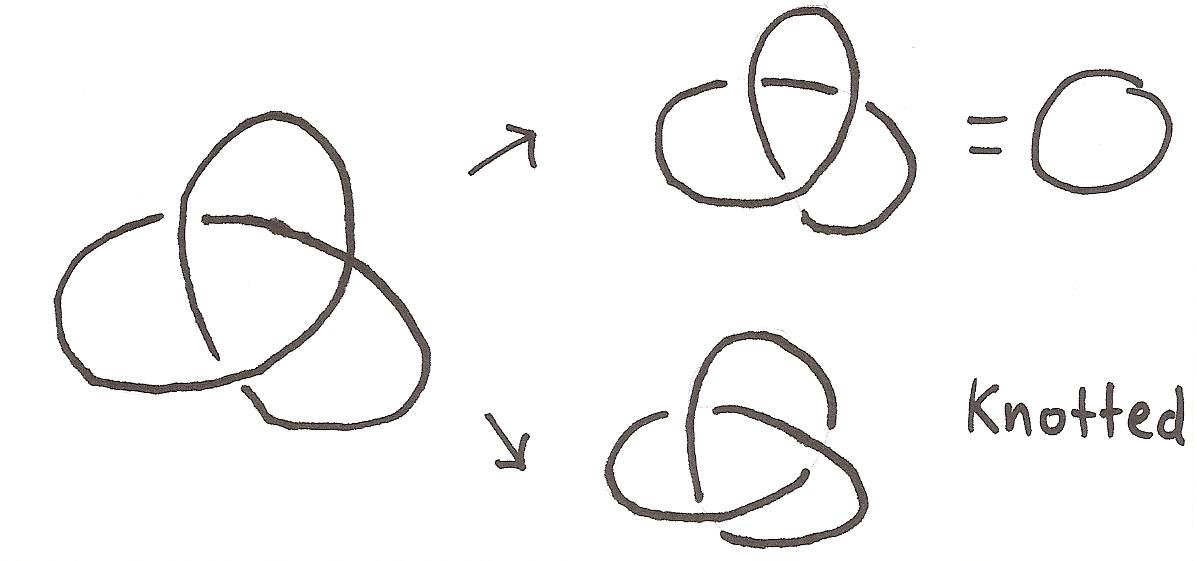}
\end{center}
\end{figure}

The natural way to add TKONTK positions is the $\oplus_2 = \vee$ operation of Section~\ref{sect:roundsum}.
For example, when we add Figures~\ref{trefoilstar} and \ref{ambiguous1} ,
\begin{figure}[H]
\begin{center}
\includegraphics[width=3in]
					{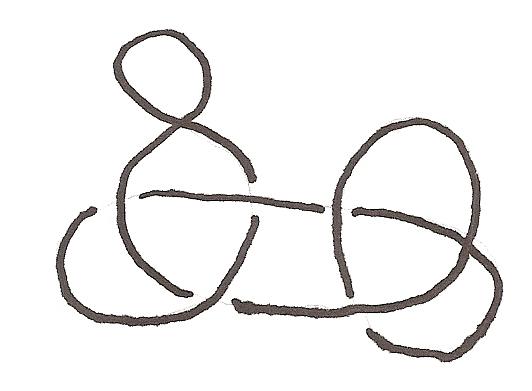}
\end{center}
\end{figure}
we get a position with value $\{1|0\} \vee 1* \approx \{1|1\} + * \approx 1.$  So the resulting
position is equivalent to, say
\begin{figure}[H]
\begin{center}
\includegraphics[width=1in]
					{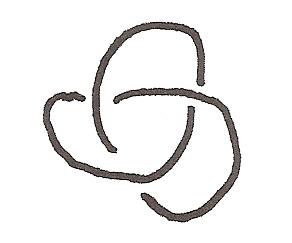}
\end{center}
\end{figure}

\section{Phony Reidemeister Moves}

\begin{definition}
A pseudodiagram $S$ is obtained by a \emph{phony Reidemeister I move} from a pseudodiagram $T$ if $S$ is obtained
from $T$ by removing a loop with an unresolved crossing from $T$, as in Figure~\ref{phony-r1}.  We denote this $T \stackrel{1}{\rightarrow} S$.
\end{definition}
\begin{figure}[H]
\begin{center}
\includegraphics[width=3in]
					{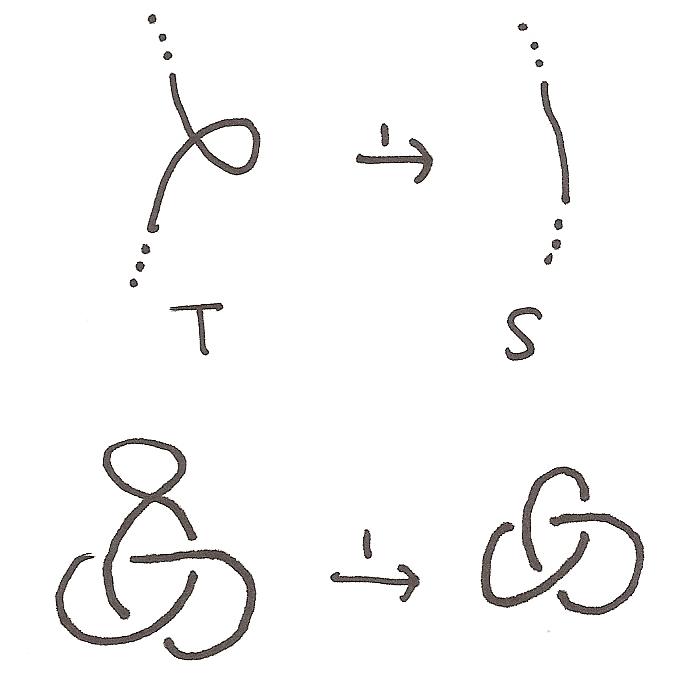}
\caption{Phony Reidemeister I move}
\label{phony-r1}
\end{center}
\end{figure}
\begin{definition}
A pseudodiagram $S$ is obtained by a \emph{phony Reidemeister II move} from a pseudodiagram $T$ if $S$ is obtained
from $T$ by uncrossing two overlapped strings, as in Figure~\ref{phony-r2}, where the two crossings eliminated are unresolved.
We denote this $T \stackrel{2}{\rightarrow} S$.
\end{definition}
\begin{figure}[H]
\begin{center}
\includegraphics[width=3in]
					{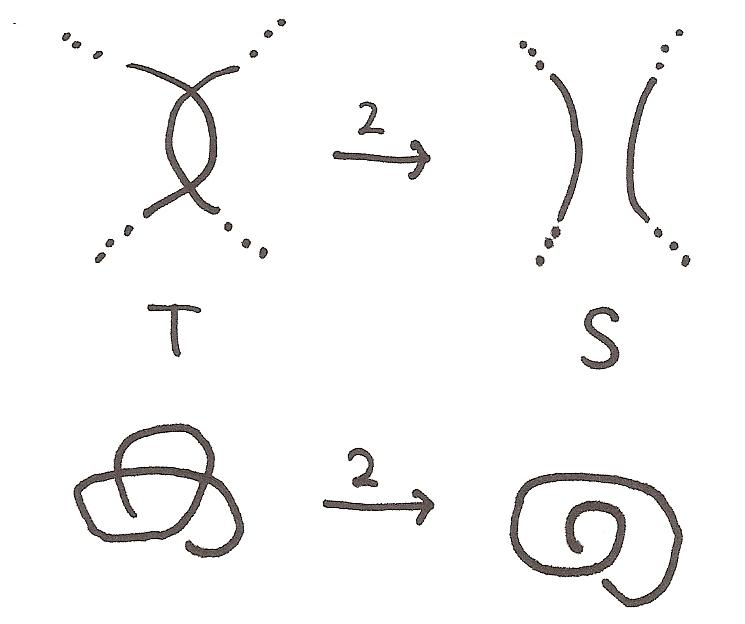}
\caption{Phony Reidemeister II move}
\label{phony-r2}
\end{center}
\end{figure}
Note that we only use these operations in one direction: $T \stackrel{i}{\rightarrow} S$ doesn't imply
$S \stackrel{i}{\rightarrow} T$.  We also use the notation $T \stackrel{i}{\Rightarrow}S$ to indicate
that $S$ is obtained from $T$ by a sequence of zero or more type $i$ moves, and $T \stackrel{*}{\Rightarrow} S$ to indicate
that $S$ is obtained by zero or more moves of either type.

If $T$ is a knot pseudodiagram, we let $\val(T)$ be the value of $T$ as a game of TKONTK, and we abuse notation
and write $u^+(T)$ and $u^-(T)$ for $u^+(\val(T))$ and $u^-(\val(T))$.

The importance of the phony Reidemeister moves is the following:
\begin{theorem}\label{redmonotone}
If $T \stackrel{1}{\rightarrow} S$ then $\val(T) = \val(S) + *$, so $u^+(T) = u^+(S)$ and $u^-(T) = u^-(S)$.

If $T \stackrel{2}{\rightarrow} S$, then $\val(T) \gtrsim_+ \val(S)$ and $\val(T) \lesssim_- \val(S)$.
So $u^+(T) \ge u^+(S)$ and $u^-(T) \le u^-(S)$.
\end{theorem}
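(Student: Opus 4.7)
The plan is to prove both claims by induction on the number of unresolved crossings in $T$, treating the phony R1 and phony R2 cases separately.

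For the R1 case, let $\ell$ be the crossing inside the R1 loop. Either resolution of $\ell$ produces a pseudodiagram whose resolved loop admits a genuine Reidemeister~I isotopy removing it; since this isotopy commutes with all subsequent resolutions, the resulting game tree is isomorphic (as a $2$-valued well-tempered game) to that of $S$, so both of these options carry the value $\val(S)$.  Resolving any other crossing $c$ of $T$ produces a pseudodiagram $T_c^{\pm}$ that is still related to the corresponding $S_c^{\pm}$ by a phony R1, so the inductive hypothesis gives $\val(T_c^{\pm}) = \val(S_c^{\pm}) + *$.  Writing out both $\val(T)$ and
\[ \val(S) + * = \langle \val(S_c^{\pm}) + *,\; \val(S) \,|\, \val(S_c^{\pm}) + *,\; \val(S) \rangle \]
in extensive form shows they have identical option sets, so $\val(T) = \val(S) + *$; the $u^{\pm}$ equalities follow because adding $*$ flips the temper while preserving the $u^{\pm}$-coordinates of the $\psi^{\pm}$-pair.

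For the R2 case, label the two phony crossings $\alpha,\beta$ and let $S'$ denote the diagram obtained from $S$ by inserting the local clasp (the alternating resolution of $\alpha,\beta$).  I will prove $\val(T) \gtrsim_+ \val(S)$ by a direct strategy argument: for every $X \in \mathcal{W}_\mathbb{Z}$, I construct a Left strategy in $T + X$ achieving $\lfout(T + X) \ge \lfout(S + X)$.  Because $T + X$ has exactly two more moves than $S + X$ and the two games share a temper, each player makes exactly one move in $\{\alpha,\beta\}$ during any complete playthrough.  Let $\sigma$ be Left's $\lfout$-optimal strategy on $S + X$.  Left's strategy in $T + X$ is then to play $\sigma$ on non-R2 crossings, and the instant Right has played in $\{\alpha,\beta\}$ while she has not, to respond by resolving the other R2 crossing in the matching direction---thereby R2-cancelling the pair and reducing the residual game to a $\sigma$-optimal continuation of $S + X$.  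The completely symmetric argument, in which Right plays his $\rfout$-optimal strategy on $S + X$ and matches any Left R2 move, yields $\val(T) \lesssim_- \val(S)$; the stated inequalities on $u^{\pm}$ then follow from Theorem~\ref{plethora}(a) and the monotonicity of the $u^{\pm}$-encoding.

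The main obstacle is verifying that Left is never compelled to open the R2 pair herself.  If both players delay R2, then after all $|S| + |X|$ non-R2 moves have been made, the next move must lie in $\{\alpha,\beta\}$; since Left occupies the final position of $T + X$ under the $\lfout$ hypothesis, a one-line parity check---independent of which temper $T + X$ has---shows that the penultimate position $|S| + |X| + 1$ belongs to Right, so Right is forced to open $\{\alpha,\beta\}$ and Left's matching response is always available.  A minor additional check confirms that the non-R2 substate of $T + X$ stays synchronised with a legal position of $S + X$ throughout (Right's R2 move is a no-op on the non-R2 state), so the prescriptions of $\sigma$ remain legal at every turn.
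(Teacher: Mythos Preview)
Your R2 argument is essentially the paper's: a pairing strategy in which the last mover matches any opponent move in $\{\alpha,\beta\}$, with a parity observation guaranteeing she is never forced to open the pair.  Two small points of sloppiness: the sentence ``each player makes exactly one move in $\{\alpha,\beta\}$ during any complete playthrough'' is false as stated (in an arbitrary playthrough one player can take both R2 moves; it only becomes true \emph{under the strategy you are about to describe}), and the quantity $|X|$ is not well-defined since $X\in\mathcal{W}_\mathbb{Z}$ need not be fixed-length.  Neither affects the argument, because your later parity check is the real justification and it works line-by-line: whenever the non-R2 part has been exhausted along a given line, exactly two moves remain and the last of these is Left's, so Right must open.

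Your R1 argument is correct but takes a more laborious route than the paper.  The paper simply observes that $T$ is literally the connected sum $S\#K$ where $K$ is the one-crossing figure-eight shadow with $\val(K)=*$, so $\val(T)=\val(S)\oplus_2 * = \val(S)+*$ in one step via Lemma~\ref{staradd}.  Your induction on the number of crossings, comparing option sets of $\val(T)$ with those of $\val(S)+*$, is effectively an in-place unrolling of that lemma's proof; it works, but the connected-sum viewpoint makes the result immediate and avoids the need for an inductive hypothesis.
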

\begin{proof}
If $S$ is obtained from $T$ by a phony Reidemeister I move, then $T$ is obtained from $S$ by adding an extra loop (with
an unresolved crossing).  In other words $T$ is $S \# K$ where $K$ is the following pseudodiagram:
\begin{figure}[H]
\begin{center}
\includegraphics[width=1in]
					{shadow-infty.jpg}
\end{center}
\end{figure}
As noted above, $\val(K) = *$, so
\[ \val(T) = \val(S) \oplus_2 * = \val(S) \oplus_2 (0 + *) = (\val(S) + *) \oplus_2 = \val(S) + *,\]
using Lemma~\ref{staradd} and the fact that $0$ is the identity element for $\oplus_2$.

For the second claim, we need to show that undoing a phony Reidemeister II move does not hurt whichever player
moves last, even if the pseudodiagram is being added to an arbitrary integer-valued game.  To see this,
suppose that $T \stackrel{2}{\rightarrow} S$ and that Alice have a strategy guaranteeing a certain score
in $\val(S) + G$ for some $G \in \mathcal{W}_\mathbb{Z}$, when Alice is the player who will make the last move.  Then Alice
can use this same strategy in $\val(T) + G$,
except that she applies a pairing strategy to manage the two new crossings.  If her opponent moves in one,
then she moves in the other, in a way that produces one of the following configurations:
\begin{figure}[H]
\begin{center}
\includegraphics[width=3in]
					{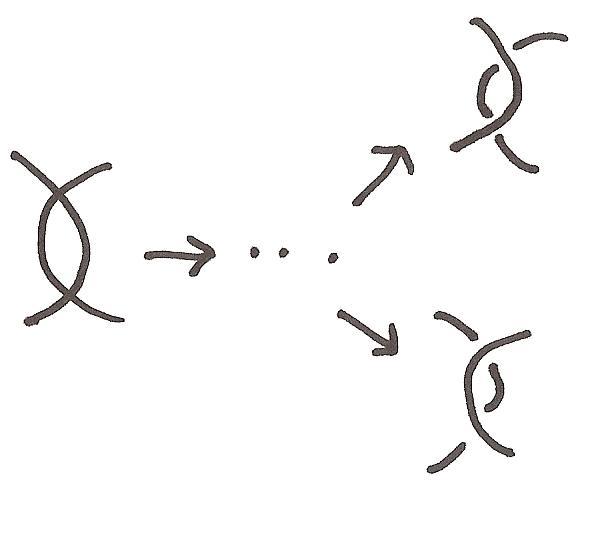}
\caption{After the first move, it is always possible to reply with a move to one of the configurations on the right.}
\label{r2-responses}
\end{center}
\end{figure}
Otherwise, she does not move in either of the two new crossings, and pretends that she is in fact playing $\val(S) + G$.
  Since she is the player who will make the last
move in the game, she is never forced to move in one of the two crossings before her opponent does, so this
pairing strategy always works.  And if the two crossings end up in one of the configurations on the right side of Figure~\ref{r2-responses},
then they can be undone by a standard Reidemeister II move, yielding a position identical to the one that Alice pretends she has reached, in $\val(S) + G$.
Since Alice had a certain guaranteed score in $\val(S) + G$, she can ensure the same score in $\val(T) + G$.  This works whether
Alice is Left or Right, so we are done.
\end{proof}

\section{Rational Pseudodiagrams and Shadows}
We use $[]$ to denote the rational tangle
\begin{figure}[H]
\begin{center}
\includegraphics[width=1in]
					{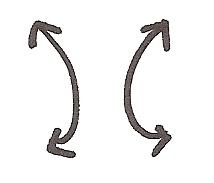}
\end{center}
\end{figure}
and $[a_1,\ldots,a_n]$ to denote
the rational tangle obtained from $[a_1,\ldots,a_{n-1}]$ by reflection over a 45 degree axis and adding $a_n$ twists
to the right.
We also generalize this notation, letting $[a_1(b_1),\ldots,a_{n}(b_n)]$ denote a tangle-like pseudodiagram
in which there are $a_1$ legitimate crossings and $b_1$ unresolved crossings at each step.  See Figure~\ref{tangle-examples}
for examples.

\begin{figure}[htbp]
\begin{center}
\includegraphics[width=6in]
					{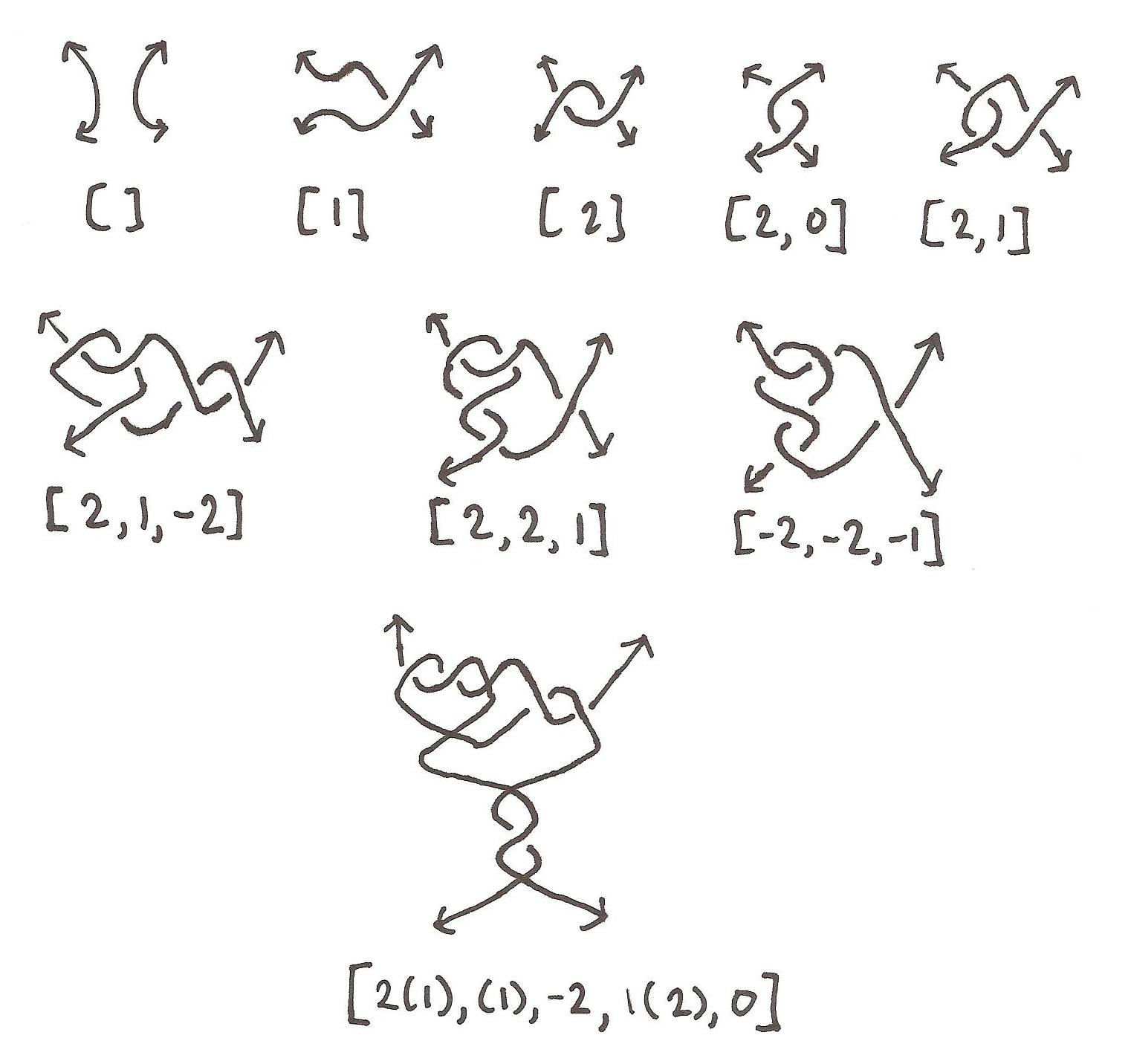}
\caption{Examples of our notation.}
\label{tangle-examples}
\end{center}
\end{figure}

So the $a_i \in \mathbb{Z}$ and the $b_i \in \mathbb{N}$, where $\mathbb{N}$ are the nonnegative integers.  If $a_i = 0$,
we write $(b_i)$ instead of $a_i(b_i)$, and similarly if $b_i = 0$, we write $a_i$ instead of $a_i(b_i)$.
A \emph{shadow} is a pseudodiagram in which all crossings are unresolved,
so a rational shadow tangle would be of the form $[(b_1),\ldots,(b_n)]$.

We abuse notation, and use the same $[a_1(b_1),\ldots,a_n(b_n)]$ notation for the pseudodiagram obtained by
connecting the top two strands of the tangle and the bottom two strands:
\begin{figure}[H]
\begin{center}
\includegraphics[width=3in]
					{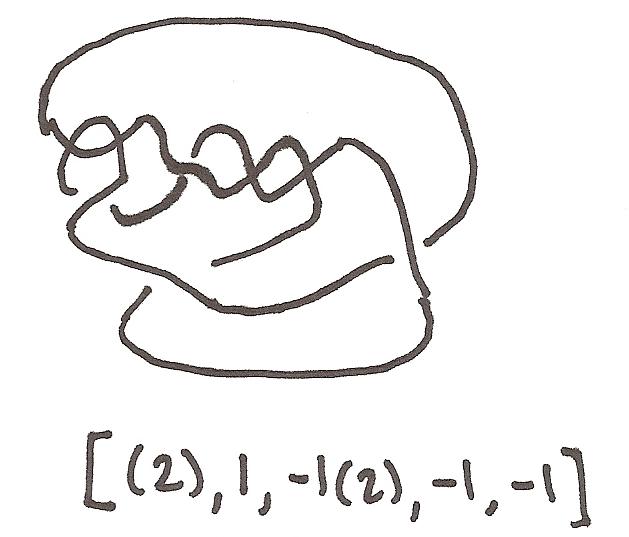}
\end{center}
\end{figure}
Note that this can sometimes yield a link, rather than a knot:
\begin{figure}[H]
\begin{center}
\includegraphics[width=2in]
					{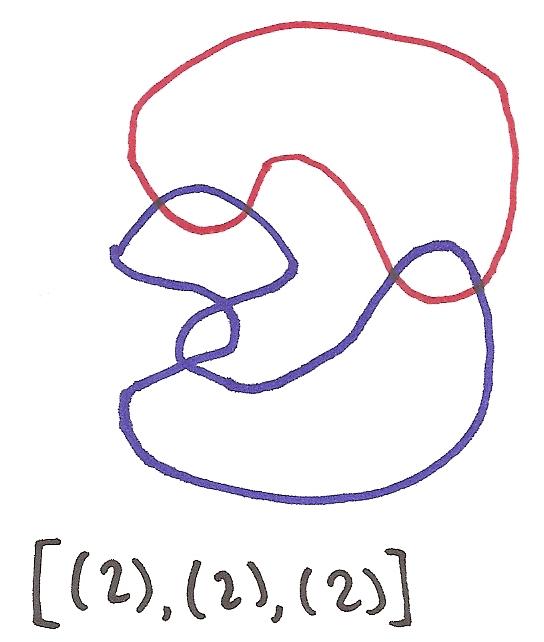}
\end{center}
\end{figure}

We list some fundamental facts about rational tangles: 
\begin{theorem}\label{fundrat}
If $[a_1,\ldots,a_m]$ and $[b_1,\ldots,b_n]$ are rational tangles, then they are equivalent if and only if
\[ a_m + \frac{1}{a_{m-1} + \frac{1}{\ddots + \frac{1}{a_1}}} = b_n + \frac{1}{b_{n-1} + \frac{1}{\ddots + \frac{1}{b_1}}}.\]
The \emph{knot or link} $[a_1,\ldots,a_m]$ is a knot (as opposed to a link) if and only if
\[ a_m + \frac{1}{a_{m-1} + \frac{1}{\ddots + \frac{1}{a_1}}} = \frac{p}{q},\]
where $p,q \in \mathbb{Z}$ and $p$ is odd.
Finally, $[a_1,\ldots,a_m]$ is the unknot if and only if $q/p$ is an integer.
\end{theorem}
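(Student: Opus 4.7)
The plan is to handle the three assertions separately, all resting on the classical Conway--Schubert theory of rational tangles and two-bridge knots. None of the three is original; my proof would assemble standard facts and first verify that the bracket notation $[a_1,\ldots,a_m]$ introduced here really does produce the continued fraction $F([a_1,\ldots,a_m]) = a_m + 1/(a_{m-1} + \cdots + 1/a_1)$ that controls everything. This is a quick induction on $m$: the twist-and-reflection rules recalled earlier in the chapter (reflection across the $45^\circ$ diagonal inverts the fraction, and adding a right-handed twist on the right adds $1$ to the fraction) imply $F([a_1,\ldots,a_m]) = a_m + 1/F([a_1,\ldots,a_{m-1}])$, which unrolls to exactly the stated continued fraction.

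For the equivalence claim, the ``equivalent tangles have equal fractions'' direction is Conway's invariance theorem; the cleanest proof I would present goes through the double branched cover, which for a rational tangle's numerator closure is the lens space $L(p,q)$, and uses the fact that the homeomorphism type of $L(p,q)$ recovers $p/q$ up to the standard two-bridge ambiguity. The converse---equal fractions imply equivalent tangles---I would prove constructively by reducing both $[a_1,\ldots,a_m]$ and $[b_1,\ldots,b_n]$ to a canonical positive continued-fraction expansion using the standard identities relating continued fractions of the same value (e.g.\ $[\ldots,a,0,b,\ldots] = [\ldots,a+b,\ldots]$ and the sign-flipping identity used earlier to exhibit the nontrivial equivalence in this chapter), and then observing that the canonical form is unique and that each identity on fractions lifts to a sequence of Reidemeister moves on tangles.

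For the knot-versus-link statement, I would track how a rational tangle pairs up its four endpoints (NW, NE, SW, SE). The base tangles $[0]$ and $[\infty]$ give the pairings NW-NE plus SW-SE and NW-SW plus NE-SE respectively; adding a twist swaps two adjacent endpoints in the pairing, and reflection interchanges the two ``parallel'' pairings. A short induction on $m$ then shows that the pairing of $[a_1,\ldots,a_m]$ is determined by the parities of the numerator $p$ and denominator $q$ of $F$, with the ``diagonal'' pairing NW-SE plus NE-SW never occurring (since a rational tangle always has the two strands locally parallel at the boundary). The numerator closure connects NW-NE and SW-SE externally, so the result has a single component precisely when the tangle's internal pairing is \emph{not} NW-NE plus SW-SE, and chasing the parity bookkeeping through the inductive step shows this happens exactly when $p$ is odd.

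For the unknot characterization, the ``if'' direction is direct: if $q/p \in \mathbb{Z}$ then $p = \pm 1$ in lowest terms, so by part 1 the tangle is equivalent to an integer tangle $[n]$, whose numerator closure visibly unknots by $n$ Reidemeister~I moves. The ``only if'' direction is the deep one, and I would invoke Schubert's classification of two-bridge knots: the numerator closure of a rational tangle of fraction $p/q$ (lowest terms, $p>0$) is the two-bridge knot $\mathfrak{b}(p,q)$, and $\mathfrak{b}(p,q) \cong \mathfrak{b}(p',q')$ iff $p = p'$ and $q' \equiv q^{\pm 1} \pmod{p}$. Since the unknot is $\mathfrak{b}(1,0)$, the closure is trivial iff $p = \pm 1$, i.e.\ iff $q/p \in \mathbb{Z}$. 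The one real obstacle, and the place where I would defer to the literature rather than reprove, is Schubert's classification itself: its shortest modern proofs still require a nontrivial amount of 3-manifold topology via lens spaces and their Heegaard splittings. I would cite Adams' \emph{The Knot Book} and Kawauchi's \emph{A Survey of Knot Theory} for the details.
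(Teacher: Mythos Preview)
The paper does not prove this theorem. It is stated as a list of ``fundamental facts about rational tangles,'' with no accompanying proof or even a sketch; the surrounding discussion in Chapter~\ref{chap:zero} and the bibliography point the reader to Adams' \emph{The Knot Book} for the knot-theoretic background. So there is nothing in the paper to compare your proposal against.

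That said, your plan is a sound and standard assembly of the classical Conway--Schubert material. The inductive identification of $F([a_1,\ldots,a_m])$ with the stated continued fraction is correct given the twist/reflection rules recalled earlier in the chapter; the equivalence of rational tangles with equal fractions is Conway's theorem and your two suggested proof routes (double branched covers to lens spaces, or reduction to a canonical continued-fraction form) are the usual ones; the parity bookkeeping on endpoint pairings is the right way to get the knot-versus-link criterion; and invoking Schubert's two-bridge classification for the ``only if'' direction of the unknot statement is exactly what one must do, and you are right to flag that as the genuinely deep input you would cite rather than reprove. There is no gap in your outline beyond the acknowledged reliance on Schubert, which the paper itself implicitly relies on as well.
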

Note that $[a_1(b_1),\ldots,a_n(b_n)]$ is a knot pseudodiagram (as opposed to a link pseudodiagram) if and only if
$[a_1 + b_1, \ldots, a_n + b_n]$ is a knot (as opposed to a link), since the number of components in the diagram
does not depend on how crossings are resolved.

The proofs of the following lemmas are left as an exercise to the reader.  They are easily seen by drawing pictures,
but difficult to prove rigorously without many irrelevant details.
\begin{lemma}
The following pairs of rational shadows are topologically equivalent (i.e., equivalent up to planar isotopy):
\begin{equation} \left[(1),(a_1),\ldots,(a_n)\right] = \left[(a_1 + 1),(a_2)\ldots,(a_n)\right] \label{onecombl}\end{equation}
\begin{equation} \left[(a_1),\ldots,(a_n),(1)\right] = \left[(a_1),\ldots,(a_{n-1}),(a_n + 1)\right] \label{onecombr}\end{equation}
\begin{equation} \left[(0),(0),(a_1),\ldots,(a_n)\right] = \left[(a_1),\ldots,(a_n)\right] \label{zlossl}\end{equation}
\begin{equation} \left[(a_1),\ldots,(a_i),(0),(a_{i+1}),\ldots,(a_n)\right] =
\left[(a_1),\ldots,(a_i + a_{i+1}),\ldots,(a_n)\right]\label{zloss}\end{equation}
\begin{equation} \left[(a_1),\ldots,(a_n),0,0\right] = \left[(a_1),\ldots,(a_n)\right]\label{zlossr}\end{equation}
\begin{equation} \left[(a_1),(a_2),\ldots,(a_n)\right] = \left[(a_n),\ldots,(a_2),(a_1)\right] \label{invert}\end{equation}
\end{lemma}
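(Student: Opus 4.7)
The plan is to prove each of the six identities separately by exhibiting an explicit planar isotopy between the pseudodiagrams on the two sides. Before starting, I would fix a recursive language for the construction. Let $T_0 = []$ denote the base tangle (a single unresolved crossing). Let $R$ denote the operation ``reflect across the $45^\circ$ diagonal axis,'' and let $\tau_b$ denote the operation ``append $b$ unresolved twists on the right-hand pair of strands.'' Then the rational shadow (before closure) can be written as the composition
\[ [(b_1),\ldots,(b_n)] \;=\; \tau_{b_n} \circ R \circ \tau_{b_{n-1}} \circ R \circ \cdots \circ \tau_{b_1} \circ R\, (T_0), \]
and the knot/link pseudodiagram $[(b_1),\ldots,(b_n)]$ is obtained from this tangle by joining the two top endpoints to each other and the two bottom endpoints to each other. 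The key structural observation is that $R \circ R$ is planar-isotopic to the identity, and that $\tau_a \circ \tau_b = \tau_{a+b}$ because two blocks of twists on the same side simply stack.

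Equations (\ref{zlossl}), (\ref{zloss}), and (\ref{zlossr}) all reduce to this observation. For (\ref{zloss}), the inner block $\tau_{a_{i+1}}\circ R \circ \tau_0 \circ R \circ \tau_{a_i} = \tau_{a_{i+1}} \circ R^2 \circ \tau_{a_i}$ collapses to $\tau_{a_{i+1}}\circ \tau_{a_i} = \tau_{a_i+a_{i+1}}$, which is exactly the operation performed at position $i$ in the right-hand side; the remaining outer operations are identical on both sides. For (\ref{zlossl}) and (\ref{zlossr}), the leading or trailing $R \circ \tau_0 \circ R \circ \tau_0$ applied to $T_0$ produces $T_0$ back up to planar isotopy, and the subsequent closure step agrees with that of the reduced bracket.

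Equations (\ref{onecombl}) and (\ref{onecombr}) are the only identities that use something special about $T_0$ itself, namely that the base tangle already consists of one crossing. Concretely, in $[(1),(a_1),\ldots]$ we first reflect $T_0$ (still one crossing) and add one twist to the right: the resulting tangle contains two unresolved crossings stacked along one axis. I would then isotope this so that, after the next $R$, its crossings line up with the $a_1$ crossings of the $\tau_{a_1}$ block, producing a single block of $a_1+1$ twists. The outer operations for $(a_2),\ldots,(a_n)$ then match those of $[(a_1+1),(a_2),\ldots,(a_n)]$ verbatim, and (\ref{onecombr}) follows symmetrically (or alternatively by combining (\ref{onecombl}) with the inversion identity).

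The hardest piece, and the main obstacle, is (\ref{invert}): the palindrome identity $[(a_1),\ldots,(a_n)] = [(a_n),\ldots,(a_1)]$ on the closures. This one is not a local planar move but a global symmetry claim about the closed pseudodiagram. My plan is to show that the closure built by the sequence $(a_1,\ldots,a_n)$ admits a $180^\circ$ rotational symmetry of the plane swapping the ``leftmost base crossing'' step with the ``rightmost twist block'' step, and more generally the $i$-th step with the $(n{+}1{-}i)$-th step. I would make this rigorous by an induction: showing that the closed pseudodiagram produced by the right-to-left construction can equally well be produced by the left-to-right construction because at every stage, the tangle has a horizontal--vertical symmetry that allows ``adding a twist block on the right and closing'' to be interchanged with ``adding a twist block on the left and closing,'' with the closure operation absorbing the difference. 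The combinatorial bookkeeping of which strands get identified at each stage is where most of the ``irrelevant details'' the author mentions will accumulate.
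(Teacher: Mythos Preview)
Your treatment of (\ref{onecombl})--(\ref{zlossr}) via the algebra of $R$ and $\tau_b$ is essentially what the paper intends; it leaves those five as exercises since they are immediate from pictures. One small correction: the base tangle $[]$ has \emph{no} crossings (its numerator closure is the unknot with no crossings, and $[(1)]$ is said to reduce to it by a single phony Reidemeister~I move). This slightly alters your analysis of (\ref{onecombl})--(\ref{onecombr}): the relevant fact is that the one-crossing tangle $[(1)]$, not $[]$, is symmetric under the diagonal reflection $R$, so that after the next $R$ its crossing merges with the $(a_1)$ block.

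The real divergence is (\ref{invert}). The paper does not use a planar rotation or an induction. Its argument is a single global move: regard the closed pseudodiagram as living on $S^2$ rather than $\mathbb{R}^2$---legitimate because sliding an outermost arc ``around the back'' of the sphere is a genuine ambient isotopy of the knot---and then turn the whole diagram inside out, exchanging the innermost disc with the unbounded region. Under this inversion the $(a_1)$ block, which sits at the core, becomes outermost, and the $(a_n)$ block becomes innermost; the result is precisely the closure recorded by the reversed sequence $[(a_n),\ldots,(a_1)]$.

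Your proposed $180^\circ$ rotation cannot accomplish this by itself: any planar isometry preserves the inside/outside nesting of the twist blocks, so it cannot reverse their order. The inductive step you sketch (``the closure absorbs the difference between adding a block on the left and on the right'') can be made to work, but only because an arc of the closure can be pushed past the point at infinity---which is exactly the sphere observation in disguise. Once you grant that, the one-shot inside-out move replaces the induction entirely, and the ``combinatorial bookkeeping'' you anticipated disappears.
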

Only (\ref{invert}) is non-obvious.  The equivalence here follows by turning everything inside out, as in Figure~\ref{inversion}.
\begin{figure}[H]
\begin{center}
\includegraphics[width=4in]
					{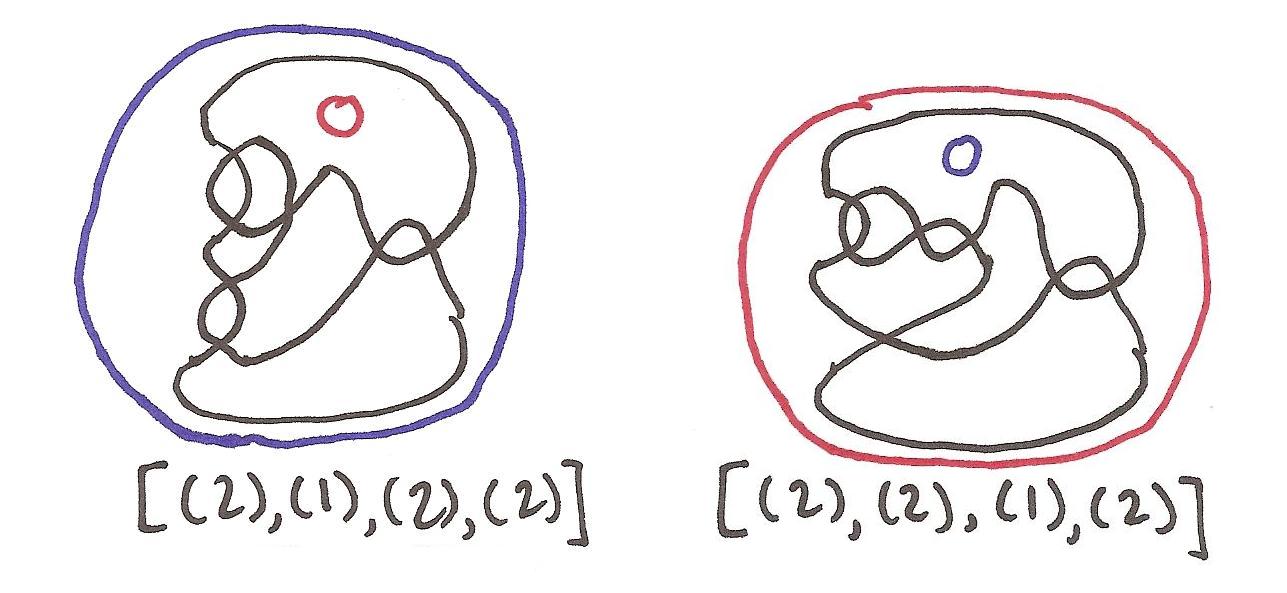}
\caption{These two knot shadows are essentially equivalent.  One is obtained from the the other by turning the diagram inside out,
exchanging the red circle on the inside and the blue circle on the outside.}
\label{inversion}
\end{center}
\end{figure}
This works
because the diagram can be thought of as living on the sphere, mainly because the following operation has no effect on a knot:
\begin{figure}[H]
\begin{center}
\includegraphics[width=4in]
					{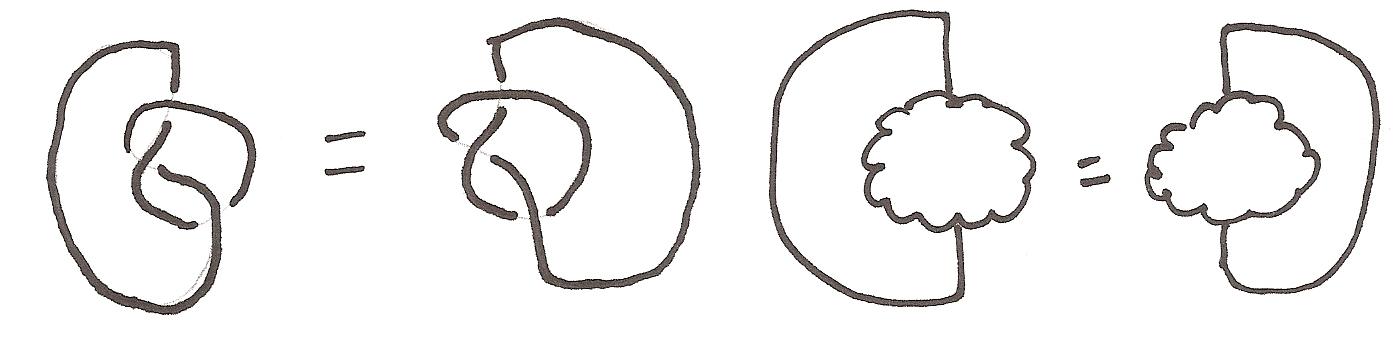}
\caption{Moving a loop from one side of the knot to the other has no effect on the knot.  So we might as well think
of knot diagrams as living on the sphere.}
\label{knots-on-a-sphere}
\end{center}
\end{figure}

Similarly, we also have
\begin{lemma}
\begin{equation} \left[(0),(a_1 + 1),(a_2),\ldots,(a_n)\right] \stackrel{1}{\rightarrow} \left[(0),(a_1),(a_2),\ldots,(a_n)\right] \label{unwindl}\end{equation}
\begin{equation} \left[(a_1),\ldots,(a_{n-1}),(a_n + 1),0\right] \stackrel{1}{\rightarrow} \left[(a_1),\ldots,(a_{n-1}),(a_n),0\right] \label{unwindr}\end{equation}
\begin{equation} \left[\ldots,(a_i + 2),\ldots\right] \stackrel{2}{\rightarrow} \left[\ldots,(a_i),\ldots\right]\label{twoloss}\end{equation}
\end{lemma}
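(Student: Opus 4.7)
The plan is to verify each of the three phony Reidemeister moves by direct inspection of the associated pseudodiagrams. Each identity is a purely local geometric fact about rational tangle shadows: the alteration occurs within a small region of the pseudodiagram, so it suffices to exhibit the relevant picture and recognize the standard configuration from Figure~\ref{phony-r1} or Figure~\ref{phony-r2}. This is in the same spirit as the previous lemma's equations (\ref{onecombl})--(\ref{invert}).

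For identity (\ref{unwindl}), I would first unwind the inductive definition of $[\,\cdot\,]$ to observe that the leading $(0)$ step, which contributes a reflection but no crossings, forces the strand entering the $(a_1+1)$ twist group on one side to loop back immediately on the other side via a short $U$-turn arc. Consequently, the outermost of the $a_1+1$ unresolved crossings in that group is enclosed in a small loop that contains only that single crossing -- precisely the Reidemeister I configuration of Figure~\ref{phony-r1}. The phony R$_1$ move eliminates it and yields $[(0),(a_1),(a_2),\ldots,(a_n)]$. Identity (\ref{unwindr}) is proved analogously: the trailing $0$ produces the same kind of $U$-turn arc on the opposite end of the twist bundle $(a_n+1)$, so the outermost crossing of that group is again a kink that a phony R$_1$ move removes.

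For identity (\ref{twoloss}), the $a_i+2$ crossings inside a single twist group are all parallel by construction, since they all arise from the same iterated twisting operation and therefore lie between the same pair of adjacent strands with the same handedness. Hence any two consecutive unresolved crossings from this group together form the two-strand overlap of Figure~\ref{phony-r2}, and the phony R$_2$ move removes them, replacing $(a_i+2)$ by $(a_i)$ with no effect on the rest of the diagram. The main obstacle in writing the proof up rigorously is the bookkeeping needed to confirm from the inductive tangle construction that a $(0)$ at the boundary really does abut its neighboring twist group via a single $U$-turn arc, and that the crossings within one twist group really do share handedness; once these two geometric facts are pinned down, each of the three equations reduces to matching Figure~\ref{phony-r1} or Figure~\ref{phony-r2} on the nose.
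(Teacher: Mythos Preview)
Your approach is exactly in line with the paper's: the paper gives no proof of this lemma at all, explicitly stating that ``the proofs of the following lemmas are left as an exercise to the reader'' and that ``they are easily seen by drawing pictures, but difficult to prove rigorously without many irrelevant details.'' Your proposal is a reasonable fleshing-out of that pictorial argument, identifying in each case the local configuration that matches Figure~\ref{phony-r1} or Figure~\ref{phony-r2}.
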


Note that $[]$ is the unknot.
\begin{lemma}
If $T$ is a rational shadow, that resolves to be a knot (not a link), then $T \stackrel{*}{\Rightarrow} []$.
\end{lemma}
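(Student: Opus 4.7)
The plan is to proceed by induction on the complexity $c(T) = n + \sum_{i=1}^n b_i$, where $T = [(b_1),\ldots,(b_n)]$. The base case $c(T) = 0$ gives $T = []$ and nothing is to prove. I will use repeatedly the fact that every phony Reidemeister move and every topological equivalence (\ref{onecombl})--(\ref{invert}) preserves the number of components of the closure (these are local modifications that do not change connectivity), so knot-shadows stay knot-shadows throughout any such reduction sequence.

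For the inductive step I first reduce to the case where every $b_i \in \{0,1\}$: whenever some $b_i \geq 2$, apply (\ref{twoloss}) and invoke the induction hypothesis. I then eliminate zeros systematically. An interior $b_i = 0$ with $1 < i < n$ is absorbed by (\ref{zloss}), combining the neighbors into a single entry of value at most $2$ (which feeds back into the previous case). A leading $b_1 = 0$ with $n \geq 2$ is handled by first applying (\ref{unwindl}) if $b_2 = 1$ to reduce it to $0$, and then (\ref{zlossl}) to strip two leading zeros; the case $b_n = 0$ is symmetric via (\ref{unwindr}) and (\ref{zlossr}). In each subcase the complexity $c(T)$ strictly decreases, so the induction hypothesis applies.

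This reduces the problem to $T = [(1)^n]$ for some $n \geq 0$. Since component count is preserved, $T$ is still a knot-shadow; the continued fraction of $[(1)^n]$ equals $F_{n+1}/F_n$ (Fibonacci ratio), and its numerator is odd precisely when $n \not\equiv 2 \pmod{3}$. The case $n = 0$ is trivial, and for $n = 1$ the shadow $[(1)]$ is a single self-crossing loop (a curl), so a direct phony Reidemeister I move yields $[(1)] \stackrel{1}{\rightarrow} []$. For $n \geq 3$ with $n \not\equiv 2 \pmod{3}$ I perform the composite reduction
\[ [(1)^n] = [(2),(1)^{n-2}] \stackrel{2}{\rightarrow} [(0),(1)^{n-2}] \stackrel{1}{\rightarrow} [(0),(0),(1)^{n-3}] = [(1)^{n-3}] \]
using (\ref{onecombl}), (\ref{twoloss}), (\ref{unwindl}), and (\ref{zlossl}) in turn, and then apply the induction hypothesis, noting that $n \not\equiv 2 \pmod{3}$ iff $n - 3 \not\equiv 2 \pmod{3}$.

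The main obstacle is the $n = 1$ base case of the all-ones reduction: the unwinding lemmas (\ref{unwindl}) and (\ref{unwindr}) do not directly apply to $[(1)]$ (they would demand a negative entry), so I must appeal to phony Reidemeister I in the degenerate configuration where the pseudodiagram consists of nothing but a single self-crossing loop. A secondary concern is bookkeeping: one must verify that the knot-shadow hypothesis genuinely rules out the link configurations $[(0)]$ and $[(1),(1)]$ (and more generally $[(1)^n]$ with $n \equiv 2 \pmod 3$) that would otherwise appear as dead ends of the reduction.
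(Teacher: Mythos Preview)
Your proof is correct and follows the same overall strategy as the paper's: induct on a complexity measure and show that some phony Reidemeister move or topological identity always applies. The organization differs slightly. The paper handles a leading $1$ immediately via (\ref{onecombl}), absorbing it into the next entry; this means that after ruling out entries $\ge 2$ and dealing with a leading $0$, the only surviving case is $[(1)]$ itself. You instead postpone using (\ref{onecombl}) until you have reduced all the way to $[(1)^n]$, which forces you into the three-step combo $[(1)^n] \stackrel{*}{\Rightarrow} [(1)^{n-3}]$ and the Fibonacci parity check to rule out the link cases $n \equiv 2 \pmod 3$. Both arguments are valid; the paper's ordering simply avoids the detour through the all-ones family and the number-theoretic bookkeeping. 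Your observation that $[(1)] \stackrel{1}{\rightarrow} []$ requires invoking phony Reidemeister~I directly on the degenerate single-curl diagram is exactly what the paper does as well.
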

\begin{proof}
Let $T = \left[(a_1),\ldots,(a_n)\right]$ be a minimal counterexample.
Then $T$ cannot be reduced by any of the rules specified above.  Since any $a_i \ge 2$ can be reduced
by (\ref{twoloss}), all $a_i < 2$.  If $n = 0$, then $T = \left[\right]$ which turns out
to be the unknot.  If $a_0 = 0$ and $n > 1$, then either $a_1$ can be decreased
by $1$ using (\ref{unwindl}), or $a_0$ and $a_1$ can be stripped off via (\ref{zlossl}).
On the other hand, if $a_0 = 0$ and $n = 1$, then $T = \left[(0)\right]$,
which is easily seen to be a link (not a knot).  So $a_0 = 1$.  If $n > 1$, then $T$ reduces to
$\left[(a_2 + 1),\ldots,(a_n)\right]$ by (\ref{onecombl}).  So $n = 1$, and $T$ is $\left[(1)\right]$ which clearly reduces
to the unknot via a phony Reidemeister I move:
\begin{figure}[H]
\begin{center}
\includegraphics[width=1.5in]
					{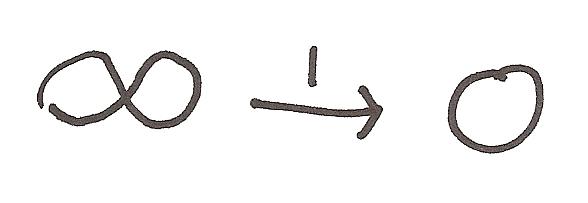}
\end{center}
\end{figure}
\end{proof}

Because of this, we see that every rational shadow has downside $0$ or $*$, viewed as a TKONTK position:
\begin{theorem}\label{downrational}
Let $T$ be a rational knot shadow (not a link).  Then $\val(T) \approx_- 0$ or $\val(T) \approx_- *$.
\end{theorem}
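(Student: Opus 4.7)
The plan is to chain together the preceding lemma (which reduces $T$ via phony Reidemeister moves to the unknot $[]$) and Theorem~\ref{redmonotone} (which controls how such moves affect the value). I will track the invariant $u^-$ from Corollary~\ref{booleansummary} along the reduction chain $T = T_0 \to T_1 \to \cdots \to T_m = []$, showing that it is weakly increasing as the diagram simplifies, and then use the fact that $u^-([]) = 0$ (the minimum of $U$) to pin down $u^-(T)$.

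For a phony Reidemeister~I step $T_i \stackrel{1}{\rightarrow} T_{i+1}$, Theorem~\ref{redmonotone} gives $\val(T_i) = \val(T_{i+1}) + *$. A short computation using Theorem~\ref{plethora}(d) and the fact that $\psi(*)=*$ shows that adding $*$ flips parity but preserves $u^-$, so $u^-(T_i) = u^-(T_{i+1})$. For a phony Reidemeister~II step $T_i \stackrel{2}{\rightarrow} T_{i+1}$, Theorem~\ref{redmonotone} gives $\val(T_i) \lesssim_- \val(T_{i+1})$; by Theorem~\ref{plethora}(a) this is equivalent to $\val(T_i)^- \lesssim \val(T_{i+1})^-$, and since both sides are $2$-valued i-games of the same parity (phony R2 preserves parity), Theorem~\ref{embedding} together with the strictly order-preserving nature of Norton multiplication by $\mathbf{1}$ translates this into $u^-(T_i) \le u^-(T_{i+1})$.

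Chaining these inequalities along the entire reduction yields $u^-(T) \le u^-([]) = 0$; since $0$ is the minimum element of $U$, equality $u^-(T) = 0$ follows. Consulting Corollary~\ref{booleansummary} once more, $u^-(T) = 0$ means $\psi^-(\val(T)) = 0$ if $T$ is even-tempered and $\psi^-(\val(T)) = *$ if $T$ is odd-tempered. By the faithfulness of $\psi$ on $2$-valued i-games (Theorem~\ref{embedding}), this forces $\val(T)^- \approx 0$ or $\val(T)^- \approx *$ respectively, which is precisely $\val(T) \approx_- 0$ or $\val(T) \approx_- *$.

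The argument is essentially a monotonicity chase and I do not expect serious difficulty. The only places warranting care are the parity bookkeeping across phony R1 moves (which is what produces the two cases in the theorem statement) and remembering which direction of $\lesssim_-$ we inherit from Theorem~\ref{redmonotone}.
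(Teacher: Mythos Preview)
Your proof is correct and follows the same route as the paper: reduce $T$ to $[]$ via the preceding lemma, use Theorem~\ref{redmonotone} to see that $u^-$ is non-increasing along the reduction chain, and conclude $u^-(T)=0$ since $0$ is the minimum of $U$. Your argument is just more explicit in re-deriving the $u^-$ conclusions from Theorem~\ref{redmonotone} (which already states them), but the structure is identical.
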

\begin{proof}
By the lemma, $T \stackrel{*}{\Rightarrow} []$.  But then by Theorem~\ref{redmonotone}, $u^-(T) \le u^-([]) = 0$,
since $\val([]) = 0$ and $u^-(0) = 0$.  But the only possible values for $u^-(T)$ are $0, \frac{1}{4}, \frac{3}{8}, \ldots$,
so $u^-(T) \le 0$ implies that $u^-(T) = 0$.  Then $\val(T)$ is equivalent to either $0$ or $*$.
\end{proof}
Now $\lceil 0 \rceil = 0$ and $\lfloor 0 + \frac{1}{2} \rfloor = 0$, so by Corollary~\ref{booleansummary}, we see that
if $T$ is a rational shadow with an even number of crossings, then $\loutcome(\val(T)) = 0$,
and if $T$ has an odd number of crossings, then $\routcome(\val(T)) = 0$.  So in particular, there are no rational shadows
for which King Lear can win as both first and second player.  And since games with $u^-(G) = 0$ are closed under $\oplus_2$,
the same is true for any position which is a connected sum of rational knot shadows.

Rational pseudodiagrams, on the other hand,
can be guaranteed wins for King Lear.  For example, in $[0,(1),3]$, King Lear can already declare victory:
\begin{figure}[H]
\begin{center}
\includegraphics[width=3in]
					{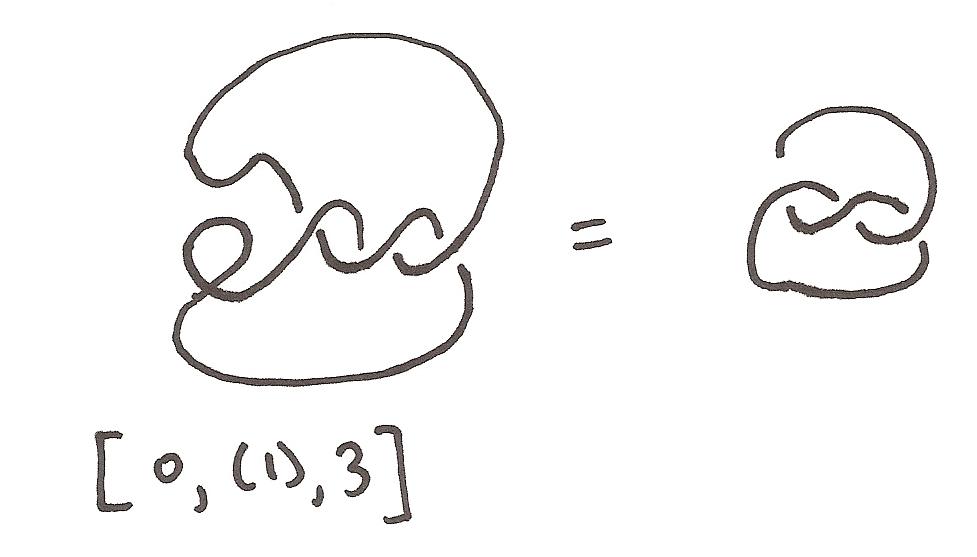}
\end{center}
\end{figure}

\section{Odd-Even Shadows}
\begin{definition}
An \emph{odd-even shadow} is a shadow of the form \[\left[(a_1),(a_2),\ldots,(a_n)\right],\]
where all $a_i \ge 1$, exactly one of $a_1$ and $a_n$ is odd, and all other
$a_i$ are even.
\end{definition}
Note that these all have an odd number of crossings (so they yield odd-tempered games).
It is straightforward to verify from (\ref{onecombl}-\ref{twoloss})
that every odd-even shadow reduces by phony Reidemeister moves to the unknot.
In particular, by repeated applications of (\ref{twoloss}), we reduce to either
$[(0),\ldots,(0),(1)]$ or $[(1),(0),\ldots,(0)]$.  Then by applying (\ref{zlossl}) or
(\ref{zlossr}), we reach one of the following:
\[ [(1)], [(0),(1)], [(1),(0)].\]
Then all of these are equivalent to $[(1)]$ by (\ref{onecombl}) or (\ref{onecombr}).
So since every odd-even shadow reduces to the unknot, ever odd-even shadow is an actual knot shadow, not a link shadow.
Thus any odd-even shadow can be used as a game of TKONTK.

\begin{theorem}
If $T$ is an odd-even shadow, then $u^+(T) = 0$.
\end{theorem}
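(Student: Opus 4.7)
The plan is to prove $u^+(T) = 0$ by a strategic argument, showing that $T \approx_+ *$ as odd-tempered $\mathbb{Z}$-valued games. Combined with $u^-(T) = 0$ (which follows from Theorem~\ref{downrational}, since odd-even shadows are rational), this yields $T \approx *$, so $T$ behaves trivially in any sum. By Corollary~\ref{booleansummary}, $T \approx_+ *$ is equivalent to $u^+(T) = 0$, so establishing this equivalence suffices.

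First, by the topological equivalence (\ref{invert}), I may assume without loss of generality that $a_n$ is the unique odd block. I then describe a strategy for Ursula in the combined game $T + X$ (for arbitrary $X \in \mathcal{W}_{\mathbb{Z}}$) under the convention that Left makes the final move, and show that this strategy yields the same outcome as $* + X$ under the analogous convention. The strategy is based on a pairing of the crossings of $T$: within each even block $(a_i)$ I group the $a_i$ crossings into $a_i/2$ consecutive pairs, and within the odd block $(a_n)$ I group the first $a_n - 1$ crossings into pairs, leaving the remaining crossing unpaired. When Left resolves a crossing that lies in a pair, Ursula immediately resolves its partner in the opposite direction, creating a configuration removable by a genuine Reidemeister II move; when Left resolves the unpaired crossing or plays in $X$, Ursula plays in $X$ according to her optimal strategy in $* + X$ (in which Left is ``forced'' to be the one making the $*$-move).

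The analysis has two parts. For the $T$-component: after all moves in $T$ are completed, the paired crossings cancel by R2, so the final knot is topologically equivalent to the closure of the tangle $[(0),\ldots,(0),\pm 1]$, whose continued fraction equals $\pm 1 = 1/(\pm 1)$, hence the unknot. For the $X$-component: counting moves shows that, under this strategy, Left and Right together play $(x-1)/2$ and $(x+1)/2$ moves respectively in $X$ (where $x = |X|$), exactly matching the distribution in $* + X$ when Left plays the $*$-move. Combining these, the $\oplus_2$-outcome of $T + X$ equals the $\oplus_2$-outcome of $* + X$, giving $T \approx_+ *$.

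The main obstacle will be the technical verification that Ursula's strategy is well-defined in every intermediate position, regardless of how Left interleaves moves between $T$ and $X$. In particular, I must check that (a) whenever Left plays in a paired $T$-crossing, the partner is still unresolved so Ursula can respond there, and (b) whenever the strategy directs Ursula to play in $X$, there is a legal move in $X$ available; the parity-counting argument above should handle (b), and (a) follows because Ursula never resolves a paired crossing except as an immediate response. A secondary subtlety is that when Left plays the unpaired crossing before all pairs are dissolved, Ursula's ``switch'' to $X$ must still leave $T$'s remaining pairs intact and resolvable in the R2-cancelling fashion on subsequent turns; this should follow because the pairing inside $T$ is purely local and unaffected by play elsewhere.
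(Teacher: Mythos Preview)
Your reduction to showing $\val(T)\approx_+ *$ is correct and equivalent to the theorem (indeed, since every odd-tempered $2$-valued game is $\gtrsim *$, only the inequality $\val(T)\lesssim_+ *$ needs to be established, so describing an Ursula strategy is the right move). However, the fixed-pairing strategy you describe does not work, and the ``parity-counting argument'' you invoke for point (b) is exactly where it breaks.

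Consider $T=[(2),(1)]$, so $k=1$ with pair $(c_1,c_2)$ in the first block and the unpaired crossing $c_3$ in the second. Take $X$ even-tempered of length $2$. Then $\val(T)+X$ has length $5$, Left (Lear) moves first and last. Suppose Lear's first move is $c_3$. Your strategy tells Ursula to respond in $X$; she plays some $X^R$. Now Lear plays in $X$ again, reaching a number. At this point $X$ is exhausted, but $c_1,c_2$ are still unresolved and it is Ursula's turn. Your strategy says ``play in $X$,'' which is impossible. The game has two moves left and Ursula must move in $T$ first, after which Lear gets the last word on the remaining crossing --- the pairing discipline has been broken. (Ursula can still win here, e.g.\ by playing the crossing adjacent to $c_3$ with the opposite sign, but that is not what your strategy prescribes and requires a different argument.) The global move count being right at the end does not prevent $X$ from running out at an intermediate step; Lear can front-load his $X$-moves and the unpaired crossing, forcing Ursula into $T$ before all pairs have been initiated by Lear.

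There is also a second, related issue: your strategy is phrased entirely as Ursula \emph{responding} to Lear, but when $X$ is odd-tempered, $\val(T)+X$ is even-tempered and $\operatorname{Lf}$ corresponds to $\operatorname{R}$, so Ursula moves \emph{first}. You would need to say what her opening move is and re-run the analysis.

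The paper takes a different route that sidesteps these problems. Rather than proving $\val(T)\lesssim_+ *$ against arbitrary $X$, it uses the structure theory of $2$-valued games (Corollary~\ref{booleansummary}) to reduce to a single concrete test: it suffices that Ursula win as second player in $\val(T)\oplus_2 G$ for the specific game $G=\langle 0\,|\,1\rangle + *$. Against this fixed opponent the paper can give a more delicate strategy (tracking positions of types (A) and (B)) in which Ursula sometimes makes a \emph{proactive} move in $T$ --- namely the twist $(b_1)\to 1(b_1-1)$ --- in response to a particular move by Lear in $G$. That proactive $T$-move is precisely what your pairing scheme lacks, and it is what lets the paper avoid ever being caught with no legal response.
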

\begin{proof}
Suppose that $\loutcome(\val(T) \oplus_2 \langle 0 | 1 \rangle \oplus_2 *) = 0$.  Then by Corollary~\ref{booleansummary},
\[\left\lceil (u^+(T) \cup u^+(\langle 0 | 1 \rangle) \cup u^+(*)) - \frac{1}{2}\right\rceil = 0,\]
since $\val(T) \oplus_2 \langle 0 | 1 \rangle \oplus_2 *$ is odd-tempered.  But $u^+(*) = 0$, $u^+(\langle 0 | 1 \rangle) = \frac{1}{2}$,
and $\frac{1}{2} \cup 0 = \frac{1}{2}$.  And $\lceil x \rceil \le 0$ if and only if $x \le 0$.  So
\[ u^+(T) \cup \frac{1}{2} \le \frac{1}{2}.\]
But if $u^+(T) \ne 0$, then $u^+(T) \ge \frac{1}{4}$, so that $u^+(T) \cup \frac{1}{2} \ge \frac{1}{4} \cup \frac{1}{2} = \frac{3}{4} \not\le \frac{1}{2}$, a contradiction. So it suffices to show that
\[ \loutcome(\val(T) \oplus_2 \langle 0 | 1 \rangle \oplus_2 *) = 0,\]
i.e., that Ursula has a winning strategy as the second player in
\begin{equation} \val(T) \oplus_2 G \label{thecombo}\end{equation}
where $G = \langle 0 | 1 \rangle \oplus_2 * = \langle 0 | 1 \rangle + *$.

Let $T = [(a_1),\ldots,(a_n)]$.  Suppose first that $a_1$ is odd and the other $a_i$ are even.  Then
Ursula's strategy in (\ref{thecombo}) is to always move to a position of one of the following forms:
\begin{description}
\item[(A)] $\val([(b_1),\ldots,(b_m)]) \oplus_2 G'$, where $b_1$ is odd and the other $b_i$ are even, and $G'$ is $G$ or $0$.
\item[(B)] $\val([1(b_1),\ldots,(b_m)]) \oplus_2 G'$, where the $b_i$ are all even and $G'$ is an odd-tempered subposition of $G$.
\end{description}
Note that the initial position is of the first form, with $b_i = a_i$ and $G' = G$.  To show that this is an actual strategy,
we need to show that Ursula can always move to a position of one of the two forms.
\begin{itemize}
\item In a position of type (A), if Lear moves in one of the $b_i$, from $(b_i)$ to $\pm1 (b_i - 1)$, then Ursula
can reply with a move to $(b_i - 2)$, as in Figure~\ref{undoing-move}, \emph{unless} $b_i = 1$. 
\begin{figure}[H]
\begin{center}
\includegraphics[width=5in]
					{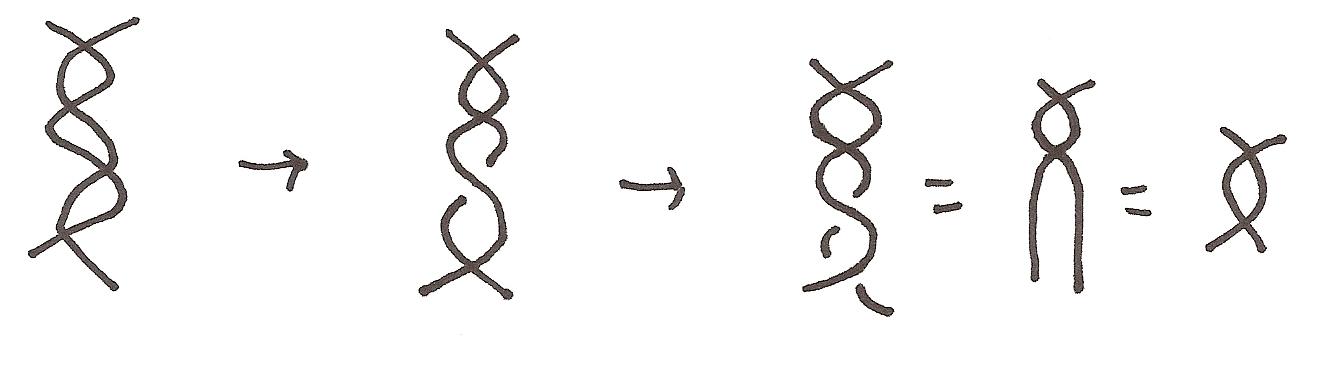}
\caption{Ursula responds to a twisting move by King Lear with a cancelling twist in the opposite direction.}
\label{undoing-move}
\end{center}
\end{figure}
But if $b_i = 1$, then King Lear has just moved to
\[ \val([\pm 1,(b_2),\ldots,(b_n)]) \oplus_2 G' = \val([\pm1(b_2),\ldots,(b_n)]) \oplus_2 G',\]
using the fact that $[\pm 1,(b_2),\ldots,(b_n)] = [\pm 1 (b_2),(b_3),\ldots,(b_n)]$ (obvious from a picture).  So now
Ursula can reply using $b_2$ instead of $b_1$, and move back to a position of type $A$, unless King Lear has just moved to
\[ \val([\pm1]) \oplus_2 G'.\]
But $[\pm 1]$ are unknots, so $\val([\pm 1]) = 0$, and $G'$ is $0$ or $G$, both of which are first player wins for Ursula, so
King Lear has made a losing move.

\item In a position of type (A), if Lear moves in $G' = G$ to $0 + *$, then Ursula replies by moving from $0 + *$ to $0 + 0$,
getting back to a position of type (A).

\item In a position of type (A), if Lear moves in $G' = G$ to $\langle 0 | 1 \rangle + 0$, then Ursula moves $(b_1) \to 1(b_1 - 1)$,
creating a position of type (B).

\item In a position of type (B), if Lear moves in $G'$ to $0$ (this is the only left option of either possibility for $G'$),
then Ursula replies with a move from $1(b_1)$ to $0(b_1 - 1)$, where $0 = 1 - 1$.  This works as long as $b_1 \ne 0$.  But
if $b_1 = 0$, then we could have rewritten $[1(b_1),(b_2),\ldots]$ as $[1(b_2),(b_3),\ldots]$ as before.  If $b_2 = 0$ too,
then we can keep on sliding over, until eventually Ursula finds a move, or it turns out that
King Lear moved to a position of the form
\[ [1] \oplus_2 0\]
which is $0$, a win for Ursula.

\item In a position of type (B), if Lear moves in any $b_i$, then Ursula makes the cancelling move
\[ (b_i) \to \pm 1 (b_i - 1) \to 0(b_i - 2),\]
this is always possible because if $b_i \ge 1$, then $b_i \ge 2$.
\end{itemize}
From the discussion above, Ursula can keep following this strategy until Lear makes a losing move.  So this is a winning
strategy for Ursula and we are done.

The other case, in which $a_n$ is odd and the other $a_i$ are even, is handled completely analogously.
\end{proof}

\section{The other games}
\begin{lemma}\label{ohnoproof}
The following rational shadows have $u^+(T) = 1$:
\[ \left[(3),(1),(3)\right], \left[(2),(1),(2),(2)\right],
\left[(2),(2),(1),(2)\right], \left[(2),(1),(1),(2)\right],\]\[
\left[(2),(2),(1),(2),(2)\right], \left[(2),(2)\right]\]
\end{lemma}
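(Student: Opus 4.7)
The plan is to verify $u^+(T) = 1$ for each of the six shadows by exhibiting an explicit winning strategy for King Lear. By Corollary \ref{booleansummary}, for an even-tempered Boolean game $G$ we have $\routcome(G) = \lfloor u^+(G) \rfloor$, so $u^+(G) = 1$ iff Lear wins with Ursula moving first. For an odd-tempered $G$, Lemma \ref{staradd} gives $G \oplus_2 * = G + *$, and the calculation $\psi^+(G+*) = \psi^+(G) + *$ combined with the classification of $\psi^+$ on $2$-valued games shows $u^+(G+*) = u^+(G)$; so the odd-tempered shadows can be replaced by their adjuncts $T + *$, reducing every case to showing that Lear wins as second player.

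Before the case analysis, I would exploit two structural reductions. First, equation (\ref{invert}) shows $[(2),(1),(2),(2)] = [(2),(2),(1),(2)]$ as pseudodiagrams, collapsing those two cases into one. Second, by Theorem \ref{redmonotone}, $T \stackrel{2}{\rightarrow} S$ implies $u^+(T) \geq u^+(S)$; iterating the reduction $(a_i + 2) \stackrel{2}{\rightarrow} (a_i)$ passes to smaller shadows and would let us prove the claim at the simplest level in the $\stackrel{2}{\Rightarrow}$-filtration. Unfortunately, some intermediate reductions produce link shadows rather than knot shadows, so the reduction cannot be completed uniformly and a direct strategy must be described for each case.

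For each of the six cases (or its augment $T + *$ in the odd-tempered cases), Lear's strategy is a pairing strategy that exploits palindromic structure. In the palindromic cases $[(3),(1),(3)]$, $[(2),(1),(1),(2)]$, $[(2),(2),(1),(2),(2)]$, and $[(2),(2)]$, Lear mirrors each of Ursula's moves across the center of the pseudodiagram, using (\ref{invert}) to justify the symmetry; when Ursula strikes the central bundle, Lear replies with a cancelling twist via (\ref{unwindl})--(\ref{unwindr}) or by completing a Reidemeister~II pair. For the asymmetric case $[(2),(1),(2),(2)]$, Lear pairs the two outer bundles of size $2$ and treats the middle $(1),(2)$ as a residual which he absorbs using moves in the adjoined $*$-crossing of $T + *$. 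After all of Lear's responses, the fully resolved pseudodiagram is a rational knot $[b_1,\ldots,b_m]$ whose continued-fraction value $p/q$ satisfies $|p| \neq 1$, so by Theorem \ref{fundrat} the resulting numerator closure is knotted.

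The main obstacle is the verification that Lear's pairing strategy always yields such a knotted residual. This is a finite but delicate case analysis: for each of Ursula's possible initial moves and each of her continuations, one must track the evolving continued fraction and confirm that Lear can steer it away from $\pm 1/n$. The author's chosen label \emph{ohnoproof} suggests that this verification is as tedious as the hint implies, and I would expect most of the proof's length to be consumed by this enumeration.
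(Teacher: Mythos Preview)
Your reduction to showing that King Lear wins as the second player in $\val(T)$ (even-tempered case) or $\val(T)+*$ (odd-tempered case) is correct and is exactly what the paper does.  After that point, however, the paper does not attempt a strategy: it simply states that the verification is done by computer, exhaustively evaluating positions via the continued-fraction test of Theorem~\ref{fundrat}.  So your plan is considerably more ambitious than what the paper actually proves.

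The specific mirror-across-the-center strategy you describe does not work, already in the smallest case $[(2),(2)]$.  Label the four crossings $c_1,c_2$ in the first bundle and $c_3,c_4$ in the second.  Any ``mirror via (\ref{invert})'' pairing matches a crossing of bundle~1 with one of bundle~2.  Ursula then plays $c_1=+1$ and, after Lear's mirrored reply in bundle~2, plays $c_2=-1$; whatever fixed sign convention Lear's mirror uses, each bundle resolves to net twist $0$, and the closure of $[0,0]$ is the unknot.  So Ursula beats the center-mirror strategy.  A correct second-player strategy for Lear here does exist, but it pairs crossings \emph{within} each bundle (if Ursula plays $c_i=\pm1$, Lear plays the other crossing in the same bundle with the \emph{same} sign), forcing the outcome into $\{[\pm2,\pm2]\}$, all of which are knotted.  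This is not the symmetry (\ref{invert}) at all.

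For the larger shadows the situation only gets worse: the central $(1)$-bundles and the odd parities mean that no single uniform pairing works, and one really is forced into the ``finite but delicate case analysis'' you allude to.  That analysis is the entire content of the lemma, and neither you nor the paper carries it out by hand; the difference is that the paper commits to a method (computer search with Theorem~\ref{fundrat}) that is manifestly sound, whereas your proposed strategy is both underspecified and demonstrably wrong in the base case.
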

\begin{proof}
To show that $u^+(T) = 1$,
it suffices by Corollary~\ref{booleansummary} to show that $G$ is a win
for King Lear when Ursula moves first, where $G$ is $\val(T)$ if $T$ has an even number of crossings,
and $G$ is $\val(T) + *$ otherwise.  For $\routcome(G) = 1$ iff $\lfloor u^+(G) \rfloor = 1$,
which happens if and only if $1 \le u^+(G) = u^+(T)$.

Unfortunately, the only way I know to prove this criterion for all the knots listed above is by computer,
making heavy use of Theorem~\ref{fundrat}.  
\end{proof}

These are shown in Figure ~\ref{the-irreducibles}.
\begin{figure}[htb]
\begin{center}
\includegraphics[width=4.5in]
					{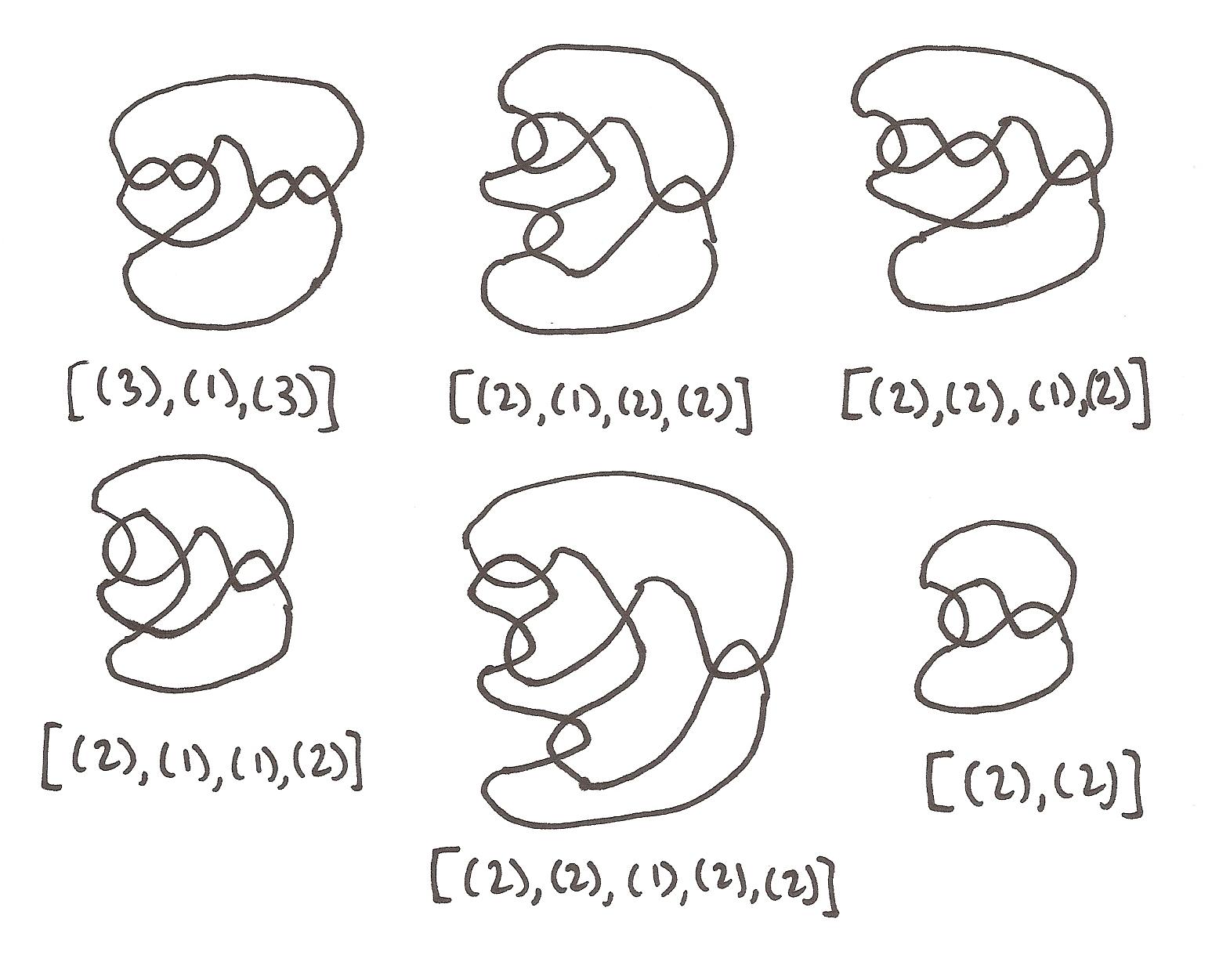}
\caption{The shadows of Lemma~\ref{ohnoproof}.}
\label{the-irreducibles}
\end{center}
\end{figure}

\begin{lemma}\label{minimal}
If $T = \left[(a_1),\ldots,(a_n)\right]$ is a rational shadow corresponding to
a knot (not a link) with at least one crossing, then either $T \stackrel{1}{\Rightarrow} O$ for some odd-even shadow $O$, or
$T \stackrel{*}{\Rightarrow} A$, where $A$ is equivalent to one of the six shadows in Lemma \ref{ohnoproof}.
\end{lemma}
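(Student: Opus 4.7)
The plan is strong induction on the crossing count $c(T) = a_1 + \cdots + a_n$; the base cases $c(T) \le 4$ follow by direct enumeration, as the only rational knot shadows with at most four crossings are (up to planar isotopy) $[(1)]$, $[(3)]$, $[(2),(3)]$, $[(3),(2)]$, and $[(2),(2)]$, each of which directly satisfies the lemma. For the inductive step, first put $T$ into a normal form using the planar-isotopy rules (\ref{onecombl})--(\ref{invert}) so that $a_i \ge 1$ for all $i$ and $a_1, a_n \ne 1$ whenever $n > 1$. If the normalized $T$ is already an odd-even shadow, option 1 holds with an empty sequence of phony moves. Otherwise, observe that the only phony Reidemeister I moves available to a normalized rational shadow are (\ref{unwindl}) and (\ref{unwindr}), both of which require a boundary $0$ that the normal form has eliminated; so option 1 is unavailable and I must produce a phony R1/R2 sequence ending at one of the six exceptions.

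The induction is driven by the observation that every phony R2 changes some $a_i$ by $2$, preserving all the entry parities and hence, by Theorem~\ref{fundrat}, the knot-versus-link status; thus each intermediate shadow remains a rational knot shadow and the inductive hypothesis applies. I would then run a priority-ordered reduction: (1) if some entry is at least $4$, apply R2 to it; (2) else if some interior entry equals $3$, apply R2 to it; (3) else if some interior entry equals $2$, apply R2 to it and then (\ref{zloss}); (4) else if an endpoint equals $3$, apply R2 followed by (\ref{onecombl}) or (\ref{onecombr}). In each of (1)--(3) the resulting $T'$ is smaller and still non-odd-even (because either the parity pattern is unchanged, or the length reduction manifestly destroys the odd-even pattern), so induction gives option 2 for $T'$ and hence for $T$.

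The main obstacle is priority (4): the endpoint-$3$ reduction can alter the parity pattern and turn $T'$ into an odd-even shadow, in which case the inductive hypothesis for $T'$ only yields option 1 and not option 2. I would handle this by first arguing that once priorities (1)--(3) are exhausted, the only normalized shadow that still has an odd endpoint is of the form $(3, 1^k, a_n)$ with $a_n \in \{2,3\}$ and all interior entries $1$; the continued-fraction recursion then shows that for the sub-family $T = [(2),(1)^k,(2)]$, $T$ is a knot precisely when $k \not\equiv 1 \pmod 3$, with $k \in \{0,2\}$ being two of the six exceptions, and for $k \ge 3$ the compound move sequence ``R2 on $a_1$, then (\ref{unwindl}), (\ref{zlossl}), (\ref{onecombl})'' reduces $T$ to $[(2),(1)^{k-3},(2)]$, which a secondary induction on $k$ finishes. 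The few remaining endpoint-$3$ cases (such as $(3,1^k,3)$ with $k \ge 1$) are handled analogously, always by finding an explicit move sequence of the same shape to one of the six exceptions.
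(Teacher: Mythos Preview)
Your priority-(3) step has a real gap. You claim that after reducing an interior $2$ to $0$ and merging via (\ref{zloss}), the result $T'$ is ``still non-odd-even,'' but merging can collapse two odd interior entries into a single even one and thereby eliminate all interior oddness at once. Concretely, take $T = [(3),(1),(2),(1),(2)]$: it is normalized (all entries $\ge 1$, endpoints $\ne 1$), is a knot shadow (the continued-fraction numerator is $41$), and is not odd-even since the interior contains $1$'s. Priorities (1) and (2) do not fire, so priority (3) acts on the unique interior $2$, giving $[(3),(1),(0),(1),(2)] = [(3),(2),(2)]$, which \emph{is} odd-even. Your inductive hypothesis on $T'$ then yields only option 1, which is useless for $T$ because the step $T \to T'$ was a phony R2, not R1. (Option 2 does hold for $T$: reducing the endpoint $3$ first gives $[(1),(1),(2),(1),(2)] = [(2),(2),(1),(2)]$, one of the six exceptions---but your priority order never reaches it. As a side note, your base-case list is also off: $[(2),(3)]$ has five crossings.)

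The paper avoids this by organizing the reduction around preserving the ``interior odd'' property rather than by a fixed priority list. After disposing of the all-even case and the case where odd entries occur only at the endpoints, one is left with some interior $a_i$ odd; the paper then reduces $T$ via phony R2 and endpoint-absorption \emph{as far as possible subject to retaining at least one odd interior entry}, and shows that such a constrained minimum has $a_j \le 2$ for interior $j$ and $a_1, a_n \in \{2,3\}$, leaving a short finite list to check by hand. That constraint is precisely what blocks the bad merge above. If you want to keep an inductive framework, the choice of which entry to reduce must depend on where the odd interior entries sit, so that at least one survives each step.
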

\begin{proof}
Without loss of generality, $T$ is irreducible as far as phony Reidemeister I moves go.  Then we can make
the assumption that all $a_i > 0$.  If all of the $a_i$ are even, then by applying
(\ref{twoloss}) and (\ref{zlossl}-\ref{zlossr}),
we can reduce $T$ down to either $\left[(2),(2)\right]$ or $\left[(2)\right]$.  But the second of these
is easily seen to be a link, so $T \stackrel{*}{\Rightarrow} \left[(2),(2)\right]$.
Otherwise, at least one of the $a_i$ is odd.  If the only odd $a_i$ are $i = 1$ and/or $i = n$, then either
$T$ is an odd-even shadow, or $a_1$ and $a_n$ are both odd.  But if both $a_1$ and $a_n$ are odd, then by applying
(\ref{twoloss}) and (\ref{zloss}), we can reduce to one of the cases $\left[(1),(0),(1)\right]$ or $\left[(1),(1)\right]$.
By (\ref{zloss}) or (\ref{onecombl}), both of these are equivalent to $\left[(2)\right]$, which is not a knot.

This leaves the case where at least one $a_i$ is odd, $1 < i < n$.  Let $T$ be (a) not reducible by
phony Reidemeister I moves or by (\ref{onecombl}-\ref{onecombr}), and (b) as reduced as possible
by phony Reidemeister II moves, without breaking the property of having one of the $a_i$ be odd,
for $1 < i < n$.  If $a_j > 2$ for any $1 < j < n$, then we can reduce $a_j$ by two, via (\ref{twoloss}).
So for every $1 < j < n$, $a_j \le 2$.  Similarly, $a_1$ and $a_n$ must be either
$2$ or $3$.  (They cannot be $1$ or else $T$ would be reducible by (\ref{onecombl}) or (\ref{onecombr}).)

Choose $i$ for which $a_i$ is odd.
If $a_1 = 3$ and $i > 2$, then we can reduce $a_1$ by two (\ref{twoloss}) and combine it (\ref{onecombl}) into $a_2$ to yield a smaller
$T$.  So if $a_1 = 3$, then $a_2 = 1$ and $a_j \ne 1$ for $j > 2$ (or else we could have
chosen a different $i$ and reduced).  Similarly, if $a_n = 3$, then
$a_{n-1} = 1$ and $a_j \ne 1$ for $j < n - 1$.  Thus, if a sequence begins with $(3)$, the next
number must be $(1)$, and the $(1)$ must be unique.  For example, the sequence $\left[(3),(1),(1),(3)\right]$
can be reduced to $\left[(1),(1),(1),(3)\right]$ and thence to $\left[(2),(1),(3)\right]$.

On the other hand, suppose $a_1 = 2$.  If $i > 4$ then we can reduce $T$ farther by decreasing
$a_1$ by (\ref{twoloss}), and then decreasing $a_2$ one by one via (\ref{unwindl})
until both $a_1$ and $a_2$ are zero.  Then both can be removed by (\ref{zlossl}), yielding a smaller $T$.
A further application of (\ref{onecombl}) may be necessary to remove an initial 1.
Moreover, if (\ref{onecombl}) is unnecessary, because $a_3 > 1$, then this also works if $i = 4$.

Therefore, what precedes any $a_i = 1$ must be one of the following:
\begin{itemize}
\item $(3)$
\item $(2)$
\item $(2)(2)$
\item $(2)(1)$
\item $(2)(2)(1)$
\item $(2)(1)(1)$
\end{itemize}
and only the first three of these can precede the first $(1)$.
The same sequences reversed must follow any $(1)$ in sequence.  Then the only
combinations which can occur are:
\begin{itemize}
\item $\left[(3),(1),(3)\right]$
\item $\left[(3),(1),(2)\right]$ and its reverse
\item $\left[(3),(1),(2),(2)\right]$ and its reverse
\item Not $\left[(3),(1),(1),(2)\right]$ because more than just $(3)$ precedes the second $(1)$.
\item $\left[(2),(1),(2)\right]$
\item $\left[(2),(1),(2),(2)\right]$ and its reverse
\item $\left[(2),(1),(1),(2)\right]$
\item $\left[(2),(1),(1),(2),(2)\right]$ and its reverse
\item $\left[(2),(1),(1),(1),(2)\right]$
\item $\left[(2),(2),(1),(2),(2)\right]$
\item $\left[(2),(2),(1),(1),(2),(2)\right]$
\item Not $\left[(2),(2),(1),(1),(1),(2)\right]$ because too much precedes the last $(1)$.
\end{itemize}
So either $T$ is one of the combinations in Lemma \ref{ohnoproof} or one of the following
happens:
\begin{itemize}
\item $\left[(3),(1),(2)\right]$ reduces by (\ref{twoloss}) to $\left[(1),(1)
,(2)\right] = \left[(2),(2)\right]$.  So does its reverse.
\item $\left[(3),(1),(2),(2)\right]$ reduces by two phony Reidemeister II moves to
\[\left[(3),(1),(0),(0)\right] = \left[(3),(1)\right] = \left[(4)\right]\] which is a link,
not a knot.  Nor is its reverse.
\item $\left[(2),(1),(2)\right]$ reduces by a phony Reidemeister II move to $\left[(0),(1),(2)\right]$,
which in turn reduces by a phony Reidemeister I move to $\left[(0),(0),(2)\right] = \left[(2)\right]$
which is a link, not a knot.  So this case can't occur.
\item $\left[(2),(1),(1),(2),(2)\right]$ reduces by phony Reidemeister moves to
\[\left[(2),(1),(1),(0),(2)\right] = \left[(2),(1),(3)\right]\] so it isn't actually minimal.
\item $\left[(2),(1),(1),(1),(2)\right]$ likewise reduces by a phony Reidemeister II move and
a I move to \[\left[(0),(0),(1),(1),(2)\right] = \left[(1),(1),(2)\right] = \left[(2),(2)\right]\]
\item $\left[(2),(2),(1),(1),(2),(2)\right]$ reduces by a phony Reidemeister II move to
\[\left[(2),(0),(1),(1),(2),(2)\right] = \left[(3),(1),(2),(2)\right],\] so it isn't actually
minimal.
\end{itemize}
In summary then, every $T$ that does not reduce by phony Reidemeister I moves to
an odd-even shadow reduces down to a finite set of minimal cases.  Each of these minimal cases
is either reducible to one of the six shadows in Lemma \ref{ohnoproof}, or is not actually
a knot.
\end{proof}

\section{Sums of Rational Knot Shadows}
Putting everything together we have
\begin{theorem}\label{firstmain}
Let $T$ be a rational knot shadow, and let $T' = \left[a_1,a_2,\ldots,a_n\right]$ be
the smallest $T'$ such that $T \to_{1}^* T'$.  Then if $T'$ is an odd-even shadow,
$T$, $u^+(T) = u^-(T) = 0$, and otherwise, $u^+(T) = 1$, $u^-(T) = 0$.
\end{theorem}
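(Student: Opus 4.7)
The plan is to assemble the theorem from pieces that are already established in the excerpt, splitting into the two cases according to the nature of $T'$.

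First I would dispense with the $u^-$ half: Theorem~\ref{downrational} states that every rational knot shadow already satisfies $u^-(T) = 0$, regardless of which case we are in, so the entire content of the theorem lies in computing $u^+(T)$. I would also note the free upper bound $u^+(T) \le 1$: since $\val(T)$ is a $2$-valued game, $u^+(T)$ lies in $U = \{0, 1/4, 3/8, 1/2, 1/2*, 5/8, 3/4, 1\}$, all of whose elements are $\le 1$. This upper bound will match the lower bound produced in the second case.

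For the first case, suppose $T'$ is an odd-even shadow. Because $T \stackrel{1}{\Rightarrow} T'$ uses only phony Reidemeister~I moves, Theorem~\ref{redmonotone} gives $u^+(T) = u^+(T')$ exactly. The theorem in the Odd-Even Shadows section then shows $u^+(T') = 0$, so $u^+(T) = 0$ as required.

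For the second case, assume $T'$ is not an odd-even shadow. Here I would apply Lemma~\ref{minimal} to $T'$ rather than to $T$. Since $T'$ is already minimal under $\stackrel{1}{\Rightarrow}$, the first alternative of that lemma (further reduction by phony Reidemeister~I moves to an odd-even shadow $O$) would force $T' = O$, contradicting the hypothesis. So the second alternative must apply: $T' \stackrel{*}{\Rightarrow} A$, where $A$ is topologically equivalent to one of the six shadows of Lemma~\ref{ohnoproof}. Applying Theorem~\ref{redmonotone} along this reduction, and noting that both phony Reidemeister~I and II moves can only preserve or decrease $u^+$, I conclude
\[ u^+(T) \;=\; u^+(T') \;\ge\; u^+(A) \;=\; 1,\]
where $u^+(A) = 1$ comes from Lemma~\ref{ohnoproof}. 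Combined with the universal bound $u^+(T) \le 1$, this forces $u^+(T) = 1$.

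I do not foresee any serious obstacle: the real work is already done in Theorems~\ref{downrational} and \ref{redmonotone}, the Odd-Even Shadows theorem, and Lemmas~\ref{ohnoproof} and \ref{minimal}; the present theorem is an assembly argument. The only small subtlety is that ``the smallest $T'$'' should be well-defined, which follows because phony Reidemeister~I moves strictly reduce crossing number and any two minimal forms will agree on whether they are odd-even. One may also wish to exclude or separately verify the degenerate case $T = []$, which has no crossings and trivially has $u^+ = u^- = 0$.
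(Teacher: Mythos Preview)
Your proposal is correct and follows essentially the same route as the paper: both use Theorem~\ref{downrational} for $u^-$, Theorem~\ref{redmonotone} to transfer $u^\pm$ values along phony Reidemeister reductions, the odd-even shadow theorem for the first case, and Lemma~\ref{minimal} together with Lemma~\ref{ohnoproof} for the second. Your version is in fact slightly more careful than the paper's in explicitly justifying why the first alternative of Lemma~\ref{minimal} is excluded for $T'$ and in flagging the degenerate case $T'=[]$.
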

\begin{proof}
We know that $\val(T')$ and $\val(T)$ differ by $0$ or $*$,
so $u^+(T) = u^+(T')$ and $u^-(T) = u^-(T')$.  We already know that if
$T'$ is an odd-even shadow, then $u^-(T') = 0$.  Otherwise,
by Lemma~\ref{minimal}, $T'$ must reduce by phony Reidemeister I and II moves
to a rational shadow $T''$ that is one of the six shadows in Lemma~\ref{ohnoproof}.
By Lemma~\ref{ohnoproof}, $u^+(T'') = 1$.  Then by Theorem~\ref{redmonotone}, $u^+(T) \ge u^+(T') \ge u^+(T'')$.
But $u^+(T'')$ is already the maximum value $1$, so $u^+(T) = 1$ too.  On the other hand,
we know that $u^-(T) = 0$ by Theorem~\ref{downrational}, regardless of what $T$ is.
\end{proof}

\begin{definition} A rational knot shadow \emph{reduces to an odd-even shadow}
if it reduces to an odd-even shadow via phony Reidemeister I moves.
\end{definition}
The previous theorem can be restated to say that a rational knot shadow has $u^+ = 0$ if it reduces to an odd-even shadow, and
$u^+ = 1$ otherwise.

\begin{theorem}
If $T_1, T_2, \ldots T_n$ are rational knot shadows, and $T = T_1 + T_2 + \ldots + T_n$ is
their connected sum, then $T$ is a win for Ursula if all of the
$T_i$ reduce to odd-even shadows.  Otherwise, if $T$ has an odd number of crossings,
then $T$ is a win for whichever player goes first, and if $T$ has an even number of crossings, then
$T$ is a win for whichever player goes second.
\end{theorem}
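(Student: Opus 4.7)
The plan is to reduce everything to an arithmetic computation inside the eight-element set $U = \{0,1/4,3/8,1/2,1/2*,5/8,3/4,1\}$ using the machinery of Chapter~\ref{chap:bool}, specifically Corollary~\ref{booleansummary}. Since TKONTK is a Boolean game (with $0$ meaning ``unknot, Ursula wins'' and $1$ meaning ``knotted, King Lear wins''), Fact~\ref{firstfact} tells us that under connected sum the Boolean values combine by logical OR, which is exactly the operation $\oplus_2 = \vee$. Thus $\val(T) = \val(T_1) \oplus_2 \cdots \oplus_2 \val(T_n)$, and by the last four equations of Corollary~\ref{booleansummary},
\[ u^+(T) = u^+(T_1) \cup u^+(T_2) \cup \cdots \cup u^+(T_n), \qquad u^-(T) = u^-(T_1) \cup \cdots \cup u^-(T_n). \]

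Next I would plug in the values of the summands using Theorem~\ref{firstmain}. That theorem gives $u^-(T_i) = 0$ for every rational knot shadow, and $u^+(T_i) = 0$ if $T_i$ reduces to an odd-even shadow, otherwise $u^+(T_i) = 1$. Since $0 \cup 0 = 0$ in $U$, we get $u^-(T) = 0$ unconditionally. For the upside, $0 \cup 0 = 0$ and $x \cup 1 = 1$ for every $x \in U$, so $u^+(T) = 0$ precisely when every $T_i$ reduces to an odd-even shadow, and $u^+(T) = 1$ otherwise.

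Now I would read off the outcomes using the explicit formulas in Corollary~\ref{booleansummary}, splitting on whether $T$ is even- or odd-tempered (equivalently, whether the total crossing number is even or odd). In the case where $u^+(T) = u^-(T) = 0$, every formula gives outcome $0$ regardless of parity (since $\lceil 0 \rceil = \lfloor 0 \rfloor = \lceil -1/2 \rceil = \lfloor 1/2 \rfloor = 0$), so Ursula wins no matter who moves first, yielding the first conclusion. In the case $u^+(T) = 1$, $u^-(T) = 0$: if $T$ is even-tempered, then $\loutcome(T) = \lceil 0 \rceil = 0$ and $\routcome(T) = \lfloor 1 \rfloor = 1$, so Ursula wins exactly when Lear moves first, i.e.\ the second player wins; if $T$ is odd-tempered, then $\loutcome(T) = \lceil 1/2 \rceil = 1$ and $\routcome(T) = \lfloor 1/2 \rfloor = 0$, so Lear wins exactly when Lear moves first, i.e.\ the first player wins.

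There is no real obstacle here beyond bookkeeping; all the heavy lifting has already been done in Theorem~\ref{firstmain} (which classified single rational shadows via Lemmas~\ref{ohnoproof} and \ref{minimal}) and in Corollary~\ref{booleansummary} (which reduced Boolean indistinguishability to arithmetic in $U$). The only mild subtlety is correctly matching the Left/Right convention (Lear $=$ Left, Ursula $=$ Right) against the ``first/second player'' language of the theorem statement, and keeping straight that ``$T_i$ reduces to an odd-even shadow'' is a property of the summand and does \emph{not} constrain the parity of its crossing count --- so Case~1 (all summands reduce to odd-even shadows) must be shown to give an Ursula win in both parities, which the computation above handles uniformly.
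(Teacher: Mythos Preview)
Your proposal is correct and follows essentially the same route as the paper: compute $u^\pm(T)$ from the $u^\pm(T_i)$ via Theorem~\ref{firstmain} and the $\cup$-rule in Corollary~\ref{booleansummary}, then read off outcomes from the parity-dependent formulas there. If anything, your treatment of the ``all $T_i$ reduce to odd-even shadows'' case is slightly more careful than the paper's, since you verify both parities explicitly rather than asserting $\val(T)\approx 0$ (which, strictly, should be $\approx *$ when $T$ is odd-tempered).
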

\begin{proof}
Note that $0 \cup 0 = 0$ and $1 \cup 0 = 1 \cup 1 = 1$.  So by Theorem~\ref{firstmain},
if every $T_i$ reduces to an odd-even shadow, then $u^\pm(T_1 + \cdots + T_n) = 0$.
So then $\val(T_1 + \cdots + T_n) \approx 0$, and so $T_1 + \cdots + T_n$ is a win
for Right (Ursula) no matter who goes first.

Otherwise, it follows by Theorem~\ref{firstmain} that $u^-(T_1 + \cdots + T_n) = 0$
and $u^+(T_1 + \cdots + T_n) = 1$.  So by Corollary~\ref{booleansummary},
if $\val(T_1 + \cdots + T_n)$ is even-tempered, then
\[ \loutcome(\val(T_1 + \cdots + T_n)) = \lceil 0 \rceil = 0\]
so that Ursula wins if King Lear goes first, and
\[ \routcome(\val(T_1 + \cdots + T_n)) = \lfloor 1 \rfloor = 1\]
so that King Lear wins if Ursula goes first.

On the other hand, if $\val(T_1 + \cdots + T_n)$ is odd-tempered, then
\[ \loutcome(\val(T_1 + \cdots + T_n)) = \lceil 1 - \frac{1}{2} \rceil = 1\]
so that King Lear wins when he goes first, and similarly
\[ \routcome(\val(T_1 + \cdots + T_n)) = \lfloor 0 - \frac{1}{2} \rfloor = 0\]
so that Ursula wins when she goes first.
\end{proof}

\section{Computer Experiments and Additional Thoughts}

So far, we have determined the value of rational \emph{shadows}, that is,
rational pseudodiagrams in which no crossings are resolved.  Although
we have ``solved'' the instances of \textsc{To Knot or Not to Knot} that correspond
to rational knots, we have not \emph{strongly solved} them, by finding
winning strategies in all their subpositions.  This would amount
to determining the values of all rational pseudodiagrams.

Since rational pseudodiagrams resolve to rational knots, a computer can check
whether the outcome of a game is knotted or not.
I wrote a program to determine the values of small rational pseudodiagrams.  Interestingly,
the only values of $u^+$ and $u^-$ which appeared were $0$, $1$, and $\frac{1}{2}*$.
This also appeared when I analyzed the positions of the following shadow:
\begin{figure}[H]
\begin{center}
\includegraphics[width=1in]
					{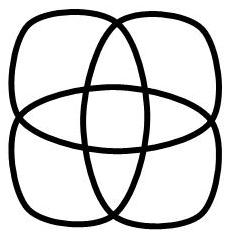}
\caption{The simplest shadow which does not completely reduce via phony Reidemeister I
and II moves.  
}
\label{petals}
\end{center}
\end{figure}
which is the simplest shadow which does not reduce via phony Reidemeister I and II moves
to the unknot.  So Theorem~\ref{downrational} does not apply, and in fact, by a computer,
I verified that this knot does not have $u^- = 0$.

Since Figure~\ref{petals} is not a rational shadow or sum of rational shadows, I used another invariant
called the \emph{knot determinant} to check whether the final resolution was an unknot.

\begin{definition}
Let $K$ be a knot diagram with $n$ crossings and $n$ strands.  Create a matrix
$M$ such that $M_{ij}$ is -1 if the $i$th strand terminates at the $j$th crossing,
2 if the $i$th strand passes over the $j$th crossing, and 0 otherwise.  The
\emph{knot determinant} is defined as $|\det(M')|$, where $M'$ is any
$(n-1)\times(n-1)$ submatrix of $M$.
\end{definition}
It turns out that the knot determinant is well defined, and is even a knot invariant.
In fact, if $\Delta(z)$
is the Alexander polynomial, then the knot determinant is just $|\Delta(-1)|$.
The knot determinant of the unknot equals 1.

\begin{lemma}
If the knot shadow in Figure~\ref{petals}
 is resolved into a knot $K$, then $K$
is the unknot iff the knot determinant of $K$ equals 1.
\end{lemma}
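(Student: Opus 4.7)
The forward direction is immediate: it is a standard fact (cited just above in the excerpt) that the knot determinant of the unknot equals~$1$, and the knot determinant is a knot invariant, so any resolution equivalent to the unknot must have determinant~$1$. The real content is the reverse direction: I must show that if $K$ is any resolution of the shadow in Figure~\ref{petals} with knot determinant equal to~$1$, then $K$ is the unknot.

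My plan is to argue by a bound on crossing number. The shadow in Figure~\ref{petals} has six crossings, so every resolution $K$ is presented by a diagram with at most six crossings. In particular, the crossing number of $K$ (the minimum number of crossings over all diagrams of $K$) is at most~$6$. I will then invoke the standard classification of prime knots of low crossing number, together with the elementary observation that the knot determinant is multiplicative under connected sum (so a composite knot's determinant is a product of the determinants of its prime summands, each of which is at least~$1$ for a nontrivial prime knot). The list of nontrivial knots with crossing number at most~$6$ is $\{3_1, 4_1, 5_1, 5_2, 6_1, 6_2, 6_3\}$, with knot determinants $3, 5, 5, 7, 9, 11, 13$ respectively --- all different from~$1$. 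Hence no nontrivial knot presented by a diagram with at most six crossings can have determinant equal to~$1$, which is exactly what is needed.

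As an alternative that avoids quoting the knot tables, I could instead enumerate the $2^6 = 64$ resolutions of the shadow by computer, compute the knot determinant (via the formula $|\det(M')|$ for any $(n-1)\times(n-1)$ minor of the labelling matrix $M$) for each, and check by another means (for example, by reducing each diagram through genuine Reidemeister moves or by computing the Alexander polynomial) which of the resolutions are unknots. In practice the computer calculation referenced in the surrounding text almost certainly already performs this enumeration, so verifying the biconditional case-by-case is straightforward.

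The hard part, if one does not wish to invoke the knot tables, is the determinant-$1$ gap result: showing from scratch that there is no nontrivial knot with crossing number at most~$6$ and determinant~$1$. Using the tables, this is a finite check. Without the tables, one would need to rule out nontrivial knots with determinant~$1$ among the at most $64$ diagrams under consideration, which is best done computationally. I would therefore present the proof by the crossing-number bound plus appeal to the classification, since that keeps the argument short and conceptual while leaving the brute-force enumeration as a redundant sanity check.
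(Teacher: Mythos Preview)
Your conceptual strategy is sound, but there is a concrete factual error that breaks the argument as written: the shadow in Figure~\ref{petals} has \emph{eight} unresolved crossings, not six. The paper itself says ``There are only 256 resolutions,'' and $256 = 2^8$. Consequently your crossing-number bound is $c(K) \le 8$, not $c(K) \le 6$, and your enumeration of $2^6 = 64$ resolutions is off by a factor of four. The list of prime knots you check (through $6_3$) is therefore incomplete: you must also rule out $7_1,\ldots,7_7$ and $8_1,\ldots,8_{21}$, as well as the composites $3_1\#3_1$, $3_1\#4_1$, $3_1\#5_1$, $3_1\#5_2$, $4_1\#4_1$. Fortunately the repair is easy: every prime knot with crossing number at most $10$ has determinant at least $3$ (the first nontrivial knots with determinant $1$ are the $11$-crossing Kinoshita--Terasaka and Conway knots, which have trivial Alexander polynomial), and multiplicativity handles the composites. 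So your argument survives once the crossing count is corrected and the table lookup is extended. Incidentally, your parenthetical ``each of which is at least~$1$'' is vacuous; you mean ``strictly greater than~$1$.''

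By contrast, the paper does not invoke knot tables at all. It takes exactly the brute-force route you mention as your alternative: enumerate all $256$ resolutions by computer, compute the determinant of each, list (up to symmetry) those with determinant~$1$, and then verify by hand that each of those diagrams is visibly the unknot. Your table-based argument is shorter and more conceptual, and has the pleasant feature of explaining \emph{why} the lemma holds (the diagram simply doesn't have enough crossings to realize a determinant-$1$ nontrivial knot); the paper's argument is more self-contained, since it does not rely on the correctness of external knot tables.
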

\begin{proof}
We can use a computer to check whether a resolution of the diagram has knot determinant
1.  There are only 256 resolutions, so it is straightforward to iterate over all resolutions.
Up to symmetry, it turns out that the only resolutions with knot determinant 1 are those
shown in Figure~\ref{unknots}.
It is straightforward to check that all of these are the
unknot.  Conversely, any knot whose determinant is not 1 cannot be the unknot, since
the knot determinant is a knot invariant.
\end{proof}

\begin{figure}[h]
\begin{center}
\includegraphics[width=4.5in]
					{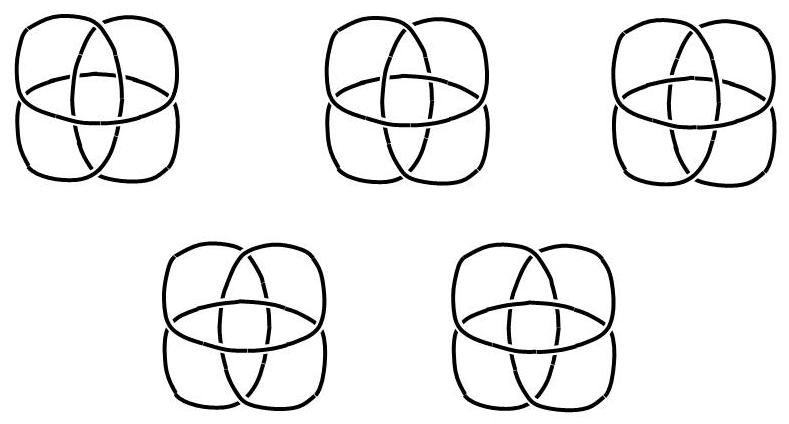}
\caption{Up to symmetry, these are the only ways to resolve Figure~\ref{petals} and have the knot determinant equal 1.
They are all clearly the unknot.}
\label{unknots}
\end{center}
\end{figure}

Because of this, we can use a computer to determine the value of the game played
on the diagram in Figure~\ref{petals}.
The value turned out to be $u^+ = 1$.  Then by Corollary~\ref{booleansummary},
this game is a win for King Lear, no matter who goes first.  This answers a question posed in \emph{A Midsummer Knot's Dream}.

The program used to analyze the shadow of Figure~\ref{petals}
also determined the
values that occur in subpositions of this game.  Again, only the values
$0$, $1$, and $\frac{1}{2}*$ were seen for $u^\pm$ values.

This led me to conjecture that these were the only possible values for any knot pseudodiagrams.
However, it seems very unlikely that this could be due to some special property of knots.  In fact, it seems
like it might be true of a larger class of games, in which Left and Right take turns setting the arguments
of a fixed function $f:\{0,1\}^n \to \{0,1\}$, and then the value of $f$ determines who wins.

I tried for a long time to prove that for such games, the only possible $u^\pm$ values were $0$,
$1$, and $\frac{1}{2}*$.  This was unlikely, for the following reason:
\begin{theorem}
If $G$ is an odd-tempered Boolean game, and $u^\pm(G) \in \{0,1,\frac{1}{2}*\}$, then $G$ is not a second-player win.
\end{theorem}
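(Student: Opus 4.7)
The plan is to translate the statement into the stopping-value language and read the answer off Corollary~\ref{booleansummary}. First I would unpack what a second-player win means for an odd-tempered Boolean (i.e.\ $2$-valued) game $G$: King Lear (Left) wants the final score to be $1$ and Ursula (Right) wants it to be $0$, so ``second-player wins'' means Ursula wins when Left moves first and Lear wins when Right moves first, that is, $\loutcome(G)=0$ and $\routcome(G)=1$.

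Next I would apply the outcome formulas for odd-tempered $2$-valued games from Corollary~\ref{booleansummary}, namely $\loutcome(G)=\lceil u^+(G)-\tfrac{1}{2}\rceil$ and $\routcome(G)=\lfloor u^-(G)+\tfrac{1}{2}\rfloor$, where the ceiling and floor are taken in the partial order on $\mathcal{G}$. Since these take integer values in $\{0,1\}$, the equations $\loutcome(G)=0$ and $\routcome(G)=1$ translate to $u^+(G)\le \tfrac{1}{2}$ and $u^-(G)\ge \tfrac{1}{2}$ in $\mathcal{G}$ respectively. Together with the inequality $u^-(G)\le u^+(G)$ from Corollary~\ref{booleansummary}, this sandwiches $u^-(G)=u^+(G)=\tfrac{1}{2}$, which is not in the set $\{0,1,\tfrac{1}{2}*\}$; contradiction.

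If for some reason the abstract argument felt unsatisfying, I would fall back to a direct case check over the pairs $(u^-(G),u^+(G))\in\{0,1,\tfrac{1}{2}*\}^2$ with $u^-(G)\le u^+(G)$. The only mildly subtle arithmetic is recognizing that $\tfrac{1}{2}*$ is incomparable to $\tfrac{1}{2}$ in $\mathcal{G}$ (their difference is $*$), so $u^-=\tfrac{1}{2}*$ forces $\lfloor \tfrac{1}{2}*+\tfrac{1}{2}\rfloor=\lfloor 1*\rfloor=0$ rather than $1$, and similarly $u^+=\tfrac{1}{2}*$ forces $\lceil\tfrac{1}{2}*-\tfrac{1}{2}\rceil=\lceil *\rceil=1$; in every admissible pair, at least one of $\loutcome(G)=0$ or $\routcome(G)=1$ fails.

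Since the result is essentially a one-step consequence of Corollary~\ref{booleansummary}, there is no substantive obstacle; the only place where care is required is handling the $\tfrac{1}{2}*$ case correctly, since its partizan-theoretic interaction with $\tfrac{1}{2}$ (via the infinitesimal $*$) is the one place where naive arithmetic on the elements of $U$ could mislead.
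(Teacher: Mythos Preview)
Your proof is correct and follows essentially the same route as the paper: assume $G$ is a second-player win, translate to $\loutcome(G)=0$ and $\routcome(G)=1$, apply the odd-tempered outcome formulas from Corollary~\ref{booleansummary} to get $u^+(G)\le\tfrac12$ and $u^-(G)\ge\tfrac12$, and sandwich to force $u^-(G)=u^+(G)=\tfrac12\notin\{0,1,\tfrac12*\}$. Your added remarks on the $\tfrac12*$ case (that $\lfloor 1*\rfloor=0$ and $\lceil *\rceil=1$) and the fallback case-check are correct elaborations but not needed beyond what the paper does.
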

\begin{proof}
If $G$ is a first-player win, then $\loutcome(G) = 0$ and $\routcome(G) = 1$.  By Corollary~\ref{booleansummary},
this means that
\[ 0 = \lceil u^+(G) - \frac{1}{2} \rceil,\]
so that $u^+(G) \le \frac{1}{2}$.  Similarly,
\[ 1 = \lfloor u^-(G) + \frac{1}{2} \rfloor,\]
so that $u^-(G) \ge \frac{1}{2}$.  Then we have
\[ \frac{1}{2} \le u^-(G) \le u^+(G) \le \frac{1}{2},\]
so that $u^-(G) = u^+(G) = \frac{1}{2}$, a contradiction.
\end{proof}

\emph{Projective Hex}\footnote{Invented by Bill Taylor and Dan Hoey, according to the Internet.} is a positional game like Hex, in which the two players take turns placing pieces of their own colors
on a board until somebody creates a path having a certain property.  In Hex, the path needs to connect your two sides of the board,
but in projective Hex, played on a projective plane, the path needs to wrap around the world an odd number of times:
\begin{figure}[H]
\begin{center}
\includegraphics[width=4in]
					{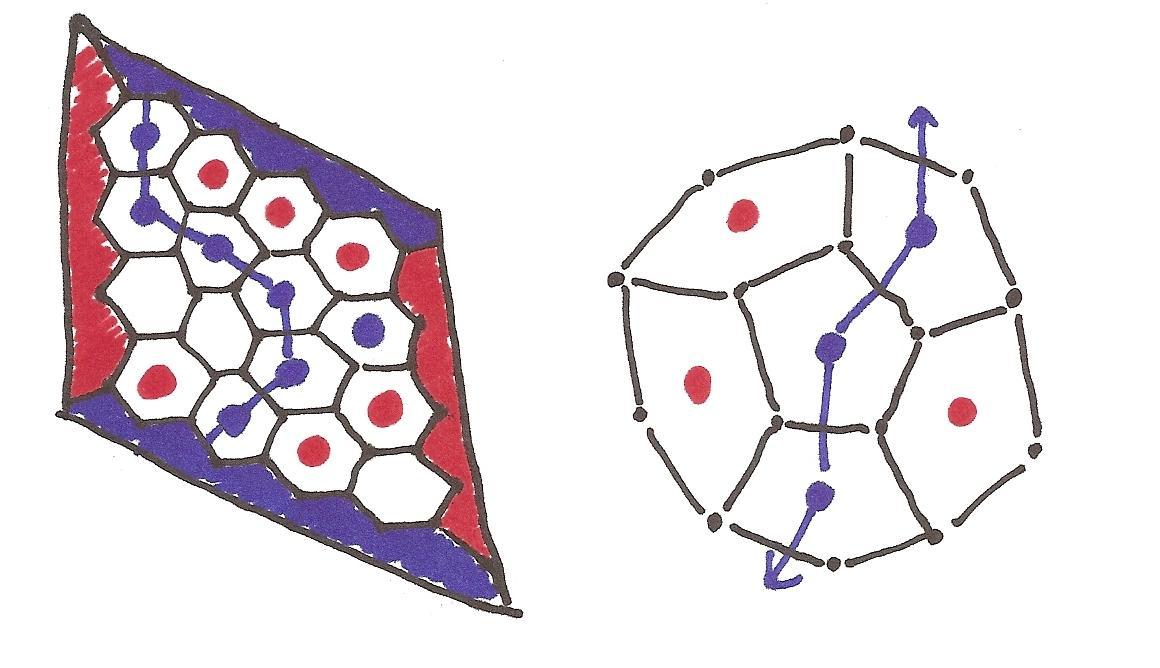}
\caption{Hex (left) and Projective Hex (right, here played on the faces of a dodecahedron).  In both games,
Blue has won.  In the Hex game, she connected her two sides, and in the Projective Hex game, she created a path which wrapped
around the world an odd number of times.}
\label{hex-and-projective-hex}
\end{center}
\end{figure}
By a standard strategy-stealing argument, Hex and Projective Hex are necessarily wins for the first player.  When Projective Hex
is played on the faces of a dodecahedron (or rather on the pairs of opposite faces)
it has the property that every opening move is a winning move, by symmetry.

Now modify dodecahedral projective hex by adding another position where the players can play (making seven positions total).
If a white piece ends up in the extra position, then the outcome is reversed, and otherwise the outcome is as before.  Also,
let players place pieces of either color.

Effectively, the players are playing dodecahedral projective Hex, but XOR'ing the outcome with the color of the piece in the extra
position.
\begin{figure}[H]
\begin{center}
\includegraphics[width=3in]
					{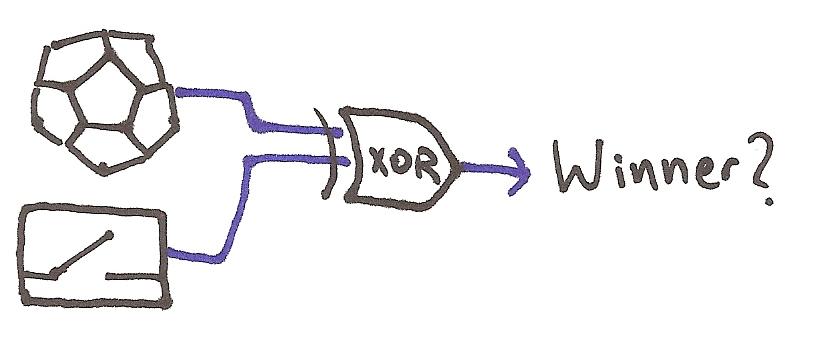}
\end{center}
\end{figure}
The resulting game comes from a function $\{0,1\}^7 \to \{0,1\}$, and I claim that it is a second-player win.  If the first player
places a white piece on the dodecahedron, the second player can choose to be the white player in dodecahedral projective Hex, by making
an appropriate move in the special location.  The fact that players can place pieces of the wrong color then becomes immaterial,
because playing pieces of the wrong color is never to your advantage in projective Hex or ordinary Hex.

On the other hand, if the first player tries playing the special location, then he has just selected what color he will be,
and given his opponent the first move in the resulting game of projective Hex, so his opponent will win.

Therefore, the resulting modified projective Hex is a counterexample to the idea that only $\{0,1,\frac{1}{2}*\}$ can occur
as $u^\pm$ values for games coming from Boolean functions $\{0,1\}^n \to \{0,1\}$.
For all I know, it might be possible to embed this example within a game of \textsc{To Knot or Not to Knot}.  Consequently, I now conjecture that
all of the possible values occur in positions of TKONTK, though I don't know how to prove this.

\appendix
\chapter{Bibliography}
I would like to thank my advisor Jim Morrow, who reviewed this thesis and provided valuable feedback.

I used the following sources and papers:
\begin{itemize}
\item Adams, Colin. \emph{The Knot Book: an Elementary Introduction to the Mathematical Theory of Knots}.
\item Albert, Michael and Richard Nowakowski, eds. \emph{Games of No Chance 3}
\item Berlekamp, Elwyn, John Conway, and Richard Guy. \emph{Winning Ways for your Mathematical Plays}.  Second edition, 4 volumes.
\item Berlekamp, Elwyn and David Wolfe. \emph{Mathematical Go: Chilling Gets the Last Point.}
\item Conway, John. \emph{On Numbers and Games}.  Second Edition.
\item \href{http://citeseerx.ist.psu.edu/viewdoc/summary?doi=10.1.1.37.5278}{Ettinger, J. Mark.  ``On the Semigroup of Positional Games.''}
\item \href{http://citeseerx.ist.psu.edu/viewdoc/summary?doi=10.1.1.37.7498}{Ettinger, J. Mark.  ``A Metric for Positional Games.''}
\item \href{http://www.math.washington.edu/~reu/papers/current/allison/knotpaper1.pdf}{Henrich, A., N. MacNaughton, S. Narayan, O. Pechenik, R. Silversmith, and J. Townsend. ``A Midsummer Knot's Dream.''}
\item Milnor, J. W. ``Sums of Positional Games'' in \emph{Contributions to the Theory of Games, Annals of Mathematics} edited by H. W. Kuhn and A. W. Tucker.
\item \href{http://www.mathstat.dal.ca/~rjn/papers/HistoryCGT.pdf}{Nowakowski, Richard. ``The History of Combinatorial Game Theory.''}
\item Nowakowski, Richard, ed. \emph{Games of No Chance}.
\item Nowakowski, Richard, ed. \emph{More Games of No Chance}.
\item \href{http://www.plambeck.org/archives/integerssemigrouppaperORIGINAL.pdf}{Plambeck, Thane. \emph{Taming the Wild in Impartial Combinatorial Games}}
\end{itemize}
Preliminary versions of my own results are in the following papers:
\begin{itemize}
\item \href{http://www.math.washington.edu/~reu/papers/current/will/knot-sum-games.pdf}{\emph{A Framework for the Addition of Knot-Type Combinatorial Games}.}
\item \href{http://www.math.washington.edu/~reu/papers/current/will/rational-knot-sums.pdf}{\emph{Who wins in \emph{To Knot or Not to Knot} played
on Sums of Rational Shadows}.}
\end{itemize}
\end{document}